\documentclass[letterpaper,12pt]{book}

\usepackage{geometry}
\geometry{bindingoffset=2cm}
\geometry{textwidth=390pt}

\usepackage{amsmath}
\usepackage{amssymb}
\usepackage{amsthm}
\usepackage{mathrsfs}
\usepackage{enumerate}
\usepackage{hyperref}
\usepackage{graphicx}
\usepackage{stmaryrd}
\usepackage{makeidx}
\makeindex

\usepackage{color}

\newcommand{\bC}{\mathbb{C}}
\newcommand{\bF}{\mathbb{F}}	

\newcommand{\bR}{\mathbb{R}}

\newcommand{\bN}{\mathbb{N}}
\newcommand{\bZ}{\mathbb{Z}}

\newcommand{\ds}{\displaystyle}
\newcommand{\ve}{\varepsilon}

\newcommand{\diam}{\operatorname{diam}}
\renewcommand{\Im}{\operatorname{Im}}
\renewcommand{\Re}{\operatorname{Re}}
\newcommand{\supp}{\operatorname{supp}}
\newcommand{\loc}{{\operatorname{loc}}}
\newcommand{\sgn}{\operatorname{sgn}}
\newcommand{\pv}{\operatorname{pv}}
\newcommand{\BMO}{\operatorname{BMO}}
\newcommand{\Ob}{\operatorname{Ob}}
\newcommand{\Hom}{\operatorname{Hom}}
\newcommand{\id}{\operatorname{id}}


\newcommand{\mb}[1]{\mathbb{#1}}
\newcommand{\mc}[1]{\mathcal{#1}}
\newcommand{\mf}[1]{\mathfrak{#1}}
\newcommand{\ms}[1]{\mathscr{#1}}

\newcommand{\hone}{\hspace{.1in}}
\newcommand{\htwo}{\hspace{.2in}}

\newtheorem{theorem}{Theorem}[chapter]
\newtheorem{lemma}[theorem]{Lemma}
\newtheorem{prop}[theorem]{Proposition}
\newtheorem{cor}[theorem]{Corollary}
\newtheorem{conjecture}[theorem]{Conjecture}

\theoremstyle{definition}
\newtheorem{defin}[theorem]{Definition}
\newtheorem{fr}{}[section]

\theoremstyle{remark}

\title{Interpolation Theorems in Harmonic Analysis}
\author{Mark Hyun-Ki Kim \\ \emph{Bachelor of Science} \\ Department of Mathematics \\ Rutgers, the State University of New Jersey}
\date{May 2012 \\ \vspace{2.5in} Advisor: R. Michael Beals}

\begin{document}


\frontmatter

\maketitle 

\thispagestyle{empty}
\pagestyle{empty}

\section*{}

\vspace{3in}

This thesis is ``dedicated'' to the first Rutgers-NYU segway polo champion: come forth and claim your prize!\index{Russell, Matthew C.}

\phantomsection
\addcontentsline{toc}{chapter}{Preface}

\chapter*{Preface}

The present thesis contains an exposition of interpolation
theory in harmonic analysis, focusing on the complex
method of interpolation. Broadly speaking, an interpolation
theorem allows us to guess the ``intermediate'' estimates
between two closely-related inequalities. To give an
elementary example, we take a square-integrable function $f$
on the real line. It is a standard result from real
analysis that $f$ satisfies the \emph{$L^2$-H\"{o}lder inequality}
\begin{equation}\label{preface-L2}
 \int_{-\infty}^\infty \left|f(x) g(x)\right| \, dx
 \leq \left(\int_{-\infty}^\infty |f(x)|^2 \, dx\right)^{1/2}
 \left( \int_{-\infty}^\infty |g(x)|^2 \, dx\right)^{1/2}.
\end{equation}
for every integrable function $g$ on the real line with compact
support. If, in addition, $f$ satisfies the integral inequality
\begin{equation}\label{preface-L1}
 \int_{-\infty}^\infty |f(x)g(x)| \, dx \leq
 \left(\int_{-\infty}^\infty |f(x)|^2 \, dx\right)^{1/2}
 \left( \int_{-\infty}^\infty |g(x)| \, dx\right)
\end{equation}
for all such $g$, it then follows ``from interpolation'' that
the inequality
\begin{equation}\label{preface-Lp}
 \int_{-\infty}^\infty |f(x)g(x)| \, dx \leq
 \left(\int_{-\infty}^\infty |f(x)|^2 \, dx\right)^{1/2}
 \left( \int_{-\infty}^\infty |g(x)|^p \, dx\right)^{1/p}
\end{equation}
holds for all $1 \leq p \leq 2$ and all $g$ on the real line with
compact support. 

From a more abstract viewpoint, we can consider interpolation as
a tool that establishes the continuity of ``in-between'' operators
from the continuity of two endpoint operators. The above example
can be viewed as a study of the ``multiplication by $f$'' operator
\[
 (Tg)(x) = f(x)g(x).
\]
In the language of Lebesgue spaces, inequality (\ref{preface-L2})
implies that $T$ is a continuous mapping from the function space $L^2$ to itself,
and inequality (\ref{preface-L1}) implies that $T$ is a continuous mapping from $L^2$
to another function space $L^1$. The conclusion, then, is that $T$ maps $L^2$
continuously into the ``interpolation spaces'' $L^p$ ($1 \leq p \leq 2$) as well.

Presented herein are a study of four interpolation
theorems, the requisite background material, and a few applications.
The materials introduced in the first three sections of Chapter 1
are used to motivate and prove the Riesz-Thorin interpolation theorem and
its extension by Stein, both of which are presented in the fourth section.
Chapter 2 revolves around Calder\'{o}n's complex
method of interpolation and the interpolation theorem of Fefferman
and Stein, with the material in between providing the necessary
examples and tools. The two theorems are then 
applied to a brief study of linear partial differential
equations, Sobolev spaces, and Fourier integral operators, presented
in the last section of the second chapter.

I have approached the project mainly as an exercise in expository writing.
As such, I have tried to keep a real audience in mind throughout.
Specifically, my aim was to make the present thesis accessible to
Rutgers graduate students who have taken Math 501, 502, and 503.
This means that I have assumed familiarity with the standard 
material in advanced calculus, complex analysis, linear algebra
and point-set topology. In addition, I expect the reader to be
conversant in the language of measure and integration theory
including Lebesgue spaces ($L^p$ spaces), and of functional analysis
up to basic Banach and Hilbert space theory. Beyond those, the required tools
from functional analysis are summarized in the beginning of Chapter 2, and
elements of harmonic analysis are introduced throughout the thesis.

Before I realized how much time it would take to develop each topic
at hand, I had planned to include some harmonic function theory,
maximal function theory of Hardy and Littlewood, 
the interpolation theorem of Marcinkiewicz, the standard material on the theory of
singular integral operators, and the Lions-Peetre method of
real interpolation as a generalization of Maricnkiewicz.
This never happened, and what I had in mind is reduced to
a brief exposition in the further-results section of Chapter 2.
Of course, given the length of the present thesis as is,
I simply would not have had the time and energy to give the
extra materials the care they deserve.

Nevertheless, the inclusion of the theory of singular integral operators
would have helped motivating the section on Fefferman-Stein theory
in Chapter 2, which I believe is extremely condensed and,
frankly, dry as it stands now. Moreover, I was not able to come up
with a coherent narrative for the section on
the functional-analytic prerequisites in the beginning of Chapter 2.
Is there any way to make a ``random collection of things you should
probably know before reading'' section flow pleasantly smooth without
expanding it into a whole chapter or a book? I do not have a good
answer at the present moment.

But, enough excuses. I had a lot of fun writing this thesis, and I hope
that I managed to produce an enjoyable read. Please feel free to send
any comments or corrections to \texttt{markhkim@dimax.rutgers.edu}.

\section*{Acknowledgements}

My deepest gratitude goes to my thesis advisor, Michael Beals.
It is the brief conversation Professor Beals and I had on my
first visit to Rutgers University that gave me the courage to
pursue mathematics, the course he taught in my second-semester
freshman year that convinced me to study analysis,
and the numerous reading courses he gave over the following
years that cultivated my current interests in the field.
From the day I set my foot on campus to the very last
day as an undergraduate, Professor Beals has been the greatest
mentor I could possibly hope for. Indeed, it is he who taught
me most of the mathematics I know, supported me wholeheartedly
in my numerous academic pursuits over the years, and counseled
me ever so patiently in times of trouble.

I would also like to express my gratitude to my academic advisor
and the chair of the honors track, Simon Thomas. There have been
more than a few times I had let myself be consumed by unrealistic,
overly ambitious projects, and Professor Thomas never hesitated
to provide me with a dose of reality and set me on the right path.
He is also one of the best lecturers I know of, and my strong
interest in mathematical exposition was, in part, cultivated
in his course I took as a sophomore. I am truly fortunate to have
had two amazing mentors throughout my undergraduate career.

I have benefited greatly from conversing with other
professors in the department---about the project, and mathematics
at large. Discussions with Eric Carlen, Roe Goodman, Robert
Wilson, and Po Lam Yung have been especially helpful.
The summer school in analysis and geometry at
Princeton University in 2011
also contributed significantly to my understanding of
the background material and their interactions with other fields.
Particularly useful were the lectures by Kevin Hughes,
Lillian Pierce, and Eli Stein. I would like to offer a special
thanks to Professor Stein, who have written the wonderful
textbooks that I have used again and again over the course of the project.

I am also grateful to Itai Feigenbaum, Matt Inverso,
and Jun-Sung Suh for putting up with my endless rants and keeping me sane,
and Matt D'Elia for being a fantastic study buddy.
A warm thank-you goes to my ``graduate officemates'' Katy
Craig and Glen Wilson in Hill 603, and Tim Naumotivz, Matthew Russell,
and Frank Seuffert in Hill 605, who assured me that I am not the
only apprentice navigator in the vast ocean of mathematics.
And last but not least, a bow to my parents for keeping
me alive for the past 23 years and supporting me through
17 years of formal education. Those are awfully big numbers,
if you ask me.

\tableofcontents

\mainmatter

\thispagestyle{headings}
\pagestyle{headings}

\chapter{The Classical Theory of Interpolation}

In the first chapter, we study two interpolation theorems, both of which are
presented in \S\ref{s-interpolation_on_lp_spaces}. Interpolation theory
began with a 1927 theorem of Marcel Riesz, first published in
\cite{Marcel_Riesz:J1927}. \emph{Riesz convexity theorem}, as it is called,
did not arise as a theorem of harmonic analysis, as the paper dealt with
the theory of bilinear forms. It was Riesz's student
G. Olof Thorin with his thesis \cite{Olof_Thorin:T1948} who appropriately
generalized the theorem of Riesz and placed it in its proper context.
The complex-analytic method used in the proof of the Riesz-Thorin
interpolation theorem was then generalized by Elias M. Stein,
allowing for interpolation of families of operators. This result,
known as the \emph{Stein interpolation theorem},
was included in his 1955 doctoral dissertation and was subsequently published
in \cite{Elias_M_Stein:J1956}.

The first two sections of the chapters are devoted to developing the
necessary tools for stating and proving the interpolation theorems.
We review the theory of measure and integration in the first section,
which is included mainly as a convenient reference. In the second section,
we tackle approximation theorems in Lebesgue spaces, which provide a
convenient way of studying function spaces by focusing on small samples of functions.
We then switch gears and present the basic theory of Fourier transform
in the third section. This serves primarily to motivate the
Riesz-Thorin interpolation theorem and to provide a useful example to which
the theorem can be applied. The chapter culminates in the fourth and the
last section, in which we state and prove the Riesz-Thorin interpolation and
its generalization by Stein.
\section{Elements of Integration Theory}\label{s-elements_of_integration_theory}

We begin the chapter by collecting the necessary facts from measure and
integration theory. The present section is meant to serve only as a quick reference,
and so the details will necessarily be sparse. See \cite{Stein_Shakarchi:B2011},
\cite{Stein_Shakarchi:B2005}, \cite{Gerald_B_Folland:B1999}, \cite{Walter_Rudin:B1986},
or any other standard textbook on the subject for a more detailed treatment.
\index{Stein, Elias M.}\index{Shakarchi, Rami}\index{Folland, Gerald B.}\index{Rudin, Walter}

\subsection{Measures and Integration}
Recall that a \emph{$\sigma$-algebra} on a nonempty set $X$ is a collection
$\mf{M}$ of subsets of $X$ such that
\begin{enumerate}[(a)]
 \item $\varnothing \in \mf{M}$ and $X \in \mf{M}$.
 \item If $(E_n)_{n=1}^\infty$ is a sequence in $\mf{M}$, then $\bigcup_n E_n
 \in \mf{M}$.
 \item If $E \in \mf{M}$, then $X \smallsetminus E \in \mf{M}$.
\end{enumerate}
Note that (b) and (c) imply
\begin{enumerate}[(a)]
 \item[(d)] If $(E_n)_{n=1}^\infty$ is a sequence in $\mf{M}$, then $\bigcap_n
 E_n \in \mf{M}$.
\end{enumerate}

The pair $(X,\mf{M})$ is referred to as a \emph{measurable space}.\index{measurable!space} Given
a measurable space $(X,\mf{M})$, we say that a subset of $X$ is \emph{measurable}\index{measurable!set}
if it is an element of $\mf{M}$. A \emph{measure}\index{measure} on $(X,\mf{M})$ is a function
$\mu:\mf{M} \to [0,\infty]$ that is \emph{countably additive}, viz.,
\[
 \mu\left(\bigcup_{n=1}^\infty E_n \right) = \sum_{n=1}^\infty \mu(E_n)
\]
for every pairwise disjoint sequence $(E_n)_{n=1}^\infty$ of measurable sets.
Every measure $\mu$ on $X$ satisfies the following properties:
\begin{enumerate}[(a)]
 \item $\mu(\varnothing) = 0$;
 \item \textbf{Monotonicity.} If $E$ and $F$ are measurable subsets of $X$
 and if $E \subseteq F$, then $\mu(E) \leq \mu(F)$.
 \item \textbf{Countable subadditivity.} If $(E_n)_{n=1}^\infty$ is a sequence
 of measurable subsets of $X$, then
 \[
  \mu\left(\bigcup_{n=1}^\infty E_n \right) \leq \sum_{n=1}^\infty \mu(E_n).
 \]
 \item \textbf{Continuity from below.} If $E_1 \subseteq E_2 \subseteq E_3
 \subseteq \cdots$ is a sequence of measurable subsets of $X$, then
 \[
  \mu\left( \bigcup_{n=1}^\infty E_n \right) = \lim_{n \to \infty} \mu(E_n).
 \]
 \item \textbf{Continuity from above.} If $E_1 \supseteq E_2 \supseteq E_3
 \cdots$ is a sequence of measurable subsets of $X$ such that $\mu(E_1) <
 \infty$, then
 \[
  \mu\left( \bigcap_{n=1}^\infty E_n \right) = \lim_{n \to \infty} \mu(E_n).
 \]
\end{enumerate}

Given a nonempty set $X$, a $\sigma$-algebra $\mf{M}$ on $X$,
and a measure $\mu$ on the measurable space $(X,\mf{M})$, we
refer to triple $(X,\mf{M},\mu)$ as a \emph{measure space}.\index{measure!space} A
measure space $(X,\mf{M},\mu)$ is said to be \emph{finite}\index{measure!finite} if $\mu(X) < \infty$,
\emph{$\sigma$-finite}\index{measure!sigma-finite@$\sigma$-finite} if there exists a sequence $(E_n)_{n=1}^\infty$
of finite-measure sets whose union is $X$, and \emph{complete} if all
subsets of measure-zero sets are measurable. We often talk about a \emph{finite
measure}, a \emph{$\sigma$-finite measure}, or a \emph{complete measure}:
this usage introduces no ambiguity, as specifying a measure picks out
a unique $\sigma$-algebra as its domain, and this $\sigma$-algebra, in turn,
determines a unique base set. Similarly, we usually speak of measures on
the base set $X$, even though the measures are, strictly speaking, defined on
measurable spaces.

If $X$ is a topological space, then we define the \emph{Borel $\sigma$-algebra}\index{Borel, \'{E}mile!sigma-algebra@$\sigma$-algebra}
to be the smallest $\sigma$-algebra on $X$ containing all open subsets of $X$.
A \emph{Borel set}\index{Borel, \'{E}mile!set} in $X$ is an element of the Borel $\sigma$-algebra on $X$,
and a \emph{Borel measure}\index{Borel, \'{E}mile!measure}\index{measure!Borel|see{Borel, \'{E}mile}} on $X$ is a measure on $X$ that renders all Borel
sets measurable. The canonical measure on $\bR^d$, the \emph{$d$-dimensional
Lebesgue measure},\index{Lebesgue, Henri!measure} is the unique complete translation-invariant Borel measure
$\ms{L}^d$ on $\bR^d$ with the normalization $\ms{L}^d([0,1]^d) = 1$. If there is no
danger of confusion, $m(E)$ or $|E|$ is often used in place of $\ms{L}^d(E)$.
We shall have more to say about the Lebesgue measure later in this section.
For now, we merely remark that the Lebesgue measure is $\sigma$-finite.

Given a measure space $(X,\mf{M},\mu)$ and a topological space $Y$, we say
that a function $f:X \to Y$ is \emph{measurable}\index{measurable!function} if each open set $E$ in
$Y$ has a measurable preimage $f^{-1}(E)$. If $X$ is a topological space
and $\mu$ a Borel measure, then the definition renders all continuous
functions measurable. If $Y$ is $\bR$ or $\bC$, then the sums and products
of measurable functions are measurable. We observe that the supremum, the
infimum, the limit superior, and the limit inferior of a sequence of
measurable functions is measurable. This, in particular, implies that
the limit of a pointwise convergent sequence of
measurable functions is measurable. In fact, if the set of divergence is
of measure zero, then this continues to hold. In other worlds, the limit
of a \emph{pointwise almost-everywhere convergent} sequence of measurable
functions is measurable. We say that a property $P$ holds \emph{almost
everywhere}\index{almost everywhere} if the set on which $P$ does not hold is of measure zero.

\index{Lebesgue, Henri!integral|(}
Let $(X,\mf{M},\mu)$ be a measure space. The \emph{characteristic function},
or the \emph{indicator function}, of $E \subseteq X$ is defined to be\index{characteristic function}\index{indicator function|see{characteristic function}}
\[
 \chi_E(x) =
 \begin{cases}
  1 & \mbox{ if } x \in E; \\
  0 & \mbox{ if } x \in X \smallsetminus E.
 \end{cases}
\]
A \emph{simple function}\index{simple function} $s$ on $X$ is a finite linear combination
\[
 s(x)
 = \sum_{n=1}^N \lambda_n \chi_{E_n}
\]
of characteristic functions, where each $\lambda_n$ is a complex number
and $E_n$ a measurable set. Note that simple functions are 
automatically measurable. The \emph{(Lebesgue) integral}
is defined to be the sum
\[
 \int s \, d\mu = \int_X s(x) \, d\mu(x) = \sum_{n=1}^N \lambda_n \mu(E_n).
\]

We extend the definition of the integral to nonnegative measurable
functions $f$ on $X$ by setting
\[
 \int f \, d\mu = \int_X f(x) \, d\mu(x)
 = \sup \left\{ \int s \, d\mu :
 s \mbox{ is simple and } 0 \leq s \leq f\right\}
\]
and call $f$ \emph{integrable} if the integral is finite.
With this definition, we can state one of the fundamental theorems
in measure theory, the \emph{monotone convergence theorem}\index{convergence theorem!monotone}: 
every increasing sequence $(f_n)_{n=1}^\infty$ of nonnegative
integrable functions on $X$ converging pointwise almost everywhere
to a function $f$ on $X$ satisfies the identity
\[
 \lim_{n \to \infty} \int f_n \, d\mu
 = \int \lim_{n \to \infty} f_n \, d\mu
 = \int f \, d\mu.
\]
The theorem allows us to approximate the integral of nonnegative
measurable functions by integrals of simple functions.
Indeed, every nonnegative measurable function $f$ on $X$ admits
an increasing sequence $(s_n)_{n=1}^\infty$ of nonnegative
simple functions that converge pointwise to $f$ and uniformly
to $f$ on all subsets of $X$ on which $f$ is bounded. For
non-increasing sequences of functions, we have \emph{Fatou's lemma}\index{Fatou's lemma},
which states that every sequence $(f_n)_{n=1}^\infty$ of
nonnegative measurable functions on $X$ satisfies the inequality
\[
 \int\liminf_{n \to \infty} f_n \, d\mu
 \leq \liminf_{n \to \infty} \int f_n \, d\mu.
\]

Before we extend the definition of the integral to general
cases, we take a moment to tackle a minor
technical issue. Functions like $f(x) = x^{-1/2}\chi_{[0,1]}$
are ``integrable
over $\bR$'' and have finite integrals, but they are not
functions on $\bR$ in the traditional sense, for $x=0$ must be
excluded from the domain. In order to incorporate such functions
into the framework of Lebesgue integration, we ought to turn them
into measurable functions on their natural ``domain space''. The
solution is to consider the \emph{extended number system}
$\bar{\bR}$, which consists of the real numbers, the negative
infinity $-\infty$, and the positive infinity $\infty$. We
define the arithmetic operations on $\bar{\bR}$ by inheriting
the operations from $\bR$ and then by setting
\[
 x \pm \infty = \pm \infty, \hone
 \frac{x}{\pm\infty} = 0, \hone
 y \cdot (\pm \infty) = \pm \infty, \hone
 (-y) \cdot (\pm \infty) = \mp \infty
\]
for all $x \in \bR$ and $y \in \bR \smallsetminus \{0\}$;
we do not attempt to define $\infty - \infty$. In measure theory,
we typically set
\[
 0 \cdot \pm\infty = 0,
\]
so that the values of an extended real-valued function on a set of measure zero
are negligible. We say that a function $f:X \to \bar{\bR}$ is \emph{measurable}
if $f^{-1}([-\infty,a))$ is measurable in $X$ for each $a \in \bR$.
With the standard topology on $\bar{\bR}$, this definition agrees with the
standard definition of measurable functions given above: see
\S\S\ref{fr-compactification} for a discussion.

We now fix an arbitrary measurable extended real-valued function $f$ on $X$
and define
\[
 f^+(x) = \max\{f(x),0\} \htwo \mbox{and} \htwo f^-(x) = \max\{-f(x),0\}.
\]
$f^+$ and $f^-$ are nonnegative, measurable, extended real-valued functions,
and so we can define the integrals $\int f^+$ and $\int f^-$ by a simple
modification of the definition of integral for nonnegative real-valued functions.
Since $f$ can be written as the difference $f^+ - f^-$, it is natural to define
the \emph{integral} of $f$ to be
\[
 \int f \, d\mu = \int_X f(x) \, d\mu(x)
 = \int f^+ \, d\mu- \int f^- \, d\mu,
\]
provided that the difference is well-defined. We say that $f$ is \emph{integrable}
if and only if the integral of $f$ is finite.

If $f$ is complex-valued, we use the decomposition
\[
 f = \Re f^+ - \Re f^- + i (\Im f^+ - \Im f^-)
\]
to define the integral of $f$ to be
\[
  \int_X f(x) \, d\mu(x)
 = \int \Re f^+ \, d\mu - \int \Re f^- \, d\mu+
 i \left( \int \Im f^+ \, d\mu - \int \Im f^- \, d\mu \right),
\]
where
\begin{eqnarray*}
 \Re f^+(x) &=& \max \{\Re f(x), 0\}; \\
 \Re f^-(x) &=& \max \{-\Re f(x), 0\}; \\
 \Im f^+(x) &=& \max \{\Im f(x), 0\}; \\
 \Im f^-(x) &=& \max \{-\Im f(x), 0\}.
\end{eqnarray*}
Again, the integral of $f$ is defined only when the above sum of integrals is
well-defined, and we say that $f$ is integrable if the integral of $f$ is
finite. The main convergence theorem for this definition is the
\emph{dominated convergence theorem}\index{convergence theorem!dominated}: a sequence $(f_n)_{n=1}^\infty$ of measurable
functions converging pointwise almost everywhere to $f$
and satisfying the bound $|f_n| \leq g$ almost everywhere with an integrable
function $g$ satisfies the following identity:
\[
 \lim_{n \to \infty} \int f_n \, d\mu =
 \int \lim_{n \to \infty} f_n \, d\mu = \int f \, d\mu.
\]

Instrumental in proving the aforementioned convergence theorems are the
following basic properties of the integral:
\begin{enumerate}[(a)]
 \item $\int (f + g) \, d\mu= \int f \, d\mu+ \int g \, d\mu$.
 \item $\int (\lambda f) \, d\mu= \lambda \int f \, d\mu$ for each complex
 number $\lambda$.
 \item If $f \leq g$, then $\int f \, d\mu \leq \int g \, d\mu$.
 \item $| \int f \, d\mu| \leq \int |f| \, d\mu$.
 \item If $f = 0$ almost everywhere, then $\int f \chi_E \, d\mu = 0$
 for all $E$.
 \item If $\mu(E) = 0$, then $\int f \chi_E \, d\mu = 0$ for all $f$.
\end{enumerate}
(a) and (b) imply that the integral is a linear functional on the Lebesgue space
$L^p(X,\mu)$, which we shall define in due course. (e) and (f) can be rephrased
in terms of integrating over subsets: if
$f$ is a complex-valued measurable function on $X$ and $E$ a measurable subset
of $X$, then the \emph{integral of $f$ over $E$} is
\[
 \int_E f \, d\mu = \int_X f \chi_E \, d\mu.
\]
(d) implies that the integrability of $|f|$ establishes the integrability of
$f$. In fact, a simple computation shows that the converse is true as well.
\index{Lebesgue, Henri!integral|)}

\subsection{\texorpdfstring{$L^p$}{Lp} Spaces}

\index{Lebesgue, Henri!space|(}
In light of the above observation, we see that
the collection $L^1(X,\mu)$ of complex-valued measurable functions $f$ on $X$
such that $\int |f| \, d\mu < \infty$ collects all integrable complex-valued
functions on $X$. We thus define the \emph{$L^1$-norm}\index{norm!L1@$L^1$} $\|f\|_1$ of
$f \in L^1(X,\mu)$ to be the integral $\int |f| \, d\mu$.
Note that the $L^1$-norm is
not a norm as it is, since functions that are zero almost everywhere
still have the $L^1$-norm of zero. To rectify this issue, we consider
$L^1(X,\mu)$ to be the quotient vector space\index{quotient space|(} defined by the equivalence
relation
\[
 f \sim g \Leftrightarrow f = g \mbox{ almost everywhere},
\]
at which point the $L^1$ norm becomes a \emph{bona fide} norm on $L^1(X,\mu)$.

We pause to make two remarks. Note first that every integrable function
must be finite almost everywhere, whence
each extended real-valued integrable function is equal almost-everywhere
to a complex-valued integrable function. Therefore, extended real-valued
integrable functions can be put in $L^1(X,\mu)$ without disrupting
the complex-vector-space structure thereof.
We also point out that the equivalence-class definition provides
no real benefit beyond resolving a few technical issues. Therefore,
we shall be intentionally sloppy and speak of \emph{functions}
in $L^1(X,\mu)$, unless structural nit-picking is necessary.

Endowing $L^1(X,\mu)$ with the corresponding norm topology, we can now
consider the dominated convergence theorem\index{convergence theorem!dominated}
as a sufficient condition for turning pointwise almost-everywhere convergence
of integrable functions into convergence in the $L^1$-norm. We also
have a partial converse, which states that every sequence of integrable
functions converging in the
$L^1$-norm admits a subsequence, with a dominating function in $L^1$, that
converges pointwise almost everywhere. We note that the $L^1$-metric
\[
 d_{L^1}(f,g) = \|f-g\|_1
\]
is complete, so that $L^1(X,\mu)$ is a \emph{Banach space}\index{Banach space}, a 
normed linear space whose norm-induced metric topology is complete.

It is also useful to consider the space $L^2(X,\mu)$ of square-integrable
functions on $X$, with the quotient-space construction as above to avoid
technical problems. The bilinear form
\[
 \langle f,g \rangle_2 = \int_X f\bar{g} \, d\mu
\]
is an inner product on $L^2(X,\mu)$, which is well-defined by the
\emph{Cauchy-Schwarz inequality}\index{inequality!Cauchy-Schwarz}:
\[
 |\langle f,g\rangle_2| \leq \|f\|_2\|g\|_2.
\]
Here $\|\cdot\|_2$ is the corresponding $L^2$-norm\index{norm!L2@$L^2$}
\[
 \|f\|_2 = \langle f,f\rangle_2^{1/2}
 = \left( \int_X |f|^2 \, d\mu \right)^{1/2},
\]
which furnishes a complete metric. Therefore, $L^2(X,\mu)$ is a
\emph{Hilbert space}\index{Hilbert space}, an inner product space whose norm-induced metric topology
is complete. Even better, if we set $X$ to be the Euclidean space $\bR^d$ and
$\mu$ the $d$-dimensional Lebesgue measure, then the corresponding $L^2$-space
is \emph{separable}, viz., it contains a countable dense subset. Since all
separable Hilbert spaces are unitarily isomorphic to one another, $L^2$ is,
in a sense, \emph{the} Hilbert space.

Recall that a \emph{linear functional}\index{linear functional} on a real or complex vector space $V$ is a
linear transformation on $V$ into the scalar field\footnote{Since we
primarily work over the complex field $\bC$ in the present thesis,
we will not retain this level of generality for the rest of the
thesis. One exception occurs in \S\ref{s-elements_of_functional_analysis},
where we consider real vector spaces and complex vector spaces
separately.} $\bF$,
which is taken to be either $\bR$ or $\bC$. If $V$ is a normed
linear space, a linear functional $l$ on $V$ is \emph{bounded}\index{bounded!linear functional} in case
it admits a constant $k$ such that
\begin{equation}\label{boundedness-definition}
 |lv| \leq k\|v\|_V
\end{equation}
for all $v \in V$. We note that $l$ is bounded if and only if $l$ is continuous
with respect to the norm topology of $V$. The collection $V^*$ of bounded
linear functionals on $V$ forms a vector space, called the \emph{dual space}\index{dual space}
of $V$. It is a standard result in real analysis that $V^*$ is a Banach space
with the operator norm\index{norm!operator}
\[
 \|l\|_{V^*} = \sup_{\|v\| \leq 1} |lv|,
\]
which, in turn, is the infimum of all possible $k$ in (\ref{boundedness-definition}).

Since many transformations of functions that arise in mathematical analysis
can be understood as bounded linear functionals on function spaces, it is of
interest to describe them as concretely as possible. A common approach,
known as a \emph{representation theorem},\index{representation theorem} is to determine the obvious
bounded linear functionals on the given function space, and then to
investigate the extent in which arbitrary bounded linear
functionals can be represented by the obvious ones.
For $L^2$, we have a wonderfully concrete representation theorem,
due to Frigyes Riesz:

\index{representation theorem!F. Riesz, Hilbert-space version}\index{Hilbert space!dual of|see{representation theorem}}
\begin{theorem}[F. Riesz representation theorem, Hilbert-space version]\label{hilbert-riesz-representation}
If $\mc{H}$ is a Hilbert space, then each bounded linear functional
$l:\mc{H} \to \bC$ admits a unique element $u \in \mc{H}$ such that
\[
 lv = \langle v,u \rangle_{\mc{H}}
\]
for all $v \in \mc{H}$. Moreover, $\|l\|_{\mc{H}^*} = \|u\|_{\mc{H}}$.
\end{theorem}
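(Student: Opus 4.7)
The plan is to handle uniqueness and the norm equality by direct computation from the inner product, and to devote the bulk of the argument to existence, which I would split according to whether $l$ vanishes.

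For existence, the trivial case $l \equiv 0$ is handled by $u = 0$. Assuming $l \neq 0$, the plan is to build $u$ out of a single normal vector to the kernel. Specifically, because $l$ is bounded, it is continuous, so $M = \ker l$ is a closed proper subspace of $\mc{H}$. The crux of the argument is the claim that $M^\perp \neq \{0\}$ when $M$ is a proper closed subspace; this is the main obstacle, since it is the only step that truly uses Hilbert-space geometry as opposed to formal algebra. I would establish it via the closest-point theorem: for any $v_0 \in \mc{H} \smallsetminus M$, the convex closed set $M$ contains a unique point $m_0$ nearest to $v_0$, and then $w = v_0 - m_0$ is a nonzero element of $M^\perp$ (a short parallelogram-law computation shows $\langle w, m\rangle = 0$ for every $m \in M$). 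With such a $w$ in hand, I would observe that for any $v \in \mc{H}$ the vector $(lv)w - (lw)v$ lies in $M$ and is therefore orthogonal to $w$, which forces
\[
 (lv)\|w\|^2 = (lw)\langle v, w \rangle.
\]
Setting
\[
 u = \frac{\overline{lw}}{\|w\|^2}\,w,
\]
and using conjugate-linearity in the second slot of $\langle \cdot, \cdot\rangle_{\mc{H}}$, I obtain $lv = \langle v, u\rangle_{\mc{H}}$ for every $v \in \mc{H}$.

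For uniqueness, if two candidates $u_1, u_2$ both represent $l$, then $\langle v, u_1 - u_2\rangle_{\mc{H}} = 0$ for every $v$; choosing $v = u_1 - u_2$ forces $u_1 = u_2$. Finally, for the norm identity, the Cauchy-Schwarz inequality gives $|lv| = |\langle v, u\rangle_{\mc{H}}| \leq \|u\|_{\mc{H}} \|v\|_{\mc{H}}$, hence $\|l\|_{\mc{H}^*} \leq \|u\|_{\mc{H}}$, while evaluating $l$ at $v = u$ yields $\|u\|_{\mc{H}}^2 = lu \leq \|l\|_{\mc{H}^*}\|u\|_{\mc{H}}$, and the reverse inequality follows after dividing by $\|u\|_{\mc{H}}$ (the case $u = 0$ being trivial).

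Nothing in this plan is surprising; the only piece that merits real care is the orthogonal-complement step, since it is where completeness of $\mc{H}$ enters. If I wished to streamline the exposition, I could package that step as a separate lemma (existence of the orthogonal projection onto a closed subspace) proved once and then invoked; otherwise I would sketch the closest-point argument in-line, as it is short and self-contained.
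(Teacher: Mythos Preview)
The paper does not actually prove this theorem; it is stated without proof in the review section on integration theory, which the author explicitly describes as a quick reference with details deferred to standard textbooks. Your argument is the standard textbook proof and is correct: the orthogonal-complement step via the closest-point theorem is precisely where completeness of $\mc{H}$ is used, the observation that $(lv)w - (lw)v \in \ker l$ is the clean way to extract the representing vector, and your uniqueness and norm-equality computations are both fine.
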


\noindent It follows that we can identify each element of $\mc{H}^*$ with an element of
$\mc{H}$. In particular, we conclude that
\[
 (L^2(X,\mu))^* = L^2(X,\mu)
\]
in light of the above identification.

Having considered $L^1$ and $L^2$, we now define, for each $p \in [1,\infty)$,
the \emph{Lebesgue space $L^p(X,\mu)$ of order $p$}\index{Lp@$L^p$ space|see{Lebesgue space}} on $X$ by collecting the
complex-valued measurable functions $f$ on $X$ such that\index{norm!Lp@$L^p$}
\[
 \|f\|_p = \left( \int_X |f|^p \, d\mu \right)^{1/p} < \infty.
\]
The standard quotient construction\index{quotient space|)} is applied here as well, turning $\|\cdot\|_p$
into a norm. With the language of Lebesgue spaces, \emph{H\"{o}lder's inequality}\index{inequality!H\"{o}lder's}
can be stated succinctly as
\[
 \|fg\|_1 \leq \|f\|_p\|g\|_{p'},
\]
where $p>1$ and $p'$ is the \emph{conjugate exponent}\index{conjugate exponent}
\[
 p' = \frac{p}{p-1}
\]
of $p$. Note that $1/p + 1/p' = 1$.

Note that if $f \in L^1(X,\mu)$ and $g$ is bounded, then
\[
 \|fg\|_1 \leq \|f\|_1 \sup_{x \in X} |g(x)|.
\]
Expanding on this idea, we introduce the space $L^\infty(X,\mu)$ of
complex-valued measurable functions $f$ on $X$ whose \emph{essential supremum}\index{essential supremum}\index{norm!Li@$L^\infty$|see{essential supremum}}
\[
 \|f\|_\infty = \inf\{\lambda \in \bR : \mu(\{x : |f(x)| > \lambda\})=0\}
\]
is finite. The space $L^\infty(X,\mu)$ can be considered as a ``limiting
space'' of $L^p(X,\mu)$, for if $f \in L^\infty$ is supported on a set
of finite measure, then $f \in L^p$ for all $p < \infty$ and
\[
 \lim_{p \to \infty} \|f\|_p = \|f\|_\infty.
\]
We remark that H\"{o}lder's inequality\index{inequality!H\"{o}lder's} holds for $p =1$ as well,
with the identification $1/\infty = 0$ to yield $p' = \infty$. 

Given $p \in [1,\infty]$, \emph{Minkowski's inequality}\index{inequality!Minkowski's} establishes the
triangle inequality for $\|\cdot\|_p$, thus turning $\|\cdot\|_p$
into a norm on $L^p(X,\mu)$. Moreover, the \emph{Riesz-Fischer theorem}
guarantees that $L^p(X,\mu)$ is a Banach space. A partial converse
to the $L^p$ dominated convergence theorem\index{convergence theorem!dominated} continues to hold,
so that a sequence of functions
converging in the $L^p$ norm admits a pointwise almost-everywhere
convergent subsequence with a dominating function in $L^p$, continues
to hold.

Observe, however, that the dominated convergence theorem fails to
hold on $L^\infty$. The $L^p$ representation theorem for $L^p$,
which yields the identification $(L^p)^* = L^{p'}$, also fails to hold
for $p = \infty$: see \S\S\ref{fr-riesz-representation}.
We shall have more to say about the representation
theorem in the next subsection.
\index{Lebesgue, Henri!space|)}

\subsection{\texorpdfstring{$\sigma$}{Sigma}-Finite Measure Spaces}

\index{measure!sigma-finite@$\sigma$-finite|(}
In this subsection, we review three major theorems of measure and integration theory
that requires the $\sigma$-finiteness
hypothesis. The first is the $L^p$ representation theorem, as
was alluded to above:

\index{representation theorem!F. Riesz, Lp-space version@F. Riesz, $L^p$-space version}\index{Riesz, Frigyes!representation theorem|see{\\ representation theorem}}
\begin{theorem}[F. Riesz representation theorem, $L^p$-space version]\label{Lp-riesz-representation}
Suppose that $(X,\mf{M},\mu)$ is a $\sigma$-finite measure space.
If $p \in [1,\infty)$, then each bounded linear functional $l$ on $L^p(X,\mu)$
admits a unique linear function $u \in L^{p'}(X,\mu)$ such that
\begin{equation}\label{riesz-eq}
 l(f) = \int fu \, d\mu
\end{equation}
for all $f \in L^p(X,\mu)$. Moreover, $\|l\|_{(L^p)^*} = \|u\|_{L^{q'}}$,
whence $(L^p)^*$ is isometrically isomorphic to $L^{p'}$.
\end{theorem}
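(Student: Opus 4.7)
The plan is to split the proof into an existence half and a uniqueness/norm half, and to handle existence by first reducing to the case of a finite measure space, then bootstrapping to $\sigma$-finite. Throughout I treat the representing element $u$ as the ``Radon--Nikod\'{y}m derivative'' of the set function $E \mapsto l(\chi_E)$ with respect to $\mu$.

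Assume first that $\mu(X) < \infty$. Then every characteristic function lies in $L^p(X,\mu)$, so I can define $\nu(E) = l(\chi_E)$ for every measurable $E$. If $E = \bigsqcup_n E_n$ is a disjoint union, then $\chi_E = \sum_n \chi_{E_n}$ in the $L^p$ sense (because $\|\chi_E - \sum_{n \leq N} \chi_{E_n}\|_p = \mu(\bigsqcup_{n > N} E_n)^{1/p} \to 0$ by continuity from above applied to the tail), and the continuity of $l$ yields countable additivity $\nu(E) = \sum_n \nu(E_n)$. Moreover $\mu(E)=0$ forces $\chi_E = 0$ in $L^p$ and hence $\nu(E)=0$, so $\nu \ll \mu$. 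The Radon--Nikod\'{y}m theorem (applied to real and imaginary, positive and negative parts of $\nu$) then supplies a function $u \in L^1(X,\mu)$ with $\nu(E) = \int_E u\, d\mu$. By linearity the representation formula (\ref{riesz-eq}) holds for all simple functions, and by the density of simple functions in $L^p$ together with the continuity of both sides in $f$ (the right side by H\"{o}lder, once we know $u \in L^{p'}$) it extends to all $f \in L^p(X,\mu)$.

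The key step is therefore showing $u \in L^{p'}$ with $\|u\|_{p'} \leq \|l\|_{(L^p)^*}$. For $p > 1$ I would insert the test function $f_n = \sgn(\bar{u})\, |u|^{p'-1}\chi_{\{|u| \leq n\}}$, which is bounded and supported where $|u|$ is bounded, hence lies in $L^p$; the identity $p(p'-1) = p'$ gives $\|f_n\|_p^p = \int_{\{|u|\leq n\}} |u|^{p'}\, d\mu$, and evaluating $l$ yields $\int_{\{|u|\leq n\}} |u|^{p'}\, d\mu = l(f_n) \leq \|l\|\,\|f_n\|_p$, so that $\int_{\{|u|\leq n\}} |u|^{p'}\, d\mu \leq \|l\|^{p'}$ uniformly in $n$, and the monotone convergence theorem concludes. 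For $p = 1$ I would instead argue by contradiction: if $\mu(\{|u| > \|l\|\}) > 0$, then a suitable normalized signed characteristic function on (a finite-measure subset of) this set produces a unit vector in $L^1$ on which $l$ exceeds $\|l\|$.

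For the $\sigma$-finite case I write $X = \bigcup_n X_n$ with the $X_n$ nested and of finite measure, apply the finite case to the restriction of $l$ to $L^p(X_n,\mu\restriction_{X_n})$ (viewed inside $L^p(X,\mu)$ by extending by zero), and obtain functions $u_n$ supported on $X_n$. Uniqueness on each $X_n$ forces $u_{n+1}\chi_{X_n} = u_n$, so the $u_n$ glue to a measurable $u$ on $X$; the bound $\|u_n\|_{p'} \leq \|l\|$ is uniform in $n$, and monotone convergence applied to $|u_n|^{p'}$ gives $u \in L^{p'}$ with the same bound. Dominated convergence then upgrades the representation from $f\chi_{X_n}$ to all $f \in L^p$. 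The H\"{o}lder inequality supplies the reverse inequality $\|l\|_{(L^p)^*} \leq \|u\|_{p'}$, completing the norm identity, and uniqueness of $u$ follows by applying the norm identity to the zero functional (if $u_1,u_2$ both represent $l$, then $u_1 - u_2$ represents the zero functional and so has $L^{p'}$-norm zero). The genuinely subtle step is the Radon--Nikod\'{y}m application together with the passage to $\sigma$-finite via exhaustion; the $p = 1$ endpoint for the norm bound is where the hypothesis $p < \infty$ (via $p'$ being meaningful as $\infty$) manifests and deserves the most care.
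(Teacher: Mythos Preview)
Your proposal is correct and follows essentially the same architecture as the paper's proof: define $\nu(E)=l(\chi_E)$ on a finite measure space, apply the Lebesgue--Radon--Nikodym theorem to obtain $u\in L^1$, establish $u\in L^{p'}$ with $\|u\|_{p'}\le\|l\|$ via a converse-H\"older test-function argument (you use truncations $\chi_{\{|u|\le n\}}$ where the paper uses simple approximants to $u$, but the idea is the same), and then exhaust to the $\sigma$-finite case. One small point worth making explicit: before plugging your test functions $f_n$ into the representation, you need to extend $l(f)=\int fu\,d\mu$ from simple $f$ to bounded measurable $f$, which follows by uniform approximation and dominated convergence since $u\in L^1$ and $\mu(X)<\infty$.
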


It is an easy consequence of H\"{o}lder's inequality that every
function of the form (\ref{riesz-eq}) is a bounded linear functional
on $L^p$. The representation theorem states that linear functionals
of the form (\ref{riesz-eq}) are, in fact, \emph{all} bounded linear
functionals on $L^p$.

Since the proof of the representation theorem makes use of a few
key notions that we shall need in later sections, we study it in detail.
To this end, we fix a measurable space $(X,\mf{M})$ and recall that a function
$\nu:\mf{M} \to \bC$ is a \emph{complex measure}\index{measure!complex} if, for each $E \in \mf{M}$
and every countable partition $\{E_n : n \in \bN\}$ of $E$ in $\mf{M}$, the\
function $\nu$ is countably additive, viz.,
\[
 \nu(E) = \sum_{n=1}^\infty \nu(E_n).
\]
Note that the definition forces $\mu(X) < \infty$.

We sometimes use the name \emph{positive measures}\index{measure!positive} for measures proper in order
to distinguish them from complex measures. In fact, there is a canonical
way of assigning a positive measure corresponding to each complex measure
$\nu$: the \emph{total variation}\index{measure!total variation of} of $\nu$ is the positive measure $|\nu|$
defined to be
\[
 |\nu|(E) = \sup_{E \in \mf{M}} \sum_{n=1}^\infty |\nu(E_n)|
\]
for each $E \in \mf{M}$, where the supremum is taken over all partitions
$\{E_n : n \in \bN\}$ of $E$ belonging to $\mf{M}$.
Recalling that a measure $\nu$, complex or positive, on $(X,\mf{M})$ is said to be
\emph{absolutely continuous}\index{measure!absolute continuity of} with respect to a positive measure
$\mu$ on $(X,\mf{M})$ if $\nu(E) = 0$ for all $E \in \mf{M}$ such that
$\mu(E)=0$, we see that $\nu$ is absolutely continuous with respect
to $|\nu|$. In general, we write
\[
 \nu \ll \mu
\]
to denote the absolute continuity of $\nu$ with respect to $\mu$.

A polar opposite notion to absolute continuity is defined
as follows: two measures
$\nu_1$ and $\nu_2$, positive or complex, are said to be \emph{mutually
singular}\index{measure!mutually singular} if there exists a disjoint pair of measurable sets $A$ and $B$ such
that
\[
 \nu_1(E) = \nu_1(A \cap E) \htwo \mbox{and} \htwo
 \nu_2(E) = \nu_2(B \cap E)
\]
for all $E \in \mf{M}$. We write
\[
 \nu_1 \perp \nu_2
\]
to denote the mutual singularity of $\nu_1$ and $\nu_2$.

We are now ready to state the second theorem of this section,
which is the main ingredient of the proof of the representation
theorem.

\begin{theorem}[Lebesgue-Radon-Nikodym]\label{lebesgue-radon-nikodym}\index{Lebesgue-Radon-Nikodym!theorem}\index{Radon-Nikodym!theorem|see{\\ Lebesgue-Radon-Nikodym}}
Let $(X,\mf{M})$ be a measurable \linebreak
space, $\nu$ a complex measure, and $\mu$ a
positive $\sigma$-finite measure. Then there is a unique pair of
complex measures $\nu_a$ and $\nu_s$ such that
\[
 \nu = \nu_a + \nu_s, \hone \nu_a \ll \mu, \hone \nu_s \perp \mu,
\]
and there exists a $u \in L^1(X,\mu)$ such that
\[
 \nu_a(E) = \int_E u \, d\mu
\]
for all $E \in \mf{M}$. Any such function agrees with $u$ almost
everywhere on $X$.
\end{theorem}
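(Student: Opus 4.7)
The plan is to follow von Neumann's elegant argument, which extracts the whole theorem from the Hilbert-space representation theorem (Theorem \ref{hilbert-riesz-representation}) that was just recorded. The cleanest route is to first reduce to the case where $\nu$ is a \emph{positive finite} measure: the complex case follows by splitting $\nu$ into real and imaginary parts and then into positive and negative variations (using the Jordan decomposition, which I would either invoke or sketch from the total variation $|\nu|$). Since $\mu$ is only $\sigma$-finite, I would also write $X = \bigsqcup_n X_n$ with $\mu(X_n) < \infty$ and either assemble the decomposition piece by piece or, more neatly, replace $\mu$ by an equivalent finite measure $\tilde{\mu}$ with the same null sets (e.g.\ $\tilde\mu = \sum_n 2^{-n}\mu/(1+\mu(X_n))\chi_{X_n}$); this lets me assume, throughout the main step, that both $\mu$ and $\nu$ are finite positive measures.

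With that reduction in hand, I would introduce the auxiliary positive measure $\varphi = \mu + \nu$, which is finite, and consider the map
\[
 L(f) = \int_X f \, d\nu, \hone f \in L^2(X,\varphi).
\]
By Cauchy-Schwarz and the bound $\nu \leq \varphi$, one has $|L(f)| \leq \nu(X)^{1/2}\|f\|_{L^2(\varphi)}$, so $L$ is a bounded linear functional on the Hilbert space $L^2(X,\varphi)$. Theorem \ref{hilbert-riesz-representation} then produces a unique $g \in L^2(X,\varphi)$ with
\[
 \int_X f \, d\nu = \int_X f g \, d\varphi \htwo \text{for all } f \in L^2(X,\varphi).
\]
Testing this identity on characteristic functions of the sets $\{g < 0\}$ and $\{g > 1\}$ shows $0 \leq g \leq 1$ $\varphi$-a.e., because $\nu$ and $\mu$ are nonnegative.

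Now I would exploit the rewritten identity $\int f(1-g)\, d\nu = \int f g \, d\mu$. Split $X$ into $A = \{0 \leq g < 1\}$ and $B = \{g = 1\}$, and set
\[
 \nu_s(E) = \nu(E \cap B), \htwo \nu_a(E) = \nu(E \cap A).
\]
Plugging $f = \chi_B$ gives $\mu(B) = 0$, which is exactly $\nu_s \perp \mu$. For the absolutely continuous part, substitute $f = (1 + g + g^2 + \cdots + g^n)\chi_E$ into the identity and apply the monotone convergence theorem on both sides: the left-hand side tends to $\nu(E \cap A) = \nu_a(E)$, and the right-hand side tends to $\int_E u \, d\mu$, where $u = g/(1-g)$ on $A$ and $u = 0$ on $B$. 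Taking $E = X$ verifies $u \in L^1(\mu)$ since $\nu_a(X) \leq \nu(X) < \infty$, and the identity established gives both $\nu_a \ll \mu$ and the desired integral representation.

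For uniqueness of $(\nu_a, \nu_s)$, if $\nu = \nu_a' + \nu_s'$ were another such decomposition, then $\nu_a - \nu_a' = \nu_s' - \nu_s$ would be simultaneously absolutely continuous with respect to $\mu$ and mutually singular with $\mu$, forcing it to vanish; uniqueness of $u$ mod $\mu$-null sets is immediate from the integral formula. The main obstacle I anticipate is the $\sigma$-finite bookkeeping at the outset: one must ensure that passing from $\mu$ to an equivalent finite measure (or else carrying out the construction on each $X_n$ and gluing) preserves all the hypotheses and conclusions, in particular that the resulting $u$ lies in $L^1(\mu)$ on the nose rather than merely being locally integrable. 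Apart from that, the argument is essentially a one-line application of Hilbert-space duality followed by careful unpacking.
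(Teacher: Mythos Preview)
Your proposal is correct: this is von Neumann's classical Hilbert-space proof, essentially as in Rudin's \emph{Real and Complex Analysis}, and the steps (reduction to finite positive measures, the functional $L$ on $L^2(\varphi)$, the splitting via $\{g<1\}$ and $\{g=1\}$, and the geometric-series trick) are all sound.

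However, there is nothing to compare against: the paper does not prove this theorem. Section~\ref{s-elements_of_integration_theory} is explicitly billed as ``a quick reference'' with sparse details, and the Lebesgue--Radon--Nikodym theorem is stated without proof, followed immediately by two remarks and then the proof of Theorem~\ref{Lp-riesz-representation}. The paper simply invokes the result as a standard tool, deferring to the textbooks cited at the start of the section (among which is Rudin, where your argument appears). So your write-up would in fact supply a proof the paper omits.
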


Two remarks are in order. First, if $\nu$ is a positive finite measure,
then so are $\nu_a$ and $\nu_s$. Second, if $\nu \ll \mu$,
then $d\nu = u d\mu$ for an $L^1$ function $u$ defined uniquely
almost everywhere. This $u$ is called the
\emph{Radon-Nikodym derivative}\index{Radon-Nikodym!derivative} and is denoted by $\frac{d\nu}{d\mu}$,
so that
\[
 d\nu = \frac{d\nu}{d\mu} d\mu.
\]

Having stated the Lebesgue-Radon-Nikodym theorem, we proceed to the proof
of the $L^p$ representation theorem. In what follows, we use the
\emph{complex signum function}\index{signum function}\index{sign function|see{signum function}}
\[
 \sgn z =
 \begin{cases}
  \frac{z}{|z|} & \mbox{ if } z \in \bC \smallsetminus \{0\}; \\
  0 & \mbox{ if } z = 0.
 \end{cases}
\]

\begin{proof}[Proof of Theorem \ref{Lp-riesz-representation}]
We first claim that the norm of $u \in L^{p'}(X,\mu)$ can be computed by
the identity
\begin{equation}\label{riesz-proof-formula}
 \|u\|_{p'} = \sup_{\|f\|_p \leq 1} \left| \int fu \, d\mu \right|.
\end{equation}
Note first that
\[
 \int |fu| \, d\mu \leq \|f\|_p \|u\|_{p'} \leq \|u\|_{p'}
\]
by H\"{o}lder's inequality, so long as $\|f\|_p \leq 1$.
If $p > 1$, then we set
\[
 f(x) = |u(x)|^{p'-1} \frac{\overline{\sgn u(x)}}{\|u\|^{p'-1}_{p'}}
\]
and observe that
\[
 \int fu \, d\mu = \frac{1}{\|u\|^{p'-1}_{p'}} \int |u(x)|^p \, d\mu = \|u\|_{p'}.
\]
Since $\|f\|_p = 1$, the claim follows. If $p = 1$, then we fix $\ve>0$ and
invoke the $\sigma$-finiteness of $\mu$ to find a set $E$ of finite positive
measure on which
\[
 |u(x)| \geq \|u\|_\infty - \ve.
\]
We then set
\[
 f(x) = \frac{\chi_E(x) \sgn u(x)}{\mu(E)}
\]
and observe that $\|f\|_1 = 1$ and
\[
 \left| \int fu \right| = \frac{1}{\mu(E)} \int_E |u| \, d\mu \geq
 \|u\|_\infty - \ve.
\]
Since $\ve > 0$ was arbitrary, the claim follows.

We now establish a converse to H\"{o}lder's inequality: namely, if $u$ is a
measurable function that is integrable on all sets of finite measure and
satisfies the bound
\[
 \sup_{\substack{\|s\|_p \leq 1 \\ s \mbox{ simple}}}
 \left| \int s u \right| = k< \infty,
\]
then $u \in L^{p'}$ and $\|u\|_{p'} = k$. To this end, we recall that
there exists a sequence $(u_n)_{n=1}^\infty$ of simple functions such
that $|u_n| \leq |u|$ almost everywhere and $u_n \to g$ pointwise
almost everywhere. If $p > 1$, then we set
\[
 f_n(x) = |u_n(x)|^{p'-1} \frac{\overline{\sgn u(x)}}{\|u_n\|^{p'-1}_{p'}}
\]
for each $n \in \bN$ and observe that
\[
 k = \sup_{\substack{\|s\|_p \leq 1 \\ s \mbox{ simple}}} \left| \int s u \right|
 \geq \left|\int f_n u \, d\mu\right|
 = \left|\frac{\int |u_n(x)|^{p'} \, d\mu}{\|u_n\|_{p'}^{p'-1}}\right|
 = \|u_n\|_{p'}.
\]
Fatou's lemma implies that $\|u\|_{p'}^{p'} \leq k^{p'}$, and H\"{o}lder's
inequality establishes the reverse inequality, verifying the claim. If $p=1$,
then we fix $\ve>0$ and let
\[
 E = \{x : |u(x)| \geq k + \ve\}.
\]
Assume for a contradiction that $\mu(E) > 0$, and invoke
the $\sigma$-finiteness of $\mu$ to find a set $F$ of finite positive
measure contained in $E$. We set
\[
 f(x) = \frac{\chi_{F}(x) \overline{\sgn u(x)}}{\mu(F)}
\]
and observe that
\[
 k = \sup_{\substack{\|s\|_1 \leq 1 \\ s \mbox{ simple}}} \left| \int s u \right|
 \geq \left| \int f u \, d\mu \right|
 = \left| \frac{\int_{F} |u| \, d\mu}{\mu(F)} \right|
 \geq k + \ve,
\]
which is absurd. It thus follows that $\|u\|_\infty \leq k$, and the reverse
inequality is established by H\"{o}lder's inequality.

Let us now return to the proof of the theorem. Assume for now that
$\mu$ is a finite measure on $X$, so that $\chi_E \in L^p(X,\mu)$ for every
measurable set $E$. Fix a bounded linear functional $l$ on $L^p(X,\mu)$
and set
\[
 \nu(E) = l(\chi_E)
\]
for each measurable set $E$. We claim that $\nu$ is a complex measure 
on $(X,\mf{M})$ that is absolutely continuous with respect to $\mu$.
To see this, we first note that the linearity of $\varphi$ establishes
the finite additivity of $\nu$. Given a pairwise disjoint sequence
$(E_n)_{n=1}^\infty$ of measurable sets, we set
\[
 E = \bigcup_{n=1}^\infty E_n \htwo \mbox{and} \htwo
 F_N = \bigcup_{n=N+1}^\infty  E_n
\]
for each $N \in \bN$. Observe that $\chi_E = (\chi_{E_1} + \cdots + \chi_{E_N})
+ \chi_{F_N}$, and so
\[
 \nu(E) = \left(\sum_{n=1}^N \nu(E_n) \right) + \nu(F_N).
\]
Since
\begin{equation}\label{absolute-continuity-uh}
 |\nu(F)| =|l(\chi_F)| \leq \|l\|_{(L^q)^*}\|\chi_F\|_p
 = \|l\|_{(L^q)^*} \left( \mu(F) \right)^{1/p},
\end{equation}
for every measurable set $F$, we see that $\nu(F_N) \to 0$ as
$N \to \infty$. Therefore, $\nu$ is countably additive, and (\ref{absolute-continuity-uh})
shows that $\nu \ll \mu$.

We now invoke the \hyperref[lebesgue-radon-nikodym]{Lebesgue-Radon-Nikodym theorem}\index{Lebesgue-Radon-Nikodym!theorem}
to find the unique $u \in L^1(X,\mu)$ such that
\[
 \nu(E) = \int_E u \, d\mu
\]
for all measurable sets $E$. Therefore,
\[
 l(\chi_E) = \int \chi_E u \, d\mu
\]
and the linearity of the integral implies that
\[
 l(s) = \int s u \, d\mu
\]
for each simple function $s$ on $X$. Recalling that every $L^p$ function can be
approximated by simple functions, we conclude that
\[
 \varphi(f) = \int f u \, d\mu
\]
for all $f \in L^p(X,\mu)$. Furthermore, we have
\[
 \|u\|_{p'} = \sup_{\|f\|_p \leq 1} \left| \int fu \, d\mu \right|
 = \sup_{\|f\|_p \leq 1} |l(f)| = \|l\|_{(L^p)^*}
\]
by formula (\ref{riesz-proof-formula}). This establishes the theorem for $\mu(X) < \infty$.

We now lift the assumption that $\mu$ is finite. By the $\sigma$-finiteness of
$\mu$, we can find an increasing sequence $(E_n)_{n=1}^\infty$ of finite-measure
sets whose union is $X$. On each $E_n$, we invoke the representation theorem for
finite measures to find an integrable function $u_n$ on $E_n$ such that
\[
 l(f\chi_{E_n}) = \int_{E_n} fu_n \, d\mu
\]
for all $f \in L^p(X,\mu)$. We extend $u_n$ onto $X$ by setting it to be zero
on $X \smallsetminus E_n$ and invoke the converse of H\"{o}lder's inequality
to see that
\[
 \|u_n\|_q \leq \|;\|_{(L^p)^*}.
\]

Note that $(u_n)_{n=1}^\infty$ is a pointwise almost-everywhere convergent
sequence of integrable functions. We set the limit to be $u$ and apply
Fatou's lemma to conclude that
\[
 \|u\|_q \leq \|l\|_{(L^p)^*}.
\]
It now follows that
\[
 l(f\chi_{E_n}) = \int f \chi_{E_n} u \, d\mu
\]
for each $f \in L^p(X,\mu)$ and every $n \in \bN$, whence taking the limit
yields
\[
 l(f) = \int f u \, d\mu.
\]
We now apply H\"{o}lder's inequality to establish the reverse inequality
\[
 \|u\|_q \geq \|l\|_{(L^p)^*},
\]
and the proof is complete.
\end{proof}

Finally, we review integration on product spaces. Given two measure
spaces $(X,\mf{M},\mu)$ and $(Y,\mf{N},\nu)$, we define the \emph{product
$\sigma$-algebra} $\mf{M} \otimes \mf{N}$ to be the smallest $\sigma$-algebra
containing the collection
\[
 \{E \times F : E \in \mf{M} \mbox{ and } F \in \mf{N}\}.
\]
of \emph{measurable rectangles}. It is a standard fact that the set function
\[
 (\mu \times \nu)(E \times F) = \mu(E)\nu(F),
\]
initially defined on the collection of measurable rectangles, can be extended
to a measure on $(X \times Y, \mf{M} \otimes \mf{N})$, forming a measure space
$(X \times Y, \mf{M} \otimes \mf{N}, \mu \times \nu)$.\index{measure!product}

If $E \subseteq X \times Y$, $x \in X$, and $y \in Y$, we define the
\emph{$x$-section} $E_x$ and the \emph{$y$-section} $E^y$ of $E$ as follows:
\[
 E_x = \{y' \in Y : (x,y') \in E\} \htwo \mbox{and} \htwo
 E^y = \{x' \in X : (x',y) \in E\}.
\]
Analogously, given a function $f:X \times Y \to \bC$, we define the
\emph{$x$-section} $f_x$ and the \emph{$y$-section} $f^y$ of $f$ as follows:
\[
 f_x(y) = f^y(x) = f(x,y).
\]
The main theorem, due to Guido Fubini and Leonida Tonelli, gives sufficient
conditions for which the order of integration may be exchanged:

\begin{theorem}[Fubini-Tonelli]\label{fubini-tonelli}\index{Fubini-Tonelli theorem}\index{Fubini's theorem|see{Fubini-Tonelli theorem}}\index{Tonelli's theorem|see{Fubini-Tonelli theorem}}
Let $(X,\mf{M},\mu)$ and $(Y,\mf{N},\nu)$ are $\sigma$-finite measure spaces.
\begin{enumerate}[(a)]
 \item \emph{\textbf{Tonelli's theorem.}} If $f$ is a nonnegative integrable
 function on $X \times Y$, then the functions $x \mapsto \int f_x \, d\nu$
 and $y \mapsto \int f^y \, d\mu$ are nonnegative integrable functions on
 $X$ and $Y$, respectively, and
 \[
  \int_{X \times Y} f \, d(\mu \times \nu)
 = \int_X \int_Y f(x,y) \, d\nu(y) \, d\mu(x)
 = \int_Y \int_X f(x,y) \, d\mu(x) \, d\nu(y).
 \]
 \item \emph{\textbf{Fubini's theorem}.} If $f$ is integrable on $X \times Y$,
 then $f_x$ is integrable on $Y$ for almost every $x \in X$, $f^y$ is integrable
 on $X$ for almost every $y \in Y$, the function $x \mapsto \int f_x \, d\nu$
 is integrable on $X$, the function $y \mapsto \int f^y \, d\mu$ is integrable
 on $Y$, and
 \[
  \int_{X \times Y} f \, d(\mu \times \nu)
 = \int_X \int_Y f(x,y) \, d\nu(y) \, d\mu(x)
 = \int_Y \int_X f(x,y) \, d\mu(x) \, d\nu(y).
 \]
\end{enumerate}
\end{theorem}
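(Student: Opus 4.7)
The plan is to prove the theorem in the standard four-stage hierarchy: characteristic functions of rectangles, then of arbitrary measurable sets, then simple functions, then nonnegative measurable functions (yielding Tonelli), and finally signed or complex integrable functions (yielding Fubini). At each stage the two iterated integrals will be handled symmetrically, so it suffices to argue for one of them.

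First I would establish the measurability of sections and the measurability of the section-measure. Specifically, I claim that for every $E \in \mf{M} \otimes \mf{N}$ the section $E_x$ lies in $\mf{N}$ for every $x \in X$, the function $x \mapsto \nu(E_x)$ is $\mf{M}$-measurable, and
\[
 (\mu \times \nu)(E) = \int_X \nu(E_x) \, d\mu(x).
\]
To prove this, let $\mf{C}$ denote the family of $E \in \mf{M} \otimes \mf{N}$ for which the claim holds. Assuming first that $\mu$ and $\nu$ are finite, I would verify directly that $\mf{C}$ contains every measurable rectangle (where the identity reduces to $\mu(E)\nu(F) = \int_X \nu(F)\chi_E(x) \, d\mu(x)$), then show that $\mf{C}$ is closed under countable disjoint unions (via the monotone convergence theorem applied to the partial sums) and under proper differences (via finite additivity of $\nu$ and linearity of the integral). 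Since $\mf{C}$ is thus a monotone class containing the algebra generated by measurable rectangles, the monotone class theorem forces $\mf{C} = \mf{M} \otimes \mf{N}$. To remove the finiteness assumption I invoke $\sigma$-finiteness: write $X = \bigcup_m X_m$ and $Y = \bigcup_n Y_n$ as increasing unions of finite-measure sets, apply the finite case to each $E \cap (X_m \times Y_n)$, and pass to the limit using continuity from below of the measure $(\mu \times \nu)$ and the monotone convergence theorem on the right-hand side.

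Once this set-theoretic claim is in hand, Tonelli's theorem follows by the usual approximation scheme. For an arbitrary nonnegative measurable $f$ on $X \times Y$, choose an increasing sequence $(s_k)$ of nonnegative simple functions with $s_k \nearrow f$ pointwise. By linearity the identity
\[
 \int_{X \times Y} s_k \, d(\mu \times \nu) = \int_X \int_Y s_k(x,y) \, d\nu(y) \, d\mu(x)
\]
holds for each $k$, since each $s_k$ is a finite linear combination of characteristic functions to which the first step applies. For fixed $x$, the sections $(s_k)_x$ increase pointwise to $f_x$, so the inner integrals $\int_Y (s_k)_x \, d\nu$ increase pointwise to $\int_Y f_x \, d\nu$ by monotone convergence; measurability of $x \mapsto \int_Y f_x \, d\nu$ follows as the pointwise limit of measurable functions. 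A second application of monotone convergence on the outer integral, together with monotone convergence for $\int s_k \, d(\mu \times \nu)$, delivers the Tonelli identity.

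Finally, Fubini's theorem is deduced from Tonelli by the standard decomposition. Given an integrable complex $f$, apply Tonelli to $|f|$ to conclude $\int_X \bigl(\int_Y |f_x| \, d\nu\bigr) d\mu < \infty$, which forces $|f_x|$ to be integrable for $\mu$-almost every $x$; redefining $f_x$ to be zero on the exceptional null set does not affect any integral. Decompose $f = (\Re f)^+ - (\Re f)^- + i(\Im f)^+ - i(\Im f)^-$, apply Tonelli to each of the four nonnegative pieces, and recombine by linearity. The main obstacle throughout is the first step: carefully executing the monotone class argument and correctly reducing from the $\sigma$-finite case to the finite case, since without $\sigma$-finiteness the section-measure function need not even be measurable.
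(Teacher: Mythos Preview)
The paper does not actually prove this theorem; it is stated without proof in a section that the author explicitly describes as ``meant to serve only as a quick reference,'' with a pointer to standard textbooks such as Folland or Rudin for details. Your proposal is the standard textbook proof (monotone class argument for the section-measure identity, reduction from the $\sigma$-finite to the finite case, approximation by simple functions with monotone convergence, then decomposition into positive and negative parts for Fubini), and it is correct as outlined.
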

\index{measure!sigma-finite@$\sigma$-finite|)}

\subsection{The Lebesgue Measure}

\index{Lebesgue, Henri!measure|(}
We conclude our review by presenting a rapid treatment of the basic properties
of the canonical measure on the Euclidean space, the Lebesgue measure.
We adopt a particularly constructive approach
from \cite{Stein_Shakarchi:B2005},\index{Stein, Elias M.}\index{Shakarchi, Rami} hinging on a decomposition theorem of
Hassler Whitney. In what follows, a \emph{cube} is an $n$-fold product of closed
intervals of the same length, and two cubes in $\bR^d$ are \emph{almost disjoint}
if their interiors are disjoint.

  \begin{theorem}[Whitney decomposition theorem]\label{whitney-decomposition}\index{Whitney decomposition theorem}
Every open set in $\bR^d$ can be decomposed into a union of
countably many almost-disjoint cubes.
\end{theorem}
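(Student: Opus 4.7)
The plan is to build the decomposition from the standard dyadic mesh on $\bR^d$. For each integer $k \geq 0$, let $\mc{D}_k$ denote the collection of closed cubes of side length $2^{-k}$ whose vertices lie on the lattice $2^{-k}\bZ^d$. Each $\mc{D}_k$ tiles $\bR^d$ by almost-disjoint cubes of a fixed scale, each cube in $\mc{D}_k$ is the almost-disjoint union of $2^d$ \emph{children} in $\mc{D}_{k+1}$, and $\bigcup_{k \geq 0} \mc{D}_k$ is countable. The dyadic structure has the crucial nesting property that any two cubes, drawn from possibly different $\mc{D}_k$'s, are either almost disjoint or one contains the other.

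Given the open set $\Omega$, I would declare a cube $Q \in \mc{D}_k$ \emph{selected} if $Q \subseteq \Omega$ and, in the case $k \geq 1$, the parent of $Q$ in $\mc{D}_{k-1}$ is not contained in $\Omega$. The remainder of the proof reduces to verifying three properties of the selected family: countability (immediate from countability of $\bigcup_k \mc{D}_k$), coverage of $\Omega$, and pairwise almost-disjointness.

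For coverage, given $x \in \Omega$, openness of $\Omega$ yields a ball around $x$ contained in $\Omega$, so for all sufficiently large $k$ the cube in $\mc{D}_k$ containing $x$ lies in $\Omega$. Let $k^\ast$ be the smallest such index. The cube $Q \in \mc{D}_{k^\ast}$ containing $x$ is then selected: either $k^\ast = 0$, or by minimality of $k^\ast$ the parent of $Q$ in $\mc{D}_{k^\ast-1}$ is not contained in $\Omega$.

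The step I expect to be the main (and essentially only real) obstacle is almost-disjointness. If two selected cubes come from the same generation $\mc{D}_k$, they are either equal or almost disjoint by the structure of the dyadic mesh. Suppose instead that $Q \in \mc{D}_k$ and $Q' \in \mc{D}_j$ are both selected with $j < k$ and their interiors meet. By the nesting property, the only option compatible with $j < k$ is $Q \subseteq Q'$, so that $Q'$ is a proper ancestor of $Q$. But then the parent of $Q$ is itself contained in $Q' \subseteq \Omega$, contradicting the selection criterion that excluded $Q$'s parent from $\Omega$. This completes the argument.
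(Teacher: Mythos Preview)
Your proof is correct. Both you and the paper work with the dyadic mesh $\bigcup_k \mc{D}_k$, so the high-level idea is the same, but the selection rules differ: the paper takes cubes that merely \emph{intersect} $O$ and runs an iterative refinement (starting coarse and throwing away or subdividing cubes at each stage), whereas you select directly the \emph{maximal dyadic cubes contained in} $\Omega$. Your criterion is the standard one in most textbook treatments and is cleaner to verify: coverage and almost-disjointness each reduce to a one-line argument using the nesting property, exactly as you wrote. The paper's iterative description yields the same decomposition in the end but requires a bit more work to see that the limiting collection has the right union; your direct ``maximal cube'' formulation avoids that bookkeeping.
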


Our version of the theorem omits the estimate on the sizes of the cubes.
See \S\S\ref{fr-whitney-decomposition} for the precise version.
We shall have more occasions to use the decomposition theorem, so we present
a full proof of the theorem.

\begin{figure}[!htb]
\begin{center}
\includegraphics[scale=0.3]{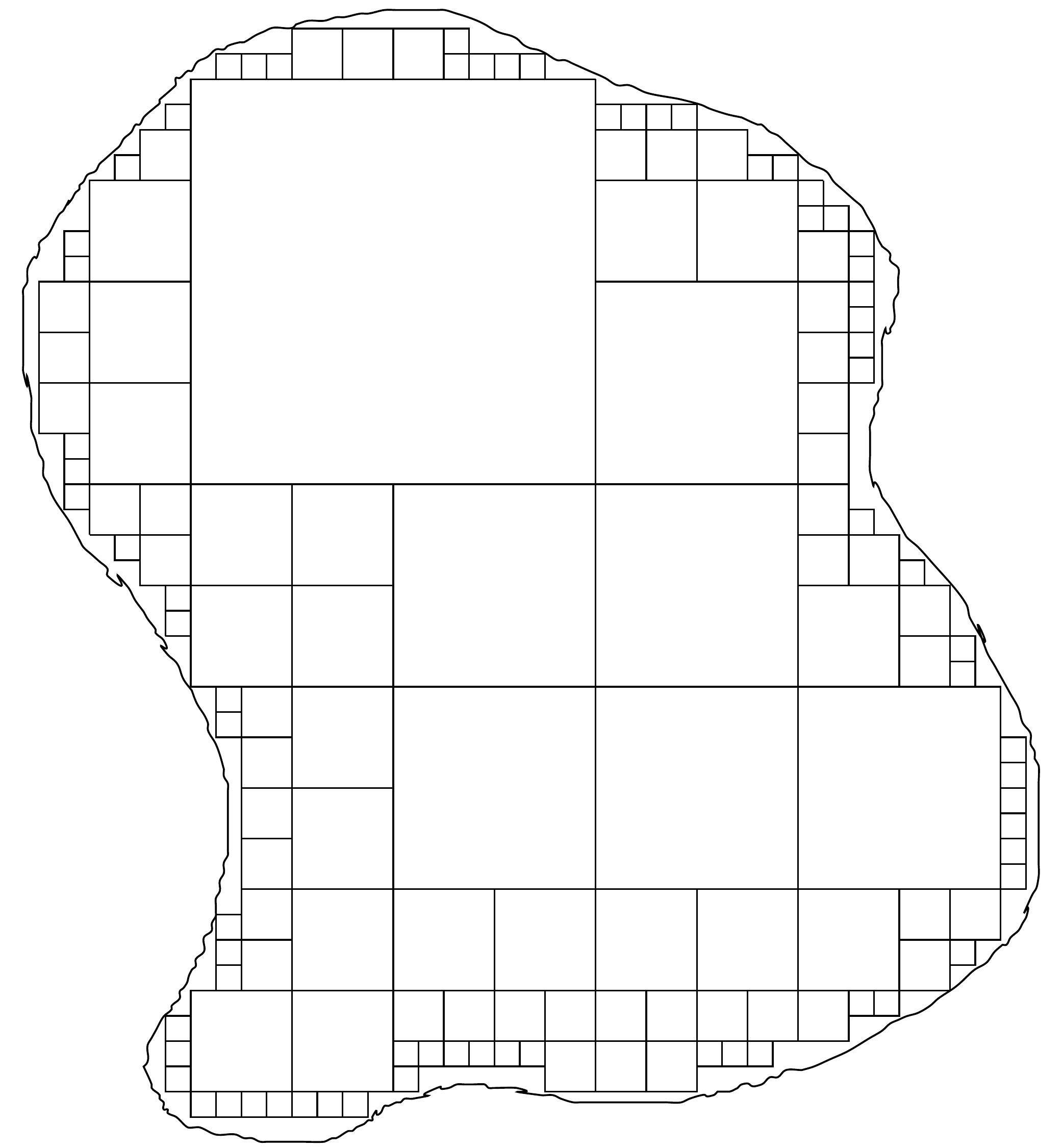}
\caption{A Whitney decomposition of a two-dimensional figure}\index{pancake!an ugly one}
\end{center}
\end{figure}

\begin{proof}
Let $O$ be an open subset of $\bR^d$. For each $n$, we consider the grid
formed by cubes of side length $2^{-n}$, whose vertices have coordinates
in
\[
 2^{-n} \bZ^d = \{ (k_1,\ldots,k_d) : 2^n k_j \in \bZ
 \mbox{ for all } 1 \leq j \leq d\}.
\]
Note that the grid formed at the $n$th stage is obtained by bisecting the
cubes that formed the grid at the $(n-1)$th stage. We define $\mc{C}_n$ 
to be the collection of all such cubes, of side length $2^{-n}$, that 
intersect $O$. Note that $\bigcup \mc{C}_n$ is a countable collection of
cubes, and that the union of all cubes in each $\mc{C}_n$ contains $O$.

We now extract a collection $\mc{C}$ of almost-disjoint cubes from $\bigcup
\mc{C}_n$ as follows. We begin by declaring every cube in $\mc{C}_1$ to
be a member of $\mc{C}$. For each $n$, we throw away all cubes in $\mc{C}$ that
intersect nontrivially with some cubes in $\mc{C}_n$ and add in all cubes
in $\mc{C}_n$ that are almost disjoint from every remaining cube in $\mc{C}$.
The resulting collection $\mc{C}$ clearly consists of almost-disjoint cubes.
Since $\mc{C} \subseteq \bigcup \mc{C}_n$, the collection is countable as well.

It now remains to show that the union of all cubes in $\mc{C}$ is $O$. Since
the union evidently contains $O$, it suffices to show that no point in $\bR^d
\smallsetminus O$ is covered by $\mc{C}$. Let $x$ be such a point, and suppose
for a contradiction that there is a cube $Q \in \mc{C}$ containing $x$. This,
in particular, implies that $Q \not\subseteq O$. Fix
$y \in Q \cap O$. Since $O$ is open, a sufficiently large integer $N$
guarantees that the grid at the $N$th stage admits a cube $Q'$ of side length
$2^{-N}$ that contains $y$ and is contained entirely in $O$. But $Q'$ is a
smaller cube than $Q$ that intersects $Q$ nontrivially, whence $Q$ cannot be
an element of $\mc{C}$ in the first place. This is absurd, and the proof is now
complete.
\end{proof}

The decomposition provides a natural way of assigning a volume to each
open set in $\bR^n$: we look at the sum of the volumes of the cubes in
each Whitney decomposition and take the infimum as the volume of the
open set.
We then define the \emph{Lebesgue outer measure} $m^*$ of an arbitrary
subset to be the infimum of the volumes of the open supersets of the set.
To ensure countable additivity, we restrict the outer measure to the
subsets $E$ of $\bR^d$ such that each $\ve>0$ admits an open superset $O$ of
$E$ with the estimate $m^*(O \smallsetminus E) < \ve$. Such a set is called
a \emph{Lebesgue-measurable set}, and the restriction of $m^*$
onto the collection of Lebesgue-measurable sets is referred to as
the \emph{Lebesgue measure}. We denote the Lebesgue measure of $E$
by $m(E)$, or $|E|$ if there is no danger of confusion.

Before we review the basic properties of the Lebesgue measure,
we remark that any reference to measurability of subsets of $\bR^d$ or
functions on $\bR^d$ in this thesis shall be
for the Lebesgue measure, unless otherwise
specified. We now recall that the Lebesgue measure of an arbitrary measurable
set can be approximated by that of open sets and closed sets:

\begin{prop}\label{approximation-by-open-and-closed-sets}
If $E$ is a measurable subset of $\bR^d$, then each $\ve>0$ has
a corresponding closed set $F \subseteq E$ and an open
set $O \supseteq E$ such that $|E \smallsetminus F| \leq \ve$
and $|O \smallsetminus E| \leq \ve$. If $|E| < \infty$, we may take $F$
to be a compact set.
\end{prop}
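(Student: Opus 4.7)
The open set half of the statement is essentially the definition of Lebesgue measurability given in the preceding paragraph: by hypothesis $E$ is measurable, so for the given $\varepsilon>0$ we can pick an open $O \supseteq E$ with $m^*(O \smallsetminus E) \leq \varepsilon$. So this part requires no work.

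For the closed set half, my plan is to dualize by passing to the complement. Since the Lebesgue measurable sets form a $\sigma$-algebra, $E^c = \bR^d \smallsetminus E$ is measurable, and therefore the definition produces an open $U \supseteq E^c$ with $|U \smallsetminus E^c| \leq \varepsilon$. Setting $F = \bR^d \smallsetminus U$, the set $F$ is closed and is contained in $E$, and the identity $E \smallsetminus F = E \cap U = U \smallsetminus E^c$ immediately yields $|E \smallsetminus F| \leq \varepsilon$.

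For the compact refinement when $|E| < \infty$, I would first apply the closed-set statement with tolerance $\varepsilon/2$ to obtain a closed $F \subseteq E$ with $|E \smallsetminus F| \leq \varepsilon/2$. Then I would exhaust $F$ by the compact sets $F_n = F \cap \overline{B(0,n)}$, which form an increasing sequence whose union is $F$. Since $|F| \leq |E| < \infty$, continuity from below of the Lebesgue measure (recorded in the list of properties in the first subsection) gives $|F_n| \to |F|$, and equivalently $|F \smallsetminus F_n| \to 0$; the finiteness is exactly what lets me convert this into a statement about the complement. Choosing $n$ so large that $|F \smallsetminus F_n| \leq \varepsilon/2$ and using $E \smallsetminus F_n = (E \smallsetminus F) \cup (F \smallsetminus F_n)$, subadditivity delivers $|E \smallsetminus F_n| \leq \varepsilon$, and $F_n$ is compact by Heine-Borel.

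The only subtle point is the appeal to the fact that the Lebesgue measurable sets form a $\sigma$-algebra, so that $E^c$ is again measurable and the definition applies to it; this is not proved in the excerpt but is the standard Carathéodory-type fact underlying the constructive approach referenced. Everything else is a routine application of countable additivity and continuity from below.
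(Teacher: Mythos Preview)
Your argument is correct and is the standard one. Note, however, that the paper does not actually supply a proof of this proposition: it sits in the review section on the Lebesgue measure and is simply stated as a recalled fact, so there is no ``paper's proof'' to compare against. Your approach---using the definition of measurability for the open-set half, dualizing via complements for the closed-set half, and exhausting by closed balls for the compact refinement---is exactly the proof one would expect given the paper's definition of Lebesgue measurability, and all the ingredients you invoke (closure of measurable sets under complementation, continuity from below, subadditivity) are either explicitly listed in the paper's review or are the standard facts the review presupposes.
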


It follows from the above proposition and the continuity of measure that
the Lebesgue measure is \emph{Borel regular}:\index{Borel, \'{E}mile!regular} $m$ is a Borel measure,\index{Borel, \'{E}mile!measure}
and each measurable subset $E$ of $\bR^d$ has a corresponding Borel
subset\index{Borel, \'{E}mile!set} $B$ of $\bR^d$ such that $E \subseteq B$ and $|E| = |B|$.
Even better, it turns out that countable intersections of open sets,
known as \emph{$G_\delta$
sets},\index{G-delta@$G_\delta$ set} and countable unions of closed sets, known as \emph{$F_\sigma$
sets},\index{F-sigma@$F_\sigma$ set} are quite enough:

\begin{prop}\label{approximation-by-borel-sets}
$E \subseteq \bR^d$ is measurable
\begin{enumerate}[(a)]
 \item if and only if there exists a $G_\delta$ set $G \subseteq \bR^d$
 such that $|G \smallsetminus E| = 0$;
 \item if and only if there exists an $F_\sigma$ set $F \subseteq \bR^d$
 such that $|E \smallsetminus F| = 0$.
\end{enumerate}
\end{prop}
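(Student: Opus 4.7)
The plan is to derive both parts as straightforward consequences of Proposition \ref{approximation-by-open-and-closed-sets}, using the fact that $G_\delta$ and $F_\sigma$ sets are Borel, hence Lebesgue measurable, and that the Lebesgue measure is complete. The two parts are perfectly symmetric, so the argument for (b) will be a mirror image of the argument for (a). I will implicitly use the standard convention that the $G_\delta$ set $G$ in (a) contains $E$ and the $F_\sigma$ set $F$ in (b) is contained in $E$, which is what the construction below naturally produces.

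For the ``only if'' direction of (a), assume $E$ is measurable. For each $n \in \bN$, Proposition \ref{approximation-by-open-and-closed-sets} furnishes an open set $O_n \supseteq E$ with $|O_n \smallsetminus E| \leq 1/n$. Set
\[
 G = \bigcap_{n=1}^\infty O_n,
\]
which is a $G_\delta$ set containing $E$. For every $n$, we have $G \smallsetminus E \subseteq O_n \smallsetminus E$, so by monotonicity $|G \smallsetminus E| \leq 1/n$; letting $n \to \infty$ gives $|G \smallsetminus E| = 0$.

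For the ``if'' direction of (a), suppose $G$ is a $G_\delta$ set with $E \subseteq G$ and $|G \smallsetminus E| = 0$. Every open set is Borel, so $G$ is a countable intersection of Borel sets, hence Borel and hence Lebesgue measurable. The set $G \smallsetminus E$ has outer measure zero, and by completeness of the Lebesgue measure every subset of a measure-zero set is measurable; thus $G \smallsetminus E$ is measurable. Therefore
\[
 E = G \smallsetminus (G \smallsetminus E)
\]
is a difference of two measurable sets, and so is measurable.

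Part (b) is obtained by dualizing the same arguments: apply Proposition \ref{approximation-by-open-and-closed-sets} to produce closed $F_n \subseteq E$ with $|E \smallsetminus F_n| \leq 1/n$, set $F = \bigcup_n F_n$ (an $F_\sigma$ set contained in $E$), and observe $|E \smallsetminus F| \leq |E \smallsetminus F_n| \to 0$; conversely, an $F_\sigma$ set $F \subseteq E$ is Borel, $E \smallsetminus F$ has outer measure zero and is therefore measurable by completeness, and $E = F \cup (E \smallsetminus F)$ is measurable. There is no real obstacle here: Proposition \ref{approximation-by-open-and-closed-sets} does all the heavy lifting, and the only conceptual ingredient beyond it is the completeness of the Lebesgue measure, which is what allows us to pass from a set of outer measure zero to an honest null measurable set.
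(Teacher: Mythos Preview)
The paper does not actually prove this proposition; it is stated without proof in a section that the author describes as ``only a quick reference.'' Your argument is the standard one and is correct, deriving both directions from Proposition~\ref{approximation-by-open-and-closed-sets} together with completeness of the Lebesgue measure. Your observation that the containments $E \subseteq G$ and $F \subseteq E$ are implicitly required is well taken: as literally stated (without containment), the ``if'' direction of (a) would be false, since for any non-measurable $E$ one could take $G = \varnothing$.
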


The Lebesgue measure behaves well under linear endomorphisms on $\bR^d$:

\begin{theorem}
If $E \subseteq \bR^d$ is measurable and $T:\bR^d \to \bR^d$ a linear
transformation, then
\[
 m(T(E)) = |\det T|m(E).
\]
\end{theorem}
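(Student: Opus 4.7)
The plan is to reduce to the invertible case, show that $E \mapsto m(T(E))$ is a translation-invariant Borel measure and hence equals $c(T) \cdot m$ for a constant $c(T)$, and finally identify $c(T)$ with $|\det T|$ by decomposing $T$ into elementary matrices. First, if $\det T = 0$, then $T(\bR^d)$ lies inside a proper linear subspace; after a rotation of coordinates this subspace sits inside $\bR^{d-1} \times \{0\}$, which Tonelli's theorem forces to have $d$-dimensional Lebesgue measure zero. Completeness of $m$ then gives $m(T(E)) = 0 = |\det T|\, m(E)$.

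Now assume $T$ is invertible. I would first verify that $T$ sends measurable sets to measurable sets: $T$ is a homeomorphism, so it carries Borel sets to Borel sets; and $T$ is Lipschitz with constant $\|T\|_{op}$, so a covering-cube argument (a null set admits a cover by cubes of total volume less than $\ve$, whose images fit inside cubes of total volume at most $(\|T\|_{op}\sqrt{d})^d \ve$) shows $T$ sends null sets to null sets. By Proposition \ref{approximation-by-borel-sets}, every measurable $E$ is an $F_\sigma$ set union a null set, so $T(E)$ is measurable. Define $\mu(E) := m(T(E))$. Bijectivity of $T$ makes $\mu$ a measure, linearity of $T$ combined with translation invariance of $m$ makes $\mu$ translation invariant, and continuity of $T$ makes $\mu$ finite on compact sets. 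The characterization of Lebesgue measure as the unique complete translation-invariant Borel measure normalized on $[0,1]^d$ then yields $\mu = c(T) \cdot m$, where $c(T) = m(T([0,1]^d))$.

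It remains to identify $c(T)$ with $|\det T|$. The function $c$ is multiplicative---$c(ST) m(E) = m(ST(E)) = c(S) m(T(E)) = c(S) c(T) m(E)$---and $|\det|$ is likewise multiplicative, so it suffices to verify the identity on the three elementary generators of $GL_d(\bR)$. Coordinate permutations fix the unit cube, giving $c = 1 = |\det|$. Scaling one coordinate by $\lambda \neq 0$ sends $[0,1]^d$ to a rectangular box of volume $|\lambda| = |\det|$. For a shear $x_i \mapsto x_i + \lambda x_j$, Fubini-Tonelli reduces the volume of $T([0,1]^d)$ to an iterated integral in which each inner slice is a translate of $[0,1]$, yielding $c = 1 = |\det|$. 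The main obstacle is this last step: shears do not obviously preserve volume, and it is precisely the Tonelli-style slicing that makes the $d$-dimensional invariance transparent. A secondary technical hurdle is the Lipschitz-preserves-null-sets argument, which is standard but essential to get the measurability of $T(E)$ off the ground.
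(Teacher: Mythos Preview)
The paper does not prove this theorem; it appears in the review section \S\ref{s-elements_of_integration_theory}, which the author explicitly flags as a quick reference with sparse details, and the statement is followed immediately by the remark that it implies translation, rotation, and dilation invariance, with no argument given.

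Your proof is correct and follows the standard route (reduce to invertible $T$, identify $E\mapsto m(T(E))$ as a translation-invariant Borel measure, then compute the scaling constant on elementary matrices). One small presentational issue: in the singular case you invoke a rotation to place the image subspace inside $\bR^{d-1}\times\{0\}$, but rotation invariance of $m$ is exactly a consequence of the theorem you are proving. This is not a real gap, since your later Lipschitz-preserves-null-sets argument applies verbatim to orthogonal maps and closes the loop; alternatively, you can avoid the rotation entirely by noting that a proper subspace lies in a hyperplane $\{x:a\cdot x=0\}$ with some $a_j\neq 0$, writing that hyperplane as a graph over the remaining coordinates, and applying Tonelli directly. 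Either fix is one line; the rest of the argument is clean.
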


This, in particular, implies that the Lebesgue measure is invariant
under translation and rotation, and scales in tune with the usual
geometric intuition under dilation.

We frequently denote the integral of a measurable function $f:\bR^d \to \bC$
with respect to the Lebesgue measure by
\[
 \int f(x) \, dx = \int_{\bR^d} f(x) \, dx
\]
instead of the more cumbersome
\[
 \int_{\bR^d} f(x) \, dm(x).
\]

We also recall that the change-of-variables formula continues to hold for
the Lebesgue integral:

\begin{theorem}[Change-of-variables formula]\label{change-of-variables}\index{change-of-variables formula}
If $O$ is an open subset of $\bR^d$ and $\phi:O \to \bR^d$ an injective
differentiable function, then, for each $f \in L^1(O)$, we have
$f \circ \varphi \in L^1(\phi(O))$ and 
\[
 \int_{\phi(O)} f(x) \, dx = \int_O f(\phi(x)) |\det D\phi(x)| \, dx,
\]
where $D \phi(x)$ is the total derivative of $\phi$ at $x$.
\end{theorem}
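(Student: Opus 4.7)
The plan is to reduce the change-of-variables identity to a single measure-theoretic statement about $\phi$ and then prove that statement by localization, marrying the linear change-of-variables theorem stated earlier with the Whitney decomposition. Specifically, if I can show that
\[
 |\phi(E)| = \int_E |\det D\phi(y)| \, dy
\]
for every measurable $E \subseteq O$, then the choice $f = \chi_{\phi(E)}$ turns the desired identity into precisely this equality after the substitution $x = \phi(y)$. Linearity then gives the formula for simple functions on $\phi(O)$, the monotone convergence theorem extends it to nonnegative measurable functions, and the standard decomposition into real/imaginary, positive/negative parts handles general $f \in L^1(\phi(O))$.

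The heart of the matter is therefore the measure identity, and I would prove this first on small cubes and then globalize. Fix $y_0 \in O$ and a cube $Q$ centered at $y_0$ with small side length. Writing $\phi(y) = \phi(y_0) + D\phi(y_0)(y - y_0) + r(y)$ with $|r(y)| = o(|y - y_0|)$, the image $\phi(Q)$ is sandwiched between two scaled copies of $\phi(y_0) + D\phi(y_0)(Q - y_0)$, dilated by factors $1 \pm \ve$ with $\ve$ as small as desired for $Q$ small. The theorem on linear maps gives $|D\phi(y_0)(Q - y_0)| = |\det D\phi(y_0)| \cdot |Q|$, and the continuity of $D\phi$ on the compact cube $Q$ makes $\int_Q |\det D\phi(y)| \, dy$ close to $|\det D\phi(y_0)| \cdot |Q|$, so the sandwich collapses to the desired measure identity on $Q$ in the limit.

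To pass from small cubes to arbitrary measurable sets, I would first handle open sets $U \subseteq O$ by invoking Theorem \ref{whitney-decomposition} to decompose $U$ into countably many almost-disjoint cubes; the cube estimate applies on each piece after further subdivision if necessary, and countable additivity together with the injectivity of $\phi$ sums the estimates. For a general measurable $E \subseteq O$ I would apply Proposition \ref{approximation-by-open-and-closed-sets} to approximate $E$ from outside by open sets $U_n$ and from inside by closed (compact, when $|E| < \infty$) sets $F_n$, transfer these to approximations of $\phi(E)$ using the continuity of $\phi$ on $U_n$ and the compactness of $F_n$, and pass to the limit, using Proposition \ref{approximation-by-borel-sets} to confine the error to Borel null sets.

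The hard part is unambiguously the linearization step: squeezing $\phi(Q)$ between two dilates of the affine image requires \emph{uniform} control of the remainder $r$ on $Q$, so the proof implicitly upgrades the ``differentiable'' hypothesis to $D\phi$ being continuous on compact subsets of $O$ (i.e., $\phi \in C^1$). A secondary technical nuisance is verifying that $\phi$ sends measurable sets to measurable sets, for which the $G_\delta$ / $F_\sigma$ approximation of Proposition \ref{approximation-by-borel-sets}, together with the fact that continuous images of $\sigma$-compact sets are $\sigma$-compact, should be enough once the measure identity is in hand.
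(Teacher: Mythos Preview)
The paper does not prove this theorem at all: it is stated as a recalled fact in the review section on the Lebesgue measure, immediately followed by its standard corollaries on translation, reflection, and dilation, with no proof or sketch provided. So there is nothing to compare your approach against.

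That said, your outline is the standard route and is essentially sound. You are right to flag the linearization step as the crux: the sandwich argument for $\phi(Q)$ between dilates of the affine image genuinely requires uniform control of the remainder on $Q$, which the bare hypothesis ``differentiable'' does not supply. The theorem as stated in the paper is therefore somewhat loosely phrased (and note also the typo: the hypothesis should read $f \in L^1(\phi(O))$, as your reduction step implicitly assumes, not $f \in L^1(O)$). Under a $C^1$ hypothesis your localization plus Whitney decomposition plus inner/outer approximation argument goes through; the measurability of $\phi(E)$ is handled exactly as you say, by pushing the $F_\sigma$ part forward via continuity and controlling the null remainder with the measure identity itself on open supersets.
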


This, in particular, implies that
\begin{eqnarray*}
 \int_{\bR^d} f(x+h) \, dx &=& \int_{\bR^d} f(x) \, dx \\
 \int_{\bR^d} f(-x) \, dx &=& \int_{\bR^d} f(x) \, dx \\ 
 \delta^d \int_{\bR^d} f(\delta x) \, dx &=& \int_{\bR^d} f(x) \, dx
\end{eqnarray*}
for all $h \in \bR^d$ and $\delta > 0$.

With this, we conclude the review. We refer the reader to \S\S\ref{fr-littlewoods-three-principles}
for a discussion of some other nice properties of the Lebesgue measure.
\index{Lebesgue, Henri!measure|)}\index{measure!Lebesgue|see{Lebesgue, Henri}}
\section{Approximation in \texorpdfstring{$L^p$}{Lp} Spaces}\label{s-approximation_in_lp_spaces}

\index{Banach space|(}
The central objects of study in interpolation theory are function spaces
and linear operators between function spaces. Typically, the function
spaces are vector spaces equipped with topologies that are compatible
with the vector-space structure. We can then require the operators
to be continuous, so as to have them behave well under various limiting
processes. Recall that a linear operator $T:V \to W$ between normed linear
spaces $V$ and $W$ is \emph{bounded}\index{bounded!linear operator}\index{norm!operator} if there exists a constant $k>0$ such that
\[
 \|Tv\|_W \leq k\|v\|_V
\]
for all $v \in V$. It is easy to show that $T$ is bounded
if and only if $T$ is continuous with respect to the norm topologies
of $V$ and $W$, and that the collection $\ms{L}(V,W)$ of bounded
linear operators from $V$ to $W$ with the \emph{operator norm}
\[
 \|T\|_{V \to W} = \sup_{\|v\|_W \leq 1} \|Tv\|_W
\]
is a Banach space if $W$ is a Banach space.

It is, however, cumbersome to specify the value of an operator
at all points on its domain. We therefore seek to find a suitable
subset of the domain that is essentially the whole space.

\begin{defin}
A subset $D$ of a topological space $X$ is \emph{dense}\index{density} if
the closure of $D$ is in $X$ is $X$.
\end{defin}

As it turns out, it is enough in many cases to specify the value
of an operator on a dense subset of its domain. Even better, the
extension is norm-preserving.

\begin{theorem}\label{norm-preserving-extension}\index{norm!-perserving extension}
Let $V$ and $W$ be normed linear spaces and $D$ a dense linear subspace
of $V$. If $W$ is a Banach space and $T:D \to W$ is a bounded linear
operator, then there exists a unique linear operator $T_1:V \to W$
such that $\|T_1\|_{V \to W} = \|T\|_{D \to W}$ and $T_1|_D = T$.
\end{theorem}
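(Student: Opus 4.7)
The plan is to use the density of $D$ to extend $T$ by a limiting procedure, relying crucially on the completeness of $W$ to guarantee that the relevant limits exist.

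First, I would construct $T_1$ pointwise. Given $v \in V$, density of $D$ yields a sequence $(v_n)$ in $D$ with $v_n \to v$. Such a sequence is Cauchy in $V$, and boundedness of $T$ gives the estimate $\|Tv_n - Tv_m\|_W \leq \|T\|_{D \to W}\|v_n - v_m\|_V$, so $(Tv_n)$ is Cauchy in $W$. Since $W$ is a Banach space, the limit $\lim_n Tv_n$ exists; define $T_1 v$ to be this limit. I would then verify this is well-defined: if $(v_n')$ is another sequence in $D$ with $v_n' \to v$, then $\|Tv_n - Tv_n'\|_W \leq \|T\|_{D \to W}\|v_n - v_n'\|_V \to 0$, forcing the two limits to coincide. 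When $v \in D$ itself, taking the constant sequence $v_n = v$ shows $T_1 v = Tv$, so $T_1|_D = T$.

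Next I would check linearity. Given $u, v \in V$ and scalars $\alpha, \beta$, choosing approximating sequences $(u_n), (v_n)$ in $D$ gives $\alpha u_n + \beta v_n \to \alpha u + \beta v$, and the linearity of $T$ together with the continuity of vector operations in $W$ yields $T_1(\alpha u + \beta v) = \alpha T_1 u + \beta T_1 v$. For the norm equality, one direction is immediate: since $T_1$ extends $T$, $\|T_1\|_{V \to W} \geq \sup_{v \in D, \|v\| \leq 1} \|Tv\|_W = \|T\|_{D \to W}$. For the reverse, given $v \in V$ with $\|v\|_V \leq 1$ and an approximating sequence $v_n \to v$ in $D$, passing to the limit in $\|Tv_n\|_W \leq \|T\|_{D \to W}\|v_n\|_V$ and using continuity of the norm in $W$ gives $\|T_1 v\|_W \leq \|T\|_{D \to W}\|v\|_V$, so $\|T_1\|_{V \to W} \leq \|T\|_{D \to W}$.

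Finally, uniqueness follows from a standard continuity argument: if $S:V \to W$ is another bounded (equivalently, continuous) linear extension of $T$, then $S$ and $T_1$ agree on the dense set $D$, and two continuous maps into a Hausdorff space that agree on a dense subset must coincide everywhere. No step is a genuine obstacle here — the only slightly delicate point is making sure the construction of $T_1 v$ does not depend on the approximating sequence, but that falls out immediately from the uniform Lipschitz estimate supplied by boundedness. The key structural ingredients are density of $D$ (to supply approximating sequences), boundedness of $T$ (to convert Cauchy sequences to Cauchy sequences), and completeness of $W$ (to guarantee the limit exists).
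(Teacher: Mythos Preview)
Your proposal is correct and follows essentially the same approach as the paper: extend $T$ by taking limits along approximating sequences from $D$, use boundedness to get Cauchy sequences, completeness of $W$ to get limits, and then verify well-definedness, linearity, and the norm equality by passing to limits. The only minor addition is that you explicitly address uniqueness via the dense-subset argument, whereas the paper's proof omits this step.
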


\begin{proof}
For each $v \in V$, we find a sequence $(v_n)_{n=1}^\infty$ in $D$
that converges to $v$. Since $T$ is bounded, $(Tv_n)_{n=1}^\infty$
is a Cauchy sequence in $W$, whence it converges to a vector $w_v \in W$.
If $(v_n')_{n=1}^\infty$ is another sequence in $D$ that converges
to $v$, then
\begin{eqnarray*}
 \|Tv_n' - w\|
 &\leq& \|Tv_n' - Tv_n\| + \|Tv_n - w_v\| \\
 &\leq& \|T\|\|v_n' - v_n\| + \|Tv_n - w_v\| \\
 &\leq& \|T\|\left( \|v_n' - v\| + \|v - v_n\| \right) + \|Tv_n - w_v\|,
\end{eqnarray*}
and so $(Tv_n')_{n=1}^\infty$ converges to $w_v$ as well. The operator
\[
 T_1 v =
 \begin{cases}
  T v & \mbox{ if } v \in D \\
  w_v & \mbox{ if } v \in V \smallsetminus D
 \end{cases}
\]
is therefore well-defined, and its linearity is a trivial consequence of
the linearity of $T$. Furthermore,
\[
 \|T_1 v\|
 = \lim_{n \to \infty} \|T_1 v_n\| 
 = \lim_{n \to \infty} \|T v_n\|
 \leq \lim_{n \to \infty} \|T\|\|v_n\| 
 = \|T\|\|v_n\|,
\]
for each $v \in V$, so that $\|T_1\| \leq \|T\|$. Since
\[
 \|T_1\|
 = \sup_{\|v\| \leq 1} \|T_1 v\|
 \geq \sup_{\substack{\|v\| \leq 1 \\ v \in D}} \|T_1 v\|
 = \sup_{\substack{\|v\| \leq 1 \\ v \in D}} \|T v\|
 = \|T\|,
\]
we have $\|T_1\| = \|T\|$, as was to be shown.
\end{proof}
\index{Banach space|)}

\subsection{Approximation by Continuous Functions}

It is therefore useful to have several examples of dense subspaces
of frequently used function spaces. We know, for example, that
we can approximate integrable functions by simple functions, whence
the space of simple functions is dense in $L^1$. Moreover,
we can approximate just as well if after restricting ourselves to
simple functions on sets of finite measure.

\begin{prop}\label{approximation-by-simple-functions}\index{simple function}\index{density!of simple functions}
Let $(X,\mf{M},\mu)$ be a $\sigma$-finite measure space.
The space of simple functions with finite-measure support is
dense in $L^1(X,\mu)$.
\end{prop}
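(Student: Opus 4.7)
The plan is to exhibit, for each $f \in L^1(X,\mu)$, a sequence of simple functions with finite-measure support that converges to $f$ in the $L^1$-norm. By the linearity of the integral and the decomposition $f = \Re f^+ - \Re f^- + i(\Im f^+ - \Im f^-)$, it suffices to handle the case of a nonnegative integrable function $f$, since a finite linear combination of approximating sequences for each of the four pieces will do.

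So fix $f \geq 0$ with $\int f \, d\mu < \infty$. I will invoke the standard fact (already recorded earlier in the chapter, right after the statement of the monotone convergence theorem) that there exists an increasing sequence $(s_n)_{n=1}^\infty$ of nonnegative simple functions with $s_n \nearrow f$ pointwise everywhere on $X$. The first key observation is that these $s_n$ automatically have finite-measure support: writing $s_n = \sum_{k=1}^{N_n} \lambda_k^{(n)} \chi_{E_k^{(n)}}$ with $\lambda_k^{(n)} > 0$ and the $E_k^{(n)}$ pairwise disjoint, we have $s_n \leq f$, so
\[
 \lambda_k^{(n)} \mu(E_k^{(n)}) \leq \int s_n \, d\mu \leq \int f \, d\mu < \infty,
\]
which forces $\mu(E_k^{(n)}) < \infty$ for each $k$, and hence $s_n$ is supported on the finite-measure set $\bigcup_k E_k^{(n)}$.

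The second key observation is that convergence in $L^1$-norm now follows immediately: since $0 \leq s_n \leq f$ and $s_n \to f$ pointwise, the dominated convergence theorem (with dominating function $f \in L^1$) yields
\[
 \|f - s_n\|_1 = \int (f - s_n) \, d\mu \longrightarrow 0
\]
as $n \to \infty$. Alternatively, monotone convergence applied to $(s_n)$ gives $\int s_n \, d\mu \to \int f \, d\mu$, and since $f - s_n \geq 0$, the same conclusion follows directly. This produces the required approximating sequence and completes the proof.

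There is no real obstacle here; the main content is simply to notice that the standard simple-function approximation of a nonnegative integrable function is automatically supported on sets of finite measure, so no additional truncation argument (which would be where the $\sigma$-finiteness of $\mu$ might otherwise enter) is needed.
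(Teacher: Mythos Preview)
Your proof is correct and takes a genuinely different route from the paper's. The paper uses the $\sigma$-finiteness hypothesis explicitly: it first picks an increasing exhaustion $(X_n)$ by finite-measure sets, uses monotone convergence to truncate $f$ to some $f\chi_{X_N}$ within $\varepsilon$ in $L^1$, and only then approximates $f\chi_{X_N}$ by simple functions on the finite-measure set $X_N$ (where finite-measure support is automatic). You instead go straight to the canonical increasing simple approximants $s_n \nearrow f$ of a nonnegative $f$ and observe that $0 \le s_n \le f \in L^1$ forces each $s_n$ to have finite-measure support, so no preliminary truncation is needed. Your argument is shorter and, as you note, does not actually use $\sigma$-finiteness at all; the paper's approach is more roundabout but illustrates the truncate-then-approximate pattern that becomes genuinely necessary in other density arguments later in the chapter.
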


\begin{proof}
Let $(X_n)_{n=1}^\infty$ be an increasing sequence of finite-measure
subsets of $X$ whose union is $X$. Given an $f \in L^1(X,\mu)$,
the monotone convergence theorem implies that
\[
 \lim_{n \to \infty} \|f - f\chi_n\| = 0.
\]
Therefore, each $\ve>0$ admits an integer $N$ such that
\[
 \int_X |f-f\chi_{X_N}| \, d\mu < \ve.
\]
We now find a sequence $(s_n)_{n=1}^\infty$ of simple functions
in $L^1(X_N,\mu)$ such that $|s_n| \leq |f\chi_{X_N}|$ 
almost everywhere and $s_n \to f$ pointwise almost everywhere.
Arguing as above, we can find an integer $M$ such that
\[
 \int_{X_N} |s_M -f\chi_{X_N}| \, d\mu < \ve. 
\]
Extending $s_M$ onto $X$ by defining $s_M(x) = 0$ on
$X \smallsetminus X_N$, we see that
\[
 \|f-s_M\|_1 \leq \|f - f\chi_{X_N}\|_1 + \|f\chi_{X_N} - s_M\|_1
 < 2\ve,
\]
as desired.
\end{proof}

While the above proposition is useful, the domain of the simple functions
in question can be quite complicated. In order to obtain more refined
approximations, it is necessary to confine ourselves to nicer measure
spaces. For simplicity, we shall work on $\bR^d$, but the main theorem
of this subsection (Theorem \ref{approximation-by-continuous-functions})
can be established on more general measure spaces.
See \S\S\ref{fr-approximation-by-Cc-on-locally-compact-spaces} for a discussion.

First, we observe that it suffices to deal with simple functions on
very nice domains.

\begin{theorem}\label{approximation-by-step-functions}\index{density!of simple functions}
The space of simple functions over cubes is dense in $L^1(\bR^d)$.
\end{theorem}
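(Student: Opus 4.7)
The plan is to reduce the problem to approximating a single indicator function of a finite-measure set by a cube-based simple function, and then exploit the Whitney decomposition theorem to perform this reduction.

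First I would invoke Proposition \ref{approximation-by-simple-functions} to reduce the statement to the following: for every measurable set $E \subseteq \bR^d$ with $|E|<\infty$ and every $\ve>0$, there is a finite linear combination $s$ of indicator functions of cubes with $\|\chi_E - s\|_1 < \ve$. Indeed, once this is established, approximate a general $f \in L^1(\bR^d)$ first by a simple function $\sum_{n=1}^N \lambda_n \chi_{E_n}$ with $|E_n|<\infty$ to within $\ve/2$, and then replace each $\chi_{E_n}$ by a cube-based approximant within tolerance $\ve/(2 N(|\lambda_n|+1))$. The triangle inequality then yields the desired estimate.

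To approximate $\chi_E$, I would use Proposition \ref{approximation-by-open-and-closed-sets} to choose an open set $O \supseteq E$ with $|O \smallsetminus E| < \ve/2$, which in particular implies $|O| < \infty$. Applying the Whitney decomposition theorem (Theorem \ref{whitney-decomposition}) to $O$ yields a countable collection $\{Q_k\}_{k=1}^\infty$ of almost-disjoint cubes whose union is $O$. Since the cubes are almost disjoint (pairwise intersections have measure zero) and their union is $O$, countable additivity gives $\sum_{k=1}^\infty |Q_k| = |O| < \infty$, and $\chi_O = \sum_{k=1}^\infty \chi_{Q_k}$ almost everywhere. Choose $N$ large enough that $\sum_{k>N} |Q_k| < \ve/2$, and set $s = \sum_{k=1}^N \chi_{Q_k}$. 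Then by the triangle inequality
\[
 \|\chi_E - s\|_1
 \leq \|\chi_E - \chi_O\|_1 + \|\chi_O - s\|_1
 = |O \smallsetminus E| + \sum_{k>N} |Q_k|
 < \ve,
\]
which is exactly the desired approximation.

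The only subtle point is the interpretation of ``almost disjoint.'' Because the cubes in a Whitney decomposition can share boundary faces, strictly speaking $\sum_k \chi_{Q_k}$ may exceed $\chi_O$ on a set of measure zero, but this has no effect on the $L^1$-norm computation. Everything else is a routine triangle-inequality argument chaining Proposition \ref{approximation-by-simple-functions}, Proposition \ref{approximation-by-open-and-closed-sets}, and Theorem \ref{whitney-decomposition}; I do not anticipate any substantial obstacle.
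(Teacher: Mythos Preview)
Your proof is correct and follows essentially the same route as the paper: reduce to indicator functions of finite-measure sets via Proposition~\ref{approximation-by-simple-functions}, approximate by an open superset via Proposition~\ref{approximation-by-open-and-closed-sets}, apply the Whitney decomposition, and truncate. The only difference is that the paper adds a final ``disjointification'' step, shrinking each $Q_k$ to a strictly interior cube $R_k$ so that the $R_k$ are pairwise disjoint and $\sum_k \chi_{R_k}$ is literally a characteristic function; your observation that $\sum_{k\le N}\chi_{Q_k}=\chi_{Q_1\cup\cdots\cup Q_N}$ almost everywhere makes this step unnecessary for the $L^1$ statement, so your argument is in fact slightly cleaner.
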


\begin{proof}
In light of Proposition \ref{approximation-by-simple-functions}, it suffices to
approximate characteristic functions over finite-measure sets by simple
functions over cubes. We therefore fix a set $E$ of finite measure.
Pick $\ve>0$, and invoke Proposition \ref{approximation-by-open-and-closed-sets}
to find an open set $O$ containing $E$ such that $|O \smallsetminus E| < \ve$.
We then have
\[
 \|\chi_O - \chi_E\|_1 =|\chi_{O \smallsetminus E}\|_1 < \ve.
\]

Let $(Q_n)_{n=1}^\infty$ be a \hyperref[whitney-decomposition]{Whitney decomposition}\index{Whitney decomposition theorem}
of $O$. Since the intersection of two almost-disjoint cubes is of measure zero, we have
\[
 |O| = \sum_{n=1}^\infty |Q_n|.
\]
Noting that $|O| \leq |E| + \ve$, we can find an integer $N$ such that
\[
 \sum_{n=N+1}^\infty |Q_n| < \ve.
\]
This, in particular, implies that
\[
 \left|\bigcup_{n=1}^N Q_n \right| = \sum_{n=1}^N |Q_n| < |O| - \ve,
\]
and so
\[
 \|\chi_O - \chi_{Q_1 \cup \cdots \cup Q_N}\|_1
 = |O| - \sum_{n=1}^N |Q_n| < \ve.
\]
Therefore,
\[
 \|\chi_E - \chi_{Q_1 \cup \cdots \cup Q_N}\|_1
 \leq \|\chi_E - \chi_O\|_1 + \|\chi_O - \chi_{Q_1 \cup \cdots \cup Q_N}\|_1
 < \frac{2}{3}\ve.
\]

It remains to ``disjointify'' the cubes $Q_1,\ldots,Q_N$, so as to turn
$\chi_{Q_1 \cup \cdots \cup Q_N}$ into a finite sum of characteristic
functions over cubes. For each $1 \leq n \leq N$, we fix a cube $R_n$
in the interior of $Q_n$ such that $|Q_n \smallsetminus R_n| < \ve$. Then
$\{R_1,\ldots,R_N\}$ is a pairwise-disjoint collection of cubes, and
\begin{eqnarray*}
 \left\|\chi_{Q_1 \cup \cdots \cup Q_N} - \sum_{n=1}^N \chi_{R_n} \right\|_1
 &=& \|\chi_{Q_1 \cup \cdots \cup Q_N} - \chi_{R_1 \cup \cdots \cup R_N}\|_1 \\
 &=& \left\|\chi_{(Q_1 \smallsetminus R_1) \cup \cdots \cup (Q_N \smallsetminus R_N)}\right\|_1 \\
 &=& \sum_{n=1}^N |Q_n \smallsetminus R_n|  \\
 &\leq& N\ve.
\end{eqnarray*}
It now follows that
\begin{eqnarray*}
 \left\|\chi_E - \sum_{n=1}^N \chi_{R_n} \right\|_1
 &\leq& \|\chi_E - \chi_{Q_1 \cup \cdots \cup Q_N}\|_1
 + \left\|\chi_{Q_1 \cup \cdots \cup Q_N} - \sum_{n=1}^N \chi_{R_n} \right\|_1 \\
 &\leq& (N+2)\ve,
\end{eqnarray*}
as was to be shown.
\end{proof}

We note that a characteristic function over a cube can be approximated quite easily
with a continuous function: we just draw steep lines from the boundary of the graph
down to zero, thereby producing a function with a tent-like graph. \index{tent function|(}Precisely, 
we construct a \emph{tent function}, which is 1 on a nice set---a cube
in our case---and 0 outside of a small dilation of the set. Once we approximate
a characteristic function over an arbitrary cube with a continuous function, we
can then appeal to the density of simple functions over cubes to show that
integrable functions can be approximated with continuous functions. This is the
content of the following theorem: 

\begin{theorem}\label{approximation-by-continuous-functions}\index{density!of continuous functions}
The space $\mc{C}_c(\bR^d)$ of continuous functions on $\bR^d$ with
compact support is dense in $L^p(\bR^d)$ for each $1 \leq p < \infty$. 
\end{theorem}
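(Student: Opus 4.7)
The plan is to reduce the problem to approximating the characteristic function of a single cube and then to write down an explicit tent function. The argument splits naturally into three steps: lift the preceding $L^1$-density results to $L^p$, handle a single cube by a tent-function construction, and patch these together by linearity.

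First, I would establish the $L^p$-analogues of Proposition~\ref{approximation-by-simple-functions} and Theorem~\ref{approximation-by-step-functions}, namely that finite linear combinations of characteristic functions of cubes are dense in $L^p(\bR^d)$. The proofs carry over essentially verbatim once one observes the following two easy adjustments: in the truncation step, $\|f - f\chi_{X_N}\|_p^p = \int |f|^p(1 - \chi_{X_N}) \, d\mu \to 0$ by the dominated convergence theorem (using $|f|^p$ as the dominating function); and in the disjointification step, the estimates only involve Lebesgue measures of symmetric differences, which are unchanged when one passes to $\|\cdot\|_p$ (because $\|\chi_A\|_p = |A|^{1/p}$, so the same measure-zero estimates translate into arbitrarily small $L^p$-error). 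Thus it suffices to approximate $\chi_Q$ for a cube $Q$ in $L^p$-norm by a continuous compactly supported function.

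Second, I would write down the tent function explicitly. Fix a cube $Q$ and $\ve > 0$. For $\delta > 0$, consider
\[
 \varphi_\delta(x) = \max\left\{0, \, 1 - \frac{d(x,Q)}{\delta}\right\},
\]
where $d(x,Q) = \inf_{y \in Q} |x-y|$ is the Euclidean distance from $x$ to $Q$. Then $\varphi_\delta$ is continuous (in fact Lipschitz), takes the value $1$ on $Q$, vanishes outside the compact set $Q_\delta^* = \{x : d(x,Q) \le \delta\}$, and satisfies $0 \le \varphi_\delta \le 1$. Hence $\varphi_\delta - \chi_Q$ is supported on $Q_\delta^* \smallsetminus Q$ and bounded by $1$, which yields
\[
 \|\varphi_\delta - \chi_Q\|_p^p \leq |Q_\delta^* \smallsetminus Q|.
\]
Since $Q_\delta^* \smallsetminus Q$ shrinks to the boundary of $Q$ (a set of measure zero) as $\delta \to 0$, continuity of the Lebesgue measure from above lets us pick $\delta$ small enough to make $\|\varphi_\delta - \chi_Q\|_p < \ve$.

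Third, I would patch by linearity. Given $f \in L^p(\bR^d)$ and $\ve > 0$, the first step furnishes $s = \sum_{n=1}^N \lambda_n \chi_{Q_n}$ with $\|f - s\|_p < \ve$. The second step furnishes $\varphi_n \in \mc{C}_c(\bR^d)$ with $\|\chi_{Q_n} - \varphi_n\|_p < \ve / \big(N(|\lambda_n| + 1)\big)$. Then $g = \sum_{n=1}^N \lambda_n \varphi_n$ belongs to $\mc{C}_c(\bR^d)$ as a finite sum of continuous compactly supported functions, and the triangle inequality gives $\|f - g\|_p < 2\ve$. The main (and mildest) obstacle is the first step: one must verify that the $L^1$-arguments preceding this theorem really do transfer without loss to the $L^p$-setting, which is the sort of thing that is instantly believable but requires one to sit down and check. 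Once that is in hand, the tent-function construction and the patching are routine.
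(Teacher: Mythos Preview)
Your proof is correct, but it takes a different route from the paper's for $p>1$. The paper proves the $p=1$ case essentially as you do (tent function over a cube, though it writes the tent as a product of one-dimensional tents rather than via the distance function), but for $p>1$ it does \emph{not} redo the step-function density in $L^p$. Instead it truncates $f$ to a bounded, compactly supported $f_N$ with $\|f-f_N\|_p$ small, approximates $f_N$ in $L^1$ by some $g \in \mc{C}_c(\bR^d)$ with $\|g\|_\infty \le \|f_N\|_\infty$, and then uses the interpolation bound $\|f_N - g\|_p \le \|f_N-g\|_1^{1/p}\|f_N-g\|_\infty^{1-1/p}$ to control the $L^p$ error. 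Your approach is more uniform (one argument for all $p$) and avoids the case split; the paper's approach avoids re-verifying Theorem~\ref{approximation-by-step-functions} in $L^p$ and instead illustrates how an $L^1$ density statement can be bootstrapped to $L^p$ via H\"older, which is thematically in line with the interpolation ideas of the thesis.
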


\begin{proof}
We first prove the theorem for $p=1$. By Theorem \ref{approximation-by-step-functions},
it suffices to approximate characteristic functions over cubes by continuous
functions with compact support. This is done by constructing a tent function
over the generic cube $Q = [a_1,b_1] \times \cdots \times [a_d,b_d]$. 
To do so, we fix $\delta>0$, and let
\[
 f_\delta(x) = 
 \begin{cases}
  1 & \mbox{ if } |x| \leq 1;\\
  \ds 1 - \frac{|x|-1}{\delta} & \ds \mbox{ if } 1 < |x| < 1 + \delta; \\
  0 & \mbox{ if } |x| > \ds 1 + \delta;
 \end{cases}
\]
This is a tent function over $[-1,1]$, with decay taking place
on intervals of length $\delta$ to make the function continuous. We observe that
\[
 \|f_\delta - \chi_{[-1,1]}\|_{L^1(\bR)}
 < \|\chi_{[-1+\delta,1+\delta]} - \chi_{[-1,1]}\|_{L^1(\bR)}
 = 2\delta,
\]
whence $f_\delta$ is a continuous approximation of the characteristic function
$\chi_{[-1,1]}$.

\begin{figure}[!htb]
\begin{center}
\includegraphics[scale=1.2]{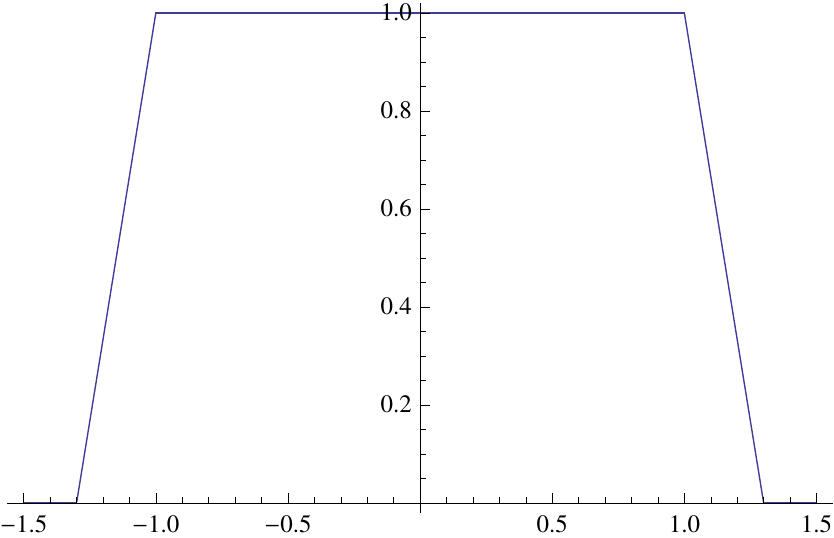}
\caption{A one-dimensional tent function, with $\delta=0.3$}
\end{center}
\end{figure}

For each $1 \leq n \leq d$, we consider the function
\[
 g^n_\delta(x) = f \left( \frac{b_n - a_n}{2} x + \frac{b_n + a_n}{2} \right).
\]
This is a tent function over $[a_n,b_n]$, viz., $g^n_\delta$ is a continuous
function that is 1 on $[a_n,b_n]$ and vanishes outside an interval slightly
bigger than $[a_n,b_n]$. Precisely, the decay to zero takes place on the intervals
$[a_n - (b_n-a_n)\delta/2, a_n]$ and $[b_n, b_n + (b_n-a_n)\delta/2]$, so that
a similar computation as above yields
\begin{equation}\label{tent-estimate}
 \|g^n_\delta - \chi_{[a_n,b_n]}\|_{L^1(\bR)} < (b_n-a_n)\delta.
\end{equation}
\index{tent function|)}
We now set
\[
 g_\delta(x_1,\ldots,x_d) = g^1_\delta(x_1) \cdots g^d_\delta(x_d).
\]
By construction, $g_\delta$ is clearly 1 on $Q$ and 0 outside a cube slightly
bigger than $Q$. By \hyperref[fubini-tonelli]{Tonelli's theorem} and the
(\ref{tent-estimate}), we have
\[
 \|g_\delta - \chi_Q\|_{L^1(\bR^d)} < \delta^d \prod_{n=1}^d (b_n - a_n).
\]
Since $\delta$ can be made arbitrarily small, we have successfully produced
a continuous approximation of $\chi_Q$. This proves the theorem for $p=1$.

We move onto the $p>1$ case. We fix $\ve>0$ and claim that each $f \in L^p(\bR^d)$
furnishes a $g \in L^\infty(\bR^d)$ and a compact set $K \subseteq (\bR^d)$
such that $\supp g \subseteq K$ and $\|f-g\|_p < \ve$. To see this,
we define the truncation\index{truncation} operator $T_r:\bC \to \bC$ at $r>0$ 
by setting
\[
 T_n z =
 \begin{cases}
  z & \mbox{ if } |z| \leq r; \\
  \frac{rz}{|z|} & \mbox{ if } |z| > r;
 \end{cases}
\]
and set
\[
 f_n = \chi_{B_n(0)} T_n f
\]
for each $n \in \bN$.
The dominated convergence theorem implies that $\|f_n - f\|_p \to 0$ as
$n \to \infty$, and so we can pick an integer $N$ such that $\|f_N - f\|_p
< \ve/2$.

Fix a second constant $\ve_1>0$.
Since $f_n \in L^1(\bR^d)$, we can find $g' \in \ms{C}_c(\bR^d)$ such that
$\|g'-f_N\|_1 < \ve_1$. We set $g = T_{\|g\|_\infty} g'$ and note that
\[
 g \in \ms{C}_c(\bR^d),
 \hone \|g\|_\infty \leq \|f_N\|_\infty,
 \hone \mbox{and} \hone  \|g - f_N\|_1 < \ve_1.
\]
It now follows from H\"{o}lder's inequality that
\begin{eqnarray*}
 \|f - g\|_p	
 &\leq& \|f-f_N\|_p + \|f_N - g\|_p \\
 &<& \frac{\ve}{2} + \|g-g_1\|^{1/p}_1 \|g-g_1\|^{1 - (1/p)}_\infty \\
 &<& \frac{\ve}{2} + \ve_1^{1/p} \left( 2\|g\|_\infty \right)^{1-(1/p)},
\end{eqnarray*}
whence picking a sufficiently small $\ve_1>0$ yields
\[
 \|f-g\|_p < \ve.
\]
This completes the proof of the theorem.
\end{proof}

\subsection{Convolutions}

To refine our approximation techniques even further, we now introduce a
widely used ``smoothing'' operation.

\begin{defin}\index{convolution}
The \emph{convolution} of measurable functions $f$ and $g$ on $\bR^d$
at $x \in \bR^d$ is defined to be
\[
 (f*g)(x) = \int f(x-y)g(y) \, dy,
\]
whenever the expression is well-defined.
\end{defin}

Convolutions can be thought of as a kind of weighted average. Indeed,
if $f(x) = 1$, then $f*g$ corresponds to the integral mean value of
$g$ over the entire space. Before we discuss why convolutions are
smoothing operations, we establish a few basic properties thereof.

\begin{theorem}[Properties of convolutions]\label{convolution}
Let $1 \leq p \leq \infty$.
\begin{enumerate}[(a)]
 \item The convolution of two measurable functions is measurable.
 \item If $f$ and $g$ are measurable, then $f*g = g*f$.
 \item \emph{\textbf{Young's inequality.}}\index{inequality!Young's} If $f \in L^p(\bR^d)$ and $g \in L^1
 (\bR^d)$, then $f*g$ is well-defined almost everywhere and
 \[
   \|f*g\|_p \leq \|f\|_p \|g\|_1.
 \]
\end{enumerate}
\end{theorem}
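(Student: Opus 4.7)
For (a), the natural approach is to first promote the integrand to a jointly measurable function on $\bR^{2d}$ and then apply the Fubini-Tonelli theorem. The map $L:\bR^{2d} \to \bR^{2d}$ given by $L(x,y) = (x-y, y)$ is an invertible linear transformation with $|\det L| = 1$, so by the linear-change-of-variables theorem it preserves Lebesgue measurability of sets. Consequently, if $F(u,v) = f(u)g(v)$---which is jointly measurable on $\bR^{2d}$ because products and coordinate-extensions of measurable functions are measurable---then $(x,y)\mapsto f(x-y)g(y) = (F\circ L)(x,y)$ is measurable on $\bR^{2d}$. Fubini-Tonelli applied to the positive function $|f(x-y)g(y)|$, and then to $f(x-y)g(y)$ itself on the set where the integral is finite, gives the measurability of $x \mapsto (f*g)(x)$ in $x$.

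For (b), commutativity is essentially the change of variables $z = x-y$ inside the convolution integral. Fixing $x \in \bR^d$ and applying Theorem \ref{change-of-variables} to the diffeomorphism $y \mapsto x-y$ (whose Jacobian determinant has absolute value $1$), we obtain
\[
 (f*g)(x) = \int f(x-y) g(y) \, dy = \int f(z) g(x-z) \, dz = (g*f)(x),
\]
whenever either integral is defined.

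For (c), I would split into three cases. When $p=1$, Tonelli's theorem applied to $|f(x-y)g(y)|$ together with the translation invariance of Lebesgue measure gives $\int\int |f(x-y)g(y)|\,dy\,dx = \|f\|_1 \|g\|_1$ directly, which shows both that $f*g$ is defined almost everywhere and that $\|f*g\|_1 \leq \|f\|_1\|g\|_1$. When $p=\infty$, the bound $|(f*g)(x)| \leq \|f\|_\infty \|g\|_1$ is immediate from pulling $\|f\|_\infty$ out of the integral. The substantive case is $1 < p < \infty$, for which the trick is to write
\[
 |f(x-y)g(y)| = \bigl(|f(x-y)|^p |g(y)|\bigr)^{1/p} |g(y)|^{1/p'}
\]
and apply H\"older's inequality in $y$ to obtain
\[
 |(f*g)(x)|^p \leq \|g\|_1^{p/p'} \int |f(x-y)|^p |g(y)| \, dy.
\]
Integrating over $x$ and invoking Tonelli plus translation invariance yields $\|f*g\|_p^p \leq \|g\|_1^{p/p'+1}\|f\|_p^p = \|g\|_1^p\|f\|_p^p$; the finiteness of the right-hand side certifies that $(f*g)(x)$ is defined almost everywhere.

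The only step demanding real care is the joint-measurability claim underlying (a): one must be vigilant that $f$ and $g$ are only Lebesgue (not Borel) measurable, which is why invertibility of $L$ with unit Jacobian is crucial---it guarantees $L^{-1}$ carries Lebesgue measurable sets to Lebesgue measurable sets, so $F\circ L$ inherits measurability. Everything thereafter is a clean application of Fubini-Tonelli, H\"older's inequality, and translation invariance, all of which are available from the previous section.
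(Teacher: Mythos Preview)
Your proof is correct, but it diverges from the paper's in two places worth flagging.

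For part (c), the paper does not split into cases or use H\"older's inequality directly. Instead, it establishes Minkowski's integral inequality (Theorem~\ref{minkowski-integral-inequality}) as a standalone lemma and then dispatches Young's inequality in three lines:
\[
 \|f*g\|_p \leq \left(\int\!\left(\int |f(x-y)||g(y)|\,dy\right)^p dx\right)^{1/p}
 \leq \int \|f(\cdot - y)\|_p\,|g(y)|\,dy = \|f\|_p\|g\|_1.
\]
This handles all $1\leq p<\infty$ uniformly, with no case distinction. The trade-off is that the paper's proof of Minkowski's integral inequality itself routes through the Riesz representation theorem, so your H\"older-based argument is actually more elementary. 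The paper's choice pays off later, though, since Minkowski's integral inequality is reused in the proofs of approximations to the identity and elsewhere.

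For part (a), the paper does not invoke the linear map $L(x,y)=(x-y,y)$ abstractly. It instead works directly with $\tilde{E}=\{(x,y):x-y\in E\}$ for $E=f^{-1}(B_r(z))$, shows $\tilde{E}$ is $G_\delta$ when $E$ is, shows $\tilde{E}$ is null when $E$ is (via Tonelli and translation invariance), and then applies Proposition~\ref{approximation-by-borel-sets} to decompose a general Lebesgue-measurable $E$ as a $G_\delta$ set minus a null set. Your argument is really the same content compressed into the slogan ``invertible linear maps with unit Jacobian preserve Lebesgue measurability,'' which is fine---but note that the step you call ``coordinate-extensions of measurable functions are measurable'' (i.e., $(u,v)\mapsto f(u)$ is Lebesgue measurable on $\bR^{2d}$) requires exactly the same $G_\delta$-plus-null-set decomposition to justify rigorously when $f$ is merely Lebesgue measurable. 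You have correctly identified where the care is needed; just be aware that the linear map $L$ is not the only place it enters.
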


The following inequality of Hermann Minkowski,\index{Minkowski, Hermann} which we shall use frequently
in the remainder of the thesis, plays a crucial role in the proof of (c).

\begin{theorem}[Minkowski's integral inequality]\label{minkowski-integral-inequality}\index{inequality!Minkowski's integral}
Let $(X,\mf{M},\mu)$ and $(Y,\mf{N}, \linebreak \nu)$ be $\sigma$-finite measure spaces and
$f$ an $(\mu \times \nu)$-measurable function on $X \times Y$. If $f \geq 0$ and
$1 \leq p < \infty$, then
\[
 \left( \int \left( \int f(x,y) \, d\nu(y) \right)^p \, d\mu(x) \right)^{1/p}
 \leq \int \left( \int f(x,y)^p \, d\mu(x) \right)^{1/p} \, d\nu(y).
\]
\end{theorem}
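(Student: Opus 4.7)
My plan is to deduce the inequality from Hölder's inequality combined with Tonelli's theorem, handling the endpoint $p=1$ separately. Set $F(x) = \int f(x,y)\,d\nu(y)$, which is nonnegative and $\mu$-measurable by Tonelli's theorem. The goal is to bound $\|F\|_{L^p(\mu)}$ by $\int \|f(\cdot,y)\|_{L^p(\mu)}\,d\nu(y)$. For $p=1$, Tonelli gives the equality $\int F\,d\mu = \iint f\,d\mu\,d\nu$ outright, so I may assume $p>1$.

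The standard trick is to write $F^p = F \cdot F^{p-1}$, expand the leading factor $F$ through its definition, swap the order of integration via Tonelli, and then apply Hölder's inequality to the inner $\mu$-integral with the conjugate exponents $p$ and $p'=p/(p-1)$. Explicitly,
\[
 \|F\|_p^p = \int F(x)^{p-1}\left(\int f(x,y)\,d\nu(y)\right)d\mu(x) = \int\!\int f(x,y)\,F(x)^{p-1}\,d\mu(x)\,d\nu(y).
\]
For each fixed $y$, Hölder's inequality gives
\[
 \int f(x,y)\,F(x)^{p-1}\,d\mu(x) \leq \left(\int f(x,y)^p\,d\mu(x)\right)^{1/p}\left(\int F(x)^{(p-1)p'}\,d\mu(x)\right)^{1/p'} = \|f(\cdot,y)\|_p\,\|F\|_p^{p/p'},
\]
using the identity $(p-1)p'=p$. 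Integrating the resulting bound in $y$ and dividing by $\|F\|_p^{p/p'}$, while noting that $p-p/p'=1$, yields the claimed inequality.

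The main obstacle I anticipate is that the division step is legitimate only when $0<\|F\|_p<\infty$; the case $\|F\|_p=0$ is vacuous, but the possibility that $\|F\|_p=\infty$ must be ruled out before one may divide. I resolve this by truncation, which is where the $\sigma$-finiteness hypothesis earns its keep. Pick exhausting increasing sequences $X_n\uparrow X$ and $Y_n\uparrow Y$ of finite measure, and set $f_n(x,y) = \min(f(x,y),n)\,\chi_{X_n\times Y_n}(x,y)$. Each $F_n(x) = \int f_n(x,y)\,d\nu(y)$ is bounded by $n\nu(Y_n)$ and supported on the finite-measure set $X_n$, so $\|F_n\|_p<\infty$ and the argument above applies verbatim to $f_n$. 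Passing to the limit $n\to\infty$ via the monotone convergence theorem on both sides---using that $y\mapsto \|f_n(\cdot,y)\|_p$ increases to $y\mapsto\|f(\cdot,y)\|_p$ pointwise---recovers the inequality for $f$, with the understanding that it holds trivially whenever the right-hand side is infinite.
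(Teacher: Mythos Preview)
Your argument is correct, but it takes a different route from the paper's. The paper proves the $p>1$ case by duality: for each $g\in L^{p'}(X,\mu)$ it swaps the order of integration via Tonelli and applies H\"older to get
\[
 \int F(x)\,|g(x)|\,d\mu(x) \;\le\; \|g\|_{p'}\int \|f(\cdot,y)\|_p\,d\nu(y),
\]
then invokes the formula $\|F\|_p=\sup_{\|g\|_{p'}\le 1}\bigl|\int Fg\,d\mu\bigr|$ from the proof of the Riesz representation theorem. You instead use the direct $F^p=F\cdot F^{p-1}$ manipulation. The trade-off is this: the duality approach handles the case $\|F\|_p=\infty$ for free, since the supremum characterization holds regardless, so no truncation is needed; your approach is more self-contained (no appeal to the dual characterization of the $L^p$ norm), at the cost of the explicit truncation-and-monotone-convergence step, which is exactly where $\sigma$-finiteness enters your argument.
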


\begin{proof}
The $p=1$ case is \hyperref[fubini-tonelli]{Tonelli's theorem}. If
$1 < p < \infty$, then \hyperref[fubini-tonelli]{Tonelli's theorem} and H\"{o}lder's inequality
imply that each $g \in L^{p'}(X,\mu)$ satisfies the
following inequality:
\begin{eqnarray*}
 \iint f(x,y) \, d\nu(y) |g(x)| \, d\mu(x)
 &=& \iint f(x,y) |g(x)| \, d\mu(x) \, d\nu(y) \\
 &\leq& \int \left( \int f(x,y)^p \, d\mu(x) \right)^{1/p} \|g\|_{p'} \, d\nu(y) \\
 &=&  \|g\|_{p'} \int \left( \int f(x,y)^p \, d\mu(x) \right)^{1/p} \, d\nu(y).
\end{eqnarray*}
Let
\[
 \phi(x) = \int \int f(x,y) \, d\nu(y).
\]
We know from the \hyperref[Lp-riesz-representation]{Riesz representation theorem}\index{representation theorem!F. Riesz, Lp-space version@F. Riesz, $L^p$-space version}
that
\[
 \|\phi\|_p = \sup_{\|g\|_{p'} \leq 1} \left| \int \phi g \, d\mu \right|,
\]
whence the above inequality implies that
\begin{eqnarray*}
 & & \left( \int \left( \int f(x,y) \, d\nu(y) \right)^p \, d\mu(x) \right)^{1/p} \\
 &=& \|\phi\|_p \\
 &=& \sup_{\|g\|_{p'} \leq 1} \left| \int \phi g \, d\mu \right| \\
 &\leq& \sup_{\|g\|_{p'} \leq 1} \|g\|_{p'} \int \left( \int f(x,y)^p \, d\mu(x) \right)^{1/p} \, d\nu(y) \\
 &=& \int \left( \int f(x,y)^p \, d\mu(x) \right)^{1/p} \, d\nu(y),
\end{eqnarray*}
as was to be shown.
\end{proof}

We proceed to the proof of the basic properties of convolutions.

\begin{proof}[Proof of Theorem \ref{convolution}]
 (a) Let $f$ and $g$ be measurable functions on $\bR^d$. We first show that
\[
 f_1(x,y) = f(x-y)
\]
is measurable on $\bR^d \times \bR^d$. It clearly suffices to prove that
$f_1^{-1}(B_r(z))$ is measurable for each $r>0$ and every $z \in \bC$.
For each subset $E$ of $\bR^d$, we define
\[
 \tilde{E} = \{(x,y) \in \bR^d \times \bR^d : x-y \in E\}.
\]
Since the subtraction operation
\[
 (x,y) \mapsto x-y
\]
is a continuous map from $\bR^d \times \bR^d$ into $\bR^d$, the set
$\tilde{E}$ is open whenever $E$ is open. By taking a countable intersection,
we see that $\tilde{E}$ is a $G_\delta$ set if $E$ is.

We also claim that $\tilde{E}$ is of measure zero whenever $E$ is of
measure zero. Indeed, if $|E| = 0$, then we can find a sequence
$(O_n)_{n=1}^\infty$ of open sets such that $O_n \supseteq E$ for each
$n$ and $|O_n| \to 0$ as $n \to \infty$. Given $n,k \in \bN$,
\hyperref[fubini-tonelli]{Tonelli's theorem} and the translation invariance
of the Lebesgue measure imply that
\begin{eqnarray*}
 |\tilde{O}_n \cap B_k(0)|
 &=& \iint \chi_{O_n}(x-y) \chi_{B_k(0)}(y) \, dy \, dx \\
 &=& \int \left( \int \chi_{O_n}(x-y) \, dx \right) \chi_{B_k(0)} \, dy \\
 &=& \int \left( \int \chi_{O_n}(x) \, dx \right)  \chi_{B_k(0)} \, dy \\
 &=& |O_n||B_k|.
\end{eqnarray*}
Therefore, if we set $\tilde{E}_k = \tilde{E} \cap B_k(0)$ for each positive
integer $k$, then $\tilde{E}_k \subseteq \tilde{O}_n \cap B_k(0)$ for all $n$ and
$|\tilde{O}_n \cap B_k(0)| \to 0$ as $n \to \infty$. It follows that
$|\tilde{E}_k| = 0$, whence by continuity of measure we have $|\tilde{E}| = 0$,
as desired.

We now fix an $r>0$ and a $z \in \bC$ and set $E = f^{-1}(B_r(z))$,
so that $\tilde{E} = f_1^{-1}(B_r(z))$.
Since $B_r(z)$ is open, the measurability of $f$ implies the measurability
of $E$, whence Proposition \ref{approximation-by-borel-sets} furnishes
a $G_\delta$ set $G$ such that $G \supseteq E$ and $|G \smallsetminus E| =0$.
Setting $F = G \smallsetminus E$, we see that
\[
 \tilde{G} = \tilde{E} \cup \tilde{F}
\]
is a $G_\delta$ set. Since $|F| = 0$, the above argument shows that
$|\tilde{F}| = 0$, whereby we appeal once again to Proposition
\ref{approximation-by-borel-sets} to conclude that $\tilde{E}$ is measurable.

It follows that if $f$ and $g$ are measurable functions on $\bR^d$, then
\[
 (x,y) \mapsto f(x-y) g(y)
\]
is measurable on $\bR^d \times \bR^d$. We now invoke
\hyperref[fubini-tonelli]{Fubini's theorem} to conclude that
\[
 (f*g)(x) = \int f(x-y)g(y) \, dy
\]
is measurable on $\bR^d$.

(b) This is a trivial consequence of the commutativity of multiplication in $\bC$
and the translation invariance of the Lebesgue measure.

(c) Since $|(f*g)(x)| \leq \int |f(x-y)||g(y)| \, dy$, we invoke
\hyperref[minkowski-integral-inequality]{Minkowski's integral inequality}\index{inequality!Minkowski's integral}
to conclude that
\begin{eqnarray*}
 \|f*g\|_p
 &=& \left( \int \left| \int f(x-y)g(y) \, dy \right|^p \, dx \right)^{1/p} \\
 &\leq& \int \left( \int |f(x-y)|^p \, dx \right)^{1/p} |g(y)| \, dy \\
 &=& \|f\|_p \|g\|_1,
\end{eqnarray*}
as was to be shown.
\end{proof}

We now return to the task of justifying the ``smoothing operator'' nickname
that convolutions possess. We begin by showing that the convolution of
two compactly supported functions is compactly supported.

\begin{theorem}\label{support-of-convolution}\index{support!of convolutions}
Let $1 \leq p \leq \infty$. If $f \in L^p(\bR^d)$ and $g \in L^1(\bR^d)$, then
\[
 \supp (g*h) \subseteq \overline{ \supp f + \supp g}
 = \overline{\{ x + y : x \in \supp f \mbox{ and } y \in \supp g\}}
\]
\end{theorem}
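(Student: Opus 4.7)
The plan is to prove the pointwise statement that $(f*g)(x) = 0$ for every $x \notin \supp f + \supp g$ at which the convolution is defined; combined with the fact that $(\overline{\supp f + \supp g})^c$ is open and contained in $(\supp f + \supp g)^c$, this will yield the desired support inclusion.

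Fix such an $x$, and analyze the integrand $y \mapsto f(x-y)g(y)$ by splitting on the dichotomy $y \in \supp g$ versus $y \notin \supp g$. On the open set $(\supp g)^c$ the function $g$ vanishes almost everywhere by the defining property of the support, so the integrand is almost everywhere zero there. For $y \in \supp g$, if $x-y$ were in $\supp f$ then $x = (x-y) + y$ would lie in $\supp f + \supp g$, contradicting the choice of $x$; hence $x - y \in (\supp f)^c$. Setting $N = \{z \in (\supp f)^c : f(z) \neq 0\}$, which is a null set since $f$ vanishes a.e.\ off $\supp f$, we obtain
\[
 \{y \in \supp g : f(x-y) \neq 0\} \subseteq x - N,
\]
and the translation invariance of the Lebesgue measure (Theorem \ref{change-of-variables}) gives $|x - N| = 0$.

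Combining the two cases, the integrand vanishes for almost every $y$, so the integral defining $(f*g)(x)$ is well-defined and equals zero. Thus $(f*g)(x) = 0$ for every $x \notin \supp f + \supp g$ where the convolution is defined; Young's inequality (Theorem \ref{convolution}(c)) ensures this is the case for almost every $x$. Restricting to the open subset $U = (\overline{\supp f + \supp g})^c$ of $(\supp f + \supp g)^c$, we see that $f*g = 0$ almost everywhere on $U$. Invoking the definition of the support as the complement of the largest open set on which the function vanishes almost everywhere, we conclude $\supp(f*g) \subseteq U^c = \overline{\supp f + \supp g}$.

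The main obstacle is strictly bookkeeping: keeping the ``almost everywhere'' qualifications straight, since $\supp f$ and $\supp g$ are defined only up to null sets, and then using translation invariance to transfer a null set in the $z$-variable to a null set in the $y$-variable. Once this is handled cleanly, the argument is a direct set-theoretic computation with no further analytic input.
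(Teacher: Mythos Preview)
Your proof is correct and follows essentially the same approach as the paper's: both arguments show that for $x \notin \supp f + \supp g$ the integrand $y \mapsto f(x-y)g(y)$ vanishes almost everywhere, hence $(f*g)(x)=0$ at such points where the convolution is defined, and then pass to the open complement of $\overline{\supp f + \supp g}$ to invoke the almost-everywhere definition of support. The only difference is that you are more explicit about the null-set bookkeeping (splitting into $y\in\supp g$ and $y\notin\supp g$ and tracking the translated null set $x-N$), whereas the paper writes the integral directly as one over $(x-\supp f)\cap\supp g$ and observes this set is empty; your extra care is warranted but not a different idea.
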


As it stands now, however, it is not entirely clear how we should interpret
the statement of the above theorem. While two functions that are almost everywhere
are considered to be ``the same'' in integration theory, the traditional
notion of support can fail to assign the same support to both functions.
To rectify this issue, we adopt a new definition:

\begin{defin}\index{support!of Lp-functions@of $L^p$-functions}
Let $f$ be a complex-valued function on $\bR^d$ and $O$
the union of all open sets in $\bR^d$ on which $f$
vanishes almost everywhere. We define the \emph{support} of $f$,
denoted $\supp f$, to be the complement of $O$.
\end{defin}

Of course, we must justify the new terminology:

\begin{prop}
Let $f$ and $O$ be defined as above.
Then $f$ vanishes almost everywhere on $O$. If $g$ is another function
that is equal to $f$ almost everywhere, then $\supp f = \supp g$. Furthermore,
this definition of support agrees with the old definition of support for
continuous functions.
\end{prop}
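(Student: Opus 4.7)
The plan is to handle the three assertions in order, with the first being the only one that requires a non-trivial observation.

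For the first claim, the subtlety is that $O$ is \emph{a priori} an uncountable union of open sets on which $f$ vanishes almost everywhere, and uncountable unions of null sets need not be null. The way around this is to exploit the fact that $\bR^d$ is second countable, hence Lindel\"{o}f: the open cover $\{U_\alpha\}$ of $O$ (where each $U_\alpha$ is an open set on which $f$ vanishes a.e.) admits a countable subcover $\{U_{\alpha_n}\}_{n=1}^\infty$. Since $\{x \in O : f(x) \neq 0\} \subseteq \bigcup_n \{x \in U_{\alpha_n} : f(x) \neq 0\}$, which is a countable union of null sets, $f$ vanishes almost everywhere on $O$. This Lindel\"{o}f step is really the only point in the proposition requiring any care.

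For the second claim, I would proceed by symmetry, showing that every open set on which $f$ vanishes a.e. is an open set on which $g$ vanishes a.e., and vice versa. Indeed, if $U$ is open and $f$ vanishes a.e. on $U$, then the set $\{x \in U : g(x) \neq 0\}$ is contained in the union $\{x \in U : f(x) \neq 0\} \cup \{x : f(x) \neq g(x)\}$, both of which are null by hypothesis. This shows that the open set $O$ in the definition is the same whether computed from $f$ or from $g$, so the supports coincide.

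For the third claim, the key observation is that continuity upgrades ``vanishing almost everywhere'' to ``vanishing everywhere'' on open sets. Concretely, if $f$ is continuous and $U$ is open with $f = 0$ a.e.\ on $U$, then $V = \{x \in U : f(x) \neq 0\}$ is open by continuity and has measure zero, hence is empty. Consequently $O$ in the new definition coincides with the union of all open sets on which $f$ vanishes identically, which is the largest such open set, namely the complement of $\overline{\{x : f(x) \neq 0\}}$. Taking complements recovers the classical notion of support for continuous functions.

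No step presents a genuine obstacle; the main thing to get right is the Lindel\"{o}f reduction in part one, since without it the statement that $f$ vanishes a.e.\ on $O$ would not even be meaningful.
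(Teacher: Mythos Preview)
Your proposal is correct and follows essentially the same approach as the paper: both use second countability of $\bR^d$ to reduce the uncountable union to a countable one for the first claim, and both handle the remaining two claims by the natural arguments you give. Your treatment of the third claim is in fact more carefully stated than the paper's own sketch.
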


\begin{proof}
Since $\bR^d$ is second-countable, we can find a sequence $(O_n)_{n=1}^\infty$
of of open sets such that $f$ vanishes almost everywhere on each $O_n$ and
that the union of all $O_n$ is $O$. The union is countable, and so $f$ vanishes
almost everywhere on $O$. If $f = g$ almost everywhere, then $f$ vanishes almost
everywhere on a vanishing set of $g$, and vice versa, whence $\supp f$ and
$\supp g$ must agree. If $f$ is continuous, then
$O = f^{-1}(\bC \smallsetminus \{0\})$, and so $\supp f = f^{-1}(\{0\})$,
as was to be shown.
\end{proof}

With the new definition, we proceed to the proof of the theorem.

\begin{proof}[Proof of Theorem \ref{support-of-convolution}]
By \hyperref[convolution]{Young's inequality}, the map $y \mapsto f(x-y)g(y)$
is integrable for each $x \in \bR^d$. 
Writing $x - \supp f$ to denote the set $\{x - y : y \in \supp f\}$, we see
that
\[
 (f*g)(x) = \int_{(x - \supp f) \cap \supp g} f(x-y) g(y) \, dy.
\]
Let $E = \supp f + \supp g$ for notational simplicity. We
note that $x \notin E$ implies $(x-\supp f) \cap \supp g = \varnothing$, so that
$(f*g)(x) = 0$.  Therefore, $(f*g)(x) = 0$ for almost every $x \in \bR^d
\smallsetminus E$. In particular, $(f*g)(x) = 0$ for almost every $x$ in the
interior of $\bR^d \smallsetminus E$, and so
\[
 \supp (f*g) \subseteq \overline{E}
\]
by the new definition of support.
\end{proof}

We are now ready to supply the promised justification of the smoothing-operations
nickname. For notational simplicity we define the following shorthand:

\begin{defin}\index{multi-index}
A $d$-dimensional \emph{multi-index} is a $d$-tuple
\[
 \alpha = (\alpha_1,\ldots,\alpha_d)
\]
consisting of nonnegative integers. We employ the following notations
for multi-indices; here $\alpha$ and $\beta$ are multi-indices, and
$x$ an element of $\bR^d$:
\begin{eqnarray*}
 |\alpha| &=& \alpha_1 + \cdots + \alpha_d \\
 x^\alpha &=& x_1^{\alpha_1} \cdots x_d^{\alpha_d} \\
 D^\alpha &=& \left(\frac{\partial}{\partial x_1}\right)^{\alpha_1}
 \cdots \left(\frac{\partial}{\partial x_d}\right)^{\alpha_d} \\
 \alpha \pm \beta &=& (\alpha_1 \pm \beta_1 , \cdots , \alpha_d \pm \beta_d).
\end{eqnarray*}
\end{defin}

The main theorem can now be stated as follows:

\begin{theorem}[Convolution as a smoothing operation]\label{convolution-smoothing}\index{convolution!as a smoothing operation}
Convolutions are ``smoothing operations'' in the following sense:
\begin{enumerate}[(a)]
 \item If $f \in \ms{C}_c(\bR^d)$ and $g \in L^1_\loc(\bR^d)$, then
 $f*g$ is well-defined everywhere and $f*g \in \ms{C}(\bR^d)$.
 \item If $f \in \ms{C}^k_c(\bR^d)$ and $g \in L^1_\loc(\bR^d)$, then
 $f*g \in \ms{C}^k(\bR^d)$ and 
 \[
  D^\alpha (f*g) = (D^\alpha f) * g
 \]
 for each multi-index $|\alpha| \leq k$. The result holds for
 $k = \infty$ as well.
 \item If $f \in \ms{C}^k(\bR^d)$ and $g \in \ms{C}^l(\bR^d)$, then
 $f*g \in \ms{C}^{k+l}(\bR^d)$ and
 \[
  D^{\alpha + \beta} (f*g) = (D^\alpha f) * (D^\beta g)
 \]
 for all multi-index $|\alpha| \leq k$ and $|\beta| \leq l$.
\end{enumerate}
\end{theorem}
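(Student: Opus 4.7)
The plan is to prove the three parts in order, using part (a) as the cornerstone, then bootstrapping to (b) via differentiation under the integral sign, and finally assembling (c) from (b) and commutativity.

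For part (a), I would first observe that since $f$ has compact support $K$, the integrand $y \mapsto f(x-y)g(y)$ is supported in $x - K$, which is compact. Therefore $|f(x-y)g(y)| \leq \|f\|_\infty |g(y)| \chi_{x-K}(y)$, and local integrability of $g$ shows that the right-hand side is integrable; hence $(f*g)(x)$ is defined at \emph{every} $x$. For continuity at a point $x_0$, I would fix a bounded neighborhood $U$ of $x_0$ and note that for all $x \in U$ the integrand is supported inside a fixed compact set $K' \supseteq U - K$. On $K'$, the function $\|f\|_\infty |g|$ is a fixed integrable dominator. Since $f$ is continuous with compact support, it is uniformly continuous, so $f(x-y) \to f(x_0 - y)$ uniformly in $y$ as $x \to x_0$; \hyperref[convolution]{the dominated convergence theorem} then delivers $(f*g)(x) \to (f*g)(x_0)$.

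For part (b), I would argue by induction on $k$, with the base case $k=0$ being (a). For the inductive step it suffices to produce $\partial_j(f*g) = (\partial_j f) * g$ for every coordinate $j$, since the right-hand side is a convolution of a $\mathscr{C}^{k-1}_c$ function with $g$, to which the inductive hypothesis applies (iterating the identity yields $D^\alpha(f*g) = (D^\alpha f)*g$ for $|\alpha| \le k$; the $k=\infty$ case follows since every $\alpha$ is of finite order). To establish the partial-derivative identity, fix $x$ and consider the difference quotient
\[
\frac{(f*g)(x + he_j) - (f*g)(x)}{h} = \int \frac{f(x + h e_j - y) - f(x-y)}{h}\, g(y)\,dy.
\]
Pointwise in $y$, the integrand converges to $(\partial_j f)(x-y)g(y)$ as $h \to 0$. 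By the mean value theorem, the difference quotient is bounded by $\|\partial_j f\|_\infty$ in absolute value, and for $|h| \le 1$ its $y$-support lies in a fixed compact set $K''$ depending only on $x$ and $\supp f$. Hence $\|\partial_j f\|_\infty |g(y)| \chi_{K''}(y)$ is an integrable dominator, and \hyperref[convolution]{the dominated convergence theorem} yields $\partial_j(f*g)(x) = ((\partial_j f) * g)(x)$. Continuity of this derivative follows from part (a) applied to $\partial_j f \in \mathscr{C}^{k-1}_c$ and $g \in L^1_\loc$.

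For part (c), I would combine part (b) with the commutativity from \hyperref[convolution]{Theorem \ref{convolution}}(b). Under the implicit hypothesis that makes $f*g$ well-defined (e.g., compact support of one factor, locally), part (b) and its mirror image via commutativity give $D^\alpha(f*g) = (D^\alpha f) * g$ for $|\alpha| \le k$ and $D^\beta(f*g) = f * (D^\beta g)$ for $|\beta| \le l$. Applying $D^\alpha$ to the second identity and invoking part (b) once more with $D^\beta g$ in place of $g$ produces $D^{\alpha+\beta}(f*g) = D^\alpha(f * D^\beta g) = (D^\alpha f) * (D^\beta g)$, and part (a) applied to $D^\alpha f$ and $D^\beta g$ shows this is continuous, proving $f*g \in \mathscr{C}^{k+l}$.

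The main obstacle is the dominated-convergence step in part (b): one must simultaneously (i) control the difference quotients uniformly in $h$ and (ii) localize their $y$-supports to a fixed compact set so that the merely \emph{locally} integrable $g$ furnishes a genuine integrable dominator. The compact support of $f$ is what makes both (i) and (ii) possible, and this is precisely where the hypothesis $f \in \mathscr{C}^k_c$ (rather than $\mathscr{C}^k$) is essential.
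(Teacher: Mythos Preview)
Your proposal is correct and follows essentially the same route as the paper: localize to a fixed compact set using $\supp f$, invoke dominated convergence against $|g|\chi_{K}$, and induct. The only cosmetic difference is that in (b) the paper controls the full first-order Taylor remainder $|f((x-y)+h)-f(x-y)-\nabla f(x-y)\cdot h|\le \ve|h|\chi_K(y)$ to obtain the total derivative in one stroke, whereas you compute each partial derivative separately via the mean value theorem; both arguments rely on the same compact-support localization you correctly identify as the crux.
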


\begin{proof}
(a) For each $x \in \bR^d$, the map $x \mapsto f(x-y)g(y)$ is measurable and has
compact support, hence integrable. Therefore, $(f*g)(x)$ is defined for all
$x \in \bR^d$. We now fix $x \in \bR^n$, pick a sequence $(x_n)_{n=1}^\infty$
in $\bR^d$ converging to $x$, and find a compact subset $K$ of $\bR^d$ such that
\[
 x_n - \supp f = \{x_n - y : y \in \supp f\} \subseteq K
\]
for each $n \in \bN$. It then follows that $f$ is uniformly continuous on $K$
and $f(x_n - y) = 0$ for all $n \in \bN$ and $y \in \bR^d \smallsetminus K$.
We can thus pick a sequence $(\ve_n)_{n=1}^\infty$ of positive real numbers
converging to zero such that
\[
 |f(x_n - y) - f(x-y)| \leq \ve_n \chi_K(y)
\]
for each $n \in \bN$ and every $y \in \bR^d$. Multiplying through by $|g(y)|$
and integrating with respect to $y$, we obtain
\[
 |(f*g)(x_n) - (f*g)(x)| \leq \ve_n \int_K |g(y)| \, dy.
\]
Since the right-hand side converges to zero as $n \to \infty$, we conclude that
\[
 \lim_{n \to \infty} (f*g)(x_n) = (f*g)(x),
\]
as was to be shown.

(b) We suppose for now that $k = 1$. The task at hand then reduces to
establishing the claim that $f*g$ is continuously differentiable at
each $x \in \bR^d$ and
\[
 \nabla (f*g)(x) = (\nabla f * g)(x).
\]
To this end, we pick $x \in \bR^d$. For each $y \in \bR^d$,
we observe that
\[
 \lim_{|h| \to 0} \frac{|f((x-y)+h) - f(x-y) - \nabla f(x-y) \cdot h|}{|h|}
 = 0,
\]
whence every $\ve>0$ admits $M_\ve>0$ such that
\[
 |f((x-y)+h) - f(x-y) - \nabla f(x-y) \cdot h| \leq \ve|h|
\]
for all $|h| < M_\ve$.

Fix a compact subset $K$ of $\bR^d$ such that
\[
 x  - \supp f + B_{M_\ve}(0)
 = \{ (x-y)+h : y \in \supp f \mbox{ and } |h|<M_\ve\}
 \subseteq K.
\]
Since
\[
 f((x-y)+h) - f(x-y) - \nabla f(x-y) \cdot h = 0
\]
for all $|h| < M_\ve$ and $y \in K$, we have
\[
 |f((x-y)+h) - f(x-y) - \nabla f(x-y) \cdot h| \leq \ve|h|\chi_K(y)
\]
for all $y \in \bR^d$. Multiplying through by $|g(y)|$ and integrating
with respect to $y$, we see that
\[
 |(f*g)(x+h) - (f*g)(x) - (\nabla f*g)(x) \cdot h|
 \leq \ve|h| \int_K |g(y)| \, dy.
\]
It follows that $f*g$ is differentiable at $x$, with the gradient
\[
 \nabla (f*g)(x) = (\nabla f * g)(x).
\]
$f \in \ms{C}^1_c(\bR^d)$ implies that $\nabla f \in \ms{C}_c(\bR^d)$,
whence $\nabla f * g \in \ms{C}(\bR^d)$ by (a). This completes the
proof for $k=1$. The case for $k>1$ now follows from induction.

(c) is a trivial consequence of (b) and the commutativity of convolution,
and the proof is now complete.
\end{proof}

\subsection{Approximation by Smooth Functions}

We shall now establish the final approximation theorem of this section:
namely, the approximation of $L^p$ functions by smooth functions.
As was hinted at in the previous subsection, we shall use convolutions
to smooth out the approximating functions. The key result, known as
\emph{approximations to the identity}\footnote{See
\S\S\ref{fr-approximations-to-the-identity} for a discussion on the
name ``approximations to the identity''.}, provides a widely applicable
tool for generating a collection of approximating functions for any
given $L^p$ function.

\begin{theorem}[Approximations to the identity]\label{approximations-to-the-identity}\index{approximations to the identity}
Let $1 \leq p < \infty$. If $f \in L^p(\bR^d)$ and $\rho \in L^1(\bR^d)$
such that $\int \rho = 1$, then $\|f*\rho_\ve - f\|_p \to 0$ as $\ve
\to 0$, where $\rho_\ve(x) = \ve^{-d}\rho(\ve^{-1}x)$ for each
$\ve>0$.
\end{theorem}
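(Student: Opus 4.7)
The plan is to reduce everything to the continuity of translation in $L^p$, then pass to the limit by dominated convergence via Minkowski's integral inequality. Throughout, the change of variables $y = \varepsilon z$ (Theorem \ref{change-of-variables}) gives $\int \rho_\varepsilon = \int \rho = 1$ and $\|\rho_\varepsilon\|_1 = \|\rho\|_1$.

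First I would write
\[
 (f*\rho_\varepsilon)(x) - f(x)
 = \int \bigl( f(x-y) - f(x) \bigr) \rho_\varepsilon(y) \, dy,
\]
which is legal precisely because $\int \rho_\varepsilon = 1$. Applying Minkowski's integral inequality (Theorem \ref{minkowski-integral-inequality}) to the absolute value of the integrand and then substituting $y = \varepsilon z$ yields
\[
 \|f*\rho_\varepsilon - f\|_p
 \leq \int \|f(\cdot - y) - f\|_p \, |\rho_\varepsilon(y)| \, dy
 = \int \|f(\cdot - \varepsilon z) - f\|_p \, |\rho(z)| \, dz.
\]

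The main obstacle, and the only nonroutine step, is to establish the auxiliary claim that translation is continuous in $L^p$, namely that $\|f(\cdot - h) - f\|_p \to 0$ as $h \to 0$ whenever $f \in L^p(\bR^d)$ and $1 \leq p < \infty$. To prove this I would fix $\eta > 0$ and invoke Theorem \ref{approximation-by-continuous-functions} to produce $g \in \ms{C}_c(\bR^d)$ with $\|f - g\|_p < \eta$. Since $g$ is uniformly continuous on $\bR^d$ and supported in a fixed compact set $K$, every translate $g(\cdot - h)$ with $|h| \leq 1$ is supported in the compact set $K + \overline{B_1(0)}$, so a uniform pointwise bound $|g(x-h) - g(x)| \leq \omega(|h|)$ (where $\omega$ is the modulus of continuity) integrates to give $\|g(\cdot - h) - g\|_p \to 0$ as $h \to 0$. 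Combining with the triangle inequality and the translation invariance of the Lebesgue measure,
\[
 \|f(\cdot - h) - f\|_p
 \leq \|f(\cdot - h) - g(\cdot - h)\|_p + \|g(\cdot - h) - g\|_p + \|g - f\|_p
 < 2\eta + \|g(\cdot - h) - g\|_p,
\]
and the middle term is less than $\eta$ for $|h|$ small.

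With the translation continuity lemma in hand, the integrand $z \mapsto \|f(\cdot - \varepsilon z) - f\|_p \, |\rho(z)|$ converges to $0$ pointwise as $\varepsilon \to 0$ and is dominated by $2\|f\|_p |\rho(z)|$, which lies in $L^1(\bR^d)$ because $\rho \in L^1$. The dominated convergence theorem then sends the right-hand side to $0$, completing the proof.
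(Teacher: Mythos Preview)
Your proof is correct and follows essentially the same approach as the paper: both arguments use Minkowski's integral inequality and the change of variables $y=\varepsilon z$ to reduce the problem to continuity of translation in $L^p$, prove that continuity by approximating $f$ with a $\ms{C}_c$ function via Theorem~\ref{approximation-by-continuous-functions}, and then conclude by dominated convergence with the majorant $2\|f\|_p|\rho|$. The only cosmetic difference is the order of presentation---the paper establishes the translation continuity lemma first and then applies Minkowski, whereas you reverse the order.
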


As per Theorem \ref{convolution-smoothing}, we can make the approximating
functions as well-behaved as we would like. Indeed, we can construct
\emph{smooth} approximations to the identity, which we shall furnish
after the proof of the theorem.

\begin{proof}
We set $\Delta_f(y) = \|f(x-y) -f(x)\|_p$ for each $y \in \bR^d$. Fix $\delta>0$
and invoke Theorem \ref{approximation-by-continuous-functions} to find
$f_1 \in \ms{C}_c(\bR^d)$ with $\|f-f_1\|_p \leq \delta$. Set $f_2 = f - f_1$.
Since $f_1(x-y)$ converges uniformly to $f_1(x)$ as $y \to 0$, we see that
$\Delta_{f_1}(y) \to 0$ as $y \to 0$. Moreover, $\Delta_{f_2}(y) \leq 2\delta$, whence
\[
 \Delta_f(y) \leq \Delta_{f_1}(y) + \Delta_{f_2}(y) \to 0
\]
as $y \to 0$. By \hyperref[minkowski-integral-inequality]{Minkowski's integral inequality}\index{inequality!Minkowski's integral},
we have the following estimate:
\begin{eqnarray*}
 \|f* \rho_\ve - f\|_p
 &=& \left\| \int f(x-y)\rho_\ve(y) \, dy - f(x) \right\|_p \\
 &=& \left\| \int f(x-y)\rho_\ve(y) \, dy - f(x) \int \rho_\ve(y) \, dy \right\|_p \\
 &=& \left\| \int [f(x-y) - f(x)]\rho_\ve(y) \, dy \right\|_p \\
 &\leq& \int \| f(x-y) - f(x)\|_{L^p(x)} |\rho_\delta(y)| \, dy \\
 &=& \int \Delta_f(y) |\rho_\ve(y)| \, dy \\
 &=& \int \Delta_f (\ve y) |\rho(y)| \, dy;
\end{eqnarray*}
the last inequality follows from the \hyperref[change-of-variables]{change-of-variables formula}.

We have shown above that $\Delta_f(\ve y) \to 0$ as $\ve \to 0$. Furthermore,
we have the bound
\[
 |\Delta_f(\delta y) \rho(y)| \leq \|2 f\|_p |\rho(y)|,
\]
whence by the dominated convergence theorem we obtain
\begin{eqnarray*}
 \lim_{\ve \to 0} \|f * \rho_\ve - f\|_p
 &\leq& \lim_{\ve \to 0} \int \Delta_f(\ve y) |\rho(y)| \, dy \\
 &=& \int \lim_{\ve \to 0} \Delta_f(\ve y) |\rho(y)| \, dy,
\end{eqnarray*}
as was to be shown.
\end{proof}

\begin{cor}[Smooth approximations to the identity]\label{smooth-approximations-to-the-identity}\index{approximations to the identity!smooth}
There exists a sequence of \emph{mollifiers}\index{mollifiers} on $\bR^d$, which is a sequence
$(\rho_n)_{n=1}^\infty$ of nonnegative $\ms{C}^\infty$-maps on $\bR^d$ such that
$\supp \rho_n \subseteq \overline{B_{1/n}(0)}$ and $\int \rho_n = 1$ for
each $n \in \bN$. Furthermore, if $f \in L^p(\bR^d)$, then
$\|f*\rho_n - f\|_p \to 0$ as $n \to \infty$.
\end{cor}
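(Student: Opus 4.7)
The plan is to derive this corollary from Theorem \ref{approximations-to-the-identity} by exhibiting a single nonnegative smooth bump function $\phi$ supported in the closed unit ball with $\int \phi = 1$, and then letting $\rho_n(x) = n^d \phi(nx)$. Once this is set up, all the hard analytic work has already been done in the preceding theorem; the remaining task is purely a matter of construction plus a routine change-of-variables.

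First I would recall the standard one-variable fact that the function
\[
 \psi(t) =
 \begin{cases}
  e^{-1/t} & \text{if } t > 0, \\
  0 & \text{if } t \leq 0,
 \end{cases}
\]
is of class $\ms{C}^\infty(\bR)$. The only nontrivial point is smoothness at the origin, which follows by an induction showing that every derivative $\psi^{(k)}(t)$ on $t > 0$ has the form $P_k(1/t) e^{-1/t}$ for some polynomial $P_k$, and hence tends to $0$ as $t \downarrow 0$; a standard iterated-limit argument then gives $\psi^{(k)}(0) = 0$ for all $k$. Composing with the smooth map $x \mapsto 1 - |x|^2$ yields the candidate bump
\[
 \phi_0(x) = \psi(1 - |x|^2),
\]
which is nonnegative, smooth on $\bR^d$, strictly positive on the open unit ball, and identically zero outside it. In particular $\supp \phi_0 \subseteq \overline{B_1(0)}$ and $0 < \int \phi_0 < \infty$, so we may normalize by setting $\phi = \phi_0 / \int \phi_0$ to obtain a nonnegative $\ms{C}^\infty$ function with $\supp \phi \subseteq \overline{B_1(0)}$ and $\int \phi = 1$.

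Next I would define $\rho_n(x) = n^d \phi(nx)$ for each $n \in \bN$. Chain-rule differentiation shows $\rho_n \in \ms{C}^\infty(\bR^d)$, nonnegativity is inherited from $\phi$, and since $\phi(nx) = 0$ whenever $|nx| > 1$ we get $\supp \rho_n \subseteq \overline{B_{1/n}(0)}$. The change-of-variables formula (Theorem \ref{change-of-variables}) applied to the dilation $y = nx$ gives
\[
 \int_{\bR^d} \rho_n(x)\,dx = \int_{\bR^d} n^d \phi(nx)\,dx = \int_{\bR^d} \phi(y)\,dy = 1,
\]
which establishes all the stated properties of the mollifier sequence.

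Finally I would observe that $\rho_n$ is exactly the dilate $\phi_{1/n}$ in the notation of Theorem \ref{approximations-to-the-identity}, since $\phi_{1/n}(x) = (1/n)^{-d} \phi((1/n)^{-1} x) = n^d \phi(nx) = \rho_n(x)$. With $\rho = \phi \in L^1(\bR^d)$ and $\int \rho = 1$, the theorem yields $\|f * \phi_\ve - f\|_p \to 0$ as $\ve \to 0$; specializing to the sequence $\ve = 1/n$ gives $\|f * \rho_n - f\|_p \to 0$ as $n \to \infty$, which is the remaining assertion. There is no real obstacle here — the only point that warrants any care is verifying that $\phi_0$ is genuinely $\ms{C}^\infty$ at the boundary of its support, which reduces to the well-known smoothness of $\psi$ at the origin.
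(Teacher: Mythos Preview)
Your proof is correct and follows essentially the same approach as the paper: the bump $\phi_0(x)=\psi(1-|x|^2)=e^{1/(|x|^2-1)}$ is exactly the paper's function, and both arguments normalize, dilate by $1/n$, and invoke Theorem~\ref{approximations-to-the-identity}. You supply more detail on the smoothness of $\psi$ at the origin than the paper does, but the construction and the logic are the same.
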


\begin{figure}[!htb]
\begin{center}
\includegraphics[scale=1.4]{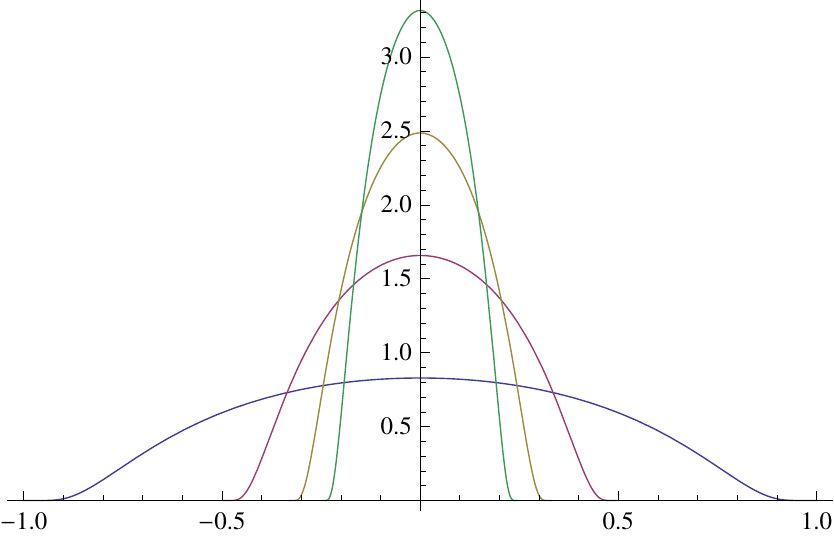}
\caption{The first four mollifiers}
\end{center}
\end{figure}

\begin{proof}
We set
\[
 \phi(x) =
 \begin{cases}
  e^{1/(|x|^2-1)} & \mbox{ if } |x| < 1; \\
  0 & \mbox{ if } |x| \geq 1
 \end{cases}
\]
and
\[
 \phi_\ve(x) = \frac{\ve^{-d} \phi(\ve^{-1} x)}{\int \phi(x) \, dx}.
\]
for each $\ve > 0$ Then each $\phi_\ve$ is a compactly supported smooth function
whose integral is 1, whence by Theorem \ref{approximations-to-the-identity}
we have
\[
 \lim_{\ve \to 0} \|f * \rho_\ve - f\|_p = 0.
\]

We now define a sequence $(\rho_n)_{n=1}^\infty$ of functions by
setting $\rho_n = \phi_{1/n}$ for each $n \in \bN$. It immediately follows from
the above construction that this is a sequence of mollifiers.
\end{proof}

The approximation theorem now follows as a simple corollary.

\begin{cor}\label{approximation-by-smooth-functions}\index{density!of smooth functions}
$\ms{C}^\infty_c(\bR^d)$ is dense in $L^p(\bR^d)$ for each $1 \leq p < \infty$.
\end{cor}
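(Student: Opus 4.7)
The plan is to combine Theorem \ref{approximation-by-continuous-functions} with the smooth approximations to the identity furnished by Corollary \ref{smooth-approximations-to-the-identity}. Specifically, given $f \in L^p(\bR^d)$ and $\ve > 0$, I would first use the density of $\ms{C}_c(\bR^d)$ in $L^p(\bR^d)$ to produce $g \in \ms{C}_c(\bR^d)$ with $\|f-g\|_p < \ve/2$. The idea then is to smooth out $g$ by convolving it with a mollifier $\rho_n$, obtaining $g \ast \rho_n$, and to verify that for sufficiently large $n$ this convolution is both in $\ms{C}^\infty_c(\bR^d)$ and within $\ve/2$ of $g$ in the $L^p$-norm.

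The three things that need to be checked are smoothness, compact support, and $L^p$-approximation. Smoothness of $g \ast \rho_n$ follows directly from Theorem \ref{convolution-smoothing}(b): since $\rho_n \in \ms{C}^\infty_c(\bR^d)$ and $g \in \ms{C}_c(\bR^d) \subseteq L^1_\loc(\bR^d)$, by commutativity of convolution we have $g \ast \rho_n = \rho_n \ast g \in \ms{C}^\infty(\bR^d)$. Compact support follows from Theorem \ref{support-of-convolution}: the support satisfies
\[
 \supp(g \ast \rho_n) \subseteq \overline{\supp g + \supp \rho_n} \subseteq \overline{\supp g + \overline{B_{1/n}(0)}},
\]
and since this is the closure of the sum of two compact sets, it is compact. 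Finally, the $L^p$-convergence $\|g \ast \rho_n - g\|_p \to 0$ as $n \to \infty$ is exactly the content of Corollary \ref{smooth-approximations-to-the-identity}, so we may pick $n$ large enough to make this less than $\ve/2$, and the triangle inequality gives $\|f - g \ast \rho_n\|_p < \ve$.

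There is no genuine obstacle here; every ingredient has been prepared. The only point that requires a moment's care is that $g$ has compact support (so $g \in L^1_\loc$ trivially, and the support sum is compact), which is why we first pass through $\ms{C}_c(\bR^d)$ rather than trying to mollify $f$ directly: mollifying $f$ would still give smoothness and $L^p$-approximation, but not compact support.
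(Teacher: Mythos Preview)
Your proof is correct. The paper takes a closely related but slightly different route: instead of first approximating $f$ by some $g \in \ms{C}_c(\bR^d)$ via Theorem \ref{approximation-by-continuous-functions}, it truncates $f$ directly by setting $f_n = (f\chi_{\overline{B_n(0)}}) * \rho_n$ and then shows $\|f - f_n\|_p \to 0$ using Young's inequality, Corollary \ref{smooth-approximations-to-the-identity}, and the dominated convergence theorem (for the truncation error $\|f - f\chi_{\overline{B_n(0)}}\|_p$). Both arguments follow the same template---produce a compactly supported approximant, then mollify---and differ only in how the compactly supported approximant is obtained: you invoke the already-established density of $\ms{C}_c$, while the paper does a bare-hands truncation. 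Your route is marginally cleaner in that the compact-support verification is immediate; the paper's route is marginally more self-contained in that it does not lean on Theorem \ref{approximation-by-continuous-functions}.
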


\begin{proof}
Fix $1 \leq p < \infty$. Let $(\rho_n)_{n=1}^\infty$ be a sequence of mollifiers,
set $B_n = \overline{B_n(0)}$ for each $n \in \bN$, and define a sequence
$(f_n)_{n=1}^\infty$ by
\[
 f_n = (f\chi_{B_n}) * \rho_n.
\]
Then \hyperref[convolution]{Young's inequality} implies that
\begin{eqnarray*}
 \|f-f_n\|_p
 &\leq& \|f - f*\rho_n\|_p + \|\rho_n * f - \rho_n * (f\chi_{B_n})\|_p \\
 &=& \|f - f*\rho_n\|_p + \|\rho_n * (f- f\chi_{B_n})\|_p \\
 &\leq& \|f - f*\rho_n\|_p + \|f-f\chi_{B_n}\|_p \|\rho_n\|_1 \\
 &=& \|f - f*\rho_n\|_p + \|f-f\chi_{B_n}\|_p.
\end{eqnarray*}
By Corollary \ref{smooth-approximations-to-the-identity}, we have
$\|f-(f*\rho_n)\|_p \to 0$ as $n \to \infty$, and the dominated convergence
theorem implies that $\|f - f\chi_{B_n}\|_p \to 0$ as $n \to \infty$.
It follows that
\[
 \lim_{n \to \infty} \|f-f_n\|_p = 0,
\]
as was to be shown.
\end{proof}

We conclude the section with another instant of convolutions
as smoothing operations. This time, we are able to recover
continuity without any smoothness on either side.

\begin{cor}\label{convolution-of-dual-Lp-functions}\index{convolution!as a smoothing operation}\index{inequality!H\"{o}lder's}
Let $1 < p < \infty$. If $f \in L^p(\bR^d)$ and $g \in L^{p'}(\bR^d)$,
then $f*g$ belongs to the space $\ms{C}_0(\bR^d)$ of continuous functions
vanishing at infinity.
\end{cor}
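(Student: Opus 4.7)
The plan is to exhibit $f*g$ as a uniform limit of functions in $\mathscr{C}_c(\mathbb{R}^d)$, and then invoke the fact that $\mathscr{C}_0(\mathbb{R}^d)$ is closed under uniform convergence (being a closed subspace of $L^\infty$ under the sup norm).

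First I would verify that $f*g$ is defined at every point and is uniformly bounded. By H\"older's inequality,
\[
 |(f*g)(x)| \leq \int |f(x-y)||g(y)|\,dy \leq \|f(x-\cdot)\|_p \|g\|_{p'} = \|f\|_p\|g\|_{p'},
\]
the last equality holding by the translation and reflection invariance of Lebesgue measure (Theorem \ref{change-of-variables}). This gives a universal sup-norm bound $\|f*g\|_\infty \leq \|f\|_p\|g\|_{p'}$ for any pair in $L^p \times L^{p'}$.

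Next I would approximate. Since $1 < p < \infty$ forces $1 < p' < \infty$, Theorem \ref{approximation-by-continuous-functions} supplies sequences $(f_n)$ in $\mathscr{C}_c(\mathbb{R}^d)$ with $\|f_n-f\|_p \to 0$ and $(g_n)$ in $\mathscr{C}_c(\mathbb{R}^d)$ with $\|g_n-g\|_{p'} \to 0$. For each $n$, Theorem \ref{convolution-smoothing}(a) guarantees $f_n*g_n \in \mathscr{C}(\mathbb{R}^d)$, while Theorem \ref{support-of-convolution} gives
\[
 \supp(f_n*g_n) \subseteq \overline{\supp f_n + \supp g_n},
\]
which is compact as the continuous image of a product of compact sets. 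Hence each $f_n*g_n \in \mathscr{C}_c(\mathbb{R}^d) \subseteq \mathscr{C}_0(\mathbb{R}^d)$.

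Finally, I would establish uniform convergence $f_n*g_n \to f*g$ by writing
\[
 f*g - f_n*g_n = (f-f_n)*g + f_n*(g-g_n)
\]
and applying the uniform bound established in the first step to each piece:
\[
 \|f*g - f_n*g_n\|_\infty \leq \|f-f_n\|_p\|g\|_{p'} + \|f_n\|_p\|g-g_n\|_{p'}.
\]
Since $\|f_n\|_p$ stays bounded and both difference norms vanish, the right side tends to zero. Thus $f*g$ is the uniform limit of elements of $\mathscr{C}_0(\mathbb{R}^d)$, hence lies in $\mathscr{C}_0(\mathbb{R}^d)$. I do not anticipate a genuine obstacle here; the only mild subtlety is making sure the uniform bound from H\"older is applied in a form that lets both $f-f_n$ and $g-g_n$ be absorbed on the correct side, which the symmetry $f*g = g*f$ handles cleanly.
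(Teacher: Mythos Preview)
Your proof is correct and follows essentially the same approach as the paper: approximate $f$ and $g$ by compactly supported functions, use H\"older to obtain the sup-norm bound $\|f*g\|_\infty \le \|f\|_p\|g\|_{p'}$, and conclude via uniform convergence of $f_n*g_n$ to $f*g$. The only cosmetic difference is that the paper approximates by $\mathscr{C}^\infty_c$ functions (via Corollary~\ref{approximation-by-smooth-functions}) rather than $\mathscr{C}_c$ functions, which is immaterial here.
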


\begin{proof}
By H\"{o}lder's inequality, $f*g$ is well-defined everywhere on $\bR^d$.
For each $\ve>0$, Corollary \ref{approximation-by-smooth-functions} furnishes
$f_\ve,g_\ve \in \ms{C}^\infty_c(\bR^d)$ such that
\[
 \|f-f_\ve\|_p \leq \frac{\ve}{2(\|f\|_p + \|g\|_{p'})}
 \htwo \mbox{and} \htwo
 \|g-g_\ve\|_{p'} \leq \frac{\ve}{2(\|f\|_p + \|g\|_{p'})}.
\]
It then follows from H\"{o}lder's inequality that
\begin{eqnarray*}
 \|f*g - f_\ve*g_\ve\|_\infty
 &\leq& \|(f-f_\ve) * g\|_\infty + \|f_\ve * (g-g_\ve)\|_\infty \\
 &\leq& \left\| \|f-f_\ve\|_p \|g\|_{p'} \right\|_\infty
 + \left\| \|f_\ve\|_p \|g-g_\ve\|_{p'} \right\|_\infty \\
 &\leq& \|f-f_\ve\|_p \|g\|_{p'} + \|f_\ve\|_p \|g-g_\ve\|_{p'} \\
 &\leq& (\|f\|_p + \|g\|_{p'})(\|f-f_\ve\|_p + \|g-g_\ve\|_{p'} \\
 &\leq& (\|f\|_p + \|g\|_{p'}) \left( \frac{2\ve}{2(\|f\|_p + \|g\|_{p'})} \right) \\
 &=& \ve,
\end{eqnarray*}
whence $f*g$ is a uniform limit of smooth functions $f_\ve * g_\ve$ with
compact support. This establishes the corollary.
\end{proof}
  
\section{The Fourier Transform}\label{s-the_fourier_transform}

\index{Fourier, Joseph}
\index{Fourier transform|(}
We now restrict our attention to the famous operator of Joseph Fourier,
the Fourier transform. To motivate the definition, we consider the
``limiting case'' of the classical Fourier series
\[
 \sum_{n=-\infty}^\infty \hat{f}(n) e^{2 \pi i n x / L}, 
\]
of $L$-periodic functions $f:[-L/2,L/2] \to \bR$, whose Fourier
coefficients are given by the formula
\[
 \hat{f}(n) = \frac{1}{L} \int_{-L/2}^{L/2} f(x) e^{-2 \pi i n x / L} \, dx
\]
Indeed, we make a simple change of variable in the above formula to obtain
\[
 \hat{f}(n) = \int_{-1/2}^{1/2} f(L x) e^{-2 \pi i n x} \, dx,
\]
and ``sending $L$ to infinity'' leads us to the following:
\[
 \hat{f}(n) = \int_{-\infty}^{\infty} f(x) e^{-2 \pi i n x} \, dx.
\]
So long as $f$ decays suitably at infinity, the integral makes sense
even when $n$ is not an integer. Therefore, we replace $n$ with a real variable
$\xi$:
\[
 \hat{f}(\xi) = \int_{-\infty}^{\infty} f(x) e^{-2 \pi i \xi x} \, dx.
\]

We promptly generalize the above ``transform'' to higher dimensions; this,
of course, requires us to take the scalar product of multi-dimensional
variables $x$ and $\xi$, which we do by taking the standard dot product:
\[
 \hat{f}(\xi) = \int_{\bR^d} f(x) e^{-2 \pi i \xi \cdot x} \, dx.
\]

\subsection{The \texorpdfstring{$L^1$}{L1} Theory}

The above expression makes sense only if $f(x) e^{-2 \pi i \xi \cdot x}$
is in $L^1$ for all $\xi$. Since $|e^{-2 \pi i \xi \cdot x}| = 1$
for all $x$ and $\xi$, this is equivalent to the condition that
$f$ is in $L^1(\bR^d)$. We are thus led to the following definition:

\begin{defin}\index{Fourier transform!on L1@on $L^1$}
The \emph{Fourier transform} of $f \in L^1(\bR^d)$ is the function $\hat{f}$
given by
\[
 \hat{f}(\xi) = \int f(x) e^{-2 \pi i \xi \cdot x} \, dx
\]
for each $\xi \in \bR^d$. We also write $\ms{F}f$ to denote the Fourier
transform of $f$.
\end{defin}

Note that $\ms{F}$ can be thought of as an operator on $L^1(\bR^d)$.
By the linearity of the integral, $\ms{F}$ is a linear operator.
The target space of $\ms{F}$, as well as a few other basic properties
of $\ms{F}$, are established in the following proposition.

\begin{prop}\label{basic-properties-of-ft}
The Fourier transform of $f \in L^1(\bR^d)$ satisfies the following properties:
\begin{enumerate}[(a)]
 \item $\|\hat{f}\|_\infty \leq \|f\|_1$. Therefore, $\ms{F}$ is a bounded
 linear operator from $L^1(\bR^d)$ into $L^\infty(\bR^d)$.
 \item $\hat{f}$ is uniformly continuous on $\bR^d$.
 \item \emph{\textbf{Riemann-Lebesgue lemma.}}\index{Riemann-Lebesgue lemma}\index{Fourier transform!decay of|see{Riemann-Lebesgue lemma}} $\hat{f}$ vanishes at infinity,
 viz., $\hat{f}(\xi) \to 0$ as $|\xi| \to \infty$.
\end{enumerate}
\end{prop}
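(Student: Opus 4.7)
The plan is to treat (a), (b), and (c) in sequence, with (a) being essentially a one-line estimate, (b) a standard dominated-convergence argument whose only subtlety is uniformity in $\xi$, and (c) the genuine substance of the proposition.

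For (a), I would simply note that $|f(x) e^{-2\pi i \xi \cdot x}| = |f(x)|$ for every $x, \xi \in \bR^d$, so the triangle inequality for integrals gives $|\hat f(\xi)| \leq \|f\|_1$ uniformly in $\xi$. This both produces the norm bound and witnesses that $\ms F : L^1 \to L^\infty$ is a bounded linear operator.

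For (b), the key observation I would make is that
\[
 \hat f(\xi + h) - \hat f(\xi) = \int f(x)\, e^{-2\pi i \xi \cdot x}\bigl(e^{-2\pi i h \cdot x} - 1\bigr)\,dx,
\]
so that $|\hat f(\xi+h) - \hat f(\xi)| \leq \int |f(x)|\,|e^{-2\pi i h \cdot x} - 1|\,dx$. Since the right-hand side depends on $h$ but not on $\xi$, proving uniform continuity reduces to showing this integral vanishes as $|h| \to 0$, which is immediate from the dominated convergence theorem with pointwise majorant $2|f| \in L^1(\bR^d)$.

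Part (c) will be the main obstacle. My plan is to exploit the identity $e^{-\pi i} = -1$, which allows us to rewrite $e^{-2\pi i \xi \cdot x} = -e^{-2\pi i \xi \cdot (x + h_\xi)}$ with $h_\xi = \xi/(2|\xi|^2)$ for each $\xi \neq 0$. Applying this and invoking the change-of-variables formula (Theorem \ref{change-of-variables}) for the translation $y = x + h_\xi$ gives
\[
 \hat f(\xi) = -\int f(y - h_\xi)\, e^{-2\pi i \xi \cdot y}\,dy.
\]
Averaging with the original definition of $\hat f(\xi)$ will then yield the clean estimate
\[
 |\hat f(\xi)| \leq \tfrac{1}{2} \int |f(y) - f(y - h_\xi)|\,dy.
\]
It then suffices to establish continuity of translation in $L^1$: the right-hand side tends to $0$ as $|h_\xi| \to 0$. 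I would prove this by a three-epsilons approximation, using Theorem \ref{approximation-by-continuous-functions} to approximate $f$ in $L^1$ by a compactly supported continuous function $g$, for which the corresponding statement follows from uniform continuity of $g$ together with dominated convergence on a fixed compact neighborhood of $\supp g$. Since $|h_\xi| = 1/(2|\xi|) \to 0$ as $|\xi| \to \infty$, this delivers the Riemann-Lebesgue conclusion.
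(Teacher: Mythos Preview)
Your proofs of (a) and (b) are correct and essentially match the paper's approach: both are one-line estimates using $|e^{-2\pi i \xi \cdot x}| = 1$ together with dominated convergence for (b). Your formulation of (b) is in fact cleaner, since you isolate the factor $e^{-2\pi i h\cdot x}-1$ and observe that the resulting bound is independent of $\xi$, making the uniformity explicit.

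For (c) your argument is correct but takes a genuinely different route from the paper. The paper proves the Riemann--Lebesgue lemma by an explicit computation: it evaluates $\widehat{\chi_Q}$ for the unit cube $Q=[0,1]^d$ as a product of one-dimensional factors $(e^{-2\pi i \xi_n}-1)/(-2\pi i \xi_n)$, observes that this tends to zero as $|\xi|\to\infty$, and then extends by linearity to step functions and by density (Theorem~\ref{approximation-by-step-functions}) to all of $L^1$. Your approach instead uses the half-period shift $h_\xi = \xi/(2|\xi|^2)$ to write $\hat f(\xi) = -\widehat{\tau_{h_\xi} f}(\xi)$, average, and reduce the problem to continuity of translation in $L^1$, which you then prove via Theorem~\ref{approximation-by-continuous-functions}. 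Both arguments ultimately rest on a density result from \S\ref{s-approximation_in_lp_spaces}, but yours avoids any explicit Fourier computation and makes the cancellation mechanism (oscillation forcing decay) more transparent; the paper's version, on the other hand, exhibits the decay concretely for the basic building blocks and is closer in spirit to a direct verification.
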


\begin{proof}
(a) It suffices to observe that
\[
 \|\hat{f}\|_\infty =
 \left\| \int f(x) e^{-2 \pi i \xi \cdot x} \, dx \right\|_\infty
 \leq \int |f(x)| |e^{-2 \pi i \xi \cdot x}| \, dx
 \leq \|f\|_1.
\]

(b) Since $|f(x+h)-f(x)| \leq 2|f(x)|$ for all sufficiently small $h \in \bR^d$,
it follows from the dominated convergence theorem that
\begin{eqnarray*}
 \lim_{h \to infty} |\hat{f}(\xi+h) - \hat{f}(\xi)|
 &\leq& \lim_{h \to infty} \int |f(x+h) - f(x)||e^{-2 \pi i \xi \cdot x}| \, dx \\
 &=& \int \lim_{h \to infty} |f(x+h) - f(x)| \, dx \\
 &=& 0.
\end{eqnarray*}

(c) Let $Q = [0,1]^d$. By \hyperref[fubini-tonelli]{Tonelli's theorem},
\[
 \widehat{\chi_Q(\xi)} =
 \int \chi_Q(x) e^{- 2 \pi i \xi \cdot x} \, dx
 = \prod_{n=1}^d \int_0^1 e^{-2 \pi i x_n \xi_n} \, dx_n
 = \prod_{n=1}^d \frac{e^{-2 \pi i \xi} - 1}{ - 2 \pi i \xi},
\]
which tends to zero as $|\xi| \to \infty$. By linearity, the Riemann-Lebesgue
lemma holds for all simple functions over cubes. Given a general integrable
function $f$ on $\bR^d$, we can invoke Theorem \ref{approximation-by-step-functions}
to find a simple function $s_\ve$ over cubes corresponding to each $\ve>0$,
satisfying the estimate
\[
 \|f-f_\ve\|_1 < \ve.
\]
Since $\hat{f_\ve}(\xi) \to 0$ as $|\xi| \to \infty$, we can find a constant $M$
such that $|\hat{f_\ve}(\xi)| < \ve$ for all $|\xi|>M$. It then follows that
\begin{eqnarray*}
 |\hat{f}(\xi)|
 &\leq& |\widehat{f_\ve}(\xi)| + \int |f(x)-f_\ve(x)||e^{-2 \pi i \xi \cdot x}| \, dx \\
 &=& |\widehat{f_\ve}(\xi)| + \|f-f_\ve\|_\ve \\
 &\leq& 2 \ve
\end{eqnarray*}
for all $|\xi| > M$, whence $\hat{f}(\xi) \to 0$ as $|\xi| \to \infty$.
\end{proof}

The Fourier transform behaves well under a number of symmetry operations in
the Euclidean space. The proof of the following proposition consists of
trivial computations and is thus omitted.

\begin{prop}\label{symmetry-invariance-of-fourier-transform}\index{Fourier transform!symmetry invariance of}
Let $f \in L^1(\bR^d)$ and $\tau \in \bR^d$.
\begin{enumerate}[(a)]
 \item $\ms{F}$ turns translation into rotation:\index{translation!of functions}
 if $\tau_hf(x)=f(x-h)$, then
 \[
  \widehat{\tau_hf}(\xi) = e^{-2 \pi i h \cdot \xi} \hat{f}(\xi).
 \]
 \item $\ms{F}$ turns rotation into translation:\index{rotation!of functions}
 if $e_hf(x) = e^{2 \pi i x \cdot h}f(x)$, then
 \[
  \widehat{e_h f}(\xi) = \tau_h\hat{f}(\xi).
 \]
 \item $\ms{F}$ commutes with reflection:\index{reflection!of functions}
 if $\tilde{f}(x) = f(-x)$, then
 \[
  \hat{\tilde{f}}(\xi) = \tilde{\hat{f}}(\xi).
 \]
 \item $\ms{F}$ scales nicely under dilation:\index{dilation!of functions}
 if we set $\delta_af(x) = f(ax)$ for each $a>0$, then
 \[
  \widehat{\delta_a f}(\xi) = a^{-d} \delta_{a^{-1}} \hat{f}(\xi)
 = a^{-d}\hat{f}(a^{-1}\xi)
 \]
 for all $a > 0$. \qed
\end{enumerate}
\end{prop}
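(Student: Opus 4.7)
The proposal is to verify each of the four identities by direct computation from the definition
\[
 \widehat{g}(\xi) = \int g(x) e^{-2\pi i \xi \cdot x} \, dx,
\]
using only the exponential identity $e^{a+b} = e^a e^b$ and the change-of-variables formulas recorded at the end of Section 1 (in particular translation invariance, reflection invariance, and the $\delta^d$-scaling under dilation of the Lebesgue measure). Since $f \in L^1(\bR^d)$ and each of $\tau_h f$, $e_h f$, $\tilde f$, $\delta_a f$ lies in $L^1(\bR^d)$ as well (translation, multiplication by a unimodular factor, reflection, and dilation all preserve the $L^1$-norm up to a factor of $a^{-d}$), every integral in sight is absolutely convergent and the manipulations below are justified by Fubini-free change of variable arguments.

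For (a), I would write
\[
 \widehat{\tau_h f}(\xi) = \int f(x-h) e^{-2\pi i \xi \cdot x}\, dx
\]
and substitute $y = x - h$; translation invariance of the Lebesgue measure contributes nothing, and pulling the resulting $e^{-2\pi i \xi \cdot h}$ out of the integral leaves $\hat f(\xi)$ inside. For (b), no change of variable is needed: combining the exponentials directly gives $\widehat{e_h f}(\xi) = \int f(x) e^{-2\pi i (\xi - h)\cdot x}\, dx = \hat f(\xi - h) = \tau_h \hat f(\xi)$. For (c), I substitute $y = -x$ in $\int f(-x) e^{-2\pi i \xi \cdot x}\, dx$; the Jacobian is $1$ in absolute value, and the sign flip in the exponent produces $\int f(y) e^{-2\pi i (-\xi)\cdot y}\, dy = \hat f(-\xi) = \tilde{\hat f}(\xi)$. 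For (d), I substitute $y = ax$ so that $dx = a^{-d}\, dy$, which turns $\int f(ax) e^{-2\pi i \xi \cdot x}\, dx$ into $a^{-d}\int f(y) e^{-2\pi i (a^{-1}\xi)\cdot y}\, dy = a^{-d}\hat f(a^{-1}\xi)$.

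There is no real obstacle here; the only point requiring any care is bookkeeping of the sign and the factor of $a$ inside the exponential after each substitution, to make sure that the transformation on the $x$-side corresponds to the advertised transformation on the $\xi$-side (translation $\leftrightarrow$ modulation, reflection $\leftrightarrow$ reflection, dilation by $a$ $\leftrightarrow$ dilation by $a^{-1}$ together with the Jacobian prefactor $a^{-d}$). This is exactly why the author labels the computation trivial and omits it.
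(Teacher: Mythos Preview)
Your proposal is correct and is exactly the routine change-of-variables verification the paper has in mind; the paper itself omits the proof, noting that it ``consists of trivial computations,'' and marks the statement with \qed. There is nothing to add.
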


The Fourier transform also behaves quite nicely under
differentiation. Indeed, the Fourier transform turns
differentiation into multiplication by a polynomial.

\begin{prop}\label{differentiation-to-multiplication}\index{Fourier transform!differentiation of}\index{differentiation!of the Fourier transform|see{\\ Fourier transform}}
Let $f \in L^1(\bR^d)$ and suppose that $x_n f(x_1,\ldots,x_n,\ldots,x_d)$
is an $L^1$ function as well. Then $\hat{f}(\xi_1,\ldots,\xi_n,\ldots,x_d)$ is continuously
differentiable with respect to $\xi_n$ and
\[
 \frac{\partial}{\partial \xi_k} \hat{f}(\xi)
 = \ms{F}(-2 \pi i x_n f(x))(\xi).
\]
More generally, if $P$ is a polynomial in $d$ variables, then
\[
 P(D)\hat{f}(\xi) = \ms{F}(P(-2 \pi i x)f(x))(\xi)
 \htwo \mbox{and} \htwo 
 \ms{F}(P(D)f)(\xi) = P(2 \pi i \xi)\hat{f}(\xi).
\]
\end{prop}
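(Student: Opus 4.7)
The plan is to prove the three claims in the order they appear. For the first identity, I would differentiate under the integral sign. Fix $\xi \in \bR^d$ and consider the difference quotient
\[
 \frac{\hat{f}(\xi + h e_n) - \hat{f}(\xi)}{h}
 = \int f(x) \, e^{-2\pi i \xi \cdot x} \, \frac{e^{-2\pi i h x_n} - 1}{h} \, dx.
\]
The integrand converges pointwise to $-2\pi i x_n f(x) e^{-2\pi i \xi \cdot x}$ as $h \to 0$, and the mean-value estimate $|e^{-2\pi i h x_n} - 1| \leq 2\pi |h||x_n|$ gives the uniform bound $2\pi |x_n f(x)|$, which is integrable by hypothesis. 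The dominated convergence theorem then yields
\[
 \frac{\partial}{\partial \xi_n}\hat{f}(\xi) = \int (-2\pi i x_n) f(x)\, e^{-2\pi i \xi \cdot x}\, dx = \ms{F}(-2\pi i x_n f(x))(\xi).
\]
Continuity of this partial derivative in $\xi$ follows immediately by applying Proposition \ref{basic-properties-of-ft}(b) to $-2\pi i x_n f(x) \in L^1(\bR^d)$.

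For the second assertion, I would iterate the preceding step. Assuming (implicitly) that $x^\alpha f(x) \in L^1(\bR^d)$ for every multi-index $\alpha$ with $|\alpha| \leq \deg P$, repeated application of the one-variable formula gives $D^\alpha \hat{f}(\xi) = \ms{F}((-2\pi i x)^\alpha f(x))(\xi)$ for each such $\alpha$, and linearity of $\ms{F}$ yields $P(D)\hat{f} = \ms{F}(P(-2\pi i x) f)$.

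For the third assertion I would use integration by parts. By linearity it suffices to verify $\ms{F}(\partial_{x_n} f)(\xi) = 2\pi i \xi_n \hat{f}(\xi)$, whence the full formula follows by iteration. Writing the integral over $\bR^d$ as an iterated integral via \hyperref[fubini-tonelli]{Fubini's theorem} and integrating by parts in the $x_n$-variable gives
\[
 \int_{-\infty}^{\infty} \partial_{x_n} f(x)\, e^{-2\pi i \xi \cdot x}\, dx_n
 = \bigl[f(x) e^{-2\pi i \xi \cdot x}\bigr]_{x_n=-\infty}^{x_n=\infty}
 + 2\pi i \xi_n \int_{-\infty}^{\infty} f(x)\, e^{-2\pi i \xi \cdot x}\, dx_n,
\]
and the main obstacle is precisely the vanishing of the boundary terms. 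I expect this to be the chief technical step: under the implicit hypothesis that $\partial_{x_n} f \in L^1(\bR^d)$ and $f$ is (absolutely) continuous in $x_n$, the fundamental theorem of calculus shows that $f(x)$ has limits as $x_n \to \pm\infty$ for a.e.\ choice of the remaining coordinates, and integrability of $f$ forces those limits to be zero. Granting this, integrating in the remaining variables and using $|e^{-2\pi i \xi \cdot x}| = 1$ yields the claimed identity, and iteration produces $\ms{F}(P(D)f)(\xi) = P(2\pi i \xi)\hat{f}(\xi)$.
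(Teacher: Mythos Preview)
Your argument for the first identity is essentially the paper's: form the difference quotient, bound the integrand by $2\pi|x_n f(x)|$, and pass the limit inside via dominated convergence. The paper phrases this slightly differently by invoking Proposition~\ref{symmetry-invariance-of-fourier-transform}(b) to write $\hat{f}(\xi+h)-\hat{f}(\xi)=\ms{F}\bigl((e^{-2\pi i x\cdot h}-1)f(x)\bigr)(\xi)$ before applying dominated convergence, but the substance is identical; your explicit mean-value bound is a welcome addition.

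One point worth noting: the paper's proof actually stops after the first identity and the remark that the second follows by linearity, and says nothing about $\ms{F}(P(D)f)(\xi)=P(2\pi i\xi)\hat{f}(\xi)$. Your integration-by-parts treatment of that third identity, with the honest discussion of the boundary terms and the implicit regularity needed to kill them, therefore goes beyond what the paper supplies.
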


\begin{proof}
Let $h = (0,\ldots,h_n,\ldots,0)$ be a nonzero vector along the $n$th coordinate
axis. By Proposition \ref{symmetry-invariance-of-fourier-transform} (ii) and
the dominated convergence theorem, we have
\begin{eqnarray*}
 \lim_{h_n \to 0} \frac{\hat{f}(\xi+h) - \hat{f}(\xi)}{h_n}
 &=& \lim_{h_n \to 0}
 \ms{F} \left( \left( \frac{e^{-2 \pi i x \cdot h} - 1}{h_n} \right) f(x) \right)(\xi) \\
 &=& \ms{F} (- 2 \pi i x_n f(x)) (\xi),
\end{eqnarray*}
as was to be shown. The second assertion now follows from linearity of the
differential operator.
\end{proof}

To rid ourselves of technical issues that arise in dealing with non-smooth functions,
it will be convenient to work in a space of smooth functions that behaves well under
the key operations in harmonic analysis. Certainly, we would like the space to be
closed under the Fourier transform. Proposition \ref{differentiation-to-multiplication}
then implies that the space must be closed under multiplication by polynomials as well.
We are thus led to the following definition, named after Laurent Schwartz:

\begin{defin}\label{defin-schwartz-space}\index{Schwartz, Laurent!space}
The \emph{Schwartz space} $\ms{S}(\bR^d)$ consists of
functions $f \in \ms{C}^\infty(\bR^d)$ with the decay condition
\[
 \sup_{x \in \bR^d} |x^\alpha D^\beta f(x)| < \infty
\]
for each pair of multi-indices $\alpha$ and $\beta$.
\end{defin}

We remark that
\[
 \ms{C}_c^\infty(\bR^d) \subseteq \ms{S}(\bR^d) \subseteq L^p(\bR^d)
\]
for all $1 \leq p \leq \infty$. Since $\ms{C}^\infty_c(\bR^d)$ contains
\hyperref[smooth-approximations-to-the-identity]{mollifiers},\index{mollifiers}
$\ms{S}(\bR^d)$ is nonempty. In fact, $\ms{C}^\infty_c(\bR^d)$ is
dense in $L^p(\bR^d)$, whence so is $\ms{S}(\bR^d)$.\index{density!of the Schwartz space} 

An equivalent definition for a Schwartz
function is a function $f \in \ms{C}^\infty(\bR^d)$ that satisfies
the growth condition
\[
 \sup_{x \in \bR^d} \langle x \rangle^n |D^\alpha f(x)| < \infty
\]
for all natural numbers $n$ and multi-indices $\alpha$, where
\[
 \langle x \rangle = \sqrt{1+x^2}.
\]

As noted, the Schwartz space is closed under the action of
the Fourier transform. This basic fact is an immediate
corollary of Proposition \ref{differentiation-to-multiplication}.

\begin{prop}\label{fourier-transform-of-schwartz-is-schwartz}\index{Fourier transform!on the Schwartz space}
If $f \in \ms{S}(\bR^d)$, then $\hat{f} \in \ms{S}(\bR^d)$. \qed
\end{prop}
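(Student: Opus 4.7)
My plan is to exploit Proposition \ref{differentiation-to-multiplication}, which already does almost all of the work: it converts differentiation of $\hat{f}$ into multiplication by a polynomial on the spatial side, and multiplication of $\hat{f}$ by a polynomial into differentiation on the spatial side. To show $\hat{f} \in \ms{S}(\bR^d)$, I need two things: that $\hat{f}$ is smooth, and that $\xi^\alpha D^\beta \hat{f}(\xi)$ is bounded for every pair of multi-indices $\alpha,\beta$. Both will follow once I observe that $\ms{S}(\bR^d)$ is stable under multiplication by monomials and under differentiation (immediate from the definition and the Leibniz rule), and that $\ms{S}(\bR^d) \subseteq L^1(\bR^d)$.

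First, I would verify smoothness. Given $f \in \ms{S}(\bR^d)$, the function $x_n f(x)$ lies in $\ms{S}(\bR^d) \subseteq L^1(\bR^d)$, so Proposition \ref{differentiation-to-multiplication} applies and gives $\partial_{\xi_n} \hat{f}(\xi) = \ms{F}(-2\pi i x_n f(x))(\xi)$; iterating with the monomials $x^\beta f(x) \in \ms{S}(\bR^d)$ shows that $\hat{f}$ has continuous partial derivatives of every order, with
\[
 D^\beta \hat{f}(\xi) = \ms{F}((-2\pi i x)^\beta f(x))(\xi).
\]

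Next, the decay estimate. Fix multi-indices $\alpha$ and $\beta$, and set $g_\beta(x) = (-2\pi i x)^\beta f(x)$. Since $g_\beta \in \ms{S}(\bR^d)$, the derivative $D^\alpha g_\beta$ also belongs to $\ms{S}(\bR^d)$, hence to $L^1(\bR^d)$. Applying the second identity of Proposition \ref{differentiation-to-multiplication} to $g_\beta$, I obtain
\[
 (2\pi i \xi)^\alpha D^\beta \hat{f}(\xi)
 = (2\pi i \xi)^\alpha \widehat{g_\beta}(\xi)
 = \ms{F}(D^\alpha g_\beta)(\xi).
\]
By Proposition \ref{basic-properties-of-ft}(a), the right-hand side is bounded in $\xi$ by $\|D^\alpha g_\beta\|_1$, so
\[
 \sup_{\xi \in \bR^d} |\xi^\alpha D^\beta \hat{f}(\xi)|
 \leq (2\pi)^{|\beta| - |\alpha|} \|D^\alpha g_\beta\|_1 < \infty.
\]
Combined with the smoothness established above, this places $\hat{f}$ in $\ms{S}(\bR^d)$.

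There is no real obstacle here beyond bookkeeping; the key point is simply that Proposition \ref{differentiation-to-multiplication} exchanges the two operations used to measure Schwartz decay, and $\ms{S}(\bR^d)$ is designed precisely so that both operations preserve it and land in $L^1$.
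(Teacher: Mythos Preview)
Your argument is correct and is exactly the approach the paper intends: the paper does not write out a proof but simply marks the proposition as an immediate corollary of Proposition~\ref{differentiation-to-multiplication}, and you have filled in precisely those details. The minor constant $(2\pi)^{|\beta|-|\alpha|}$ is off (the $(2\pi)^{|\beta|}$ is already absorbed into $g_\beta$), but this is irrelevant to the finiteness conclusion.
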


The Schwartz space behaves well under other important operations
in harmonic analysis as well. We shall take up this matter
in \S\ref{s-generalized_functions}.
 
We now turn to one of the fundamental questions in classical
Fourier analysis: given the Fourier transform of a
function, can we find the function itself? We begin with a useful
proposition that allows us to ``push the hat around'':

\begin{prop}[Multiplication formula]\label{multiplication-formula}\index{multiplication formula}
If $f,g \in L^1(\bR^d)$, then
\[
 \int \hat{f}(t) g(t) \, dt = \int f(t) \hat{g}(t) \, dt.
\]
\end{prop}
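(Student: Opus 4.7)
The plan is to recognize this as a direct application of the Fubini-Tonelli theorem (Theorem \ref{fubini-tonelli}) to the function
\[
 F(x,t) = f(x) g(t) e^{-2 \pi i t \cdot x}
\]
on $\bR^d \times \bR^d$. Once we can freely exchange the order of integration, the formula falls out immediately by unfolding the definition of $\hat{f}$ on the left-hand side and of $\hat{g}$ on the right-hand side.

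First, I would verify that $F$ is $(\ms{L}^d \times \ms{L}^d)$-measurable. This is routine: the maps $(x,t) \mapsto f(x)$ and $(x,t) \mapsto g(t)$ are measurable as compositions with the coordinate projections, and $(x,t) \mapsto e^{-2\pi i t \cdot x}$ is continuous, hence measurable. Since products of complex-valued measurable functions are measurable, $F$ is measurable on $\bR^d \times \bR^d$.

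Next, I would apply Tonelli's theorem to $|F(x,t)| = |f(x)||g(t)|$, noting that
\[
 \int_{\bR^d \times \bR^d} |F(x,t)| \, d(x,t)
 = \int |f(x)| \, dx \int |g(t)| \, dt
 = \|f\|_1 \|g\|_1 < \infty,
\]
so $F \in L^1(\bR^d \times \bR^d)$. Fubini's theorem then permits swapping the order of integration. Writing out
\[
 \int \hat{f}(t) g(t) \, dt
 = \int \left( \int f(x) e^{-2 \pi i t \cdot x} \, dx \right) g(t) \, dt
 = \int f(x) \left( \int g(t) e^{-2 \pi i t \cdot x} \, dt \right) dx
 = \int f(x) \hat{g}(x) \, dx,
\]
completes the proof.

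There is no real obstacle here; the only subtlety is the measurability of $F$ on the product space, which is why I flagged it explicitly. Everything else is a bookkeeping computation, and the symmetry of the exponent $e^{-2\pi i t \cdot x}$ in $t$ and $x$ is what makes the two sides land on the correct integrals after the swap.
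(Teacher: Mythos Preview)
Your proof is correct and follows the same approach as the paper: apply Fubini's theorem to swap the order of integration in the double integral of $f(x)g(t)e^{-2\pi i t\cdot x}$. You are in fact more careful than the paper, which simply invokes Fubini and writes out the computation without explicitly verifying measurability or the $L^1$ bound $\|f\|_1\|g\|_1$ on the product space.
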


\begin{proof}
By \hyperref[fubini-tonelli]{Fubini's theorem},
\begin{eqnarray*}
 \int \hat{f}(t) g(t) \, dt
 &=& \int \left( \int f(x) e^{-2 \pi i t \cdot x} \, dx \right) g(t) \, dt \\
 &=& \int \left( \int g(t) e^{-2 \pi i t \cdot x} \, dt \right) f(x) \, dx \\
 &=& \int \hat{g}(x) f(x) \, dx \\
 &=& \int f(t) \hat{g}(t) \, dt,
\end{eqnarray*}
as desired.
\end{proof}

We shall also need the following computation:

\begin{prop}\label{gaussian}\index{Fourier transform!of the Gaussian}
For all $\ve > 0$, we have
\[
 \ms{F}\left(e^{-\ve \pi |x|^2}\right)(\xi) = \ve^{-d/2}e^{-\ve^{-1}\pi|\xi|^2}.
\]
This, in particular shows that the Fourier transform of the \emph{Gaussian}
\[
\Gamma(x) = e^{-\pi |x|^2}
\]
is the Gaussian itself.
\end{prop}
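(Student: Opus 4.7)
The plan is to first reduce the general $\varepsilon$ case to the unit case $\varepsilon = 1$ by invoking the dilation rule from Proposition \ref{symmetry-invariance-of-fourier-transform}(d), and then to prove that $\widehat{\Gamma} = \Gamma$ by solving a first-order ODE. Specifically, writing $a = \sqrt{\varepsilon}$, we have $e^{-\varepsilon\pi|x|^2} = \Gamma(ax) = (\delta_a \Gamma)(x)$, so once the identity $\widehat{\Gamma} = \Gamma$ is established, the dilation rule gives
\[
 \ms{F}(e^{-\varepsilon\pi|x|^2})(\xi) = a^{-d}\widehat{\Gamma}(a^{-1}\xi) = \varepsilon^{-d/2} e^{-\varepsilon^{-1}\pi|\xi|^2},
\]
as desired.

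To prove $\widehat{\Gamma} = \Gamma$, I would first reduce to dimension one via Tonelli's theorem: since $\Gamma(x) = \prod_{n=1}^d e^{-\pi x_n^2}$ and $e^{-2\pi i \xi\cdot x}$ factors across coordinates, the $d$-dimensional Fourier transform splits as a product of one-dimensional Fourier transforms of the univariate Gaussian $\gamma(t) = e^{-\pi t^2}$. Thus it suffices to show $\widehat{\gamma}(\xi) = e^{-\pi\xi^2}$ on $\bR$.

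For the one-dimensional claim, set $F(\xi) = \widehat{\gamma}(\xi)$. The key observation is that $\gamma$ satisfies the ODE $\gamma'(t) = -2\pi t\, \gamma(t)$. Since $\gamma$ and $t\gamma(t)$ both lie in $L^1(\bR)$, Proposition \ref{differentiation-to-multiplication} applies in both directions: on the one hand,
\[
 F'(\xi) = \ms{F}(-2\pi i t\, \gamma(t))(\xi),
\]
and on the other hand,
\[
 \ms{F}(\gamma'(t))(\xi) = 2\pi i \xi\, F(\xi).
\]
Combining these with the ODE $\gamma' = -2\pi t\gamma$ yields
\[
 2\pi i \xi\, F(\xi) = \ms{F}(\gamma'(t))(\xi) = \ms{F}(-2\pi t\, \gamma(t))(\xi) = -i \cdot F'(\xi),
\]
which rearranges to $F'(\xi) = -2\pi\xi\, F(\xi)$. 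Solving this separable ODE gives $F(\xi) = F(0) e^{-\pi\xi^2}$, and the initial value $F(0) = \int_{-\infty}^\infty e^{-\pi t^2}\,dt = 1$ is the classical Gaussian integral (provable, e.g., by squaring and passing to polar coordinates).

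The main obstacle is really a bookkeeping one: ensuring that Proposition \ref{differentiation-to-multiplication} legitimately applies to both $\gamma$ and $\gamma'$ (which requires $t\gamma(t) \in L^1$, true by Gaussian decay) so that we can legitimately equate the two expressions for $\ms{F}(\gamma')$. Once the ODE is in hand, the rest reduces to standard calculus and the dilation rule.
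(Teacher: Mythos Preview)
Your proof is correct but takes a different route from the paper's. The paper computes the one-dimensional integral $\int_{-\infty}^\infty e^{-px^2}e^{-qix}\,dx$ directly by completing the square in the exponent and reducing to the standard Gaussian integral $\int e^{-x^2}\,dx = \sqrt{\pi}$, then plugs in $p=\varepsilon\pi$, $q=2\pi\xi$, and extends to $\bR^d$ by Fubini. You instead first reduce to $\varepsilon=1$ via the dilation rule (Proposition~\ref{symmetry-invariance-of-fourier-transform}(d)), then use the ODE argument: the identity $\gamma'(t)=-2\pi t\,\gamma(t)$ together with Proposition~\ref{differentiation-to-multiplication} forces $\widehat{\gamma}$ to satisfy the same ODE, and the initial value $\widehat{\gamma}(0)=1$ pins it down. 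The paper's approach is computationally direct but tacitly relies on a contour shift in the complex plane (the substitution $x\mapsto x+\tfrac{qi}{2p}$ moves the line of integration off the real axis, which strictly speaking requires Cauchy's theorem). Your ODE argument avoids this and stays entirely within the real-variable machinery already developed in the chapter, at the cost of invoking Proposition~\ref{differentiation-to-multiplication} in both directions.
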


\begin{proof}
Recall that
\[
 \int_{-\infty}^\infty e^{-x^2} dx = \sqrt{\pi}.
\]
We first consider the one-dimensional case. In fact, we fix
positive real numbers $p$ and $q$ and compute a more general
integral:
\begin{eqnarray*}
 \int_{-\infty}^\infty e^{-p x^2} e^{- q i x} \, dx
 &=& \int_{-\infty}^\infty e^{-p(x^2 + \frac{qi}{p}x)} \, dx \\
 &=& \int_{-\infty}^\infty e^{-p(x + \frac{qi}{2p})^2 - \frac{q^2}{4p}} \, dx \\
 &=& e^{-q^2 / 4p} \int_{-\infty}^\infty e^{-p(x + \frac{qi}{2p})^2} \\
 &=& e^{-q^2 / 4p} \int_{-\infty}^\infty e^{-(\sqrt{p}x)^2} \\
 &=& \frac{e^{-q^2 / 4p}}{\sqrt{p}} \int_{-\infty}^\infty e^{-x^2} \\
 &=& e^{-q^2 / 4p} \sqrt{\frac{\pi}{p}}.
\end{eqnarray*}
Plugging in $p = \ve \pi$ and $q = 2 \pi \xi$, we have
\[
 \ms{F}\left(e^{-\ve \pi x^2}\right)(\xi) = \ve^{-1/2} e^{-\ve^{-1} \pi \xi^2}
\]
whenever $x,\xi \in \bR$.

It now suffices to observe that
\begin{eqnarray*}
 \ms{F}\left(e^{-\ve \pi |x|^2}\right)(\xi)
 &=& \int e^{-\ve \pi |x|^2} e^{-2 \pi i \xi \cdot x} \, dx \\
 &=& \prod_{n=1}^d \int e^{-\ve \pi x_n^2} e^{-2 \pi i \xi_nx_n} \, dx_n \\
 &=& \prod_{n=1}^d \ms{F}\left(e^{-\ve \pi x_n^2}\right)(\xi_n) \\
 &=& \prod_{n=1}^d \ve^{-1/2} e^{-\ve^{-1} \pi \xi_n^2} \\
 &=& \ve^{-d/2}e^{-\ve^{-1}\pi|\xi|^2},
\end{eqnarray*}
as desired.
\end{proof}

We now present a preliminary solution to the inversion problem, which is
sufficient for the present thesis. A more detailed discussion can be
found in \S\S\ref{fr-fourier-inversion-problem}.

\begin{theorem}[Fourier inversion theorem]\label{fourier-inversion-formula}\index{Fourier inversion!on L1@on $L^1$|(}\index{Fourier transform!inverse of|see{Fourier inversion}}
If $f \in L^1(\bR^d)$ and $\hat{f} \in L^1(\bR^d)$, then
\[
 f(x) = \int \hat{f}(\xi) e^{2 \pi i \xi \cdot x} \, d\xi
\]
for almost every $x \in \bR^d$.
\end{theorem}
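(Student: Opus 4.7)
The plan is to use the Gaussian as a convergence factor, since a direct application of Fubini to $\int \hat{f}(\xi) e^{2\pi i\xi\cdot x} d\xi = \int\int f(y) e^{2\pi i \xi\cdot(x-y)}\, dy\, d\xi$ is not permitted: $\xi \mapsto e^{2\pi i\xi\cdot(x-y)}$ is not integrable. The Gaussian $e^{-\ve\pi|\xi|^2}$ gives an integrable cutoff that vanishes in the limit, and its self-dual behavior under $\ms{F}$ (Proposition \ref{gaussian}) makes the computation clean.

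First, for fixed $x \in \bR^d$ and $\ve > 0$, I would set $g_{x,\ve}(\xi) = e^{2\pi i x\cdot\xi}\,e^{-\ve\pi|\xi|^2}$ and compute its Fourier transform. By Proposition \ref{symmetry-invariance-of-fourier-transform}(b), modulation by $e^{2\pi i x \cdot \xi}$ turns into translation by $x$, and Proposition \ref{gaussian} handles the Gaussian, so
\[
 \widehat{g_{x,\ve}}(y) = \ve^{-d/2}\,e^{-\pi|y-x|^2/\ve}.
\]
The multiplication formula (Proposition \ref{multiplication-formula}), applied to $f$ and $g_{x,\ve}$ (both in $L^1$), then yields
\[
 \int \hat{f}(\xi)\,e^{2\pi i x\cdot\xi}\,e^{-\ve\pi|\xi|^2}\, d\xi
 \;=\; \int f(y)\,\ve^{-d/2}\,e^{-\pi|y-x|^2/\ve}\, dy
 \;=\; (f * K_\ve)(x),
\]
where $K_\ve(y) = \ve^{-d/2} e^{-\pi|y|^2/\ve}$. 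Observe that with the substitution $t=\sqrt{\ve}$, $K_\ve = \rho_t$ for $\rho(y) = e^{-\pi|y|^2}$, and $\int \rho = 1$ (this is the $\xi=0$ value of its own Fourier transform computed in Proposition \ref{gaussian}).

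Next I would take $\ve \to 0^+$ in both sides separately. For the left-hand side, since $\hat{f} \in L^1(\bR^d)$ and the integrand is pointwise bounded by $|\hat{f}(\xi)|$, the dominated convergence theorem gives, for every $x$,
\[
 \lim_{\ve \to 0^+} \int \hat{f}(\xi)\,e^{2\pi i x\cdot\xi}\,e^{-\ve\pi|\xi|^2}\, d\xi
 \;=\; \int \hat{f}(\xi)\,e^{2\pi i x\cdot\xi}\, d\xi.
\]
For the right-hand side, Theorem \ref{approximations-to-the-identity} (the approximation-to-the-identity theorem) applied to the family $\{K_\ve\}$ yields $\|f*K_\ve - f\|_1 \to 0$. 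Extracting a sequence $\ve_n \to 0$ along which $(f*K_{\ve_n})(x) \to f(x)$ pointwise almost everywhere (using the partial converse to dominated convergence in $L^1$ recalled earlier in the chapter) and equating the two limits produces the identity $f(x) = \int \hat{f}(\xi) e^{2\pi i\xi\cdot x}\, d\xi$ for almost every $x$.

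The main obstacle is precisely the incompatibility between the two types of convergence on the two sides: pointwise-everywhere convergence on the Fourier side versus $L^1$-norm convergence on the convolution side. Reconciling them requires the subsequence extraction in the final step; without the hypothesis $\hat{f} \in L^1$, the left-hand integral would not even be absolutely convergent and the argument would collapse. Everything else is just bookkeeping around the identity $\widehat{g_{x,\ve}}(y) = \ve^{-d/2} e^{-\pi|y-x|^2/\ve}$ and the recognition of $K_\ve$ as a dilate of the Gaussian.
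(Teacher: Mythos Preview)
Your proof is correct and follows essentially the same route as the paper: regularize with a Gaussian, apply the multiplication formula to convert the Fourier-side integral into a convolution $f*K_\ve$, then send $\ve\to 0$ using dominated convergence on one side and the approximation-to-the-identity theorem on the other. Your final step is in fact slightly more careful than the paper's, which simply asserts that pointwise and $L^1$ limits must agree without explicitly passing to an a.e.-convergent subsequence.
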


By Proposition \ref{fourier-transform-of-schwartz-is-schwartz},
Schwartz functions satisfy the hypothesis of the above theorem.
In general, Proposition \ref{basic-properties-of-ft} indicates
that $f$ must necessarily be a $C_0$ map, although this is not
a sufficient condition.

\begin{proof}[Proof of Theorem \ref{fourier-inversion-formula}]
We consider the following modification of the inversion theorem:
\[
 I_\ve(x) = \int \hat{f}(\xi) e^{- \pi \ve^2 |\xi|^2} e^{2 \pi i \xi \cdot x} \, d\xi.
\]
Since $\hat{f} \in L^1(\bR^d)$, the dominated convergence theorem implies that
\[
 \lim_{\ve \to 0} I_\ve(x) = \int \hat{f}(\xi) e^{2 \pi i \xi \cdot x} \, d\xi.
\]
We now set $g_\ve(\xi) =  e^{- \pi \ve^2 |\xi|^2} e^{2 \pi i \tau \cdot x}$
for a fixed $\tau$. By the
\hyperref[multiplication-formula]{multiplication formula}, we have
\[
 I_\ve(x) = \int \hat{f}(t) g_\ve(t) \, dt = \int f(t) \widehat{g_\ve}(t) \, dt.
\]
Proposition \ref{symmetry-invariance-of-fourier-transform}(b) and
Proposition \ref{gaussian} imply that
\begin{eqnarray*}
 \widehat{g_\ve}(t)
 &=& \ms{F}\left(e^{-\pi\ve^2|\xi - \tau|^2}\right)(t) \\
 &=& \ve^{-d} e^{-\ve^{-2} |t-\tau|^2} \\
 &=& \ve^{-d} \Gamma( \ve^{-1} (\tau-t)).
\end{eqnarray*}
Setting $\Gamma_\ve(s) = \ve^{-d} \Gamma (\ve^{-1} s)$, we see that
\[
 I_\ve(s) = \int f(t) \Gamma_{\ve}(s-t) \, dt = (f * \Gamma_{\ve})(s).
\]
$(\Gamma_{\ve})_{\ve>0}$ is an \hyperref[approximations-to-the-identity]{approximation
to the identity}, and so
\[
 \lim_{\ve \to 0} \|I_\ve - f\|_1 = \lim_{\ve \to 0} \|f*\Gamma_\ve - f\|_1 = 0.
\]
It follows that $(I_\ve)_{\ve>0}$ converges to
$\int \hat{f}(\xi) e^{2 \pi i \xi \cdot x} \, d\xi$ pointwise and to $f$ in $L^1$,
whence
\[
 f(x) = \int \hat{f}(\xi) e^{2 \pi i \xi \cdot x} \, d\xi,
\]
as was to be shown.
\end{proof}
\index{Fourier inversion!on L1@on $L^1$|)}

We often write $f^\vee$ to denote the inverse Fourier transform of $f$. Note that
\[
 f^\vee(x) = \hat{f}(-x).
\]
The inversion formula, combined with Proposition \ref{fourier-transform-of-schwartz-is-schwartz},
implies that the Fourier transform operator $\ms{F}$ maps $\ms{S}(\bR^d)$
onto itself, with the inverse
\[
 \ms{F}^{-1}(f)(x) = \ms{F}(\hat{f})(-x).
\]
Since $\ms{F}$ is also linear, we see that $\ms{F}$ is a linear automorphism
of $\ms{S}(\bR^d)$. We shall see that $\ms{F}$ also preserves the natural
topological structure on $\ms{S}(\bR^d)$, hence turning $\ms{F}$ into a
\emph{Fr\'{e}chet-space automorphism}.\index{Fourier transform!as a Fr\'{e}chet-space \\ automorphism} Fr\'{e}chet spaces\index{Fr\'{e}chet space} are discussed in
\S\ref{s-elements_of_functional_analysis}; topological properties of the Schwartz
space are discussed in \S\ref{s-generalized_functions}.\index{Fourier transform!on the Schwartz space}

\subsection{The \texorpdfstring{$L^2$}{L2} Theory}

\index{Fourier transform!on L2@on $L^2$|(}
We now recall that $L^2(\bR^d)$ is a Hilbert space, with the inner product
\[
 \langle f,g \rangle_2 = \int f \bar{g}.
\]
Since $\ms{S}(\bR^d)$ is a linear subspace of $L^2(\bR^d)$, it inherits
the inner product as well. As it turns out, the Fourier transform
operator preserves the inner product:

\begin{lemma}[Plancherel, Schwartz-space version]\label{schwartz-plancherel}\index{Fourier transform!on the Schwartz space}
$\ms{F}:\ms{S}(\bR^d) \to \ms{S}(\bR^d)$ is a unitary operator.\index{unitary!operator}
In other words, if $\varphi,\phi \in \ms{S}(\bR^d)$, then
\[
 \langle \hat{f}, \hat{g}\rangle_2 = \langle f,g\rangle_2.
\]
In particular, $\|\hat{\varphi}\|_2 = \|\varphi\|_2$.
\end{lemma}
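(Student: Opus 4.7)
The plan is to derive Plancherel's identity as a short consequence of two already-established results: the multiplication formula (Proposition \ref{multiplication-formula}) and the Fourier inversion theorem (Theorem \ref{fourier-inversion-formula}). The trick is to feed the multiplication formula a carefully chosen auxiliary function so that it produces exactly the equality $\langle \hat{f},\hat{g}\rangle_2 = \langle f,g\rangle_2$. Since $f,g \in \ms{S}(\bR^d)$, Proposition \ref{fourier-transform-of-schwartz-is-schwartz} tells us that $\hat{f},\hat{g} \in \ms{S}(\bR^d) \subseteq L^1 \cap L^\infty$, so every integral that appears below converges absolutely and no integrability hypothesis needs to be re-verified.

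The heart of the argument is to introduce the auxiliary function $h(\xi) = \overline{\hat{g}(\xi)}$, which is Schwartz and therefore in $L^1(\bR^d)$. I would then compute $\hat{h}$ directly from the definition by pulling the complex conjugate out of the integral:
\[
 \hat{h}(x) = \int \overline{\hat{g}(\xi)}\, e^{-2 \pi i \xi \cdot x}\, d\xi
 = \overline{\int \hat{g}(\xi)\, e^{2 \pi i \xi \cdot x}\, d\xi}.
\]
Because $g \in \ms{S}(\bR^d)$ ensures both $g$ and $\hat{g}$ lie in $L^1(\bR^d)$, the Fourier inversion theorem identifies the bracketed integral with $g(x)$, so $\hat{h}(x) = \overline{g(x)}$.

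With $\hat{h}$ in hand, Proposition \ref{multiplication-formula} applied to the pair $(f,h)$ gives
\[
 \int \hat{f}(t)\overline{\hat{g}(t)}\, dt
 = \int \hat{f}(t) h(t)\, dt
 = \int f(t) \hat{h}(t)\, dt
 = \int f(t) \overline{g(t)}\, dt,
\]
which is precisely $\langle \hat{f},\hat{g}\rangle_2 = \langle f,g\rangle_2$. Specializing to $g=f$ yields $\|\hat{f}\|_2 = \|f\|_2$. The unitarity (not merely isometry) of $\ms{F}$ on $\ms{S}(\bR^d)$ then follows from the remark immediately after the inversion theorem, where $\ms{F}$ was already established to be a linear bijection of $\ms{S}(\bR^d)$.

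There is really no substantial obstacle: the only care required is the complex-conjugation bookkeeping in computing $\hat{h}$ (in particular, noticing that conjugation turns $e^{-2\pi i \xi \cdot x}$ into $e^{+2\pi i \xi \cdot x}$ so that the inversion formula applies cleanly) and the verification that Schwartz decay permits Fubini-type moves inside the multiplication formula. Both are routine, and the proof collapses to roughly four lines.
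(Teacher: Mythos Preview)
Your proof is correct and follows essentially the same strategy as the paper: both arguments combine the multiplication formula (Proposition~\ref{multiplication-formula}) with the Fourier inversion theorem (Theorem~\ref{fourier-inversion-formula}), and both hinge on the identity relating conjugation to the Fourier transform. The only cosmetic difference is packaging: you introduce the auxiliary function $h = \overline{\hat g}$ and compute $\hat h = \bar g$ directly, whereas the paper routes the same computation through the reflection identity $\widehat{\tilde\phi} = \tilde{\hat\phi}$ from Proposition~\ref{symmetry-invariance-of-fourier-transform}(c); your version is arguably the cleaner of the two.
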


\begin{proof}
By the \hyperref[fourier-inversion-formula]{Fourier inversion formula}
and Proposition \ref{symmetry-invariance-of-fourier-transform}(c),
\[
 \int \varphi(t) \bar{\phi}(t) \, dt
 = \int \widehat{\hat{\varphi}}(-t) \bar{\phi}(t) \, dt
 = \int \widehat{\hat{\varphi}}(-t)\overline{\phi}(-t) \, dt,
\]
and so
\[
 \int \hat{\varphi}\overline{\hat{\phi}}
 = \int \widehat{\hat{\varphi}}\tilde{\varphi}.
\]
It now follow from the \hyperref[multiplication-formula]{multiplication formula}
that
\[
 \langle \varphi,\phi\rangle_2
 = \int \varphi \bar{\phi}
 = \int \hat{\varphi} \widehat{\tilde{\phi}}
 = \int \hat{\varphi} \overline{\hat{\phi}}
 = \langle \hat{\varphi}, \hat{\phi} \rangle_2,
\]
as desired.
\end{proof}

Could we do better? By Theorem \ref{norm-preserving-extension},
the Fourier transform operator $\ms{F}$, defined on the dense subspace
$\ms{S}(\bR^d)$ of $L^2(\bR^d)$, admits a unique norm preserving extension
on $L^2(\bR^d)$. We call this extension the \emph{$L^2$ Fourier transform}
and denote it by $\ms{F}$ as well. The norm-preserving property implies that
the $L^2$ Fourier transform is an isometry into itself. Since an isometric
operator on a Hilbert space is also a unitary operator,\index{unitary!operator} the Fourier
transform preserves the $L^2$-inner product as well. We summarize
the foregoing discussion in the following theorem:

\begin{theorem}[Plancherel]\label{plancherel}\index{Plancherel's theorem}\index{Fourier transform!as a unitary automorphism on L2@as a unitary automorphism \\ on $L^2$|see{Plancherel's theorem}}
The $L^2$ Fourier transform $\ms{F}$ is a unitary\index{unitary!automorphism}
automorphism on $L^2(\bR^d)$. In other words, the $L^2$ Fourier transform
is linear, maps $L^2(\bR^d)$ onto itself, and preserves the inner-product
structure of $L^2(\bR^d)$. Furthermore, the $L^2$ Fourier transform on
$L^1(\bR^d) \cap L^2(\bR^d)$ agrees with the $L^1$ transform.
\end{theorem}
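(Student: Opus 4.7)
The plan is to lift the Schwartz-space statement (Lemma~\ref{schwartz-plancherel}) to all of $L^2(\bR^d)$ using the norm-preserving extension theorem (Theorem~\ref{norm-preserving-extension}). Lemma~\ref{schwartz-plancherel} asserts that $\ms{F}\colon\ms{S}(\bR^d)\to\ms{S}(\bR^d)$ is a linear isometry with respect to the $L^2$-inner product. Since $\ms{S}(\bR^d)$ is dense in $L^2(\bR^d)$ (a fact already recorded after Definition~\ref{defin-schwartz-space}), Theorem~\ref{norm-preserving-extension} produces a unique bounded linear extension $\ms{F}_2\colon L^2(\bR^d)\to L^2(\bR^d)$ with $\|\ms{F}_2 f\|_2=\|f\|_2$ for every $f\in L^2(\bR^d)$. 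Linearity of $\ms{F}_2$ is built into the extension theorem.

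The next step is to upgrade isometry to inner-product preservation. Given $f,g\in L^2(\bR^d)$, I pick Schwartz sequences $\varphi_n\to f$ and $\phi_n\to g$ in $L^2$; then
\[
\langle \ms{F}_2 f,\ms{F}_2 g\rangle_2=\lim_{n\to\infty}\langle \hat{\varphi}_n,\hat{\phi}_n\rangle_2=\lim_{n\to\infty}\langle \varphi_n,\phi_n\rangle_2=\langle f,g\rangle_2,
\]
using continuity of the inner product together with Lemma~\ref{schwartz-plancherel} applied to each $(\varphi_n,\phi_n)$. (Alternatively, polarization produces the same conclusion directly from the norm identity.) For surjectivity, I observe that $\ms{F}$ is a linear automorphism of $\ms{S}(\bR^d)$ by the inversion formula (Theorem~\ref{fourier-inversion-formula}), so $\ms{F}_2(L^2)\supseteq \ms{F}(\ms{S})=\ms{S}$, which is dense in $L^2$; since $\ms{F}_2$ is an isometry, its range is closed, hence equal to $L^2(\bR^d)$.

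The main obstacle — and the part needing the most care — is the compatibility clause: that $\ms{F}_2$ agrees with the $L^1$ Fourier transform on $L^1(\bR^d)\cap L^2(\bR^d)$. Here the danger is that $\ms{F}_2 f$ is a priori only an $L^2$-equivalence class defined via abstract completion, whereas $\hat{f}$ is defined pointwise by an absolutely convergent integral. My strategy is the standard double-approximation: for $f\in L^1\cap L^2$, set $f_n=(f\chi_{B_n})\ast\rho_n$ with $(\rho_n)$ a sequence of mollifiers. Inspecting the proof of Corollary~\ref{approximation-by-smooth-functions} — which relies only on Young's inequality (Theorem~\ref{convolution}) and dominated convergence, both available for $p=1$ and $p=2$ — one gets $\|f-f_n\|_1\to 0$ \emph{and} $\|f-f_n\|_2\to 0$ simultaneously, with $f_n\in\ms{C}^\infty_c(\bR^d)\subseteq\ms{S}(\bR^d)$.

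With such a sequence in hand, Proposition~\ref{basic-properties-of-ft}(a) gives $\|\hat{f}_n-\hat{f}\|_\infty\leq\|f_n-f\|_1\to 0$, so $\hat{f}_n\to\hat{f}$ uniformly, and in particular pointwise everywhere. On the other hand, since $f_n\in\ms{S}$ we have $\ms{F}_2 f_n=\hat{f}_n$, and the $L^2$ isometry property yields $\|\ms{F}_2 f_n-\ms{F}_2 f\|_2=\|f_n-f\|_2\to 0$; extracting a subsequence via the partial converse to $L^p$-convergence, one obtains pointwise almost-everywhere convergence of some $\hat{f}_{n_k}$ to $\ms{F}_2 f$. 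Matching the two limits forces $\ms{F}_2 f=\hat{f}$ almost everywhere, closing the argument.
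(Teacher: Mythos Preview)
Your proof is correct and follows essentially the same architecture as the paper's: extend the Schwartz-space isometry via Theorem~\ref{norm-preserving-extension}, obtain inner-product preservation (you approximate directly, the paper uses polarization --- you mention both), and get surjectivity from closed-plus-dense range. The one substantive difference is the compatibility clause on $L^1\cap L^2$: the paper dispatches it in a single sentence by invoking the uniqueness clause of Theorem~\ref{norm-preserving-extension}, which is somewhat glib (uniqueness of the bounded extension does not by itself explain why the \emph{integral} definition on $L^1\cap L^2$ matches the abstract $L^2$-limit). Your double-approximation argument --- a single sequence converging in both $L^1$ and $L^2$, then matching pointwise limits --- is the standard rigorous way to close this gap, and is a genuine improvement over the paper's treatment.
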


\begin{proof}
The linearity of $\ms{F}:L^2(\bR^d) \to L^2(\bR^d)$ has already been
established by Theorem \ref{norm-preserving-extension}. Since
$\ms{S}$ is dense in both $L^1(\bR^d)$ and $L^2(\bR^d)$, the uniqueness
clause in Proposition \ref{norm-preserving-extension} also guarantees
that the $L^1$ and $L^2$ Fourier transforms must agree on $L^1 \cap L^2$.

We claim that $\ms{F}(L^2(\bR^d))$ is closed and dense. Indeed, if
$(f_n)_{n=1}^\infty$ is a sequence of functions in $\ms{F}(L^2(\bR^d))$
that converges to $f \in L^2(\bR^d)$, then we can find a sequence
$(g_n)_{n=1}^\infty$ of functions in $L^2(\bR^d)$ such that $\widehat{g_n}
= f_n$ for each integer $n$. Since $\ms{F}$ is an isometry, $(g_n)_{n=1}^\infty$
is Cauchy in $L^2(\bR^d)$, hence converges to $g \in L^2(\bR^d)$. Of course,
$\hat{g} = f$, and the range is closed. To establish the density of
$\ms{F}(L^2(\bR^d))$ in $L^2(\bR^d)$, it suffices to observe that
\[
 \ms{S}(\bR^d) =\ms{F}(\ms{S}(\bR^d)) \subseteq \ms{F}(L^2(\bR^d)).
\]
This proves the claim, and it now follows that $\ms{F}(L^2(\bR^d)) = L^2(\bR^d)$.
For each $f,g \in L^2(\bR^d)$, we invoke the polarization identity of
inner-product spaces to see that
\begin{eqnarray*}
 \langle f,g\rangle_2
 &=& \frac{1}{4} \left( \|f+g\|_2 - \|f-g\|_2 +i\|f+ig\|_2 - i\|f - ig\|_2 \right) \\
 &=& \frac{1}{4} \left( \|\widehat{f+g}\|_2 - \|\widehat{f-g}\|_2
 +i\|\widehat{f+ig}\|_2 - i\|\widehat{f - ig}\|_2 \right) \\
 &=& \frac{1}{4} \left( \|\hat{f}+\hat{g}\|_2 - \|\hat{f}-\hat{g}\|_2
 +i\|\hat{f}+i\hat{g}\|_2 - i\|\hat{f} - i\hat{g}\|_2 \right) \\
 &=& \langle \hat{f},\hat{g} \rangle_2,
\end{eqnarray*}
whence $\ms{F}$ is a unitary automorphism on $L^2(\bR^d)$. 
\end{proof}
\index{Fourier transform!on L2@on $L^2$|)}

\subsection{The \texorpdfstring{$L^p$}{Lp} Theory}

\index{Fourier transform!on L1 + L2@on $L^1+L^2$|(}
Thus far, we have seen that the Fourier transform can be defined on
$L^1(\bR^d)$ and $L^2(\bR^d)$. In the final subsection of this section,
we shall extend the Fourier transform operator onto other $L^p$ spaces.
To this end, we establish our first interpolation result:

\begin{prop}\label{intermediate-lp-spaces}\index{interpolation space}
Let $(X,\mf{M},\mu)$ be a measure space.
If $1 \leq p < r < q \leq \infty$, then $L^r(X,\mu) \subseteq (L^p+L^q)(X,\mu)$.
\end{prop}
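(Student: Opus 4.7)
The plan is to show that every $f \in L^r(X,\mu)$ can be written as a sum $f = g + h$ with $g \in L^p(X,\mu)$ and $h \in L^q(X,\mu)$. The natural way to produce such a decomposition is to split $f$ according to the size of $|f|$: the ``large'' part of $f$ will be placed in $L^p$ (since $p < r$, a function that is bounded below on its support can be made even more integrable), while the ``small'' part will be placed in $L^q$ (since $r < q$, a function that is bounded above can be made less integrable, or even essentially bounded in the case $q = \infty$).

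Concretely, I would fix $f \in L^r(X,\mu)$, set
\[
 E = \{x \in X : |f(x)| > 1\},
\]
and write $f = f\chi_E + f\chi_{X \smallsetminus E}$. For the piece $g = f\chi_E$, one uses the pointwise bound $|f|^p \leq |f|^r$ on $E$ (which holds because $|f| > 1$ and $p - r < 0$) to conclude
\[
 \int |g|^p \, d\mu = \int_E |f|^p \, d\mu \leq \int_E |f|^r \, d\mu \leq \|f\|_r^r < \infty,
\]
so $g \in L^p(X,\mu)$.

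For the piece $h = f\chi_{X \smallsetminus E}$ in the case $q < \infty$, one has the reverse pointwise bound $|f|^q \leq |f|^r$ on $X \smallsetminus E$ (since $|f| \leq 1$ and $q - r > 0$), and the same computation gives $\|h\|_q^q \leq \|f\|_r^r < \infty$. In the endpoint case $q = \infty$, it is even simpler: $|h| \leq 1$ everywhere, so $\|h\|_\infty \leq 1$ and $h \in L^\infty(X,\mu)$. There is really no main obstacle here; the only thing worth flagging is that the argument splits into the cases $q < \infty$ and $q = \infty$, and one should make sure the threshold (here chosen as $1$) works uniformly in both. The decomposition $f = g + h$ then exhibits $f$ as an element of $(L^p + L^q)(X,\mu)$, which completes the proof.
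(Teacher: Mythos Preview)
Your proof is correct and follows essentially the same approach as the paper: the same threshold set $E=\{|f|>1\}$, the same decomposition $f=f\chi_E+f\chi_{X\smallsetminus E}$, and the same pointwise comparisons $|f|^p\le|f|^r$ on $E$ and $|f|^q\le|f|^r$ on $X\smallsetminus E$ (with the separate treatment of $q=\infty$). There is nothing to add.
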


\begin{proof}
Let $f \in L^r(X,\mu)$, set $E = \{x : |f(x)| > 1\}$, and define
$g = f \chi_E$ and $h = \chi_{X \smallsetminus E}$. Note that
$|g|^p = |f|^p \chi_E \leq |f|^r \chi_E$, and so $g \in L^p(X,\mu)$.
If $q < \infty$, then $|h|^p = |f|^p\chi_{X \smallsetminus E}
\leq |f|^r \chi_{X \smallsetminus E}$, and so $h \in L^q(X,\mu)$.
If $q = \infty$, then $\|h\|_\infty \leq 1$, and so $h \in L^q(X,\mu)$.
It thus follows that
\[
 f = g+h
\]
is in $(L^p+L^q)(X,\mu)$.
\end{proof}

In view of the above proposition, we extend the domain of Fourier transform to
all $L^p(\bR^d)$ for $1 < p < 2$ by defining the \emph{$L^1+L^2$ Fourier transform}.
Indeed, we set
\[
 \hat{f} = \hat{g} + \hat{h}
\]
for each $f \in L^p(\bR^d)$, where $g \in L^1(\bR^d)$ and $h \in L^2(\bR^d)$.
The decomposition is not unique, of course, but the $L^1 + L^2$ Fourier transform
is nevertheless well-defined. Indeed, $g_1 + h_1 = g_2 + h_2$ implies
that $g_1 - g_2 = h_2 - h_1$ is in $L^1(\bR^d) \cap L^2(\bR^d)$. Since
the $L^1$ and $L^2$ Fourier transforms coincide on $L^1 \cap L^2$, it follows
that $\widehat{g_1} - \widehat{g_2} = \widehat{h_2} - \widehat{h_1}$, or
\[
 \widehat{g_1} + \widehat{h_1} = \widehat{g_2} + \widehat{h_2}.
\]
We can now restrict the $L^1+L^2$ Fourier transform operator onto each
$L^p(\bR^d)$ to define the $L^p$ Fourier transform.

Alternatively, we could use the density of $\ms{S}(\bR^d)$ to extend the
Fourier transform on $\ms{S}(\bR^d)$ onto $L^p(\bR^d)$, as
we shall show below that the Fourier transform extends to a bounded
operator. Since the $L^1+L^2$
definition of the $L^p$ Fourier transform must agree with the usual Fourier
transform on $\ms{S}(\bR^d)$, Theorem \ref{norm-preserving-extension}
implies that these two definitions coincide.

Implicit in the above argument is the second conclusion of
\hyperref[plancherel]{Plancherel's theorem}, which asserts that the $L^1$
Fourier transform and the $L^2$ Fourier transform agree on $L^1 \cap L^2$.
In fact, it is possible to carry out the argument directly on the intersection,
as the next proposition shows.

\begin{prop}\label{norm-estimates-of-intermediate-lp-spaces}
Let $(X,\mf{M},\mu)$ be a measure space.
If $1 \leq p < r < q \leq \infty$, then $L^p(X,\mu) \cap L^q(X,\mu)
\subseteq L^r(X,\mu)$ and
\begin{equation}\label{norm-estimate}
 \|f\|_r \leq \|f\|_p^{1-\theta}\|f\|_q^\theta
\end{equation}
for all $f \in L^p(X,\mu) \cap L^q(X,\mu)$, where $\theta$ is
the unique real number in $(0,1)$ satisfying the identity
\begin{equation}\label{conjugate-exponent-interpolation}
 r^{-1} = (1-\theta)p^{-1} + \theta q^{-1}.
\end{equation}
\end{prop}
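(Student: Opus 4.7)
The plan is to prove the log-convexity of $L^r$ norms by applying H\"{o}lder's inequality to the decomposition $|f|^r = |f|^{(1-\theta)r} \cdot |f|^{\theta r}$, with exponents chosen so that the two factors land in $L^p$ and $L^q$ respectively.

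First I would verify that $\theta \in (0,1)$ is well-defined. The map $\theta \mapsto (1-\theta)p^{-1} + \theta q^{-1}$ is affine and strictly monotone on $[0,1]$, sending the endpoints to $p^{-1}$ and $q^{-1}$ (with the convention $\infty^{-1} = 0$). Since $p < r < q$ gives $q^{-1} < r^{-1} < p^{-1}$, there is a unique $\theta \in (0,1)$ satisfying (\ref{conjugate-exponent-interpolation}).

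Next I would handle the main case $q < \infty$. Set $a = p/((1-\theta)r)$ and $b = q/(\theta r)$. A direct computation using (\ref{conjugate-exponent-interpolation}) shows
\[
 \frac{1}{a} + \frac{1}{b} = \frac{(1-\theta)r}{p} + \frac{\theta r}{q} = r \cdot r^{-1} = 1,
\]
so $a$ and $b$ are conjugate exponents in $(1,\infty)$. Applying H\"{o}lder's inequality to the product $|f|^{(1-\theta)r} \cdot |f|^{\theta r}$ with these exponents yields
\[
 \int_X |f|^r \, d\mu
 \leq \left( \int_X |f|^{(1-\theta)r \cdot a} \, d\mu \right)^{1/a}
 \left( \int_X |f|^{\theta r \cdot b} \, d\mu \right)^{1/b}
 = \|f\|_p^{(1-\theta)r} \|f\|_q^{\theta r},
\]
since $(1-\theta)ra = p$ and $\theta r b = q$. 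Taking $r$th roots gives the inequality (\ref{norm-estimate}); in particular the right-hand side is finite, so $f \in L^r(X,\mu)$.

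Finally I would dispose of the boundary case $q = \infty$ by a direct argument, since H\"{o}lder degenerates there. Here (\ref{conjugate-exponent-interpolation}) simplifies to $\theta = 1 - p/r$, and one estimates
\[
 \int_X |f|^r \, d\mu
 = \int_X |f|^{r-p} |f|^p \, d\mu
 \leq \|f\|_\infty^{r-p} \|f\|_p^p,
\]
from which (\ref{norm-estimate}) follows after taking $r$th roots and noting $(r-p)/r = \theta$ and $p/r = 1-\theta$. I do not anticipate any genuine obstacle here; the only minor subtlety is keeping the bookkeeping of the exponents straight and remembering to treat $q = \infty$ separately, since the proposition above (the representation theorem and H\"{o}lder's inequality) covers the machinery we need.
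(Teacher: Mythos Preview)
Your proof is correct and follows essentially the same approach as the paper: both split off the case $q=\infty$ with the direct bound $|f|^r \leq \|f\|_\infty^{r-p}|f|^p$, and for $q<\infty$ both apply H\"{o}lder's inequality with the conjugate pair $a=p/((1-\theta)r)$, $b=q/(\theta r)$ to the factorization $|f|^r=|f|^{(1-\theta)r}|f|^{\theta r}$. The only difference is cosmetic---you treat the cases in the opposite order and include an explicit verification that $\theta$ is well-defined, which the paper leaves to the reader.
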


\begin{proof}
If $q = \infty$, then $|f|^r = |f|^p|f|^{r-p} \leq |f|^p\|f\|_\infty^{r-p}$.
With $\theta = 1-p/r$, we have
\[
 \|f\|_r \leq \|f\|^{p/r}_p\|f\|^{1-p/r}_\infty
 = \|f\|^{1-\theta}_p \|f\|^\theta_\infty.
\]
If $q < \infty$, then we observe that
\[
 1 = \frac{1}{p/(1-\theta)r} + \frac{1}{q/\theta r},
\]
whence we may apply H\'{o}lder's inequality:
\begin{eqnarray*}
 \|f\|_r^r
 &=& \int |f|^r \, d\mu \\
 &=& \int |f|^{(1-\theta) r} |f|^{\theta r} \, d\mu \\
 &\leq& \|f\|_{p/(1-\theta)r}^{(1-\theta) r} \|f\|_{q/\theta r}^{\theta r} \\
 &=& \left( \int |f|^p \, d\mu \right)^{(1-\theta)r / p}
 \left( \int |f|^q \, d\mu \right)^{\theta r / q} \\
 &=& \|f\|_p^{(1-\theta)r} \|f\|_q^{\theta r}.
\end{eqnarray*}
Taking the $r$th roots, we obtain the desired inequality.
\end{proof}

The above proposition also suggests what we should expect
the codomain of the $L^p$ Fourier transform to be.
The $L^1$ Fourier transform maps into $L^\infty$, and the $L^2$ Fourier
transform maps into $L^2$. For any given $1 < p < 2$, then
we might expect the $L^p$ Fourier transform to map into $L^{p'}$,
as the constant $\theta$ that satisfies the identity (\ref{conjugate-exponent-interpolation})
\[
 p^{-1} = \frac{1-\theta}{1} + \frac{\theta}{2},
\]
with 1 and 2 plugged in for the $L^1$ and $L^2$ Fourier transforms, yields
\[
 (p')^{-1} = \frac{1-\theta}{\infty} + \frac{\theta}{1}
\]
when we plug in 2 and $\infty$, as per the orders of the the target spaces
for the $L^1$ and $L^2$ Fourier transforms. Following this line of reasoning,
we could also conjecture that the norm estimate (\ref{norm-estimate}) holds
for operators as well, which, in this case, implies that
\begin{equation}\label{hausdorff-young-preliminary}\index{inequality!Hausdorff-Young}
 \|\ms{F}\|_{L^p \to L^{p'}}
 \leq \|\ms{F}\|_{L^1 \to L^{\infty}}^{1-\theta}
 \|\ms{F}\|_{L^2 \to L^2}^\theta
 \leq 1^{1-\theta} 1^\theta = 1
\end{equation}
for all $1 < p < 2$. This, in fact, turns out to be true.

Proposition \ref{norm-estimates-of-intermediate-lp-spaces} and
the conjectured inequality (\ref{hausdorff-young-preliminary}) are
special cases of our first main theorem of the thesis, the
\hyperref[riesz-thorin]{Riesz-Thorin interpolation theorem}.\index{Riesz-Thorin interpolation \\ theorem}
We shall study the theorem and its consequences in the next section.
For now, we shall apply our newly established $L^p$ Fourier transform
to convolutions and study their behaviors. First, we establish
a preliminary result:

\begin{prop}[Convolution theorem, $L^1$ version]\label{convolution-theorem-preliminary}\index{convolution theorem!on L1@on $L^1$}
If $f,g \in L^1(\bR^d)$, then $\widehat{f*g} = \hat{f}\hat{g}$.
\end{prop}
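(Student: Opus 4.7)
The plan is to unfold the definition of the Fourier transform of $f*g$, apply Fubini's theorem to interchange the order of integration, and then use the translation invariance of the Lebesgue measure (via the change-of-variables formula, Theorem \ref{change-of-variables}) to separate the integrals into a product. Concretely, I would start by writing
\[
 \widehat{f*g}(\xi) = \int \left( \int f(x-y) g(y) \, dy \right) e^{-2\pi i \xi \cdot x} \, dx.
\]

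Before swapping the order of integration, I must justify the use of Fubini (Theorem \ref{fubini-tonelli}). The hypothesis $f, g \in L^1(\bR^d)$ together with Young's inequality (Theorem \ref{convolution}(c)) gives $f*g \in L^1$, so that $(f*g)(x) e^{-2\pi i \xi \cdot x}$ is integrable in $x$. More directly, Tonelli applied to the nonnegative function $|f(x-y)||g(y)|$, combined with the translation invariance of the Lebesgue measure, yields
\[
 \iint |f(x-y)||g(y)| \, dx \, dy = \|f\|_1 \|g\|_1 < \infty,
\]
which validates the interchange for the complex-valued integrand $f(x-y) g(y) e^{-2\pi i \xi \cdot x}$.

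Once Fubini is available, I would swap the order and pull out $g(y)$:
\[
 \widehat{f*g}(\xi) = \int g(y) \left( \int f(x-y) e^{-2\pi i \xi \cdot x} \, dx \right) dy.
\]
For the inner integral, the substitution $x' = x - y$ (an instance of the change-of-variables formula) produces
\[
 \int f(x-y) e^{-2\pi i \xi \cdot x} \, dx = e^{-2\pi i \xi \cdot y} \int f(x') e^{-2\pi i \xi \cdot x'} \, dx' = e^{-2\pi i \xi \cdot y} \hat{f}(\xi).
\]

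Substituting back and factoring $\hat{f}(\xi)$ out of the $y$-integration finishes the argument:
\[
 \widehat{f*g}(\xi) = \hat{f}(\xi) \int g(y) e^{-2\pi i \xi \cdot y} \, dy = \hat{f}(\xi) \hat{g}(\xi).
\]
There is no real obstacle here; the only care required is the bookkeeping to verify the Fubini hypothesis, which is immediate from $f, g \in L^1$. The proof is really just a computation that the Fourier transform turns convolution into pointwise multiplication by exploiting the multiplicative structure of $e^{-2\pi i \xi \cdot x}$ under the translation $x \mapsto x - y$.
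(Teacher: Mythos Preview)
Your proof is correct and follows essentially the same route as the paper: justify that $f*g\in L^1$ via Young's inequality, apply Fubini to swap the order of integration, and recognize the inner integral as the Fourier transform of a translate of $f$ (the paper cites Proposition \ref{symmetry-invariance-of-fourier-transform}(a) for this step, whereas you carry out the change of variables explicitly). Your additional Tonelli check on $|f(x-y)||g(y)|$ is a nice touch that makes the Fubini justification slightly more self-contained than the paper's version.
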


\begin{proof}
\hyperref[convolution]{Young's inequality} shows that $f*g \in L^1(\bR^d)$, so
we can make sense of the Fourier transform of $f*g$. The proposition is
now an easy consequence of \hyperref[fubini-tonelli]{Fubini's theorem}
and Proposition  \ref{symmetry-invariance-of-fourier-transform}(a):
\begin{eqnarray*}
 \widehat{f*g}(\xi)
 &=& \int \left( \int f(x-y)g(y) \, dy \right) e^{-2 \pi i \xi \cdot x} \, dx \\
 &=& \int \left( \int f(x-y) e^{-2 \pi i \xi \cdot x} \, dx \right) g(y) \, dy \\
 &=& \int \hat{f}(\xi) g(y) e^{-2 \pi i y \cdot \xi} \, dy \\
 &=& \hat{f}(\xi)\hat{g}(\xi).
\end{eqnarray*}
\end{proof}

Since the Fourier transform is linear, the result extends easily to the $L^p$ case.
The main idea is to consider the convolution operator\index{convolution!operator} as a bounded operator.
We shall have more to say about this in \S\ref{s-generalized_functions}.
See also Theorem \ref{young} in the next section for another example of a
convolution operator.

\begin{theorem}[Convolution theorem, $L^p$ version]\label{convolution-theorem}\index{convolution theorem!on Lp, for 1 p 2@$L^p$, for $1 \leq p \leq 2$}
Let $1 \leq p \leq 2$. If $f \in L^p(\bR^d)$ and $g \in L^1(\bR^d)$, then
$\widehat{f*g}=\hat{f}\hat{g}$.
\end{theorem}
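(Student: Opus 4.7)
The case $p=1$ is precisely Proposition \ref{convolution-theorem-preliminary}, so I will focus on $1 < p \leq 2$. The idea is to decompose $f$ using the inclusion $L^p \subseteq L^1 + L^2$ from Proposition \ref{intermediate-lp-spaces} and reduce everything to two cases: the $L^1$ case (already known) and the $L^2$ case (which I prove by Schwartz-function approximation and Plancherel). Concretely, following the proof of Proposition \ref{intermediate-lp-spaces}, I split $f = f_1 + f_2$ with $f_1 \in L^1(\bR^d)$ and $f_2 \in L^2(\bR^d)$. By linearity of convolution, $f*g = f_1*g + f_2 * g$, and by the very construction of the $L^1 + L^2$ Fourier transform, $\widehat{f*g} = \widehat{f_1*g} + \widehat{f_2*g}$.

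The first summand is immediate: since $g \in L^1$, Young's inequality gives $f_1 * g \in L^1$, so Proposition \ref{convolution-theorem-preliminary} yields $\widehat{f_1*g} = \hat{f_1}\hat{g}$. The main step is the $L^2$ analogue: I claim that for $h \in L^2(\bR^d)$ and $g \in L^1(\bR^d)$, one has $\widehat{h*g} = \hat{h}\hat{g}$. Using the density of $\ms{S}(\bR^d)$ in $L^2(\bR^d)$ (a consequence of Corollary \ref{approximation-by-smooth-functions}, since $\ms{C}^\infty_c \subseteq \ms{S}$), I pick a sequence $(\phi_n)_{n=1}^\infty$ in $\ms{S}(\bR^d)$ with $\phi_n \to h$ in $L^2$. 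Because each $\phi_n$ lies in $L^1 \cap L^2$ and the $L^1$ and $L^2$ transforms agree on this intersection by Plancherel's theorem, I can apply Proposition \ref{convolution-theorem-preliminary} to obtain
\[
\widehat{\phi_n * g} = \hat{\phi_n}\hat{g}.
\]

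Now I pass to the limit on both sides. By Young's inequality, $\|\phi_n*g - h*g\|_2 \leq \|\phi_n - h\|_2\|g\|_1 \to 0$, so $\phi_n * g \to h*g$ in $L^2$; Plancherel's theorem (Theorem \ref{plancherel}) then gives $\widehat{\phi_n*g} \to \widehat{h*g}$ in $L^2$. On the other hand, Plancherel yields $\hat{\phi_n} \to \hat{h}$ in $L^2$, and since $\|\hat{g}\|_\infty \leq \|g\|_1$ by Proposition \ref{basic-properties-of-ft}(a), multiplication by $\hat{g}$ is bounded on $L^2$, whence $\hat{\phi_n}\hat{g} \to \hat{h}\hat{g}$ in $L^2$. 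Uniqueness of $L^2$-limits gives $\widehat{h*g} = \hat{h}\hat{g}$. Applying this to $h = f_2$ and combining with the $L^1$ case gives $\widehat{f*g} = \hat{f_1}\hat{g} + \hat{f_2}\hat{g} = (\hat{f_1} + \hat{f_2})\hat{g} = \hat{f}\hat{g}$, where the last equality is again the definition of the $L^1+L^2$ transform. The main obstacle is precisely the $L^2$ claim handled above, since one cannot invoke the $L^1$ convolution theorem directly when $f \notin L^1$; the Schwartz approximation plus Plancherel bypasses this cleanly without appealing to the Hausdorff--Young inequality, which has not yet been proved at this point in the text.
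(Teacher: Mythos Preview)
Your proof is correct and takes a somewhat different route from the paper. The paper works directly on $L^p$: it views $Tf = f*g$ as a bounded operator $L^p \to L^p$ by Young's inequality, composes with $\ms{F}$ to obtain $T_1 = \ms{F}T : L^p \to L^{p'}$, checks that $T_1 f = \hat f\,\hat g$ on $\ms{S}(\bR^d)$ via Proposition~\ref{convolution-theorem-preliminary}, and then invokes density and the uniqueness clause of Theorem~\ref{norm-preserving-extension} to extend the identity to all of $L^p$. You instead pull back to the very definition of the $L^1+L^2$ transform: you split $f = f_1 + f_2$ with $f_1 \in L^1$, $f_2 \in L^2$, dispatch the $L^1$ piece immediately by Proposition~\ref{convolution-theorem-preliminary}, and prove the $L^2$ analogue by Schwartz approximation and Plancherel. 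The paper's argument is a touch shorter, but---as you rightly flag---its appeal to the boundedness of $T_1 : L^p \to L^{p'}$ implicitly uses that $\ms{F}:L^p \to L^{p'}$ is bounded (Hausdorff--Young), which is only established in the next section; your argument is fully self-contained at this point in the text. Both proofs ultimately rest on the same two ingredients (the $L^1$ convolution theorem on a dense class, plus a continuity/limit argument), but you apply the density step in $L^2$ alone, where Plancherel supplies the required continuity for free.
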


\begin{proof}
Fix $g \in L^1(\bR^d)$ and $1 \leq p \leq 2$. For each $f \in L^p(\bR^d)$,
\hyperref[convolution]{Young's inequality} shows that $f*g \in L^p(\bR^d)$,
so we can apply the $L^p$ Fourier transform to $f*g$. Consider the ``convolution
operator'' $T:L^p(\bR^d) \to L^p(\bR^d)$ defined by
\[
 Tf = f*g.
\]
By \hyperref[convolution]{Young's inequality}, $T$ is bounded with operator norm
at most $\|g\|_1$. Therefore, the operator $T_1 = \ms{F}T:L^p(\bR^d) \to L^{p'}(\bR^d)$
is bounded as well.

Proposition \ref{convolution-theorem-preliminary}
now implies that
\begin{equation}\label{yay}
 T_1 f = \widehat{f*g} = \widehat{f}\widehat{g}
\end{equation}
for all $f \in \ms{S}(\bR^d)$. $\ms{S}(\bR^d)$ is dense in $L^p(\bR^d)$,
and $\ms{F}$ is continuous, hence the uniqueness statemente in
Theorem \ref{norm-preserving-extension} guarantees that (\ref{yay}) holds
for all $f \in L^p(\bR^d)$.
\end{proof}
\index{Fourier transform!on L1 + L2@on $L^1+L^2$|)}

What can be say about the $L^p$ Fourier transform for $p > 2$?\index{Fourier transform!on Lp, for p>2@on $L^p$, for $p>2$}
In order to extend $L^1$ Fourier transform on $\ms{S}(\bR^d)$
to $L^p(\bR^d)$ via Theorem \ref{norm-preserving-extension},
we must have a Banach space as a codomian. As it stands now, however,
it is not at all clear what the target space should be.
In fact, the Fourier transform of an $L^p$ function may
not even be a function but a tempered distribution,
which we shall define in \S\ref{s-generalized_functions}.
\index{Fourier transform|)}

\section{Interpolation on \texorpdfstring{$L^p$}{Lp} Spaces}\label{s-interpolation_on_lp_spaces}

We now come to the first major theorem of this thesis. We have seen
in the last section that the Fourier transform, defined as a
bounded operator on $(L^1 + L^2)(\bR^d)$ into $(L^\infty + L^2)(\bR^d)$,
can be ``interpolated'' to yield a bounded operator on $L^p(\bR^d)$ into
$L^{p'}(\bR^d)$. We shall see that a general theorem of the kind
holds. Specifically, if an operator is ``defined'' on both $L^{p_0}$ and
$L^{p_1}$ and maps boundedly into $L^{q_0}$ and $L^{q_1}$, respectively,
then we shall prove that the operator can be interpolated to yield
a bounded operator on $L^{p_\theta}$ into $L^{q_\theta}$, where
$p_\theta$ and $q_\theta$ are appropriately defined intermediate exponents.

\subsection{The Riesz-Thorin Interpolation Theorem}

\index{Riesz-Thorin interpolation \\ theorem|(}\index{Riesz, Marcel!convexity theorem|see{\\ Riesz-Thorin interpolation \\ theorem}}
To state the theorem, we first need to make sense of an operator defined
on two separate domains. Taking a cue from Theorem \ref{norm-preserving-extension},
we define our operator on a dense subset of each Lebesgue space in question.

\begin{defin}\label{strong-type}\index{operator!of type (p,q)@of type $(p,q)$}\index{operator!of strong type|see{type $(p,q)$}}\index{operator!norm|see{norm}}
Let $(X,\mu)$ and $(Y,\nu)$ be $\sigma$-finite measure spaces. We fix
a vector space $D$ of $\mu$-measurable complex-valued functions on $X$
that contains the simple functions with finite-measure support.
We also assume that $D$ is closed under \emph{truncation},\index{truncation} viz.,
if $f \in D$, then the function
\[
 g_{r_1,r_2}(x) =
 \begin{cases}
  f(x) & \mbox{ if } r_1 < |f(x)| \leq r_2; \\
  0 & \mbox{ otherwise;}
 \end{cases}
\]
defined for each $0<r_1 \leq r_2$, is also in $D$. Given $1 \leq p,q \leq
\infty$, we say that a linear operator $T$ on $D$ into the vector space
$\mc{M}(Y,\mu)$ of $\nu$-measurable complex-valued functions on $Y$ is
\emph{of type $(p,q)$} if there exists a constant $k>0$ such that
\[
 \|Tf\|_q \leq k\|f\|_p
\]
for all $f \in D \cap L^p(X,\mu)$. The infimum of all such $k$ is referred
to as the \emph{$(p,q)$ norm} of $T$ and is denoted by $\|T\|_{L^p \to L^q}$.
\end{defin}

Let $T$ be an operator of type $(p_0,q_0)$. We first remark that we can restrict
the codomain of $T:D \cap L^{p_0}(X,\mu) \to \mc{M}(Y,\nu)$ to $L^{q_0}(Y,\mu)$,
which is a Banach space. Since Proposition \ref{approximation-by-simple-functions}
implies that $D \cap L^{p_0}(X,\mu)$ is dense in $L^{p_0}(X,\mu)$,  we may
invoke Theorem \ref{norm-preserving-extension} to construct a unique
norm-preserving extension $T:L^{p_0}(X,\mu) \to L^{q_0}(Y,\nu)$. If, in
addition, $T$ is of type $(p_1,q_1)$, then a similar argument yields the
extension $T:L^{p_1}(X,\mu) \to L^{q_1}(Y,\nu)$.

We now state the interpolation theorem, due to M. Riesz and O. Thorin:

\begin{theorem}[Riesz-Thorin interpolation]\label{riesz-thorin}
Let $1 \leq p_0,p_1,q_0,q_1 \leq \infty$. If $T$ is a linear operator
simultaneously of type $(p_0,q_0)$ and of type $(p_1,q_1)$, then
$T$ is of type $(p_\theta,q_\theta)$ with the norm estimate
\[
 \|T\|_{L^{p_\theta} \to L^{q_\theta}}
 \leq \|T\|^{1-\theta}_{L^{p_0} \to L^{q_0}} \|T\|^\theta_{L^{p_1} \to L^{q_1}}
\]
for each $\theta \in [0,1]$, where
\begin{eqnarray*}
 p_\theta^{-1} &=& (1-\theta)p_0^{-1} + \theta p_1^{-1};\\
 q_\theta^{-1} &=& (1-\theta)q_0^{-1} + \theta q_1^{-1}.
\end{eqnarray*}
\end{theorem}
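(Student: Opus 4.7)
The plan is to prove the theorem via the complex method of interpolation, with the \emph{Hadamard three-lines theorem} at its core: a function $F$ holomorphic on the open strip $\{0 < \Re z < 1\}$, bounded and continuous on its closure, and satisfying $|F(z)| \leq M_0$ on $\Re z = 0$ and $|F(z)| \leq M_1$ on $\Re z = 1$, must obey $|F(\theta)| \leq M_0^{1-\theta} M_1^\theta$ for every $\theta \in [0,1]$. The challenge is to produce, from a given $f$ and a test function $g$, an analytic family whose boundary estimates come from the $(p_0,q_0)$ and $(p_1,q_1)$ hypotheses and whose value at $z = \theta$ recovers the desired $(p_\theta, q_\theta)$ estimate for $T$.

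By the \hyperref[Lp-riesz-representation]{$L^p$ representation theorem}, it suffices to bound $|\int (Tf) g \, d\nu|$ for $\|f\|_{p_\theta} = \|g\|_{q_\theta'} = 1$. Using Proposition \ref{approximation-by-simple-functions} together with the closure of $D$ under truncation, I would reduce further to the case where $f = \sum_j a_j \chi_{A_j} \in D$ and $g = \sum_k b_k \chi_{B_k}$ are simple functions with finite-measure, pairwise-disjoint supports. Introducing the holomorphic exponents
\[
 \frac{1}{p(z)} = \frac{1-z}{p_0} + \frac{z}{p_1}, \qquad
 \frac{1}{q'(z)} = \frac{1-z}{q_0'} + \frac{z}{q_1'},
\]
so that $p(\theta) = p_\theta$ and $q'(\theta) = q_\theta'$, I would then set
\[
 f_z = \sum_j |a_j|^{p_\theta/p(z)} \sgn(a_j) \chi_{A_j}, \qquad
 g_z = \sum_k |b_k|^{q_\theta'/q'(z)} \sgn(b_k) \chi_{B_k}.
\]
The function $F(z) = \int_Y (Tf_z) g_z \, d\nu$ is then a finite $\bC$-linear combination of exponentials in $z$, hence entire and bounded on the closed strip.

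On the line $\Re z = 0$, the identities $\Re\bigl(p_\theta/p(z)\bigr) = p_\theta/p_0$ and $\Re\bigl(q_\theta'/q'(z)\bigr) = q_\theta'/q_0'$ yield $\|f_z\|_{p_0} = 1$ and $\|g_z\|_{q_0'} = 1$, so that applying the $(p_0,q_0)$ type hypothesis followed by H\"{o}lder's inequality gives $|F(z)| \leq \|T\|_{L^{p_0} \to L^{q_0}}$; the analogous computation produces $|F(z)| \leq \|T\|_{L^{p_1} \to L^{q_1}}$ on $\Re z = 1$. The three-lines theorem then yields
\[
 \left| \int (Tf) g \, d\nu \right| = |F(\theta)|
 \leq \|T\|_{L^{p_0} \to L^{q_0}}^{1-\theta} \|T\|_{L^{p_1} \to L^{q_1}}^\theta,
\]
and the conclusion follows upon taking the supremum over admissible $g$ and extending by density.

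The main technical obstacle lies in the edge cases where some of $p_0, p_1, q_0, q_1, p_\theta, q_\theta$ equal $1$ or $\infty$. For these, the reciprocals in the definitions of $p(z)$ and $q'(z)$ must be interpreted through the convention $1/\infty = 0$, and the corresponding factors $|a_j|^{p_\theta/p(z)}$ degenerate to constants rather than complex powers; the boundary norm computations must then be verified by hand in each degenerate regime. The case $q_\theta = \infty$, for which the duality step fails outright, calls for a separate treatment that works directly with the essential supremum of $Tf$ in place of the pairing against a test function.
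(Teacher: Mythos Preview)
Your proposal is correct and follows essentially the same route as the paper: reduce via duality to simple $f$ and $g$ with $\|f\|_{p_\theta}=\|g\|_{q_\theta'}=1$, embed them in the analytic family $f_z,g_z$ with exponents $p_\theta/p(z)$ and $q_\theta'/q'(z)$, and apply the three-lines theorem to $F(z)=\int (Tf_z)g_z\,d\nu$. The only place where the paper is a bit more explicit is the extension step: rather than a bare density argument, it uses the truncation closure of $D$ to split $f=f^0+f^1$ with $f^j\in D\cap L^{p_j}$, approximates each piece monotonically by simple functions, passes to pointwise a.e.\ convergent subsequences of $Tf_n^j$ via the endpoint bounds, and concludes with Fatou's lemma---you may want to flesh out ``extending by density'' along these lines.
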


We remark that the interpolation result can be described pictorially
in a so-called \emph{Riesz diagram} of $T$, which is the collection
of all points $(1/p,1/q)$ in the unit square such that $T$ is
of type $(p,q)$. In this context, the above theorem implies that the Riesz
diagram of a linear operator is a convex set: for any two points in
the Riesz diagram, the Riesz-Thorin interpolation theorem guarantees
that the line connecting them is also in the Riesz diagram.

\begin{figure}[!htb]
\begin{center}
\includegraphics[scale=1.3]{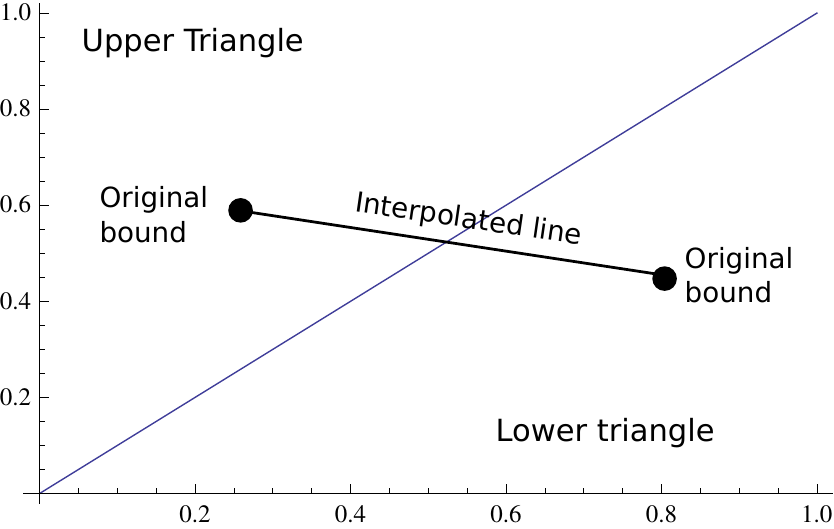}
\caption{A Riesz diagram}\label{riesz-diagram}\index{Riesz, Marcel!diagram}
\end{center}
\end{figure}

The interpolation theorem was originally stated by M. Riesz
in \cite{Marcel_Riesz:J1927}.\index{Riesz, Marcel} In the paper, the theorem was
stated only for the lower triangle in the Riesz diagram, i.e.,
for $p_n \leq q_n$. Since the proof of the theorem made use of
convexity results for bilinear forms, the interpolation theorem is
often referred to as the \emph{Riesz convexity theorem}. The extension
of the theorem to the entire square is due to O. Thorin, originally
published in his 1938 paper and explicated further in \cite{Olof_Thorin:T1948}.\index{Thorin, Olof}
The modern proof of the theorem was first provided by J. Tamarkin\index{Tamarkin, Jacob D.} and A. Zygmund\index{Zygmund, Antoni}
in \cite{Tamarkin_Zygmund:J1944}
and makes use of an extension of the maximum modulus principle\index{maximum modulus principle} from
complex analysis:

\begin{theorem}[Hadamard's three-lines theorem]\label{hadamard}\index{Hadamard's three-lines theorem}
Let $\Phi$ be a holomorphic function in the interior of the closed strip
\[
 S = \{z \in \bC : 0 \leq \Re z \leq 1\}
\]
and bounded and continuous on $S$. If $|\Phi(z)| \leq k_0$ on the line
$\Im z = 0$ and $|\Phi(z)| \leq k_1$ on the line $\Im z = 1$, then,
for each $0 \leq \theta \leq 1$, we have the inequality
\[
 |\Phi(z)| \leq k_0^{1-\theta} k_1^\theta
\]
on the line $\Im z = \theta$.
\end{theorem}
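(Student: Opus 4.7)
The plan is to reduce to a direct application of the classical maximum modulus principle on the strip $S$ by constructing a suitable auxiliary function. First I would dispose of a trivial degenerate case: if $k_0 = 0$ or $k_1 = 0$, then the general case can be recovered by replacing $k_0$ and $k_1$ with $k_0 + \eta$ and $k_1 + \eta$ for $\eta > 0$ and sending $\eta \to 0$ at the end, so I may assume $k_0, k_1 > 0$. Then define
\[
 \Psi(z) = \Phi(z) k_0^{z-1} k_1^{-z},
\]
where $k_j^{w} = e^{w \log k_j}$ uses the real logarithm, so that $\Psi$ is holomorphic on the interior of $S$ and bounded and continuous on $S$. Since $|k_j^{iy}| = 1$, one checks that $|\Psi(z)| = |\Phi(z)|/k_0$ on $\Re z = 0$ and $|\Psi(z)| = |\Phi(z)|/k_1$ on $\Re z = 1$. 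Hence the hypotheses give $|\Psi(z)| \leq 1$ on $\partial S$, and the claim of the theorem on $\Re z = \theta$ reduces to showing $|\Psi(z)| \leq 1$ throughout $S$.

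The main obstacle is that $S$ is unbounded in the imaginary direction, so the maximum modulus principle does not apply directly; this is where a Phragm\'en--Lindel\"of-style truncation is needed. To overcome this, for each $\varepsilon > 0$ I would introduce the further auxiliary function
\[
 \Psi_\varepsilon(z) = \Psi(z) e^{\varepsilon(z^2 - 1)}.
\]
Writing $z = x + iy$, one has $|e^{\varepsilon(z^2 - 1)}| = e^{\varepsilon(x^2 - y^2 - 1)}$. On the two boundary lines $x = 0$ and $x = 1$, this factor is at most $1$, so $|\Psi_\varepsilon(z)| \leq 1$ on $\partial S$. Moreover, since $\Psi$ is bounded on $S$ (by boundedness of $\Phi$ and the explicit form of the exponential factors), $|\Psi_\varepsilon(z)|$ tends to zero as $|y| \to \infty$ uniformly in $x \in [0,1]$.

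From there, the argument is essentially mechanical. Choose $R > 0$ so large that $|\Psi_\varepsilon(z)| \leq 1$ on the horizontal segments $\Im z = \pm R$, $0 \leq \Re z \leq 1$; this is possible by the decay just noted. Applying the classical maximum modulus principle to $\Psi_\varepsilon$ on the rectangle $\{0 \leq \Re z \leq 1,\ |\Im z| \leq R\}$ yields $|\Psi_\varepsilon(z)| \leq 1$ on the rectangle, and since $R$ was arbitrary this gives $|\Psi_\varepsilon(z)| \leq 1$ on all of $S$. Finally, for each fixed $z \in S$, letting $\varepsilon \to 0$ produces $|\Psi(z)| \leq 1$, and specializing to $\Re z = \theta$ rearranges to the desired bound $|\Phi(z)| \leq k_0^{1-\theta} k_1^\theta$.
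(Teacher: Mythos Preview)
Your proof is correct and follows essentially the same route as the paper's: form $\Psi(z)=\Phi(z)k_0^{z-1}k_1^{-z}$, multiply by a Gaussian-type factor $e^{\varepsilon(z^2-1)}$ to force decay in the imaginary direction, apply the maximum modulus principle on a large rectangle, and let $\varepsilon\to 0$. The only differences are cosmetic (continuous $\varepsilon$ versus the paper's discrete $1/n$) and your explicit handling of the degenerate case $k_0=0$ or $k_1=0$, which the paper omits.
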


\begin{proof}
We define holomorphic functions
\[
 \Psi(z) = \frac{\Phi(z)}{k_0^{1-z}k_1^z}
 \htwo \mbox{and} \htwo
 \Psi_n(z) = \Psi(z)e^{(z^2-1)/n},
\]
so that $\Psi_n \to \Psi$ as $n \to \infty$. We wish to show that $|\Psi(z)| \leq 1$
on $S$.

To this end, we first note that $|\Psi(z)| \leq 1$ on the lines $\Im z = 0$ and
$\Im z = 1$. Moreover, $\Phi$ is bounded above on $S$, and $k_0^{1-z} k_1^z$ is
bounded below on $S$, hence we have the bound $|\Psi(z)| \leq M$ on $S$.
Noting the inequality
\[
 |\Psi_n(x+iy)| \leq Me^{-y^2 / n} e^{(x^2-1)/n} \leq Me^{-y^2/n},
\]
we see that $\Psi_n(x+iy)$ converges uniformly to 0 as $|y| \to \infty$. 
We can therefore find $y_n > 0$ such that $|\Psi_n(x+iy)| \leq 1$ for
all $|y| \geq y_n$ and $0 \leq x \leq 1$, whence by the maximum modulus
principle\index{maximum modulus principle} we have $|\Psi_n(z)| \leq 1$ on the rectangle
\[
 \{z = x+iy : 0 \leq x \leq 1 \mbox{ and } -y_n \leq y \leq y_n\}.
\]
It follows that $|\Psi_n(z)| \leq 1$ on $S$ for each $n \in \bN$, and
sending $n \to \infty$ yields the desired result.
\end{proof}

We now come to the proof of the interpolation theorem.
In the words of C. Fefferman\index{Fefferman, Charles|(} in \cite{Fefferman_Fefferman_Wainger:B1995},
the proof essentially amounts to providing an estimate for the integral
\[
 \int_Y (T f)g \, d\nu,
\]
where $f \in L^{p_\theta}(X,\mu)$ and $g \in L^{q_\theta'}(Y,\nu)$.
In light of the \hyperref[Lp-riesz-representation]{Riesz representation theorem},\index{representation theorem!F. Riesz, Lp-space version@F. Riesz, $L^p$-space version}
the supremum of the above expression equals $\|Tf\|_{q_\theta}$.
To give an upper bound for the above integral, we begin by finding nonnegative
real numbers $F$ and $G$ and real numbers
$\phi$ and $\psi$ such that $f = Fe^{i\phi}$ and $g = Ge^{i\psi}$,
and consider the entire function
\[
 \Phi(z) = \int_Y (Tf_z)g_z \, d\nu,
\]
where $f_z = F^{az+b} e^{i\phi}$ and $g_z = G^{cz+d} e^{i\psi}$ for
suitably chosen real numbers $a,b,c,d$. We pick $a,b,c,d$ such that
\[
 |f_z|^{p_0} = |f|^{p_\theta} \htwo \mbox{and} \htwo
 |g_z|^{q_0'} = |g|^{q_\theta'}
\]
on the line $\Re z = 0$,
\[
 |f_z|^{p_1} = |f|^{p_\theta} \htwo \mbox{and} \htwo
 |g_z|^{q_1'} = |g|^{q_\theta'}
\]
on the line $\Re z = 1$, and
\[
 f_z = f \htwo \mbox{ and } \htwo g_z = g
\]
at the point $z = \theta$.

The first assumption yields an upper bound
$k_0'$ for both $\|f_z\|_{p_0}$ and $\|g_z\|_{q_0'}$ on the line $\Re z = 1$,
and so the $L^{p_0} \to L^{q_0}$ norm inequality of $T$ yields the estimate
\[
 |\Phi(z)| \leq k_0
\]
on the line $\Re z = 0$ for some constant $k_0$. Similarly, the second
assumption furnishes a constant $k_1$ such that
\[
 |\Phi(z)| \leq k_1
\]
on the line $\Re z = 1$. By \hyperref[hadamard]{Hadamard's three-lines theorem},
we now have the bound $|\Phi(z)| \leq k_0^{1-\theta}k_1^\theta$, and the third
assumption implies that
\[
 \left| \int_Y (Tf) g \, d\nu \right| \leq k_0^{1-\theta}k_1^\theta.
\]
\index{Fefferman, Charles|)}

We now present the proof in full detail.

\begin{proof}[Proof of Theorem \ref{riesz-thorin}]
Fix $0 \leq \theta \leq 1$. For notational convenience, we let
\[
 \alpha_0 = \frac{1}{p_0}, \hone \alpha_1 = \frac{1}{p_1},
 \hone \alpha = \frac{1}{p_\theta}, \hone
 \beta_0 = \frac{1}{q_0}, \hone \beta_1 = \frac{1}{q_1}, \hone
 \beta = \frac{1}{q_\theta}.
\]
With this notation, we set
\[
 \alpha(z) = (1-z)\alpha_0 + z\alpha_1 \htwo \mbox{and} \htwo
 \beta(z) = (1-z)\beta_0 + z\beta_1,
\]
so that
\[
 \alpha(0) = \alpha_0, \hone \alpha(1) = \alpha_1,
 \hone \alpha(\theta) = \alpha, \hone
 \beta(0) = \beta_0, \hone \beta(1) = \beta_1, \hone
 \beta(\theta) = \beta.
\]

We shall first prove the theorem for simple functions with finite-measure
support, which evidently belong to $D \cap L^{p_\theta}(X,\mu)$.
By the \hyperref[Lp-riesz-representation]{Riesz representation theorem}, we have
\[
 \|Tf\|_{q_\theta} = \sup \left| \int_Y (Tf) g \, d\nu \right|
\]
for each simple function $f$, where the supremum is taken over all simple
functions $g$ of $L^{q_\theta'}$-norm at most one. Therefore, it suffices to
show that
\[
 \left| \int_Y (Tf)g \, d\nu \right| \leq k_0^{1-\theta}k_1^{\theta}\|f\|_{p_\theta}
\]
for each such $g$. If $\|f\|_{p_\theta} = 0$, then there is nothing to prove, and so
we can assume by renormalization of $f$ that $\|f\|_{p_\theta} = 1$. We thus
set out to establish
\[
 \left| \int_Y (Tf)g \, d\nu \right| \leq k_0^{1-\theta}k_1^{\theta}
\]
for all simple functions $g$ with $\|g\|_{q_\theta'}=1$. 

We now suppose that
\[
 f = \sum_{j=1}^m a_j \chi_{E_j} \htwo\mbox{ and }\htwo
 g = \sum_{k=1}^n b_k \chi_{F_k}
\]
are two simple functions satisfying the above conditions. We also assume 
without loss of generality that $p_\theta < \infty$ and $q_\theta > 1$, so that
$\alpha > 0$ and $\beta < 1$. Write $a_j = |a_j| e^{i\theta_j}$
and $b_k = |b_k| e^{i\varphi_k}$ and set 
\[
 f_z = \sum_{j=1}^m |a_j|^{\alpha(z)/\alpha}e^{i\theta_j} \chi_{E_j}
 \htwo\mbox{and}\htwo
 g_z = \sum_{k=1}^m |b_k|^{(1-\beta(z))/(1-\beta)}e^{i\varphi_k}\chi_{F_k}
\]
for each $z \in \bC$. Then
\[
 \Phi(z) = \int_Y (Tf_z) g_z \, d\nu
\]
is an entire function such that
\[
 \Phi(\theta) = \int_Y (Tf) g \, d\nu.
\]
We note, in particular, that $\Phi$ is holomorphic in the interior of $S$
and is continuous on $S$. Since $T$ is linear, we see that
\[
 \Phi(z) = \sum_{j=1}^m\sum_{k=1}^n
 |a_j|^{\alpha(z)/\alpha} |b_k|^{(1-\beta(z))/(1-\beta)}
 \left(e^{i(\theta_j + \varphi_k)} \int_Y (T\chi_{E_j}) \chi_{F_k} \right),
\]
whence $F$ is bounded on $S$. 

We now furnish a bound for $\Phi$ on the lines $\Re z = 0$ and $\Re z = 1$.
Note first that $|\Phi(z)| \leq \|Tf_z\|_{q_0} \|g_z\|_{q_\theta'}$ by H\"{o}lder's
inequality.
Observing the identities $\alpha(z) = \alpha_0 + z(\alpha_1 -\alpha_0)$ and
$1-\beta(z) = (1-\beta_0) - z(\beta_1 - \beta_0)$ on the line $\Re z =0$,
we see that
\begin{eqnarray*}
 |f_z|^{p_0}
 &=& |e^{i \arg f} |f|^{z (\alpha_1-\alpha_0)/\alpha} |f|^{p_\theta/p_0}|^{p_0}
 = |f|^{p_\theta} \\
 |g_z|^{q_0'}
 &=& |e^{i \arg g} |g|^{-z (\beta_1 - \beta_0)/(1-\beta)} |g|^{q_\theta'/q_0'}|^{q_0'}
 = |g|^{q_\theta'}.
\end{eqnarray*}
Since $T$ is of type $(p_0,q_0)$, it thus follows that
\begin{eqnarray*}
 |\Phi(z)|
 &\leq& \|Tf_z\|_{q_0} \|g_z\|_{q_\theta'} \\
 &\leq& k_0 \|f_z\|_{p_0}\|g_z\|_{q_0'} \\
 &=& k_0 \left( \int_X |f|^p \, d\mu \right)^{1/p_0}
 \left( \int_Y |g|^{q_0'} \, d\nu \right)^{1/q_0} \\
 &\leq& k_0 \|f\|_p^{p_\theta/p_0} \|g\|_q^{q_\theta'/q_0'} \\
 &\leq& k_0
\end{eqnarray*}
on the line $\Re z = 0$. A similar computation establishes the bound $|\Phi(z)| \leq k_1$
on the line $\Re z = 1$, whence by \hyperref[hadamard]{Hadamard's three-lines theorem}
we have the inequality
\[
 |\Phi(z)| \leq k_0^{1-\theta}k_1^\theta
\]
on the line $\Re z = \theta$. Setting $z = \theta$, we have
\[
 \left|\int_Y (Tf) g \, d\nu\right| = |\Phi(\theta)| \leq k_0^{1-\theta} k_1^\theta,
\]
which is the desired inequality.

Having established the theorem for simple functions, we now prove the theorem for
the general function $f \in D \cap L^p(X,\mu)$. To this end, we shall furnish
a sequence $(f_n)_{n=1}^\infty$ of simple functions such that
\[
 \lim_{n \to \infty} \|f_n - f\|_{p_\theta} = 0
 \htwo \mbox{and} \htwo
 \lim_{n \to \infty} (T f_n)(x) = (Tf)(x),
\]
for then Fatou's lemma yields the inequality
\[
 \|Tf\|_{q_\theta}
 \leq \lim_{n \to \infty} \|Tf_n\|_{q_\theta}
 \leq \lim_{n \to \infty} k_0^{1-\theta} k_1^\theta \|f_n\|_{p_\theta}
 = k_0^{1-\theta} k_1^\theta \|f\|_{p_\theta}.
\]
Therefore, the task of proving the theorem reduces to finding such a sequence.

We assume without loss of generality that $f \geq 0$ and $p_0 \leq p_1$. Let
$f^0$ be the truncation of $f$, defined as
\[
 f^0(x) =
 \begin{cases}
  f(x) & \mbox{ if } f(x) > 1; \\
  0 & \mbox{ if } f(x) \leq 1;
 \end{cases}
\]
and $f^1=f-f^0$ another truncation. $D$ contains all truncations of $f$,
and $(f^0)^{p_0}$ and $(f^1)^{p_1}$ are bounded by $f^p$, hence
$f^0 \in D \cap L^{p_0}(X,\mu)$ and $f^1 \in D \cap L^{p_1}(X,\mu)$. We can find
a monotonically increasing sequence $(g_m)_{m=1}^\infty$ converging to
$f$, which satisfies
\[
 \lim_{m \to \infty} \|g_m - f\|_{p_\theta} = 0
\]
by the monotone convergence theorem. If $g_m^0$ and $g_m^1$ are truncations
of $g_m$ defined in the same way as $f^0$ and $f^1$, then we have
\[
 \lim_{m \to \infty} \|g_m^0 - f^0\|_{p_\theta}
 = \lim_{m \to \infty} \|g_m^1 f^1\|_{p_\theta}
 = 0.
\]

Since $T$ is of types $(p_0,q_0)$ and $(p_1,q_1)$, we have
\[
 \lim_{m \to \infty} \|Tg_m^0 - Tf^0\|_{q_0}
 = \lim_{m \to \infty} \|Tg_m^1 - Tf^1\|_{q_1}
 = 0.
\]
We can then find a subsequence of $(T g_m^0)_{m=1}^\infty$ converging
almost everywhere to $Tf^0$, whence we may as well assume that
the full sequence converges almost everywhere to $Tf^0$. Similarly,
we can find a subsequence $(g_{m_n})_{n=1}^\infty$ of $(g_m)_{m=1}^\infty$
such that $(T g_{m_n}^1)_{n=1}^\infty$ converges almost everywhere to $Tf^1$,
whence the sequence $(f_n)_{n=1}^\infty$ defined by setting
\[
 f_n = g_{m_n}^0 + g_{m_n}^1
\]
is the desired sequence. This completes the proof of the Riesz-Thorin
interpolation theorem.
\end{proof}

\subsection{Corollaries of the Interpolation Theorem}

We now recall the Hausdorff-Young inequality from the last section:

\begin{theorem}[Hausdorff-Young inequality]\label{hausdorff-young}\index{Hausdorff-Young}
For each $1 \leq p \leq 2$, the $L^p$ Fourier transform is a bounded
linear operator from $L^p(\bR^d)$ into $L^{p'}(\bR^d)$. Specifically,
we have the inequality
\[
 \|\hat{f}\|_{p'} \leq \|f\|_p,
\]
whence the $L^p \to L^{p'}$ operator norm of $\ms{F}$ is at most 1.
\end{theorem}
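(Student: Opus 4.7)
The plan is to obtain Theorem \ref{hausdorff-young} as a direct application of the \hyperref[riesz-thorin]{Riesz-Thorin interpolation theorem} with the endpoints $(p_0,q_0) = (1,\infty)$ and $(p_1,q_1) = (2,2)$. Both endpoint inequalities are already in hand: Proposition \ref{basic-properties-of-ft}(a) asserts $\|\hat{f}\|_\infty \leq \|f\|_1$, so $\ms{F}$ is of type $(1,\infty)$ with $(1,\infty)$-norm at most $1$; and \hyperref[plancherel]{Plancherel's theorem} asserts $\|\hat{f}\|_2 = \|f\|_2$, so $\ms{F}$ is of type $(2,2)$ with $(2,2)$-norm equal to $1$. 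To fit the hypotheses of Theorem \ref{riesz-thorin}, I would take the common domain $D$ to be the simple functions with finite-measure support (which are contained in every $L^p(\bR^d)$ and are manifestly closed under truncation); on this $D$, the $L^1$ and $L^2$ Fourier transforms coincide, so $\ms{F}$ is unambiguously defined as a single linear operator on $D$.

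Next I would check the arithmetic of the interpolation exponents. For $\theta \in [0,1]$, Riesz-Thorin produces
\[
 p_\theta^{-1} = (1-\theta)\cdot 1 + \theta \cdot \tfrac{1}{2} = 1 - \tfrac{\theta}{2},
 \qquad
 q_\theta^{-1} = (1-\theta)\cdot 0 + \theta \cdot \tfrac{1}{2} = \tfrac{\theta}{2},
\]
so $p_\theta^{-1} + q_\theta^{-1} = 1$, i.e.\ $q_\theta = p_\theta'$. As $\theta$ ranges over $[0,1]$, $p_\theta$ ranges over $[1,2]$, hitting every exponent in the stated range exactly once. The norm estimate furnished by Riesz-Thorin is
\[
 \|\ms{F}\|_{L^{p_\theta} \to L^{p_\theta'}}
 \leq \|\ms{F}\|_{L^1 \to L^\infty}^{1-\theta}\,\|\ms{F}\|_{L^2 \to L^2}^{\theta}
 \leq 1^{1-\theta}\cdot 1^{\theta} = 1,
\]
which is precisely the claimed inequality on the dense subspace $D$.

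Finally, I would invoke Theorem \ref{norm-preserving-extension} to pass from the inequality on $D$ to the inequality on all of $L^p(\bR^d)$: since $D$ is dense in $L^p(\bR^d)$ and $L^{p'}(\bR^d)$ is a Banach space, the restricted operator extends uniquely and norm-preservingly to a bounded operator $L^p(\bR^d) \to L^{p'}(\bR^d)$. One should verify that this extension agrees with the $L^p$ Fourier transform defined via the $L^1+L^2$ decomposition in the previous subsection; this is immediate because both operators agree on the dense subspace $\ms{S}(\bR^d)$ (or on $D$) and both are continuous, so uniqueness of the extension forces them to coincide.

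There is no real obstacle in the argument once Riesz-Thorin and Plancherel are available; the only mild point of care is the bookkeeping with the common domain $D$ and the identification of the resulting extension with the $L^p$ Fourier transform already defined. The computation $q_\theta = p_\theta'$ is what makes the endpoints $(1,\infty)$ and $(2,2)$ conspire so cleanly, and the fact that both endpoint norms equal $1$ is what gives the sharp constant $1$ in the final inequality.
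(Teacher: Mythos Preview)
Your proposal is correct and follows exactly the approach the paper takes: apply Riesz--Thorin with endpoints $(p_0,q_0)=(1,\infty)$ (from Proposition~\ref{basic-properties-of-ft}(a)) and $(p_1,q_1)=(2,2)$ (from Plancherel), noting that both endpoint norms are at most~$1$. The paper states this in a single sentence without spelling out the exponent arithmetic or the domain bookkeeping, so your version is simply a more detailed rendering of the same argument.
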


The inequality is now a trivial consequence of the interpolation theorem,
for the Fourier transform is a bounded operator from $L^1$ into $L^\infty$
and from $L^2$ into $L^2$,\index{Plancherel's theorem} whose operator norm is at most 1 in both cases.

Another application is the following generalization of
\hyperref[convolution]{Young's inequality}:

\begin{theorem}[Young's inequality]\label{young}\index{inequality!Young's}
If $1 \leq p,q,r \leq \infty$ such that
\[
 \frac{1}{p} + \frac{1}{q} = 1 + \frac{1}{r},
\]
then
\[
 \|f*g\|_r \leq \|f\|_p \|g\|_q
\]
for all $f \in L^p(\bR^d)$ and $g \in L^q(\bR^d)$.
\end{theorem}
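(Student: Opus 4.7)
The plan is to fix $g \in L^q(\bR^d)$ and view convolution against $g$ as a linear operator $T_g f = f * g$; then apply the Riesz-Thorin interpolation theorem with two endpoint estimates. As the space $D$ of Definition \ref{strong-type}, I take the simple functions with finite-measure support in $\bR^d$; this is closed under truncation and dense in every $L^p(\bR^d)$ with $p < \infty$, and any $f \in D$ lies in $L^1 \cap L^{q'}$.

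The two endpoints I would establish are the following. First, for $f \in L^1(\bR^d)$, the already-proved Young's inequality (Theorem \ref{convolution}(c)), combined with the commutativity $f*g = g*f$, gives
\[
 \|T_g f\|_q = \|g*f\|_q \leq \|g\|_q \|f\|_1,
\]
so $T_g$ is of type $(1,q)$ with norm at most $\|g\|_q$. Second, for $f \in L^{q'}(\bR^d)$, a direct application of H\"older's inequality inside the defining integral yields
\[
 |(T_g f)(x)| = \left| \int f(x-y) g(y) \, dy \right|
 \leq \|f(x-\cdot)\|_{q'} \|g\|_q = \|f\|_{q'} \|g\|_q
\]
by translation invariance of the Lebesgue measure (Theorem \ref{change-of-variables}), so $T_g$ is of type $(q',\infty)$ with norm at most $\|g\|_q$.

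Now I invoke Theorem \ref{riesz-thorin} with $p_0 = 1$, $q_0 = q$, $p_1 = q'$, $q_1 = \infty$. For $\theta \in [0,1]$ the interpolated exponents satisfy
\[
 \frac{1}{p_\theta} = (1-\theta) + \frac{\theta}{q'}, \qquad
 \frac{1}{r_\theta} = \frac{1-\theta}{q},
\]
and a short calculation verifies $1/p_\theta + 1/q = 1 + 1/r_\theta$. As $\theta$ varies over $[0,1]$, the exponent $p_\theta$ sweeps out the full interval $[1, q']$, and these are precisely the values of $p$ compatible with the scaling relation $1/p + 1/q = 1 + 1/r$ (which forces $1 \leq p \leq q'$ and correspondingly $q \leq r \leq \infty$). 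Riesz-Thorin then gives
\[
 \|T_g f\|_{r_\theta}
 \leq \|g\|_q^{1-\theta} \|g\|_q^{\theta} \|f\|_{p_\theta}
 = \|g\|_q \|f\|_{p_\theta}
\]
for all $f \in D$, and by the density of $D$ in $L^{p_\theta}(\bR^d)$ (Proposition \ref{approximation-by-simple-functions}) together with Theorem \ref{norm-preserving-extension}, the inequality extends to all $f \in L^{p_\theta}(\bR^d)$.

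I do not expect serious obstacles; the argument is essentially a bookkeeping exercise once the endpoints are identified. The one place to be a little careful is the boundary cases of the hypothesis: when $q = 1$ the relation forces $r = p$ and the claim is just the earlier Young's inequality, while when $q = \infty$ it forces $p = 1$ and $r = \infty$ and the claim is exactly the H\"older bound above, so these degenerate cases need to be handled by hand rather than by interpolation (since then the two endpoints coincide). Apart from that, verifying the algebraic identity relating $p_\theta$, $q$, and $r_\theta$ and checking that the range of $p_\theta$ covers every admissible $p$ is the main thing to get right.
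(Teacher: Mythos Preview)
Your proof is correct and follows essentially the same approach as the paper: fix one of the two functions, use the basic Young inequality and the H\"older bound as the two endpoints, and apply Riesz--Thorin. The only cosmetic difference is that the paper fixes $f\in L^p$ and interpolates in the variable $g$ (citing Corollary~\ref{convolution-of-dual-Lp-functions} for the $(p',\infty)$ endpoint rather than writing out the H\"older step), whereas you fix $g\in L^q$ and interpolate in $f$; your treatment of the degenerate boundary cases and the exponent algebra is more explicit than what the paper records.
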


Again, the proof is a routine application of the interpolation theorem
on the convolution operator
\[
 Tg = f*g,
\]
once we recall the \hyperref[convolution]{Young's inequality} to establish
that $T$ is of type $(1,p)$ and invoke the bound
\[
 \|f*g\|_\infty \leq \|f\|_p \|g\|_{p'}
\]
from Theorem \ref{convolution-of-dual-Lp-functions} to prove that $T$ is of
type $(p',\infty)$. We shall have more to say about convolution operators
in \S\ref{s-generalized_functions}. The constants in the above inequalities
can be improved: see \S\S\ref{fr-sharp-inequalities} for the sharp versions.
\index{Riesz-Thorin interpolation \\ theorem|)}

\subsection{The Stein Interpolation Theorem}

\index{Stein, Elias M.!interpolation theorem|(}
Let $(X,\mf{M},\mu)$ and $(Y,\mf{N},\nu)$ be $\sigma$-finite measure spaces
and $T$ a linear operator of types $(p_0,q_0)$ and $(p_1,q_1)$.
Recall that the proof of the \hyperref[riesz-thorin]{Riesz-Thorin interpolation theorem}
essentially amounted to giving an estimate of the entire function
\[
 z \mapsto \int_Y (Tf_z) g_z \, d\nu
\]
What if we let the operator $T$ vary as well? \index{Fefferman, Charles|(}C. Fefferman points out in
\cite{Fefferman_Fefferman_Wainger:B1995} that
the net result of adding a letter $z$ to the operator $T$ is the new entire function
\[
 \Phi(z) = \int_Y (T_zf_z) g_z \, d\nu,
\]
whence establishing an estimate of $\Phi$ produces another interpolation theorem.
The similarity of the operators suggest that the new proof should closely mimic
that of the \hyperref[riesz-thorin]{Riesz-Thorin interpolation theorem},
and this is indeed the case.

We thus obtain an interpolation theorem that
allows the operator to vary in a holomorphic manner, as the letter $z$ suggests.\index{Fefferman, Charles|)}
The interpolation theorem was first established by E. Stein in
\cite{Elias_M_Stein:J1956} and is dubbed
\emph{interpolation of analytic families of operators}\index{interpolation of analytic families of operators|see{Stein interpolation theorem}} in
the standard reference \cite{Stein_Weiss:B1971}\index{Stein, Elias M.}\index{Weiss, Guido} of E. Stein and G. Weiss. In the
present thesis, we shall refer to it as the \emph{Stein interpolation theorem}.

In order for $\Phi$ to be holomorphic, we must impose a restriction on how
the operator can vary:

\begin{defin}
Let $(X,\mf{M},\mu)$ and $(Y,\mf{N},\nu)$ be $\sigma$-finite measure spaces
and
\[
 S = \{z \in \bC : 0 \leq \Re z \leq 1\}.
\]
We suppose that we are given a linear operator $T_z$, for each $z \in S$,
on the space of simple functions in $L^1(M,\mu)$ into the space of
measurable functions on $N$. If $f$ is a simple function in $L^1(M,\mu)$
and $g$ a simple function in $L^1(N,\nu)$, we assume furthermore that
$(T_z f)g \in L^1(N,\nu)$. The family $\{T_z\}_{z \in S}$ of operators
is said to be \emph{admissible} if, for each such $f$ and $g$, the map
\[
 z \mapsto \int_N (T_z f)g \, d\nu
\]
is holomorphic in the interior of $S$ and continuous on $S$, and if
there exists a constant $k < \pi$ such that
\[
 \sup_{z \in S} e^{-k|\Im z|} \left| \int_N (T_z f) g \, d\nu \right| < \infty.
\]
\end{defin}

With this hypothesis, we can state the interpolation theorem as follows:

\begin{theorem}[Stein interpolation theorem]\label{stein}
Let $(X,\mf{M},\mu)$ and $(Y,\mf{N},\nu)$ be $\sigma$-finite measure spaces
and $\{T_z\}_{z \in S}$ an admissible family of linear operators.
Fix $1 \leq p_0,p_1,q_0,q_1 \leq \infty$ and assume, for each real number $y$, that
there are constants $M_0(y)$ and $M_1(y)$ such that
\[
 \|T_{iy} f\|_{q_0} \leq M_0(y) \|f\|_{p_0} \htwo \mbox{and} \htwo
 \|T_{1+iy} f\|_{q_1} \leq M_1(y) \|f\|_{p_1}
\]
for each simple function $f$ in $L^1(M,\mu)$. If, in addition, the constants $M_j(y)$
satisfy
\[
 \sup_{-\infty < y < \infty} e^{-k|y|} \log M_j(y) < \infty
\]
for some $k < \pi$, then each $\theta \in [0,1]$ furnishes a constant $M_\theta$ such that
\[
 \|T_\theta f\|_{q_\theta} \leq M_\theta \|f\|_{p_\theta}
\]
for each simple function $f$ in $L^1(M,\mu)$, where
\begin{eqnarray*}
 p_\theta^{-1} &=& (1-\theta)p_0^{-1} + \theta p_1^{-1};\\
 q_\theta^{-1} &=& (1-\theta)q_0^{-1} + \theta q_1^{-1}.
\end{eqnarray*}
\end{theorem}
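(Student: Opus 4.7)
The plan is to mimic the Riesz--Thorin argument with the single letter $z$ now attached to the operator as well. Fix $\theta \in (0,1)$; by the Riesz representation theorem it suffices to bound $|\int_Y (T_\theta f) g\, d\nu|$ for simple $f,g$ with $\|f\|_{p_\theta} = \|g\|_{q_\theta'} = 1$. Writing $f = \sum_j a_j \chi_{E_j}$ and $g = \sum_k b_k \chi_{F_k}$, I define $f_z$ and $g_z$ exactly as in the proof of Theorem \ref{riesz-thorin} and consider
$$\Phi(z) = \int_Y (T_z f_z)\, g_z \, d\nu = \sum_{j,k} |a_j|^{\alpha(z)/\alpha} |b_k|^{(1-\beta(z))/(1-\beta)} e^{i(\arg a_j + \arg b_k)} \int_Y (T_z \chi_{E_j}) \chi_{F_k} \, d\nu.$$
Admissibility makes each integral in the sum holomorphic in the interior of $S$, continuous on $S$, and of exponential type $<\pi$ in $|\Im z|$; the coefficients are entire and bounded in modulus on $S$ (their dependence on $\Im z$ lives only in unit-modulus phase factors), so $\Phi$ inherits the same regularity and satisfies $|\Phi(z)| \leq C e^{k_0|\Im z|}$ for some $k_0 < \pi$. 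The exponents in $f_z, g_z$ were arranged so that $|f_{iy}|^{p_0} = |f|^{p_\theta}$ and $|g_{iy}|^{q_0'} = |g|^{q_\theta'}$; combining this with the endpoint hypothesis on $T_{iy}$ and H\"older's inequality gives $|\Phi(iy)| \leq M_0(y)$, and symmetrically $|\Phi(1+iy)| \leq M_1(y)$.

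The main obstacle is that Theorem \ref{hadamard} as stated is insufficient: it requires $\Phi$ to be bounded on $S$, whereas here both $\Phi$ itself and the boundary data $M_j$ are merely of exponential type strictly less than $\pi$. The remedy is a Phragm\'en--Lindel\"of strengthening of the three-lines theorem: under our growth hypotheses one has the Poisson representation
$$\log|\Phi(\theta + i y_0)| \leq \int_{-\infty}^{\infty} \omega_0(\theta, y_0 - y) \log M_0(y)\, dy + \int_{-\infty}^{\infty} \omega_1(\theta, y_0 - y) \log M_1(y)\, dy,$$
where $\omega_0, \omega_1$ are the Poisson kernels for the strip $0 < \Re z < 1$. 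These kernels decay like $e^{-\pi|y|}$, which is precisely why the constant $k < \pi$ in the admissibility hypothesis is the correct threshold and cannot be relaxed. I would prove this refinement by the classical trick of multiplying $\Phi$ by an entire damping factor of the form $\exp(-\varepsilon\, h(z))$, where $h$ is chosen so that $\Re h(z)$ grows faster than $k_0 |\Im z|$ on $S$ while $\Re h(iy)$ and $\Re h(1+iy)$ remain controlled; this reduces to the bounded case, where the Poisson formula and the ordinary maximum principle apply directly, after which one lets $\varepsilon \to 0^+$.

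Specializing the Poisson estimate to $y_0 = 0$ yields the constant $M_\theta$ claimed in the theorem and the inequality $|\Phi(\theta)| \leq M_\theta$; since $\Phi(\theta) = \int_Y (T_\theta f)\, g\, d\nu$, taking the supremum over admissible $g$ gives $\|T_\theta f\|_{q_\theta} \leq M_\theta \|f\|_{p_\theta}$ for every simple $f \in L^1(X,\mu)$, which is exactly what the statement asserts.
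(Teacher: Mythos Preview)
Your proposal is correct and follows essentially the same architecture as the paper's proof: define $\Phi(z)=\int_Y(T_zf_z)g_z\,d\nu$ with the Riesz--Thorin deformations $f_z,g_z$, use admissibility and the endpoint bounds to control $\Phi$ on the boundary lines, and then invoke a sharpened three-lines estimate. The paper names that sharpened estimate \emph{Hirschman's lemma} and proves it by conformally mapping the strip to the disk and applying the Poisson--Jensen formula there (passing $R\to 1$ under the growth hypothesis), which yields exactly the Poisson-kernel inequality you wrote down; your alternative route via a Phragm\'en--Lindel\"of damping factor $\exp(-\varepsilon h(z))$ is a standard and equally valid way to reach the same conclusion.
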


Theorem \ref{norm-preserving-extension} then implies that
$T_\theta$ can be extended to a bounded operator on $L^{p_\theta}(\bR^d)$
into $L^{q_\theta}(\bR^d)$. For the sake of convenience, we shall use
the term \emph{operator of type $(p_\theta,q_\theta)$} for $T_\theta$
as well. The proof of the interpolation theorem
makes use of an extension of \hyperref[hadamard]{Hadamard's three-lines theorem}
due to I. Hirschman:

\begin{lemma}[Hirschman]\label{hirschman}\index{Hirschman's lemma|(}
If $\Phi$ is a continuous function on the strip $S$ that is holomorphic in
the interior of $S$ and satisfies the bound
\begin{equation}\label{hirschman1}
 \sup_{z \in S} e^{-k|\Im z|} \log |\Phi(z)| < \infty
\end{equation}
for some constant $k < \pi$, then
\begin{equation}\label{hirschman2}
 \log|\Phi(\theta)|
 \leq \frac{\sin \pi \theta}{2}
 \int_{-\infty}^\infty \frac{\log|\Phi(iy)|}{\cosh \pi y - \cos \pi \theta}
 + \frac{\log|\Phi(1+iy)|}{\cosh \pi y + \cos \pi \theta} \, dy
\end{equation}
for all $\theta \in (0,1)$. 
\end{lemma}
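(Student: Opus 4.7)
The plan is to recognize the inequality \eqref{hirschman2} as the subharmonic analogue of the Poisson integral formula on the strip $S$. The two kernels appearing on the right are precisely the Poisson kernels for $S$ at the interior point $\theta$, and the inequality (rather than equality) reflects the subharmonicity of $\log|\Phi|$. The growth hypothesis $k<\pi$ will be exactly what is needed to justify a Phragm\'{e}n--Lindel\"{o}f argument on the strip.

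First, I would identify the Poisson kernels for the strip. The conformal map $\zeta = e^{i\pi z}$ sends $S$ to the closed upper half-plane, with $\Re z = 0$ going to the positive real axis and $\Re z = 1$ to the negative real axis. Pulling back the Poisson kernel for $\mathbb{H}$ at the image of $\theta$ and performing the substitution $t = \pm e^{-\pi y}$ on each boundary ray produces, after routine algebra, the two kernels
\begin{align*}
P_0(\theta,y) &= \frac{\sin\pi\theta}{2(\cosh\pi y - \cos\pi\theta)}, \\
P_1(\theta,y) &= \frac{\sin\pi\theta}{2(\cosh\pi y + \cos\pi\theta)}.
\end{align*}
I would then define
\[
 U(z) = \int_{-\infty}^{\infty} P_0(\Re z,y)\log|\Phi(iy)|\,dy + \int_{-\infty}^{\infty} P_1(\Re z,y)\log|\Phi(1+iy)|\,dy.
\]
The integrands are integrable because $\cosh \pi y$ grows like $\tfrac{1}{2}e^{\pi|y|}$ while \eqref{hirschman1} forces the boundary values of $\log|\Phi|$ to grow at most like $e^{k|y|}$ with $k<\pi$. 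A direct computation (or the conformal invariance of the Poisson integral) shows that $U$ is harmonic on the interior of $S$ with boundary values $\log|\Phi(iy)|$ and $\log|\Phi(1+iy)|$ almost everywhere.

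Next, form $V(z) = \log|\Phi(z)| - U(z)$. Since $\log|\Phi|$ is subharmonic and $U$ is harmonic, $V$ is subharmonic on the interior of $S$ and satisfies $V \le 0$ a.e.\ on each boundary line (with genuine inequality at zeros of $\Phi$, where $\log|\Phi|=-\infty$, so no issue arises). A short estimate using \eqref{hirschman1} and the explicit form of $P_0,P_1$ shows that $U(z) = O(e^{k|\Im z|})$, and hence $V(z) = O(e^{k|\Im z|})$ as $|\Im z|\to\infty$ uniformly on $S$. A Phragm\'{e}n--Lindel\"{o}f principle for the strip---applied to $V$ by multiplying by comparison functions of the form $\exp(-\varepsilon \cos(\pi\Re z)\cosh(\pi\Im z))$, which decay like $\exp(-\tfrac{\varepsilon}{2}\sin(\pi\Re z)e^{\pi|\Im z|})$ in the interior---then forces $V\le 0$ throughout $S$. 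Evaluating at $z=\theta$ and unpacking the definition of $U$ yields \eqref{hirschman2}.

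The main obstacle is the Phragm\'{e}n--Lindel\"{o}f step: the threshold $\pi$ in the hypothesis $k<\pi$ is sharp precisely because the natural auxiliary functions on the strip that vanish on $\{\Re z=0,1\}$ and grow in the interior have exponential type $\pi$ in the imaginary direction. One must select the comparison function so that $V + \varepsilon(\text{comparison})$ is at most $0$ on a suitably chosen bounded rectangle $\{|\Im z|\le R\}$ for all large $R$, then send $\varepsilon\to 0$. A secondary technical nuisance is making sure the non-integrable pathology from zeros of $\Phi$ does no harm, but this is handled by the fact that subharmonic functions are upper semicontinuous and that the Poisson integral $U$ uses the boundary $\log|\Phi|$ values (which are integrable against $P_0,P_1$ since $\log|\Phi(iy)|$ is locally in $L^1$ as the log of the modulus of a holomorphic boundary trace of an entire family).
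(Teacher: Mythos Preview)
Your approach is sound but proceeds quite differently from the paper. The paper conformally maps the strip to the unit disk via $\zeta = (e^{i\pi z}-i)/(e^{i\pi z}+i)$, applies the Poisson--Jensen formula to $\Psi = \Phi\circ\varphi$ on disks of radius $R<1$, uses the growth hypothesis \eqref{hirschman1} (which on the disk becomes $\log|\Psi(\eta)| \le C(|1+\eta|^{-k/\pi}+|1-\eta|^{-k/\pi})$, integrable since $k/\pi<1$) to pass to the limit $R\to 1$ by dominated convergence, and then performs an explicit change of variables back to strip coordinates to produce the kernels $P_0,P_1$. In effect the paper folds the Phragm\'en--Lindel\"of step into the Poisson--Jensen machinery on the disk: the finiteness of the limiting boundary integral is what absorbs the growth condition. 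Your route separates the two ingredients, writing down the harmonic majorant $U$ on the strip directly and then invoking a strip Phragm\'en--Lindel\"of principle to force $\log|\Phi|\le U$. This is cleaner conceptually and spares the change-of-variables computation, at the price of needing the P--L principle as a separate input. One correction to your sketch: the auxiliary function should be $\sin(\pi\Re z)\cosh(\pi\Im z)$, which is harmonic, vanishes on both boundary lines, and grows like $\tfrac12 e^{\pi|\Im z|}$; the $\cos$ version does not vanish on $\Re z=0,1$. Moreover, since $V$ is subharmonic rather than holomorphic, one \emph{subtracts} $\varepsilon$ times this harmonic function from $V$ and applies the maximum principle on large rectangles, rather than multiplying by an exponential.
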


Why is this an extension of \hyperref[hadamard]{Hadamard's three-lines theorem}?\index{Hadamard's three-lines theorem}
If $\Phi$ is bounded and continuous on $S$, $|\Phi(z)| \leq k_0$ on the line
$\Im z = 0$ and $|\Phi(z)| \leq k_1$ on the line $\Im z = 1$, then
the bound
\[
 \sup_{z \in S} e^{-k|\Im z|} \log |\Phi(z)| < \infty
\]
is satisfied for $k = 0$. Furthermore, we have
\[
 \frac{1}{2}\int_{-\infty}^\infty \frac{\log|\Phi(iy)|}{\cosh \pi y - \cos \pi \theta} \, dy = \theta
 \htwo \mbox{and} \htwo
 \frac{1}{2} \int_{-\infty}^\infty \frac{\log|\Phi(1+iy)|}{\cosh \pi y + \cos \pi \theta} \, dy
 = 1-\theta,
\]
whence Hirschman's lemma implies that
\[
 |\Phi(x)| \leq k_0^{1-\theta}k_1^\theta.
\]
We have thus recovered the three-lines theorem from Hirschman's lemma.

\begin{proof}[Proof of Lemma \ref{hirschman}]
Let $D$ be the closed unit disk in $\bC$. For each $\zeta$ in $D \smallsetminus
\{-1,1\}$, we define
\[
 \varphi(\zeta) = \frac{1}{\pi i} \log \left( i\frac{1+\zeta}{1-\zeta} \right).
\]
$\varphi$ is a composition of the conformal mapping
\[
 \zeta \mapsto w = i\frac{1+\zeta}{1-\zeta}
\]
of $D \smallsetminus \{-1,1\}$ onto the closed upper-half plane $\mb{H}$
and of the conformal mapping
\[
 w \mapsto z = \frac{1}{\pi i} \log w
\]
of $\mb{H}$ onto the closed strip $S$. Therefore, $h$ maps $D
\smallsetminus \{-1,1\}$ conformally onto $S$, and the inverse
\[
 \zeta = \varphi^{-1}(z) = \frac{e^{\pi i z} - i}{e^{ \pi i z } + i}
\]
is conformal as well. $\Psi = \Phi \circ \varphi$ is holomorphic
on open unit disk and is continuous on $D \smallsetminus \{-1,1\}$.

We now recall the following standard result\footnote{See pages 206-208 of
\cite{Lars_V_Ahlfors:B1979} for a detailed discussion} from complex analysis:

\begin{lemma}[Poisson-Jensen formula]
Let $\Psi$ be a holomorphic function on an open disk of radius $R$
centered at 0. If, for some $0 < \rho < R$, we write
$a_1,\ldots,a_N$ to denote the zeroes of $\Psi$ in the open
disk $|z| < \rho$, then
\[
 \log|\Psi(z)| = -\sum_{n=1}^N \log \left| \frac{\rho^2 - \bar{a}_nz}{\rho(z-a_n)} \right|
 + \frac{1}{2\pi} \int_0^{2\pi} \Re \frac{\rho e^{i\theta} + z}{\rho e^{i\theta} - z}
 \log |f(\rho e^{i\theta})| \, d\theta 
\]
for all $|z| < r$ such that $f(z) \neq 0$.
\end{lemma}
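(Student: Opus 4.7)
The plan is to reduce the general statement to the case of a zero-free holomorphic function, for which $\log|\Psi|$ is harmonic and the classical Poisson integral formula applies directly. The reduction is effected by dividing out the zeros $a_1, \ldots, a_N$ using finite Blaschke-type products adapted to the disk $|z|<\rho$, exploiting the fact that these products have modulus one on the circle $|z|=\rho$ so that the boundary values of $|\Psi|$ are not disturbed by the division.

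Concretely, for each zero $a_n$ I would consider the M\"obius factor
\[
 B_n(z) = \frac{\rho(z-a_n)}{\rho^2 - \bar{a}_n z},
\]
which is holomorphic on a neighborhood of $\overline{|z|<\rho}$ (since $|a_n|<\rho$ forces $\rho^2 - \bar{a}_n z \ne 0$ there), vanishes precisely at $a_n$ with the correct order after taking the product (accounting for multiplicities), and satisfies $|B_n(\rho e^{i\theta})|=1$ by a direct computation. Setting $B = B_1 \cdots B_N$ and $g = \Psi/B$, the function $g$ is then holomorphic and non-vanishing on a neighborhood of $\overline{|z|<\rho}$, with $|g(\rho e^{i\theta})| = |\Psi(\rho e^{i\theta})|$ on the boundary circle.

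Next, since $g$ is holomorphic and zero-free on a neighborhood of the closed disk of radius $\rho$, a holomorphic branch of $\log g$ exists there, so $\log|g| = \Re \log g$ is harmonic on $|z|<\rho$ and continuous up to the boundary. Applying the classical Poisson integral formula for harmonic functions on the disk yields
\[
 \log|g(z)| = \frac{1}{2\pi}\int_0^{2\pi} \Re \frac{\rho e^{i\theta} + z}{\rho e^{i\theta} - z} \log|g(\rho e^{i\theta})| \, d\theta
\]
for all $|z|<\rho$. Substituting $|g(\rho e^{i\theta})| = |\Psi(\rho e^{i\theta})|$ and adding $\log|B(z)| = -\sum_{n=1}^N \log\bigl|(\rho^2 - \bar{a}_n z)/(\rho(z-a_n))\bigr|$ to both sides (noting that $\log|\Psi(z)| = \log|g(z)| + \log|B(z)|$ for $z$ with $\Psi(z)\ne 0$) produces the claimed identity.

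The main technical obstacle I anticipate is the case in which $\Psi$ happens to have zeros on the boundary circle $|z|=\rho$, since then $\log|\Psi(\rho e^{i\theta})| = -\infty$ at finitely many points and the Poisson integral must be interpreted carefully. I would handle this by perturbing the radius: apply the argument on circles of radius $\rho'$ slightly smaller than $\rho$ that avoid all boundary zeros, then pass to the limit $\rho' \to \rho^-$, using integrability of $\log$-singularities of holomorphic functions (so that $\log|\Psi(\rho e^{i\theta})|$ is in $L^1$ of the circle) and a dominated-convergence argument to obtain the formula at radius $\rho$. The rest of the proof consists of routine computations verifying the modulus identity $|B_n|=1$ on $|z|=\rho$ and the harmonicity of the Poisson kernel.
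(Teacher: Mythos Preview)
The paper does not actually prove this lemma: it is stated as a ``standard result'' with a footnote citing pages 206--208 of Ahlfors, and then immediately applied. Your proposal is correct and follows exactly the classical argument found in Ahlfors and elsewhere --- divide out the zeros by a Blaschke product for the disk of radius $\rho$, apply the Poisson integral formula to the harmonic function $\log|g|$ where $g=\Psi/B$ is zero-free, and use that $|B|=1$ on the boundary circle so the boundary data are unchanged. Your treatment of possible zeros on $|z|=\rho$ via radial perturbation and the $L^1$-integrability of the logarithmic singularity is also the standard way to close that gap.
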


For $0 \leq \rho < R < 1$, we can write $\zeta = \rho e^{i\omega}$ and apply
the above lemma to obtain
\begin{equation}\label{poisson-jensen}
 \log|\Psi(\zeta)|
 \leq \frac{1}{2\pi}
 \int_{-\pi}^\pi \frac{R^2 - \rho^2}{R^2 - 2 R \rho \cos (\omega - \phi) + \rho^2}
 \log|\psi(Re^{i\phi})| \, d\phi.
\end{equation}
Rewriting (\ref{hirschman1}) in terms of $\Psi$ and $\eta = \varphi^{-1}(x+iy)$,
we have
\[
 \log|\Psi(\eta)| \leq C \left[ |1+\eta|^{-k/\pi} + |1-\eta|^{-k/\pi} \right]
\]
for some constant $C$ independent of $\eta \in D$. Plugging in $\eta = Re^{i\phi}$
and noting that $k/\pi < 1$, we see that the above inequality permits us to
use the dominated convergence theorem to the integral in (\ref{poisson-jensen}).
Therefore, by sending $R \to 1$, we obtain
\begin{equation}\label{uh2}
\log|\Psi(\zeta)|
 \leq \frac{1}{2\pi}
 \int_{-\pi}^\pi \frac{1 - \rho^2}{1 - 2 \rho \cos (\omega - \phi) + \rho^2}
 \log|\psi(e^{i\phi})| \, d\phi.
\end{equation}

We now apply a \hyperref[change-of-variables]{change of variables} to (\ref{uh2})
inequality to obtain (\ref{hirschman2}). First, we convert the condition
\begin{equation}\label{uh}
 0 < \omega = \varphi(\rho e^{i \omega}) < 1
\end{equation}
into a condition on $\rho$ and $\omega$. Indeed, we note that
\[
 \rho e^{i \omega}
 = \varphi^{-1}(\theta) 
 = \frac{e^{\pi i \theta} - i}{e^{\pi i \theta} + i} 
 = - i \frac{\cos \pi \theta}{1 + \sin \pi \theta} 
 = \left( \frac{\cos \pi \theta}{1 + \sin \pi \theta} \right) e^{i\pi/2},
\]
whence (\ref{uh}) implies that
\[
 \rho = 
 \begin{cases}
  \frac{\cos \pi \theta}{1 + \sin \pi \theta} & \mbox{ if } 0 < \theta \leq \frac{1}{2}; \\
  -\frac{\cos \pi \theta}{1 + \sin \pi \theta} & \mbox{ if } \frac{1}{2} \leq \theta < 1;
 \end{cases}
 \htwo \mbox{and} \htwo
 \omega =
 \begin{cases}
  -\frac{\pi}{2} & \mbox{ if } 0 < \theta \leq \frac{1}{2}; \\
  \frac{\pi}{2} & \mbox{ if } \frac{1}{2} \leq \theta < 1.
 \end{cases}
\]
Therefore, if $0 < \theta \leq \frac{1}{2}$, then
\begin{eqnarray*}
 \frac{1 - \rho^2}{1 - 2 \rho \cos (\omega - \phi) + \rho^2}
 &=& \frac{1 - \rho^2}{1 - 2 \rho \cos(\phi + \pi/2) + \rho^2} \\
 &=& \frac{1-\rho^2}{1+2\rho\sin\phi + \rho^2} \\
 &=& \frac{\sin \pi \theta}{1 + \cos \pi \theta \sin \phi}.
\end{eqnarray*}
Furthermore, the above equality holds for $\frac{1}{2} \leq \theta < 1$
as well.

We now observe from the identity
\[
 e^{i\phi} = \varphi^{-1}(iy) = \frac{e^{-\pi y}}{e^{-\pi y} + i}
\]
that $y$ ranges from $+\infty$ to $-\infty$ as $\phi$ ranges from $-\pi$
to $0$. Moreover,
\[
 \sin \phi = -\frac{1}{\cosh \pi y}
 \htwo \mbox{and} \htwo
 d\phi = -\frac{\pi}{\cosh \pi y} dy,
\]
and so the \hyperref[change-of-variables]{change-of-variables formula}
yields
\begin{eqnarray*}
 & & \frac{1}{2\pi}
 \int_{-\pi}^0 \frac{1 - \rho^2}{1 - 2 \rho \cos (\omega - \phi) + \rho^2}
 \log|\Psi(e^{i\phi})| \, d\phi \\
 &=& \frac{\sin \pi \theta}{2} \int_{-\infty}^\infty
 \frac{\log|\Phi(iy)|}{\cosh \pi y - \cos \pi \theta}  \, dy.
\end{eqnarray*}
Similarly, as $\phi$ ranges from $0$ to $\pi$, the function
$\varphi(e^{\phi})$ produces the points $1+iy$ with $-\infty < y < \infty$,
whence
\begin{eqnarray*}
& & \frac{1}{2\pi}
 \int_0^{\pi} \frac{1 - \rho^2}{1 - 2 \rho \cos (\omega - \phi) + \rho^2}
 \log|\Psi(e^{i\phi})| \, d\phi \\
 &=& \frac{\sin \pi \theta}{2} \int_{-\infty}^\infty
 \frac{\log|\Phi(1+iy)|}{\cosh \pi y + \cos \pi \theta}  \, dy.
\end{eqnarray*}
We add up the two quantities and plug the sum into (\ref{uh2})
to obtain (\ref{hirschman2}), which was the desired inequality.
\end{proof}
\index{Hirschman's lemma|)}

We are now ready to present a proof of the interpolation theorem.

\begin{proof}[Proof of Theorem \ref{stein}]
Fix $0 \leq \theta \leq 1$. As in the proof of the
\hyperref[riesz-thorin]{Riesz-Thorin interpolation theorem}, we let
\[
 \alpha_0 = \frac{1}{p_0}, \hone \alpha_1 = \frac{1}{p_1},
 \hone \alpha = \frac{1}{p_\theta}, \hone
 \beta_0 = \frac{1}{q_0}, \hone \beta_1 = \frac{1}{q_1}, \hone
 \beta = \frac{1}{q_\theta}.
\]
With this notation, we set
\[
 \alpha(z) = (1-z)\alpha_0 + z\alpha_1 \htwo \mbox{and} \htwo
 \beta(z) = (1-z)\beta_0 + z\beta_1,
\]
so that
\[
 \alpha(0) = \alpha_0, \hone \alpha(1) = \alpha_1,
 \hone \alpha(\theta) = \alpha, \hone
 \beta(0) = \beta_0, \hone \beta(1) = \beta_1, \hone
 \beta(\theta) = \beta.
\]

Let 
\[
 f = \sum_{j=1}^m a_j \chi_{E_j} \htwo\mbox{ and }\htwo
 g = \sum_{k=1}^n b_k \chi_{F_k}
\]
be simple functions such that $f \in L^1(X,\mu)$, $g \in L^1(Y,\nu)$,
and
\[
\|f\|_{p_\theta} = 1 = \|g\|_{q_\theta'}.
\]
Write $a_j = |a_j| e^{i\theta_j}$ and $b_k = |b_k| e^{i\varphi_k}$ and set 
\[
 f_z = \sum_{j=1}^m |a_j|^{\alpha(z)/\alpha}e^{i\theta_j} \chi_{E_j}
 \htwo\mbox{and}\htwo
 g_z = \sum_{k=1}^m |b_k|^{(1-\beta(z))/(1-\beta)}e^{i\varphi_k}\chi_{F_k}
\]
for each $z \in \bC$. Then
\[
 \Phi(z) = \int_Y (T_zf_z) g_z \, d\nu
\]
is an entire function such that
\[
 \Phi(\theta) = \int_Y (T_\theta f) g \, d\nu.
\]
We note, in particular, that $\Phi$ is holomorphic in the interior of $S$
and is continuous on $S$. Since $T_z$ is linear, we see that
\[
 \Phi(z) = \sum_{j=1}^m\sum_{k=1}^n
 |a_j|^{\alpha(z)/\alpha} |b_k|^{(1-\beta(z))/(1-\beta)}
 \left(e^{i(\theta_j + \varphi_k)} \int_Y (T_z \chi_{E_j}) \chi_{F_k} \right).
\]

It follows that $\{T_z\}_{z \in S}$ is an admissible family, whence
$\Phi$ satisfies the bound
\[
 \sup_{z \in S} e^{-k|\Im z|} \log |\Phi(z)| < \infty.
\]
Furthermore, we have
\[
 |f_{iy}|^{p_0} = |f|^{p_\theta} = |f_{1+iy}|^{p_1}
 \htwo \mbox{and} \htwo
 |g_{iy}|^{q_0'} = |g|^{q_\theta'} = |g_{1+iy}|^{q_1'}
\]
for all $y \in \bR$, and so H\"{o}lder's inequality implies that
$|\Phi(iy)| \leq M_0(y)$ and $|\Phi(1+iy)| \leq M_1(y)$.
\hyperref[hirschman]{Hirschman's lemma} now establishes the bound
\begin{eqnarray*}
 |\Phi(\theta)| \\
 &\leq& \exp \left( \frac{\sin (\pi \theta)}{2}
 \int_{-\infty}^\infty \frac{\log|\Phi(iy)|}{\cosh \pi y - \cos \pi \theta}
 + \frac{\log|\Phi(1+iy)|}{\cosh \pi y + \cos \pi \theta} \, dy
 \right) \\
 &=& M_\theta,
\end{eqnarray*}
whence we invoke the \hyperref[Lp-riesz-representation]{Riesz representation theorem}
to conclude that
\[
 \|T_\theta f\|_{q_\theta}
 = \sup_{\|g\|_{q_\theta'} = 1}\left| \int_Y (T_\theta f) g \, d\nu \right|
 \leq M_\theta 
 = M_\theta \|f\|_{p_\theta},
\]
as was to be shown.
\end{proof}

See \S\S\ref{fr-fourier-inversion-problem} for an application of the Stein
interpolation theorem in the context of the Fourier inversion problem.
In \S\ref{s-hardy-spaces-and-bmo}, we shall prove Fefferman's
generalization of the Stein interpolation theorem
that allows us to take a particular subspace
of $L^1$ as the domain for one of the endpoint estimates. The theory of
\emph{complex interpolation}, which provides an abstract framework for
Riesz-Thorin, Stein, and Fefferman-Stein, will be developed in \S\ref{s-the_complex_interpolation_method}.\index{complex interpolation}
\index{Stein, Elias M.!interpolation theorem|)}
  
\section{Additional Remarks and Further Results}

In this section, we collect miscellaneous comments that provide
further insights or extension of the material discussed in the
chapter. No result in the main body of the thesis relies on
the material presented herein.

\begin{fr}\label{fr-compactification}\index{compactification}
If $X$ is a topological space, then a
\emph{compactification}\footnote{See \S29 and \S38 in \cite{James_R_Munkres:B2000}\index{Munkres, James R.} or
Proposition 4.36 and Theorem 4.57 in \cite{Gerald_B_Folland:B1999}\index{Folland, Gerald B.}
for standard methods of compactification in point-set topology.}
of $X$ is defined to be a compact Hausdorff space $Y$ such that
$X$ embeds into $Y$ as a dense subset of $Y$. The extended
real line $\bar{\bR}$ can be considered as a compactification
of $\bR$, if, in addition to the open sets in $\bR$,
we declare the intervals of the form $[-\infty,a)$ and
$(a,\infty]$ as open sets in $\bar{\bR}$. With this topology,
an extended real-valued function $f$ is measurable if and only
if $f^{-1}(U)$ is measurable for every open set $U$ in $\bar{\bR}$,
thus conforming to the standard definition of measurability.
\end{fr}

\begin{fr}\label{fr-riesz-representation}\index{representation theorem!F. Riesz, Lp-space version@F. Riesz, $L^p$-space version}
For $1<p<\infty$, the \hyperref[Lp-riesz-representation]{Riesz representation theorem} continues to hold on
non-$\sigma$-finite
measure spaces: see Theorem 6.15 in \cite{Gerald_B_Folland:B1999}\index{Folland, Gerald B.}.
It thus follows that $L^p$ spaces for $1 < p <\infty$ are always reflexive
Banach spaces.
\index{representation theorem!on L1@on $L^1$|(}
As for $p=1$, the representation theorem continues to hold
for a slightly milder condition that $\mu$ be decomposable:\index{measure!decomposable}
we say that $\mu$ is \emph{decomposable} if there is a pairwise disjoint
collection $\ms{F}$ of finite $\mu$-measure such that
\begin{enumerate}[(a)]
 \item The union of all members of $\ms{F}$ is the base set $X$;
 \item If $E$ is of finite $\mu$-measure, then
 \[
  \mu(E) = \sum_{F \in \ms{F}} \mu(E \cap F);
 \]
 \item If the intersection a subset $E$ of $X$ and each element of
 the collection $\ms{F}$  is $\mu$-measurable, then $E$ is $\mu$-measurable.
\end{enumerate}
See Theorem 20.19 in \cite{Hewitt_Stromberg:B1965}\index{Hewitt, Edwin}\index{Stromberg, Karl} for a proof.
\index{representation theorem!on L1@on $L^1$|)}

\index{representation theorem!on Linfty@on $L^\infty$|(}
The dual of $L^\infty$ is strictly bigger than $L^1$, even on the Euclidean
space. We define a bounded linear functional $l_0$ on $\mc{C}_c(\bR^d)$
by setting
\[
 l_0(f) = f(0)
\]
for each $f \in \mc{C}_c(\bR^d)$. By Theorem \ref{extension-of-linear-functionals-on-banach-spaces},\index{Hahn-Banach theorem} 
there exists a bounded extension $l$ of $l_0$ on $L^\infty(\bR^d)$. Assume for
a contradiction that
\[
 l(f) = \int fu \, dx
\]
for some $u \in L^1(\bR^d)$. Then $\int fu = 0$ for all $f \in \mc{C}_c(\bR^d)$
such that $f(0)=0$, whence the discussion in \S\ref{fr-smooth-approximations-to-the-identity}\index{approximations to the identity!smooth}
establishes that $u=0$ almost everywhere on $\bR^d \smallsetminus \{0\}$. Therefore, $u=0$
almost everywhere on $\bR^d$, and so
\[
 \int fu \, dx = 0
\]
for all $f \in L^\infty(\bR^d)$, which is evidently absurd.

In general, the dual of $L^\infty(X,\mf{M},\mu)$ is isometrically isomorphic
to the Banach space of finitely additive measures of bounded total variation
that are absolutely continuous to $\mu$, which \cite{Dunford_Schwartz:B1958}\index{Dunford, Nelson}\index{Schwartz, Jacob T.}
denotes as $ba(X,\mf{M},\mu_1)$. See \S IV.8.16 in \cite{Dunford_Schwartz:B1958}
or \S IV.9, Example 5 in \cite{Kosaku_Yosida:B1980}\index{Yosida, Kosaku} for a proof of this
representation theorem. Another ``representation theorem'' can be established
if we consider $L^\infty$ as a $C^*$-algebra: see \S12.20 in \cite{Walter_Rudin:B1991}.\index{Rudin, Walter}
\index{representation theorem!on Linfty@on $L^\infty$|)}
\end{fr}

\begin{fr}\label{fr-whitney-decomposition}\index{Whitney decomposition theorem}
The \emph{Whitney decomposition theorem} states that every nonempty \linebreak
closed set $F$
in $\bR^d$ admits a countable collection $\{Q_n:n \in \bN\}$ of almost-disjoint cubes
such that the union of the cubes is $\bR^d \smallsetminus F$ and
\[
 \diam Q_n \leq d(Q_n,F) \leq 4 \diam(Q_n)
\]
for each $n \in \bN$. See \S VI.1.2 in \cite{Elias_M_Stein:B1970}\index{Stein, Elias M.} or Appendix J
in \cite{Loukas_Grafakos:B2008}\index{Grafakos, Loukas} for a proof. 
Compare the decomposition theorem with the \emph{Calder\'{o}n-Zygmund lemma}:\index{Calder\'{o}n-Zygmund!lemma}
if $f$ is a nonnegative integrable function on $\bR^d$ and $\alpha$ a positive
constant, then there exists a decomposition $\bR^d = F \cup \Omega$ such that
\begin{enumerate}[(a)]
 \item $F \cap \Omega = \varnothing$;
 \item $f(x) \leq \alpha$ almost everywhere on $F$;
 \item $\Omega$ is the union of almost-disjoint cubes $\{Q_n : n \in \bN\}$
 such that
 \[
  \alpha < \frac{1}{|Q_n|} \int_{Q_n} f(x) \, dx \leq 2^d \alpha.
 \]
\end{enumerate}
This is of paramount importance in the theory of singular integrals,
which we briefly touch upon in \S\ref{s-the_hilbert_transform}.
A proof of the lemma can be found in \cite{Elias_M_Stein:B1970},\index{Stein, Elias M.} I.3.3,
although the lemma is usually integrated into the
\emph{Calder\'{o}n-Zygmund decomposition} in many expositions:
see Theorem \ref{calderon-zygmund} for details.
\end{fr}

\index{Littlewood's three principles|(}
\begin{fr}\label{fr-littlewoods-three-principles}
Littlewood's three principles serve as a useful guide for studying
the properties of the Lebesgue measure. The first principle states
that \emph{every set is nearly a finite sum of intervals}. To make
this precise, we recall that the \emph{symmetric difference} of two
sets $E$ and $F$ is defined by
\[
 E \Delta F = (E \smallsetminus F) \cup (F \smallsetminus E).
\]

\begin{prop}
If $E \subseteq \bR^d$ is of finite measure, then, for each
$\ve>0$, there exists a finite sequence $(Q_n)_{n=1}^N$ of
cubes such that
\[
 \left| E \Delta \bigcup_{n=1}^N Q_n \right| \leq \ve.
\]
\end{prop}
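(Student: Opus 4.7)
The plan is to combine the outer-regularity approximation from Proposition \ref{approximation-by-open-and-closed-sets} with the \hyperref[whitney-decomposition]{Whitney decomposition theorem}, in essentially the same fashion as in the proof of Theorem \ref{approximation-by-step-functions}.

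First, since $|E|<\infty$, I would fix $\ve>0$ and apply Proposition \ref{approximation-by-open-and-closed-sets} to obtain an open set $O\supseteq E$ with $|O\smallsetminus E|<\ve/2$. In particular $|O|\leq|E|+\ve/2<\infty$. Next, by the Whitney decomposition theorem, $O$ can be written as a countable union $\bigcup_{n=1}^\infty Q_n$ of almost-disjoint cubes. Since almost-disjoint cubes overlap only on sets of measure zero, we have the exact equality
\[
 |O|=\sum_{n=1}^\infty |Q_n|,
\]
so this series converges, and I can choose $N$ large enough that $\sum_{n=N+1}^\infty |Q_n|<\ve/2$.

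Setting $U_N=\bigcup_{n=1}^N Q_n$, the estimate of the symmetric difference splits as
\[
 |E\Delta U_N|=|E\smallsetminus U_N|+|U_N\smallsetminus E|.
\]
For the first term, $E\smallsetminus U_N\subseteq O\smallsetminus U_N$, and $O\smallsetminus U_N$ is contained in $\bigcup_{n=N+1}^\infty Q_n$ up to a measure-zero set from the almost-disjoint overlaps, so its measure is bounded by $\sum_{n=N+1}^\infty |Q_n|<\ve/2$. For the second term, $U_N\smallsetminus E\subseteq O\smallsetminus E$, which has measure at most $\ve/2$ by construction. Adding yields $|E\Delta U_N|<\ve$, as desired.

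There is no serious obstacle here: every ingredient has already been established, and the argument essentially reproduces the opening reduction of Theorem \ref{approximation-by-step-functions}. The only mild subtlety is ensuring that the decomposition $O=\bigcup Q_n$ actually gives $|O|=\sum|Q_n|$, which follows because the pairwise intersections of almost-disjoint cubes are null; absent this, one would only get the inequality $\leq$ from countable subadditivity, which would still suffice to truncate the tail.
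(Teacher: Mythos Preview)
Your argument is correct. The paper itself states this proposition without proof in the further-results section, but your approach is exactly the one the paper uses in the proof of Theorem~\ref{approximation-by-step-functions}: approximate $E$ by an open superset via Proposition~\ref{approximation-by-open-and-closed-sets}, apply the Whitney decomposition, and truncate the cube series using $|O|=\sum_n |Q_n|<\infty$. There is nothing to add.
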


The second principle, which states that \emph{every function is nearly
continuous}, can be stated as follows:

\begin{theorem}[Lusin]\index{Lusin's theorem!on Euclidean spaces}
If $E$ is a finite-measure subset of $\bR^d$ and $f:E \to \bC$ a measurable
function, then, for each $\ve>0$, there exists a closed subset $F$ of $E$
such that $|E \smallsetminus F| \leq \ve$ and $f|_{F}$ is
continuous. 
\end{theorem}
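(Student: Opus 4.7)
The plan is to reduce to the case of a bounded function on a set of finite measure, approximate that bounded function by continuous functions in the $L^1$ norm, and then upgrade this to uniform convergence on a large subset so that the restriction is continuous as a uniform limit of continuous functions.

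First, I would dispose of the places where $f$ is large. Setting $E_n = \{x \in E : |f(x)| \leq n\}$, the sets $E_n$ form an increasing sequence whose union is $E$ (since $f$ is complex-valued, hence everywhere finite), so continuity of the Lebesgue measure yields some $N$ with $|E \smallsetminus E_N| < \ve/3$. Extending $f\chi_{E_N}$ by zero to all of $\bR^d$ gives a bounded, compactly almost-supported measurable function, hence a member of $L^1(\bR^d)$. By Theorem~\ref{approximation-by-continuous-functions}, I can choose a sequence $g_n \in \mc{C}_c(\bR^d)$ with $\|g_n - f\chi_{E_N}\|_1 < 4^{-n}$ for each $n$.

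Next, I would convert $L^1$ convergence into uniform convergence off a small set by a Chebyshev-style argument (this is the finitary Egorov trick). Setting
\[
 A_n = \{x \in \bR^d : |g_n(x) - f(x)\chi_{E_N}(x)| > 2^{-n}\},
\]
Chebyshev's inequality gives $|A_n| \leq 2^{-n}$, so for some integer $M$ the set $B = \bigcup_{n \geq M} A_n$ satisfies $|B| < \ve/3$. On $E_N \smallsetminus B$, the inequality $|g_n - f| \leq 2^{-n}$ holds for every $n \geq M$, so $g_n \to f$ uniformly on $E_N \smallsetminus B$. Because each $g_n$ is continuous on $\bR^d$, its restriction to $E_N \smallsetminus B$ is continuous in the subspace topology, and a uniform limit of continuous functions is continuous; hence $f$ restricted to $E_N \smallsetminus B$ is continuous.

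Finally, I would pass from a measurable set to a closed set. By Proposition~\ref{approximation-by-open-and-closed-sets}, the finite-measure set $E_N \smallsetminus B$ contains a closed subset $F$ with $|(E_N \smallsetminus B) \smallsetminus F| < \ve/3$. Then $F$ is closed, $F \subseteq E$, and combining the three estimates yields $|E \smallsetminus F| < \ve$. Since continuity passes to subspaces, $f|_F$ is continuous. The main obstacle is really the passage from $L^1$ approximation by continuous functions to uniform convergence on a large set; once one has the Chebyshev-and-union bookkeeping above, the rest is straightforward regularity of the Lebesgue measure.
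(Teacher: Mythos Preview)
Your argument is correct. The reduction to a bounded function, the $L^1$ approximation by $\mc{C}_c$ functions via Theorem~\ref{approximation-by-continuous-functions}, the Chebyshev bound $|A_n| \leq 2^n \cdot 4^{-n} = 2^{-n}$, and the final inner-regularity step via Proposition~\ref{approximation-by-open-and-closed-sets} all go through as written. There is no circularity: the paper's proof of Theorem~\ref{approximation-by-continuous-functions} is done directly with tent functions over cubes and does not invoke Lusin.

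As for comparison: the paper does not actually prove this statement. Lusin's theorem is merely \emph{stated} in \S\S\ref{fr-littlewoods-three-principles} as one of Littlewood's three principles, and the general locally-compact version in \S\S\ref{fr-approximation-by-Cc-on-locally-compact-spaces} is deferred to \cite{Walter_Rudin:B1986}, with Urysohn's lemma flagged as the key ingredient. So there is no in-paper proof to compare against. Your route---density of $\mc{C}_c$ in $L^1$ plus a Chebyshev/Borel--Cantelli upgrade to uniform convergence---is one of the standard proofs and fits naturally with the tools the paper has already developed; the Urysohn-based approach that Rudin takes is the other standard route and is what one needs in the abstract locally-compact setting where tent functions are unavailable.
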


The third and the final principle states that \emph{every convergent
sequence of functions is nearly uniformly convergent} and can be
formulated precisely as follows:

\begin{theorem}[Egorov]\index{Egorov's theorem}\index{Egoroff's thoerem|see{Egorov's theorem}}
If $E$ is a finite-measure subset of $\bR^d$ and $(f_n)_{n=1}^\infty$ a
sequence of measurable functions on $E$ that converge pointwise
almost everywhere to $f:E \to \bC$, then, for each $\ve>0$, there
exists a closed subset $F$ of $E$ such that $|E \smallsetminus F|
\leq \ve$ and $f_n \to f$ uniformly on $F$.
\end{theorem}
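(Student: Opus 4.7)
The plan is to follow the classical approach of Egorov, which first constructs a measurable set off which uniform convergence is easy to establish, and then uses the regularity of the Lebesgue measure (specifically Proposition \ref{approximation-by-open-and-closed-sets}) to replace that set by a closed one. Throughout, I would assume without loss of generality that $(f_n)$ converges to $f$ pointwise everywhere on $E$: if $N$ denotes the null set on which convergence fails, then working with $E \smallsetminus N$ changes neither the hypothesis nor the conclusion, since closed subsets of $E \smallsetminus N$ are also closed subsets of $E$ (after intersecting with $E$), and a set of measure zero contributes nothing to the estimate $|E \smallsetminus F| \leq \ve$.

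The key construction is, for each pair $n,k \in \bN$, the set
\[
 E_n^k = \bigcup_{m \geq n} \{x \in E : |f_m(x) - f(x)| \geq 1/k\}.
\]
For fixed $k$, the sequence $(E_n^k)_{n=1}^\infty$ is decreasing in $n$, and $\bigcap_{n=1}^\infty E_n^k = \varnothing$ because for each $x \in E$ the sequence $f_n(x)$ eventually lies within $1/k$ of $f(x)$. Since $|E| < \infty$, the continuity of measure from above gives $|E_n^k| \to 0$ as $n \to \infty$, so for each $k$ I may choose $n_k$ large enough that $|E_{n_k}^k| < \ve / 2^{k+1}$. Setting $G = \bigcup_{k=1}^\infty E_{n_k}^k$, countable subadditivity yields $|G| < \ve/2$, and by construction, for every $x \in E \smallsetminus G$ and every $k$, one has $|f_m(x) - f(x)| < 1/k$ whenever $m \geq n_k$. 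This is precisely uniform convergence of $f_n$ to $f$ on $E \smallsetminus G$.

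The final step is to replace the measurable set $E \smallsetminus G$ by a closed set. Since $E \smallsetminus G$ has finite measure, Proposition \ref{approximation-by-open-and-closed-sets} furnishes a closed set $F \subseteq E \smallsetminus G$ with $|(E \smallsetminus G) \smallsetminus F| < \ve/2$, whence
\[
 |E \smallsetminus F| \leq |G| + |(E \smallsetminus G) \smallsetminus F| < \ve.
\]
Uniform convergence persists on any subset of $E \smallsetminus G$, in particular on $F$, so this is the desired closed set.

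The main obstacle, and the place where the hypothesis $|E| < \infty$ plays an essential role, is the application of continuity from above to conclude $|E_n^k| \to 0$: without finite measure this limit can fail (consider $f_n = \chi_{[n,n+1]}$ on $\bR$, which converges pointwise to $0$ but not uniformly off any set of small measure). The rest of the argument is bookkeeping: arranging the $n_k$ so that the geometric series $\sum \ve/2^{k+1}$ controls the total exceptional measure, and invoking outer regularity of the Lebesgue measure to close up the set at the end.
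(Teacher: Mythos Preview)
Your proof is correct and follows the classical argument for Egorov's theorem. The paper, however, does not supply a proof: the theorem is stated in the further-results section (\S\S\ref{fr-littlewoods-three-principles}) as part of Littlewood's three principles, with no proof given. Your approach---defining the sets $E_n^k$, using continuity from above (which is precisely where $|E|<\infty$ enters), summing a geometric series to control the exceptional set, and then invoking Proposition~\ref{approximation-by-open-and-closed-sets} to pass to a closed set---is the standard one and would be entirely appropriate here.
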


Lusin's theorem can be established on more general domains, which can
then be used to generalize Theorem \ref{approximation-by-continuous-functions}.
We shall take up on this matter in the next subsection.
\end{fr}

\begin{fr}\label{fr-approximation-by-Cc-on-locally-compact-spaces}
Theorem \ref{approximation-by-continuous-functions} can be generalized to
locally compact Hausdorff domains with complete Borel regular measures.
This is a direct consequence of the generalized Lusin's theorem:

\begin{theorem}[Lusin]\index{Lusin's theorem!on locally compact spaces}
Let $\mu$ be a complete Borel regular measure on a locally compact Hausdorff
space $X$. If $f$ is a complex-valued measurable function on $X$, $A$
a finite-measure subset of $X$, and $\supp f \subseteq A$, then each
$\ve>0$ admits a function $g \in \mc{C}_c(X)$ such that
\[
 \mu(\{x : f(x) \neq g(x)\}) < \ve
 \htwo \mbox{and} \htwo
 \sup_{x \in X} |g(x)| \leq \sup_{x \in X}|f(x)|.
\]
\end{theorem}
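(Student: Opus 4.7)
The plan is to carry out the classical construction of Lusin in three stages: reduce to the case $0 \le f \le 1$ with support in a pre-compact open set, build a candidate $g_0 \in \mc{C}_c(X)$ via a dyadic decomposition paired with Urysohn's lemma, and finally post-compose with a truncation to enforce the sup-norm bound.

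First, by splitting $f = (\Re f)^+ - (\Re f)^- + i((\Im f)^+ - (\Im f)^-)$ and absorbing an $\ve/4$ factor into each piece, I would reduce to $f \ge 0$. If $M = \sup_x |f(x)|$ is finite, then rescaling by $1/M$ reduces to $0 \le f \le 1$; if $M = \infty$ the sup bound is vacuous and I would handle $f$ via $f \wedge n$ and a diagonal argument. Using that $\supp f \subseteq A$ with $\mu(A) < \infty$, I would apply inner regularity on $A$ to obtain a compact $K \subseteq A$ with $\mu(A \smallsetminus K) < \ve/4$, and then cover $K$ by a pre-compact open set $V$ supplied by local compactness; throughout the rest of the argument, every set I introduce sits inside $V$, which guarantees compact support for $g_0$.

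Second, I would write
\[
 f = \sum_{n=1}^\infty 2^{-n} \chi_{T_n}
\]
via the standard dyadic decomposition, where each $T_n$ is a measurable subset of $\overline V$. For each $n$, inner regularity furnishes a compact $K_n \subseteq T_n$ and outer regularity an open $V_n$ with $T_n \subseteq V_n \subseteq V$ satisfying $\mu(V_n \smallsetminus K_n) < \ve \cdot 2^{-n-1}$. The locally compact Hausdorff Urysohn lemma then yields $h_n \in \mc{C}_c(X)$ with $\chi_{K_n} \le h_n \le \chi_{V_n}$ and $\supp h_n \subseteq \overline V$. The series $g_0 = \sum_{n=1}^\infty 2^{-n} h_n$ converges uniformly, so $g_0$ is continuous; it is supported in the compact set $\overline V$; and it agrees with $f$ off $\bigcup_n (V_n \smallsetminus K_n)$, a set of measure strictly less than $\ve/2$.

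Third, to achieve $\sup |g| \le \sup |f|$, I would define the truncation $\phi : \bC \to \bC$ by $\phi(z) = z$ for $|z| \le M$ and $\phi(z) = Mz/|z|$ for $|z| > M$. Then $\phi$ is continuous and $|\phi| \le M$, so $g = \phi \circ g_0$ lies in $\mc{C}_c(X)$ with $\sup|g| \le M$; moreover $g_0(x) = f(x)$ forces $|g_0(x)| \le M$ and hence $g(x) = g_0(x) = f(x)$, so $\{f \neq g\} \subseteq \{f \neq g_0\}$ and the measure estimate survives. The main obstacle, I expect, is the very first step: promoting $\supp f \subseteq A$ with merely $\mu(A) < \infty$ to a pre-compact open set carrying all subsequent approximants. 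Borel regularity as stated only gives outer regularity by Borel sets, so obtaining a compact $K \subseteq A$ requires invoking the inner-regularity properties that accompany Radon-type measures in the locally compact Hausdorff setting, and this is the technical heart of the reduction.
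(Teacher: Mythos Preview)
The paper does not supply its own proof of this theorem: it simply refers the reader to \S2.24 of Rudin's \emph{Real and Complex Analysis} and remarks that Urysohn's lemma for locally compact Hausdorff spaces is the crucial ingredient. Your proposal is precisely the classical Rudin argument the paper is pointing to --- the dyadic decomposition $f=\sum 2^{-n}\chi_{T_n}$ for $0\le f\le 1$, sandwiching each $T_n$ between a compact $K_n$ and an open $V_n$ via regularity, invoking Urysohn to produce $h_n$, summing, and finishing with a radial truncation to recover the sup bound --- so there is nothing substantive to compare. Your closing worry about inner regularity is well placed: the paper's working definition of ``Borel regular'' (outer approximation by Borel sets) is indeed weaker than what the proof needs, and Rudin's version of the theorem assumes the stronger Radon-type regularity from the outset; you should simply state that hypothesis explicitly rather than try to derive it.
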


See \S2.24 in \cite{Walter_Rudin:B1986}\index{Rudin, Walter} for a proof. Crucial
in proving the above theorem is \emph{Urysohn's lemma} for locally
compact Hausdorff spaces, which can be stated as follows:

\begin{theorem}[Urysohn's lemma]\index{Urysohn's lemma}
Let $X$ be a locally compact Hausdorff space. For each open subset $O$
of $X$ and a compact subset $K$ of $O$, we can find a function 
$f \in \mc{C}_c(X)$ such that $0 \leq f(x) \leq 1$ on $X$,
$\supp f \subseteq O$, and $f(x) = 1$ on $K$.
\end{theorem}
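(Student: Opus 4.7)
The plan is to reduce the problem to the classical Urysohn lemma on a compact Hausdorff space. The key preparatory step is to produce an open set $V$ with compact closure satisfying $K \subseteq V \subseteq \overline{V} \subseteq O$; once such a $V$ is in hand, $\overline{V}$ is a compact Hausdorff, hence normal, space, and the classical dyadic sandwiching construction can be carried out inside it, automatically yielding a continuous function with compact support contained in $O$.

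To construct $V$, I would first use local compactness to obtain, for each $x \in K$, an open neighborhood $W_x$ of $x$ with $\overline{W_x}$ compact. The set $\overline{W_x} \cap (X \smallsetminus O)$ is then compact and does not contain $x$, so the Hausdorff property gives an open neighborhood $V_x$ of $x$, with $V_x \subseteq W_x$, whose closure is disjoint from $\overline{W_x} \cap (X \smallsetminus O)$; hence $\overline{V_x}$ is compact and contained in $O$. Compactness of $K$ then yields a finite subcover $V_{x_1}, \ldots, V_{x_n}$, and the union $V = V_{x_1} \cup \cdots \cup V_{x_n}$ is the desired sandwich.

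With $V$ secured, I would enumerate the dyadic rationals in $(0,1)$ and recursively construct open sets $U_r$ with $K \subseteq U_r \subseteq \overline{U_r} \subseteq V$ and $\overline{U_s} \subseteq U_r$ whenever $r < s$. Existence of each $U_r$ at the recursive step follows from the normality of $\overline{V}$, applied to separate the relevant compact set from the complement of the relevant open set in the already-constructed chain, combined with a further shrinking (by the same argument used to produce $V$) to ensure that $\overline{U_r}$ remains inside $V$. Setting $U_0 = V$ and
\[
 f(x) = \sup\{r : x \in U_r\}
\]
(with $\sup\varnothing = 0$) gives a function satisfying $f = 1$ on $K$, $f = 0$ off $V$, and $0 \leq f \leq 1$. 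Continuity follows from the standard identifications of $\{f > a\}$ and $\{f < a\}$ as unions of $U_r$'s and complements of $\overline{U_r}$'s respectively, and then $\supp f \subseteq \overline{V} \subseteq O$ is immediate.

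The main obstacle is the initial sandwich step: local compactness alone furnishes compact-closure neighborhoods but provides no control over where the closure lies, while the Hausdorff property alone permits separation of points from compact sets but gives no compactness of the separating neighborhoods. The combination must be used simultaneously, and this is the only place in the argument where both hypotheses are genuinely essential. Once $V$ is obtained, the remainder is a close transcription of the classical Urysohn proof, with the additional bookkeeping that each $U_r$ stays inside $V$ so that $\supp f$ remains compactly contained in $O$.
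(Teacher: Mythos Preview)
Your argument is correct and follows the standard route: first produce a relatively compact open set $V$ with $K \subseteq V \subseteq \overline{V} \subseteq O$ by combining local compactness with the fact that in a Hausdorff space a point can be separated from a compact set by disjoint open sets, then run the classical dyadic Urysohn construction inside the compact Hausdorff (hence normal) space $\overline{V}$.

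However, the paper does not actually prove this theorem. It appears in the further-results section and is stated only as a tool for generalizing Lusin's theorem, with the remark ``A proof of the above theorem can be found in \cite{Walter_Rudin:B1986}, \S2.12.'' So there is no proof in the paper to compare against; your sketch is essentially the argument Rudin gives at the cited location.
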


A proof of the above theorem can be found in \cite{Walter_Rudin:B1986},\index{Rudin, Walter}
\S2.12. An even more general version of Lusin's theorem for Radon
measures appears in \cite{Gerald_B_Folland:B1999}\index{Folland, Gerald B.} as Theorem 7.10.
Urysohn's lemma continues to hold for normal spaces: see \S33 in
\cite{James_R_Munkres:B2000}\index{Munkres, James R.} or Theorem 4.15 in \cite{Gerald_B_Folland:B1999}\index{Folland, Gerald B.}
for a discussion.
\end{fr}
\index{Littlewood's three principles|)}

\index{approximations to the identity|(}
\begin{fr}\label{fr-approximations-to-the-identity}
The name \emph{approximations to the identity} (as in
Theorem \ref{approximations-to-the-identity} and Corollary
\ref{smooth-approximations-to-the-identity}) can be motivated by
considering a more abstract framework. A \emph{Banach algebra}\index{Banach algebra}
is a (complex) Banach space $\ms{A}$ with associative bilinear
multiplication operation satisfying the inequality
\[
 \|xy\| \leq \|x\|\|y\|
\]
for all $x,y \in \ms{A}$. \hyperref[convolution]{Young's inequality}
shows that the space $L^1$ equipped with the convolution operation
is a Banach algebra.

Given a Banach algebra $\ms{A}$,
a \emph{left approximate identity}, or a \emph{left approximation to
the identity}, is a sequence $(e_n)_{n=1}^\infty$ of elements
in $\ms{A}$ such that
\[
 \lim_{n \to \infty} \|e_nx - x\| = 0
\]
for all $x \in \ms{A}$. While $e_n$ is not quite a multiplicative
identity of the Banach algebra $\ms{A}$, it serves as one
``at infinity''---hence the name ``approximation to the identity''.
Right approximate identities can be defined analogously.
\end{fr}
\index{approximations to the identity|)}

\begin{fr}\label{fr-smooth-approximations-to-the-identity}\index{approximations to the identity!smooth}
Another useful corollary of Theorem \ref{approximations-to-the-identity}
is that, for $1 \leq p < \infty$, an $L^p$ function $u$ on an open subset
$O$ of $\bR^d$ is zero almost everywhere on $\Omega$ in case
\[
 \int fu = 0
\]
for all $f \in \ms{C}_c^\infty$. See \S\ref{fr-riesz-representation}.

A consequence of the above result is that proving the equal almost-everywhere
state of two functions amounts to integration the difference of two functions
against a small class of ``test functions''. This idea will be a recurring
theme in distribution theory, which will be developed in \S\ref{s-generalized_functions}.
\end{fr}

\begin{fr}\label{fr-sharp-inequalities}
The optimal constant in the $d$-dimensional
\hyperref[hausdorff-young]{Hausdorff-Young inequality}\index{inequality!Hausdorff-Young}
is $C_p^d$, where
\[
 C_p = \left( \frac{p^{1/p}}{(p')^{1/p'}} \right)^{1/2}.
\]
The equality is achieved if and only if $f$ is of the form
\[
 f(x) = A \exp \left( - x \cdot Ax + b \cdot x \right),
\]
where $A$ is a real symmetric positive-definite matrix and $b$ a vector in $\bC^n$.
The optimal constant was given for all $1 \leq p \leq 2$ by William Beckner
in \cite{William_Beckner:J1975}.\index{Beckner, William} The necessary and sufficient condition
for the equality, published in \cite{Elliott_H_Lieb:J1990},\index{Lieb, Elliott H.} is due to Elliott Lieb.

The optimal constant for the $d$-dimensional
\hyperref[young]{generalized Young's inequality},\index{inequality!Young} also established
by Beckner in \cite{William_Beckner:J1975},\index{Beckner, William} is $(C_pC_qC_{r'})^d$,
where the constants are defined as above. In \cite{Elliott_H_Lieb:J1990},\index{Lieb, Elliott H.}
Lieb gives a necessary and sufficient condition for an even more general version
of Young's inequality. See \cite{Lieb_Loss:B2001},\index{Lieb, Elliott H.}\index{Loss, Michael} Theorem 4.2 for a textbook
exposition of the fully generalized Young's inequality.
\end{fr}

\chapter{The Modern Theory of Interpolation}
 
The idea of the proof of the Riesz-Thorin interpolation theorem can be
generalized to a class of operators on spaces other than the Lebesgue
spaces. Alberto P. Calder\'{o}n's insight was to consider interpolation
as an operation on spaces, rather than on operators. First published
in \cite{Alberto_P_Calderon:J1964}, Calder\'{o}n's complex method
of interpolation absorbs the complex-analytic proof of the
Riesz-Thorin interpolation theorem and provides an abstract framework
in which many new interpolation theorems can be generated.

On the more concrete side, it became increasingly evident that many
useful operators simply were not bounded on $L^1$ or $L^\infty$.
Hardy spaces and the space of bounded mean oscillation were introduced
as well-behaved substitutes, and it was proven by Charles Fefferman
and Elias M. Stein in \cite{Fefferman_Stein:J1972} that operators
on these spaces can be interpolated as if they are on Lebesgue
spaces. Precisely, the Fefferman-Stein interpolation theorem is
a generalization of the Stein interpolation theorem on analytic
families of operators, where the operator on $L^1$ can be bounded
only on a subspace $H^1$ of $L^1$, and the operator on $L^\infty$
can satisfy a weaker estimate ($L^\infty \to \BMO$) than the
usual estimate ($L^\infty \to L^\infty$).

The necessary machinery for these interpolation theorems is developed
in the first and the third sections. In the fourth section, we 
study the Hilbert transform, which sets the stage for the
Fefferman-Stein theory sketched in the fifth section.
In the sixth and the last section, the Fefferman-Stein theory
is applied to the study of differential
equations and Fourier integral operators, and the complex
interpolation method is applied to Sobolev spaces.

\section{Elements of Functional Analysis}\label{s-elements_of_functional_analysis}

We begin the chapter by establishing a few key results from functional analysis,
as explicated in many references such as
\cite{Haim_Brezis:B2011},\index{Brezis, Haim} \cite{Peter_D_Lax:B2002},\index{Lax, Peter D.}
\cite{Walter_Rudin:B1991},\index{Rudin, Walter} \cite{Kosaku_Yosida:B1980},\index{Yosida, Kosaku}
and \cite{Dunford_Schwartz:B1958}.\index{Dunford, Nelson}\index{Schwartz, Jacob T.}
We have already encountered functional-analytic methods in Chapter 1, in which
we often investigated the properties of spaces of functions rather than those of
individual functions. We shall raise the level of abstraction even higher
in this section, and study various kinds of \emph{topological vector spaces}:

\begin{defin}\index{topological vector space}
A real or complex vector space $V$ is a \emph{topological vector space} if
$V$ is equipped with a topology such that the addition map $(x,y) \mapsto x+y$
and the scalar multiplication map $(a,v) \mapsto av$ are continuous.
\end{defin}

\subsection{Continuous Linear Functionals on Fr\'{e}chet Spaces}

\index{Fr\'{e}chet space|(}
$L^p$ spaces and, in general, Banach spaces are canonical examples
of topological vector spaces. Some function spaces, however, do not admit
one canonical norm. They might come with multiple natural norms; there
might not be a convenient quotient construction to turn the norm-like
map into a genuine norm. We are thus led to the following generalization:

\begin{defin}\label{defin-seminorm}\index{seminorm}\index{norm!semi-|see{seminorm}}
A \emph{seminorm} on a topological vector space $V$ is a function
$\rho:V \to [0,\infty)$ such that $\rho(a v) = |a|v$ and $\rho(v+w)
\leq \rho(v) + \rho(w)$ for all scalar $a$ and vectors $v$ and $w$.
\end{defin}

We shall see in \S\ref{s-generalized_functions} that the canonical
topology on $\ms{S}(\bR^d)$ is given by a countable collection of
seminorms. This turns $\ms{S}(\bR^d)$ into a \emph{Fr\'{e}chet space},
which we now define.

\begin{defin}\label{defin-frechet-space}
Let $V$ be a real or complex topological vector space. $V$ is a 
\emph{Fr\'{e}chet space} if $V$ is equipped with a countable
collection $\{\rho_n : n \in \bN\}$ of seminorms such that
\[
 d(v,w) = \sum_{n=1}^\infty \frac{1}{2^n}
 \left( \frac{\rho_n(v - w)}{1 + \rho_n(v - w)} \right)
\]
is a metric generating the topology of $V$. 
\end{defin}

We note that the seminorms $\rho_n$ are
continuous in the topology defined as above.
It is clear that every Banach space is a Fr\'{e}chet
space. While most function spaces we study in the present
thesis are Banach spaces, we shall see that some are more
naturally described in the language of Fr\'{e}chet spaces.

For now, we study a different problem: namely, the existence
of nontrivial continuous linear functionals. We know from linear algebra
that finite-dimensional vector spaces have as many linear functionals
as the vectors therein, and that each vector space is isomorphic to
its dual space. It is not at all clear, however, if there are nonzero
linear functionals on an infinite-dimensional space, let alone continuous
ones.

There are, of course, some infinite-dimensional spaces with nontrivial
continuous linear functionals. For example, we have seen that the
$L^p$ spaces have infinitely many bounded linear functionals.
Indeed, the \hyperref[Lp-riesz-representation]{Riesz representation theorem}
asserts that the \emph{dual} of $L^p$ spaces are highly nontrivial.

\begin{defin}
The \emph{dual space} of a topological vector space $V$ is the
vector space $V^*$ of continuous linear functionals on $V$.
\end{defin}

Our immediate goal, then, is to show that the dual of many topological vector
spaces are nontrivial. This requires a preliminary result, widely regarded
as one of the cornerstones in functional analysis.

\index{Hahn-Banach theorem|(}\index{linear functional!extension of|see{Hahn-Banach \\ theorem}}
\begin{theorem}[Hahn-Banach]\label{hahn-banach}
Let $V$ be a real vector space and $\rho$ a seminorm on $V$. If $M$ is a
linear subspace of $V$ and $l$ a real linear functional on $M$ such that
$l(v) \leq \rho(v)$ for all $v \in M$, then there exists a linear functional
$L$ on $V$ such that $L|_M = l$ and $L(v) \leq \rho(v)$ for all $v \in V$.
\end{theorem}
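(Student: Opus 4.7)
The plan is to prove the theorem in two stages: first, a one-dimensional extension lemma showing that if the dominated functional $l$ is defined on a proper subspace $M \subsetneq V$, we can extend it by a single dimension while preserving domination by $\rho$; second, a Zorn's lemma argument to obtain a maximal such extension, which must be defined on all of $V$.

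For the one-dimensional step, fix $v_0 \in V \setminus M$ and try to define $L$ on $M \oplus \bR v_0$ by $L(v + tv_0) = l(v) + t\alpha$ for some real scalar $\alpha$ yet to be chosen. The requirement $L \leq \rho$ splits, after dividing by $|t|$, into the two families of inequalities $\alpha \leq \rho(w+v_0)-l(w)$ for $w$ ranging over $M$ (coming from $t>0$) and $\alpha \geq l(w')-\rho(w'-v_0)$ for $w'$ ranging over $M$ (coming from $t<0$). Such an $\alpha$ exists precisely when
\[
 \sup_{w' \in M}\bigl[l(w')-\rho(w'-v_0)\bigr] \;\leq\; \inf_{w \in M}\bigl[\rho(w+v_0)-l(w)\bigr],
\]
and this in turn is equivalent to $l(w)+l(w') \leq \rho(w+v_0)+\rho(w'-v_0)$ for all $w,w' \in M$. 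This last inequality follows immediately from the linearity of $l$, the hypothesis $l \leq \rho$ on $M$, and subadditivity of $\rho$:
\[
 l(w)+l(w')=l(w+w')\leq \rho(w+w')=\rho((w+v_0)+(w'-v_0))\leq \rho(w+v_0)+\rho(w'-v_0).
\]

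For the second stage, consider the collection $\mc{E}$ of pairs $(N,\lambda)$ where $N$ is a linear subspace of $V$ containing $M$ and $\lambda:N\to\bR$ is a linear functional with $\lambda|_M = l$ and $\lambda \leq \rho$ on $N$. Partially order $\mc{E}$ by declaring $(N_1,\lambda_1)\leq(N_2,\lambda_2)$ when $N_1\subseteq N_2$ and $\lambda_2|_{N_1}=\lambda_1$. Given a chain $\{(N_\gamma,\lambda_\gamma)\}$ in $\mc{E}$, the union $N_\infty=\bigcup_\gamma N_\gamma$ is a subspace, and defining $\lambda_\infty$ on $N_\infty$ by $\lambda_\infty(v)=\lambda_\gamma(v)$ whenever $v\in N_\gamma$ yields a well-defined upper bound for the chain. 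By Zorn's lemma, $\mc{E}$ has a maximal element $(N^*,L)$. The one-dimensional extension step shows that if $N^*\neq V$, we could strictly enlarge $N^*$, contradicting maximality; hence $N^*=V$ and $L$ is the desired extension.

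The main obstacle is the one-dimensional extension: everything hinges on verifying that the two families of one-sided bounds on $\alpha$ are actually compatible, which is where the subadditivity of $\rho$ and linearity of $l$ interlock. Once that calculation is in hand, the Zorn's lemma bookkeeping is routine. I should also remark that the theorem is stated for real vector spaces and real functionals; the complex version (which the thesis will presumably need later) reduces to this case by treating a complex linear functional as $L(v) = \ell(v) - i\ell(iv)$, where $\ell=\Re L$ is a real-linear functional to which the real Hahn-Banach theorem applies.
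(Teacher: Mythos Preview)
Your proof is correct and follows essentially the same approach as the paper: a one-dimensional extension step (where the key inequality $\sup_{w'}[l(w')-\rho(w'-v_0)] \leq \inf_{w}[\rho(w+v_0)-l(w)]$ is derived from subadditivity of $\rho$) followed by a Zorn's lemma argument on the poset of dominated extensions. Your presentation is arguably cleaner in that you treat the sup--inf inequality as a necessary and sufficient condition for the existence of $\alpha$, whereas the paper derives it from a hypothetical extension and then separately verifies that the chosen $\alpha$ works via explicit case analysis on the sign of the scalar.
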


\begin{proof}
If $M = V$, then there is nothing to prove, and so we may suppose the existence
of a vector $z \in V \smallsetminus M$. We first show that we can extend
$l$ by one extra dimension. If $L$ is an extension of $l$
on $M \oplus \bR z$ such that $L(v) \leq \rho(v)$ for all $fv \in M \oplus
\bR z$, then, for any $y_0,y_1 \in M$, we have
\begin{eqnarray*}
 L(y_0)+L(y_1)
 &=& L(y_0+y_1) \\
 &\leq& \rho(y_0+y_1) \\
 &=& \rho(y_0 - z + z + y_1) \\
 &\leq& \rho(y_0 - z) + \rho(z+y_1).
\end{eqnarray*}
This implies that
\[
 L(y_0) - \rho(y_0-z) \leq \rho(z + y_1) - L(y_1),
\]
and so
\begin{equation}\label{hahn-banach-inequality}
 \sup_{y \in M} L(y) - \rho(y-z) \leq \inf_{y \in M} \rho(z+y) - L(y).
\end{equation}

Recall now that every $v \in M \oplus \bR z$ can
be written as the sum
\[
 v = y + \lambda z,
\]
where $y \in M$ and $\lambda \in \bR$. 
We fix a real number $\alpha$ such that
\[
 \sup_{y \in M} L(y) - \rho(y-z) \leq \alpha \leq  \inf_{y \in M} \rho(z+y) - L(y)
\]
and define
\[
 L(v) = L(y) + \lambda \alpha
\]
for each $v \in M$, so that $L$ is a linear extension of $l$ on $M$.
Furthermore, if $\lambda > 0$, then
\begin{eqnarray*}
 L(v)
 = L(y+\lambda z) 
 &=& L(y) + \lambda \alpha \\
 &=& \lambda \left[ L(\lambda^{-1} y/) + \alpha \right] \\
 &\leq& \lambda \left( L(\lambda^{-1}y)
 + \left[\rho(z + \lambda^{-1}y) - L(\lambda^{-1}y) \right] \right) \\
 &=& \lambda \rho(z + \lambda^{-1}y) \\
 &=& \rho(y + \lambda z) = \rho(v)
\end{eqnarray*}
by (\ref{hahn-banach-inequality}). If $\lambda < 0$, then
\begin{eqnarray*}
 L(v)
 = L(y + \lambda z)
 &=& L(y) - (-\lambda) \alpha \\
 &=& (-\lambda) \left[ L(-\lambda^{-1}y) - \alpha \right] \\
 &\leq& (-\lambda) \left( L(-\lambda^{-1}y)
 - \left[ L(-\lambda^{-1}y) - \rho(-\lambda^{-1}y - z)\right] \right)\\
 &=& (-\lambda) \rho(-\lambda^{-1}y - z) \\
 &=& \rho(y + \lambda z) = \rho(v).
\end{eqnarray*}
If $\lambda = 0$, then $L(y + \lambda z) = l(y)$, and there is nothing
to prove. We have thus demonstrated that we can always extend $l$ by
one extra dimension.

Let us now return to the proof of the theorem in its full generality.
We define a partial order $\leq$ on the collection of ordered pairs $(l',M')$
of linear extensions $l'$ of $l$ on $M'$ satisfying the bound $l'(v) \leq \rho(v)$
for all $v \in M'$ by setting $(l',M') \leq (l'',M'')$ if and only if
$M' \subseteq M''$. Given a chain
\begin{equation}\label{hahn-banach-chain}
 (l_1,M_1) \leq (l_2,M_2) \leq (l_3,M_3) \leq \cdots
\end{equation}
of such ordered pairs, we define the pair $(l_\infty,M_\infty)$ by setting
\[
 M_\infty = \bigcup_{n=1}^\infty M_n
 \htwo\mbox{and}\htwo
 l_\infty = l_n(v),
\]
where $n$ is chosen such that $v \in M_n$. The definition of $l_\infty$ is
unambiguous, because the linear functionals agree on common domains.
It is easy to see that $(l_\infty,M_\infty)$ is an upper bound of the
chain (\ref{hahn-banach-chain}).

We now invoke Zorn's lemma\index{Zorn's lemma} to construct a maximal element $(L,M_0)$ of
the collection given above. If $M_0 \neq V$, then we can extend $L$
by one extra dimension, whence $(L,M_0)$ is not maximal. It follows that
$L$ is the desired linear functional, and the proof is now complete.
\end{proof}

Since most functions we study in this thesis are complex-valued,
the corresponding function spaces are mostly complex vector spaces
as well. We therefore require the following generalization of the
\hyperref[hahn-banach]{Hahn-Banach theorem}, commonly referred to
as the \emph{complex Hahn-Banach theorem}.

\begin{theorem}[Bohnenblust-Sobczyk]\label{complex-hahn-banach}\index{Bohnenblust-Sobczyk \\ theorem|see{Hahn-Banach \\ theorem}}
Let $V$ be a complex vector space and $\rho$ a seminorm on $V$. If $M$ is a
linear subspace of $V$ and $l$ a complex linear functional on $M$ such that
$|l(v)| \leq \rho(v)$ for all $v \in M$, then there exists a linear
functional $L$ on $V$ such that $L|_M = l$ and $|L(v)| \leq \rho(v)$ for all
$v \in V$. 
\end{theorem}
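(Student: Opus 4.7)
The plan is to reduce the complex version to the real \hyperref[hahn-banach]{Hahn-Banach theorem} by a standard decomposition trick. Write $V$ as a real vector space by restriction of scalars, and decompose $l$ into real and imaginary parts as $l(v) = u(v) + iw(v)$. Complex linearity of $l$ gives $l(iv) = il(v) = -w(v) + iu(v)$, so that $w(v) = -u(iv)$. Hence
\[
 l(v) = u(v) - iu(iv),
\]
and $u:M\to\bR$ is a real-linear functional with $u(v) \leq |l(v)| \leq \rho(v)$ for every $v \in M$.

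Next I would apply Theorem \ref{hahn-banach} to $u$ (viewing $\rho$ as a seminorm on the underlying real vector space of $V$) to obtain a real-linear extension $U:V \to \bR$ with $U(v) \leq \rho(v)$ for all $v \in V$. Because $\rho$ is a seminorm we also have $-U(v) = U(-v) \leq \rho(-v) = \rho(v)$, so in fact $|U(v)| \leq \rho(v)$. I would then define
\[
 L(v) = U(v) - iU(iv)
\]
and verify:
\begin{enumerate}[(a)]
 \item $L$ is real-linear, inherited from $U$.
 \item $L$ is complex-linear: it suffices to check $L(iv) = iL(v)$, which is immediate from $L(iv) = U(iv) - iU(-v) = U(iv) + iU(v) = i(U(v) - iU(iv))$.
 \item $L|_M = l$, since on $M$ we have $U = u$ and thus $L(v) = u(v) - iu(iv) = l(v)$.
\end{enumerate}

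The one step that is not a formal manipulation is the bound $|L(v)| \leq \rho(v)$; this is where the ``absolute value'' upgrade actually happens. For each $v \in V$, choose $\theta \in \bR$ with $L(v) = |L(v)|e^{i\theta}$, so that
\[
 |L(v)| = e^{-i\theta}L(v) = L(e^{-i\theta}v).
\]
Since the left-hand side is real and nonnegative, $L(e^{-i\theta}v)$ equals its own real part, which is $U(e^{-i\theta}v)$. The bound on $U$ together with the absolute homogeneity of $\rho$ then yields
\[
 |L(v)| = U(e^{-i\theta}v) \leq \rho(e^{-i\theta}v) = \rho(v),
\]
completing the proof. I expect the only mild subtlety to be making sure the modulus bound $|L(v)| \leq \rho(v)$ is extracted from the one-sided bound $U \leq \rho$; this is handled by the rotation trick above, which uses exactly that $\rho$ is absolutely homogeneous.
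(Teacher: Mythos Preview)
Your proof is correct and follows essentially the same route as the paper: decompose $l$ into real and imaginary parts, observe the relation $l(v) = u(v) - iu(iv)$, extend the real part via the real Hahn-Banach theorem, rebuild $L$ from the extension, and recover the modulus bound by the rotation $|L(v)| = L(e^{-i\theta}v) = U(e^{-i\theta}v) \leq \rho(e^{-i\theta}v) = \rho(v)$. Your write-up is in fact slightly more explicit than the paper's in verifying complex linearity of $L$.
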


\begin{proof}
We first note that $l$ can be written as
\[
 l = l_1 + i \, l_2
\]
where $l_1$ and $\Lambda_2$ are real linear functionals on $V$,
considered as a real vector space. For each $v \in M$, we see that
\[
 l_1(iv) + i \, l_2(iv)
 = l(iv) = i \, l(v)
 = i \left[ l_1(v) + i \, l_2(v) \right]
 = i \, l_1(v) - l_2(v).
\]
This implies that
\[
 l_2(v) = - l_1(iv)
\]
for all $v \in M$. Since we have the bound
\[
 l_1(v) \leq |l_1(v)| \leq |l(v)| \leq \rho(v)
\]
for all $v \in M$, we can invoke the \hyperref[hahn-banach]{Hahn-Banach theorem}
to construct an extension $L_1$ of $l_1$ on $V$ such that $L_1$ is still
dominated by $\rho$.

We define a complex linear functional $L$ on $V$ by setting
\[
 L(v) = L_1(v) -iL_1(iv)
\]
for each $v \in V$, which is easily seen to be an extension of $l$.
To show that $|L(v)| \leq \rho(v)$ for each $v \in V$, we
fix a $v \in V$ and find $r>0$ and $\theta \in [0,2\pi)$ such that
$L(v) = re^{i\theta}$. We then have
\[
 |L(v)| = r = re^{i\theta}e^{-i\theta} = e^{-i\theta}L(v) = L(e^{-i\theta}v).
\]
Noting that $|L(v)| \geq 0$, we conclude that
\[
 |L(v)| = L(e^{-i\theta}v) = L_1(e^{-i\theta}v).
\]
Since
\[
 -L_1(e^{-i\theta}v) = L_1(-e^{-i\theta}v) \leq \rho(-e^{-i\theta}v) = \rho(e^{-i\theta}v),
\]
we see that $|L_1(e^{-i\theta}v)| \leq \rho(e^{-i\theta}v)$. It thus
follows that
\[
 |L(v)| = L_1(e^{-i\theta}v)
 = |L_1(e^{-i\theta}v)| \leq \rho(e^{-i\theta}v)
 = |e^{-i\theta}|\rho(v) = \rho(v),
\]
as was to be shown.
\end{proof}

We are interested in \emph{bounded} linear functionals, so we now
put a topology on our vector space. The corresponding extension
theorem is the following:

\begin{theorem}\label{extension-of-linear-functionals}
Let $V$ be a real or complex topological vector space. If $\rho$ is a
continuous seminorm on $V$ and $v_0$ a vector in $V$, then there exists a
continuous linear functional $L$ on $V$ such that $L(v_0) = \rho(v_0)$
and $|L(v)| \leq \rho(v)$ for all $v \in V$.
\end{theorem}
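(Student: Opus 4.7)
The plan is to reduce the theorem to a direct application of the Hahn-Banach theorem (Theorem \ref{hahn-banach} or its complex counterpart Theorem \ref{complex-hahn-banach}) by constructing an obvious candidate linear functional on a one-dimensional subspace and then extending. The continuity conclusion will come for free from the domination by the continuous seminorm $\rho$.

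First I would handle the trivial case $v_0 = 0$: here $\rho(v_0) = 0$, and one may take $L = 0$. So assume $v_0 \neq 0$ and also $\rho(v_0) > 0$ (if $\rho(v_0) = 0$ the zero functional still works). Set
\[
 M = \{c v_0 : c \in \bF\},
\]
where $\bF$ is the scalar field, and define $l:M \to \bF$ by $l(cv_0) = c\rho(v_0)$. This is linear, and $|l(cv_0)| = |c|\rho(v_0) = \rho(cv_0)$, so $l$ is dominated by $\rho$ on $M$ (in the real case, the inequality $l(v) \leq \rho(v)$ holds because $|l| \leq \rho$). Invoking Theorem \ref{hahn-banach} in the real case, or Theorem \ref{complex-hahn-banach} in the complex case, I obtain an extension $L:V \to \bF$ that is linear, agrees with $l$ on $M$ (so in particular $L(v_0) = l(v_0) = \rho(v_0)$), and satisfies $|L(v)| \leq \rho(v)$ for all $v \in V$.

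It remains to verify continuity of $L$. Using linearity and the bound from Hahn-Banach,
\[
 |L(v) - L(w)| = |L(v-w)| \leq \rho(v-w)
\]
for all $v,w \in V$. Since $\rho$ is continuous by hypothesis, $\rho(v-w) \to 0$ whenever $v \to w$ in $V$, which forces $L(v) \to L(w)$. Hence $L$ is continuous, completing the proof.

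The main obstacle, such as it is, lies in unifying the real and complex cases; but this is entirely handled by the availability of both Theorem \ref{hahn-banach} and Theorem \ref{complex-hahn-banach}. Everything else is a direct one-dimensional bootstrap, and continuity is automatic because $\rho$ is continuous.
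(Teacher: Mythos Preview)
Your proof is correct and follows essentially the same route as the paper: define $l(\lambda v_0) = \lambda \rho(v_0)$ on the one-dimensional span of $v_0$, check $|l| \leq \rho$ there, extend by Hahn--Banach (real or complex as appropriate), and read off continuity from the domination $|L| \leq \rho$ together with the continuity of $\rho$. The only cosmetic difference is that you separate out the trivial cases $v_0 = 0$ and $\rho(v_0) = 0$, whereas the paper's argument handles them uniformly (the one-dimensional construction still works there, yielding the zero functional), and your continuity argument is phrased via $|L(v)-L(w)| \leq \rho(v-w)$ rather than ``continuous at $0$ plus linearity,'' but these are equivalent.
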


\begin{proof}
We let $\bF$ denote either $\bR$ or $\bC$ and
define a linear functional $l$ on the one-dimensional subspace $\bF v_0$
of $V$ by setting
\[
 l(\lambda v_0) = \lambda \rho(v_0).
\]
Since $|l(\lambda v_0)| = |\lambda \rho(v_0)| = \rho(\lambda v_0)$,
we can apply either the \hyperref[hahn-banach]{real Hahn-Banach theorem}
or the \hyperref[complex-hahn-banach]{complex Hahn-Banach theorem}
to construct a linear functional $L$ on $V$ such that $L|_{\bF v_0} = l$
and $|L(v)| \leq \rho(v)$ for all $v \in V$. $L$ is a linear extension
of $l$, and so $L(v_0) = l(v_0) = \rho(v_0)$. Furthermore,
the bound $|L(v)| \leq \rho(v)$ implies that $L$ is continuous at $v=0$,
whence by linearity it is continuous on $V$.
\end{proof}

For a large class of topological vector spaces known as
\emph{locally convex spaces},\index{locally convex space} a stronger extension theorem can be
established. Since we will not need such generality, we state
and prove a special case of the theorem for Fr\'{e}chet spaces.
See \S\S\ref{fr-locally-convex-topological-spaces} for a discussion
of locally convex spaces.

\begin{cor}\label{extension-of-linear-functionals-on-frechet-spaces}
Let $V$ be a Fr\'{e}chet space. For every nonzero vector $v_0$ in $V$,
there exists a continuous seminorm $\rho$ on $V$ such that $\rho(v_0) \neq 0$.
Consequently, there is a continuous linear functional $l$ on $V$ such that
$l(v_0) \neq 0$ and $|l(v)| \leq \rho(v)$ for all $v \in V$.
\end{cor}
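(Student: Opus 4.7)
The plan is to exploit the fact that, by Definition \ref{defin-frechet-space}, the topology of the Fr\'{e}chet space $V$ is generated by a countable family $\{\rho_n : n \in \bN\}$ of continuous seminorms via the metric
\[
 d(v,w) = \sum_{n=1}^\infty \frac{1}{2^n} \left( \frac{\rho_n(v-w)}{1 + \rho_n(v-w)} \right).
\]
Since this metric generates the Hausdorff vector-space topology of $V$ and $v_0 \neq 0$, we must have $d(v_0,0) > 0$. The series expression for $d(v_0,0)$ is a sum of nonnegative terms, so at least one term is nonzero; that is, there exists $n \in \bN$ with $\rho_n(v_0) > 0$.

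Having produced a continuous seminorm $\rho = \rho_n$ satisfying $\rho(v_0) \neq 0$, the second half of the statement is immediate: I would directly invoke Theorem \ref{extension-of-linear-functionals} with this seminorm $\rho$ and the vector $v_0$. That theorem yields a continuous linear functional $l$ on $V$ satisfying $l(v_0) = \rho(v_0) \neq 0$ and $|l(v)| \leq \rho(v)$ for every $v \in V$, which is exactly what we want.

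The only mild subtlety---and the closest thing to an obstacle---is the assertion that each $\rho_n$ is actually continuous on $V$ in the metric topology defined above. This was remarked upon just after Definition \ref{defin-frechet-space} and is straightforward: if $v_k \to v$ in $d$, then each summand in the defining series tends to zero, so in particular $\rho_n(v_k - v)/(1+\rho_n(v_k - v)) \to 0$, and hence $\rho_n(v_k - v) \to 0$; subadditivity of $\rho_n$ then gives $|\rho_n(v_k) - \rho_n(v)| \leq \rho_n(v_k - v) \to 0$. With continuity of $\rho_n$ in hand, the hypothesis of Theorem \ref{extension-of-linear-functionals} is verified and the corollary follows.
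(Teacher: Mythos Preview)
Your argument is correct and matches the paper's proof essentially line for line: pick one of the defining seminorms $\rho_N$ with $\rho_N(v_0)\neq 0$ (since otherwise $d(v_0,0)=0$), then apply Theorem~\ref{extension-of-linear-functionals}. Your added justification of the continuity of each $\rho_n$ is a welcome bit of extra detail but not a different approach.
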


\begin{proof}
Let $\{\rho_n:n \in \bN\}$ be a collection of seminorms generating
the topology on $V$, all of which must be continuous.
Given a fixed nonzero vector $v_0$ in $V$, we
observe that there exists an $N \in \bN$ such that $\rho_N(v_0) \neq 0$.
If not, then the distance between $v_0$ and the zero vector in the metric
generated by $\{\rho_n:n \in \bN\}$ is zero, which is evidently absurd.
Theorem \ref{extension-of-linear-functionals} can now be applied
to $\rho_N$, whereby obtaining a continuous linear functional
continuous linear functional $l$ on $V$ such that $l(v_0) = \rho(v_0) \neq 0$
and $|l(v)| \leq \rho(v)$ for all $v \in V$.
\end{proof}

If $V$ is a Banach space, then the extension theorem can be strengthened
as follows:

\index{Banach space|(}
\begin{cor}\label{extension-of-linear-functionals-on-banach-spaces}
Let $V$ be a Banach space. For every nonzero vector $v_0$ in $V$,
there exists a continuous linear functional $l$ such that $l(v_0) = \|v_0\|$
and $\|l\|=1$. 
\end{cor}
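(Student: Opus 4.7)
The plan is to apply Theorem \ref{extension-of-linear-functionals} with the seminorm $\rho$ taken to be the Banach-space norm itself. Indeed, $\rho(v) = \|v\|$ is a continuous seminorm on $V$ (it satisfies homogeneity and the triangle inequality, and continuity is immediate from the reverse triangle inequality). So the theorem furnishes a continuous linear functional $l$ on $V$ with $l(v_0) = \rho(v_0) = \|v_0\|$ and $|l(v)| \leq \rho(v) = \|v\|$ for every $v \in V$.

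It then remains to verify that $\|l\| = 1$. The bound $|l(v)| \leq \|v\|$ immediately gives $\|l\| \leq 1$. For the reverse inequality, since $v_0 \neq 0$ we may form the unit vector $v_0/\|v_0\|$, and then
\[
 \|l\| = \sup_{\|v\| \leq 1} |l(v)|
 \geq \left| l\!\left( \frac{v_0}{\|v_0\|} \right) \right|
 = \frac{|l(v_0)|}{\|v_0\|}
 = \frac{\|v_0\|}{\|v_0\|} = 1,
\]
where the second equality uses linearity of $l$. Combining the two inequalities yields $\|l\| = 1$, which is exactly the claim.

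There is essentially no obstacle here: the work has already been done in Theorem \ref{extension-of-linear-functionals}, whose proof in turn rests on the Hahn-Banach theorem. The only thing particular to the Banach-space setting is the observation that the norm itself serves as a globally defined continuous seminorm, which lets us replace the conclusion ``$|l(v)| \leq \rho(v)$'' by the clean operator-norm statement ``$\|l\| = 1$''. In contrast, in the Fr\'{e}chet setting of Corollary \ref{extension-of-linear-functionals-on-frechet-spaces}, one must first hunt for a seminorm $\rho_N$ in the defining family that does not vanish at $v_0$, and no single seminorm controls the topology, so no analogue of ``$\|l\| = 1$'' is available.
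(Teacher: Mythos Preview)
Your proof is correct and follows essentially the same approach as the paper: take $\rho = \|\cdot\|$ as the continuous seminorm, invoke the extension theorem to get $l(v_0) = \|v_0\|$ and $|l(v)| \leq \|v\|$, and read off $\|l\| = 1$. The only cosmetic difference is that the paper routes the citation through Corollary~\ref{extension-of-linear-functionals-on-frechet-spaces} rather than Theorem~\ref{extension-of-linear-functionals} directly, and compresses your unit-vector computation for $\|l\| \geq 1$ into a single remark.
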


\begin{proof}
Fix a nonzero vector $v_0 \in V$. Since the norm $\|\cdot\|$ is a continuous
seminorm on $V$ with $\|v_0\| \neq 0$, we apply Corollary
\ref{extension-of-linear-functionals-on-frechet-spaces} to construct a
continuous linear functional $l$ such that $l(v_0) = \|v_0\|$
and $|l(v)| \leq \|v\|$ for all $v \in V$. The second conclusion implies that
$\|l\| \leq 1$, whence the first conclusion implies that $\|l\| = 1$.
\end{proof}
\index{Hahn-Banach theorem|)}
\index{Fr\'{e}chet space|)}

\subsection{Complex Analysis of Banach-valued Functions}\label{complex-analysis-of-banach-valued-functions}

Let us now consider an application of the extension theorems
established in the previous subsection.
As alluded to in the beginning of the chapter, we shall extend the
Riesz-Thorin interpolation theorem (Theorem \ref{riesz-thorin})
to a more general framework in \S\ref{s-the_complex_interpolation_method}.
To do so, we shall need to consider Banach-valued functions on $\bC$.
It is therefore convenient to be able to apply complex-analytic methods to
functions on $\bC$ mapping into a complex Banach space.

\begin{defin}
Let $V$ be a complex Banach space and $O$ a connected open subset of $\bC$.
A function $f:O \to V$ is \emph{holomorphic}\index{holomorphicity!on a Banach space} if, for every bounded
linear functional $l$ on $V$, the composite map $lf$ is a holomorphic 
function on $O$ as a complex function of one variable.
\end{defin}

Since there are plenty of bounded linear functionals on $V$,
the definition is non-trivial. Indeed, it is sufficiently restrictive
that the theorems of complex analysis, such as Louiville's theorem,
continue to hold. Note that every bounded linear operator $T:V \to W$
between Banach spaces preserves the holomorphicity of $V$-valued
functions, for the composition of $T$ and an arbitrary bounded
linear functional on $W$ is a bounded linear functional on $V$.
Indeed, if $f$ is a $V$-valued holomorphic map, then, for each
linear functional $l$ on $W$, the composite map $lTf$ is a
complex-valued holomorphic map. It then follows that $Tf$
is a $W$-valued holomorphic map.
This technique is used in the proof of Theorem \ref{complex-interpolation-is-exact}.

If $V$ is a Banach space of linear operators\footnote{or, more generally,
a Banach algebra: see \S\S\ref{fr-approximations-to-the-identity}
for the definition.}, then we can talk about power series in
$V$, where the infinite sum is defined by the limit of the Cauchy sequence
of partial sums. In this case, a $V$-valued function on $O$ with a
power-series expansion in $V$ is holomorphic. We shall see an application of
this technique in the next subsection.

\subsection{Spectra of Operators on Banach Spaces}\label{s-spectra-of-operators-on-banach-spaces}

Among the central objects of study in finite-dimensional linear algebra
are eigenvalues and eigenvectors: an \emph{eigenvalue}\index{eigenvalue} of an $n$-by-$n$
matrix $A$ with complex entries is a complex number $\lambda$ such that
\[
 Av = \lambda v
\]
for some nonzero $n$-vector $v$, which is referred to as an \emph{eigenvector}\index{eigenvector}
of $A$ with respect to the eigenvalue $\lambda$. Writing $I$ to denote
the $n$-by-$n$ identity matrix, we see that the eigenvectors with
respect to an eigenvalue $\lambda$ are precisely the elements of
the nullspace of $A-\lambda I$. In other words, if a complex number $\lambda$
renders the matrix $A-\lambda I$ invertible, then the nullspace of
$A-\lambda I$ is trivial, whence there are no
``eigenvectors'' corresponding to $\lambda$. This implies that 
$\lambda$ is \emph{not} an eigenvalue of $A$.

Let us now consider a bounded linear operator
$T$ on a complex Banach space $V$. We define the \emph{resolvent set}\index{resolvent set}
$\rho(T)$ of $T$ to be the collection of complex numbers $\lambda$ such that
the operator
\[
 I - \lambda T
\]
is invertible. The \emph{spectrum}\index{spectrum} $\sigma(T)$ of $T$ is defined to be
the set $\bC \smallsetminus \rho(T)$. We observe that these
definitions are straightforward generalizations of the above observation.

Recall that finding the eigenvalues of an $n$-by-$n$ matrix $A$ amounts
to solving the polynomial equation
\[
 \det(A - \lambda I) = 0
\]
for $\lambda$. If $A$ has complex entries, then the fundamental theorem
of algebra guarantees that the roots always exist, whence every matrix
with complex entires has eigenvalues. As it turns out, our
infinite-dimensional generalization retains this property.

\begin{theorem}\label{spectrum-is-nonempty}\index{spectrum!is nonempty|(}
Let $V$ be a complex Banach space and $T:V \to V$ a bounded linear
operator. The spectrum $\sigma(T)$ of $T$ is nonempty.
\end{theorem}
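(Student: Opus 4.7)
My plan is the classical Liouville argument for the non-emptiness of the spectrum, carried out for Banach-valued holomorphic functions by means of the Hahn-Banach machinery developed in the preceding subsection. I will argue by contradiction: assume $\sigma(T) = \varnothing$, so that $I - \lambda T$ is invertible for every $\lambda \in \bC$, and work with the resolvent
\[
 R(\lambda) = (I - \lambda T)^{-1}.
\]
The goal is to produce a bounded entire $\mathcal{L}(V,V)$-valued function that must vanish, contradicting invertibility.

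The first step is to verify that $R$ is holomorphic on $\bC$ in the Banach-valued sense of subsection \ref{complex-analysis-of-banach-valued-functions}. At any $\lambda_0 \in \bC$, the factorization
\[
 I - \lambda T = (I - \lambda_0 T)\bigl(I - (\lambda - \lambda_0)\, R(\lambda_0) T\bigr)
\]
together with the Neumann series yields the operator-norm-convergent power-series expansion
\[
 R(\lambda) = \sum_{n=0}^{\infty} (\lambda - \lambda_0)^n \bigl(R(\lambda_0) T\bigr)^n R(\lambda_0),
\]
valid for $|\lambda - \lambda_0| < \|R(\lambda_0) T\|^{-1}$. Composing with any bounded linear functional on $\mathcal{L}(V,V)$ produces a convergent scalar power series around $\lambda_0$, so $l \circ R$ is holomorphic there, which gives Banach-valued holomorphy by definition.

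The second step is to obtain the decay needed for Liouville. Here the convention $I - \lambda T$ is inconvenient because $R(\lambda)$ does not itself decay at infinity; instead I pass to the auxiliary function $S(\mu) = \mu^{-1} R(\mu^{-1}) = (\mu I - T)^{-1}$ on $\bC\setminus\{0\}$. For $\mu \neq 0$, the identity $I - \mu^{-1} T = \mu^{-1}(\mu I - T)$ shows that $\mu I - T$ is invertible by our standing hypothesis, so $S$ is defined on $\bC\setminus\{0\}$ and holomorphic there by the analogous Neumann-series argument. For $|\mu| > \|T\|$, the factorization $\mu I - T = \mu(I - \mu^{-1}T)$ and the Neumann series yield
\[
 \|S(\mu)\| \le \frac{1}{|\mu| - \|T\|} \longrightarrow 0 \quad\text{as}\quad |\mu|\to\infty.
\]

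The final step closes the argument via Liouville plus Hahn-Banach. For each continuous linear functional $l$ on $\mathcal{L}(V,V)$, the scalar function $l \circ S$ is holomorphic on $\bC\setminus\{0\}$ and bounded at infinity. After verifying that the putative singularity at $\mu = 0$ is removable---which reduces, via the relation $S(\mu) = \mu^{-1} R(\mu^{-1})$, to a growth estimate on $R$ at infinity---the function $l \circ S$ extends to a bounded entire function, which by Liouville must be constant and hence identically zero by the decay at infinity. Since $l$ was arbitrary, Corollary \ref{extension-of-linear-functionals-on-banach-spaces} forces $S(\mu) = 0$ for every $\mu$, contradicting invertibility. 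The main obstacle is this second step: the convention in the definition of $\sigma(T)$ forces one to pass through the change of variable $\mu = 1/\lambda$ before Liouville becomes applicable, and the behaviour near $\mu = 0$ must be handled carefully to produce a genuinely entire auxiliary function.
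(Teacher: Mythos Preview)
Your overall strategy---resolvent holomorphy, decay at infinity, Liouville, then Hahn--Banach via Corollary~\ref{extension-of-linear-functionals-on-banach-spaces}---is exactly the paper's. The paper, however, works throughout with $(\lambda I - T)^{-1}$ rather than $(I-\lambda T)^{-1}$: it uses the resolvent identity of Lemma~\ref{operator-algebra-formula} to compute the difference quotient directly, obtains $\|(\lambda I - T)^{-1}\|\to 0$ from the Neumann series, and applies Liouville with no change of variable. In effect the proof silently treats the resolvent set as $\{\lambda:\lambda I - T\text{ invertible}\}$, not as the set where $I-\lambda T$ is invertible that was written down in the definition.

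Your faithful use of the stated convention is what produces the obstacle at $\mu=0$, and that obstacle is genuine, not merely technical. The removability you need amounts to $\|R(\lambda)\|=O(|\lambda|^{-1})$ as $|\lambda|\to\infty$, equivalently to the boundedness of $(T-\varepsilon I)^{-1}$ as $\varepsilon\to 0$; this holds only when $T$ itself is invertible. For a quasinilpotent operator (e.g.\ the Volterra integral operator on $L^2[0,1]$) one has $I-\lambda T$ invertible for every $\lambda\in\bC$, so under the literal definition the spectrum is empty and the theorem is false. Thus your change-of-variable argument cannot be completed in general. The fix is to read the definition as the paper's own proof does, with $\lambda I - T$, after which your argument and the paper's coincide and no singularity at $\mu=0$ arises.
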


To prove this result, we first observe that invertible operators
form an ``open set''.

\begin{lemma}\label{invertibility-is-open}
Let $V$ be a complex Banach space and $T:V \to V$ a bounded linear
operator. If $T$ is invertible, and if another bounded linear
operator $T':V \to V$ satisfies the norm estimate
\[
 \|T'\|_{V \to V} < \frac{1}{\|T^{-1}\|_{V \to V}}.
\]
then $T-T'$ is invertible.
\end{lemma}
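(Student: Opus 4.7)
The plan is to reduce the statement to the classical Neumann series. First I would factor
\[
 T - T' = T(I - T^{-1}T'),
\]
which is a valid manipulation because $T$ is invertible. Since $T$ is invertible, the product $T - T'$ is invertible if and only if $I - T^{-1}T'$ is. So the task reduces to showing that $I - S$ is invertible whenever $S:V \to V$ is a bounded linear operator with $\|S\|_{V \to V} < 1$; here $S = T^{-1}T'$, and the submultiplicativity of the operator norm together with the hypothesis gives
\[
 \|S\|_{V \to V} \leq \|T^{-1}\|_{V \to V} \|T'\|_{V \to V} < 1,
\]
so $S$ indeed has norm strictly less than one.

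Next I would establish the invertibility of $I - S$ by constructing its inverse explicitly as the Neumann series $\sum_{n=0}^\infty S^n$. The partial sums $U_N = \sum_{n=0}^N S^n$ form a Cauchy sequence in the Banach space $\ms{L}(V,V)$, since $\|S^n\|_{V \to V} \leq \|S\|_{V \to V}^n$ and $\sum_n \|S\|_{V \to V}^n$ is a convergent geometric series. Because $V$ is a Banach space, $\ms{L}(V,V)$ is itself a Banach space, so the partial sums converge in operator norm to some $U \in \ms{L}(V,V)$. A telescoping computation gives $(I - S)U_N = U_N(I-S) = I - S^{N+1}$, and $S^{N+1} \to 0$ in operator norm; passing to the limit yields $(I-S)U = U(I-S) = I$, so $U = (I-S)^{-1}$.

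Combining the two steps, $(T - T')^{-1} = (I - T^{-1}T')^{-1} T^{-1} = U T^{-1}$ is a bounded linear operator, which is exactly the desired conclusion. The only nontrivial point is convergence of the Neumann series, which hinges on the completeness of $\ms{L}(V,V)$; this was noted already in \S\ref{s-approximation_in_lp_spaces} but deserves to be invoked explicitly. Everything else is essentially algebra with the operator norm, and no deeper complex-analytic machinery from \S\S\ref{complex-analysis-of-banach-valued-functions} is needed here, although the same Neumann series will later reappear when proving that $\rho(T)$ is open and that the resolvent $\lambda \mapsto (I - \lambda T)^{-1}$ is a holomorphic $\ms{L}(V,V)$-valued function, which in turn feeds into the proof of Theorem \ref{spectrum-is-nonempty}.
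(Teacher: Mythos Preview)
Your proof is correct and matches the paper's approach exactly: the same factorization $T - T' = T(I - T^{-1}T')$ and the same Neumann series argument for $I - S$ with $\|S\| < 1$. The only cosmetic difference is the order of presentation—the paper first treats the special case $T = I$ and then reduces the general case to it, whereas you reduce first and then prove the special case.
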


\begin{proof}[Proof of lemma]
We assume for now that $T$ is the identity operator $I$.
In this case, the lemma asserts that all bounded linear operators
$I-T'$ with the norm estimate $\|T'\|_{V \to V} < 1$ is
invertible. In this case, the sequence of partial sums 
\[
 I, I+T', I+T'+(T')^2, \cdots
 \sum_{n=0}^N (T')^n, \cdots
\]
is Cauchy in the space $\ms{L}(V)$ of bounded linear endomorphisms
on $V$, which is Banach. Therefore, the operator
\[
 \sum_{n=0}^\infty (T')^n = \lim_{N \to \infty} \sum_{n=0}^N (T')^n
\]
is well-defined. We observe that
\[
 (I-T')\sum_{n=0}^\infty (T')^n = \lim_{N \to \infty} I - (T')^{n+1} = I
\]
and that
\[
 \left(\sum_{n=0}^\infty (T')^n \right)(I-T')
 = \lim_{N \to \infty} I-(T')^{n+1} = I,
\]
whence $I-T'$ is invertible and
\begin{equation}\label{power-series-inverse}
 (I-T')^{-1} = \sum_{n=0}^\infty (T')^n.
\end{equation}

We now consider an arbitrary bounded, invertible linear operator
$T:V \to V$. Fix a bounded linear operator $T':V \to V$ such that
\[
 \|T'\|_{V \to V} < \frac{1}{\|T^{-1}\|_{V \to V}}.
\]
We factor
\[
 T-T' = T(I-T^{-1}T')
\]
and observe that
\[
 \|T^{-1}T'\|_{V \to V} \leq \|T^{-1}\|_{V \to V}\|T'\|_{V \to V}
 < \frac{\|T^{-1}\|_{V \to V}}{\|T^{-1}\|_{V \to V}} = 1.
\]
Therefore, the argument carried out in the above paragraph shows that
$I-T^{-1}T'$ is invertible. Since $T$ is also invertible, it follows
that $T-T'$ is invertible, as was to be shown.
\end{proof}

We also establish the following computational result:

\begin{lemma}\label{operator-algebra-formula}
If $T$ and $T'$ are bounded, invertible linear operators on a Banach space
$V$, then
\[
 T^{-1} - (T')^{-1} = T^{-1}(T'-T)(T')^{-1}.
\]
\end{lemma}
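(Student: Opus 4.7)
The plan is to prove this by a direct algebraic manipulation, using nothing more than the associativity of composition of bounded linear operators and the defining relations $T^{-1}T = I = T(T')^{-1}(T')$ for invertible operators. No analytic machinery (completeness, norm estimates, spectrum, etc.) is needed; the identity is purely algebraic and would hold in any associative ring with the relevant inverses.

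My approach is to insert the identity operator $I$ in a convenient form. Specifically, I would start from the right-hand side and distribute:
\[
T^{-1}(T'-T)(T')^{-1} = T^{-1}T'(T')^{-1} - T^{-1}T(T')^{-1}.
\]
Then I would simplify each term using $T^{-1}T = I$ in the second term and, in the first term, associate differently and use $T'(T')^{-1} = I$ — or, more symmetrically, recognize the first term as $T^{-1}\bigl(T'(T')^{-1}\bigr) = T^{-1}$ and the second as $(T^{-1}T)(T')^{-1} = (T')^{-1}$. This yields $T^{-1} - (T')^{-1}$, which is the left-hand side. Alternatively, one could start from the left, write $T^{-1} = T^{-1}T'(T')^{-1}$ and $(T')^{-1} = T^{-1}T(T')^{-1}$, then subtract.

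There is no serious obstacle here: the only thing to be mildly careful about is the order of composition (bounded operators on $V$ form a noncommutative algebra, so $T$ and $T'$ need not commute, which is exactly why the factorization $T^{-1}(T'-T)(T')^{-1}$ must sit in that specific order). The proof should be one or two lines of display math.
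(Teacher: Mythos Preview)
Your proposal is correct and matches the paper's proof almost exactly: the paper starts from the left-hand side, writes $T^{-1} = T^{-1}T'(T')^{-1}$ and $(T')^{-1} = T^{-1}T(T')^{-1}$, and then factors, which is precisely the alternative you describe. The expand-from-the-right version you lead with is of course equivalent.
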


\begin{proof}[Proof of lemma]
Observe that
\[
T^{-1} -(T')^{-1} = T^{-1}(T')(T')^{-1} - T^{-1}T(T')^{-1}
= T^{-1}(T'-T)(T')^{-1},\]
as was claimed.
\end{proof}

We now return to the proof of the main theorem.

\begin{proof}[Proof of theorem \ref{spectrum-is-nonempty}]
Let $T:V \to V$ be a bounded linear operator. If $\lambda \in \rho(T)$,
then Lemma \ref{invertibility-is-open} implies that
\[
 (\lambda-\ve)I-T = (\lambda I - T) - \ve I
\]
is invertible for sufficiently small $\ve \in \bC$. It follows that
$\rho(T)$ is open, and so $\sigma(T)$ is closed.

We claim that $\lambda \mapsto (\lambda 1 - T)^{-1}$ is
a $V$-valued holomorphic map\index{holomorphicity!on a Banach space} on $\rho(T)$. For each fixed $\mu \in \rho(T)$,
Lemma \ref{operator-algebra-formula} implies that
\begin{eqnarray*}
 \frac{(\lambda I - T)^{-1} - (\mu I - T)^{-1}}{\lambda - \mu}
 &=& \frac{(\lambda I - T)^{-1} (\mu I - \lambda I) (\mu I - T)^{-1}}{\lambda - \mu} \\
 &=& -(\lambda I - T)^{-1}(\mu I - T)^{-1}.
\end{eqnarray*}
Therefore, we have
\[
 \lim_{\mu \to \lambda} \frac{(\lambda I - T)^{-1} - (\mu I - T)^{-1}}{\lambda - \mu}
 = \lim_{\mu \to \lambda} -(\lambda I - T)^{-1}(\mu I - T)^{-1}
 = -(\lambda I - T)^{-2},
\]
and so
\[
 \lim_{\mu \to \lambda} \frac{l((\lambda I - T)^{-1}) - l((\mu I - T)^{-1})}{\lambda - \mu}
 = l((\lambda I - T)^{-2})
\]
for each bounded linear functional $l$ on $V$. It follows that
$\lambda \mapsto (\lambda I - T)^{-1}$ is holomorphic.

Let us now suppose for a contradiction that $\rho(T) = \bC$. This,
in particular, implies that $\lambda \to l((\lambda I - T)^{-1})$ is an
entire function for every bounded linear functional $l$ on $V$.
We apply the power-series expansion 
Since
continuous maps send compact sets to compact sets, this map is bounded
on every compact subset of $\rho(T)$. We apply the power-series expansion
(\ref{power-series-inverse}), employed in the proof of Lemma
\ref{invertibility-is-open}, to $(\lambda I - T)^{-1}$:
\[
 (\lambda I - T)^{-1} = \lambda^{-1}(I-\lambda^{-1}T)^{-1}
 = \lambda^{-1} \sum_{n=0}^\infty \lambda^{-n} T^n.
\]
It follows that
\[
 \|(\lambda I - T)^{-1}\|_{V \to V} \leq \frac{1}{|\lambda| - \|T\|_{V \to V}},
\]
and so $\|(\lambda I - T)^{-1}\| \to 0$ as $|\lambda| \to \infty$.

Furthermore, we have
\[
 |l((\lambda I - T)^{-1})|
 \leq \|l\|_{V \to \bC} \|(\lambda I - T)^{-1}\|_{V \to V}
 \leq \frac{\|l\|_{V \to \bC}}{|\lambda\| - \|T\|_{V \to V}}
\]
for each bounded lienar functional $l$ on $V$, whence
$\lambda \mapsto l((\lambda I - T)^{-1})$ vanishes at infinity.
Louiville's theorem can now be applied to conclude that
this map is the zero map, regardless of our choice of $l$.
Given a fixed $\lambda \in \rho(T)$, however, $(\lambda I - T)^{-1}$
is invertible and thus not the zero operator. We invoke
Theorem \ref{extension-of-linear-functionals-on-banach-spaces}
to construct a bounded linear functional $l$ on $V$ such that
$\|l\| = 1$ and
\[
 \|l((\lambda I - T))^{-1}\|_{V \to \bC}
 = \|(\lambda I - T)^{-1}\|_{V \to V}
 > 0.
\]
This is evidently absurd, since the map $\lambda \mapsto l((\lambda I - T)^{-1})$
was shown to be the zero map. It now follows that $\rho(T) \neq \bC$,
whereby we conclude that $\sigma(T)$ is nonempty.
\end{proof}

\index{spectrum!is nonempty|)}

We remark that the above theorem does \emph{not} guarantee the
existence of eigenvalues proper for all bounded operators.
Indeed, the right-shift operator $R:l^2(\bZ) \to l^2(\bZ)$ defined
on the space $l^2(\bZ)$ of square-summable sequences by
\[
 R((a_n)_{n \in \bZ}) = (a_{n+1})_{n \in \bZ}
\]
admits no eigenvalues or eigenvectors, for the identity
\[
 \lambda a_n = a_{n+1}
\]
does not hold for all sequences $(a_n)_{n \in \bZ}$, regardless of
the value of $\lambda$.

The proof of the theorem goes through \emph{verbatim} if we substitute
the bounded operators with elements of a Banach algebra. See
\ref{fr-approximations-to-the-identity} for the definition of
a Banach algebra.

It is sometimes useful to classify operators by the content of their
spectra.

\begin{defin}\label{positive-and-negative-operators}\index{operator!positive and negative}
A bounded operator $T:V \to V$ on a complex Banach space $V$
is \emph{positive} if the spectrum consists of positive real numbers,
and \emph{negative} if the spectrum consists of negative real numbers.
\end{defin}

We shall prove a theorem about positive operators in \S\ref{s-interpolation-of-hilbert-spaces}.
An example of a positive operator can be found in \S\ref{s-sobolev-spaces}.

\subsection{Compactness of the Unit Ball}

We now consider a property of finite-dimensional vector spaces that do \emph{not}
generalize to infinite-dimensional spaces. Recall the Heine-Borel theorem,\index{Heine-Borel theorem} which
guarantees that the closed and bounded sets in $\bR^d$ and $\bC^d$ are compact.
As it turns out, the norm topology on an infinite-dimensional Banach space
has ``too many open sets'' to preserve this property.

\begin{theorem}\label{noncompactness-of-the-unit-ball}\index{unit ball!noncompactness of}
The closed unit ball in a Banach space $V$ is compact if and only if
$V$ is finite-dimensional.
\end{theorem}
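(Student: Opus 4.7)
The plan is to prove the two directions separately, with the forward direction being straightforward and the reverse requiring F. Riesz's lemma on almost-orthogonal vectors.

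For the easy direction, suppose $V$ is finite-dimensional, say of dimension $n$. Fix a basis $\{e_1,\ldots,e_n\}$, and observe that any two norms on $V$ are equivalent: I would verify this by showing that the map $(a_1,\ldots,a_n) \mapsto \|a_1 e_1 + \cdots + a_n e_n\|_V$ is continuous on $\bF^n$ with the Euclidean norm (trivial from the triangle inequality and Cauchy-Schwarz), strictly positive on the unit sphere, and therefore bounded above and below on that sphere by compactness of the Euclidean sphere. This yields a linear homeomorphism $\bF^n \to V$, so the closed unit ball of $V$ is the continuous image of a closed bounded set in $\bF^n$, hence compact by Heine-Borel.

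For the hard direction, I would argue the contrapositive: if $V$ is infinite-dimensional, construct a sequence in the closed unit ball with no convergent subsequence. The main tool will be the following almost-orthogonality lemma of F. Riesz: if $M$ is a closed proper subspace of a normed linear space $V$ and $0 < \theta < 1$, then there exists $v \in V$ with $\|v\| = 1$ and $d(v, M) \geq \theta$. To prove this, pick any $w \in V \setminus M$, set $\delta = d(w,M) > 0$ (positive because $M$ is closed), choose $u \in M$ with $\|w - u\| \leq \delta/\theta$, and take $v = (w-u)/\|w-u\|$; a direct check shows $d(v,M) \geq \theta$.

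Next I would use this lemma inductively. Since every finite-dimensional subspace of a normed space is closed (a consequence of the forward direction applied to the subspace, which is complete, hence closed in $V$), I can pick any $v_1$ with $\|v_1\|=1$, then set $M_n = \operatorname{span}\{v_1,\ldots,v_n\}$ and apply Riesz's lemma with $\theta = 1/2$ to the proper closed subspace $M_n \subsetneq V$ to obtain $v_{n+1}$ with $\|v_{n+1}\|=1$ and $d(v_{n+1}, M_n) \geq 1/2$. The resulting sequence $(v_n)_{n=1}^\infty$ lies in the closed unit ball and satisfies $\|v_n - v_m\| \geq 1/2$ for all $n \neq m$, so no subsequence is Cauchy, let alone convergent. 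This contradicts sequential compactness of the unit ball, completing the proof.

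The main obstacle is Riesz's lemma itself: the subtlety is that $M$ must be closed (to ensure $\delta > 0$) and that one cannot in general take $\theta = 1$ unless $V$ is a Hilbert space or similarly well-behaved. Otherwise the proof is a clean inductive construction, and the reliance on closedness of finite-dimensional subspaces feeds back into the forward direction in a satisfying way.
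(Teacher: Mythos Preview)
Your argument is correct and is in fact the standard proof via F.~Riesz's lemma. The paper, however, does not prove this theorem at all: it explicitly omits the proof, referring the reader to Lax's text, and offers only an informal sketch in the special case of a separable Hilbert space, where one uses an orthonormal basis $\{e_n\}$ directly (the vectors $e_n$ already satisfy $\|e_n - e_m\| = \sqrt{2}$ for $n \neq m$, so no Riesz-lemma construction is needed). Your approach is strictly more general, since Riesz's lemma requires no inner-product structure and handles arbitrary infinite-dimensional normed spaces; the paper's sketch trades that generality for the convenience of a ready-made orthonormal sequence. Both yield a sequence in the unit ball with pairwise distances bounded below, but yours is the one that actually establishes the theorem as stated.
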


We will not have an occasion to use this theorem, so we omit the
proof: see, for example, Section 5.2, Theorem 6 in \cite{Peter_D_Lax:B2002}.\index{Lax, Peter D.}
The proof can be sketched easily if $V$ is a separable Hilbert space.
Recall the standard result in Hilbert-space theory that there is an
orthonormal basis $\{e_n:n \in \bN\}$ of $V$: intuitively, there are
countably many axes in $V$ that are perpendicular to one another.
If we construct an open cover of $V$ such that each ``axis'' of
the unit ball in $V$ is covered by an open set that does not overlap
much with the rest of the cover, then none of the countably many
sets covering these axes can be removed without losing the open-cover
status.

Given the usefulness of compact sets, we seek a way to reduce the
number of open sets, thereby increasing the number of compact sets.
To this end, we recall that the dual of a Banach space $V$ is a Banach
space, and that the map $v \mapsto L_v$ from $V$ to its double dual
$V^{**}$ defined by $L_v(l) = lv$ is an isometric embedding.

\begin{defin}\label{weak-star}\index{weak-* topology}
Let $V$ be a Banach space.
The \emph{weak-* topology} on $V^*$ with respect to $V$
is the topology generated by the sets $L_v^{-1}(O)$,
where $L_v$ is the linear functional in the above proposition and $O$ an arbitrary
open set in $\bC$. 
\end{defin}

With this new topology, Theorem \ref{noncompactness-of-the-unit-ball}
can now be reversed.

\begin{theorem}[Banach-Alaoglu]\label{banach-alaoglu}\index{Banach-Alaoglu theorem}\index{unit ball!weak-* compactness \\ of|see{Banach-Alaoglu theorem}}
If $V$ is a Banach space, then the unit ball in $V^*$ is compact in the weak-* topology.
\end{theorem}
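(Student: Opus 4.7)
The plan is to realize the unit ball $B = \{l \in V^* : \|l\|_{V^*} \leq 1\}$ as a closed subspace of a compact product space, and then invoke Tychonoff's theorem. For each $v \in V$, set $D_v = \{z \in \bC : |z| \leq \|v\|\}$, which is compact in $\bC$. The product space
\[
 P = \prod_{v \in V} D_v,
\]
equipped with the product topology, is compact by Tychonoff's theorem. Define $\Phi:B \to P$ by $\Phi(l) = (l(v))_{v \in V}$; this is well-defined, since $\|l\|_{V^*} \leq 1$ forces $|l(v)| \leq \|v\|$ for every $v \in V$.

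The first substantive step is to check that $\Phi$ is a homeomorphism onto its image when $B$ carries the weak-* topology and $P$ the product topology. This should be essentially formal: both topologies are, by their very definitions, the initial topologies generated by the evaluation maps $l \mapsto l(v)$, so $\Phi$ is continuous, injective, and open onto $\Phi(B)$. The second step is to identify $\Phi(B)$ inside $P$. A tuple $(z_v)_{v \in V} \in P$ lies in $\Phi(B)$ precisely when the assignment $v \mapsto z_v$ is a linear functional, i.e. when
\[
 z_{v+w} = z_v + z_w \htwo \mbox{and} \htwo z_{\lambda v} = \lambda z_v
\]
for all $v,w \in V$ and all scalars $\lambda \in \bC$. (Boundedness with constant $\leq 1$ is automatic, because $|z_v| \leq \|v\|$ inside $P$.)

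The key observation is that each of these conditions cuts out a closed subset of $P$: for fixed $v,w$, the map $(z_u)_{u \in V} \mapsto z_{v+w} - z_v - z_w$ is continuous in the product topology (it depends on only three coordinates), so its zero set is closed, and similarly for the homogeneity relations. Since $\Phi(B)$ is the intersection of all of these closed subsets of $P$, it is closed in $P$. A closed subset of a compact space is compact, so $\Phi(B)$ is compact, and the homeomorphism $\Phi$ transports this back to give compactness of $B$ in the weak-* topology. The main subtlety, as usual, is to ensure that the product and weak-* topologies really do match up on the image and that no choice of generating subbase tacitly enlarges one topology beyond the other; once this is nailed down, everything else reduces to Tychonoff plus the elementary fact that linearity is a closed condition.
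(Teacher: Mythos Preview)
Your proposal is correct and is essentially the same argument as the paper's: embed $B$ into a product of compact disks via evaluation, apply Tychonoff, and observe that the linearity relations cut out a closed subset. The only cosmetic difference is that the paper embeds $V^*$ into the full function space $\bF^V$ and then intersects with the compact box $K_1 = \prod_v \{|z| \leq \|v\|\}$, whereas you map $B$ directly into that box; your explicit remark that the weak-* and product topologies agree because both are initial for the evaluations is a point the paper passes over by simply calling $\Phi$ a ``topological embedding.''
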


\begin{proof}
Let $\bF$ denote either $\bR$ or $\bC$. We consider the product space
$\bF^V$ consisting of all functions from $V$ into $\bF$, equipped with
the standard product topology. An arbitrary element of $\bF^V$ shall be denoted
by $\omega = (\omega_v)_{v \in V}$, where $\omega_v$ is the value of $\omega$
evaluated at $v$.

We now consider the topological embedding $\Phi: V^* \to \bF^V$
defined by $\Phi(l) = (\omega_v)_{v \in V}$, where $\omega_v = l(v)$.
It suffices to check that $\Phi(B)$ is compact, where
$B$ is the closed unit ball in $V^*$.
We observe that $\Phi(B)$ is the collection of $\omega = (\omega_v)_{v \in V}$
in $\bF^V$ such that $|\omega_v| \leq \|v\|_V$, $\omega_{v+w} = \omega_v + \omega_w$,
and $\omega_{\lambda v} = \lambda \omega_v$ for all $\lambda \in \bF$ and
$v,w \in V$. By setting
\begin{eqnarray*}
 K_1 &=& \{\omega \in \bF^V : |\omega_v| \leq \|v\|_V \mbox{ for all } v \in V \} \\
 K_2 &=& \{\omega \in \bF^V : \omega_{v+w} = \omega_v + \omega_w \mbox{ and }
 \omega_{\lambda v} = \lambda \omega_v \\
 & & \mbox{ for all } \lambda \in \bF \mbox{ and }
 v,w \in V\},
\end{eqnarray*}
we see that $\Phi(B) = K_1 \cap K_2$.

Observe that $K_1$ is a product of the closed intervals
$[-\|v\|, \|v\|]$ as $v$ runs through $V$, and so Tychonoff's
theorem\index{Tychonoff's theorem} implies that $K_1$ is compact. Furthermore, the sets
\begin{eqnarray*}
 E_{v,w} &=& \{\omega \in \bF^V : \omega_{v+w} - \omega_v - \omega_w = 0\} \\
 F_{\lambda,v} &=& \{\omega \in \bF^V : \omega_{\lambda v} - \lambda \omega_v = 0\}
\end{eqnarray*}
are closed for each fixed $\lambda \in \bF$, whence
\[
 K_2 = \left( \bigcap_{v,w \in V} E_{v,w} \right) \cap
 \left( \bigcap_{\substack{ v \in E \\ \lambda \in \bF}} F_{\lambda, v} \right)
\]
is closed. Therefore, $\Phi(B) = K_1 \cap K_2$ is compact, and so is $B$.
\end{proof}

\subsection{Bounded Linear Maps between Banach Spaces}

An important property of complete metric spaces is the \emph{Baire category
theorem},\index{Baire category theorem} which states that no complete metric space can be written as a
countable union of \emph{nowhere dense sets},\index{nowhere dense sets}\index{nowhere dense sets|seealso{Baire category theorem}} viz., sets whose interiors of
their closures are empty. In this subsection, we apply the category theorem
to the study of linear maps between banach spaces and derive two powerful
consequences.

Recall that a function $f:X \to Y$ between topological spaces $X$ and $Y$
is \emph{open}\index{open map} if $f(U)$ is open in $Y$ for each open set $U$ in $Y$ and
The first result concerns open linear maps between Banach spaces.

\index{open mapping theorem|(}
\begin{theorem}[Banach-Schauder, open mapping theorem]\label{open-mapping-theorem}\index{Banach-Schauder theorem|see{open mapping theorem}}
Let $V$ and $W$ be Banach spaces and $T:V \to W$ a bounded linear
transformation. If $T$ is surjective, then $T$ is open.
\end{theorem}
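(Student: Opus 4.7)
The plan is to prove the open mapping theorem by the standard Baire-category argument, reducing openness everywhere to the assertion that $T$ maps some ball around $0 \in V$ onto a set containing a ball around $0 \in W$. First I would observe that, by linearity and translation invariance, it suffices to show that for every $r > 0$ there is some $s > 0$ such that $T(B_r^V(0)) \supseteq B_s^W(0)$; indeed, if $U \subseteq V$ is open and $Tv \in T(U)$, picking $r$ with $B_r^V(v) \subseteq U$ yields $T(U) \supseteq Tv + T(B_r^V(0)) \supseteq Tv + B_s^W(0)$, so $T(U)$ is open.

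Next I would invoke surjectivity and the Baire category theorem. Since $T$ is onto, $W = \bigcup_{n=1}^\infty T(B_n^V(0)) = \bigcup_{n=1}^\infty \overline{T(B_n^V(0))}$, and as $W$ is a Banach space (hence a nonempty complete metric space), one of these closed sets must have nonempty interior. Using the symmetry $T(B_n^V(0)) = -T(B_n^V(0))$ and the convexity of $T(B_n^V(0))$, a standard averaging argument shows that $\overline{T(B_n^V(0))}$ contains a neighborhood of $0$ in $W$, and rescaling yields the weaker statement $\overline{T(B_1^V(0))} \supseteq B_{s}^W(0)$ for some $s > 0$.

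The main obstacle, and the heart of the proof, is removing the closure: passing from $\overline{T(B_1^V(0))} \supseteq B_s^W(0)$ to $T(B_2^V(0)) \supseteq B_s^W(0)$. Here I would use a successive-approximation argument exploiting completeness of $V$. Given $w \in B_s^W(0)$, pick $v_1 \in B_1^V(0)$ with $\|w - Tv_1\|_W < s/2$; by the scaled version $\overline{T(B_{1/2}^V(0))} \supseteq B_{s/2}^W(0)$, pick $v_2 \in B_{1/2}^V(0)$ with $\|w - Tv_1 - Tv_2\|_W < s/4$; iterating yields $v_n \in B_{2^{1-n}}^V(0)$ with $\|w - T(v_1 + \cdots + v_n)\|_W < s/2^n$. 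The partial sums $v_1 + \cdots + v_n$ form a Cauchy sequence in $V$ whose norms are bounded by $\sum 2^{1-n} = 2$, hence by completeness they converge to some $v \in \overline{B_2^V(0)}$; continuity of $T$ gives $Tv = w$, proving $B_s^W(0) \subseteq T(B_2^V(0))$. Combined with the first paragraph, this establishes openness of $T$ and completes the proof.
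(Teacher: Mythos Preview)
Your proof is correct and follows essentially the same approach as the paper's: Baire category to get a ball inside $\overline{T(B_1^V(0))}$, a recentering step to place that ball at the origin, and a successive-approximation argument using completeness of $V$ to remove the closure. The only cosmetic difference is in the recentering step---you invoke symmetry and convexity of $\overline{T(B_1^V(0))}$, whereas the paper translates by an explicit $T(v_1)$ close to the off-center point---and a harmless slip where your $v$ lands in $\overline{B_2^V(0)}$ rather than the open ball (easily absorbed by enlarging the radius).
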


\begin{proof}
We write $B^V_r(x)$ and $B^W_r(y)$ to denote the open balls of radius $r$
centered at $x \in V$ and $y \in W$, respectively. If we can show that
$T(B_1^V(0))$ contains an open ball centered at the origin, then the
linearity of $T$ establishes the desired result. To this end, we shall
first show that $\overline{T(B_1^V(0))}$ contains an open ball
centered at the origin. Since $T$ is surjective,
\[
 W = \bigcup_{n=1}^\infty T(B_n^V(0)),
\]
whence the Baire category theorem implies the existence of an integer $n_0$
such that $T(B_{n_0}^V(0))$ is \emph{not} nowhere dense. Therefore,
$\overline{T(B_{n_0}^V(0))}$ has a nonempty interior, and so the linearity
of $T$ yields a point $w_0 \in W$ and a real number $\ve>0$ such that
\[
 B^W_\ve(w_0) \subseteq \overline{T(B_1^V(0))}. 
\]

We now fix a point $v_1 \in B_1^V(0)$ such that $w_1 =T(v_1)$ satisfies
the distance estimate $\|w_1 - w_0\| < \ve/2$. If $w \in B_{\ve/2}^W(0)$,
then
\[
 \|(w-w_1)-w_0\| \leq \|w\| + \|w_1-w_0\| < \frac{\ve}{2} + \frac{\ve}{2} = \ve,
\]
and so $w-w_1 \in B^W_\ve(w_0) \subseteq \overline{T(B_1^V(0))}$. Since
$w = T(v_1) + w - w_1$, the linearity of $T$ implies that $w \in
\overline{T(B_2^V(0))}$. Once again, the linearity of $T$ establishes the
inclusion
\[
 B^W_{\ve/4}(0) \subseteq \overline{T(B_1^V(0))},
\]
and our claimed is proved. We remark that we can assume without loss
of generality that
\[
 B^W_1(0) \subseteq \overline{T(B_1^V(0))},
\]
by rescaling $T$ in the above inclusion if necessary. This, in particular,
implies that
\begin{equation}\label{open-lemma}
 B^W_{1/2^n}(0) \subseteq \overline{T(B_{1/2^n}^V(0))}.
\end{equation}
for each $n \in \bN$.

We now show that
\begin{equation}\label{open-result}
 B^W_{1/2}(0) \subseteq T(B_1^V(0)),
\end{equation}
which then establishes the theorem. Fix $w \in B^W_{1/2}(0)$, which
by (\ref{open-lemma}) is in $\overline{T(B_{1/2}^V(0))}$. We can then
find $v_1 \in B_{1/2}^V(0)$ with the distance estimate
$\|w_1 - T(v_1)\| < 2^{-2}$, or, equivalently, the inclusion
\[
w_1 - T(v_1) \in B_{1/2^2}^W(0).
\]
Applying (\ref{open-lemma}) once again, we see that 
$w_1 - T(v_1) \in \overline{T(B_{1/2^2}^V(0))}$, whence we can find
$v_2 \in B_{1/2^2}^v(0)$ such that
\[
 w_1 - T(v_1) - T(v_2) \in B_{1/2^3}^W(0).
\]
Continuing the process, we obtain a sequence $(v_n)_{n=1}^\infty$
of vectors in $V$ such that $\|v_n\| < 2^{-n}$. The sequence
$(v_1 + \cdots + v_N)_{N=1}^\infty$ of partial sums is therefore Cauchy,
and the completeness of $V$ furnishes a limit $v = \sum v_n$ with
the norm estimate
\begin{equation}\label{open-norm}
 \|v\| < \sum_{n=1}^\infty 2^{-n} = 1.
\end{equation}
Now, $T$ is continuous, and
\[
 \left\|w - \sum_{n=1}^N T(v_n) \right\| < 2^{-n-1}
\]
for each $N \in \bN$, whence it follows that $\|w-T(v)\| = 0$, or $w = T(v)$.
Combined with (\ref{open-norm}), this establishes (\ref{open-result}), and
the proof is complete.
\end{proof}
\index{open mapping theorem|)}

The second result concerns closed linear maps between Banach spaces.
An operator $T:V \to W$ between two normed linear spaces $V$ and $W$
is \emph{closed}\index{operator!closed}\index{closed linear maps|see{operator, \\ closed}} if $v_n \to v$ in $V$ and $Tv_n \to w$ implies
$Tv = w$. This is \emph{not} equivalent to the notion of a closed map
in point-set topology, which refers to a function $f:X \to Y$ between
topological spaces $X$ and $Y$ such that $f(U)$ is closed in $Y$ whenever
$U$ is closed in $X$.

\index{closed graph theorem|(}
\begin{theorem}[Closed graph theorem]\label{closed-graph-theorem}
Let $V$ and $W$ be Banach spaces and $T:V \to W$ a linear
transformation. If $T$ is closed, then $T$ is bounded.
\end{theorem}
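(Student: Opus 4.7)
The plan is to deduce the closed graph theorem from the \hyperref[open-mapping-theorem]{open mapping theorem} by exchanging a statement about $T$ for a statement about a projection from the graph of $T$. First I would form the product space $V \times W$ and equip it with the norm $\|(v,w)\| = \|v\|_V + \|w\|_W$; it is routine to check that this turns $V \times W$ into a Banach space, since Cauchy sequences in $V \times W$ split into Cauchy sequences in each factor, whose limits lie in $V$ and $W$ respectively by completeness.

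Next I would consider the graph of $T$,
\[
 \Gamma(T) = \{(v,Tv) : v \in V\} \subseteq V \times W.
\]
Since $T$ is linear, $\Gamma(T)$ is a linear subspace of $V \times W$. The closedness hypothesis on $T$ translates precisely to the statement that $\Gamma(T)$ is a closed subspace: if $(v_n, Tv_n) \to (v,w)$ in $V \times W$, then $v_n \to v$ in $V$ and $Tv_n \to w$ in $W$, so by hypothesis $w = Tv$, whence $(v,w) \in \Gamma(T)$. Being a closed subspace of a Banach space, $\Gamma(T)$ is itself a Banach space under the restriction of the product norm.

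Now I would introduce the two coordinate projections $\pi_1 : \Gamma(T) \to V$ and $\pi_2 : \Gamma(T) \to W$ defined by $\pi_1(v,Tv) = v$ and $\pi_2(v,Tv) = Tv$. Both are linear, and the bounds
\[
 \|\pi_1(v,Tv)\|_V = \|v\|_V \leq \|(v,Tv)\|, \htwo
 \|\pi_2(v,Tv)\|_W = \|Tv\|_W \leq \|(v,Tv)\|
\]
show that $\pi_1$ and $\pi_2$ are bounded. Moreover, $\pi_1$ is a linear bijection between the Banach spaces $\Gamma(T)$ and $V$, so by Theorem \ref{open-mapping-theorem} it is an open map, and hence its inverse $\pi_1^{-1} : V \to \Gamma(T)$, given by $v \mapsto (v, Tv)$, is bounded. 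Composing, I would conclude that
\[
 T = \pi_2 \circ \pi_1^{-1} : V \to W
\]
is bounded, as a composition of bounded linear operators.

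The only delicate point is the application of the open mapping theorem, which requires completeness of both the domain and the codomain of $\pi_1$: this is exactly where the closedness of $\Gamma(T)$ (i.e., the closedness of $T$) is essential, since without it $\Gamma(T)$ need not be a Banach space and the open mapping theorem would not apply. Everything else is a bookkeeping exercise on products and projections, so I do not anticipate any serious obstacle beyond verifying this completeness.
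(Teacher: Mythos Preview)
Your proposal is correct and follows essentially the same approach as the paper: equip $V \times W$ with the sum norm, observe that the graph is a closed (hence complete) subspace, apply the open mapping theorem to the first-coordinate projection to invert it boundedly, and write $T$ as the composition of the second projection with this inverse. The paper's proof is identical in structure and detail, including the same choice of norm and the same identification of where closedness of $T$ is used.
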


Why the name \emph{closed graph}? Recall that the \emph{graph}\index{graph}\index{graph!closed|see{closed graph theorem}}
of a linear map $T:X \to Y$ between two Banach spaces $V$ and $W$
is defined to be the set
\[
 G_T = \{(v,T(v)) \in V \times W : v \in V\}.
\]
If $T$ is closed, then $(v_n,T(v_n)) \to (v,w)$ implies that
$(v_n,T(v_n)) \to (v,Tv)$, which is in $G_T$. Conversely, if
$G_T$ is closed, then $v_n \to v$ and $Tv_n \to w$ implies that
the limit $\lim (v_n,Tv_n) = (v,w)$ must be in $G_T$, whence
$w = Tv$. It follows that the closedness of $T$ is equivalent
to the closedness of its graph $G_T$.

\begin{proof}
We first note that the normed linear space $V \times W$
with the norm $\|(v,w)\|_{V \times W} = \|v\|_V + \|w\|_W$
is a Banach space\footnote{The second half of Proposition
\ref{plus-and-cap-are-well-defined} establishes a strenghtening
of this result for the internal sum $V + W$. To see why
this is a generalization, we recall that the product
$V \times W$ is isomorphic to the direct sum $V \oplus W$,
which equals the internal sum $V + W$ if and only if $V$ and $W$
have a trivial intersection as subspaces $V \oplus W$.} Indeed,
the norm properties are established. If $\{(v_n,w_n)\}_{n=1}^\infty$
is a Cauchy sequence in $V \times W$, then $(v_n)_{n=1}^\infty$ and
$(w_n)_{n=1}^\infty$ are Cauchy in $V$ and $W$, respectively,
and so $v_n \to v$ and $w_n \to w$ for some $v \in V$ and $w \in W$.
It now suffices to note that
\[
 \|(v_n,w_n)-(v,w)\|_{V \times W}
 = \|(v_n-v,w_n-w)\|_{V \times W}
 = \|v_n - v\|_V + \|w_n - w\|_W,
\]
which converges to 0 as $n \to \infty$. It follows that $V \times W$
is a Banach space, whence the closed subspace $G_T$ of $V \times W$
is also a Banach space.

We now consider the projection maps $P_V:G_T \to V$ and $P_W:G_T \to W$
defined by
\[
 P_V(v,Tv) = v \htwo \mbox{and} \htwo P_W(v,Tv) = Tv.
\]
Since $P_V$ is a bijective, bounded linear map, the
\hyperref[open-mapping-theorem]{open mapping theorem}\index{open mapping theorem} implies that
$P_V$ is an open map. This, in particular, shows that the
inverse $P_V^{-1}$ is a bounded linear map. Likewise, $P_W$ is
a bounded linear map, and so the composition
\[
 T = P_W \circ P_V^{-1}
\]
is bounded as well, thereby establishing the theorem.
\end{proof}

We remark that the open mapping theorem can be proven using the
closed graph theorem, thereby establishing the equivalence between
the two results. We shall see an application of the closed graph
theorem in the next subsection. See \S\S\ref{fr-fourier-inversion-problem}
for another important corollary of the Baire category theorem
and its application to the Fourier inversion problem.
\index{closed graph theorem|)}
\index{Banach space|)}

\subsection{Spectral Theory of Self-Adjoint Operators}\label{s-spectral-theory-of-self-adjoint-operators}

\index{operator!unbounded|see{self-adjoint operator}}
We now restrict our attention to Hilbert spaces, on which we can establish
substantial generalizations of many results from finite-dimensional
linear algebra. In particular, we shall focus on the problem of diagonalization
in this subsection.

Recall that an $n$-by-$n$ matrix $A$ with complex entries is \emph{self-adjoint}\index{self-adjoint!matrix}
if $A$ equals the conjugate transpose $A^*$ of $A$, and \emph{unitary}\index{unitary!matrix}
if $A^{-1}$ equals $A^*$. A standard result in linear algebra is that
every self-adjoint matrix $A$ is \emph{unitarily equivalent} to a diagonal matrix,
viz., there exists a diagonal matrix $D$ and a unitary matrix $U$ such that
$D = U^{-1}AU$. Since the columns of a unitary matrix form an orthonormal basis,
it follows that every self-adjoint matrix can be diagonalized with respect to
an orthonormal basis. This is the \emph{spectral theorem}.\index{spectral theorem!on finite-dimensional \\ vector spaces}

The spectral theorem can be generalized considerably. To this end, we first
define the infinite-dimensional generalization of self-adjoint matrices.

\begin{defin}
Let $H$ be a complex Hilbert space. A linear operator $T:H \to H$ is
\emph{self-adjoint}\index{self-adjoint!operator} if
\[
 \langle Tv,w \rangle_H = \langle v, Tw \rangle_H
\]
for all $v,w \in H$.
\end{defin}

We show that self-adjoint operators on Hilbert spaces must be bounded .

\begin{prop}[Hellinger-Toeplitz]\label{hellinger-toeplitz}\index{Hellinger-Toeplitz theorem}\index{self-adjoint!operators are bounded|see{\\ Hellinger-Toeplitz theorem}}
If $T$ is a self-adjoint operator on a Hilbert space $H$, then $T$ is bounded.
\end{prop}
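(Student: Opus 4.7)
The plan is to deduce this from the \hyperref[closed-graph-theorem]{closed graph theorem} (Theorem \ref{closed-graph-theorem}). Since $T$ is already defined on all of $H$ and $H$ is a Banach space, it suffices to show that $T$ is a closed operator; boundedness will then follow immediately. So the task reduces to verifying the closed graph condition: whenever $v_n \to v$ in $H$ and $Tv_n \to w$ in $H$, we must have $Tv = w$.

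To verify this, I would fix an arbitrary $u \in H$ and use the self-adjointness of $T$ together with the continuity of the inner product. Self-adjointness gives
\[
 \langle Tv, u \rangle_H = \langle v, Tu \rangle_H,
\]
and the continuity of $\langle \cdot, Tu \rangle_H$ (as a bounded linear functional, by Cauchy-Schwarz) lets me pass the limit through:
\[
 \langle v, Tu \rangle_H = \lim_{n \to \infty} \langle v_n, Tu \rangle_H
 = \lim_{n \to \infty} \langle Tv_n, u \rangle_H
 = \langle w, u \rangle_H,
\]
where the middle equality again invokes self-adjointness. Hence $\langle Tv - w, u \rangle_H = 0$ for every $u \in H$, which forces $Tv = w$ upon taking $u = Tv - w$. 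This establishes that the graph of $T$ is closed, and Theorem \ref{closed-graph-theorem} then delivers the boundedness of $T$.

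There is no serious obstacle here; the entire content of the argument is packaged into the closed graph theorem, and self-adjointness is precisely the structural hypothesis needed to exchange the limit with the action of $T$ under the inner product. The one subtlety worth flagging is that in more general settings (for instance, operators densely defined on $H$ rather than defined on all of $H$), the Hellinger-Toeplitz conclusion fails and one instead arrives at the dichotomy that symmetric operators are either bounded or not everywhere-defined — but since Definition of self-adjointness as stated here already requires $T$ to be defined on all of $H$, this subtlety does not arise.
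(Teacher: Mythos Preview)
Your proof is correct and follows essentially the same approach as the paper: both verify the closed-graph condition by using self-adjointness to pass $T$ across the inner product, take limits via continuity of the inner product, and then invoke the closed graph theorem. The only differences are cosmetic (variable names and the order in which the two applications of self-adjointness are presented).
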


\begin{proof}
Fix vectors $v,w,v_1,v_2,\ldots,v_n,\ldots$ in $H$ such that
$v_n \to v$ and $Tv_n \to w$. By the self-adjointness of $T$, we have
\[
 \langle Tv_n,x \rangle_H = \langle v_n, Tx \rangle_H
\]
for each $n \in \bN$ and every $x \in H$, whence taking the limit yields
\[
 \langle w,x \rangle_H = \langle v, Tx \rangle_H.
\]
Applying the self-adjointness of $T$ once again, we have the identity
\[
 \langle w,x \rangle_H = \langle Tv, x \rangle_H
\]
for all $x \in H$. Therefore,
\[
 \langle w - Tv , x \rangle_H = 0
\]
for all $x \in H$, and, in particular,
\[
 \langle w - Tv, w- Tv \rangle_H = \|w- Tv\|^2_H = 0.
\]
It follows that $w = Tv$, and so $T$ is closed.
We now invoke the closed graph theorem (Theorem \ref{closed-graph-theorem})\index{closed graph theorem}
to conclude that $T$ is bounded.
\end{proof}

Note, however, that there is nothing ``topological'' about the definition
of self-adjointness, and so we seek to generalize the definition to
unbounded operators. In light of the above proposition, we are forced to
define unbounded self-adjoint operators only on proper subspaces of the
Hilbert space in question.

\begin{defin}
Let $H_1$ and $H_2$ be complex Hilbert spaces,
$D$ a dense subspace of $H_1$,
and $T:D \to H_2$ a linear operator. We define $D^*$ to be the
collection of all $w \in H_2$ such that, for each $v \in H_1$,
there exists a vector $w^* \in H_1$ satisfying the identity
\[
 \langle Tv,w \rangle_{H_2} = \langle v, w^* \rangle_{H_1}.
\]
The \emph{adjoint}\index{adjoint}\index{operator!adjoint of an|see{adjoint}} of $T$ is the operator $T^*:D^* \to H_1$
defined to be
\[
 T^*w = w^*
\]
at each $w \in D^*$, so that
\[
 \langle Tv,w \rangle = \langle v, T^*w \rangle
\]
for each $v \in D$ and every $w \in D^*$. $T$ is \emph{self-adjoint}\index{self-adjoint!operator}\index{operator!self-adjoint|see{self-adjoint}}
if $D = D^*$ and $T = T^*$.
\end{defin}

A remark is in order. The density of $D$ guarantees that
there is only one $w^*$ for each $w \in D$, whence $T^*w$ is
unambiguously defined. Of course, $D^*$ is a linear subspace of
$H_2$, and $T^*$ a linear operator. Henceforth, we shall
write $\mc{D}(T)$ to denote $D$, and $\mc{D}(T^*)$ to denote $D^*$.
Furthermore, we shall speak of an unbounded operator $T$
\emph{on} $H_1$, although it is defined only on $\mc{D}(T)$.

Let us now return to the task at hand. A \emph{unitary operator}\index{unitary!operator}\index{operator!unitary|see{unitary}}
on a Hilbert space $H_1$ into another Hilbert space $H_2$
is a bounded linear operator $U:H_1 \to H_2$
such that $T^{-1} = T^*$. The spectral theorem for linear operators
on \emph{separable} Hilbert spaces can now be stated as follows:

\index{spectral theorem!on separable Hilbert spaces|(}
\begin{theorem}[Spectral theorem]\label{spectral-theorem}
Let $T$ be an unbounded self-adjoint operator on a separable
Hilbert space $H$. There exists a measure space $(X,\mf{M},\mu)$,
a unitary operator $U:L^2(X,\mu) \to H$, and a real-valued
$\mu$-measurable function $a$ on $X$ such that
\[
 U^{-1}TU u(x) = a(x) u(x)
\]
for all $Uu \in \mc{D}(T)$. Furthermore, $Uu \in \mc{D}(T)$
if and only if $au \in L^2(X,\mu)$. 
\end{theorem}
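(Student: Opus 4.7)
The plan is to reduce the unbounded case to a multiplication-operator model for a unitary operator via the Cayley transform, and then handle the unitary case by combining the continuous functional calculus with a cyclic-vector decomposition of $H$. This two-step strategy isolates the delicate domain issues into a single bookkeeping step at the end.

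First I would introduce the Cayley transform $V = (T - iI)(T + iI)^{-1}$. The self-adjointness of $T$ gives the identity $\|(T \pm iI)x\|^2 = \|Tx\|^2 + \|x\|^2$ for $x \in \mc{D}(T)$, so $T + iI$ is injective with closed range; density of the range uses the stronger hypothesis $T = T^*$ (as opposed to mere symmetry). Thus $T + iI: \mc{D}(T) \to H$ is a bijection, and one checks that $V$ is a unitary operator on $H$ with $I - V$ injective and of dense range, and that $T$ is recovered by $T = i(I + V)(I - V)^{-1}$ with $\mc{D}(T) = \operatorname{Ran}(I - V)$. This reduces the theorem to finding a multiplication-operator representation for $V$, because multiplication by $b$ on $L^2(X, \mu)$ with $|b| = 1$ pulls back to multiplication by the real-valued function $a = i(1 + b)/(1 - b)$.

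Next I would set up the spectral theorem for the unitary operator $V$. After establishing $\sigma(V) \subseteq \bT$ via Theorem \ref{spectrum-is-nonempty} applied to $V$ and $V^*$, one builds the continuous functional calculus by defining $p(V, V^*)$ for Laurent polynomials and extending via the Stone-Weierstrass theorem to an isometric $*$-homomorphism $\Phi: \ms{C}(\sigma(V)) \to \ms{L}(H)$. For each $v \in H$, the functional $f \mapsto \langle \Phi(f) v, v \rangle$ is positive and bounded on $\ms{C}(\sigma(V))$, so the Riesz representation theorem for positive functionals on $\ms{C}$ (Riesz-Markov) yields a finite Borel measure $\mu_v$ on $\sigma(V)$. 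The map $f \mapsto \Phi(f) v$ is then an $L^2(\mu_v)$-isometry whose closure furnishes a unitary $U_v: L^2(\sigma(V), \mu_v) \to H_v$, where $H_v$ is the closed cyclic subspace generated by $v$, intertwining multiplication by $z$ with $V|_{H_v}$. A Zorn's-lemma argument, made an ordinary countable induction by the separability of $H$, decomposes $H$ as an orthogonal direct sum $\bigoplus_n H_{v_n}$ of cyclic subspaces; concatenating gives a unitary $U: L^2(X, \mu) \to H$ with $X = \bigsqcup_n \sigma(V)$ and $\mu = \bigsqcup_n \mu_{v_n}$, under which $V$ becomes multiplication by a measurable function $b$ with $|b| = 1$ almost everywhere.

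Finally, transferring the identity $T = i(I + V)(I - V)^{-1}$ under $U$ yields multiplication by the real-valued function $a(x) = i(1 + b(x))/(1 - b(x))$, and the condition $Uu \in \mc{D}(T) = \operatorname{Ran}(I - V)$ unwinds to $au \in L^2(X, \mu)$. The hard part will be the unbounded step at either end of this chain: proving that $T + iI$ is genuinely surjective uses $T = T^*$ in an essential way, and matching the algebraically-defined domain $\operatorname{Ran}(I - V)$ with the analytic condition $au \in L^2$ after the transfer is the delicate piece that makes the domain clause of the theorem come out correctly. A secondary technical obstacle is establishing the $C^*$-identity $\|p(V)\| = \sup_{z \in \sigma(V)} |p(z)|$ needed for the Stone-Weierstrass extension, which relies on a spectral-radius computation beyond what the paper has developed and would have to be inserted as a lemma.
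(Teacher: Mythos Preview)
The paper does not prove this theorem at all: immediately after the statement it says ``The proof is rather elaborate and requires heavy machinery, so we omit it,'' and refers the reader to Taylor, Lax, Rudin, and Yosida. So there is nothing in the paper to compare your argument against.

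That said, your outline is a standard and sound route---essentially the Cayley-transform approach found in Rudin's \emph{Functional Analysis}, Chapter~13, one of the cited references. The reduction to a unitary $V$, the cyclic decomposition combined with the Riesz--Markov measures $\mu_v$, and the back-transfer via $a = i(1+b)/(1-b)$ are all correct in spirit, and you correctly flag the two genuinely delicate points (surjectivity of $T+iI$ requiring $T=T^*$ rather than mere symmetry, and the domain bookkeeping at the end). Two small quibbles: Theorem~\ref{spectrum-is-nonempty} only gives $\sigma(V)\neq\varnothing$, not $\sigma(V)\subseteq\bT$; for the latter you need the elementary observation that $\|V\|=\|V^{-1}\|=1$ forces the spectrum into the unit circle. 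And when you write $a=i(1+b)/(1-b)$ you are implicitly using that $b\neq 1$ almost everywhere, which follows from the injectivity of $I-V$ you already noted but deserves a sentence.
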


This formulation of the spectral theorem is that of
Theorem 1.7 in Chapter 8 of \cite{Michael_E_Taylor:B2010-2}.\index{Taylor, Michael E.}
The proof is rather elaborate and requires heavy machinery,
so we omit it. See Chapter 8 of \cite{Michael_E_Taylor:B2010-2},\index{Taylor, Michael E.}
Chapter 32 of \cite{Peter_D_Lax:B2002},\index{Lax, Peter D.} Chapter 13 of
\cite{Walter_Rudin:B1991},\index{Rudin, Walter} or Chapter XI of \cite{Kosaku_Yosida:B1980}\index{Yosida, Kosaku}
for an exposition of spectral theory of unbounded operators.
For our purposes, it suffices to consider an extension of
the spectral theory of bounded operators on Banach spaces
discussed in \S\S\ref{s-spectra-of-operators-on-banach-spaces}.

The \emph{resolvent set}\index{resolvent set!of a self-adjoint operator} $\rho(T)$ of a self-adjoint operator
$T:H_1 \to H_2$ between two complex Hilbert spaces $H_1$ and
$H_2$ is the collection of all complex numbers $\lambda$ such
that $T-\lambda I$ is a bijective map from $\mc{D}(T)$ onto $H_2$.
The \emph{spectrum}\index{spectrum!of a self-adjoint operator} $\sigma(T)$ of $T$ is the set
$\bC \smallsetminus \rho(T)$. It can be shown\footnote{See,
for example, Theorem 5 in Chapter 31 of \cite{Peter_D_Lax:B2002}.}
that the spectrum of a self-adjoint operator consists of
real numbers. This framework allows us to extend Definition
\ref{positive-and-negative-operators} and consider positive
or negative unbounded self-adjoint operators.\index{operator!positive and negative} We shall prove
a theorem about positive unbounded self-adjoint operators in
\S\S\ref{s-interpolation-of-hilbert-spaces}.
\index{spectral theorem!on separable Hilbert spaces|)}
 
\section{The Complex Interpolation Method}\label{s-the_complex_interpolation_method}

 \index{Calder\'{o}n, Alberto P.!interpolation|see{complex interpolation}}
\index{complex interpolation|(}
In this section, we study the basics of
Alberto Calder\'{o}n's complex method of interpolation,
following \cite{Alberto_P_Calderon:J1964}, \cite{Bergh_Lofstrom:B1976},\index{Bergh, J\"{o}ran}\index{L\"{o}fstr\"{o}m, J\"{o}rgen}
and \cite{Michael_E_Taylor:B2010}.\index{Taylor, Michael E.}
Calder\'{o}n's theory serves as a turning point for the development of interpolation
theory, providing a new abstract framework that allows
the theory to grow beyond the realm of classical harmonic analysis.
Despite its prevalence in standard expositions
of modern interpolation theory, the language of category theory
is avoided in this section. See \S\S\ref{fr-abstract-interpolation-theory}
for a brief sketch of the categorical formulation.

\subsection{Complex Interpolation}

The main novelty of the theory is the shift in focus from
interpolation of \emph{operators} to interpolation of \emph{spaces}.
Take the Fourier transform operator, for example. 
We have used the Riesz-Thorin interpolation theorem (Theorem \ref{riesz-thorin})\index{Riesz-Thorin interpolation \\ theorem}
to the $L^1$ Fourier transform and the $L^2$ Fourier transform to obtain
a new Fourier transform operator on, say, $L^{1.5}$. It can also be said,
however, that we have determined $L^{1.5}$ to be an ``interpolation space''
between $L^1$ and $L^2$. To make this notion precise, we first pick out
the pairs of spaces which we can interpolate.

\begin{defin}\label{banach-couples}\index{Banach couple}
A (complex) \emph{Banach couple} is an order pair $(B_0,B_1)$ of (complex) Banach spaces
such that both $A_0$ and $A_1$ are continuously embedded into a Hausdorff
topological vector space $V$.\index{topological vector space}
\end{defin}

Recall from \S\ref{s-the_fourier_transform} that we have defined the
$L^p$ Fourier transform by defining the $L^1+L^2$ Fourier transform\index{Fourier transform!on L1 + L2@on $L^1+L^2$|(}
operator and restricting it onto the ``interpolation spaces'' $L^p$.
Had $L^1+L^2$ not be well-defined, this line of reasoning would have been
nonsensical. The continuous-embedding criterion guarantees that $B_0+B_1$
is well-defined, as we shall see in Proposition \ref{plus-and-cap-are-well-defined}
below.

We also recall that Proposition \ref{norm-estimates-of-intermediate-lp-spaces}
provided another way of defining the $L^p$ Fourier transform: namely,
extending via Theorem \ref{norm-preserving-extension} the Fourier transform
on $L^1 \cap L^2$. This extension, moreover, agreed with the restriction of
the $L^1 + L^2$ Fourier transform.\index{Fourier transform!on L1 + L2@on $L^1+L^2$|)}

We would like to model our abstract framework on the two modes of interpolation
discussed above. This, above all, requires the two spaces $B_0 + B_1$ and
$B_0 \cap B_1$ to be well-defined and well-behaved, which we establish promptly.

\begin{prop}\label{plus-and-cap-are-well-defined}
If $(B_0,B_1)$ is a Banach couple, then $B_0 \cap B_1$ is a Banach space with
the norm
\[
 \|v\|_{B_0 \cap B_1} = \max\{\|v\|_{B_0} , \|v\|_{B_1}\},
\]
and $B_0 + B_1$ is a Banach space with the norm 
\[
 \|v\|_{B_0 + B_1} = \inf_{v = v_0 + v_1} \|v_0\|_{B_0} + \|v_1\|_{B_1}.
\]
Furthermore, both $B_0 \cap B_1$ and $B_0 + B_1$ are continuously embedded
into the ambient Hausdorff topological vector space.
\end{prop}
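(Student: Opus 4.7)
The plan is to treat the two spaces in parallel, handling the norm axioms, continuous embedding into the ambient space $V$, and completeness, in that order. Throughout, let $\iota_0\colon B_0\hookrightarrow V$ and $\iota_1\colon B_1\hookrightarrow V$ denote the given continuous embeddings, with operator bounds $c_0$ and $c_1$, so that $\|\iota_j v\|_V\le c_j\|v\|_{B_j}$. I will identify $B_0$, $B_1$ with their images in $V$, so $B_0\cap B_1$ and $B_0+B_1$ make unambiguous sense as linear subspaces of $V$.

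For $B_0\cap B_1$, the claimed formula $\|v\|_{B_0\cap B_1}=\max\{\|v\|_{B_0},\|v\|_{B_1}\}$ is a norm by elementary manipulation (the maximum of two norms is a norm), and the inequality $\|v\|_V\le\max\{c_0,c_1\}\|v\|_{B_0\cap B_1}$ gives the continuous embedding into $V$. For completeness, I would take a Cauchy sequence $(v_n)$ in $B_0\cap B_1$; it is Cauchy in each $B_j$ separately, hence converges to some $w_j\in B_j$ in the $B_j$-norm. By continuity of $\iota_j$, both $w_0$ and $w_1$ are the limit of $(v_n)$ in $V$, and since $V$ is Hausdorff the limit is unique, so $w_0=w_1=:v\in B_0\cap B_1$, and $v_n\to v$ in the max norm by taking the max of the two convergences.

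For $B_0+B_1$, subadditivity and positive homogeneity of $\|\cdot\|_{B_0+B_1}$ follow by concatenating decompositions. The delicate point is that $\|v\|_{B_0+B_1}=0$ forces $v=0$: pick decompositions $v=v_0^{(n)}+v_1^{(n)}$ with $\|v_0^{(n)}\|_{B_0}+\|v_1^{(n)}\|_{B_1}\to 0$; then $\iota_0 v_0^{(n)}+\iota_1 v_1^{(n)}\to 0$ in $V$, but this sum equals $v$ for every $n$, so $v=0$ because $V$ is Hausdorff. The embedding into $V$ is then immediate: $\|v\|_V\le\|\iota_0 v_0\|_V+\|\iota_1 v_1\|_V\le\max\{c_0,c_1\}(\|v_0\|_{B_0}+\|v_1\|_{B_1})$ for any decomposition, so taking the infimum gives $\|v\|_V\le\max\{c_0,c_1\}\|v\|_{B_0+B_1}$.

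The main obstacle is completeness of $B_0+B_1$, which I would handle by the standard telescoping trick used to prove that absolute convergence implies convergence in a Banach space. Given a Cauchy sequence $(v_n)$ in $B_0+B_1$, extract a subsequence $(v_{n_k})$ with $\|v_{n_{k+1}}-v_{n_k}\|_{B_0+B_1}<2^{-k}$ and, for each $k$, choose a decomposition $v_{n_{k+1}}-v_{n_k}=x_k+y_k$ with $\|x_k\|_{B_0}+\|y_k\|_{B_1}<2^{-k}$. Then $\sum x_k$ and $\sum y_k$ converge absolutely in $B_0$ and $B_1$ respectively (this uses that each $B_j$ is Banach), say to $x$ and $y$. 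A further decomposition $v_{n_1}=a+b$ with $a\in B_0$, $b\in B_1$ gives a candidate limit $v=(a+x)+(b+y)\in B_0+B_1$, and the partial-sum estimate yields $\|v_{n_K}-v\|_{B_0+B_1}\le\sum_{k\ge K}2^{-k}\to 0$. Finally, the Cauchy property of the original sequence upgrades subsequential convergence to full convergence, completing the proof.
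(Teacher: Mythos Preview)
Your argument for $B_0\cap B_1$ is the same as the paper's. For $B_0+B_1$ you take a genuinely different route on completeness: the paper simply asserts that a Cauchy sequence $(v_n)$ in $B_0+B_1$ can be decomposed as $v_n=w_n+x_n$ with $(w_n)$ Cauchy in $B_0$ and $(x_n)$ Cauchy in $B_1$, without explaining why such decompositions exist. Your telescoping/absolutely-summable-series argument is exactly what is needed to justify that assertion, so your proof is in fact the more complete one here. You also spell out positive-definiteness of $\|\cdot\|_{B_0+B_1}$, which the paper waves away as ``clear.'' For the continuous embedding of $B_0+B_1$ into $V$, the paper appeals to the gluing lemma on closed sets, which is a somewhat awkward fit; your direct estimate is cleaner.

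One genuine slip: you introduce a norm $\|\cdot\|_V$ and operator bounds $c_0,c_1$, but the ambient space $V$ is only assumed to be a Hausdorff topological vector space, not normed. This does not damage the substance of your argument. For positive-definiteness, replace the norm estimate by: $v_0^{(n)}\to 0$ in $B_0$ and $v_1^{(n)}\to 0$ in $B_1$ imply (by continuity of $\iota_0,\iota_1$ and of addition in $V$) that $v=v_0^{(n)}+v_1^{(n)}\to 0$ in $V$, hence $v=0$ since $V$ is Hausdorff. For the continuous embedding $B_0+B_1\hookrightarrow V$, argue similarly: if $v_n\to v$ in $B_0+B_1$, pick decompositions $v_n-v=a_n+b_n$ with $\|a_n\|_{B_0}+\|b_n\|_{B_1}\to 0$; then $a_n\to 0$ and $b_n\to 0$ in $V$, so $v_n\to v$ in $V$. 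The same remark applies to your embedding estimate for $B_0\cap B_1$.
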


\begin{proof}
We first show that $B_0 \cap B_1$ is a Banach space.
It is easy to check that $\|\cdot\|_{B_0 \cap B_1}$ is a norm.
If $(v_n)_{n=1}^\infty$ is a Cauchy
sequence in $B_0 \cap B_1$, then we can find $v \in B_0$ and $v' \in B_1$
such that $\|v_n - v\|_{B_0} \to 0$ and $\|v_n - v'\|_{B_1} \to 0$ as $n \to \infty$.
Norm convergences in $B_0$ and $B_1$ must agree with convergence in
the topology of the ambient Hausdorff topological vector space, whence the limit
must be unique. Therefore, $v$ equals $v'$ and is consequently in $B_0 \cap B_1$.
Furthermore, it is now evident that $(v_n)_{n=1}^\infty$ converges to $v$ in
the norm topology of $B_0 \cap B_1$, thus establishing the completeness of
$\|\cdot\|_{B_0 \cap B_1}$.

We now turn to $B_0 + B_1$. Clearly, $\|\cdot\|_{B_0 + B_1}$ is a norm.
If $(v_n)_{n=1}^\infty$ is a Cauchy sequence in $B_0 + B_1$, then
we can find sequences $(w_n)_{n=1}^\infty$ and $(x_n)_{n=1}^\infty$ in
$B_0$ and $B_1$, respectively, such that $v_n = w_n + x_n$ for each $n \in \bN$.
Since $(w_n)_{n=1}^\infty$ and $(x_n)_{n=1}^\infty$ are Cauchy sequences in
$B_0$ and $B_1$, respectively, we can find $w \in B_0$ and $x \in B_1$
such that $\|w_n - w\|_{B_0} \to 0$ and $\|x_n - x\|_{B_1} \to 0$
as $n \to \infty$. It now suffices to observe that
\[
 \lim_{n \to \infty} \|v_n - (w+x)\|_{B_0 + B_1}
 \leq \lim_{n \to \infty} \|v_n - w_n\|_{B_0} + \lim_{n \to \infty} \|v_n-x_n\|_{B_1}
 = 0,
\]
whence $B_0+B_1$ is complete.

We now let $V$ be the ambient space in which $B_0$ and $B_1$ are continuously embedded.
We can take the embedding $B_0 \cap B_1 \hookrightarrow V$ to be the restriction
of either the embedding $B_0 \hookrightarrow V$ or the embedding $B_1 \hookrightarrow
V$ on $B_0 \cap B_1$. As for $B_0 + B_1$ we note that both $B_0$ and $B_1$ can be
identified with complete and thus closed subsets of $V$,
whence the standard gluing lemma\footnote{See, for example, Theorem 18.3 in
\cite{James_R_Munkres:B2000}.\index{Munkres, James R.}} of point-set topology applied to
the embeddings $B_0 \hookrightarrow V$ and $B_1 \hookrightarrow V$ furnishes a
continuous embedding of $B_0 + B_1$ into $V$.
\end{proof}

We now set out to generalize the \hyperref[riesz-thorin]{Riesz-Thorin interpolation theorem}
in the new framework.
Let us recall that the proofs of the interpolation theorems involved placing the
two endpoint operators on the two boundaries of the strip
\[
 S = \{z \in \bC : 0 \leq \Re z \leq 1\}
\]
and examining what happens in the middle. This was done by encoding the operators
in a function that is continuous on $S$, holomorphic in the interior of $S$, and
suitably bounded on the two boundaries of the strip, and then establishing the
intermediate bound via Hadamard's three-lines theorem (Theorem \ref{hadamard}).\index{Hadamard's three-lines theorem}
The key is to leave behind
the complex-valued functions on $S$, 
and to consider instead the Banach-valued functions
on $S$. The same argument will then produce new \emph{spaces}---as opposed to
\emph{operators}---in the middle of the strip.

\begin{defin}\label{space-generating-function}
A \emph{space-generating function}\footnote{This is not a standard notion
and will not be used beyond this subsection.} for a complex Banach couple
$(B_0,B_1)$ is a function $f:S \to B_0+B_1$ such that
\begin{enumerate}[(a)]
 \item $f$ is continuous and bounded on $S$ with respect to the norm of
 $B_0+B_1$;
 \item $f$ is holomorphic in the interior of $S$, as per the definition of
 holomorphicity in \S\S\ref{complex-analysis-of-banach-valued-functions};
 \item $f$ maps into $B_0$ and is continuous with respect to the norm
 of $B_0$ on the line $\Re z = 0$ and decays to zero as $|\Im z| \to \infty$
 on this line;
 \item $f$ maps into $B_1$ and is continuous with respect to the norm
 of $B_1$ on the line $\Re z = 1$ and decays to zero as $|\Im z| \to \infty$
 on this line.
\end{enumerate}
The collection of all space-generating functions for $(B_0,B_1)$ is denoted
by $\mc{F}(B_0,B_1)$.
\end{defin}

The interpolation spaces, which we shall define in due course, will be
subspaces of $B_0+B_1$ isomorphic to a quotient space of $\mc{F}(B_0,B_1)$.
We first check that $\mc{F}(B_0,B_1)$ is indeed a well-behaved space of functions.

\begin{prop}
$\mc{F}(B_0,B_1)$ is a Banach space, with the norm
\[
 \|f\|_{\mc{F}} = \max\left\{
 \sup_{-\infty < y < \infty} \|f(iy)\|_{B_0},
 \sup_{-\infty < y < \infty} \|f(1+iy)\|_{B_1}
 \right\}.
\]
\end{prop}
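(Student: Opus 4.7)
My plan is to verify the norm properties first and then prove completeness, in both cases exploiting Hadamard's three-lines theorem (Theorem \ref{hadamard}) through the scalar-valued functions $z \mapsto l(f(z))$ obtained by composing with continuous linear functionals $l$ on $B_0 + B_1$. The key observation is that both embeddings $B_0 \hookrightarrow B_0 + B_1$ and $B_1 \hookrightarrow B_0 + B_1$ are continuous by Proposition \ref{plus-and-cap-are-well-defined}, so every $l \in (B_0+B_1)^*$ restricts to a bounded linear functional on $B_j$, meaning $l \circ f$ is a scalar holomorphic function on the interior of $S$, continuous and bounded on $S$, with $|l(f(iy))| \leq \|l\| \cdot \|f(iy)\|_{B_0}$ on the left edge and analogously on the right.

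For the norm axioms, finiteness of $\|f\|_{\mc{F}}$ is immediate from conditions (c) and (d) together with the uniform boundedness on $S$ from condition (a). Homogeneity and the triangle inequality are routine. For positive definiteness, if $\|f\|_{\mc{F}} = 0$ then $f(iy) = 0$ and $f(1+iy) = 0$ for all $y \in \bR$, so for any $l \in (B_0+B_1)^*$, Hadamard's three-lines theorem applied with $k_0 = k_1 = 0$ forces $l(f(z)) = 0$ throughout $S$; by Corollary \ref{extension-of-linear-functionals-on-banach-spaces}, this yields $f \equiv 0$.

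For completeness, suppose $(f_n)$ is Cauchy in $\mc{F}$. The boundary restrictions $(y \mapsto f_n(iy))$ and $(y \mapsto f_n(1+iy))$ are uniformly Cauchy in $B_0$ and $B_1$ respectively, so they converge uniformly to continuous limit functions $g_0 \colon \bR \to B_0$ and $g_1 \colon \bR \to B_1$. I would then apply the three-lines theorem to $l \circ (f_n - f_m)$ for arbitrary $l \in (B_0 + B_1)^*$ with $\|l\| \leq 1$, taking the supremum over such $l$ via the Hahn-Banach corollary to obtain
\[
 \sup_{z \in S} \|f_n(z) - f_m(z)\|_{B_0 + B_1} \leq \max\{\|f_n - f_m\|_\text{left},\, \|f_n - f_m\|_\text{right}\}.
\]
Hence $(f_n)$ is uniformly Cauchy on $S$ in the $B_0 + B_1$ norm, converging uniformly to some bounded continuous $f \colon S \to B_0 + B_1$ whose boundary values agree with $g_0$ and $g_1$.

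It remains to verify that $f \in \mc{F}(B_0,B_1)$ and $\|f_n - f\|_{\mc{F}} \to 0$. The latter is immediate from uniform convergence of the boundary values in the appropriate norms. Decay of $f(iy)$ in $B_0$ as $|y| \to \infty$ follows from uniform $B_0$-convergence combined with the same property for each $f_n$, and likewise on the right edge. The step I expect to require the most care is verifying holomorphicity of $f$ in the interior of $S$: for each $l \in (B_0 + B_1)^*$, the scalar functions $l \circ f_n$ are holomorphic on the interior of $S$ and converge to $l \circ f$ uniformly on $S$, so by the classical Weierstrass theorem $l \circ f$ is holomorphic, which by definition means $f$ is $B_0 + B_1$-valued holomorphic as required. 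This completes the plan.
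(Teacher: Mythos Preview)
Your proposal is correct and follows essentially the same route as the paper: apply the three-lines theorem to bound $\|f_n(z)-f_m(z)\|_{B_0+B_1}$ by $\|f_n-f_m\|_{\mc{F}}$, deduce uniform convergence on $S$, and then verify that the limit inherits continuity, boundedness, holomorphicity, and the boundary decay conditions. The only difference is presentational: the paper applies the three-lines estimate directly to the norm-valued expression, while you make the reduction to scalar functions $l\circ f$ explicit via Hahn--Banach; since the paper's definition of Banach-valued holomorphicity is itself through linear functionals, your version is arguably the more faithful unpacking of the same argument.
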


\begin{proof}
It is trivial to check that $\|\cdot\|$ is a norm, so it suffices
to check that $\mc{F}(B_0,B_1)$ is complete. To this end,
we suppose that $(f_n)_{n=1}^\infty$ is a Cauchy sequence in
$\mc{F}(B_0,B_1)$. For each $z = x + i y$ in the strip $S$,
the \hyperref[hadamard]{three-lines lemma} provides the estimate
\begin{eqnarray*}
 \|f_n(z) - f_m(z)\|_{B_0+B_1}
 &\leq& \max \left\{ \sup_{y} \|f(iy)\|_{B_0+B_1},
 \sup_{y} \|f(1+iy)\|_{B_0+B_1}
 \right\} \\
 &\leq& \|f_n - f_m\|_{\mc{F}},
\end{eqnarray*}
whence $(f_n)_{n=1}^\infty$ converges uniformly to a function
$f \in B_0+B_1$. By the uniformity, the function $f$ is
continuous and bounded on $S$ and holomorphic in the
interior of $S$.

Since the $B_0+B_1$ norm is determined by the maximum
of the $B_0$ norm and the $B_1$ norm, we see that
$(f_n)_{n=1}^\infty$ converges uniformly to a limit
$g_0$ in $B_0$ and another limit $g_1$ in $B_1$. The Hausdorff
condition of the ambient topological vector space guarantees that
$f = g_0 = g_1$. The uniformity once again guarantees that
the conditions (c) and (d) in Definition \ref{space-generating-function}
are satisfied, whence $f$ is in $\ms{F}(B_0,B_1)$. It now follows
from the definition of the $\mc{F}$-norm that $\|f_n - f\|_{\mc{F}} \to 0$,
and our claim is established.
\end{proof}

We are now ready to give the main definition of the section.

\begin{defin}\index{interpolation space!complex}\index{interpolation space}
Let $(B_0,B_1)$ be a complex Banach couple and fix $\theta \in [0,1]$. The
\emph{complex interpolation space of order $\theta$} between $B_0$ and
$B_1$ is the normed linear subspace
\[
 B_\theta = [B_0,B_1]_\theta
 = \{v \in B_0+B_1 : v = f(\theta) \mbox{ for some } f \in \mc{F}(B_0,B_1)\}
\]
of $B_0+B_1$, with the norm
\[
 \|v\|_\theta = \|v\|_{B_\theta}
 = \inf_{\substack{ f \in \ms{F} \\ f(\theta) = v}} \|f\|_{\mc{F}}.
\]
\end{defin}

We first check that the interpolation spaces are well-behaved.
For this purpose, let us recall that the quotient norm on the quoient space
$B/N$ of a Banach space $B$ is given by
\[
 \|[v]\|_{B/N} = \inf_{w \in [v]} \|w\|_{B}
 = \inf_{w \in N} \|v+w\|_{B}. 
\]
It is a standard result that the closedness of $N$ guarantees the completeness
of the quotient norm,\index{norm!quotient} thereby turning $B/N$ into a Banach space.\index{quotient space}\index{quotient space!norm on a|see{norm, quotient}}

\begin{prop}
For each $\theta \in [0,1]$, the interpolation space $[B_0,B_1]_\theta$ is
isometrically isomorphic to the quotient Banach space
$\mc{F}(B_0,B_1)/\mc{N}_\theta$, where $\mc{N}_\theta$ is the
subspace of $\mc{F}(B_0,B_1)$ consisting of all functions $f \in
\mc{F}(B_0,B_1)$ such that $f(\theta) = 0$. 
\end{prop}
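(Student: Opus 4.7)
The plan is to exhibit the isomorphism as being induced by the evaluation map
\[
 \mathrm{ev}_\theta : \mc{F}(B_0,B_1) \to B_0 + B_1, \quad f \mapsto f(\theta),
\]
and then to check that this map is bounded, surjective onto $[B_0,B_1]_\theta$, and has kernel $\mc{N}_\theta$, with the induced norm on the quotient matching $\|\cdot\|_\theta$ by construction.

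First I would verify that $\mathrm{ev}_\theta$ is a bounded linear map from $\mc{F}(B_0,B_1)$ into $B_0 + B_1$. Linearity is immediate, and boundedness is the one piece of analytic input: for any $f \in \mc{F}(B_0,B_1)$ and any bounded linear functional $l$ on $B_0+B_1$, the scalar function $l \circ f$ is continuous and bounded on $S$ and holomorphic in the interior of $S$, with $|(l\circ f)(iy)| \le \|l\|\cdot\|f(iy)\|_{B_0+B_1}$ and similarly on the line $\Re z = 1$. An application of Hadamard's three-lines theorem (Theorem \ref{hadamard}) then gives $|(l\circ f)(\theta)| \le \|l\| \cdot \|f\|_{\mc{F}}$, and taking the supremum over $l$ in the unit ball of $(B_0+B_1)^*$ (using Corollary \ref{extension-of-linear-functionals-on-banach-spaces}) yields $\|f(\theta)\|_{B_0+B_1} \leq \|f\|_{\mc{F}}$.

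Next I would read off the image and kernel directly from the definitions: the image of $\mathrm{ev}_\theta$ is precisely $[B_0,B_1]_\theta$ by definition of the interpolation space, and its kernel is precisely $\mc{N}_\theta$ by definition of $\mc{N}_\theta$. Since $\mathrm{ev}_\theta$ is continuous, $\mc{N}_\theta$ is a closed subspace of the Banach space $\mc{F}(B_0,B_1)$, so the quotient $\mc{F}(B_0,B_1)/\mc{N}_\theta$ is itself a Banach space. The usual first isomorphism theorem then produces an algebraic isomorphism $\Phi : \mc{F}(B_0,B_1)/\mc{N}_\theta \to [B_0,B_1]_\theta$ sending $[f] \mapsto f(\theta)$.

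Finally, the isometry statement is essentially tautological once the above is in place. On the one hand, the quotient norm is
\[
 \|[f]\|_{\mc{F}/\mc{N}_\theta}
 = \inf_{g \in \mc{N}_\theta} \|f + g\|_{\mc{F}}
 = \inf\{ \|h\|_{\mc{F}} : h \in \mc{F}(B_0,B_1),\ h(\theta) = f(\theta)\},
\]
which is exactly the definition of $\|f(\theta)\|_\theta$. Hence $\|\Phi([f])\|_\theta = \|[f]\|_{\mc{F}/\mc{N}_\theta}$, so $\Phi$ is an isometric isomorphism. The only nontrivial step is the boundedness of $\mathrm{ev}_\theta$; everything else is either a definition chase or a standard quotient construction, and as a bonus this argument shows that $[B_0,B_1]_\theta$ is in fact a Banach space.
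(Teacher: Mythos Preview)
Your proof is correct and follows essentially the same route as the paper: both use the evaluation map $f \mapsto f(\theta)$, identify its image and kernel from the definitions, and note that the quotient norm coincides with $\|\cdot\|_\theta$ by construction. The only cosmetic difference is that the paper writes the boundedness estimate $\|f(\theta)\|_{B_0+B_1} \le \|f\|_{\mc{F}}$ directly (it was already obtained via the three-lines lemma in the preceding proposition), whereas you redo it by composing with linear functionals and invoking Hadamard's theorem; your version is in fact more explicit about the isometry, which the paper leaves implicit.
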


\begin{proof}
First, we observe that $\mc{N}_\theta$ is a closed subspace of
$\mc{F}(B_0,B_1)$, so that the quotient space $\mc{F}(B_0,B_1)/\mc{N}_\theta$
is a Banach space.
We consider the mapping $f \mapsto f(\theta)$ from $\mc{F}(B_0,B_1)$
to $B_0 + B_1$. Clearly, the image of the mapping is $[B_0,B_1]_\theta$,
and the kernel $\mc{N}_\theta$. Since
\[
 \|f(\theta)\|_{B_0+B_1}
 \leq \max \left\{
 \sup_y \|f(iy)\|_{B_0+B_1} , \sup_y \|f(1+iy)\|_{B_0+B_1} \right\}
 \leq \|f\|_{\mc{F}}
\]
for each $f \in \mc{F}(B_0,B_1)$, the mapping is bounded, and so
$\mc{F}(B_0,B_1)/\mc{N}_\theta$ is isomorphic to $[B_0,B_1]_\theta$.
\end{proof}

We now observe that the complex interpolation method can be
flipped in a natural way; the proof is a straightforward
application of the definitions and is thus omitted.

\begin{prop}
For each $\theta \in [0,1]$, we have the isomorphism
\[
 [B_0,B_1]_\theta \cong [B_1,B_0]_{1-\theta}.
\]
\end{prop}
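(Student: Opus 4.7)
The plan is to construct an explicit isometric isomorphism between the space-generating-function spaces $\mathcal{F}(B_0,B_1)$ and $\mathcal{F}(B_1,B_0)$, and then descend to the interpolation spaces via the preceding quotient-space identification. The key observation is that the map $z \mapsto 1-z$ is a holomorphic self-map of the strip $S$ that interchanges the two boundary lines $\Re z = 0$ and $\Re z = 1$, precisely matching the way the roles of $B_0$ and $B_1$ are interchanged in Definition~\ref{space-generating-function}.

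Concretely, I would define $\Phi: \mathcal{F}(B_0,B_1) \to \mathcal{F}(B_1,B_0)$ by $(\Phi f)(z) = f(1-z)$. First I would verify that $\Phi f$ actually lies in $\mathcal{F}(B_1,B_0)$: continuity and boundedness on $S$ and holomorphicity on the interior of $S$ are preserved because $z \mapsto 1-z$ is a biholomorphism of the strip; condition (c) of Definition~\ref{space-generating-function} for $\Phi f$ with respect to $(B_1,B_0)$ follows from condition (d) for $f$ (the line $\Re z = 0$ is sent to the line $\Re z = 1$, and $B_1$ is the ``correct'' target there), and symmetrically condition (d) for $\Phi f$ follows from condition (c) for $f$. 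The decay of $|\Im z| \to \infty$ is unaffected since $|\Im(1-z)| = |\Im z|$.

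Next I would compute the norm. By the definition of $\|\cdot\|_{\mathcal{F}(B_1,B_0)}$,
\[
 \|\Phi f\|_{\mathcal{F}(B_1,B_0)}
 = \max \left\{ \sup_{y} \|f(1+iy)\|_{B_1},\; \sup_{y} \|f(iy)\|_{B_0} \right\}
 = \|f\|_{\mathcal{F}(B_0,B_1)},
\]
after substituting $y \mapsto -y$ in the suprema (which does nothing). Thus $\Phi$ is an isometry, and applying the same construction with $B_0$ and $B_1$ swapped shows that $\Phi$ has an inverse of the same form; hence $\Phi$ is an isometric isomorphism.

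Finally, I would pass to the interpolation spaces. The evaluation map $f \mapsto f(\theta)$ on $\mathcal{F}(B_0,B_1)$ and the evaluation map $g \mapsto g(1-\theta)$ on $\mathcal{F}(B_1,B_0)$ are intertwined by $\Phi$, since $(\Phi f)(1-\theta) = f(\theta)$. Consequently $\Phi$ maps the subspace $\mathcal{N}_\theta \subseteq \mathcal{F}(B_0,B_1)$ isometrically onto the subspace $\mathcal{N}_{1-\theta} \subseteq \mathcal{F}(B_1,B_0)$, and the induced map on the quotient Banach spaces $\mathcal{F}(B_0,B_1)/\mathcal{N}_\theta \to \mathcal{F}(B_1,B_0)/\mathcal{N}_{1-\theta}$ is an isometric isomorphism. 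Combining this with the preceding proposition gives $[B_0,B_1]_\theta \cong [B_1,B_0]_{1-\theta}$. There is no real obstacle here; the only point requiring a moment of care is checking that a holomorphic $B_0+B_1$--valued map pulled back by $z \mapsto 1-z$ remains holomorphic in the sense of \S\ref{complex-analysis-of-banach-valued-functions}, but this is immediate because composition with any bounded linear functional on $B_0+B_1 = B_1+B_0$ reduces the question to ordinary complex-analytic invariance under the biholomorphism $z \mapsto 1-z$.
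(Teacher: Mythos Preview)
Your proposal is correct and is precisely the ``straightforward application of the definitions'' that the paper has in mind; the paper omits the proof entirely, but the reflection $z\mapsto 1-z$ inducing an isometric isomorphism $\mathcal{F}(B_0,B_1)\to\mathcal{F}(B_1,B_0)$ that intertwines evaluation at $\theta$ with evaluation at $1-\theta$ is exactly the intended argument.
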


The complex interpolation method behaves well under
reiteration. The proof is rather elaborate and we omit it:
see \S\S\ref{fr-dual-of-complex-interpolation-space}
for a discussion.

\begin{theorem}[Reiteration theorem]\label{complex-interpolation-reiteration}\index{complex interpolation!reiteration theorem}
Let $(B_0,B_1)$ be a Banach couple. Fix $\theta_0,\theta_1 \in [0,1]$
and set
\[
 X_j = [B_0,B_1]_{\theta_j}.
\]
If $B_0 \cap B_1$ is dense in each of the spaces $B_0$, $B_1$, and $X_0 \cap X_1$,
then we have the isomorphism
\[
 [X_0,X_1]_\Theta \cong [B_0,B_1]_{(1-\Theta)\theta_0 + \Theta \theta_1}
\]
for all $\Theta \in [0,1]$.
\end{theorem}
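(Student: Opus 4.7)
The plan is to establish the isomorphism by proving the two set-theoretic inclusions $[B_0,B_1]_\eta \subseteq [X_0,X_1]_\Theta$ and $[X_0,X_1]_\Theta \subseteq [B_0,B_1]_\eta$ separately with matching norm estimates, where I write $\eta = (1-\Theta)\theta_0 + \Theta\theta_1$.

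The whole argument hinges on the following auxiliary observation, which I would prove first: for any $f \in \mc{F}(B_0,B_1)$ and any $\zeta \in S$ with $\Re \zeta = s$, one has $f(\zeta) \in [B_0,B_1]_s$ with $\|f(\zeta)\|_s \le \|f\|_{\mc{F}}$, because the translate $w \mapsto f(w + i\,\Im\zeta)$ still lies in $\mc{F}(B_0,B_1)$ and attains the value $f(\zeta)$ at $w = s$. In particular, on the lines $\Re z = \theta_0$ and $\Re z = \theta_1$, every $f \in \mc{F}(B_0,B_1)$ takes values in $X_0$ and $X_1$ respectively, with uniformly controlled norms. Combined with a Poisson-type integral representation on the sub-strip $\{\theta_0 \le \Re w \le \theta_1\}$, whose convergence in $X_0$ and $X_1$ is ensured by the density of $B_0 \cap B_1$, this also shows $[B_0,B_1]_\mu \subseteq X_0 + X_1$ for every $\mu \in [\theta_0,\theta_1]$, so that inclusion between the ambient sum spaces is not an obstacle.

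For the inclusion $[B_0,B_1]_\eta \subseteq [X_0,X_1]_\Theta$, given $v = f(\eta)$ with $f \in \mc{F}(B_0,B_1)$, I would define $F_\delta(z) = e^{\delta(z^2-\Theta^2)} f(\phi(z))$, where $\phi(z) = (1-z)\theta_0 + z\theta_1$ is the affine map sending $S$ into the sub-strip and satisfying $\phi(\Theta) = \eta$. The preliminary observations make $F_\delta$ a map $S \to X_0 + X_1$ with $F_\delta(iy) \in X_0$ and $F_\delta(1+iy) \in X_1$; the Gaussian factor $e^{\delta z^2}$ supplies the decay at $|\Im z| \to \infty$ required by the definition of $\mc{F}(X_0,X_1)$, while $F_\delta(\Theta) = v$. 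Taking the infimum over $\delta > 0$ and over admissible $f$ recovers the norm estimate $\|v\|_{[X_0,X_1]_\Theta} \le \|v\|_{[B_0,B_1]_\eta}$.

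The reverse inclusion $[X_0,X_1]_\Theta \subseteq [B_0,B_1]_\eta$ is the harder direction and is where I expect the main obstacle. Given $F \in \mc{F}(X_0,X_1)$ with $F(\Theta) = v$, the task is to assemble some $G \in \mc{F}(B_0,B_1)$ with $G(\eta) = v$ out of the pointwise lifts $g_y \in \mc{F}(B_0,B_1)$ with $g_y(\theta_0) = F(iy)$, and analogous lifts $h_y$ at $\Re z = 1$. The difficulty is to perform the selection $y \mapsto g_y$, $y \mapsto h_y$ in an analytically controlled way and to glue the results into a single holomorphic $B_0+B_1$-valued function on $S$ whose boundary traces decay appropriately; this is where the density of $B_0 \cap B_1$ in $X_0 \cap X_1$ enters, by first approximating $F$ by functions whose boundary data lie in $B_0 \cap B_1$, for which the lifting is essentially canonical. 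The gluing itself is a Poisson-type construction on the sub-strip, matched with the Poisson representation of $F$ on $S$ at the point $\Theta$; the fact that these two harmonic measures are compatible under $\phi$ is precisely the content of the numerical identity $\eta = (1-\Theta)\theta_0 + \Theta\theta_1$. Once the construction is carried out on the dense subspace with the correct norm bound and decay, a limiting argument gives the estimate $\|v\|_{[B_0,B_1]_\eta} \le \|v\|_{[X_0,X_1]_\Theta}$, and the two inequalities together yield the desired isomorphism.
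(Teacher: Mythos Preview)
The paper does not actually prove this theorem: immediately after the statement it says ``The proof is rather elaborate and we omit it,'' and in the further-results section \S\S\ref{fr-dual-of-complex-interpolation-space} it only indicates that the reiteration theorem can be obtained from the \emph{equivalence theorem} $[B_0,B_1]_\theta \subseteq [B_0,B_1]^\theta$ and the \emph{duality theorem} $([B_0,B_1]_\theta)^* \cong [B_0^*,B_1^*]^\theta$ of Calder\'{o}n, citing \cite{Alberto_P_Calderon:J1964}, \S12.3. So there is no proof in the paper to compare your proposal against line by line.

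That said, your route and the one the paper points to are genuinely different. Calder\'{o}n's argument passes through the second interpolation functor $[\,\cdot\,,\cdot\,]^\theta$ and uses duality to close the hard inclusion, whereas you attempt a direct construction entirely within $\mc{F}(B_0,B_1)$ and $\mc{F}(X_0,X_1)$: the affine reparametrization $\phi(z)=(1-z)\theta_0+z\theta_1$ for the easy direction, and a boundary-lifting/Poisson-extension scheme for the hard one. Your easy direction is standard and correct; the observation that $f(\zeta)\in[B_0,B_1]_{\Re\zeta}$ with norm at most $\|f\|_{\mc{F}}$ is exactly the right lemma, and the Gaussian damping $e^{\delta(z^2-\Theta^2)}$ is the usual device. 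For the hard direction your outline is the right shape (it is essentially the argument in Bergh--L\"{o}fstr\"{o}m, Theorem~4.6.1), but the step you flag as ``the main obstacle'' really is one: choosing the lifts $y\mapsto g_y$, $y\mapsto h_y$ in a way that produces a single holomorphic $B_0+B_1$-valued function is not just a matter of Poisson extension on the sub-strip, since holomorphy is a much stronger constraint than harmonicity. The standard way around this is to work first with $F$ whose boundary values are finite sums of the form $\sum e^{\lambda_k z}a_k$ with $a_k\in B_0\cap B_1$ (these are dense in $\mc{F}(X_0,X_1)$ under the density hypothesis), for which an explicit holomorphic lift into $\mc{F}(B_0,B_1)$ can be written down, and only then pass to the limit. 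Your sketch gestures at this (``approximating $F$ by functions whose boundary data lie in $B_0\cap B_1$'') but does not name the specific dense class or explain why the lift is canonical for it; filling that in is the real content of the proof.
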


Finally, we show that the method of complex interpolation is
truly a generalization of the \hyperref[riesz-thorin]{Riesz-Thorin
interpolation theorem}.\index{Riesz-Thorin interpolation \\ theorem}

\begin{theorem}[Complex interpolation is exact]\label{complex-interpolation-is-exact}\index{interpolation space!exact}
Let $(B_0,B_1)$ and $(C_0,C_1)$ be two complex Banach couples
and $T:B_0 + B_1 \to C_0 + C_1$ a bounded linear operator.
For each $\theta \in [0,1]$, the restriction of $T$
to $[B_0,B_1]_\theta$ maps boundedly into $[C_0,C_1]_\theta$
and satisfies the norm estimate
\[
 \|T\|_{B_\theta \to C_\theta} \leq
 \|T\|_{B_0 \to C_0}^{1-\theta} \|T\|_{B_1 \to C_1}^\theta.
\]
\end{theorem}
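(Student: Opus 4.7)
The plan is to take any representative $f \in \mathcal{F}(B_0,B_1)$ of $v \in [B_0,B_1]_\theta$, post-compose with $T$, and multiply by a suitable scalar exponential factor so that the resulting function lies in $\mathcal{F}(C_0,C_1)$ and absorbs the geometric mean $M_0^{1-\theta}M_1^\theta$ (where $M_j = \|T\|_{B_j \to C_j}$) into its $\mathcal{F}$-norm. This is morally the same exponential rebalancing trick that drives the proof of the Riesz-Thorin interpolation theorem (Theorem \ref{riesz-thorin}), but here it is applied once at the level of spaces rather than repeatedly at the level of test simple functions.

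Concretely, I would fix $\varepsilon > 0$ and choose $f \in \mathcal{F}(B_0,B_1)$ with $f(\theta)=v$ and $\|f\|_{\mathcal{F}} \leq (1+\varepsilon)\|v\|_{B_\theta}$. Assuming first that $M_0,M_1 > 0$ (the degenerate cases are handled by replacing $M_j$ by $M_j+\delta$ and letting $\delta \to 0$), set
\[
 g(z) = M_0^{z-\theta}\, M_1^{\theta-z}\, T(f(z)).
\]
The scalar multiplier is entire, equals $1$ at $z=\theta$, has constant modulus $M_0^{-\theta}M_1^\theta$ on the line $\Re z = 0$, and constant modulus $M_0^{1-\theta}M_1^{\theta-1}$ on the line $\Re z = 1$. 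These factors are tuned precisely so that the raw estimates $\|Tf(iy)\|_{C_0}\leq M_0\|f(iy)\|_{B_0}$ and $\|Tf(1+iy)\|_{C_1}\leq M_1\|f(1+iy)\|_{B_1}$ both collapse to the uniform boundary bound $M_0^{1-\theta}M_1^\theta \|f\|_{\mathcal{F}}$, giving $\|g\|_{\mathcal{F}} \leq M_0^{1-\theta}M_1^\theta \|f\|_{\mathcal{F}}$.

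The remaining work is to verify that $g \in \mathcal{F}(C_0,C_1)$ in the sense of Definition \ref{space-generating-function}. Continuity and boundedness on $S$ in the $C_0+C_1$-norm follow because $T:B_0+B_1 \to C_0+C_1$ is bounded, $f$ is continuous and bounded in $B_0+B_1$, and the scalar factor is bounded on $S$. On the line $\Re z=0$, boundedness of $T:B_0 \to C_0$ promotes the $B_0$-continuity and decay-at-infinity of $f(iy)$ to the corresponding $C_0$-properties of $g(iy)$, and symmetrically on the line $\Re z=1$. For holomorphicity of $T\circ f$ as a $C_0+C_1$-valued map in the interior of $S$, I would observe that each $l \in (C_0+C_1)^*$ satisfies $l \circ T \in (B_0+B_1)^*$, so $l(T f(\cdot)) = (l\circ T)(f(\cdot))$ is complex holomorphic by the holomorphicity clause already built into $f \in \mathcal{F}(B_0,B_1)$; multiplication by the entire scalar factor preserves this.

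Once $g \in \mathcal{F}(C_0,C_1)$ and $g(\theta) = Tv$ are in hand, the definition of the interpolation norm yields $\|Tv\|_{C_\theta} \leq \|g\|_{\mathcal{F}} \leq M_0^{1-\theta}M_1^\theta(1+\varepsilon)\|v\|_{B_\theta}$, and letting $\varepsilon \to 0$ finishes the proof. I expect the main obstacle to be neither the algebra of the exponential twist nor the boundary estimates, which are mechanical, but rather a clean handling of the Banach-valued holomorphicity transport $f \mapsto T\circ f$; this is why I would state the $l \circ T \in (B_0+B_1)^*$ reduction explicitly rather than leave it implicit.
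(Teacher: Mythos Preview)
Your proposal is correct and follows essentially the same approach as the paper: pick a near-optimal $f\in\mc{F}(B_0,B_1)$, post-compose with $T$, multiply by an exponential scalar to balance the boundary norms, and verify membership in $\mc{F}(C_0,C_1)$ via the observation that $l\circ T\in(B_0+B_1)^*$ for each $l\in(C_0+C_1)^*$. The only cosmetic difference is your choice of normalization $M_0^{z-\theta}M_1^{\theta-z}$ (which equals $1$ at $z=\theta$ and pushes the constant $M_0^{1-\theta}M_1^\theta$ into the $\mc{F}$-norm bound) versus the paper's $k_0^{z-1}k_1^{-z}$ (which makes $\|g\|_{\mc{F}}\leq\|f\|_{\mc{F}}$ and recovers the constant from $g(\theta)=k_0^{\theta-1}k_1^{-\theta}Tv$); your version also explicitly addresses the degenerate case $M_j=0$, which the paper leaves implicit.
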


\begin{proof}
Fix $\theta \in [0,1]$, $v \in [B_0,B_1]_\theta$, and
$\ve>0$. We shall show that
\[
 \|Tv\|_{C_\theta}
 \leq k_0^{1-\theta}k_1^\theta\|v\|_{B_\theta},
\]
where
\[
 k_0 = \|T\|_{B_0 \to C_0} \htwo \mbox{and}\htwo
 k_1 = \|T\|_{B_1 \to C_1}.
\]
By the definition of the $B_\theta$ norm, we can find
an $f \in \mc{F}(B_0,B_1)$ such that
$f(\theta) = v$ and $\|f\|_{\mc{F}} \leq \|v\|_{B_\theta} + \ve$.

We claim that
\[
 g(z) = k_0^{z-1}k_1^{-z}[Tf(z)]
\]
belongs to $\mc{F}(C_0,C_1)$ and satisfies the norm estimate
$\|g\|_{\mc{F}} \leq \|v\|_{B_\theta} + \ve$. The continuity
of $g$ is clear. Since $f \in \mc{F}(B_0,B_1)$, we have
the bound $M \geq \|f(z)\|_{B_0+B_1}$ for all $z \in S$,
and so we have the estimate.
\begin{eqnarray*}
 \|g(z)\|_{C_0+C_1}
 &=& k_0^{z-1}k_1^{-z}\|Tf(z)\|_{C_0+C_1} \\
 &\leq& k_0^{z-1}k_1^{-z}\|T\|_{B_0+B_1 \to C_0+C_1} \|f(z)\|_{B_0+B_1} \\
 &\leq& k_0^{z-1}k_1^{-z}\|T\|_{B_0+B_1 \to C_0+C_1} C,
\end{eqnarray*}
Therefore, (a) in Definition \ref{space-generating-function} is satisfied.
Given any bounded linear functional $l$ on $C_0+C_1$, the map
\[
 lg(z) = l(k_0^{z-1}k_1^{-z}[Tf(z)])
 = k_0^{z-1}k_1^{-z} [lTf(z)]
\]
is holomorphic in the interior of $S$, for $lT$ is a bounded linear
functional on $B_0+B_1$ and $f \in \mc{F}(B_0,B_1)$.
This establishes
(b). (c) and (d) follow from the rapid decay of $k_0^{z-1}k_1^{-z}$,
and so $g$ is in $\mc{F}(C_0,C_1)$. We also observe that
\begin{eqnarray*}
 \|g\|_{\mc{F}}
 &=& \max\left\{ \sup_y \|g(iy)\|_{C_0} , \sup_y \|g(1+iy)\|_{C_1} \right\} \\
 &\leq& \max\left\{ \sup_y |k_0^{iy}||k_1^{-iy}|\|f(iy)\|_{B_0},
 \sup_y |k_0^{iy}||k_1^{-iy}|\|f(1+iy)\|_{B_1} \right\} \\
 &=& \max\left\{ \sup_y \|f(iy)\|_{B_0},
 \sup_y \|f(1+iy)\|_{B_1} \right\} \\
 &=& \|f\|_{\mc{F}} \\
 &\leq& \|v\|_{B_\theta} + \ve,
\end{eqnarray*}
as was claimed.

It now follows that
\[
 \|v\|_{B_\theta} + \ve
 \geq \|g\|_{\mc{F}}
 \geq \|g(\theta)\|_{B_\theta}
 = \|k_0^{\theta-1}k_1^{-\theta}[Tf(\theta)]\|_{C_\theta}
 = k_0^{\theta-1}k_1{-\theta}\|Tv\|_{C_\theta}, 
\]
whereby we have the estimate
\[
 \|Tv\|_{C_\theta}
 \leq k_0^{1-\theta}k_1^{\theta}\left(\|v\|_{B_\theta} + \ve\right).
\]
Since $\ve>0$ was arbitrary, we have the inequality
\[
 \|Tv\|_{C_\theta}
 \leq k_0^{1-\theta}k_1^{\theta}\|v\|_{B_\theta}
\]
for all $v \in B_\theta$, and the proof is now complete.
\end{proof}

See \S\ref{fr-abstract-interpolation-theory} for the definition
of an exact interpolation space. Calder\'{o}n presents another
complex interpolation method in \cite{Alberto_P_Calderon:J1964},
which leads to a study of dual spaces of complex interpolation
spaces. See \S\S\ref{fr-dual-of-complex-interpolation-space} for
a quick sketch.

\subsection{Interpolation of \texorpdfstring{$L^p$}{Lp} Spaces}

Since we have motivated the method of complex interpolation
as a generalization of the \hyperref[riesz-thorin]{Riesz-Thorin
interpolation theorem}, it is natural to expect that
Riesz-Thorin has been incorporated into the theory as a
special case thereof. For simplicity's sake, we
prove the theorem only on $\bR^d$.

\begin{theorem}\label{interpolation-of-lp-spaces}\index{interpolation space!between Lp spaces@between $L^p$ spaces|(}
Given $p_0,p_1 \in [1,\infty]$, we have
\[
 [L^{p_0}(\bR^d),L^{p_1}(\bR^d)]_\theta = L^{p_\theta}(\bR^d),
\]
for each $\theta \in (0,1)$, where
\[
 p_\theta^{-1} = (1-\theta)p_0^{-1} + \theta p_1^{-1}.
\]
\end{theorem}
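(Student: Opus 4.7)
The equality will be established as a two-sided continuous inclusion yielding matching norm inequalities. Writing $p_\theta^{-1}=(1-\theta)p_0^{-1}+\theta p_1^{-1}$ and setting $\alpha(z)=(1-z)/p_0+z/p_1$ (so $\alpha(\theta)=1/p_\theta$), I will work primarily with the case $1\le p_0,p_1<\infty$; the endpoint $p_j=\infty$ requires minor cosmetic adjustments discussed at the end. Both directions are modeled on the complex-analytic machinery used for the Riesz-Thorin theorem, with Hadamard's three-lines theorem (Theorem \ref{hadamard}) doing the key work in one direction and a direct construction of a space-generating function doing it in the other.

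For the inclusion $[L^{p_0},L^{p_1}]_\theta\hookrightarrow L^{p_\theta}$, given $f\in[L^{p_0},L^{p_1}]_\theta$ pick $F\in\mc{F}(L^{p_0},L^{p_1})$ with $F(\theta)=f$ and $\|F\|_{\mc{F}}$ close to $\|f\|_{[L^{p_0},L^{p_1}]_\theta}$. By the Riesz representation theorem,
\[
 \|f\|_{p_\theta} = \sup_{\substack{g\text{ simple}\\ \|g\|_{p_\theta'}\le1}}\left|\int fg\,dx\right|.
\]
For each such $g$, mimic the Riesz-Thorin construction to build a companion function $G(z)=e^{\delta(z^2-\theta^2)}|g|^{\beta(z)/\beta(\theta)}\sgn(g)$, where $\beta(z)=(1-z)/p_0'+z/p_1'$, so that $G(\theta)=g$, $\|G(iy)\|_{p_0'}\le 1$, and $\|G(1+iy)\|_{p_1'}\le 1$. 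Then $\Phi(z)=\int F(z)G(z)\,dx$ is bounded and continuous on $S$ and holomorphic in its interior; Hölder's inequality on the two boundary lines gives $|\Phi(iy)|,|\Phi(1+iy)|\le\|F\|_{\mc{F}}$, whence Theorem \ref{hadamard} yields $|\int fg\,dx|=|\Phi(\theta)|\le\|F\|_{\mc{F}}$. Taking the supremum over $g$ and the infimum over $F$ produces $\|f\|_{p_\theta}\le\|f\|_{[L^{p_0},L^{p_1}]_\theta}$.

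For the reverse inclusion $L^{p_\theta}\hookrightarrow[L^{p_0},L^{p_1}]_\theta$, the task is to exhibit, for each $f\in L^{p_\theta}$, an explicit element of $\mc{F}(L^{p_0},L^{p_1})$ passing through $f$ at $\theta$. First assume $f=\sum_k c_k\chi_{E_k}$ is a simple function with finite-measure support and, for $\delta>0$, define
\[
 F(z)(x)=e^{\delta(z^2-\theta^2)}|f(x)|^{p_\theta\alpha(z)}\sgn f(x)=\sum_k e^{\delta(z^2-\theta^2)}|c_k|^{p_\theta\alpha(z)}\sgn(c_k)\,\chi_{E_k}(x).
\]
Then $F(\theta)=f$ and, since $\Re(p_\theta\alpha(iy))=p_\theta/p_0$ and $\Re(p_\theta\alpha(1+iy))=p_\theta/p_1$, one computes $\|F(iy)\|_{p_0}\le e^{-\delta y^2}\|f\|_{p_\theta}^{p_\theta/p_0}$ and $\|F(1+iy)\|_{p_1}\le e^{\delta(1-y^2)}\|f\|_{p_\theta}^{p_\theta/p_1}$; in particular these quantities decay to zero as $|y|\to\infty$. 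Interior holomorphicity into $L^{p_0}+L^{p_1}$ follows from the explicit finite-sum representation above, since applying any bounded linear functional produces a finite sum of entire scalar functions. Continuity on $S$ in the $L^{p_0}+L^{p_1}$ norm and continuity on each boundary line in the respective $L^{p_j}$ norm reduce to uniform continuity of $z\mapsto|c_k|^{p_\theta\alpha(z)}$ on compacts. Hence $F\in\mc{F}(L^{p_0},L^{p_1})$, and taking $\delta\to 0^+$ through a sequence yields $\|f\|_{[L^{p_0},L^{p_1}]_\theta}\le\|f\|_{p_\theta}$. Simple functions with finite-measure support are dense in $L^{p_\theta}$ (Proposition \ref{approximation-by-simple-functions}); combined with completeness of the interpolation space, this extends the inequality to all of $L^{p_\theta}$ and completes the identification.

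The main obstacle lies in the endpoint cases $p_0=\infty$ or $p_1=\infty$, where several steps need adjustment: simple functions are not dense in $L^\infty$, so the density-and-completeness argument in the second inclusion must be replaced by a direct approximation of $f\in L^{p_\theta}$ by truncations, and the exponent $p_\theta\alpha(z)$ degenerates on the relevant boundary line (becoming purely imaginary), forcing $|F|$ to be constant in $x$ on the support of $f$ there, which must be reconciled with the required decay in $L^\infty$ --- fortunately the $e^{\delta(z^2-\theta^2)}$ factor handles this. A secondary subtlety is that the definition of $\mc{F}(B_0,B_1)$ in Definition \ref{space-generating-function} requires boundary decay at infinity, which is precisely why the Gaussian regularizer $e^{\delta(z^2-\theta^2)}$ cannot be omitted; correspondingly, the infimum over $\delta>0$ rather than a single evaluation at $\delta=0$ is what delivers the sharp norm bound.
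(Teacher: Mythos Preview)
Your argument is correct and follows essentially the same route as the paper: one inclusion via an explicit Riesz--Thorin-style holomorphic family $|f|^{p_\theta\alpha(z)}\sgn f$ with the Gaussian regularizer $e^{\delta(z^2-\theta^2)}$, the other via duality and the three-lines lemma applied to $\int F(z)G(z)\,dx$. The only organizational differences are that the paper works with $\mc{C}^\infty_c$ test functions rather than simple functions, and that in the duality direction the paper pairs the \emph{specific} family $\Phi_z$ against $\Psi_z$, whereas you (more cleanly) pair an \emph{arbitrary} $F\in\mc{F}$ with $F(\theta)=f$ against the companion $G$---this makes the passage from the bound to the infimum defining $\|f\|_{[\theta]}$ more transparent.
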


\begin{proof}
We first show that $\mc{C}^\infty_c(\bR^d)$ is a dense
subspace of $L^{p_0}(\bR^d)+L^{p_1}(\bR^d)$ in the $L^{p_0}+L^{p_1}$
norm given in Proposition \ref{plus-and-cap-are-well-defined}.
To see this, we fix an arbitrary $f \in L^{p_0}+L^{p_1}$ and
find $f_0 \in L^{p_0}$ and $f_1 \in L^{p_1}$ such that
$f = f_0+f_1$. By Corollary \ref{approximation-by-smooth-functions},
we can find two sequences $(\varphi_n)_{n=1}^\infty$ and
$(\phi_n)_{n=1}^\infty$ such that $\|f_0 - \varphi_n\|_{p_0} \to 0$
and $\|f_1 - \phi_n\|_{p_1} \to 0$ as $n \to \infty$. Therefore,
we have
\[
 \lim_{n \to \infty} \|f - (\varphi_n + \phi_n)\|_{L^{p_0}+L^{p_1}}
 \leq \lim_{n \to \infty} \|f_0 - \varphi_n\|_{p_0} + \|f_1 - \phi_n\|_{p_1}
 = 0,
\]
as desired.

It now suffices to show that
\[
 \|f\|_{[\theta]} = \|f\|_{[L^{p_0}(\bR^d),L^{p_1}(\bR^d)]_\theta} = \|f\|_{p_\theta} 
\]
for all $f \in \mc{C}^\infty_c(\bR^d)$. To this end, we fix an
$f \in \mc{C}^\infty_c(\bR^d)$, pick an $\ve>0$, and define
\[
 \Phi_z(x) = e^{\ve z^2 - \ve \theta^2} \frac{|f(x)|^{p/p(z)}f(x)}{|f(x)}
\]
for each $z \in S$, where
\[
 \frac{1}{p(z)} = \frac{1-z}{p_0} + \frac{z}{p_1}.
\]
We assume without loss of generality that $\|f\|_{p_\theta} = 1$ by
renormalizing if necessary. Observe that $\Phi \in
\mc{F}(L^{p_0}(\bR^d),L^{p_1}(\bR^d))$ and $\|\Phi\|_{\mc{F}} \leq e^\ve$.
Since $\Phi(\theta) = f$, we have $\|f\|_{[\theta]}$, and so
$\|f\|_{[\theta]} \leq \|f\|_{p_\theta}$. 

To establish the reverse inequality, we recall the following consequence
of the \hyperref[Lp-riesz-representation]{Riesz representation theorem}:\index{representation theorem!F. Riesz, Lp-space version@F. Riesz, $L^p$-space version}
\[
 \|f\|_{p_\theta}
 = \sup_{\substack{ g \in \mc{C}^\infty_c(\bR^d) \\ \|g\|_{p_\theta'} = 1}}
 \int fg.
\]
We fix such a $g$ and set
\[
 \Psi_z(x) = e^{\ve z^2 - \ve \theta^2}
 \frac{|g(x)|^{p_\theta' / p'(z)} g(x)}{|g(x)|},
\]
for each $z \in S$, where
\[
 \frac{1}{p'(z)} = \frac{1-z}{p_0'} + \frac{z}{p_1'}.
\]
We assume without loss of generality that $\|f\|_{[\theta]} = 1$
by renormalizing if necessary. Observe that
\[
 \Xi(z) = \int \Phi_z(x) \Psi_z(x) \, dx
\]
satisfies the bounds
\[
 |\Xi(iy)| \leq e^\ve \htwo \mbox{and} \htwo
 |\Xi(1+iy)| \leq e^{2\ve}
\]
for all $y \in \bR$. It now follows from the \hyperref[hadamard]{three-lines lemma}
that $|\Xi(z)| \leq e^{2\ve}$ for all $z \in S$, whence
$\|f\|_{p_\theta} \leq \|f\|_{[\theta]}$. This completes the proof.
\end{proof}
\index{interpolation space!between Lp spaces@between $L^p$ spaces|)}

The \hyperref[riesz-thorin]{Riesz-Throin interpolation theorem} now follows
as a direct consequence of Theorem \ref{complex-interpolation-is-exact} and
Theorem \ref{interpolation-of-lp-spaces}. 

\subsection{Interpolation of Hilbert Spaces}\label{s-interpolation-of-hilbert-spaces}

We conclude the section by studying another example of interpolation spaces,
which we shall have an occasion to use later in the chapter.
Let $H$ be a separable Hilbert space and $T$ a self-adjoint operator on $H$.
We assume furthermore that $T$ is positive
(see \S\S\ref{s-spectral-theory-of-self-adjoint-operators}).
The spectral theorem (Theorem \ref{spectral-theorem})\index{spectral theorem!on separable Hilbert spaces} implies that
there is a unitary operator\index{unitary!operator}
$U:H \to L^2(X,\mu)$ and a real-valued $\mu$-measurable function $a$ on $X$
such that
\[
 D = UTU^{-1}u(x) = a(x)u(x)
\]
for all $u \in L^2(X,\mu)$.
It then follows that $\mc{D}(T) = U^{-1}(\mc{D}(D))$, where
\[
 \mc{D}(D) = \{u \in L^2(X,\mu) : au \in L^2(X,\mu)\}.
\]

We shall assume that $a(x) \geq 1$, which is equivalent to the assumption that
\[
 \langle Tv,v \rangle \geq \|v\|^2.
\]
Note that if $a$ is bounded, then $\mc{D}(D) = L^2(X,\mu)$ and
$\mc{D}(T) = H$, whence $T$ must be a bounded operator
(Proposition \ref{hellinger-toeplitz}).\index{Hellinger-Toeplitz theorem} For each $\theta$
in the strip
\[
 S = \{z \in \bC : 0 \leq \Re z \leq 1\},
\]
we define $T^\theta$ to be the operator $U^{-1}D^\theta U$, where
\[
 D^\theta u(x) = a(x)^\theta u(x).
\]
The associated domain $\mc{D}(T^\theta)$ is the preimage $U^{-1}(\mc{D}(D^\theta))$,
where
\[
 \mc{D}(D^\theta) = \{u \in L^2(X,\mu) : a^\theta u \in L^2(X,\mu)\}.
\]

We now characterize the interpolation spaces between $H$ and $\mc{D}(T)$

\index{interpolation space!between Hilbert spaces|(}
\begin{theorem}\label{interpolation-of-domains-of-self-adjoint-operators-on-a-hilbert-space}
Let $H$ and $T$ be defined as above. For each $\theta \in [0,1]$, we have
\[
 [H,\mc{D}(T)]_\theta = \mc{D}(T^\theta).
\]
\end{theorem}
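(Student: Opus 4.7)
My plan is to transport the problem to multiplication operators via the spectral theorem. The unitary $U:H \to L^2(X,\mu)$ sends $T$ to the multiplication operator $M_a$ with $a(x) \geq 1$, and it sends $T^\theta$ to $M_{a^\theta}$. Since $U$ is an isometric isomorphism of Banach couples $(H,\mc{D}(T)) \to (L^2(X,\mu),\mc{D}(M_a))$, Theorem \ref{complex-interpolation-is-exact} (applied to $U$ and $U^{-1}$) shows that complex interpolation is transported unitarily. Hence it suffices to prove the multiplicative version $[L^2(X,\mu),\mc{D}(M_a)]_\theta = \mc{D}(M_{a^\theta})$, where the norm on $\mc{D}(M_a)$ is $\|au\|_{L^2}$ (equivalent to the graph norm since $a \geq 1$) and the claimed identification is $\|u\|_\theta = \|a^\theta u\|_{L^2}$.

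For the inclusion $\mc{D}(M_{a^\theta}) \subseteq [L^2,\mc{D}(M_a)]_\theta$, given $u$ with $a^\theta u \in L^2$ and a small $\ve>0$, I will construct the space-generating function
\[
 g(z)(x) = a(x)^{\theta - z}\, u(x)\, e^{\ve(z-\theta)^2}, \qquad z \in S.
\]
Since $|e^{\ve(z-\theta)^2}|$ decays like a Gaussian in $|\Im z|$, a direct computation gives $\|g(iy)\|_{L^2} = e^{\ve(\theta^2-y^2)}\|a^\theta u\|_{L^2}$ and $\|a\,g(1+iy)\|_{L^2} = e^{\ve((1-\theta)^2-y^2)}\|a^\theta u\|_{L^2}$, so $g \in \mc{F}(L^2,\mc{D}(M_a))$ with $g(\theta)=u$. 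Holomorphicity on the interior of $S$ in the $L^2+a^{-1}L^2$ norm follows from the pointwise holomorphy of $z \mapsto a^{\theta-z}$ together with dominated convergence, using the Gaussian factor to control difference quotients. Sending $\ve \to 0$ yields $\|u\|_{[\theta]} \leq \|a^\theta u\|_{L^2}$.

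For the reverse inclusion, take any $g \in \mc{F}(L^2,\mc{D}(M_a))$ with $g(\theta)=u$. For each bounded test function $\phi \in L^2(X,\mu) \cap L^\infty(X,\mu)$ supported on the set $\{a \leq N\}$, define the scalar function
\[
 F(z) = \int_X a(x)^{z}\, g(z)(x)\, \overline{\phi(x)}\, d\mu(x).
\]
On the support of $\phi$ the multiplier $a^z\bar\phi$ lies in $L^2 \cap L^\infty$ and depends holomorphically on $z$, so $F$ is continuous on $S$ and holomorphic in its interior. The Cauchy-Schwarz estimates $|F(iy)| \leq \|g(iy)\|_{L^2}\|\phi\|_{L^2} \leq \|g\|_{\mc{F}}\|\phi\|_{L^2}$ (using $|a^{iy}|=1$) and $|F(1+iy)| \leq \|a\,g(1+iy)\|_{L^2}\|\phi\|_{L^2} \leq \|g\|_{\mc{F}}\|\phi\|_{L^2}$ combined with \hyperref[hadamard]{Hadamard's three-lines theorem} (after a harmless $e^{\ve z^2}$ regularization to secure its global boundedness hypothesis, then $\ve \to 0$) give $|F(\theta)| \leq \|g\|_{\mc{F}}\|\phi\|_{L^2}$. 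By the \hyperref[Lp-riesz-representation]{Riesz representation theorem} applied on the set $\{a \leq N\}$ this yields $\|\chi_{\{a \leq N\}}\,a^\theta u\|_{L^2} \leq \|g\|_{\mc{F}}$, and the monotone convergence theorem as $N\to\infty$ delivers $a^\theta u \in L^2$ with $\|a^\theta u\|_{L^2} \leq \|g\|_{\mc{F}}$; taking the infimum over all admissible $g$ completes the argument. The main technical obstacle is the holomorphicity of $F$ and the three-lines applicability, which require the truncation by $\{a \leq N\}$ precisely because $g(z)$ is only known to lie in the sum space $L^2 + a^{-1}L^2$ rather than in $L^2$ itself.
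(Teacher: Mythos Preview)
Your argument is correct. The forward inclusion is essentially the paper's: the paper writes $f(z)=T^{-z+\theta}v$, which in the multiplication picture is exactly your $a^{\theta-z}u$; you add the harmless Gaussian factor $e^{\ve(z-\theta)^2}$ to enforce the decay required by Definition~\ref{space-generating-function}, a point the paper glosses over.

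The reverse inclusion is where the two arguments diverge. The paper regularizes with the resolvent: it considers the $H$-valued function $z\mapsto T^{z}(I-i\ve T)^{-1}f(z)$, which is globally bounded because $(I-i\ve T)^{-1}$ and $T(I-i\ve T)^{-1}$ are bounded operators, applies the maximum modulus principle directly to its norm, and then sends $\ve\to 0$. You instead truncate spatially to $\{a\le N\}$, pair against bounded test functions, and invoke Hadamard's three-lines theorem on the resulting scalar function, followed by duality and monotone convergence in $N$. Both devices serve the same purpose---to tame the unbounded multiplier $a^{z}$---but yours stays entirely within the $L^2$ framework and gives the sharp norm identity $\|u\|_{[\theta]}=\|a^\theta u\|_{L^2}$ explicitly, whereas the paper's resolvent trick is shorter and more operator-theoretic but is presented only as a sketch.
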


\begin{proof}
Fix $v \in \mc{D}(T^\theta)$. If we let
\[
 f(z) = T^{-z + \theta}v,
\]
then $f \in \mc{F}(H,\mc{D}(T))$ and $v = f(\theta)$. Conversely,
for each $f \in \mc{F}(H,\mc{D}(T))$ and every $\ve>0$, we
note that
\begin{eqnarray*}
 \|T^z(I - i \ve T)^{-1} f(z)\|_H
 &\leq& \sup_y \max \left\{
 \|(I-i \ve T)^{-1} T^{iy} z(iy)\|_H , \right. \\
 & & \left. \|(T^{1+iy}(I+i \ve T)^{-1} z(1+iy)\|_H
 \right\} \\
 &\leq& C
\end{eqnarray*}
by the maximum modulus principle,\index{maximum modulus principle} where $C$ is a constant
independent of $\ve$. It thus follows that
$f(\theta) \in \mc{D}(T^\theta)$, and the proof is complete.
\end{proof}

We shall use this result in \S\S\ref{s-sobolev-spaces},
when we characterize fractional-order Sobolev spaces
via the Fourier transform: see the proof of
Theorem \ref{l2-sobolev-fourier-characterization}.
\index{complex interpolation|)}
\index{interpolation space!between Hilbert spaces|)}
  
\section{Generalized Functions}\label{s-generalized_functions}

\index{Schwartz, Laurent!distribution theory|see{tempered distributions}}
\index{generalized functions|see{tempered distributions}}
\index{distribution theory|see{tempered distributions}}
In the following two sections, we set up the stage for
the interpolation theorem of C. Fefferman and E. Stein,
which we study in \S\ref{s-hardy-spaces-and-bmo}. We introduce
Laurent Schwartz's\index{Schwartz, Laurent}\index{tempered!distributions} theory of distributions 
in this section and apply it in the next section to the study
of a classical singular operator called the Hilbert transform.

We recall the remark from \S\ref{s-the_fourier_transform}
that the output of the Fourier transform sometimes cannot be described
as a function. To provide a rigorous explanation for this remark,
we introduce a particular kind of ``generalized functions'',
known as \emph{tempered distributions}.

\subsection{The Schwartz Space}

\index{Schwartz, Laurent!space|(}
The starting point of distribution theory is that linear functionals
acting on a space of ``nice functions'' is easier to deal with than
the functions they represent. In the context of harmonic analysis,
the natural candidate for such a space is the
\hyperref[defin-schwartz-space]{Schwartz space}, which
we have seen to be closed under differentiation and the Fourier
transform. Even better, the Schwartz space is also closed under
other fundamental operations in harmonic analysis: translation,
rotation, reflection, dilation, and convolution. The first four
assertions follow from trivial computations, so we omit the proof.

\begin{prop}\index{translation!of Schwartz functions}\index{reflection!of Schwartz functions}\index{Fourier transform!on the Schwartz space}\index{rotation!of Schwartz functions}
If $\varphi \in \ms{S}(\bR^d)$, then $\tau_h \varphi, e_h \varphi,
\tilde{\varphi}$, and $\widehat{\delta_a \varphi}$ are
in $\ms{S}(\bR^d)$ for each $h \in \bR^d$ and every $a>0$.
\end{prop}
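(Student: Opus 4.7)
The plan is to verify directly that each of the four operations preserves every Schwartz seminorm $\rho_{\alpha,\beta}(\varphi) = \sup_{x \in \bR^d} |x^\alpha D^\beta \varphi(x)|$. Smoothness of the transformed functions is immediate from the chain rule in each case, so the only content is to bound $\rho_{\alpha,\beta}$ applied to $\tau_h \varphi$, $e_h \varphi$, $\tilde{\varphi}$, and the appropriate dilation in terms of a finite collection of seminorms of $\varphi$.

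First I would treat translation. Since $D^\beta(\tau_h \varphi)(x) = (D^\beta \varphi)(x-h)$, the binomial expansion $x^\alpha = \sum_{\gamma \leq \alpha} \binom{\alpha}{\gamma} (x-h)^\gamma h^{\alpha - \gamma}$ yields
\[
 \sup_{x} |x^\alpha D^\beta(\tau_h \varphi)(x)|
 \leq \sum_{\gamma \leq \alpha} \binom{\alpha}{\gamma} |h|^{|\alpha - \gamma|}
 \rho_{\gamma,\beta}(\varphi),
\]
which is finite. For the modulation $e_h \varphi$, the Leibniz rule gives
\[
 D^\beta (e_h \varphi)(x)
 = \sum_{\gamma \leq \beta} \binom{\beta}{\gamma}
 (2\pi i h)^{\beta - \gamma} e^{2\pi i x \cdot h} D^\gamma \varphi(x),
\]
so $|x^\alpha D^\beta(e_h \varphi)(x)|$ is bounded by a finite linear combination of the seminorms $\rho_{\alpha,\gamma}(\varphi)$ with coefficients depending on $h$ and $\beta$.

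Next I would dispose of reflection and dilation by the chain rule plus a change of variables. For $\tilde{\varphi}(x) = \varphi(-x)$ one has $D^\beta \tilde{\varphi}(x) = (-1)^{|\beta|} (D^\beta \varphi)(-x)$, so substituting $y = -x$ shows $\rho_{\alpha,\beta}(\tilde{\varphi}) = \rho_{\alpha,\beta}(\varphi)$. For $\delta_a \varphi(x) = \varphi(ax)$ one has $D^\beta (\delta_a \varphi)(x) = a^{|\beta|} (D^\beta \varphi)(ax)$, so with $y = ax$ we obtain $\rho_{\alpha,\beta}(\delta_a \varphi) = a^{|\beta| - |\alpha|} \rho_{\alpha,\beta}(\varphi)$. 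In particular $\delta_a \varphi \in \ms{S}(\bR^d)$, and then Proposition \ref{fourier-transform-of-schwartz-is-schwartz} immediately gives $\widehat{\delta_a \varphi} \in \ms{S}(\bR^d)$.

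There is really no obstacle here: each assertion is a direct application of the product or chain rule followed by a substitution, and the Fourier-transform step is handled by a prior proposition. The only thing one must be slightly careful about is combinatorial bookkeeping for the Leibniz expansion in the modulation case, but even that only produces a finite sum of seminorms of $\varphi$ evaluated at lower multi-indices.
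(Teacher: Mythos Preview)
Your argument is correct and is precisely the routine seminorm verification the paper has in mind; the paper itself omits the proof entirely, stating that these assertions ``follow from trivial computations.'' Your use of Proposition~\ref{fourier-transform-of-schwartz-is-schwartz} for the $\widehat{\delta_a\varphi}$ step is also exactly how the paper structures things.
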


To see that the Schwartz space is closed under convolution, we recall
Theorem \ref{convolution-theorem}, which states that
the Fourier transform turns convolution into pointwise multiplication.
Since the Schwartz space is closed under the Fourier transform,
pointwise multiplication, and the inverse Fourier transform,
the convolution theorem implies that the Schwartz space is closed
under convolution.

\begin{prop}\index{convolution!of Schwartz functions}
If $\varphi,\phi \in \ms{S}(\bR^d)$, then $\varphi*\phi \in \ms{S}(\bR^d)$.
\end{prop}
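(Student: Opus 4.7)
The plan is to follow the hint in the paragraph preceding the proposition and exploit the convolution theorem, reducing the claim about convolutions to the already-established facts that $\ms{S}(\bR^d)$ is preserved by the Fourier transform, pointwise multiplication, and reflection.

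First I would prove an auxiliary lemma: $\ms{S}(\bR^d)$ is closed under pointwise multiplication. Given $\psi_1,\psi_2 \in \ms{S}(\bR^d)$ and multi-indices $\alpha,\beta$, the Leibniz rule gives
\[
 D^\beta(\psi_1\psi_2) = \sum_{\gamma \leq \beta} \binom{\beta}{\gamma}(D^\gamma \psi_1)(D^{\beta-\gamma}\psi_2),
\]
so $|x^\alpha D^\beta(\psi_1\psi_2)(x)|$ is bounded above by a finite sum of products of the form $\sup_y|y^\alpha D^\gamma \psi_1(y)| \cdot \sup_y|D^{\beta-\gamma}\psi_2(y)|$, each factor of which is finite by the Schwartz condition. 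Hence $\psi_1\psi_2 \in \ms{S}(\bR^d)$.

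Next, given $\varphi,\phi \in \ms{S}(\bR^d) \subseteq L^1(\bR^d)$, \hyperref[convolution]{Young's inequality} gives $\varphi*\phi \in L^1(\bR^d)$, and Theorem \ref{convolution-theorem-preliminary} yields $\widehat{\varphi*\phi} = \hat{\varphi}\hat{\phi}$. Proposition \ref{fourier-transform-of-schwartz-is-schwartz} places $\hat{\varphi}, \hat{\phi}$ in $\ms{S}(\bR^d)$, and the auxiliary lemma then gives $\hat{\varphi}\hat{\phi} \in \ms{S}(\bR^d) \subseteq L^1(\bR^d)$. Since both $\varphi*\phi$ and its Fourier transform $\hat{\varphi}\hat{\phi}$ are in $L^1(\bR^d)$, the \hyperref[fourier-inversion-formula]{Fourier inversion theorem} applies, yielding
\[
 (\varphi*\phi)(x) = \int \hat{\varphi}(\xi)\hat{\phi}(\xi) e^{2\pi i \xi \cdot x}\, d\xi
\]
for almost every $x \in \bR^d$. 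The right-hand side is $\ms{F}(\hat{\varphi}\hat{\phi})(-x)$, which is the composition of a Schwartz function with the reflection $x \mapsto -x$; both operations preserve the Schwartz space, so the right-hand side is itself a Schwartz function. Therefore $\varphi*\phi$ coincides almost everywhere with an element of $\ms{S}(\bR^d)$, and hence $\varphi*\phi \in \ms{S}(\bR^d)$.

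I do not anticipate any serious obstacle here: the main work is the Leibniz estimate in the auxiliary lemma, and the rest is bookkeeping with results already in place. One subtlety worth noting is that the Fourier inversion identity only gives equality almost everywhere; since Schwartz functions are identified in $\ms{S}(\bR^d)$ only up to equality of continuous representatives, and $\varphi*\phi$ already admits a continuous representative (differentiation under the integral sign is justified by the rapid decay of $\varphi$ and $\phi$, mirroring Proposition \ref{differentiation-to-multiplication}), this a.e.\ identification is harmless.
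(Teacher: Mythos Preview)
Your proposal is correct and follows essentially the same route as the paper's one-line proof $\varphi * \phi = (\widehat{\varphi * \phi})^\vee = (\hat{\varphi}\hat{\phi})^\vee$, which relies on exactly the closure properties you invoke. You supply more detail than the paper does---in particular, the Leibniz-rule verification that $\ms{S}(\bR^d)$ is closed under pointwise multiplication and the handling of the almost-everywhere caveat from Fourier inversion---both of which the paper leaves implicit.
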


\begin{proof}
$\varphi * \phi = (\widehat{\varphi * \phi})^\vee = (\hat{\varphi}\hat{\phi})^\vee$.
\end{proof}

Let us now return to the task of examining linear functionals on the Schwartz
space. We are only interested in the bounded ones, for they are the ones
that are well-behaved under limiting operations. This,
in turn, requires us to give a topology on $\ms{S}(\bR^d)$.

To do so, we consider the natural \hyperref[defin-seminorm]{seminorms}
\[
 \rho_{\alpha \beta}(\varphi)
 = \sup_{x \in \bR^d} \left| x^\alpha D^\beta \varphi(x) \right|
 = \|x^\alpha D^\beta\varphi(x)\|_\infty
\]
for each pair of multi-indices $\alpha$ and $\beta$. We observe that
each $\rho_{\alpha\beta}$ is complete, in the sense that $\rho_{\alpha\beta}
(\varphi_n - \varphi_m) \to 0$ as $n,m \to 0$ implies that there
exists a $\varphi \in \ms{S}(\bR^d)$ such that $\rho_{\alpha\beta}
(\varphi_n - \varphi) \to 0$ as $n \to \infty$. Indeed, we see by
setting $\alpha = \beta = 0$ that
$(\varphi_n)_{n=1}^\infty$ is uniformly Cauchy, whence we can find a function
$\varphi:\bR^d \to \bC$ to which the sequence converges uniformly. All other
convergences are uniform as well, and so the decay and smoothness conditions are
trivially established.

We now arrange the seminorms into a sequence $(\rho_n)_{n=1}^\infty$ and
consider the metric
\[
 d(\varphi,\phi) = \sum_{n=1}^\infty \frac{1}{2^n}
 \left( \frac{\rho_n(\varphi - \phi)}{1 + \rho_n(\varphi - \phi)} \right)
\]
on $\ms{S}(\bR^d)$. It is easy to see that
$d(\varphi_k,\phi) \to 0$ as $k \to \infty$ if and only 
 $\rho_n(\varphi_k - \varphi) \to 0$ for all $n$. This, in particular,
implies that $d$ is a complete metric.
It therefore follows that $\ms{S}(\bR^d)$ is a
\hyperref[defin-frechet-space]{Fr\'{e}chet space}.\index{Fr\'{e}chet space|(} The Fr\'{e}chet-space
topology on $\ms{S}(\bR^d)$ is commonly referred to as the
\emph{strong topology} on $\ms{S}(\bR^d)$. Consequently, a sequence
converging in the strong topology is said to \emph{converge strongly}.

We now recall that the Fourier transform is a linear automorphism on
$\ms{S}(\bR^d)$. If $(\varphi_n)_{n=1}^\infty$ is a sequence in $\ms{S}(\bR^d)$
converging strongly to $\varphi \in \ms{S}(\bR^d)$, then
\begin{eqnarray*}
 \lim_{n \to \infty} |\hat{\varphi}_n(\xi) - \hat{\varphi}(\xi)|
 &=& \lim_{n \to \infty} \left| \int (\varphi_n(x) - \varphi(x))
 e^{-2 \pi i x \cdot \xi} \, dx \right| \\
 &\leq& \lim_{n \to \infty} \int |\varphi_n(x) - \varphi(x)|
 |e^{-2 \pi i x \cdot \xi}| \, dx \\
 &=& \int \lim_{n \to \infty} |\varphi_n(x) - \varphi(x)| \, dx
 = 0
\end{eqnarray*}
by the uniform convergence of $(\varphi_n)_{n=1}^\infty$. The same
convergence result can easily be established for all $\rho_{\alpha\beta}$.
Carrying out an analogous computation for the inverse Fourier transform,
we see that the Fourier transform is a homeomorphism on $\ms{S}(\bR^d)$
onto itself. In other words, the Fourier transform is a topological
automorphism on $\ms{S}(\bR^d)$.

To wrap up the above discussion, we now collect some basic properties of
$\ms{S}(\bR^d)$. Of course, the topology on $\ms{S}(\bR^d)$ in
the following proposition is the strong topology.

\begin{prop}\label{properties-of-schwartz-functions}
The following are basic properties of the Schwartz space:
\begin{enumerate}[(a)]
 \item $\ms{S}(\bR^d)$ is closed under translation, rotation, reflection,
 differentiation, convolution, and the Fourier transform.
 \item The Fourier transform is a linear automorphism and
 a topological automorphism of $\ms{S}(\bR^d)$.
 \item For each pair of multi-indices $\alpha$ and $\beta$, the map
 $\varphi(x) \mapsto x^\alpha D^\beta \varphi(x)$ is continuous.
 \item If $\varphi \in \ms{S}(\bR^d)$, then $\tau_h \varphi \to
 \varphi$ as $h \to 0$.
 \item Let $h = (0,\ldots,h_n,\ldots,0)$ lie on the $n$th coordinate
 axis of $\bR^d$. For each $\varphi \in \ms{S}(\bR^d)$, we have
 \[
  \lim_{|h| \to 0} \frac{\varphi - \tau_h \varphi}{h_n}
  = \frac{\partial}{\partial x_n} \varphi.
 \]
\end{enumerate}
\end{prop}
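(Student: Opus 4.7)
The plan is to verify each item by leveraging the seminorm-based description of the strong topology: a sequence $(\varphi_n)$ converges to $\varphi$ in $\ms{S}(\bR^d)$ if and only if $\rho_{\alpha\beta}(\varphi_n - \varphi) \to 0$ for every pair of multi-indices $(\alpha,\beta)$. Part (a) has essentially been assembled already in the preceding discussion: closure under translation, rotation, reflection, and dilation follows by direct substitution into the defining seminorms (for rotations, the map $x\mapsto Ax$ with orthogonal $A$ preserves $|x|$, so one bounds $|x|^{|\alpha|}|D^\beta \varphi(Ax)|$ by a finite sum of seminorms of $\varphi$ via the chain rule); closure under differentiation is immediate from the definition; closure under Fourier transform is Proposition \ref{fourier-transform-of-schwartz-is-schwartz}; and closure under convolution was just noted via $\varphi * \phi = (\hat\varphi \hat\phi)^\vee$. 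Part (b) likewise recapitulates the content established just before the proposition, together with the Fourier inversion formula (Theorem \ref{fourier-inversion-formula}): linearity and bijectivity come from inversion, and continuity in both directions reduces (as in the computation just performed in the text) to moving $x^\alpha D^\beta$ inside the Fourier integral through Proposition \ref{differentiation-to-multiplication}, so that each seminorm $\rho_{\alpha\beta}(\hat\varphi_n - \hat\varphi)$ is controlled by a finite combination of seminorms $\rho_{\alpha'\beta'}(\varphi_n - \varphi)$.

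For (c), I would argue directly from the seminorm definition. Given multi-indices $\alpha,\beta$ and any target seminorm $\rho_{\gamma\delta}$, the Leibniz rule expands $x^\gamma D^\delta(x^\alpha D^\beta \varphi)$ as a finite linear combination of terms of the form $x^{\gamma+\alpha-\mu} D^{\delta+\beta-\nu}\varphi$ with constant coefficients, so $\rho_{\gamma\delta}(x^\alpha D^\beta \varphi)$ is bounded by a finite sum of $\rho_{\gamma',\delta'}(\varphi)$. Continuity of a linear map between Fr\'echet spaces is equivalent to such a seminorm estimate, which finishes (c).

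Part (d) is a uniform-continuity argument. Given $\varphi \in \ms{S}(\bR^d)$ and a seminorm $\rho_{\alpha\beta}$, write
\[
 x^\alpha\bigl(D^\beta\varphi(x) - D^\beta\varphi(x-h)\bigr) = x^\alpha \int_0^1 \nabla D^\beta \varphi(x - th)\cdot h\,dt,
\]
so $\rho_{\alpha\beta}(\varphi - \tau_h \varphi) \leq |h| \sup_{|t|\leq 1, 1\leq n\leq d}\|x^\alpha \partial_n D^\beta \varphi(x - th)\|_\infty$. The right-hand side is uniformly bounded in $t$ (using the decay of $\varphi$ to absorb the translation in $x^\alpha$ via $|x^\alpha| \leq 2^{|\alpha|}(|x-th|^{|\alpha|} + |h|^{|\alpha|})$), and tends to $0$ as $|h|\to 0$. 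Thus $\tau_h\varphi \to \varphi$ in every seminorm, hence strongly. For (e), I would write the Taylor remainder
\[
 \varphi(x) - \varphi(x - h) - h_n (\partial_n \varphi)(x) = \int_0^1 \bigl[\partial_n\varphi(x - th) - \partial_n\varphi(x)\bigr] h_n\,dt,
\]
divide by $h_n$, and apply the same majorization as in (d) to conclude that $\rho_{\alpha\beta}\bigl(h_n^{-1}(\varphi - \tau_h\varphi) - \partial_n\varphi\bigr) \to 0$ as $|h|\to 0$. The principal obstacle in all of this is purely bookkeeping: carefully tracking the Leibniz expansions and the $x^\alpha$ factors under translation so that every seminorm estimate closes back to a finite combination of seminorms of the original function. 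No deep new idea is required beyond what has already been developed.
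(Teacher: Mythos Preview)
Your proposal is correct and follows the same approach as the paper, which simply notes that (a) and (b) were established in the preceding discussion and that (c)--(e) follow easily from the seminorm description of strong convergence; you have supplied the details the paper omits, particularly the Leibniz expansion for (c) and the integral remainder arguments for (d) and (e). One small terminological slip: in this paper ``rotation'' means the modulation operator $e_h\varphi(x) = e^{2\pi i x\cdot h}\varphi(x)$ (see Proposition~\ref{symmetry-invariance-of-fourier-transform}(b)), not a spatial rotation $x\mapsto Ax$, so your orthogonal-matrix argument in (a) is addressing a different (though also true) closure property.
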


\begin{proof}
(a) and (b) were already established in this subsection. (c) is
a trivial consequence of the $\rho_{\gamma \delta}$-convergence
implying the $\rho_{(\gamma+\alpha) (\delta+\beta)}$-convergence.
Likewise, (d) and (e) are easy consequences of the definition
of strong convergence in $\ms{S}(\bR^d)$.
\end{proof}

It is important to note that strong convergence implies
$L^p$ convergence. In fact, the $L^p$-norm of a Schwartz function
is dominated by a finite linear combination of Schwartz norms:

\begin{theorem}\label{Linfty-estimate}
If $\varphi \in \ms{S}(\bR^d)$ and $p \in (0,\infty]$, then,
for some constant $C_{p,d}$ depending only on $p$ and $d$,
\[
 \|D^\beta \varphi\|_p \leq C_{p,d}
 \sum_{|\alpha| \leq \llbracket\frac{d+1}{p}\rrbracket+1} \rho_{\alpha\beta}(\varphi)
\]
whenever the right-hand side is finite. Here $\llbracket\frac{d+1}{p}\rrbracket$
is the greatest integer smaller than or equal to $\frac{d+1}{p}$.
\end{theorem}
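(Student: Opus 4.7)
The plan is to separate the argument into the easy case $p = \infty$ and the case $p < \infty$, and in the latter case to trade a factor of polynomial decay for integrability.

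For $p = \infty$, the estimate is immediate: $\|D^\beta \varphi\|_\infty = \rho_{0,\beta}(\varphi)$, so the sum on the right-hand side (which includes the term $\alpha = 0$) already dominates it with constant $1$. Notice also that $\llbracket (d+1)/\infty \rrbracket + 1 = 1$, so the summation range is nontrivial and the inequality holds.

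For $p \in (0,\infty)$, the key choice is $N = \llbracket (d+1)/p \rrbracket + 1$, so that $Np \geq d+1 > d$ and hence the integral $I_{p,N,d} = \int_{\bR^d} (1+|x|)^{-Np}\, dx$ is finite (by passing to polar coordinates). Then, for any point $x$, I would write
\[
 |D^\beta \varphi(x)| = (1+|x|)^{-N}\bigl[(1+|x|)^N |D^\beta \varphi(x)|\bigr]
 \leq (1+|x|)^{-N} \sup_{y \in \bR^d}(1+|y|)^N|D^\beta\varphi(y)|,
\]
take $p$-th powers, integrate in $x$, and extract $p$-th roots to obtain
\[
 \|D^\beta\varphi\|_p \leq I_{p,N,d}^{1/p}\sup_{y \in \bR^d}(1+|y|)^N|D^\beta\varphi(y)|.
\]

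The final step is to rewrite the weight $(1+|x|)^N$ in terms of monomials $x^\alpha$. A direct comparison gives $(1+|x|)^N \leq C_{d,N}\bigl(1 + \sum_{j=1}^d |x_j|^N\bigr)$ (handle $|x|\leq 1$ by crude bounds and $|x|\geq 1$ by $|x|\leq \sqrt d\,\max_j|x_j|$), so that $(1+|x|)^N|D^\beta\varphi(x)|$ is dominated by $C_{d,N}$ times $\rho_{0,\beta}(\varphi)+\sum_{j=1}^d \rho_{Ne_j,\beta}(\varphi)$, where $e_j$ is the $j$-th coordinate multi-index. Absorbing all constants into $C_{p,d}$ yields the stated inequality with the sum taken over $|\alpha|\leq N$. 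The only ``obstacle'' is the purely bookkeeping step of matching the exponent $N$ coming from integrability with the index $\llbracket (d+1)/p\rrbracket+1$ appearing in the statement, and this is immediate from the choice above.
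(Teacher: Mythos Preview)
Your proof is correct and follows the same core idea as the paper's: trade a polynomial weight for integrability, then bound the polynomial weight by monomials $x^\alpha$. The execution differs slightly. The paper splits $\bR^d$ into $\{|x|\le 1\}$ and $\{|x|\ge 1\}$, uses the bare weight $|x|^{d+1}$ on the outer region, and then has to pass through the inequality $(a+b)^{1/p}\le C_p(a^{1/p}+b^{1/p})$ and the step $|x|^{(d+1)/p}\le |x|^{\llbracket(d+1)/p\rrbracket+1}$ (valid because $|x|\ge 1$). Your choice of the global weight $(1+|x|)^{-N}$ with $N=\llbracket(d+1)/p\rrbracket+1$ is a bit cleaner: it avoids the domain split and the $p$-th root inequality, at the cost of needing the elementary bound $(1+|x|)^N\le C_{d,N}(1+\sum_j|x_j|^N)$, which you handle correctly. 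Either route lands on the same final estimate.
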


\begin{proof}
The proof is trivial for $p = \infty$, so we assume that $p < \infty$.
We let $\omega_d$ denote the volume of the $d$-dimensional unit ball
and break the $L^p$-norm of $D^\beta \varphi$ into two pieces:
\begin{eqnarray*}
 & & \|D^\beta \varphi\|_p \\
 &=& \left( \int_{|x| \leq 1} |D^\beta \varphi(x)|^p \, dx
 + \int_{|x| \geq 1} |D^\beta \varphi(x)|^p \, dx \right)^{1/p} \\
 &=& \left( \int_{|x| \leq 1} |D^\beta \varphi(x)|^p \, dx
 + \int_{|x| \geq 1} |x|^{d+1}|D^\beta \varphi(x)|^p|x|^{-(d+1)} \, dx \right)^{1/p} \\
 &\leq& \left( \int_{|x| \leq 1} \|D^\beta \varphi(x)\|_\infty^p \, dx
 + \int_{|x| \geq 1} |x|^{d+1}|D^\beta \varphi(x)|^p|x|^{-(d+1)} \, dx \right)^{1/p} \\
 &\leq& \left( \omega_d \|D^\beta \varphi(x)\|_\infty^p
 + \sup_{x \in \bR^d} |x|^{d+1}|D^\beta \varphi(x)|^p
 \int_{|x| \geq 1} |x|^{-(d+1)} \, dx \right)^{1/p}.
\end{eqnarray*}

We set $C_{p,d}'$ to be the maximum of $\omega_d$ and
$\int_{|x| \geq 1} |x|^{-(d+1)} \, dx$.
For an appropriately chosen constant $C_p''>0$, we have the following estimate:

\begin{eqnarray*}
 \|D^\beta \varphi\|_p
 &\leq&  (C_{p,d}')^p
 \left( \|D^\beta \varphi\|^p_\infty + \sup_{x \in \bR^d} |x|^{d+1} |D^\beta \varphi(x)|^p
 \right)^{1/p} \\
 &\leq& (C_{p,d}')^p C_p'' \left[
 \left( \|D^\beta \varphi\|^p_\infty \right)^{1/p}
 + \left(\sup_{x \in \bR^d} |x|^{d+1} |D^\beta \varphi(x)|^p\right)^{1/p} \right] \\
 &=& (C_{p,d}')^p C_p'' \left( \|D^\beta \varphi\|_\infty +
 \sup_{x \in \bR^d} |x|^{\frac{d+1}{p}} |D^\beta \varphi(x)| \right) \\
 &\leq& (C_{p,d}')^p C_p'' \left( \|D^\beta \varphi\|_\infty +
 \sup_{x \in \bR^d} |x|^{\llbracket\frac{d+1}{p}\rrbracket+1} |D^\beta \varphi(x)| \right)
\end{eqnarray*}

We now set set $C_{p,d}'''$ to be the minimum of the map
\[
 x \mapsto \sum_{|\alpha| = \llbracket\frac{d+1}{p}\rrbracket+1} |x^\alpha|
\]
on the unit sphere $|x|=1$. $C_{p,d}'''$ is positive, for this map has no zero on
the unit sphere. This, in particular, implies that
\begin{eqnarray*}
 |x|^{\llbracket\frac{d+1}{p}\rrbracket+1}
 \leq C_{p,d}'''\sum_{|\alpha| = \llbracket\frac{d+1}{p}\rrbracket+1} |x^\alpha|
\end{eqnarray*}
for all $|x|=1$, and we can extend the inequality to all $|x|>0$ by renormalizing
$x$. Since
\[
 \|D^\beta \varphi\|_p
 \leq (C_{p,d}')^p C_p'' \left( \|D^\beta \varphi\|_\infty +
 \sup_{x \in \bR^d} |x|^{\llbracket\frac{d+1}{p}\rrbracket+1} |D^\beta \varphi(x)| \right),
\]
we let
\[
 C_{p,d} = (C_{p,d}'(^p C_p'' \times \max\{1,C_{p,d}'''\}
\]
to conclude that
\begin{eqnarray*}
 \|D^\beta \varphi\|_p
 &\leq& C_{p,d}
 \left( \|D^\beta \varphi\|_\infty +
 \sup_{x \in \bR^d} \sum_{|\alpha| = \llbracket\frac{d+1}{p}\rrbracket+1} |x^\alpha|
 |D^\beta \varphi(x)| \right) \\
 &\leq& C_{p,d} \left( \rho_{0\beta}(\varphi)
 + \sum_{|\alpha| = \llbracket\frac{d+1}{p}\rrbracket+1}
 \rho_{\alpha\beta}(\varphi) \right) \\
 &\leq& C_{p,d} \sum_{|\alpha| \leq \llbracket\frac{d+1}{p}\rrbracket+1}
 \rho_{\alpha\beta}(\varphi).
\end{eqnarray*}
Therefore, the $L^p$-norm of a Schwartz function is dominated by a finite
linear combination of $\rho_{\alpha\beta}$-norms, as was to be shown.
\end{proof}
\index{Schwartz, Laurent!space|)}
\index{Fr\'{e}chet space|)}

\subsection{Tempered Distributions}

\index{Schwartz, Laurent!space, the dual of|see{tempered distributions}}
\index{tempered!distributions|(}
We are now ready to consider the continuous linear functionals
on $\ms{S}(\bR^d)$.

\begin{defin}
The space of \emph{tempered distributions} is the continuous
dual space $\ms{S}'(\bR^d)$ of $\ms{S}(\bR^d)$. In other words,
$\ms{S}'(\bR^d)$ consists of the bounded linear functionals
on $\ms{S}(\bR^d)$.
\end{defin}

In what sense are tempered distributions ``generalized functions''?
Given a function $f \in L^p(\bR^d)$ for some $1 \leq p \leq \infty$,
we set
\[
 l_f(\varphi) = \int f(x) \varphi(x) \, dx
\]
for each $\varphi \in \ms{S}(\bR^d)$. $L_f$ is clearly finite
and is a linear functional on $\ms{S}(\bR^d)$. To show that
$L_f$ is continuous, it suffices to establish the continuity of
$L_f$ at the origin. To this end, we pick a sequence
$(\varphi_n)_{n=1}^\infty$ in $\ms{S}(\bR^d)$ converging strongly to 0.
This, in particular, implies that $\rho_{\alpha0}(\varphi_n) \to 0$
as $n \to \infty$, whence Theorem \ref{Linfty-estimate} implies that
$\|\varphi_n\|_{p'} \to 0$ as $n \to \infty$. It now follows from
H\"{o}lder's inequality that
\[
 \lim_{n \to \infty} |l_f(\varphi_n)|
 \leq  \lim_{n \to \infty} \|f\|_p \|\varphi_n\|_{p'}
 = 0,
\]
and so $l_f \in \ms{S}'(\bR^d)$.

At this point, we take a moment to introduce a new notation.
Given a tempered distribution $u$ and a Schwartz function $\varphi$,
we shall denote the action of $u$ on $\varphi$ by
\[
 u(\varphi) = \langle \varphi, u \rangle.
\]
To see why we use the inner-product notation, we recall that the
\hyperref[hilbert-riesz-representation]{Hilbert-space
F. Riesz representation theorem}\index{representation theorem!F. Riesz, Hilbert-space version} yields an isomorphism
\[
 u \mapsto \langle \cdot, u \rangle_{\mc{H}}.
\]
from a Hilbert space $\mc{H}$ to its dual $\mc{H}^*$. Since
each bounded linear functional $l$ on $\mc{H}$ corresponds uniquely
to an element of $\mc{H}$, we may consider $L$ as an element of
$\mc{H}$ and write
\[
 lv = \langle v, L\rangle.
\]
Following this identification, we use the same inner-product
notation for the Schwartz space and bounded linear functionals thereon.
With this notation, we identify the $L^p$-function $f$ with the associated
bounded linear functional $L_f$ on $\ms{S}(\bR^d)$ and write $f$ to
denote the tempered distribution. In other words, we write
$\langle \varphi, f \rangle$ to denote $L_f(\varphi)$.

Let us consider a few more canonical examples of tempered distributions.
For each $f \in L^1(\bR^d)$, we can associate a complex Borel measure\index{Borel, \'{E}mile!measure}\index{measure!complex}
$\mu_f$ defined by
\[
 \mu_f(E) = \int_E f(x) dx
\]
for all Borel sets\index{Borel, \'{E}mile!set} $E \subseteq \bR^d$. In this sense, the space
$\ms{M}(\bR^d)$ of complex Borel measures on $\bR^d$ contains $L^1(\bR^d)$.
Now, for each $\mu \in \ms{M}(\bR^d)$, we consider the linear functional
\[
 l_\mu(\varphi) = \int \varphi(x) \, d\mu(x)
\]
on $\ms{S}(\bR^d)$. To show that $l_\mu$ is bounded, we pick a
sequence $(\varphi_n)_{n=1}^\infty$ of Schwartz functions converging strongly
to 0 and observe that
\[
 \lim_{n \to \infty} |l_\mu(\varphi_n)|
 \leq \lim_{n \to \infty} \|\varphi_n\|_1 \mu(\bR^d)
 = 0
\]
by H\"{o}lder's inequality. Similarly as above, we identify the measure $\mu$
with the associated tempered distribution.

An important special case is the \emph{Dirac
$\delta$-distribution},\index{Dirac $\delta$-distribution} defined for a fixed point $x \in \bR^d$
to be the measure
\[
 \delta^x(E) =
 \begin{cases}
  1 & \mbox{ if } x \in E \\
  0 & \mbox{ if } x \notin E.
 \end{cases}
\]
For each $\varphi \in \ms{S}(\bR^d)$, the tempered distribution
$\delta^x$ satisfies the identity
\[
 \langle \varphi, \delta^x  \rangle = \varphi(x).
\]

A basic property of the Dirac $\delta$-distribution is that
$\delta^0 * \psi = \psi$ for all Schwartz functions $\psi$.
See \S\S\ref{s-operations-on-tempered-distributions} for the
definition of convolution of a tempered distribution and
a Schwartz function. The property is a trivial consequence
of the definitions presented in the subsection.
Moreover, the Dirac $\delta$-distributions are ``atomic'' examples of distributions
with point support. See \S\S\ref{fr-tempered-distributions-with-point-support}\index{support!of tempered distributions}
for the precise statement of the theorem,
as well as the definition of support of a distribution.

Yet wider classes of functions and measures are tempered distributions.
We recall that $\varphi:\bR^d \to \bC$ is a Schwartz function if
and only if $\varphi$ satisfies the growth condition
\[
 \sup_{x \in \bR^d} \langle x \rangle^n |D^\beta \varphi(x)| < \infty
\]
for each positive integer $n$ and every multi-index $\beta$, where
\[
 \langle x \rangle = \sqrt{1+x^2}.
\]
Therefore, if a measurable function $f:\bR^d \to \bC$ satisfies
$\langle x \rangle^{-n}f(x) \in L^p(\bR^d)$ for some postive integer $n$
and $1 \leq p < \infty$, then
\[
 \langle \varphi, f \rangle = \int f(x) \varphi(x) \, dx
 = \int [ \langle x \rangle^{-n} f(x) ] [ \langle x \rangle^n \varphi(x) ] \, dx
\]
is a tempered distribution. In this case, we say that
the function $f$ is a \emph{tempered $L^p$-function}.\index{tempered!Lp-function@$L^p$-function} Similarly, we say
that a Borel measure\index{Borel, \'{E}mile!measure} $\mu$, real or complex, is a \emph{tempered measure}\index{tempered!measure}
if the total variation\index{measure!total variation of} $|\mu|$ of $\mu$ satisfies the bound
\[
 \int \langle x \rangle^{-n} \, d|\mu|(x) < \infty
\]
for some positive integer $n$. If $\mu$ is a tempered measure, then
\[
 \langle \varphi, \mu \rangle = \int \varphi(x) \, d\mu(x)
\]
is a tempred distribution.

In general, a linear functional $l$ on $\ms{S}(\bR^d)$ is a tempered
distribution precisely in case $l$ is bounded by a finite linear
combination of Schwartz norms.

\begin{theorem}\label{iff-tempered-distribution}\index{tempered!distributions, necessary and sufficient condition for}
A linear functional $l$ on $\ms{S}(\bR^d)$ is a tempered distribution
if and only if there are integers $m$ and $n$ and a constant $C>0$
such that
\[
 |\langle \varphi, l \rangle| \leq
 C \sum_{\substack{|\alpha| \leq m \\ |\beta| \leq n}}
 \rho_{\alpha\beta}(\varphi)
\]
for all $\varphi \in \ms{S}(\bR^d)$.
\end{theorem}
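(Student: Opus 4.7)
The sufficiency direction will be the easy half: assuming the estimate, I would just verify continuity at the origin (which suffices by linearity). If $(\varphi_k)_{k=1}^\infty$ converges strongly to $0$ in $\ms{S}(\bR^d)$, then by the definition of the Fr\'{e}chet topology, $\rho_{\alpha\beta}(\varphi_k) \to 0$ for each pair of multi-indices. The finite sum in the hypothesis then forces $|\langle \varphi_k, l \rangle| \to 0$, and so $l \in \ms{S}'(\bR^d)$.

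For the necessity direction, I would argue by contradiction in the usual Fr\'{e}chet-space style. Suppose no such $m$, $n$, $C$ exist. Then for every positive integer $k$, applying the failed estimate with $m = n = k$ and $C = k$ produces a Schwartz function $\varphi_k$ such that
\[
 |\langle \varphi_k, l \rangle| > k \sum_{|\alpha|, |\beta| \leq k} \rho_{\alpha\beta}(\varphi_k).
\]
Setting $c_k$ equal to the right-hand side and $\psi_k = \varphi_k / c_k$, I obtain functions satisfying $|\langle \psi_k, l \rangle| > 1$ while $\sum_{|\alpha|, |\beta| \leq k} \rho_{\alpha\beta}(\psi_k) = 1/k$.

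The main observation is then that $(\psi_k)_{k=1}^\infty$ converges strongly to zero in $\ms{S}(\bR^d)$: for any fixed pair of multi-indices $(\alpha_0, \beta_0)$, every $k \geq \max(|\alpha_0|, |\beta_0|)$ yields $\rho_{\alpha_0\beta_0}(\psi_k) \leq 1/k$, so $\rho_{\alpha_0\beta_0}(\psi_k) \to 0$, which is exactly strong convergence to $0$ in the Fr\'{e}chet topology defined earlier in the section. But then the assumed continuity of $l$ gives $\langle \psi_k, l \rangle \to 0$, contradicting the lower bound $|\langle \psi_k, l \rangle| > 1$.

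The argument has no real obstacle, so the only thing to be careful about is the bookkeeping in the normalization step (ensuring $c_k > 0$, which follows because $|\langle \varphi_k, l\rangle| > 0$ forces $\varphi_k \neq 0$ and thus at least one of the seminorms $\rho_{\alpha\beta}(\varphi_k)$ is positive provided we include $\rho_{00}$ in the sum) and noting that the convergence criterion used is exactly the one packaged into Definition \ref{defin-frechet-space} applied to the sequence of seminorms $(\rho_n)_{n=1}^\infty$ obtained by enumerating $\{\rho_{\alpha\beta}\}$.
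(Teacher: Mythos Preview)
Your proof is correct. The sufficiency half matches the paper's one-line observation, and your contradiction argument for necessity is sound, including the care you take with the normalization $c_k > 0$.

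The paper, however, proves necessity \emph{directly} rather than by contradiction. It observes that the sets
\[
 N_{\ve,m,n} = \Bigl\{\varphi : \sum_{\substack{|\alpha| \leq m \\ |\beta| \leq n}} \rho_{\alpha\beta}(\varphi) < \ve \Bigr\}
\]
together with their translates form a subbasis for the strong topology, so continuity of $l$ at the origin furnishes particular $\ve, m, n$ with $|\langle \varphi, l\rangle| \leq 1$ for all $\varphi \in N_{\ve,m,n}$. A rescaling $\varphi \mapsto \frac{\ve_0}{\|\varphi\|}\varphi$ (with $\|\varphi\|$ the finite seminorm sum and $\ve_0 < \ve$) then pushes an arbitrary $\varphi$ into $N_{\ve,m,n}$ and yields the estimate with $C = \ve_0^{-1}$. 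The two arguments are dual to each other: the paper uses the neighborhood-basis description of continuity at a point, while you use the sequential description. The direct argument is marginally shorter and avoids the bookkeeping of building a sequence, but your approach has the advantage of making the failure mode completely explicit.
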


\begin{proof}
It is clear that the existence of such $m$ and $n$ implies the continuity
of $l$. Conversely, we suppose that $l$ is continuous. Observe that
the collection of sets
\[
 N_{\ve,m,n}
 = \left\{\varphi : \sum_{\substack{|\alpha| \leq m \\ |\beta| \leq n}}
 \rho_{\alpha\beta}(\varphi) < \ve \right\}
\]
for all $\ve>0$ and $m,n \in \bN$ and their translates form a subbasis
of the strong topology on $\ms{S}(\bR^d)$. Therefore, we can find $\ve$,
$m$, and $n$ such that $|\langle \varphi, l \rangle| \leq 1$ on
$N_{\ve,m,n}$.

For each $\varphi \in \ms{S}(\bR^d)$, we set
\[
 \|\varphi\| = \sum_{\substack{|\alpha| \leq m \\ |\beta| \leq n}}
 \rho_{\alpha\beta}(\varphi).
\]
Fix $\ve_0 \in (0,\ve)$ and note that
\[
 \varphi_0 = \frac{\ve_0}{\|\varphi\|} \varphi \in N_{\ve,m,n}
\]
for all $\varphi \in \ms{S}(\bR^d)$. Therefore,
we have
\[
 |\langle \varphi, l \rangle|
 = \frac{\ve_0}{\|\varphi\|} |\langle \varphi_0, l \rangle|
 \leq 1,
\]
or
\[
 |\langle \varphi, l \rangle| \leq
 \frac{1}{\ve_0} \|\varphi\|
 = \sum_{\substack{|\alpha| \leq m \\ |\beta| \leq n}}
 \rho_{\alpha\beta}(\varphi).
\]
It follows that $C = \ve_0^{-1}$ is the desired constant,
and the proof is complete.
\end{proof}

\subsection{Operations on Tempered Distributions}\label{s-operations-on-tempered-distributions}

Generalized functions, of course, would be mere abstract nonsense
if they existed only for the sake of generalization. Before we can
study applications of tempered distributions, we must extend
the familiar operations in analysis to this general context.
We thus return to the six operations in harmonic analysis that
we have discussed in the beginning of this section: translation,
rotation, reflection, dilations, convolution, differentiation,
and the Fourier transform.

The guiding principle for the extensions
we shall study is that \emph{an operation on a generalized function
manifests itself by operating on the test functions.}
The action of a tempered distribution is
studied by ``testing it out'' on each Schwartz function.
It is therefore natural to define operations on tempered
distributions by the action of the operations on Schwartz functions:

\index{Fourier transform!of tempered distributions|(}
\begin{defin}
Let $u$ be a tempered distribution.
\begin{enumerate}[(a)]
 \item Given $h \in \bR^d$, the \emph{translation}\index{translation!of tempered distributions} of $u$
 with respect to $h$ is defined by
 \[
  \langle \varphi, \tau_h u \rangle
 = \langle \tau_{-h} \varphi, u \rangle
 \]
 for each $\varphi \in \ms{S}(\bR^d)$.
 \item Given $h \in \bR^d$, the \emph{rotation}\index{rotation!of tempered distributions} of $u$
 with respect to $h$ is defined by
 \[
  \langle \varphi, e_h u \rangle
 = \langle e_{-h} \varphi, u \rangle
 \]
 for each $\varphi \in \ms{S}(\bR^d)$.
 \item The \emph{reflection}\index{reflection!of tempered distributions} of $u$ is defined by
 \[
  \langle \varphi, \tilde{u} \rangle
 = \langle \tilde{\varphi}, u \rangle
 \]
 for each $\varphi \in \ms{S}(\bR^d)$.
 \item The \emph{dilation}\index{dilation!of tempered distributions} of $u$ is defined by
 \[
  \langle \varphi, \delta_a u \rangle
 = \langle a^{-d}\delta_{a^{-1}} \varphi , u \rangle
 \]
 for each $a > 0$.
 \item Given $\psi \in \ms{S}(\bR^d)$, the \emph{convolution}\index{convolution!of tempered distributions} of
 $u$ and $\psi$ is defined by
 \[
  \langle \varphi, u * \psi \rangle
 = \langle \tilde{\psi} * \varphi, u \rangle
 \]
 for each $\varphi \in \ms{S}(\bR^d)$.
 \item Given a multi-index $\beta$, the \emph{partial derivative}\index{differentiation!of tempered distributions} of $u$
 with respect to $\beta$ is defined by
 \[
  \langle \varphi, D^\beta u \rangle
 = (-1)^{|\beta|} \langle D^\beta \varphi , u \rangle 
 \]
 for each $\varphi \in \ms{S}(\bR^d)$.
 \item The \emph{Fourier transform} of $u$ is defined by
 \[
  \langle \varphi, \hat{u} \rangle
 = \langle \hat{\varphi} , u \rangle
 \]
 for each $\varphi \in \ms{S}(\bR^d)$.
\end{enumerate}
\end{defin}

The above definitions are direct generalizations of their
analogues for ordinary functions. If $u$ is an $L^p$-function, then
\[
 \langle \varphi, \hat{u} \rangle
 = \int \hat{u}(t) \varphi(t) \, dt
 = \int u(t) \hat{\varphi}(t) \, dt
 = \langle \hat{\varphi}, u \rangle
\]
by the multiplication formula (Theorem \ref{multiplication-formula}).\index{multiplication formula}
With this new definition of Fourier transform,
translation, rotation, and reflection are defined in a way that
makes Proposition \ref{symmetry-invariance-of-fourier-transform}\index{Fourier transform!symmetry invariance of|(} true.
Differentiation of tempered distribution is also straightforward, as
we have
\begin{eqnarray*}
 \langle \varphi, D^\beta u \rangle
 &=& \int [D^\beta u(x)] \varphi(x) \, dx \\
 &=& (-1)^{|\beta|} \int u(x) [D^\beta \varphi(x)] \, dx \\
 &=& (-1)^{|\beta|} \langle D^\beta \varphi , u \rangle
\end{eqnarray*}
via integration by parts, provided that $u$ and $\varphi$ 
have enough smoothness conditions. Finally, \hyperref[fubini-tonelli]{Fubini's theorem}
shows that
\[
  \langle \varphi, u * \psi \rangle
 = \int (u*\psi)(x)\varphi(x) \, dx = \int u(x)(\tilde{\psi} * \varphi)(x) \, dx
 = \langle \tilde{\psi} * \varphi, u \rangle.
\]

It is easy to see that the space of tempered distributions is
closed under the six operations. Furthermore,
the basic properties of the six operations are preserved as well. In the
following proposition, we state the generalizations of Proposition
\ref{symmetry-invariance-of-fourier-transform}, Proposition
\ref{differentiation-to-multiplication},\index{Fourier transform!differentiation of} Proposition
\ref{fourier-transform-of-schwartz-is-schwartz}, and Theorem
\ref{fourier-inversion-formula}\index{Fourier inversion! of tempered distributions} in the context of tempered distributions.
The proof of the following proposition is a direct adaptation of
the corresponding statement on the Schwartz space and is thus omitted.

\begin{prop}
Let $u \in \ms{S}'(\bR^d)$, $\psi \in \ms{S}(\bR^d)$,
and $h \in \bR^d$.
\begin{enumerate}[(a)]
 \item $\widehat{\tau_h u} = e_{-h} \hat{u}$.
 \item $\widehat{e_h u} = \tau_h \hat{u}$.
 \item $\hat{\tilde{u}} = \tilde{\hat{u}}$
 \item If $P$ is a polynomial in $d$ variables, then
  \[
  P(D)\hat{f}(\xi) = \ms{F}(P(-2 \pi i x)f(x))(\xi)
  \hone \mbox{and} \hone
  \ms{F}(P(D)f)(\xi) = P(2 \pi i \xi)\hat{f}(\xi).
 \]
 \item The \emph{inverse Fourier transform}, defined by
 \[
  \langle \varphi, u^\vee \rangle
 = \langle \varphi^\vee , \varphi \rangle
 \]
 for each $\varphi \in \ms{S}(\bR^d)$, is well-defined
 on $\ms{S}'(\bR^d)$. We have $(\hat{u})^\vee = u$, \linebreak
 and the Fourier transform is a linear automorphism
 on $\ms{S}'(\bR^d)$.
\end{enumerate}
\end{prop}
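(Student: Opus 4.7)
The plan is to establish each identity by exploiting the defining feature of operations on tempered distributions: every operation on $u \in \ms{S}'(\bR^d)$ is defined so that, when $u$ happens to be a Schwartz function (regarded as a distribution), the identity reduces to the corresponding identity on Schwartz functions. Thus each item of the proposition will be proved by the same three-step move: unfold the definition of the distributional operation on the left-hand side, invoke the Schwartz-space identity from Proposition \ref{symmetry-invariance-of-fourier-transform}, Proposition \ref{differentiation-to-multiplication}, or Theorem \ref{fourier-inversion-formula}, and then refold the definition to identify the right-hand side.

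To illustrate with (a), I would compute, for an arbitrary test function $\varphi \in \ms{S}(\bR^d)$,
\[
 \langle \varphi, \widehat{\tau_h u} \rangle
 = \langle \hat{\varphi}, \tau_h u \rangle
 = \langle \tau_{-h}\hat{\varphi}, u \rangle,
\]
then use the Schwartz-space identity $\tau_{-h}\hat{\varphi} = \widehat{e_{-h}\varphi}$ (Proposition \ref{symmetry-invariance-of-fourier-transform}(b)) to rewrite this as $\langle \widehat{e_{-h}\varphi}, u \rangle = \langle e_{-h}\varphi, \hat{u} \rangle = \langle \varphi, e_{-h}\hat{u}\rangle$; parts (b) and (c) are entirely analogous. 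For (d), given a monomial $x^\alpha$ I would compute $\langle \varphi, P(D)\hat{u}\rangle = (-1)^{|\alpha|}\langle P(D)\varphi, \hat{u}\rangle = (-1)^{|\alpha|}\langle \widehat{P(D)\varphi}, u\rangle$, then apply Proposition \ref{differentiation-to-multiplication} on the Schwartz function $\varphi$ to convert $\widehat{P(D)\varphi}$ into $P(2\pi i\xi)\hat{\varphi}$ (and vice versa), and finally refold; linearity in $P$ extends the computation from monomials to arbitrary polynomials.

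For (e), the key point is first to check that $u^\vee$ is in fact a tempered distribution, which is immediate from Theorem \ref{iff-tempered-distribution} and the fact that $\varphi \mapsto \varphi^\vee$ is continuous on $\ms{S}(\bR^d)$ (part (b) of Proposition \ref{properties-of-schwartz-functions}). The inversion identity then follows from
\[
 \langle \varphi, (\hat{u})^\vee \rangle
 = \langle \varphi^\vee, \hat{u} \rangle
 = \langle \widehat{\varphi^\vee}, u \rangle
 = \langle \varphi, u \rangle,
\]
where the last equality uses the Schwartz inversion formula $\widehat{\varphi^\vee} = \varphi$. Symmetrically, $(u^\vee)^\wedge = u$, so $\ms{F}$ is a bijection on $\ms{S}'(\bR^d)$ with inverse $\vee$; linearity is inherited from the linearity of $\ms{F}$ on test functions via the pairing, so $\ms{F}$ is a linear automorphism.

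I do not anticipate a serious obstacle: the entire proposition is a transposition exercise, and the only item requiring any care is a bookkeeping check of signs and conjugate conventions in the definitions of $\tau_h, e_h, \tilde{\cdot}, \delta_a$ so that the duality in each line produces precisely the sign pattern recorded in Proposition \ref{symmetry-invariance-of-fourier-transform}. Once those sign conventions are fixed, (a)-(e) all reduce to one-line verifications.
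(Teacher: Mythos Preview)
Your proposal is correct and matches the paper's approach: the paper explicitly omits the proof, stating that it ``is a direct adaptation of the corresponding statement on the Schwartz space,'' which is exactly the unfold--apply Schwartz identity--refold strategy you describe. Your sketch fills in precisely the routine duality computations the paper leaves to the reader.
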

\index{Fourier transform!symmetry invariance of|)}
\index{Fourier transform!of tempered distributions|)}

\subsection{Convolution Operators and Fourier Multipliers}\label{s-convolution-operators-and-fourier-multipliers}

The reader may have noted that we have not said anything
about the convolution operation. It is an odd one,
indeed: for one, convolution is an operation between
a tempered distribution and a Schwartz function,
whereas all the other operations were those of
two tempered distributions. As such, the convolution
operation possesses a number of special properties
we now study.

We first observe that the convolution of a tempered
distribution and a Schwartz function is not only
a tempered distribution, but also a function.

\index{convolution!of tempered distributions|(}
\begin{theorem}\label{convolution-with-tempered-distributions}
If $u \in \ms{S}'(\bR^d)$ and $\psi \in \ms{S}(\bR^d)$, then
$u * \psi$ is a $\ms{C}^\infty$ map, in the sense that the function
\[
 f(x) = (u * \psi)(x) = \langle \tau_x \tilde{\psi} , u \rangle
\]
is in $\ms{C}^\infty(\bR^d)$. Furthermore, each multi-index $\beta$ admits
constants $C_\beta$ and $n_\beta$ such that
\[
 |D^\beta (u * \psi)(x)| \leq C_\beta \langle x \rangle^{n_\beta},
\]
whence $u * \psi$ is a tempered distribution.
\end{theorem}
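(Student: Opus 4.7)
The plan is to exhibit the function $f(x) = \langle \tau_x \tilde{\psi}, u\rangle$, show it is smooth with the predicted growth, and then verify that it represents the tempered distribution $u*\psi$ from the general definition. First, since $\ms{S}(\bR^d)$ is closed under reflection and translation, $\tau_x \tilde{\psi} \in \ms{S}(\bR^d)$ for every $x \in \bR^d$, so $f$ is well-defined pointwise. Continuity of $f$ follows from Proposition \ref{properties-of-schwartz-functions}(d): the map $x \mapsto \tau_x \tilde{\psi}$ is continuous from $\bR^d$ into the strong topology on $\ms{S}(\bR^d)$, and $u$ is continuous on $\ms{S}(\bR^d)$.

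To obtain smoothness, I would induct on $|\beta|$ using Proposition \ref{properties-of-schwartz-functions}(e). For $h = h_n e_n$ small, the difference quotient
\[
 \frac{\tau_{x+h}\tilde{\psi} - \tau_x \tilde{\psi}}{h_n}
\]
converges strongly in $\ms{S}(\bR^d)$ to $-\tau_x (\partial_n \tilde{\psi})$ as $h_n \to 0$. Applying $u$ to both sides and iterating yields
\[
 D^\beta f(x) = (-1)^{|\beta|} \langle \tau_x D^\beta \tilde{\psi}, u \rangle,
\]
and each such derivative is continuous by the same argument used for $f$.

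For the polynomial growth, I invoke Theorem \ref{iff-tempered-distribution} to fix constants $C$, $m$, $n$ such that $|\langle \varphi, u\rangle| \leq C \sum_{|\alpha|\leq m,\, |\gamma|\leq n} \rho_{\alpha\gamma}(\varphi)$ for all $\varphi \in \ms{S}(\bR^d)$. Applied to $\varphi = \tau_x D^\beta \tilde{\psi}$, a change of variables together with a binomial expansion of $(z+x)^\alpha$ bounds $\rho_{\alpha\gamma}(\tau_x D^\beta \tilde{\psi})$ by a polynomial in $|x|$ whose coefficients involve Schwartz norms of $\psi$. Hence $|D^\beta f(x)|$ is dominated by a polynomial in $|x|$, which in turn is bounded by $C_\beta \langle x \rangle^{n_\beta}$ for suitable constants. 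The tempered-distribution claim follows because smooth functions of polynomial growth pair against Schwartz functions.

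The main obstacle, and the last step, is to verify that the function $f$ just constructed really \emph{is} the distribution $u*\psi$ defined by $\langle \varphi, u*\psi\rangle = \langle \tilde{\psi}*\varphi, u\rangle$. The key identity is
\[
 (\tilde{\psi}*\varphi)(y) = \int \varphi(x)\, (\tau_x \tilde{\psi})(y)\, dx,
\]
so I would like to pull $u$ through the $x$-integral and obtain $\int \varphi(x) f(x)\, dx$. To justify this interchange without invoking full Bochner-integral machinery, I plan to approximate the integral by Riemann sums $\sum_j \varphi(x_j) \tau_{x_j} \tilde{\psi}\, \Delta x_j$ and show these converge to $\tilde{\psi}*\varphi$ in the strong topology of $\ms{S}(\bR^d)$; the rapid decay of $\varphi$ and Schwartz regularity of $\psi$ give the necessary uniform control on every seminorm $\rho_{\alpha\gamma}$. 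Continuity of $u$ then pushes the limit inside, producing the desired identity $\langle \varphi, u*\psi \rangle = \int \varphi(x) f(x)\, dx$ and completing the proof.
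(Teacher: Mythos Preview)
Your proposal is correct and follows essentially the same route as the paper: smoothness via the strong-topology difference-quotient limit from Proposition~\ref{properties-of-schwartz-functions}(e), polynomial growth from Theorem~\ref{iff-tempered-distribution} together with the estimate $\rho_{\alpha\gamma}(\tau_x D^\beta\tilde\psi)\le$ polynomial in $x$, and identification of $f$ with $u*\psi$ by a Riemann-sum approximation of $\tilde\psi*\varphi$ converging in $\ms{S}(\bR^d)$. The paper carries out exactly these three steps in the same order with the same tools.
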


\begin{proof}
We first show that $f \in \mc{C}^\infty(\bR^d)$. For each $1 \leq n \leq d$,
we write $h^n = (0,\ldots,h_n,\ldots,0)$. Proposition
\ref{properties-of-schwartz-functions}(e) implies that
\[
 \lim_{|h| \to 0} \frac{\tau_{x+h^n} \tilde{\psi} - \tau_x \tilde{\psi}}{h_n}
 = -\tau_x\left( \frac{\partial \tilde{\psi}}{\partial x_n} \right).
\]
in the strong topology. By continuity of $u$, we have
\[
 \lim_{h_n \to 0} \frac{f(x)}{h_n}
 = \lim_{h_n \to 0}
 \frac{\langle \tau_{x+h^n} \tilde{\psi} - \tau_x \tilde{\psi}, \rangle}{h_n}
 = \left< -\tau_x \left( \frac{\partial \tilde{\psi}}{\partial x_n} \right)
 , u \right>.
\]
We carry out an analogous argument for each $n$ and conclude from
Proposition \ref{properties-of-schwartz-functions}(d)
that $f \in \mc{C}^1(\bR^d)$.
Since $\frac{\partial \tilde{\psi}}{\partial x_d}$
is a Schwartz function, we can reiterate the argument to prove that
$D^\beta f$ exists and is continuous for each multi-index $\beta$.

We now show that each $D^\beta f$ satisfies the slow-growth
condition for a tempered distribution.
Theorem \ref{iff-tempered-distribution} furnishes a constant $C>0$
and integers $m$ and $n$ such that
\begin{equation}\label{oh-hi}
 |f(x)| = |\langle \tau_x \tilde{\psi}, u \rangle|
 \leq \sum_{\substack{|\alpha| \leq m \\ |\beta| \leq n}}
 \rho_{\alpha\beta}(\tau_x \tilde{\psi}).
\end{equation}
Since
\[
 \rho_{\alpha\beta}(\tau_x\tilde{\psi})
 = \sup_{y \in \bR^d} |y^\alpha (D^\beta \tilde{\psi})(y - x)|
 = \sup_{y \in \bR^d} |(y+x)^\alpha (D^\beta \tilde{\psi})(y)|
\]
is bounded by a polynomial in $x$,
\[
 (D^\beta f)(x) = (-1)^{|\beta|} \langle \tau_x D^\beta \tilde{\psi},
 u \rangle
\]
combined with (\ref{oh-hi}) establishes the claim.

It now remains to show that $u * \psi$ \emph{is} the function $f$.
To this end, it suffices to prove the identity
\[
 \langle \varphi, u*\psi \rangle
 = \int \varphi(t) f(t) \, dt.
\]
To this end, we observe that
\begin{eqnarray*}
\langle \varphi, u*\psi \rangle
&=& \langle \tilde{\psi} * \varphi , u \rangle \\
&=& \left< \int \tilde{\psi}(x-t)\varphi(t) \, dt , u \right> \\
&=& \left< \int (\tau_t \tilde{\psi})(x) \varphi(t) \, dt , u \right>
\end{eqnarray*}
We now note that the Riemann sums of the last integral converge
in the strong topology, whence
\[
 \left< \int (\tau_t \tilde{\psi})(x) \varphi(t) \, dt , u \right>
 = \int \langle \tau_t \tilde{\psi}, u \rangle , \varphi(t) \, dt
 = \int f(t) \varphi(t) \, dt.
\]
The claim follows from the linearity and continuity of $u$.
\end{proof}
\index{convolution!of tempered distributions|)}

We now recall the convolution theorem (Theorem \ref{convolution-theorem}),
which describes the close relationship between pointwise multiplication
and the convolution operation. We shall need a generalization of these theorems
in the context of distribution theory. To this end,
we must make sense of pointwise multiplication
between a tempered distribution and a Schwartz function.

\begin{defin}\index{pointwise multiplication of tempered distributions}
If $u \in \ms{S}'(\bR^d)$ and $\psi \in \ms{S}(\bR^d)$, then
the \emph{pointwise multiplication} of $u$ and $\psi$ is defined to be
\[
 \langle \varphi, u\psi \rangle = \langle \psi\varphi, u \rangle
\]
for each $\varphi \in \ms{S}(\bR^d)$. 
\end{defin}

The following result is now a trivial consequence of the definition and the
properties of tempered distributions:

\begin{theorem}[Convolution theorem on $\ms{S}'(\bR^d)$]\label{convolution-theorem-on-distributions}\index{convolution theorem!on tempered distributions}
If $u \in \ms{S}'(\bR^d)$ and $\psi \in \ms{S}(\bR^d)$, then
\[
 \widehat{u * \psi} = \hat{u}\hat{\psi}
 \htwo \mbox{and} \htwo
 \widehat{u\psi} = \hat{u} * \hat{\psi}. 
\]
\end{theorem}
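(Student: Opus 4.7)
The plan is to unwind the distributional definitions and reduce each identity to its already-proved analogue on the Schwartz space. Fix $u \in \ms{S}'(\bR^d)$, $\psi \in \ms{S}(\bR^d)$, and an arbitrary test function $\varphi \in \ms{S}(\bR^d)$; I will show that both sides of each identity pair with $\varphi$ to give the same scalar.

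For the first identity $\widehat{u*\psi} = \hat{u}\hat{\psi}$, I would compute
\[
 \langle \varphi, \widehat{u*\psi}\rangle
 = \langle \hat{\varphi}, u*\psi\rangle
 = \langle \tilde{\psi}*\hat{\varphi}, u\rangle
\]
by the definitions of Fourier transform and convolution on $\ms{S}'(\bR^d)$. On the other hand,
\[
 \langle \varphi, \hat{u}\hat{\psi}\rangle
 = \langle \hat{\psi}\varphi, \hat{u}\rangle
 = \langle \widehat{\hat{\psi}\varphi}, u\rangle
\]
by the definitions of pointwise multiplication and Fourier transform. So it suffices to verify $\tilde{\psi}*\hat{\varphi} = \widehat{\hat{\psi}\varphi}$, a statement about Schwartz functions. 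By Theorem \ref{convolution-theorem-preliminary} (applied in the Schwartz setting, where it is clean because everything is in $L^1$), $\widehat{\hat{\psi}\varphi} = \widehat{\widehat{\hat{\psi}}} * \hat{\varphi}$; and by the Fourier inversion formula (Theorem \ref{fourier-inversion-formula}) together with Proposition \ref{symmetry-invariance-of-fourier-transform}(c), $\widehat{\widehat{\hat{\psi}}} = \tilde{\psi}$. This closes the first identity.

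For the second identity $\widehat{u\psi} = \hat{u}*\hat{\psi}$, the same template applies: I would pair each side with $\varphi$ to obtain
\[
 \langle \varphi, \widehat{u\psi}\rangle
 = \langle \psi\hat{\varphi}, u\rangle
 \qquad\text{and}\qquad
 \langle \varphi, \hat{u}*\hat{\psi}\rangle
 = \langle \widehat{\tilde{\hat{\psi}}*\varphi}, u\rangle.
\]
Thus everything reduces to the Schwartz identity $\psi\hat{\varphi} = \widehat{\tilde{\hat{\psi}}*\varphi}$, which follows from the Schwartz-space form of Theorem \ref{convolution-theorem-preliminary} and the computation $\widehat{\tilde{\hat{\psi}}} = \tilde{\widehat{\hat{\psi}}} = \tilde{\tilde{\psi}} = \psi$, again via inversion and Proposition \ref{symmetry-invariance-of-fourier-transform}(c).

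The proof is entirely formal, so there is no real obstacle beyond bookkeeping: the main thing to keep straight is the bookkeeping for reflections and for the placement of hats, in particular that our distributional definitions of $\hat{u}$, $u*\psi$, and $u\psi$ are tuned so that the identity extends from $\ms{S}$ to $\ms{S}'$ without any additional twisting. One only needs to invoke Theorem \ref{convolution-with-tempered-distributions} implicitly to know that $u*\psi$ is itself a tempered distribution whose Fourier transform is defined, and the result follows.
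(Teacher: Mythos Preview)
Your approach is correct and is exactly the kind of direct unwinding the paper has in mind; the paper itself does not spell out a proof, declaring the result ``a trivial consequence of the definition and the properties of tempered distributions.'' Your argument supplies precisely those details.

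One small bookkeeping slip in the first half: when you apply the Schwartz-level convolution theorem in the form $\widehat{fg} = \hat{f}*\hat{g}$ to $\hat{\psi}\varphi$, the right-hand side should be $\widehat{\hat{\psi}}*\hat{\varphi}$ (two hats on $\psi$, not three), and then $\widehat{\hat{\psi}} = \tilde{\psi}$ by Fourier inversion. Your displayed formula has an extra hat and the subsequent claim $\widehat{\widehat{\hat{\psi}}} = \tilde{\psi}$ is off by one application of $\ms{F}$; these two slips cancel, so your conclusion $\widehat{\hat{\psi}\varphi} = \tilde{\psi}*\hat{\varphi}$ is correct. The second half is clean.
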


A higher, more abstract viewpoint is now in order.
Since the convolution between a tempered distribution and a Schwartz function
produces a function, we can consider the convolution operation as an
operator on $\ms{S}(\bR^d)$. Since $\ms{S}(\bR^d)$ is a dense subspace in
many function spaces, this operator often can be extended via
Theorem \ref{norm-preserving-extension}.

\index{convolution!operator|(}
\begin{defin}
Let $V$ be a normed linear space of measurable functions on $\bR^d$ containing
$\ms{S}(\bR^d)$ as a dense subspace. A \emph{convolution operator} on
$V$ is a linear operator on $V$ that can be written as
\[
 T\psi = u * \psi
\]
for all $f \in \ms{S}(\bR^d)$, where $u$ is a tempered distribution determined
uniquely by $T$.
\end{defin}

Of course, the most important cases of convolution operators are those
between Lebesgue spaces.

\begin{defin}
Given $1 \leq p,q \leq \infty$, the space $\ms{M}_{p,q}$ is defined to be
the collection of bounded convolution operators on $L^p(\bR^d)$ into
$L^q(\bR^d)$.
\end{defin}

Since there is a one-to-one correspondence between each convolution
operator and its associated tempered distribution,
we shall abuse the notation and speak of tempered distributions
\emph{in} $\ms{M}_{p,q}$.

We shall primarily be concerned with the space $\ms{M}_{p,p}$.
Observe that $\ms{M}_{p,p}$ is precisely the collection of
bounded operators that are \emph{basically} the Fourier transform:

\index{Fourier multiplier|(}
\begin{defin}
Fix $p \in [1,\infty)$. A bounded linear operator
$T:L^p(\bR^d) \to L^p(\bR^d)$ is an \emph{$L^p$ Fourier multiplier}
if there exists a bounded function $m$ such that
\[
 T \psi = (m \hat{\psi})^\vee
\]
for all $\psi \in \ms{S}(\bR^d)$. $m$ is referred to as
the \emph{symbol} of the Fourier multiplier $T$, and
the collection of all symbols of $L^p$ Fourier multipliers
is denoted by $\ms{M}^p$.
\end{defin}

\index{Fourier multiplier!as a convolution operator}
It follows from the convolution theorem (Theorem \ref{convolution-theorem-on-distributions})
that $m \in \ms{M}^p$ if and only if the associated
Fourier multiplier $T_m$ is in $\ms{M}_{p,p}$.
We shall study an important example of a Fourier multiplier
in the next section. For now, we content ourselves
with concrete characterizations of two important special cases.

\begin{theorem}\label{M11}
A convolution operator $T \psi = u * \psi$ is in $\ms{M}_{1,1}$ if and only if
$u$ is a complex Borel measure.\index{measure!complex}\index{Borel, \'{E}mile!measure}\index{Fourier multiplier!L1@$L^1$} In this case,
\[
 \|T\|_{L^1 \to L^1} = |u|,
\]
the total variation of $u$.
It thus follows that $\mc{M}^1$ is the space of
the Fourier transforms of complex Borel measures.
\end{theorem}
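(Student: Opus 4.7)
The plan is to prove the two implications separately and then read off the norm identity and the description of $\mc{M}^1$. For the easy direction ($\Leftarrow$), suppose $u$ is a complex Borel measure with total variation $|u|$. For $\psi \in \ms{S}(\bR^d)$, define the measure-theoretic convolution $(u*\psi)(x) = \int \psi(x-y)\,du(y)$; a one-line Fubini check against any test function shows this agrees with the distributional convolution. Applying \hyperref[fubini-tonelli]{Fubini-Tonelli} to $|\psi(x-y)|$ against $|u|\times m$ gives
\[
 \|u*\psi\|_1 \leq \int \int |\psi(x-y)|\,dx\,d|u|(y) = |u|(\bR^d)\|\psi\|_1.
\]
By the density of $\ms{S}(\bR^d)$ in $L^1(\bR^d)$ and Theorem \ref{norm-preserving-extension}, $T$ extends uniquely to a bounded operator on $L^1(\bR^d)$ with $\|T\|_{L^1 \to L^1} \leq |u|(\bR^d)$.

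For the nontrivial direction ($\Rightarrow$), assume $T\psi = u*\psi$ is in $\ms{M}_{1,1}$ with norm $k = \|T\|_{L^1 \to L^1}$. The key idea is to approximate $u$ by $L^1$-functions and extract a weak-$*$ limit in the dual of $\ms{C}_0(\bR^d)$. Let $(\rho_n)_{n=1}^\infty$ be the sequence of mollifiers from Corollary \ref{smooth-approximations-to-the-identity}; since $\|\rho_n\|_1 = 1$, we have $f_n := T\rho_n = u*\rho_n \in L^1(\bR^d)$ with $\|f_n\|_1 \leq k$. Each $f_n$ defines a complex Borel measure $\mu_n$ via $d\mu_n = f_n\,dm$ with total variation $|\mu_n|(\bR^d) = \|f_n\|_1 \leq k$. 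Identifying $\ms{M}(\bR^d)$ with the dual of the separable Banach space $\ms{C}_0(\bR^d)$ (Riesz representation), the weak-$*$ unit ball of $\ms{M}(\bR^d)$ is metrizable and, by the \hyperref[banach-alaoglu]{Banach-Alaoglu theorem}, sequentially compact, so we can extract a subsequence (not renamed) with $\mu_n \to \mu$ weak-$*$ for some $\mu \in \ms{M}(\bR^d)$ with $|\mu|(\bR^d) \leq k$.

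It remains to identify $u = \mu$ as tempered distributions. For $\varphi \in \ms{S}(\bR^d) \subseteq \ms{C}_0(\bR^d)$, weak-$*$ convergence yields $\langle \varphi, f_n \rangle = \int \varphi\,d\mu_n \to \int \varphi\,d\mu = \langle \varphi, \mu \rangle$. On the other hand, by the definition of convolution of a tempered distribution with a Schwartz function,
\[
 \langle \varphi, f_n \rangle = \langle \varphi, u*\rho_n \rangle = \langle \tilde{\rho}_n * \varphi, u \rangle.
\]
Since $(\tilde{\rho}_n)$ is itself an approximation to the identity, I claim $\tilde{\rho}_n * \varphi \to \varphi$ strongly in $\ms{S}(\bR^d)$: for each pair of multi-indices $\alpha,\beta$, the commutation $D^\beta(\tilde{\rho}_n * \varphi) = \tilde{\rho}_n * D^\beta \varphi$ together with the binomial expansion of $x^\alpha = ((x-y)+y)^\alpha$ and the compact support of $\tilde{\rho}_n$ reduces matters to the uniform convergence $\tilde{\rho}_n * (x^\gamma D^\beta\varphi) \to x^\gamma D^\beta\varphi$, which follows from the continuity version of Theorem \ref{approximations-to-the-identity} applied to the bounded continuous function $x^\gamma D^\beta\varphi$. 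Continuity of $u$ on $\ms{S}(\bR^d)$ then gives $\langle \tilde{\rho}_n * \varphi, u \rangle \to \langle \varphi, u \rangle$. Combining the two computations, $\langle \varphi, u \rangle = \langle \varphi, \mu \rangle$ for every $\varphi \in \ms{S}(\bR^d)$, so $u = \mu$. This shows $u$ is a complex Borel measure with $|u|(\bR^d) \leq k$, which paired with the forward direction gives $\|T\|_{L^1 \to L^1} = |u|(\bR^d)$. The final assertion about $\mc{M}^1$ is immediate: by the convolution theorem for tempered distributions (Theorem \ref{convolution-theorem-on-distributions}), the symbol of $T_u$ is precisely $\hat{u}$, so $\mc{M}^1 = \{\hat{\mu} : \mu \in \ms{M}(\bR^d)\}$.

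The main obstacle is verifying $\tilde{\rho}_n * \varphi \to \varphi$ in the Fr\'{e}chet topology of $\ms{S}(\bR^d)$—Theorem \ref{approximations-to-the-identity} delivers $L^p$-convergence but we need convergence in every Schwartz seminorm $\rho_{\alpha\beta}$. The binomial-expansion-plus-compact-support trick sketched above handles this but requires some care. A minor secondary point, which I would state as background rather than prove, is the Riesz representation theorem identifying $\ms{M}(\bR^d)$ with $\ms{C}_0(\bR^d)^*$; this is standard and unavoidable for invoking Banach-Alaoglu in the shape we need.
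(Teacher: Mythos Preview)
Your proof is correct and follows essentially the same route as the paper: convolve $u$ against an approximate identity to produce a bounded family in $L^1\hookrightarrow M(\bR^d)=(\mc{C}_0)^*$, invoke Banach--Alaoglu to extract a weak-$*$ limit $\mu$, and identify $u=\mu$ by showing $\rho_n*\varphi\to\varphi$ strongly in $\ms{S}(\bR^d)$. The only cosmetic difference is that the paper uses the Gauss--Weierstrass kernel $W(t,\ve)$ in place of your compactly supported mollifiers; your choice actually makes the Schwartz-seminorm convergence step (the one you flag as the main obstacle) cleaner, since the compact support of $\tilde{\rho}_n$ lets the binomial expansion of $x^\alpha=((x-y)+y)^\alpha$ terminate with uniformly bounded $y$-moments.
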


To prove the above theorem, we shall need the following representation theorem:

\begin{lemma}[Riesz-Markov representation theorem]\label{riesz-markov}\index{representation theorem!Riesz-Markov}\index{representation theorem!on C0@on $C_0(\bR^d)$|see{Riesz-Markov}}\index{Riesz-Markov theorem|see{representation theorem}}
The dual of the space $\mc{C}_0(\bR^d)$ of continuous functions on $\bR^d$
vanishing at infinity is isomorphic to the space $M(\bR^d)$ of complex
Borel measures on $\bR^d$, in the sense that each bounded linear functional
$l$ on $\mc{C}_0(\bR^d)$ furnishes a complex Borel measure $\mu$ on $\bR^d$
such that
\[
 l(g) = \int g(x) \, d\mu(x)
\]
for each $g \in \mc{C}_0(\bR^d)$.
\end{lemma}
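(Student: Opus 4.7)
The plan is to reduce the general case to the positive case and then construct a positive Borel measure directly from a positive bounded linear functional. First, I would split $l$ into real and imaginary parts by setting $l_1(g) = \frac{1}{2}(l(g) + \overline{l(\bar{g})})$ and $l_2(g) = \frac{1}{2i}(l(g) - \overline{l(\bar{g})})$ for $g \in \mc{C}_0(\bR^d)$, so that $l = l_1 + i l_2$ with $l_1,l_2$ bounded linear functionals sending real-valued functions to real values. Each such real functional can in turn be written as a difference of positive bounded linear functionals via a Jordan-type decomposition: for $g \geq 0$ in $\mc{C}_0(\bR^d)$, define
\[
 l_1^+(g) = \sup\{l_1(h) : h \in \mc{C}_0(\bR^d), \, 0 \leq h \leq g\},
\]
verify that $l_1^+$ is additive on nonnegative functions and extends by linearity to a positive bounded functional on $\mc{C}_0(\bR^d)$, and set $l_1^- = l_1^+ - l_1$. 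Repeating for $l_2$ yields four positive bounded linear functionals whose signed combination recovers $l$.

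The core step is then to prove the positive case: every positive bounded linear functional $\Lambda$ on $\mc{C}_0(\bR^d)$ arises from a finite positive Borel measure. I would define the outer measure first on open sets by
\[
 \mu(U) = \sup\{\Lambda(f) : f \in \mc{C}_c(\bR^d), \, 0 \leq f \leq 1, \, \supp f \subseteq U\},
\]
then extend to arbitrary $E \subseteq \bR^d$ by $\mu^*(E) = \inf\{\mu(U) : U \supseteq E \text{ open}\}$. Using Urysohn's lemma (recalled in \S\S\ref{fr-approximation-by-Cc-on-locally-compact-spaces}) to produce enough test functions and the separability of $\bR^d$ to handle countable additivity, I would show that $\mu^*$ is a Borel outer measure and that every Borel set is $\mu^*$-measurable, yielding a finite Borel measure $\mu$. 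The representation $\Lambda(g) = \int g \, d\mu$ for $g \in \mc{C}_c(\bR^d)$ follows by approximating $g$ from below by simple functions supported in small neighborhoods, squeezing both sides by the definition of $\mu$ on open sets, and then the extension to $g \in \mc{C}_0(\bR^d)$ follows by density and boundedness, with $\mu(\bR^d) = \|\Lambda\|$.

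Assembling the four measures produced in this way from $l_1^\pm$ and $l_2^\pm$ into the complex measure $\mu = (\mu_1^+ - \mu_1^-) + i(\mu_2^+ - \mu_2^-)$ then gives the desired representation
\[
 l(g) = \int g \, d\mu.
\]
For uniqueness, I would observe that if $\int g \, d\mu = \int g \, d\nu$ for all $g \in \mc{C}_0(\bR^d)$, then $\mu - \nu$ vanishes on $\mc{C}_c(\bR^d)$, and using the outer regularity already established together with Urysohn's lemma one can approximate $\chi_E$ for open $E$ of finite measure, hence for all Borel $E$, forcing $\mu = \nu$.

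The hard part will be the positive-functional construction, specifically verifying countable additivity of $\mu^*$ on Borel sets and proving the representation $\Lambda(g) = \int g \, d\mu$; the Jordan-type splitting and the complex-to-real reduction are essentially formal once the positive case is in hand, and the norm identity $\|l\| = |\mu|(\bR^d)$ follows by testing $l$ against normalized approximants to $\overline{\sgn\, d\mu/d|\mu|}$ supplied by the density of $\mc{C}_0(\bR^d)$ in $L^1(|\mu|)$.
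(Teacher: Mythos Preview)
The paper does not actually prove this lemma: it merely states it and refers the reader to Theorem 6.19 in Rudin's \emph{Real and Complex Analysis} or Theorem 7.17 in Folland's \emph{Real Analysis}. Your outline is correct and is essentially the standard proof found in those references---reduce to the positive case via a Jordan-type decomposition of the functional, then build the measure by outer approximation on open sets using Urysohn-type test functions---so there is nothing to compare against in the paper itself.
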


The proof of the representation theorem can be found in many standard
textbooks in measure theory: see, for example, Theorem 6.19 in
\cite{Walter_Rudin:B1986}\index{Rudin, Walter} or Theorem 7.17 in \cite{Gerald_B_Folland:B1999}.\index{Golland, Gerald B.}

Let us now return to the task at hand:

\begin{proof}[Proof of Theorem \ref{M11}]
If $\mu$ is a complex Borel measure, then Young's inequality
\[
 \|\mu * \psi| \leq |\mu|\|\psi\|_1
\]
continues to hold, whence $\mu \in \ms{M}_{1,1}$. To prove the
converse assertion for an arbitrary $u \in \ms{M}_{1,1}$,
we define the \emph{Gauss-Weierstrass kernel}\index{Gauss-Weierstrass kernel}
\[
 W(t,\ve) = (4 \pi \ve)^{-\pi/2} e^{-|t|^2/4\ve}
\]
for each $\ve>0$: we have used the kernel in Proposition \ref{gaussian}\index{Fourier transform!of the Gaussian}
and the proof of the Fourier inversion formula
(Theorem \ref{fourier-inversion-formula}) already. Note that
\[
 \int W(t,\ve) \, dt = 1
\]
for all $\ve>0$. Therefore, $(W(t,\ve))_{\ve > 0}$ is an
\hyperref[approximations-to-the-identity]{approximation to the
identity},\index{approximations to the identity} and $u_\ve(x) = (u * W(t,\ve))(x)$ satisfies the estimate
\[
 \|u_\ve\|_1 \leq A\|W(t,\ve)\|_1 = C
\]
for some constant $C>0$ that does not depend on $\ve$.

We now consider $L^1(\bR^d)$ as an embedded subspace of the space
$M(\bR^d)$ of complex Borel measures on $\bR^d$. By the
\hyperref[riesz-markov]{Riesz-Markov representation theorem},
$M(\bR^d)$ is the dual of $\mc{C}_0(\bR^d)$, whence
we can endow $M(\bR^d)$ with the weak-* topology\index{weak-* topology} with respect
to $\mc{C}_0(\bR^d)$ (Definition \ref{weak-star}). The
Banach-Alaoglu theorem\index{Banach-Alaoglu theorem} (Theorem \ref{banach-alaoglu}) now
implies that the unit ball in $M(\bR^d)$ is compact, and
so a ``subsequence'' $(u_{\ve_k})_{k=1}^\infty$ of the
approximation to the identity converges to
a measure $\mu \in M(\bR^d)$ in this topology. In other words,
we have
\begin{equation}\label{M11-eq}
 \lim_{k \to \infty} \int \psi(x) u_{\ve_k} (x) \, dx
 = \int \psi(x) \, d\mu(x)
\end{equation}
for each $\psi \in \ms{S}(\bR^d)$.

It remains to show that $\mu$ \emph{is} the distribution $u$.
To this end, it suffices to show that
\[
 \langle \psi, u \rangle = \int \psi(x) \, d\mu(x)
\]
for each $\psi \in \ms{S}(\bR^d)$. We fix a $\psi \in \ms{S}(\bR^d)$
and set
\[
 \psi_\ve(x) = (\varphi(t) * W(t,\ve))(x) = \int \psi(x-t)W(t,\ve) \, dt
\]
for each $\ve>0$. For each multi-index $\beta$, we have
\[
 (D^\beta \psi_\ve)(x) = (D^\beta \psi(t) * W(t,\ve))(x)
 = \int (D^\beta \psi)(x-t) W(t,\ve) \, dt.
\]
The calculation in the proof of the Fourier inversion theorem
(Theorem \ref{fourier-inversion-formula})\index{Fourier inversion!on L1@on $L^1$} shows that $D^\beta \psi_\ve$
converges uniformly to $D^\beta \psi$. Therefore,
$(\psi_\ve)_{\ve>0}$ converges strongly to $\psi$ as $\ve \to 0$,
and so $\langle \psi_\ve,u \rangle \to \langle \psi, u \rangle$
as $\ve \to 0$. Since $\tilde{W}(t,\ve) = W(t,\ve)$, we have
\[
 \langle \psi_\ve , u \rangle
 = \langle W(t,\ve) * \psi(t) , u \rangle
 = \langle \psi, u * W(t,\ve) \rangle
 = \int \psi(x) u_\ve(x) \, dx.
\]
It now follows from (\ref{M11-eq}) that
\[
 \langle \psi, u \rangle
 = \lim_{k \to \infty} \langle \psi_{\ve_k} , u \rangle
 = \lim_{k \to \infty} \int \psi(x) u_{\ve_k} \, dx
 = \int \psi(x) \, d\mu(x),
\]
as was to be shown.
\end{proof}

We also have a nice characterization for $\ms{M}_{2,2}$.

\begin{theorem}\index{Fourier multiplier!L2@$L^2$}
A convolution operator $T\psi = u * \psi$ is in $\ms{M}_{2,2}$ if and only
if $\hat{u} \in L^\infty(\bR^d)$. In this case,
\[
 \|T\|_{L^2 \to L^2} = \|\hat{u}\|_\infty.
\]
It thus follows that $\mc{M}^2 = L^\infty(\bR^d)$.
\end{theorem}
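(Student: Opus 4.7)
The forward direction is short. Assume $\hat{u} \in L^\infty(\bR^d)$. For any $\psi \in \ms{S}(\bR^d)$, the convolution theorem for tempered distributions (Theorem \ref{convolution-theorem-on-distributions}) gives $\widehat{u*\psi} = \hat{u}\hat{\psi}$, and by Plancherel's theorem (Theorem \ref{plancherel}),
\[
 \|u*\psi\|_2 = \|\widehat{u*\psi}\|_2 = \|\hat{u}\hat{\psi}\|_2 \leq \|\hat{u}\|_\infty \|\hat{\psi}\|_2 = \|\hat{u}\|_\infty \|\psi\|_2.
\]
Since $\ms{S}(\bR^d)$ is dense in $L^2(\bR^d)$, Theorem \ref{norm-preserving-extension} extends $\psi \mapsto u*\psi$ to a bounded operator on $L^2(\bR^d)$, so $T \in \ms{M}_{2,2}$ with $\|T\|_{L^2 \to L^2} \leq \|\hat{u}\|_\infty$.

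For the converse, assume $T \in \ms{M}_{2,2}$. Running the same computation in reverse, for each $\psi \in \ms{S}(\bR^d)$ the product $\hat{u}\hat{\psi}$ coincides (as a tempered distribution) with the genuine $L^2$ function $\widehat{u*\psi}$, and Plancherel yields
\[
 \|\hat{u}\hat{\psi}\|_2 = \|u*\psi\|_2 \leq \|T\|_{L^2\to L^2}\|\psi\|_2 = \|T\|_{L^2\to L^2}\|\hat{\psi}\|_2.
\]
Because $\ms{F}$ is a bijection of $\ms{S}(\bR^d)$ (Proposition \ref{fourier-transform-of-schwartz-is-schwartz}), the map $M_0:\varphi \mapsto \hat{u}\varphi$ is well-defined from $\ms{S}(\bR^d)$ into $L^2(\bR^d)$ and satisfies $\|M_0\varphi\|_2 \leq \|T\|_{L^2\to L^2}\|\varphi\|_2$. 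Density of $\ms{S}(\bR^d)$ in $L^2(\bR^d)$ and Theorem \ref{norm-preserving-extension} then produce a bounded operator $M:L^2(\bR^d) \to L^2(\bR^d)$ extending $M_0$ with $\|M\| \leq \|T\|_{L^2\to L^2}$.

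The main obstacle is identifying $M$ as honest pointwise multiplication by $\hat{u}$ on all of $L^2$: for $f \in L^2$, pick $\psi_n \in \ms{S}$ with $\psi_n \to f$ in $L^2$, extract a subsequence converging a.e., and note that $M\psi_n = \hat{u}\psi_n$ must converge to both $Mf$ (in $L^2$, hence a.e.\ along a further subsequence) and to $\hat{u}f$ a.e.; hence $Mf = \hat{u}f$ a.e. Once this is done, the bound $\|\hat{u}\|_\infty \leq \|T\|_{L^2\to L^2}$ follows by the standard argument: if the essential supremum exceeded $\|T\|_{L^2 \to L^2}$ by some $\delta > 0$, one could find a set $E$ of positive finite measure on which $|\hat{u}| \geq \|T\|_{L^2\to L^2}+\delta$, and applying $M$ to $\chi_E \in L^2$ would yield $\|M\chi_E\|_2^2 = \int_E |\hat{u}|^2 \geq (\|T\|_{L^2\to L^2}+\delta)^2|E|$, contradicting $\|M\| \leq \|T\|_{L^2\to L^2}$. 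Combined with the forward direction, this gives the equality $\|T\|_{L^2\to L^2} = \|\hat{u}\|_\infty$.

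Finally, for the concluding identification $\mc{M}^2 = L^\infty(\bR^d)$: by definition a symbol $m$ lies in $\mc{M}^2$ precisely when $T_m\psi = (m\hat{\psi})^\vee$ extends to a bounded operator on $L^2(\bR^d)$. Writing $u = m^\vee$ so that $\hat{u}=m$, the convolution theorem rewrites $T_m\psi = u*\psi$, so $T_m \in \ms{M}_{2,2}$ is equivalent to $m = \hat{u}$ for some $u$ with $T_u \in \ms{M}_{2,2}$. The main theorem then identifies this class as exactly $\{m \in L^\infty(\bR^d)\}$, yielding $\mc{M}^2 = L^\infty(\bR^d)$.
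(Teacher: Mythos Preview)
Your proof is correct and follows the same route as the paper: Plancherel together with the convolution theorem reduces both directions to the statement that multiplication by $\hat u$ is bounded on $L^2$ if and only if $\hat u \in L^\infty$, with matching norms. You in fact supply more detail than the paper at the key step---the paper simply asserts $\|\hat u\|_\infty \le C$ once the multiplication operator has been extended, whereas you give the $\chi_E$ argument explicitly. One small point to tighten: before writing ``$\hat u\psi_n \to \hat u f$ a.e.'' you should observe that $\hat u$ is an honest locally-$L^2$ function (not just a tempered distribution); this follows immediately by taking $\psi$ to be a bump equal to $1$ on a given ball, since then $\hat u\psi \in L^2$ forces $\hat u \in L^2_{\mathrm{loc}}$.
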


\begin{proof}
Let $u \in \ms{M}_{2,2}$. For each $\psi \in \ms{S}(\bR^d)$,
the convolution theorem (Theorem \ref{convolution-theorem-on-distributions})\index{convolution theorem!on tempered distributions}
implies that
\[
 \hat{u}\hat{\psi} = \widehat{u * \psi},
\]
for each $\psi \in \ms{S}(\bR^d)$, whence by Plancherel's theorem
(Theorem \ref{plancherel})\index{Plancherel's theorem}
we have
\[
 \|\hat{u}\hat{\psi}\|_2
 = \|\widehat{u * \psi}\|_2
 = \|u * \psi\|_2.
\]
Since $u \in \ms{M}_{2,2}$, there exists a constant $C>0$ such that
\[
 \|u * \psi\|_2 \leq C\|\psi\|_2
\]
for all $\psi \in \ms{S}(\bR^d)$. Applying Plancherel's theorem
once more, we obtain the norm estimate
\[
 \|\hat{u}\hat{\psi}\|_2 \leq C\|\psi\|_2 = C\|\hat{\psi}\|_2
\]
for all $\psi \in \ms{S}(\bR^d)$

The Fourier transform is an automorphism on $\ms{S}(\bR^d)$,
and so the above is equivalent to
\[
 \|\hat{u}\psi\|_2 \leq C\|\psi\|_2
\]
for all $\psi \in \ms{S}(\bR^d)$, where
\[
 \hat{u}\psi = \hat{u} \widehat{(\psi^\vee)}.
\]
We now invoke Theorem \ref{norm-preserving-extension} to extend
the multiplication operator
\[
 \psi \mapsto \hat{u}
\]
as a bounded operator from $L^2$ into itself. By the 
norm-preserving property of the extension, it follows
that $\|\hat{u}\|_\infty \leq C$.

Conversely, if $\hat{u} \in L^\infty(\bR^d)$, then
Plancherel's theorem and the convolution theorem imply that
\[
 \|\hat{u} * \psi\|_2 = \|\widehat{u\psi^\vee}\|_2
 = \|u\psi^\vee\|_2 \leq \|\hat{u}\|_\infty \|\psi^\vee\|_2
 = \|\hat{u}\|_\infty \|\psi\|_2
\]
for each $\psi \in \ms{S}(\bR^d)$. It follows that
$u \in \mc{M}_{2,2}$, and the operator norm of the associated
convolution operator is easily seen to be $\|\hat{u}\|_\infty$.
\end{proof}

Characterization of the space $\ms{M}_{p,q}$ is trickier.
It is a standard result in harmonic analysis that $\ms{M}_{p,q}$
coincides with the space of bounded linear operators $T:L^p(\bR^d)
\to L^q(\bR^d)$ that \emph{commute with translations},\index{operator!commuting with translations} viz.,
\[
 T(\tau_h f) = \tau_h Tf
\]
for all $f \in L^p(\bR^d)$; see Chapter I, Theorem 3.16
in \cite{Stein_Weiss:B1971}\index{Stein, Elias M.}\index{Weiss, Guido} or Theorem 2.5.2 in
\cite{Loukas_Grafakos:B2008}\index{Grafakos, Loukas} for a proof. The only other known
result so far is the duality relation
\[
 \ms{M}_{p,q} = \ms{M}_{q',p'}.
\]
that holds for $p,q \in [1,\infty]$. A proof can be found in
\cite{Stein_Weiss:B1971}, Chapter I, Theorem 3.20, 
or in \cite{Loukas_Grafakos:B2008}, Theorem 2.5.7.

Fourier multipliers are special cases of pseudodifferential operators,
which, in turn, are special cases of Fourier integral operators.
We shall briefly study pseudodifferential operators
in \S\S\ref{s-sobolev-spaces} and Fourier integral operators in
\S\S\ref{s-analysis-of-the-homogeneous-wave-equation}.\index{pseudodifferential operator}\index{Fourier integral operator}\index{operator!pseudodifferential|see{pseudodifferential operator}}\index{operator!Fourier integral|see{Fourier integral operator}}
\index{tempered!distributions|)}
\index{convolution!operator|)}
\index{Fourier multiplier|)}
 
\section{The Hilbert Transform}\label{s-the_hilbert_transform}

\index{Hilbert transform|(}
In this section, we study a particularly important example of
a convolution operator, the Hilbert transform.
The Hausdorff-Young inequality tells us that the Fourier transform
is bounded on $L^p$ for $p \in [1,2]$, but it is not very clear
how the $L^p$-theory of the Fourier transform should be developed
for $p > 2$. To this end, we shall consider a Fourier multiplier
whose symbol is a constant, suitably normalized to retain
the validity of Plancherel's theorem. We shall see that
this operator is bounded on $L^p$ for all $p \in (1,\infty)$.

\subsection{The Distribution \texorpdfstring{$\pv(\frac{1}{x})$}{pv(1/x)} and the \texorpdfstring{$L^2$}{L2} Theory}

Recall that a measurable function on a subset $E$ of $\bR^d$ is \emph{locally integrable}\index{locally integrable}
if $f$ is integrable on every compact subset of $E$. If a locally integrable
function $f$ barely fails to be integrable near the origin, then it is often possible
to compute the
\emph{principal value}\index{principal value} of its integral, defined by
\[
 \pv \int f(x) \, dx = \lim_{\ve \to 0} \int_{|x| \geq \ve} f(x) \, dx.
\]

\begin{defin}
Let $f$ be measurable on $\bR^d$ and locally integrable on $\bR^d
\smallsetminus \{0\}$. The \emph{principal-value distribution}\index{principal-value distribution}
$\pv(f)$ is defined to be
\[
 \langle \varphi, \pv(f) \rangle
 = \lim_{\ve \to 0} \int_{|x| \geq \ve} f(x) \varphi(x) \, dx
\]
for each $\varphi \in \ms{S}(\bR^d)$, provided that the limit
exists for all such $\varphi$.
\end{defin}

\begin{prop}
The one-dimensional principal-value distribution $\pv(\frac{1}{x})$ is
a well-defined tempered distribution. 
\end{prop}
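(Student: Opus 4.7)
The plan is to verify the two required properties separately: first, that the limit defining $\langle \varphi, \pv(\tfrac{1}{x}) \rangle$ exists for every Schwartz function $\varphi$, and second, that the resulting linear functional is bounded by a finite linear combination of Schwartz seminorms, so that Theorem \ref{iff-tempered-distribution} applies.

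For the existence of the limit, I would split the integral at $|x|=1$. On the outer region $|x|\geq 1$, the function $\varphi(x)/x$ is absolutely integrable for any $\varphi \in \ms{S}(\bR)$, since $|\varphi(x)/x|\leq |\varphi(x)|$ and Schwartz decay makes $\varphi$ integrable; the integral $\int_{|x|\geq 1} \varphi(x)/x\,dx$ therefore contributes no convergence issue. On the inner region $\ve\leq |x|\leq 1$, I would exploit the oddness of $1/x$: since $\int_{\ve\leq|x|\leq 1} \frac{dx}{x} = 0$, I can rewrite
\[
 \int_{\ve \leq |x| \leq 1} \frac{\varphi(x)}{x} \, dx
 = \int_{\ve \leq |x| \leq 1} \frac{\varphi(x)-\varphi(0)}{x} \, dx.
\]
The integrand is now bounded pointwise by $\sup_{|x|\leq 1}|\varphi'(x)|$, by the mean value theorem, and is thus absolutely integrable on $[-1,1]$. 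The dominated convergence theorem (or monotone convergence on each side) then gives existence of the limit as $\ve\to 0$.

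For the continuity, I would combine the two bounds. The inner piece is dominated by $2\,\rho_{0,1}(\varphi) = 2\sup_{x}|\varphi'(x)|$. For the outer piece, I write $|\varphi(x)/x|\leq |x|\cdot |\varphi(x)|/x^2 \leq \rho_{2,0}(\varphi)/x^2$ when $|x|\geq 1$, so that $\int_{|x|\geq 1}|\varphi(x)/x|\,dx \leq 2\rho_{2,0}(\varphi)$. Combining,
\[
 \left|\langle \varphi, \pv(\tfrac{1}{x}) \rangle\right|
 \leq 2\,\rho_{0,1}(\varphi) + 2\,\rho_{2,0}(\varphi),
\]
which is exactly the kind of bound required by Theorem \ref{iff-tempered-distribution}. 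Linearity is immediate from linearity of the integral and of the limit.

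The only subtle point is justifying the interchange of limit and integral on the inner piece; this is the ``hard part,'' though it is not actually hard once the odd-symmetry cancellation is exploited to pass from the singular integrand $\varphi(x)/x$ to the bounded integrand $(\varphi(x)-\varphi(0))/x$. Everything else is routine estimation with Schwartz seminorms.
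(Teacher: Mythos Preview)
Your proposal is correct and follows essentially the same approach as the paper: split at $|x|=1$, use the oddness of $1/x$ to subtract $\varphi(0)$ on the inner piece, bound by the mean value theorem, and invoke Theorem \ref{iff-tempered-distribution}. The only cosmetic difference is that the paper bounds the outer piece using seminorms with $|\alpha|\leq 1$ (effectively $\rho_{1,0}$) rather than your $\rho_{2,0}$, but either choice works.
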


\begin{proof}
Fix $\varphi \in \ms{S}(\bR)$ and suppose that $\ve \leq 1$.
We write
\begin{equation}\label{decomposing-hilbert}
 \int_{|x| \geq \ve} \frac{\varphi(x)}{x} \, dx
 = \int_{1 \geq |x| \geq \ve} \frac{\varphi(x)}{x} \, dx
 + \int_{|x| > 1} \frac{\varphi(x)}{x} \, dx.
\end{equation}
Since the Schwartz function $\varphi$ decays rapidly at infinity,
the second integral in the right-hand side of (\ref{decomposing-hilbert})
converges. As for the first one, we note that
\[
 \int_{1 \geq |x| \geq \ve} \frac{1}{x} \, dx = 0.
\]
Therefore,
\[
 \int_{1 \geq |x| \geq \ve} \frac{\varphi(x)}{x} \, dx
 = \int_{1 \geq |x| \geq \ve} \frac{\varphi(x)-\varphi(0)}{x} \, dx
 \leq \int_{1 \geq |x| \geq \ve}
 \frac{\ds\left(\sup_x |\varphi'(x)|\right) |x|}{x} \, dx,
\]
and so the integral in the left-hand side of (\ref{decomposing-hilbert})
converges. The above computation also yields the estimate
\[
 \left| \lim_{\ve \to 0} \int_{|x| \geq \ve} \frac{\varphi(x)}{x} \, dx \right|
 \leq k \sum_{\substack{|\alpha| \leq 1 \\ |\beta| \leq 1}}
 \rho_{\alpha\beta}(\varphi)
\]
for some constant $k$, whence it follows from Theorem \ref{iff-tempered-distribution}
that $\pv(\frac{1}{x})$ is a tempered distribution.
\end{proof}

\begin{defin}
The \emph{Hilbert transform} $\mc{H}$ is the convolution operator
\[
 \mc{H} \psi = \frac{1}{\pi}\pv\left(\frac{1}{x}\right) * \psi.
\]
\end{defin}

As was alluded to above, we now show that the Hilbert transform
is a Fourier multiplier.

\begin{theorem}\label{L2-hilbert-transform}\index{Hilbert transform!as a Fourier multiplier|(}
$\mc{H}$ is an $L^2$ Fourier multiplier with the symbol $-i \sgn(x)$.
\end{theorem}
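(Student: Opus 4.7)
The plan is to compute the Fourier transform of the tempered distribution $\frac{1}{\pi}\pv(\frac{1}{x})$ and show it equals $-i\sgn(\xi)$; the convolution theorem on tempered distributions (Theorem \ref{convolution-theorem-on-distributions}) will then finish the job, and the $L^2$ theory of Fourier multipliers developed in \S\ref{s-convolution-operators-and-fourier-multipliers} (specifically that $\mc{M}^2 = L^\infty(\bR)$ with operator norm given by the $L^\infty$-norm of the symbol) will give the $L^2$ boundedness for free.

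First I would approximate $\pv(\frac{1}{x})$ by honest functions. Set
\[
 f_\ve(x) = \frac{\chi_{\{\ve \leq |x| \leq 1/\ve\}}(x)}{x},
\]
which is an odd, compactly supported, bounded (hence tempered) function. By the definition of the principal-value distribution and the fact that $\varphi(x) = \varphi(0) + O(|x|)$ near $0$ while $\varphi$ decays rapidly at infinity, $f_\ve \to \pv(\frac{1}{x})$ in $\ms{S}'(\bR)$ as $\ve \to 0$. Since the Fourier transform is continuous on $\ms{S}'(\bR)$, it suffices to compute $\lim_{\ve \to 0} \hat{f}_\ve$ in $\ms{S}'(\bR)$.

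Next, using the oddness of $f_\ve$, I would compute
\[
 \hat{f}_\ve(\xi) = \int_{\ve \leq |x| \leq 1/\ve} \frac{e^{-2\pi i \xi x}}{x} \, dx
 = -2i \int_\ve^{1/\ve} \frac{\sin(2\pi \xi x)}{x} \, dx.
\]
The substitution $t = 2\pi \xi x$ (for $\xi \neq 0$) converts this into the sine integral, and the classical identity $\int_0^\infty \frac{\sin t}{t}\,dt = \frac{\pi}{2}$ gives the pointwise limit $\hat{f}_\ve(\xi) \to -i\pi \sgn(\xi)$ as $\ve \to 0$. To upgrade pointwise convergence to convergence in $\ms{S}'$, I need a uniform bound: the key (and essentially only) analytic point is that
\[
 \sup_{0 < a < b < \infty} \left|\int_a^b \frac{\sin t}{t}\,dt\right| < \infty,
\]
which follows from alternating-series type estimates (integrating against the half-period decomposition of $\sin t$). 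This gives $\|\hat{f}_\ve\|_\infty \leq C$ uniformly in $\ve$, and the dominated convergence theorem applied in the pairing $\langle \varphi, \hat{f}_\ve \rangle = \langle \hat{\varphi}, f_\ve \rangle$ — equivalently, testing against $\hat{\varphi} \in \ms{S}$ — yields
\[
 \widehat{\pv(\tfrac{1}{x})} = -i\pi \sgn(\xi) \quad \text{in } \ms{S}'(\bR).
\]

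Finally, the convolution theorem (Theorem \ref{convolution-theorem-on-distributions}) gives
\[
 \widehat{\mc{H}\psi}(\xi) = \widehat{\tfrac{1}{\pi}\pv(\tfrac{1}{x})}(\xi)\,\hat{\psi}(\xi) = -i\sgn(\xi)\,\hat{\psi}(\xi)
\]
for every $\psi \in \ms{S}(\bR)$. Since $-i\sgn(\xi) \in L^\infty(\bR)$ with norm $1$, the characterization of $\ms{M}_{2,2}$ established just before this theorem shows that $\mc{H}$ extends to a bounded convolution operator on $L^2(\bR)$, and it is by construction the $L^2$ Fourier multiplier with symbol $-i\sgn(\xi)$. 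The main obstacle in the argument is the uniform bound on $\hat{f}_\ve$; once that is in hand, everything else is a mechanical application of the tempered distribution machinery developed in \S\ref{s-generalized_functions}.
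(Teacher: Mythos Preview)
Your proposal is correct and follows essentially the same approach as the paper: both truncate to $\ve \leq |x| \leq 1/\ve$, exploit oddness to reduce to sine integrals, invoke the uniform bound $\sup_{a,b}\left|\int_a^b \frac{\sin t}{t}\,dt\right| < \infty$ together with the value $\int_0^\infty \frac{\sin t}{t}\,dt = \pi/2$, and pass to the limit via dominated convergence before applying the convolution theorem. The only cosmetic difference is that you package the argument through continuity of $\ms{F}$ on $\ms{S}'$ applied to the approximants $f_\ve$, whereas the paper unwinds the pairing $\langle \hat\psi, \pv(1/x)\rangle$ directly and introduces the upper truncation mid-computation; your framing is slightly cleaner, and your appeal to the $\mc{M}^2 = L^\infty$ characterization replaces the paper's direct Plancherel computation $\|\mc{H}\psi\|_2 = \|\psi\|_2$, but the substance is identical.
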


\begin{proof}
For each $\psi \in \ms{S}(\bR^d)$, we observe that
\begin{eqnarray*}
 \left< \psi , \widehat{\pv\left( \frac{1}{x} \right)} \right>
 &=& \left< \widehat{\psi} , \pv\left( \frac{1}{x} \right) \right> \\
 &=& \lim_{\ve \to 0} \int_{|\xi| \geq \ve}
 \frac{\hat{\psi}(\xi)}{\xi} \, d\xi \\
 &=& \lim_{\ve \to 0} \int_{|\xi| \geq \ve}
 \int_{-\infty}^\infty \frac{\psi(x)e^{-2 \pi i \xi x}}{\xi} \, dx \, d\xi \\
 &=& \lim_{\ve \to 0} \int_{\frac{1}{\ve} \geq|\xi| \geq \ve}
 \int_{-\infty}^\infty \frac{\psi(x)e^{-2 \pi i \xi x}}{\xi} \, dx \, d\xi \\
 &=& \lim_{\ve \to 0} \int_{-\infty}^\infty
 \int_{\frac{1}{\ve} \geq|\xi| \geq \ve}
 \frac{\psi(x)e^{-2 \pi i \xi x}}{\xi} \, d\xi \, dx \\
 &=& \lim_{\ve \to 0} \int_{-\infty}^\infty \psi(x)
 \left(-i \int_{\frac{1}{\ve} \geq |\xi| \geq \ve}
 \frac{\sin(2 \pi \xi x)}{\xi} \, d\xi \right) \, dx.
\end{eqnarray*}

Since
\[
 \left|\int_a^b \frac{\sin \xi}{\xi} \, dx \right| \leq 4
\htwo\mbox{and}\htwo
 \int_{-\infty}^\infty \frac{\sin(b \xi)}{\xi} \, dx = \pi \sgn(b)
\]
for all $0<a<b<\infty$, we see that the quantity
\[
 -i \int_{\frac{1}{\ve} \geq |\xi| \geq \ve}
 \frac{\sin(2 \pi \xi x)}{\xi} \, d\xi
\]
are uniformly bounded by 8\index{8} and converges to $\pi \sgn(x)$
as $\ve \to 0$. We now apply the dominated convergence theorem
to conclude that
\[
 \lim_{\ve \to 0} \int_{-\infty}^\infty \psi(x)
 \left(-i \int_{\frac{1}{\ve} \geq |\xi| \geq \ve}
 \frac{\sin(2 \pi \xi x)}{\xi} \, d\xi \right) \, dx
 = \pi\int_{-\infty}^\infty \psi(x)(-i \sgn(x)) \, dx,
\]
whence the Fourier transform of $\pv(\frac{1}{x})$ is
$-i \pi \sgn(\xi)$.

The convolution theorem (\ref{convolution-theorem-on-distributions})\index{convolution theorem!on tempered distributions}
now implies that
\[
 \widehat{\mc{H}\psi}(\xi)
 = \frac{1}{\pi} \mc{F}\left(\pv \left( \frac{1}{x} \right) * \psi \right)(\xi) 
 = \frac{1}{\pi} \widehat{\pv \left( \frac{1}{x} \right)} \widehat{\psi}(\xi)
 = -i \sgn(\xi) \hat{\psi}(\xi),
\]
and Plancherel's theorem (Theorem \ref{plancherel})\index{Plancherel's theorem} yields the equality
\[
 \|\mc{H}\psi\|_2 = \|\widehat{\mc{H}\psi}\|_2
 = \|-i \sgn(\xi) \hat{\psi}(\xi)\|_2
 = \|\hat{\psi}\|_2 = \|\psi\|_2.
\]
It follows that $\mc{H}$ is an $L^2$ Fourier multiplier with the symbol
$-i \sgn(\xi)$, as was to be shown.
\end{proof}
\index{Hilbert transform!as a Fourier multiplier|)}

\subsection{The \texorpdfstring{$L^p$}{Lp} Theory}

We now tackle the main theorem of the present section,
usually attributed to Marcel Riesz.
While Riesz himself never studied the problem himself,
the subject of his
1927 paper \cite{Marcel_Riesz:J1927-2}\index{Riesz, Marcel} is now known to be
directly relevant to the study of the Hilbert transform.
Riesz's original proof of the related result
exploits the close relationship between the Hilbert
transform and the Cauchy integral in complex analysis.
A suitably modified
version of this proof that serves directly as the proof of
the $L^p$-boundedness of the Hilbert transform
can be found in Chapter 2, Section 3 of
\cite{Stein_Shakarchi:B2011}.\index{Stein, Elias M.}\index{Shakarchi, Rami}

For the sake of brevity, we do not pursue the connections
with complex analysis. Instead, we follow the approach
in \S\S4.1.3 of \cite{Loukas_Grafakos:B2008}\index{Grafakos, Loukas} and
base our proof on the following identity:

\begin{lemma}\label{interesting-hilbert-identity}
$(\mc{H}\psi)^2 = \psi^2 + 2\mc{H}(\psi(\mc{H}\psi))$
for each $\psi \in \ms{S}(\bR^d)$.
\end{lemma}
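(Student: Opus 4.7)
The plan is to prove the identity on the Fourier side, exploiting the multiplier description $\widehat{\mc{H}\psi}(\xi) = -i\sgn(\xi)\hat{\psi}(\xi)$ from Theorem \ref{L2-hilbert-transform}. The central device is the ``analytic'' decomposition
\[
 g = \psi + i\mc{H}\psi, \qquad h = \psi - i\mc{H}\psi,
\]
for which
\[
 \hat{g}(\xi) = (1+\sgn\xi)\hat{\psi}(\xi),\qquad
 \hat{h}(\xi) = (1-\sgn\xi)\hat{\psi}(\xi).
\]
Thus $\hat{g}$ is supported in $[0,\infty)$ and $\hat{h}$ in $(-\infty,0]$. Because $\psi \in \ms{S}(\bR)$ and $\mc{H}\psi \in L^2(\bR)$ (Theorem \ref{L2-hilbert-transform}), both $g$ and $h$ lie in $L^2(\bR)$, so $g^2$ and $h^2$ lie in $L^1(\bR)$, and the convolution theorem (Theorem \ref{convolution-theorem-on-distributions}) gives $\widehat{g^2} = \hat{g}*\hat{g}$ and $\widehat{h^2} = \hat{h}*\hat{h}$ as bounded continuous functions.

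The crucial observation is that the convolution of two $L^2$ functions supported in $[0,\infty)$ is again supported in $[0,\infty)$ (and analogously on $(-\infty,0]$). Hence $\widehat{g^2}$ is supported in $[0,\infty)$ and $\widehat{h^2}$ in $(-\infty,0]$, so on these supports the multiplier $-i\sgn(\xi)$ reduces to a constant and
\[
 \mc{H}(g^2) = -ig^2, \qquad \mc{H}(h^2) = ih^2
\]
as tempered distributions.

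Now I would simply expand: using $(i\mc{H}\psi)^2 = -(\mc{H}\psi)^2$,
\[
 g^2 - h^2 = 4i\,\psi\,\mc{H}\psi, \qquad g^2 + h^2 = 2\psi^2 - 2(\mc{H}\psi)^2,
\]
and then by linearity of $\mc{H}$,
\[
 4i\,\mc{H}(\psi\,\mc{H}\psi) = \mc{H}(g^2) - \mc{H}(h^2)
 = -i(g^2+h^2) = -2i\bigl(\psi^2 - (\mc{H}\psi)^2\bigr),
\]
which rearranges to the claimed identity.

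The main technical point to watch will be the interpretation of the products $(\mc{H}\psi)^2$, $\psi\,\mc{H}\psi$, $g^2$, $h^2$ and the application of $\mc{H}$ to them: a priori $\mc{H}\psi$ is only in $L^2(\bR)$, so $(\mc{H}\psi)^2$ is known only to lie in $L^1(\bR)$ and need not lie in $L^2(\bR)$ without using the $L^p$ theory that we are precisely building up. One therefore carries out the argument inside $\ms{S}'(\bR)$, noting that $\mc{H} = \frac{1}{\pi}\pv(1/x)*(\cdot)$ is defined on the whole of $\ms{S}'(\bR)$ by convolution with the tempered distribution $\pv(1/x)$; the displayed identities hold first as tempered distributions and are transferred to pointwise a.e.\ equalities since each individual term is locally integrable. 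No additional $L^p$-boundedness of $\mc{H}$ is needed for this proof, only Theorem \ref{L2-hilbert-transform} and the convolution theorem for tempered distributions.
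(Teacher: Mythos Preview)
Your argument is correct and takes a genuinely different route from the paper's. The paper proves the identity by a direct computation on the Fourier side: it expands $\ms{F}[\psi^2 + 2\mc{H}(\psi\mc{H}\psi)]$ as an integral in $\eta$ and then verifies, by a case-by-case check on the signs of $\xi$, $\eta$, $\xi-\eta$, the pointwise identity
\[
 1 + m(\xi)m(\eta) + m(\xi)m(\xi-\eta) = m(\eta)m(\xi-\eta),\qquad m(\xi)=-i\sgn(\xi),
\]
which collapses the integral to $\widehat{\mc{H}\psi}*\widehat{\mc{H}\psi}=\widehat{(\mc{H}\psi)^2}$. Your approach instead uses the Riesz projections $g=\psi+i\mc{H}\psi$ and $h=\psi-i\mc{H}\psi$, observes that their Fourier transforms are supported on complementary half-lines, and exploits the fact that this one-sided support is preserved under squaring (convolution on the Fourier side). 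The algebra then finishes the job. Your route is more conceptual: it exposes the underlying ``Hardy-space'' mechanism (functions with half-line spectrum form an algebra, on which $\mc{H}$ acts as $\mp i$), whereas the paper's proof is a bare-hands verification of a signum identity that does not reveal why it should be true.

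One small correction: your sentence that $\mc{H}=\frac{1}{\pi}\pv(1/x)*(\cdot)$ ``is defined on the whole of $\ms{S}'(\bR)$ by convolution with the tempered distribution $\pv(1/x)$'' overreaches, since the paper only defines convolution of a tempered distribution with a Schwartz function, and convolution of two arbitrary tempered distributions is not generally available. You do not actually need this. Since $\psi\in\ms{S}(\bR)$, Theorem~\ref{convolution-with-tempered-distributions} gives that $\mc{H}\psi$ is smooth, and from the principal-value integral one sees $\mc{H}\psi(x)=O(1/|x|)$ as $|x|\to\infty$; hence $g,h$ are smooth with $O(1/|x|)$ decay and $g^2,h^2\in L^1(\bR)\cap L^2(\bR)$. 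Then $\mc{H}(g^2)$ and $\mc{H}(h^2)$ are covered by the $L^2$ multiplier characterization of Theorem~\ref{L2-hilbert-transform}, and all your displayed identities hold as equalities of $L^2$ (hence a.e.) functions, with no need to invoke convolution in $\ms{S}'$.
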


\begin{proof}[Proof of lemma]
We let $m(\xi) = - i \sgn(\xi)$ be the symbol of the
Hilbert transform. Taking the Fourier transform of the right-hand side,
we obtain
\begin{eqnarray*}
 \ms{F}\left[ \psi^2 + 2\mc{H}(\psi(\mc{H}\psi)) \right](\xi) 
 &=& \widehat{\psi^2}(\xi) + 2\ms{F}\left[ \mc{H}(\psi (\mc{H} \psi)) \right](\xi) \\
 &=& \left( \hat{\psi} * \hat{\psi} \right) (\xi) + 
 2m(\xi) \left( \hat{\psi} * \widehat{\mc{H}\psi} \right)(\xi) \\
 &=& \int \hat{\psi}(\eta) \hat{\psi}(\xi - \eta) \, d \eta \\
 & & + 2m(\xi) \int \hat{\psi}(\eta) m(\eta) \hat{\psi}(\xi-\eta) \, d\eta \\
 &=& \int \hat{\psi}(\eta) \hat{\psi}(\xi - \eta) \, d \eta \\
 & & + 2m(\xi) \int \hat{\psi}(\eta) m(\xi - \eta) \hat{\psi}(\xi-\eta) \, d\eta;
\end{eqnarray*}
here we have used the convolution theorem (Theorem \ref{convolution-theorem}).\index{convolution theorem!on Lp, for 1 p 2@$L^p$, for $1 \leq p \leq 2$}
We average the last two quantities in the above inequality to conclude that
\[
 \ms{F}[ \psi^2 + 2\mc{H}(\psi(\mc{H}\psi)) ](\xi) 
 = \int \hat{\psi}(\eta) \hat{\psi}(\xi - \eta)
 \left( 1 + m(\xi) \left[ m(\eta) + m(\xi - \eta) \right] \right) \, d\eta.
\]
Since
\begin{eqnarray*}
 1 + m(\xi) m(\eta) + m(\xi) m(\xi - \eta)
 &=& 1 - \sgn(\xi)\sgn(\eta) - \sgn(\xi)\sgn(\xi - \eta) \\
 &=&
 \begin{cases}
  1 & \mbox{ if } \eta > \xi > 0 \\
  0 & \mbox{ if } \xi = \eta > 0 \\
  -1 & \mbox{ if } \xi > \eta > 0 \\
  0 & \mbox{ if } \xi > \eta = 0 \\
  1 & \mbox{ if } \xi = 0 \\
  0 & \mbox{ if } \xi < \eta = 0 \\
  -1 & \mbox{ if } \xi < \eta < 0 \\
  0 & \mbox{ if } \xi = \eta < 0 \\
  1 & \mbox{ if }  \eta < \xi < 0
 \end{cases} \\
 &=&
 \begin{cases}
  1 & \mbox{ if } \frac{\eta}{\xi} > 1 \mbox{ or } \xi = 0 \\
  0 & \mbox{ if } \frac{\eta}{\xi} = 1 \mbox{ or } \eta = 0 \\
  -1 & \mbox{ if } \frac{\eta}{\xi} < 1
 \end{cases} \\
 &=& m(\eta)m(\xi-\eta),
\end{eqnarray*}
it follows from the convolution theorem (Theorem \ref{convolution-theorem-on-distributions})\index{convolution theorem!on tempered distributions}
that
\begin{eqnarray*}
  \ms{F}[ \psi^2 + 2\mc{H}(\psi(\mc{H}\psi)) ](\xi) 
 &=& \int \hat{\psi}(\eta)\hat{\psi}(\xi - \eta) m(\eta) m(\xi - \eta) \, d\eta \\
 &=& \int \widehat{\mc{H}\psi}(\xi) \widehat{\mc{H}\psi}(\xi - \eta) \, d\eta \\
 &=& \left( \widehat{\mc{H}\psi} * \widehat{\mc{H}\psi} \right)(\xi) \\
 &=& \widehat{(\mc{H}\psi)^2}(\xi).
\end{eqnarray*}
Applying the inverse Fourier transform on both sides, we obtain the
desired equality.
\end{proof}

We are now ready to establish the $L^p$-boundedness
of the Hilbert transform.

\begin{theorem}[M. Riesz]\label{Lp-boundedness-of-the-hilbert-transform}\index{Hilbert transform!Lp-boundedness of@$L^p$-boundedness of|(}
$\mc{H} \in \ms{M}_{p,p}$ for all $1 < p <\infty$.
\end{theorem}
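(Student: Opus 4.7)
The plan is to bootstrap the $L^2$-boundedness of $\mc{H}$ (Theorem \ref{L2-hilbert-transform}) along the sequence of exponents $2, 4, 8, \ldots, 2^k$ using the algebraic identity from Lemma \ref{interesting-hilbert-identity}, then invoke the Riesz-Thorin interpolation theorem (Theorem \ref{riesz-thorin}) to fill in the intermediate exponents in $[2,\infty)$, and finally obtain the range $(1,2)$ by a duality argument.

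For the bootstrap, suppose $\mc{H}$ is of type $(p,p)$ with norm $C_p$. Splitting a complex Schwartz function $\psi$ into its real and imaginary parts reduces us to real $\psi$, for which Lemma \ref{interesting-hilbert-identity}, combined with the triangle inequality in $L^p$ and a single application of H\"{o}lder's inequality, yields
\[
\|\mc{H}\psi\|_{2p}^2 \leq \|\psi\|_{2p}^2 + 2 C_p \|\psi\|_{2p} \|\mc{H}\psi\|_{2p}.
\]
Regarding this as a quadratic inequality in the nonnegative quantity $\|\mc{H}\psi\|_{2p}$ and solving, one obtains
\[
C_{2p} \leq C_p + \sqrt{C_p^2 + 1},
\]
which is finite whenever $C_p$ is. Starting from $C_2 = 1$ and iterating produces finite bounds $C_{2^k}$ for every $k \geq 1$.

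With $\mc{H}$ now known to be of type $(2,2)$ and of type $(2^k, 2^k)$ for every $k$, Theorem \ref{riesz-thorin} produces $L^p$-boundedness for every $p \in [2, 2^k]$; letting $k \to \infty$ covers all of $[2,\infty)$. For $1 < p < 2$ I use duality: since the convolution kernel $\frac{1}{\pi}\pv(\frac{1}{x})$ is an odd distribution, a direct computation on Schwartz pairs yields the antisymmetry identity $\int (\mc{H}\psi)\phi = -\int \psi(\mc{H}\phi)$. Combined with the Riesz representation theorem (Theorem \ref{Lp-riesz-representation}) realizing $(L^p)^*$ as $L^{p'}$, the $L^{p'}$-boundedness established above yields the $L^p$-boundedness with $C_p \leq C_{p'}$.

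The main technical obstacle is the a priori integrability needed at each step of the bootstrap: applying the quadratic inequality requires knowing in advance that $\mc{H}\psi$ lies in $L^{2p}$, which is the very conclusion sought. The standard resolution is to begin with a subclass of test functions for which $\mc{H}\psi$ automatically lies in all $L^q$ with $1 < q < \infty$---for example, Schwartz functions, on which $\mc{H}\psi$ is bounded and decays like $1/|x|$---derive the bound on this subclass, and then invoke Theorem \ref{norm-preserving-extension} to extend $\mc{H}$ to a bounded operator on all of $L^{2p}(\bR)$. A related routine point is that the Riesz-Thorin theorem as stated applies to operators defined on the simple functions with finite-measure support, so the norm-preserving extensions just described must be put in place before one interpolates.
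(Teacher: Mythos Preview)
Your proposal is correct and follows essentially the same route as the paper: bootstrap from $L^2$ to $L^{2^k}$ via Lemma~\ref{interesting-hilbert-identity} and the resulting quadratic inequality, fill in $[2,\infty)$ by Riesz--Thorin, then obtain $(1,2)$ by duality. The only cosmetic differences are that the paper runs the duality argument through Plancherel and the multiplier symbol rather than through the oddness of the kernel, and that you are (rightly) more explicit than the paper about the a~priori membership of $\mc{H}\psi$ in $L^{2p}$ for Schwartz $\psi$ needed to justify the quadratic-inequality step.
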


\begin{proof}
We have already established that $\mc{H} \in \ms{M}_{2,2}$ (Theorem \ref{L2-hilbert-transform}).
Fix a positive integer $n$ and assume inductively that
\[
 \|\mc{H}\psi\|_p \leq A_p \|\psi\|_p
\]
for $p=2^n$. Lemma \ref{interesting-hilbert-identity} implies that
\[
 \|\mc{H}\psi\|_{2p}
 = \|(\mc{H}\psi)^2\|^{1/2}_p \\
 = \| \psi^2 + 2\mc{H}(\psi(\mc{H}\psi)) \|^{1/2}_p,
\]
and so
\begin{eqnarray*}
 \|\mc{H}\psi\|_{2p}
 &\leq& \left( \|\psi^2\|_p  + 2\|\mc{H}(\psi(\mc{H}\psi))\|_p \right)^{1/2} \\
 &\leq& \left( \|\psi^2\|_p  + 2A_p\|\psi(\mc{H}\psi)\|_p \right)^{1/2} \\
 &\leq& \left( \|\psi^2\|_p  + 2A_p\|\psi\|_{2p}\|\mc{H}\psi)\|_{2p} \right)^{1/2};
\end{eqnarray*}
the last inequality follows from the Cauchy-Schwarz inequality.

It follows that
\begin{eqnarray*}
 0
 &\geq&
 \left( \frac{\|\mc{H}\psi\|_{2p}}{\|\psi\|_{2p}} \right)^2
 - 2 A_p \left( \frac{\|\mc{H}\psi\|_{2p}}{\|\psi\|_{2p}} \right) 
 - 1 \\
 &=& \left[ \left( \frac{\|\mc{H}\psi\|_{2p}}{\|\psi\|_{2p}} \right)
 - A_p + \sqrt{1 + A_p^2} \right]
 \left[ \left( \frac{\|\mc{H}\psi\|_{2p}}{\|\psi\|_{2p}} \right)
 - A_p - \sqrt{1 + A_p^2} \right],
\end{eqnarray*}
whence
\[
 A_p - \sqrt{1 + A_p^2} \leq
  \frac{\|\mc{H}\psi\|_{2p}}{\|\psi\|_{2p}}
 \leq A_p + \sqrt{1 + A_p^2}.
\]
In particular, we conclude that
\[
 \|\mc{H}\psi\|_{2p} \leq \left( A_p + \sqrt{1 + A_p^2} \right) \|\psi\|_{2p},
\]
which establishes $\mc{H} \in \ms{M}_{2^m,2^m}$ for all positive integers $m$.
We now invoke the Riesz-Thorin interpolation theorem (Theorem \ref{riesz-thorin}),\index{Riesz-Thorin interpolation \\ theorem}
to obtain the $L^p$-boundedness for all $p \geq 2$.

It remains to show that $\mc{H} \in \ms{M}_{p,p}$ for $1 < p \leq 2$, and this
requires a standard duality argument. By Plancherel's theorem\index{Plancherel's theorem}
(Theorem \ref{plancherel}), we have
\begin{eqnarray*}
 \langle \mc{H} \psi, \varphi \rangle
 &=& \langle \widehat{\mc{H} \psi}, \widehat{\varphi} \rangle
 = \langle -i\sgn(\xi) \psi , \hat{\varphi} \rangle \\
 &=& \langle  \hat{\psi} , -i\sgn(\xi) \hat{\varphi} \rangle
 = \langle \widehat{\psi}, \widehat{\mc{H} \varphi} \rangle \\
 &=& \langle \psi,  \mc{H} \varphi \rangle
\end{eqnarray*}
with respect to the standard inner product in $L^2(\bR)$. 
This, combined with the \hyperref[Lp-riesz-representation]{Riesz
representation theorem},\index{representation theorem!F. Riesz, Lp-space version@F. Riesz, $L^p$-space version} implies that
\begin{eqnarray*}
 \|\mc{H}\psi\|_p
 &=& \sup_{\|\varphi\|_{p'} \leq 1}
 \left| \int (\mc{H}\psi) \varphi \right|  \\
 &=& \sup_{\|\varphi\|_{p'} \leq 1}
 \left| \int (\mc{H}\psi) \bar{\varphi} \right| 
 = \sup_{\|\varphi\|_{p'} \leq 1}
 \langle \mc{H} \psi, \varphi \rangle \\
 &=& \sup_{\|\varphi\|_{p'} \leq 1}
 \langle \psi,  \mc{H} \varphi \rangle;
\end{eqnarray*}
here the density of $\mc{S}(\bR)$ in $L^{p'}(\bR)$
allows us to use only the Schwartz function to compute
the operator norm of the functional
\[
 f \mapsto \int \mc{H}\psi f.
\]
Since $p' \geq 2$, we can apply what we have proved above
to conclude that
\[
 \|\mc{H}\psi\|_p
 \leq \sup_{\|\varphi\| \leq 1} \|\varphi\|_{p'} \|\psi\|_p 
 \leq A_{p'} \|\psi\|_p.
\]
This completes the proof.
\end{proof}
\index{Hilbert transform!Lp-boundedness of@$L^p$-boundedness of|)}

\subsection{Singular Integral Operators}

One crucial drawback of the theory developed in this section
is that the Hilbert transform is only defined on $\bR$. In order
to study multidimensional Fourier analysis in this setting, we
would expect that it is
necessary to define higher-dimensional analogues of
the Hilbert transform.

\begin{defin}
Given an integer $1 \leq j \leq d$, we define the \emph{$n$th Riesz transform}\index{Riesz transform}
$\mc{R}_n$ on $\bR^d$ to be the convolution operator
\[
 \mc{R}_n \psi = \frac{1}{\omega_d} \pv\left( \frac{x_n}{|x|^{d+1}} \right) * \psi,
\]
where $\omega_d$ is the volume of the $d$-dimensional ball. 
\end{defin} 

A minor modification of the argument given in the proof of Theorem
\ref{L2-hilbert-transform} shows that the Riesz transforms are $L^2$
Fourier multipliers. How about the $L^p$ boudedness?
To this end, we remark that both the Hilbert transform and the
Riesz transforms are integral operators of the form
\[
 \int K(x-y)f(y) \, dy,
\]
where the kernel $K$ barely fails to be integrable on the diagonal $x=y$.
The transforms, therefore, are instances of \emph{singular integral
operators}, and their $L^p$-theory is subsumed to that of a much wider
class of operators. We give one such example:

\begin{defin}
$K \in L^2(\bR^d)$ is a \emph{Calder\'{o}n-Zygmund kernel}\index{Calder\'{o}n-Zygmund!kernel}
if the following conditions are met:
\begin{enumerate}[(a)]
 \item The Fourier transform $\hat{K}$ of $K$ is in $L^\infty(\bR^d)$.
 \item $K \in \mc{C}^1(\bR^d \smallsetminus \{0\})$ and
 \[
  |\nabla K(x)| \leq \frac{\|\hat{K}\|_\infty}{|x|^{d+1}}.
 \]
\end{enumerate}
\end{defin}

\index{Calder\'{o}n-Zygmund!singular integral \\ operator}
\begin{theorem}[Calder\'{o}n-Zygmund]\label{calderon-zygmund}\index{Calder\'{o}n-Zygmund!theorem}
Let $p \in (1,\infty)$.
If $K$ is a Calder\'{o}n-Zygmund kernel, then the \emph{singular integral
operator of Calder\'{o}n-Zygmund type}
\[
 Tf = \int K(x-y)f(y) \, dy,
\]
initially defined for $f \in L^1(\bR^d) \cap L^p(\bR^d)$, satisfies
the norm estimate
\[
 \|Tf\|_p \leq A_p \|f\|_p
\]
for some constant $A_p$ independent of $f$ and $\|K\|_2$.
Therefore, $T$ can be extended to a bounded operator $T:L^p(\bR^d) \to L^p(\bR^d)$.
\end{theorem}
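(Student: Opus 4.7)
The plan is to establish the theorem in four steps. First, I would verify that $T$ extends to a bounded operator on $L^2(\bR^d)$: the convolution theorem gives $\widehat{Tf} = \hat{K}\hat{f}$ for $f \in L^1 \cap L^2$, and \hyperref[plancherel]{Plancherel's theorem} immediately yields $\|Tf\|_2 \leq \|\hat{K}\|_\infty\|f\|_2$. Second, I would establish the weak-type $(1,1)$ estimate $|\{|Tf|>\alpha\}| \leq C\alpha^{-1}\|f\|_1$ via a Calder\'on-Zygmund decomposition. Third, Marcinkiewicz interpolation between these two endpoints would yield boundedness on $L^p$ for $1 < p \leq 2$. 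Fourth, a duality argument applied to the adjoint kernel $\tilde{K}(x) = \overline{K(-x)}$, which satisfies the same hypotheses, would handle $2 < p < \infty$.

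The heart of the proof is the weak-$(1,1)$ estimate. Fixing $\alpha>0$ and $f \in L^1 \cap L^p$, I would apply the Calder\'on-Zygmund lemma of \S\S\ref{fr-whitney-decomposition} at level $\alpha$ to obtain a pairwise almost-disjoint family of cubes $\{Q_j\}$ with $\sum_j|Q_j| \leq \alpha^{-1}\|f\|_1$ and a decomposition $f = g + b$ in which $|g| \leq 2^d\alpha$ almost everywhere and $b = \sum_j b_j$ with each $b_j$ supported in $Q_j$, of mean zero, and satisfying $\|b_j\|_1 \leq 2^{d+1}\alpha|Q_j|$. The good part is easy: Chebyshev and the $L^2$ bound give $|\{|Tg|>\alpha/2\}| \lesssim \alpha^{-2}\|g\|_2^2 \lesssim \alpha^{-1}\|g\|_\infty\|g\|_1 \lesssim \alpha^{-1}\|f\|_1$, using $\|g\|_1 \leq \|f\|_1$. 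Letting $Q_j^*$ be a fixed dilate of $Q_j$ about its center and $\Omega^* = \bigcup_j Q_j^*$, we have $|\Omega^*| \lesssim \alpha^{-1}\|f\|_1$, so it remains to bound $|\{x\notin\Omega^* : |Tb(x)|>\alpha/2\}|$.

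The main obstacle is controlling $\int_{\bR^d\smallsetminus\Omega^*}|Tb(x)|\,dx$. The key observation is that the mean-zero property of $b_j$ lets us write, for $x\notin\Omega^*$ and $y_j$ the center of $Q_j$,
\[
 Tb_j(x) = \int_{Q_j}\bigl[K(x-y)-K(x-y_j)\bigr]b_j(y)\,dy,
\]
so that a smoothness-based cancellation becomes available. The gradient hypothesis then implies the H\"ormander-type inequality
\[
 \int_{|x|>2|y|}|K(x-y)-K(x)|\,dx \leq C,
\]
uniformly in $y$, via the mean value theorem and integration in polar coordinates. Since $x\notin Q_j^*$ guarantees $|x-y_j| > 2|y-y_j|$ for $y \in Q_j$, the H\"ormander estimate applied kernel-by-kernel gives $\|Tb_j\|_{L^1(\bR^d\smallsetminus Q_j^*)} \lesssim \|b_j\|_1$; summing over $j$ yields $\int_{\bR^d\smallsetminus\Omega^*}|Tb| \lesssim \sum_j\|b_j\|_1 \lesssim \|f\|_1$, and Chebyshev completes the weak-$(1,1)$ bound. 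The remaining interpolation step and the duality pass to $p > 2$ are then routine, the only subtlety being to verify that the adjoint kernel $\tilde{K}$ inherits both the $L^\infty$-boundedness of its Fourier transform and the pointwise gradient estimate, which are preserved under reflection and complex conjugation.
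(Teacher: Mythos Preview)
Your argument is the standard Calder\'on--Zygmund route and is correct as outlined: $L^2$ boundedness from Plancherel, weak-$(1,1)$ via the Calder\'on--Zygmund decomposition together with the H\"ormander cancellation estimate coming from the gradient bound on $K$, then Marcinkiewicz interpolation for $1<p<2$ and duality for $p>2$. This is exactly the ``classical proof'' the paper alludes to and cites to \cite{Elias_M_Stein:B1970}.

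The paper, however, does not prove the theorem this way. It defers the result to \S\ref{s-hardy-spaces-and-bmo} and obtains it instead as a consequence of the Fefferman--Stein interpolation theorem: one uses the $L^2$ bound together with the fact (quoted from \cite{Elias_M_Stein:B1993}) that Calder\'on--Zygmund operators map $H^1$ boundedly into $L^1$, and then interpolates via Theorem~\ref{fefferman_stein-h1} to get $L^p$ boundedness for $1<p<2$; the range $p>2$ follows by duality or from the companion $L^\infty\to\BMO$ endpoint and Theorem~\ref{fefferman_stein-bmo}. Your approach is more self-contained and stays entirely within real-variable $L^p$ theory, at the cost of invoking Marcinkiewicz (which the thesis only sketches in \S\S\ref{fr-marcinkiewicz-and-real-interpolation}); the paper's approach trades the weak-$(1,1)$ machinery for the $H^1$/$\BMO$ framework it is building toward, so that the Calder\'on--Zygmund theorem becomes an application of, rather than a prerequisite for, the Fefferman--Stein theory.
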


The classical proof of the theorem makes use of the
technique known as \emph{Calder\'{o}n-Zygmund decomposition}\index{Calder\'{o}n-Zygmund!decomposition} and can be
found in Chapter 2, Section 2 of \cite{Elias_M_Stein:B1970}.
A key element in the proof is the interpolation theorem of Marcinkiewicz,
which is discussed briefly in \S\ref{fr-marcinkiewicz-and-real-interpolation}.
In the present thesis, we shall derive this result as a consequence
of the Fefferman-Stein interpolation theorem, which we take up in the next section.
\index{Hilbert transform|)}

\section{Hardy Spaces and \texorpdfstring{$\BMO$}{BMO}}\label{s-hardy-spaces-and-bmo}

\index{Fefferman, Charles|(}\index{Stein, Elias M.|(}
We have seen in the last section that
a clever use of the \hyperref[riesz-thorin]{Riesz-Thorin interpolation theorem}
at intermediate points establishes the
\hyperref[Lp-boundedness-of-the-hilbert-transform]{$L^p$ boundedness of the Hilbert transform}
without the endpoint estimates. This is not entirely satisfying, for there is
no clear way to generalize the proof to a wider class of operators.

In this section, we take a stroll through a theory of interpolation that allows us
to interpolate operators that are not necessarily bounded operators between
Lebesgue spaces. In particular, we shall consider two new Banach spaces,
$H^1$ and $\BMO$, which serve as substitutes for $L^1$ and $L^\infty$,
respectively. The section will culminate in an interpolation
theorem between $L^2$ and $\BMO$ and its dual theorem between $H^1$ and $L^2$.

First presented by Charles Fefferman and Elias Stein in
\cite{Fefferman_Stein:J1972}, the theory of interpolation on $H^1$ and
$\BMO$ is laden with intricate technical details and requires more than
a mere section for a full development. We shall therefore confine ourselves
to stating the main definition and theorems, with occasional sketches of
proofs. 

\subsection{The Hardy Space \texorpdfstring{$H^1$}{H1}}

\index{H1@$H^1$|(}
\index{Hardy space|see{H1@$H^1$}}
Since the Hilbert transform of an $L^1$-function is not necessarily
in $L^1$, it is natural to consider the subspace $H^1(\bR)$ of $L^1(\bR)$
consisting of $L^1$-functions whose Hilbert transforms are in $L^1$ as well.
Many important operators, however, take functions on a multidimensional
Euclidean space as their input, and it is thus of interest to consider the
$d$-dimensional generalization $H^1(\bR^d)$ of $H^1(\bR)$, consisting
of $L^1$-functions whose Riesz transforms are in $L^1$ as well. More generally,
we define, for each $p \geq 1$, the subspace $H^p(\bR^d)$ of $L^p(\bR^d)$
as follows:

\begin{defin}\label{riesz-transform-characterization-of-H1}
The (real) \emph{Hardy space $H^p(\bR^d)$ of order $p$} consists of functions
$f \in L^p(\bR^d)$ such that the sum
\[
 \|f*\rho_\ve\|_p + \sum_{n=1}^d \|(\mc{R}_n f) * \rho_\ve\|_p 
\]
is bounded for each approximations to the identity
$(\rho_\ve)_{n=1}^\infty$.
\end{defin} 

By the $L^p$-boundedness of the Riesz transforms, the
Hardy space $H^p(\bR^d)$ coincides with the Lebesgue space $L^p(\bR^d)$ for
all $1 < p < \infty$. As for $p=1$, the above discussion implies that
the Hardy space $H^1(\bR^d)$ is strictly smaller than the Lebesgue space
$L^1(\bR^d)$. Moreover, we can write $H^1$-functions as linear combinations
of particularly basic functions in $H^1$, known as \emph{atoms}.

\index{H1@$H^1$!atomic decomposition of|(}
\begin{defin}
A $d$-dimensional \emph{$H^1$-atom} is a measurable function $\mf{a}$
supported in a ball $B$ in $\bR^d$ such that $|\mf{a}(x)| \leq |B|^{-1}$
for almost every $x \in \bR^d$ and $\int \mf{a}(x) \, dx = 0$.
\end{defin}

The atomic decomposition of $H^1$ can be stated as follows:

\begin{theorem}[Atomic decomposition of $H^1$]\label{atomic-decomposition-characterization-of-H1}
Every $d$-dimensional $H^1$-atom belongs to $H^1(\bR^d)$,
and every function $f \in H^1(\bR^d)$ can be written as
the infinite linear combination
\[
 f = \sum_{n=1}^\infty \lambda_n \mf{a}_n
\]
of $H^1$-atoms $(\mf{a}_n)_{n=1}^\infty$ with
$\sum |\lambda_n| < \infty$, where the sum
is understood as the limit of partial sums in $L^1(\bR^d)$.
Therefore, a function $f \in L^1(\bR^d)$ is in $H^1(\bR^d)$
if and only if $f$ admits an atomic decomposition.
\end{theorem}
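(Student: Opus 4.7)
The plan is to prove the two directions separately. For the forward direction, that every $H^1$-atom lies in $H^1(\bR^d)$, I fix an atom $\mf{a}$ supported in a ball $B = B(x_0, r)$ satisfying $|\mf{a}| \leq |B|^{-1}$ and $\int \mf{a} = 0$. The bound $\|\mf{a} * \rho_\ve\|_1 \leq \|\mf{a}\|_1 \leq 1$ is immediate from \hyperref[convolution]{Young's inequality}, so the work reduces to bounding $\|\mc{R}_n \mf{a}\|_1$ uniformly in the atom; a second application of Young's inequality will then control $\|(\mc{R}_n \mf{a}) * \rho_\ve\|_1$ uniformly in $\ve$. I would split $\bR^d = B^* \cup (B^*)^c$ with $B^* = B(x_0, 2r)$: on $B^*$, the Cauchy-Schwarz inequality together with the $L^2$-boundedness of the Riesz transform gives
\[
 \int_{B^*} |\mc{R}_n \mf{a}(x)| \, dx \leq |B^*|^{1/2} \|\mc{R}_n \mf{a}\|_2 \leq C r^{d/2} \|\mf{a}\|_2 \leq C r^{d/2} |B|^{-1/2} \leq C';
\]
on $(B^*)^c$, I would exploit $\int \mf{a} = 0$ to write $\mc{R}_n \mf{a}(x) = \int [K_n(x-y) - K_n(x-x_0)] \mf{a}(y) \, dy$, where $K_n$ is the Riesz kernel, and use the smoothness estimate $|K_n(x-y) - K_n(x-x_0)| \leq C r |x - x_0|^{-d-1}$ valid for $y \in B$ and $x \notin B^*$. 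Integrating this against $|\mf{a}|$ and then over $(B^*)^c$ produces a uniform constant, and this same computation gives the ``sum of atoms in $H^1$'' half of the biconditional.

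For the reverse direction, I would first take as input the equivalence $\|f\|_{H^1} \sim \|f^*\|_1$, where $f^*(x) = \sup_\ve |(f * \rho_\ve)(x)|$ is the grand maximal function; this maximal-function characterization is dual to Definition \ref{riesz-transform-characterization-of-H1} and can be established via the Riesz-transform definition together with the Calder\'{o}n-Zygmund machinery. Setting $\Omega_k = \{x : f^*(x) > 2^k\}$ and applying a Whitney-type decomposition (as in \S\ref{fr-whitney-decomposition}) to each open set $\Omega_k$, I obtain almost-disjoint cubes $\{Q_{k,j}\}_j$ covering $\Omega_k$ with good separation from $\Omega_k^c$. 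Using a smooth partition of unity subordinate to these cubes, I would define at each scale $k$ a ``good'' function $g_k$ with $\|g_k\|_\infty \leq C \cdot 2^k$, equal to $f$ off $\Omega_k$, so that the ``bad'' remainder $b_k = f - g_k$ is supported on $\Omega_k$ and its restriction to each cube has mean zero. Since $\|g_k\|_\infty \to 0$ as $k \to -\infty$ and $g_k \to f$ in $L^1$ as $k \to +\infty$, the telescoping identity
\[
 f = \sum_{k \in \bZ} (g_{k+1} - g_k)
\]
re-expresses $f$ as a countable sum of pieces supported on individual cubes. After normalizing each piece to an atom $\mf{a}_n$, the coefficients $\lambda_n$ are proportional to $2^k |Q_{k,j}|$, and a layer-cake estimate shows $\sum_n |\lambda_n| \leq C \sum_k 2^k |\Omega_k| \leq C' \|f^*\|_1 \sim \|f\|_{H^1}$.

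The principal obstacle lies squarely in this reverse direction. Establishing $\|f\|_{H^1} \sim \|f^*\|_1$ is a substantial theorem in its own right, requiring a careful passage between the Riesz-transform definition and the maximal-function viewpoint via convolution with the Poisson kernel and harmonic conjugation. Once that equivalence is available, the remaining delicate step is verifying that each difference $g_{k+1} - g_k$, when restricted to a single Whitney cube, actually satisfies the size bound $|Q|^{-1}$ and the mean-zero condition---the latter requires distributing the mean carefully among overlapping cubes at adjacent scales when one works with a smooth partition of unity rather than sharp cutoffs. Everything else amounts to bookkeeping against the dyadic level sets $\Omega_k$.
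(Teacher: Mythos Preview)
The paper does not supply its own proof of this theorem: immediately after the statement it refers the reader to \cite{Elias_M_Stein:B1993}, Chapter III, Sections 2.2 and 4.3, for the equivalence of the Riesz-transform, maximal-function, and atomic-decomposition definitions, and to \cite{Stein_Shakarchi:B2011} for the $L^1$-convergence formulation. So there is no in-paper argument to compare your proposal against.

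That said, your sketch follows the standard route taken in the cited references. The forward direction is exactly the usual near/far split, and your identification of the reverse direction as resting on the maximal-function characterization plus a Calder\'{o}n--Zygmund/Whitney decomposition of the level sets of $f^*$ is correct and is precisely what Stein does. You are also right to flag the equivalence $\|f\|_{H^1} \sim \|f^*\|_1$ as the substantive input: the paper explicitly declines to discuss the maximal-function definition, so within the paper's framework this step is a genuine black box that you would need to import wholesale. One small correction: the statement $\|g_k\|_\infty \to 0$ as $k \to -\infty$ is not what you want (the good part $g_k$ tends to $f$, not to $0$, in that limit); the telescoping works because $g_k \to 0$ in $L^1$ as $k \to -\infty$ and $g_k \to f$ in $L^1$ as $k \to +\infty$, which is what makes $f = \sum_k (g_{k+1} - g_k)$ legitimate.
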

\index{H1@$H^1$!atomic decomposition of|)}

Stein, in \cite{Elias_M_Stein:B1993}, proves the equivalence
of the Riesz-transform definition and the maximal-function
definition---which we do not discuss---in Chapter III, Section 4.3.
The equivalence of the maximal-function definition and the
atomic-decomposition definition is proved in Chapter III, Section 2.2. 
The $L^1$-norm-convergence characterization is taken from
Chapter 2, Section 5.1 in \cite{Stein_Shakarchi:B2011},\index{Shakarchi, Rami} which
takes the atomic-decomposition characterization as the definition
of $H^1$.

With the above theorem, we can now define the \emph{$H^1$-norm}\index{norm of}
of $f \in H^1(\bR^d)$ as
\[
 \|f\|_{H^1} = \inf \sum_{n=1}^\infty |\lambda_n|,
\]
where the infimum is taken over all atomic decompositions of $f$.
$H^1$ is a Banach space with this norm. Furthermore, $H^1$
behaves much nicer than $L^1$, because the singular integral operators\index{Calder\'{o}n-Zygmund!singular integral \\ operator}
of Calder\'{o}n-Zygmund type are bounded operators from $H^1$ to $L^1$:
this is proved in \cite{Elias_M_Stein:B1993}, Chapter III, Section 3.1.

We can, therefore, consider $H^1$ as a better substitute for
$L^1$ in many cases. As is the case with $L^1$, the dual of
$H^1$ can be realized as a concrete space of functions. This
is the space of bounded mean oscillations, which we shall
define in due course.
\index{H1@$H^1$|)}

\subsection{Interlude: The Maximal Function}

In order to chasracterize the dual of $H^1$, we must
find a suitable substitute space for $L^\infty$.
To this end, we shall shift our
focus from controlling the functions themselves to dealing with
their mean values instead. We review the basic theory of integral
mean values in this section, focusing in particular on a dominating
function of the mean-value function, the Hardy-Littlewood maximal function.

For each $\delta>0$, we recall that the \emph{mean value}\index{mean value}\index{integral mean value|see{mean value}} $(A_\delta f)(x)$ of a
measurable function $f$ at $x \in \bR^d$ is defined by
\[
 (A_\delta f)(x) = \frac{1}{|B_\delta(x)|} \int_{B_\delta(x)} f(y) \, dy
\]
The \emph{Lebesgue differentiation theorem}\index{Lebesgue, Henri!differentiation theorem} guarantees that the mean value
is a good approximation of the actual function. More precisely, if
$f:\bR^d \to \bC$ is locally integrable, then 
\[
 \lim_{\delta \to 0} (A_\delta f)(x) = f(x)
\]
for almost every $x$. Proving such a convergence result often requires
an estimate on the \emph{size} of the operator. For the mean-value
operator, we introduce the following maximal-estimate operator:

\index{Hardy-Littlewood maximal function|(}
\begin{defin}
The \emph{Hardy-Littlewood maximal function} of a measurable function $f$
on $\bR^d$ is
\[
 (\mc{M}f)(x) = \sup_{\delta > 0} (A_\delta |f|)(x)
 = \sup_{\delta > 0} \frac{1}{|B_\delta(x)|} \int_{B_\delta(x)} |f(y)| \, dy.
\]
\end{defin}

Similar to the Hilbert transform and the Riesz transforms, the maximal
function does not map $L^1$ into $L^1$. Indeed, if $f \in L^1(\bR^d)$
is not of $L^1$-norm zero, then there exists a ball $B$ in $\bR^d$
such that $\int_B |f| > 0$. We fix some $\delta>0$ such that
$B$ is contained in $B_\delta(1)$ and set
\[
 k = \frac{1}{|B_\delta(1)|} \int_B |f(y)| \, dy.
\]
For each $|x| \geq 1$, we can now establish the following lower bound:
\begin{eqnarray*}
 (\mc{M}f)(x)
 &\geq& \frac{1}{|B_{\delta+(|x|-1)}(x)|} \int_{B_{\delta+(|x|-1)}(x)} |f(y)| \, dy \\
 &\geq& \frac{1}{|x|^d|B_\delta(1)|} \int_{B_\delta(1)} |f(y)| \, dy \\
 &\geq& \frac{1}{|x|^d} \frac{1}{|B_\delta(1)|} \int_{B} |f(y)| \, dy \\
 &=& \frac{k}{|x|^d}.
\end{eqnarray*}
It follows that $\mc{M}f$ is not integrable on $\bR^d$.

As a substitute for the norm estimate, we have the following inequality:

\begin{theorem}[Hardy-Littlewood maximal inequality]\label{hary-littlewood-maximal-inequality}\index{Hardy-Littlewood maximal function!maximal inequality}\index{Hardy-Littlewood maximal function!weak-type (1,1) estimate of|see{maximal inequality}}
If $f \in L^1(\bR^d)$, then we have the \emph{weak-type} estimate
\[
 m\{x: (\mc{M}f) (x) > \alpha \} \leq \frac{A_d}{\alpha} \|f\|_1,
\]
where $A_d$ is a constant that depends only on the dimension $d$.
\end{theorem}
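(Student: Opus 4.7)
The plan is to estimate the superlevel set $E_\alpha = \{x \in \bR^d : (\mc{M}f)(x) > \alpha\}$ via a covering argument. For each $x \in E_\alpha$, the definition of $\mc{M}f$ supplies some radius $r_x > 0$ with
\[
 \frac{1}{|B_{r_x}(x)|}\int_{B_{r_x}(x)} |f(y)|\,dy > \alpha,
\]
which rearranges into the crucial inequality $|B_{r_x}(x)| < \alpha^{-1}\int_{B_{r_x}(x)} |f|$. The balls $\{B_{r_x}(x)\}_{x \in E_\alpha}$ cover $E_\alpha$, but summing their measures directly yields nothing useful because of uncontrolled overlap. The strategy is to extract a pairwise disjoint subcollection whose modest dilation still covers $E_\alpha$, so that the per-ball estimate can be summed against $\|f\|_1$.

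First, I would reduce to compact subsets: by inner regularity of the Lebesgue measure (Proposition \ref{approximation-by-open-and-closed-sets}, applied to $E_\alpha \cap B_n(0)$, followed by continuity from below), it suffices to show $|K| \leq A_d \alpha^{-1}\|f\|_1$ for every compact $K \subseteq E_\alpha$; a uniform bound here forces $|E_\alpha|$ itself to be finite and to satisfy the same estimate, sidestepping the fact that we do not yet know $E_\alpha$ has finite measure. Fix such a $K$ and consider the open cover $\{B_{r_x}(x) : x \in K\}$, from which compactness extracts a finite subcover $\{B_{r_1}(x_1),\ldots,B_{r_N}(x_N)\}$. I would then invoke a finite Vitali-type covering lemma: any finite collection of balls in $\bR^d$ admits a pairwise disjoint subcollection $\{B_{r_{i_k}}(x_{i_k})\}_{k=1}^M$ such that the concentric dilates of radius $3 r_{i_k}$ cover the original union. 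This is proved by a greedy selection that sorts the balls in decreasing order of radius and, at each step, picks the largest remaining ball disjoint from everything already chosen; for any discarded $B_{r_j}(x_j)$, there is a selected $B_{r_{i_k}}(x_{i_k})$ with $r_{i_k} \geq r_j$ and $B_{r_j}(x_j) \cap B_{r_{i_k}}(x_{i_k}) \neq \varnothing$, so the triangle inequality $|x_j - x_{i_k}| \leq r_j + r_{i_k} \leq 2 r_{i_k}$ forces $B_{r_j}(x_j) \subseteq B_{3 r_{i_k}}(x_{i_k})$. The disjoint subcollection then yields
\[
 |K| \leq \sum_{k=1}^M \left| B_{3 r_{i_k}}(x_{i_k}) \right|
 = 3^d \sum_{k=1}^M |B_{r_{i_k}}(x_{i_k})|
 < \frac{3^d}{\alpha}\sum_{k=1}^M \int_{B_{r_{i_k}}(x_{i_k})} |f(y)|\,dy
 \leq \frac{3^d}{\alpha}\|f\|_1,
\]
with the final inequality using the pairwise disjointness of the selected balls. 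Taking the supremum over compact $K \subseteq E_\alpha$ gives the theorem with $A_d = 3^d$.

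The principal technical obstacle is the covering lemma itself, in particular verifying the containment claim for discarded balls with a dimension-free (in fact, universal) enlargement factor. The greedy construction is short, but it is essential to sort the balls by decreasing radius, for only then does the triangle-inequality bound $|x_j - x_{i_k}| \leq 2 r_{i_k}$ hold; an arbitrary greedy pick would yield a worse constant or fail outright. A secondary concern is a clean handling of the reduction from $E_\alpha$ to its compact subsets, which I would phrase directly in terms of inner regularity rather than attempt any delicate a priori argument that $E_\alpha$ has finite measure; the uniform bound on all compact subsets of $E_\alpha$ automatically implies $|E_\alpha| < \infty$ together with the desired inequality.
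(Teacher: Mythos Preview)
Your proposal is correct and follows exactly the approach the paper indicates: the paper does not supply its own proof but remarks that ``the standard proof of the inequality makes use of the \emph{Vitali covering lemma}'' and cites Stein--Shakarchi, Folland, and Rudin. Your argument---reduction to compact subsets, finite Vitali-type selection by greedy choice of largest radii, and summing over the disjoint subcollection to obtain $A_d = 3^d$---is precisely that standard proof.
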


The standard proof of the inequality makes use of the \emph{Vitali covering
lemma} and is covered in many standard textbooks in real analysis. See,
for example, Chapter 3, Theorem 1.1 in \cite{Stein_Shakarchi:B2005}
Theorem 3.17 in \cite{Gerald_B_Folland:B1999}, or Section 7.5 in
\cite{Walter_Rudin:B1986}. Even though $\mc{M}$ is not of type $(1,1)$,
we have the $L^\infty$-norm estimate
\[
 \|\mc{M}f\|_\infty = \|f\|_\infty
\]
and, using these endpoint estimates, we can establish the following
interpolated bounds:

\begin{theorem}[$L^p$-boundedness of the maximal function]\label{lp-boundedness-of-the-maximal-function}\index{Hardy-Littlewood maximal function!Lp-boundedness of@$L^p$-boundedness of}
If $f \in L^p(\bR^d)$ for some $1 < p < \infty$, then $\mc{M}f \in
  L^p(\bR^d)$ and
  \[
   \|\mc{M}f\|_p \leq A_{p,d} \|f\|_p,
  \]
where $A_{p,d}$ is a constant that depends only on $p$ and the dimension $d$.
\end{theorem}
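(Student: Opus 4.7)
The plan is to imitate the Marcinkiewicz interpolation argument directly, since the Riesz-Thorin interpolation theorem is not applicable here: the operator $\mc{M}$ is sublinear rather than linear, and one of the endpoint estimates (the $(1,1)$ bound) is only of weak-type. The bound $\|\mc{M}f\|_\infty \leq \|f\|_\infty$ is clear from the definition, so all the work goes into leveraging the weak-type inequality of Theorem \ref{hary-littlewood-maximal-inequality} away from the $L^\infty$ endpoint.

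First, for each fixed $\alpha>0$, I would decompose $f$ by truncation at height $\alpha/2$: set $f^0(x) = f(x)$ when $|f(x)| > \alpha/2$ and $f^0(x) = 0$ otherwise, and $f^1 = f - f^0$. By construction $\|f^1\|_\infty \leq \alpha/2$, so the trivial $L^\infty$ estimate gives $(\mc{M}f^1)(x) \leq \alpha/2$ for every $x$. Subadditivity $\mc{M}f \leq \mc{M}f^0 + \mc{M}f^1$ (which follows immediately from the definition) then shows that $(\mc{M}f)(x) > \alpha$ forces $(\mc{M}f^0)(x) > \alpha/2$. Applying Theorem \ref{hary-littlewood-maximal-inequality} to $f^0 \in L^1(\bR^d)$ yields
\[
 m\{x : (\mc{M}f)(x) > \alpha\}
 \leq \frac{2 A_d}{\alpha} \int_{\{|f| > \alpha/2\}} |f(y)| \, dy.
\]

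Next, I would compute the $L^p$-norm via the layer-cake representation
\[
 \|\mc{M}f\|_p^p
 = p \int_0^\infty \alpha^{p-1} \, m\{x : (\mc{M}f)(x) > \alpha\} \, d\alpha,
\]
substitute the weak bound above, and swap the order of integration using Fubini-Tonelli. The outer $\alpha$-integral becomes an integral of $\alpha^{p-2}$ on $[0, 2|f(x)|]$, which evaluates to $(2|f(x)|)^{p-1}/(p-1)$. Pulling out constants yields
\[
 \|\mc{M}f\|_p^p \leq \frac{2^p p A_d}{p-1} \|f\|_p^p,
\]
so the conclusion holds with $A_{p,d} = (2^p p A_d/(p-1))^{1/p}$.

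The main obstacle is not any deep step but rather bookkeeping: one must choose the truncation level (here $\alpha/2$) so that the contribution from $f^1$ is absorbed cleanly and so that the layer-cake integral converges at zero. The constant $A_{p,d}$ necessarily blows up as $p \to 1^+$, which is in keeping with the fact that $\mc{M}$ is not of strong type $(1,1)$; this blowup is intrinsic to the method and cannot be avoided within this framework. If one wished to derive the result as a genuine corollary of an abstract interpolation theorem rather than via the direct argument, one would need the Marcinkiewicz interpolation theorem, which handles sublinear operators with weak-type endpoints and is discussed in the further-results section.
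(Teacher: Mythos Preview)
Your argument is correct and is precisely the approach the paper has in mind: the paper does not write out a proof but simply remarks that the norm estimates follow by ``splitting $\mc{M}f$ into its large and small parts'' and refers to pages 4--7 of Stein's \emph{Singular Integrals}, which is exactly the truncation-plus-layer-cake computation you carried out. Your commentary on the blowup of $A_{p,d}$ as $p\to1^+$ and the connection to Marcinkiewicz interpolation also matches the paper's surrounding discussion.
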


The norm estimates are established by splitting $\mc{M}f$ into its large
and small parts: see pages 4-7 in \cite{Elias_M_Stein:B1970} for the proof.
The idea of the proof can be generalized to establish an interpolation theorem
for ``weak-type'' endpoint estimates. This is the theorem of Marcinkiewicz\index{Marcinkiewicz, J\'{o}zef!interpolation theorem} and
serves as a starting point for the real method of interpolation, which is
discussed in \S\ref{fr-marcinkiewicz-and-real-interpolation}.
\index{Hardy-Littlewood maximal function|)}

\subsection{Functions of Bounded Mean Oscillation}

\index{BMO@$\BMO$|(}\index{bounded mean oscillations|see{$\BMO$}}
We now consider a space of functions whose integral mean values\index{mean value} are
controlled.

\begin{defin}
The space $\BMO(\bR^d)$ of \emph{bounded mean oscillations} on $\bR^d$ consists
of locally integrable functions $f$ on $\bR^d$ such that
\begin{equation}\label{bmo-definition}
 \frac{1}{|B|} \int_B |f(x)-f_B| \, dx
\end{equation}
is uniformly bounded for all balls $B$, where $f_B$ is the integral mean value
\[
 \frac{1}{|B|} \int_B f(x) \, dx
\]
over $B$.
\end{defin}

The infimum of all uniform bounds of (\ref{bmo-definition}) is denoted
by $\|f\|_{\BMO}$.\index{BMO@$\BMO$!norm of} Provided that we consider the quotient space\index{quotient space} given by the equivalence
relation
\[
 f \sim g \htwo \Leftrightarrow \htwo f = g+k
 \hone \mbox{ for some constant } \hone k,
\]
$\|\cdot\|_{\BMO}$ is a complete norm on $\BMO(\bR^d)$. Functions of bounded mean
oscillation are ``nearly bounded'' in the following sense: $f \in \BMO(\bR^d)$
if and only if its \emph{sharp maximal function}\index{sharp maximal function}
\[
 f^\sharp(x) = \sup_{B \ni x} \frac{1}{|B|} \int_B |f(y) - f_B| \, dy
\]
is bounded. Since the sharp maximal function is dominated by the
Hardy-Littlewood maximal function at each point, we have the norm estimate
\[
 \|f^\sharp\|_p \leq \|\mc{M}f\|_p \leq A_{p,d} \|f\|_p
\]
for all $1 < p < \infty$.

We now turn to the remarkable observation of C. Fefferman that
there is a duality relationship between $H^1$ and $\BMO$,
analogous to that of $L^1$ and $L^\infty$.
In other words, each bounded linear functional $l$ on $H^1(\bR^d)$ 
can be represented as
\begin{equation}\label{h1-bmo}
 l(f) = \int f(x) u(x) \, dx
\end{equation}
for a unique $u \in \BMO(\bR^d)$. We do not have a substitute for H\"{o}lder's
inequality, however, and so the integral in (\ref{h1-bmo}) need not be
well-defined. To bypass the problem, we first consider bounded linear functionals
on the space $H^1_a(\bR^d)$ of bounded, compactly-supported $H^1$-functions
with integral zero. $H^1_a(\bR^d)$ is precisely the collection of finite linear
combinations of $H^1$-atoms, which is dense by the argument in
Chapter 3, Section 2.4 of \cite{Elias_M_Stein:B1993}. The integral in
(\ref{h1-bmo}) then converges and remains the same for all representatives
of the equivalence class $[u]$ in $\BMO$. Furthermore, we can invoke
Theorem \ref{norm-preserving-extension} to extend the linear functionals
of the form (\ref{h1-bmo}) onto $H^1$.

With this, we can now state the fundamental theorem of Charles Fefferman,
which is the first main result in \cite{Fefferman_Stein:J1972}:

\begin{theorem}[C. Fefferman duality]\label{fefferman-duality}\index{Fefferman, Charles!duality theorem}\index{BMO@$\BMO$!dual of|see{Fefferman duality \\ theorem}}\index{representation theorem!on BMO@on $\BMO$|see{Fefferman duality theorem}}
If $u \in \BMO(\bR^d)$, then the linear functional of the form (\ref{h1-bmo}),
initially defined on $H^1_a(\bR^d)$ and extended onto $H^1(\bR^d)$, is
bounded and satisfies the norm inequality
\[
 \|l\| \leq k\|u\|_{\BMO}
\]
for some constant $k$.
Conversely, every bounded linear functional $l$ on $H^1(\bR^d)$ can be
written in the form (\ref{h1-bmo}) and satisfies the norm inequality
\[
 \|u\|_{\BMO} \leq k' \|l\|
\]
for some constant $k'$.
\end{theorem}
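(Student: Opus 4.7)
The plan is to establish the two directions separately, with the forward direction following almost immediately from the atomic decomposition and the reverse direction requiring a Hilbert-space duality argument on balls followed by a patching construction.

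For the forward direction, fix $u \in \BMO(\bR^d)$ and let $\mathfrak{a}$ be an $H^1$-atom supported in a ball $B$. Using the cancellation property $\int \mathfrak{a}=0$, I would write
\[
 \left| \int \mathfrak{a}(x) u(x) \, dx \right|
 = \left| \int_B \mathfrak{a}(x)(u(x) - u_B) \, dx \right|
 \leq |B|^{-1} \int_B |u(x) - u_B| \, dx
 \leq \|u\|_{\BMO},
\]
using the atom size condition $|\mathfrak{a}| \leq |B|^{-1}$. Given $f \in H^1_a(\bR^d)$ written as a finite linear combination $f = \sum \lambda_n \mathfrak{a}_n$ of atoms, linearity yields $|l(f)| \leq \|u\|_{\BMO} \sum |\lambda_n|$, and taking the infimum over atomic decompositions gives $|l(f)| \leq \|u\|_{\BMO} \|f\|_{H^1}$. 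Since $H^1_a(\bR^d)$ is dense in $H^1(\bR^d)$ by the atomic decomposition (Theorem \ref{atomic-decomposition-characterization-of-H1}) and $H^1(\bR^d)$ is Banach, Theorem \ref{norm-preserving-extension} furnishes the unique extension with $\|l\| \leq \|u\|_{\BMO}$.

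For the reverse direction, fix a bounded linear functional $l$ on $H^1(\bR^d)$ and, for each ball $B$, let $L^2_0(B)$ denote the space of $L^2$-functions supported in $B$ with integral zero. The key observation is that any $f \in L^2_0(B)$, after normalization, is a constant multiple of an $H^1$-atom: specifically, $f / (|B|^{1/2}\|f\|_2)$ meets the size bound $|B|^{-1}$ only on average, but one can check via Cauchy-Schwarz that $\|f\|_{H^1} \leq |B|^{1/2} \|f\|_2$, so that $l|_{L^2_0(B)}$ is a bounded linear functional on a closed subspace of $L^2(B)$. The Hilbert-space F.~Riesz representation theorem (Theorem \ref{hilbert-riesz-representation}) then provides a unique $u_B \in L^2(B)$ with zero mean such that $l(f) = \int f \, \overline{u_B}$ for all $f \in L^2_0(B)$. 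Taking an increasing exhaustion $B_1 \subset B_2 \subset \cdots$ of $\bR^d$ by balls, the functions $u_{B_n}$ agree on overlaps modulo additive constants on each smaller ball, so by fixing these constants one patches them into a single $u \in L^2_{\loc}(\bR^d)$ satisfying $l(f) = \int f u$ for every $f \in H^1_a(\bR^d)$.

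To verify $u \in \BMO$ with the correct norm bound, I would fix a ball $B$, set $g = \overline{\sgn(u - u_B)}\chi_B$, and form the test function
\[
 f = \frac{1}{2|B|} \left( g - \frac{1}{|B|}\int_B g \right).
\]
Because $|g| \leq 1$, we have $|f| \leq |B|^{-1}$ and $\int f = 0$ with $\supp f \subseteq B$, so $f$ is an $H^1$-atom (perhaps up to a harmless constant multiple), giving $\|f\|_{H^1} \leq 1$. Exploiting $\int_B (u - u_B) \, dx = 0$ to kill the constant-subtraction term yields
\[
 l(f) = \int f \cdot u \, dx
 = \frac{1}{2|B|} \int_B g \cdot (u - u_B) \, dx
 = \frac{1}{2|B|} \int_B |u - u_B| \, dx,
\]
whence $\frac{1}{|B|}\int_B |u - u_B| \, dx \leq 2\|l\|$. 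Taking the supremum over balls $B$ yields $\|u\|_{\BMO} \leq 2\|l\|$.

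The main obstacle will be the patching step in the reverse direction: one must verify that the locally-defined $u_{B_n}$ can be consistently adjusted by additive constants so as to produce a single globally-defined $u \in L^2_{\loc}(\bR^d)$, and that the resulting representation $l(f) = \int fu$ extends from $H^1_a(\bR^d)$ to all of $H^1(\bR^d)$ in a way compatible with the (only conditionally convergent) atomic decomposition. This requires care because for a general $f = \sum \lambda_n \mathfrak{a}_n$ in $H^1$, the integral $\int fu$ need not converge absolutely, so one must interpret it as the limit of the corresponding sum $\sum \lambda_n \int \mathfrak{a}_n u$, invoking the forward direction to ensure convergence.
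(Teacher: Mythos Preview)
The paper does not actually prove this theorem; immediately after the statement it simply refers the reader to Chapter~III, Theorem~1 of Stein's \emph{Harmonic Analysis}. So there is no ``paper's proof'' to compare against, and your outline should be judged on its own merits.

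Your argument is the standard $L^2$-duality approach and is essentially correct. The forward direction is clean and complete as written. In the reverse direction the strategy---restrict $l$ to $L^2_0(B)$, represent via Riesz, patch, then test against the atom built from $\overline{\sgn(u-u_B)}$---is exactly right, and your computation of $\|u\|_{\BMO}\le 2\|l\|$ is correct.

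The one genuine soft spot is the sentence ``one can check via Cauchy--Schwarz that $\|f\|_{H^1}\le |B|^{1/2}\|f\|_2$.'' This is the assertion that $L^2$-atoms lie in $H^1$ with uniformly bounded norm, and it is not a one-line Cauchy--Schwarz consequence. With the paper's $L^\infty$-atom definition of $\|\cdot\|_{H^1}$, you would need a Calder\'on--Zygmund-type layer decomposition to write an $L^2$-atom as an $\ell^1$-combination of $L^\infty$-atoms; with the Riesz-transform definition (Definition~\ref{riesz-transform-characterization-of-H1}), you need the usual near/far splitting: Cauchy--Schwarz plus $L^2$-boundedness of $\mc{R}_n$ handles the region $2B$, but the tail outside $2B$ requires the kernel smoothness estimate and the cancellation $\int \mathfrak{a}=0$. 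Either route is standard, but neither is ``just Cauchy--Schwarz,'' and this step is doing real work in your argument since without it $l|_{L^2_0(B)}$ is not known to be bounded. Your closing paragraph correctly identifies the patching/extension issue as the other place requiring care.
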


See Chapter 3, Theorem 1 in \cite{Elias_M_Stein:B1993} for a proof. With
the duality theorem, we can establish a reverse norm estimate
\[
 \|f\|_p \leq A_{p,d}'\|f^\sharp\|_p
\]
for all $1 < p < \infty$. A proof of the inequality can be found in
Chapter 3, Section 2 of \cite{Elias_M_Stein:B1993}.
\index{BMO@$\BMO$|)}

\subsection{Interpolation on \texorpdfstring{$H^1$}{H1} and \texorpdfstring{$\BMO$}{BMO}}

We now come to the main interpolation theorem of the chapter.

\index{Fefferman-Stein interpolation \\ theorem!on BMO@on $\BMO$|(}
\begin{theorem}[Fefferman-Stein interpolation, $\BMO$ version]\label{fefferman_stein-bmo}
For each $\theta$ in $(0,1)$, we have
\[
 [L^2(\bR^d),\BMO(\bR^d)]_\theta = L^{p_\theta}(\bR^d)
\]
in the language of complex interpolation, where
\[
 p_\theta = \frac{2}{1-\theta}.
\]
This generalizes the \hyperref[stein]{Stein interpolation theorem}
in the following sense:

For each $z$ in the closed strip
\[
 \{z \in \bC : 0 \leq \Re z \leq 1\},
\]
let us assume that we have a bounded linear operator
$T_z:L^2(\bR^d) \to L^2(\bR^d)$ such that each
\[
 z \mapsto \int_{\bR^d} (T_zf)g
\]
is a holomorphic function in the interior of $S$ and is continuous
on $S$ and that the norms $\|T_z\|_{L^2 \to L^2}$ of the operators
are uniformly bounded. If there exists a constant $k$ such that
\[
 \|T_{iy}f\|_2 \leq k\|f\|_L^2 
\]
for all $f \in L^2(\bR^d)$ and $y \in \bR$ and that
\[
 \|T_{1+iy}f\|_{\BMO} \leq k\|f\|_\infty
\]
for all $f \in L^2(\bR^d) \cap L^\infty(\bR^d)$ and $y \in \bR$,
then we have the interpolated bound
\[
 \|T_\theta f\|_{p_\theta} \leq k_\theta \|f\|_{p_\theta}
\]
for each $\theta \in (0,1)$ and every $f \in L^2 \cap L^p$,
where
\[
 p_\theta = \frac{2}{1-\theta}.
\]
\end{theorem}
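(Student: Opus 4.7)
The plan is to establish the theorem in two stages: first the space-level identification $[L^2(\bR^d),\BMO(\bR^d)]_\theta = L^{p_\theta}(\bR^d)$, and then the analytic-families version, which does not follow directly from Theorem \ref{complex-interpolation-is-exact} since we are given only a family $\{T_z\}$ rather than a single bounded operator on the Banach couple. For the inclusion $L^{p_\theta} \hookrightarrow [L^2,\BMO]_\theta$, I would mimic the construction in Theorem \ref{interpolation-of-lp-spaces}: for simple $f$ with $\|f\|_{p_\theta}=1$ and $\ve>0$, set
\[
 F_z(x) = e^{\ve(z^2-\theta^2)}\,|f(x)|^{p_\theta(1-z)/2}\,\sgn f(x).
\]
Then $F_\theta = f$, $\|F_{iy}\|_2 \leq e^{\ve}$, and $\|F_{1+iy}\|_\infty \leq e^{\ve}$; since $L^\infty \hookrightarrow \BMO$ continuously, $F \in \mc{F}(L^2,\BMO)$ and letting $\ve\to 0$ gives $\|f\|_{[\theta]} \leq \|f\|_{p_\theta}$. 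A density argument extends this to all of $L^{p_\theta}$.

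For the reverse inclusion $[L^2,\BMO]_\theta \hookrightarrow L^{p_\theta}$, I would use duality. The Fefferman duality theorem (Theorem \ref{fefferman-duality}) identifies $\BMO = (H^1)^*$, and Calder\'on's second complex interpolation functor satisfies a duality identity (cf.\ \S\S\ref{fr-dual-of-complex-interpolation-space}) that, under the density hypotheses present here, gives $[L^2,\BMO]_\theta = \big([L^2,H^1]_\theta\big)^*$. Paired with the identity $[L^2,H^1]_\theta = L^{p_\theta'}$ (which is proved using the atomic decomposition, Theorem \ref{atomic-decomposition-characterization-of-H1}, and the fact that $H^p = L^p$ for $p > 1$), one obtains $[L^2,\BMO]_\theta = (L^{p_\theta'})^* = L^{p_\theta}$. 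Alternatively, the inclusion may be established directly by pairing $f \in [L^2,\BMO]_\theta$ against an atomic decomposition of a generic $g \in L^{p_\theta'}$ and using the Stein-type $\mc{F}$-function associated to $f$.

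For the analytic-families statement, I would run the Stein interpolation proof (Theorem \ref{stein}) with the endpoints suitably modified. Given simple $f$ with $\|f\|_{p_\theta}=1$ and $g \in L^{p_\theta'}$ with $\|g\|_{p_\theta'}=1$, construct $f_z$ as in Stein's proof so that $|f_{iy}|^2 = |f|^{p_\theta}$ and $\|f_{1+iy}\|_\infty \leq 1$, and construct an auxiliary family $g_z$ with $g_\theta = g$, $\|g_{iy}\|_2$ bounded, and, crucially, $g_{1+iy} \in H^1$ with controlled norm. The entire function
\[
 \Phi(z) = \int_{\bR^d} (T_z f_z)\,g_z
\]
satisfies $|\Phi(iy)| \leq k\|f_{iy}\|_2\|g_{iy}\|_2$ by the $L^2 \to L^2$ hypothesis, while the $L^\infty \to \BMO$ hypothesis together with Fefferman duality yields
\[
 |\Phi(1+iy)| \leq C\,\|T_{1+iy}f_{1+iy}\|_{\BMO}\,\|g_{1+iy}\|_{H^1} \leq Ck\,\|g_{1+iy}\|_{H^1}.
\]
The uniform $L^2$-bound on $T_z$ gives the growth condition needed to apply Hirschman's lemma (Lemma \ref{hirschman}), producing the required bound at $z=\theta$, which by Riesz duality equals $\|T_\theta f\|_{p_\theta}$.

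The main obstacle is the construction of the family $g_z$. The naive Stein construction $g_z = |g|^{p_\theta'(1-\beta(z))/(1-\beta)}\sgn g$ lands in $L^1$, not $H^1$, on $\Re z = 1$, so one cannot simply pair against $\BMO$. The resolution, following Fefferman--Stein, is to first reduce, by density, to the case where $g$ is a \emph{finite} linear combination $\sum_n \lambda_n \mathfrak{a}_n$ of $H^1$-atoms with disjointly supported balls, and then build $g_z$ atom-by-atom: each atom $\mathfrak{a}_n$ is multiplied by a holomorphic scalar amplitude $\mu_n(z)$ chosen so that $\mu_n(\theta)\mathfrak{a}_n = \lambda_n\mathfrak{a}_n$, so that the mean-zero and sup-norm atom conditions persist on $\Re z = 1$ (with norm controlled by $\sum_n|\mu_n(1+iy)|$), and so that on $\Re z = 0$ the $L^2$ norm is controlled by $\|g\|_{p_\theta'}$. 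Making these three constraints compatible, and verifying the requisite holomorphy and boundedness for Hirschman's lemma, is the technical heart of the argument.
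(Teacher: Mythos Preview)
Your outline is coherent and the duality strategy can be made to work, but it is not the route the paper takes. The paper does not actually prove this theorem; it simply points to Stein's \emph{Harmonic Analysis}, noting that ``the proof makes use of the properties of the sharp maximal function.'' That approach bypasses the $H^1$ side entirely: one shows a pointwise estimate of the form $(T_\theta f)^\sharp(x) \leq C\,[\mc{M}(|f|^{q_0})(x)]^{1/q_0}$ (or its analytic-family analogue, obtained by fixing a ball $B$ and applying the three-lines lemma to an average over $B$), and then invokes the reverse inequality $\|h\|_p \leq A_p \|h^\sharp\|_p$ together with the $L^p$-boundedness of $\mc{M}$ to conclude. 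No dual family $g_z$ is needed, and Fefferman duality enters only in establishing the sharp-function inequality itself.

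Your approach trades the sharp-function machinery for Calder\'on's duality theorem and the atomic decomposition of $H^1$. This is legitimate, but note two things. First, the step $[L^2,\BMO]_\theta = ([L^2,H^1]_\theta)^*$ is not a direct instance of Calder\'on's duality theorem (which computes the dual of $[B_0,B_1]_\theta$, not an interpolation space whose endpoint is already a dual); you need to route through $[B_0^*,B_1^*]^\theta$ and then invoke the equivalence theorem, which is available here since $L^2$ is reflexive. Second, your atom-by-atom construction of $g_z$ is precisely the content of the inclusion $L^{p_\theta'} \hookrightarrow [L^2,H^1]_\theta$, i.e., the hard direction of Theorem~\ref{fefferman_stein-h1}. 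In this thesis that theorem is deduced \emph{from} the $\BMO$ version by duality, so within the paper's logical order your argument would be circular; as a freestanding proof you would need to supply the atomic construction first. The sharp-function route avoids both issues and is more self-contained.
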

\index{Fefferman-Stein interpolation \\ theorem!on BMO@on $\BMO$|)}

The uniform boundedness hypothesis in the above theorem is
for clarity's sake can be relaxed to resemble properly the hypothesis
of the \hyperref[stein]{Stein interpolation theorem}.\index{Stein, Elias M.!interpolation theorem} The proof
makes uses of the properties of the sharp maximal function and
can be found in Chapter 3, Section 5.2 of \cite{Elias_M_Stein:B1993}.

Using the \hyperref[fefferman-duality]{Fefferman duality theorem},\index{Fefferman, Charles!duality theorem}
we can now modify the argument given in the proof of the
\hyperref[Lp-boundedness-of-the-hilbert-transform]{$L^p$-boundedness
of the Hilbert transform}\index{Hilbert transform!Lp-boundedness of@$L^p$-boundedness of} to establish the following dual result:

\index{Fefferman-Stein interpolation \\ theorem!on H1@on $H^1$|(}
\begin{theorem}[Fefferman-Stein interpolation, $H^1$ version]\label{fefferman_stein-h1}
For each $\theta$ in $(0,1)$, we have
\[
 [H^1(\bR^d),L^2(\bR^d)]_\theta = L^{p_\theta}(\bR^d)
\]
in the language of complex interpolation, where
\[
 p_\theta = \frac{2}{2-\theta}
\]
This generalizes the \hyperref[stein]{Stein interpolation theorem}
in the following sense:

For each $z$ in the closed strip
\[
 \{z \in \bC : 0 \leq \Re z \leq 1\},
\]
let us assume that we have a bounded linear operator
$T_z:L^2(\bR^d) \to L^2(\bR^d)$ such that each
\[
 z \mapsto \int_{\bR^d} (T_zf)g
\]
is a holomorphic function in the interior of $S$ and is continuous
on $S$ and that the norms $\|T_z\|_{L^2 \to L^2}$ of the operators
are uniformly bounded. If there exists a constant $k$ such that
\[
 \|T_{iy}f\|_2 \leq k\|f\|_L^2 
\]
for all $f \in L^2(\bR^d)$ and $y \in \bR$ and that
\[
 \|T_{1+iy}f\|_{1} \leq k\|f\|_{H^1}
\]
for all $f \in L^2(\bR^d) \cap H^1(\bR^d)$ and $y \in \bR$,
then we have the interpolated bound
\[
 \|T_\theta f\|_{p_\theta} \leq k_\theta \|f\|_{p_\theta}
\]
for each $\theta \in (0,1)$ and every $f \in L^2 \cap L^p$,
where
\[
 p_\theta = \frac{2}{2-\theta}.
\]
\end{theorem}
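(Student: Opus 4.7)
The plan is to deduce Theorem \ref{fefferman_stein-h1} from Theorem \ref{fefferman_stein-bmo} by a duality argument based on the Fefferman duality theorem (Theorem \ref{fefferman-duality}), which identifies $(H^1(\bR^d))^* \cong \BMO(\bR^d)$. In spirit this mirrors the dualization step at the end of the proof of Theorem \ref{Lp-boundedness-of-the-hilbert-transform}, where the $L^p$-boundedness of the Hilbert transform for $p \in (1,2]$ was extracted from the range $p \in [2,\infty)$ via self-duality and the Riesz representation theorem.

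For the operator-level statement, we would work with the family of adjoints $U_z := T_z^*$. The hypothesis $\|T_{iy}f\|_2 \leq k\|f\|_2$ immediately gives $\|U_{iy}f\|_2 \leq k\|f\|_2$. More substantively, the hypothesis $\|T_{1+iy}f\|_1 \leq k\|f\|_{H^1}$ says that $T_{1+iy}:H^1 \to L^1$ is bounded; taking its Banach-space adjoint and invoking the duality $(L^1)^* = L^\infty$ together with Fefferman duality $(H^1)^* = \BMO$, we obtain $\|U_{1+iy}g\|_{\BMO} \leq k\|g\|_\infty$ on the dense subspace $L^2 \cap L^\infty$. The family $\{U_z\}$ should inherit admissibility from $\{T_z\}$: for simple $f$ and $g$, the identity $\int (U_z f)g = \int f(T_z g)$ transfers holomorphicity in the interior of the strip, continuity on its closure, and the uniform $L^2$-bound. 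Applying Theorem \ref{fefferman_stein-bmo} to $\{U_z\}$ at parameter $\vartheta \in (0,1)$ would then yield $\|U_\vartheta f\|_{2/(1-\vartheta)} \leq k_\vartheta \|f\|_{2/(1-\vartheta)}$, which dualizes via Theorem \ref{Lp-riesz-representation} to $\|T_\vartheta g\|_{2/(1+\vartheta)} \leq k_\vartheta \|g\|_{2/(1+\vartheta)}$; a reparametrization (reflecting that adjunction reverses the direction in which the strip is traversed) then matches this with the stated $L^{p_\theta}$ bound.

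The space-level identification $[H^1, L^2]_\theta = L^{p_\theta}$ would be obtained in parallel by invoking the duality principle for complex interpolation: one has $([H^1, L^2]_\theta)^* = [\BMO, L^2]_\theta = [L^2, \BMO]_{1-\theta}$, and Theorem \ref{fefferman_stein-bmo} identifies the last space with $L^{2/\theta}$; its predual is $L^{2/(2-\theta)}$, as claimed.

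The hard part will be the technical care required around $\BMO$: as a quotient of locally integrable functions modulo constants, it is not a concrete subspace of any $L^p$, and its pairing with $H^1$ is realized through the atomic decomposition (Theorem \ref{atomic-decomposition-characterization-of-H1}) rather than by direct integration. Making rigorous sense of the Banach-space adjoint $U_{1+iy}:L^\infty \to \BMO$, and justifying the density hypotheses underpinning the duality of complex interpolation in the space-level statement, both hinge on the atomic structure of $H^1$ and the kind of careful limiting arguments carried out in \cite{Fefferman_Stein:J1972}.
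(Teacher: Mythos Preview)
Your proposal is correct and follows essentially the same approach as the paper: the paper explicitly states that one deduces the $H^1$ version from the $\BMO$ version (Theorem~\ref{fefferman_stein-bmo}) via the Fefferman duality theorem, modifying the duality argument from the proof of Theorem~\ref{Lp-boundedness-of-the-hilbert-transform}. You have fleshed out precisely this sketch, including the passage to adjoints and the identification of the technical difficulties around $\BMO$ and the atomic structure of $H^1$.
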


The $H^1$ interpolation theorem
can be used to study
linear operators that are not necessarily of type $(1,1)$.
In particular, the Fefferman-Stein theory settles the
$L^p$-boundedness problem
of the singular integral operators of Calder\'{o}n-Zygmund\index{Calder\'{o}n-Zygmund!singular integral \\ operator}
type, which include the Riesz transforms. We conclude
the section with a remark that
the development of the Fefferman-Stein theory does \emph{not}
make use of the $L^p$-boundedness of the Riesz transforms,
lest our argument be circular.
\index{Fefferman-Stein interpolation \\ theorem!on H1@on $H^1$|)}

We remark that the $L^\infty \to \BMO$ boundedness
hypothesis in Theorem \ref{fefferman_stein-bmo}
can be replaced by the more general $L^p \to \BMO$ boundedness
hypothesis for
some $p \in (1,\infty]$. The same proof then establishes the
corresponding interpolation theorem, and the duality argument
shows that the $H^1 \to L^1$ boundedness hypothesis
in Theorem \ref{fefferman_stein-h1} can be replaced
by the more general $H^1 \to L^p$ hypothesis for some
$p \in [1,\infty)$. We shall have an occasion to use this
formulation of the $H^1$-theorem in the next section.
\index{Fefferman, Charles|)}\index{Stein, Elias M.|)}
 
\section{Applications to Differential Equations}\label{s-applications-to-pde}
\index{PDE|see{differential equation}}
We have developed a number of tools for studying linear operators
on function spaces throughout the present thesis. In this section,
we shall apply them to the study of linear partial differential
equations and derive a few results. Standard theorems that do
not make use of interpolation theory will simply be cited with
references for proofs. 

What are linear partial differential equations?
Recall that the higher-order derivatives
of a function $f$ on $\bR^d$ can be written as $D^\alpha f(x)$, where $\alpha$
is an appropriate $d$-dimensional multi-index. The symbol $D^\alpha$ can
be thought of as an \emph{operator} on a suitably defined space of functions
on $\bR^d$: for example, the symbol
\[
 D^{(1,0,1)} = \frac{\partial}{\partial x_1} \frac{\partial}{\partial x_3}
\]
takes functions in $\mc{C}^2(\bR^3)$ to functions in $\mc{C}(\bR^3)$.
We can then define a $d$-dimensional \emph{linear differential operator}\index{differential!operator}
to be an operator $L$ on $\ms{S}'(\bR^d)$ that takes the form
\begin{equation}\label{do}
 (Lu)(x) = \sum_{n=1}^N f_n(x) D^{\alpha_n} u(x),
\end{equation}
where each $f_n$ is a function on $\bR^d$ and $\alpha_n$ a $d$-dimensional
multi-index. Since $u$ is a tempered distribution, the derivatives are
taken to be weak derivatives.

Given a linear differential operator $L$, we consider a
\emph{linear partial differential equation}\index{differential!equation}
\begin{equation}\label{pde}
 Lu = C
\end{equation}
for some constant $C$. We say that a linear PDE (\ref{pde}) is
\emph{homogeneous}\index{differential!equation, homogeneous and inhomogeneous} if $C=0$, and \emph{inhomogeneous} otherwise. 

\subsection{Fundamental Solutions}

We begin by examining \emph{constant-coefficient linear partial
differential equations}.\index{differential!equation, constant-coefficient} These are linear partial differential equations
such that the coefficients $f_n$ of the associated differential operator
(\ref{do}) are constants.
As such, every constant-coefficient linear PDE can be written in the form
\begin{equation}\label{constant-coefficent-linear-pde}
 P(D)u = v,
\end{equation}
where $P$ is a polynomial in $d$ variables and $v$ a function on $\bR^d$.

We have seen that the Fourier transform turns differentiation into multiplication
by a polynomial (Proposition \ref{differentiation-to-multiplication}).\index{Fourier transform!differentiation of} It
is natural to expect that every differential equation of the form
(\ref{constant-coefficent-linear-pde}) can be solved by taking the Fourier
transform of both sides, dividing through by the resulting polynomial factor,
and taking the inverse Fourier transform. To make this heuristic reasoning
precise, we introduce the following notion:

\begin{defin}\index{differential!equation, fundamental solution \\ of}\index{fundamental solution|see{\\ differential equation}}
A \emph{fundamental solution} of a constant-coefficient linear PDE
(\ref{constant-coefficent-linear-pde}) is a tempered distribution
$E \in \ms{S}'(\bR^d)$ such that
\[
 P(D)E = \delta^0.
\]
\end{defin}

If $v$ is a Schwartz function, then Theorem \ref{convolution-with-tempered-distributions}
implies that $E*v \in \mc{C}^\infty$. Furthermore,
\[
 P(D)(E*v) = (P(D)E) * v = \delta^0 * v = v,
\]
whence $u=E*v$ is a solution to the constant-coefficient linear PDE
(\ref{constant-coefficent-linear-pde}). A basic existence result is the following:

\begin{theorem}[Malgrange-Ehrenpreis]\index{fundamental solution!existence of|see{\\ Malgrange-Ehrenpreis \\ theorem}}\index{Malgrange-Ehrenpreis theorem}
Every constant-coefficient linear PDE has a fundamental solution.
\end{theorem}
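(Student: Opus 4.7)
The plan is to construct the fundamental solution $E$ by applying the Hahn-Banach theorem (Theorem \ref{hahn-banach}, Theorem \ref{complex-hahn-banach}) to a suitable linear functional on a space of test functions, with the crux of the argument being an a priori estimate of Hörmander--Łojasiewicz type for the adjoint operator.

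First I would reformulate the problem dually. Writing $P^t(D)$ for the formal adjoint of $P(D)$ --- obtained by the integration-by-parts rule built into the definition of differentiation on $\ms{S}'(\bR^d)$, namely $\langle \varphi, P(D) u \rangle = \langle P^t(D) \varphi, u \rangle$ --- the equation $P(D) E = \delta^0$ is equivalent to
\[
 \langle P^t(D)\varphi, E \rangle = \varphi(0)
 \hone \text{for every } \varphi \in \ms{C}^\infty_c(\bR^d).
\]
So it suffices to produce a continuous linear functional on a suitable ambient test space whose restriction to the subspace $M = P^t(D)(\ms{C}^\infty_c(\bR^d))$ assigns the value $\varphi(0)$ to $P^t(D)\varphi$.

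Second, on $M$ I would define $\lambda(P^t(D)\varphi) = \varphi(0)$ and reduce both the well-definedness and the boundedness of $\lambda$ to a single a priori inequality. Specifically, if for each $R>0$ there is a constant $C_R$ and a continuous seminorm $\rho$ on an ambient Fréchet space $X$ containing $\ms{C}^\infty_c(B_R(0))$ such that
\[
 |\varphi(0)| \leq C_R \, \rho(P^t(D)\varphi)
 \hone \text{for all } \varphi \in \ms{C}^\infty_c(B_R(0)),
\]
then $\lambda$ is unambiguously defined on $M$ (since $P^t(D)\varphi = 0$ forces $\varphi(0) = 0$) and continuous with respect to $\rho$. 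The complex Hahn-Banach theorem then extends $\lambda$ to a continuous linear functional $E$ on $X$, which is the candidate fundamental solution. A small amount of bookkeeping --- either choosing $X$ with a weight so that its dual embeds into $\ms{S}'(\bR^d)$ via Theorem \ref{iff-tempered-distribution}, or constructing $E_R$ ball-by-ball and gluing via a partition of unity --- upgrades $E$ to a tempered distribution and shows it solves $P(D)E = \delta^0$.

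The hard part is obviously the a priori estimate. The standard approach works on the Fourier side: by the inversion formula $\varphi(0) = \int \hat\varphi(\xi)\,d\xi$, and one would like to write $\hat\varphi(\xi) = \widehat{P^t(D)\varphi}(\xi) / P(-2\pi i \xi)$ and integrate; but $P$ can vanish on a real algebraic hypersurface, so this literal formula fails. The key device is to deform the contour of integration from $\bR^d$ into $\bC^d$: since $\varphi$ is compactly supported, the Paley-Wiener theorem extends $\hat\varphi$ to an entire function of exponential type, and Hörmander's lemma on polynomials (there exists a shift vector $\eta \in \bR^d$, depending on $P$, such that $|P(2\pi i (\xi + i\eta))|$ is bounded below uniformly in $\xi \in \bR^d$) lets one carry out the division on the shifted contour. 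Ehrenpreis's variant averages over a family of such shifts to the same effect. Either route yields the inequality, and once it is established the rest of the proof is formal.
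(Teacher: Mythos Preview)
The paper does not actually supply a proof of this theorem: immediately after the statement it simply refers the reader to Theorem 8.5 of Rudin's \emph{Functional Analysis}, noting that the argument there makes rigorous the heuristic ``Fourier transform, divide by $P(2\pi i\xi)$, inverse transform.'' So there is nothing in the thesis to match your argument against line by line.

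That said, your route is a genuinely different one from Rudin's. Rudin builds the fundamental solution directly on the Fourier side by an explicit averaging device that bypasses the real zeros of $P$, whereas you go through the H\"ormander functional-analysis scheme: reformulate $P(D)E=\delta^0$ dually, reduce to an a priori inequality $|\varphi(0)|\le C\,\rho(P^t(D)\varphi)$ on compactly supported test functions, and then invoke the complex \hyperref[complex-hahn-banach]{Hahn--Banach theorem} to extend. Both are standard and correct in outline; your approach has the virtue of isolating exactly where the difficulty lies (the estimate) and of fitting naturally into the functional-analytic language of this chapter, while Rudin's is more constructive and closer to the heuristic the thesis emphasizes.

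Two small points worth tightening. First, the polynomial lemma as you phrase it --- a \emph{single} fixed imaginary shift $\eta$ giving a uniform lower bound on $|P(2\pi i(\xi+i\eta))|$ over all $\xi\in\bR^d$ --- is not available for a general $P$; the shift must depend on $\xi$ (in a measurable, bounded way), or one averages over a family of shifts as in your Ehrenpreis remark. Second, the thesis's definition asks for $E\in\ms{S}'(\bR^d)$, not merely $E\in\mathscr{D}'(\bR^d)$; your ``bookkeeping'' paragraph acknowledges this, and it is indeed where one must be a little careful, since the basic Hahn--Banach extension only lands in the dual of whatever ambient space $X$ you chose. Weighting $X$ so that $X^*\hookrightarrow\ms{S}'(\bR^d)$, or using the H\"ormander staircase of weighted $L^2$ spaces, handles it, but it is more than a throwaway line.
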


\noindent See Theorem 8.5 in \cite{Walter_Rudin:B1991}\index{Rudin, Walter} for a proof, which makes use of
the rigorous version of the heuristic reasoning presented above.

\subsection{Regularity of Solutions}

Recall that the $d$-dimensional \emph{Laplacian}\index{Laplacian} is defined to be the differential
operator
\[
 \Delta = \sum_{n=1}^d \frac{\partial^2}{\partial x_n^2}.
\]
\emph{Laplace's equation} is the associated homogeneous PDE
\[
 \Delta u = 0,
\]
and a solution to Laplace's equation is referred to as a \emph{harmonic function}.\index{harmonic function}

It is a standard result in complex analysis that every two-dimensional harmonic
function is in $\mc{C}^2(\bR^d)$, as it is the real part of a holomorphic\index{holomorphicity!on the complex plane}
function.\index{regularity theorem!for Cauchy-Riemann \\ equations} A higher-dimensional generalization of the harmonic function theory,
which can be found in Chapter 2 of \cite{Stein_Weiss:B1971}, establishes the
analogous result for higher dimensions. Such a result is called a
\emph{regularity theorem}\index{regularity theorem} and establishes a higher degree of regularity for
the solutions of a certain differential equation than is \emph{a priori} assumed.

To this end, we shall consider a class of differential operators that generalizes
the Laplacian.

\begin{defin}\index{elliptic!differential operator}
Let $N \in \bN$. A linear differential operator $L$ is \emph{elliptic of
order $N$} if all multi-indices $\alpha_n$ in the expression (\ref{do})
satisfy the inequality $|\alpha_n| \leq N$ and if at least one coefficient
function $f_{n_0}$ with $|\alpha_{n_0}|=N$ is nonzero.
\end{defin}

An \emph{elliptic linear partial differential equation}\index{elliptic!PDE} is a differential
equation whose associated differential operator is elliptic. Examples
include Laplace's equation and its complex-variables counterpart,
the Cauchy-Riemann equations. The ``once differentiable, forever
differentiable'' regularity result of complex analysis generalizes to
elliptic operators:

\begin{theorem}[Elliptic regularity, $\mc{C}^\infty$-version]\label{elliptic-regularity-c-inf}\index{elliptic!regularity on Cinfty@regularity on $\mc{C}^\infty$}\index{regularity theorem!elliptic|see{elliptic}}
Let $L$ be an elliptic operator of order $N$ with coefficients in
$\mc{C}^\infty$. In addition, we assume that all coefficients of order $N$
are constants. If $v \in \mc{C}^\infty(\bR^d)$, then
every $u \in \mb{S}'(\bR^d)$ satisfying the identity
\[
 Lu = v
\]
is in $\mc{C}^\infty(\bR^d)$. This, in particular, implies that
every solution $u$ of the homogeneous differential equation
\[
 Lu = 0
\]
is in $\mc{C}^\infty(\bR^d)$.
\end{theorem}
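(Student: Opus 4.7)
The plan is to combine the Fourier-multiplier machinery developed in \S\ref{s-the_fourier_transform} with a parametrix construction and a Sobolev-space bootstrap. First I would decompose $L = L_N + L'$, where $L_N = \sum_{|\alpha|=N} c_\alpha D^\alpha$ is the constant-coefficient principal part and $L'$ is a linear differential operator of order at most $N-1$ with $\mc{C}^\infty(\bR^d)$ coefficients. By Proposition \ref{differentiation-to-multiplication}, the Fourier symbol of $L_N$ is the homogeneous polynomial $P_N(\xi) = \sum_{|\alpha|=N} c_\alpha (2\pi i \xi)^\alpha$. The ellipticity hypothesis forces $P_N(\xi) \neq 0$ for $\xi \neq 0$, and compactness of the unit sphere then yields a lower bound $|P_N(\xi)| \geq c|\xi|^N$ for all $|\xi| \geq 1$.

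The next step is to build a parametrix for $L_N$. Fix a cutoff $\chi \in \ms{C}^\infty(\bR^d)$ with $\chi = 0$ on $|\xi| \leq 1$ and $\chi = 1$ on $|\xi| \geq 2$, and define $E \in \ms{S}'(\bR^d)$ by $\hat{E}(\xi) = \chi(\xi)/P_N(\xi)$. The symbol $\chi/P_N$ is bounded and decays like $|\xi|^{-N}$ at infinity, so $E$ is a well-defined tempered distribution. Applying the convolution theorem (Theorem \ref{convolution-theorem-on-distributions}) and Proposition \ref{differentiation-to-multiplication} gives
\[
 L_N(E * f) \;=\; \ms{F}^{-1}\bigl(\chi(\xi)\hat{f}(\xi)\bigr) \;=\; f - Rf,
\]
where $Rf = \ms{F}^{-1}((1-\chi)\hat{f})$. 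Since $1-\chi \in \ms{C}^\infty_c(\bR^d)$, its inverse Fourier transform is Schwartz, so $R$ is convolution with a Schwartz kernel and therefore a smoothing operator in the sense that it maps $\ms{S}'(\bR^d)$ into $\ms{C}^\infty(\bR^d)$.

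Now I would execute the bootstrap in the local Sobolev spaces $H^s_\loc(\bR^d)$, taking for granted the Fourier characterization $\|f\|_{H^s}^2 = \int (1+|\xi|^2)^s |\hat{f}(\xi)|^2\, d\xi$ and the basic mapping properties that will be established in \S\ref{s-sobolev-spaces}. The multiplier $E *$ gains exactly $N$ derivatives, so $E * : H^s_\loc \to H^{s+N}_\loc$, while $L'$ loses at most $N-1$ derivatives (multiplication by a $\ms{C}^\infty$ coefficient preserves $H^s_\loc$). Consequently $E * L'$ is gaining at least one derivative. Convolving $Lu = v$ with $E$ yields the identity
\[
 u \;=\; E * v \;-\; E * L' u \;+\; R u,
\]
valid modulo the usual cutoff arguments. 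Since $v \in \ms{C}^\infty$, the term $E * v$ is smooth; $Ru$ is smooth; and $E * L' u$ lies in $H^{s+1}_\loc$ whenever $u \in H^s_\loc$. Because $u \in \ms{S}'(\bR^d)$, $u$ lies in $H^{-M}_\loc$ for some finite $M$, and iterating the displayed identity raises its Sobolev index by one at each stage. After finitely many steps $u \in H^s_\loc$ for every $s$, and Sobolev embedding then forces $u \in \ms{C}^\infty(\bR^d)$.

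The main obstacle is the commutator bookkeeping needed to convert the global parametrix identity into an honest interior regularity statement: cutting off $u$ by a compactly supported $\varphi \in \ms{C}^\infty_c(\bR^d)$ and applying $E*$ produces commutators $[\varphi, L']$ whose order is again at most $N-1$, and these must be reabsorbed at each step of the iteration without costing back the derivative just gained. The slick way to package this is through the calculus of pseudodifferential operators, where the composition formula handles such commutators automatically and where $E*$ becomes the canonical order-$(-N)$ left parametrix of $L$; in the present elementary setting I would instead give an explicit argument that multiplication by a $\ms{C}^\infty_c$ cutoff is bounded on each $H^s$ and that $[\varphi, D^\alpha]$ has order $|\alpha|-1$, which is just enough to push the bootstrap through.
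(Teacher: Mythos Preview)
Your sketch is correct and is essentially the standard proof. The paper itself does not prove this theorem: it simply states that it follows from the Sobolev-space version (Theorem~\ref{elliptic-regularity-sobolev}), and for that it cites Rudin. Your parametrix-plus-bootstrap argument is exactly how one establishes the Sobolev version, after which Sobolev embedding gives the $\mc{C}^\infty$ conclusion; so you are supplying precisely the argument the paper omits, and your route is aligned with what the paper intends.

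One point worth flagging: the paper's Definition of ``elliptic of order $N$'' is defective as written---it merely asks that $L$ have order exactly $N$, with no condition on the principal symbol. Under that literal definition your claim ``ellipticity forces $P_N(\xi)\neq 0$ for $\xi\neq 0$'' would not follow, and indeed the theorem would be false (take $L=\partial_1^2$ on $\bR^2$). You are right to use the standard ellipticity hypothesis, namely that the principal symbol $P_N$ has no real zeros away from the origin; just be aware that you are tacitly repairing the paper's definition when you do so.

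Your handling of the technical obstructions is honest: the global identity $u = E*v - E*L'u + Ru$ is only formal until you localize, since $E$ and $v$ are not Schwartz. Working in $H^s_\loc$ and absorbing the commutators $[\varphi,L]$ (of order $\leq N-1$) is the right fix, and the step ``$u\in\ms{S}'(\bR^d)$ implies $u\in H^{-M}_\loc$ for some finite $M$'' is legitimate because a compactly supported distribution of finite order is a continuous functional on some $\mc{C}^k$ and hence on $H^{k+d/2+1}$ by Sobolev embedding.
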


The above theorem is a consequence of a more general theorem, which will be
presented in the next subsection.

\subsection{Sobolev Spaces}\label{s-sobolev-spaces}

\index{differentiation!of tempered distributions|(}\index{weak derivative|see{differentiation of tempered distributions}}
\index{Sobolev space|(}
So far, we spoke of regularity only in the sense of strong derivatives.
Linear differential operators were defined in terms of weak derivatives,
however, and we have yet to discuss the notion of regularity appropriate
for this general context. Our immediate goal is to make precise the notion of
``well-behaved weak derivatives''. This is done by introducing \emph{Sobolev
spaces}, which are subspaces of $L^p$ containing functions whose weak
derivatives are also in $L^p$. We shall then apply Calder\'{o}n's complex
interpolation method to characterize ``in-between'' smoothness,
which we require in order to state the general version of the
\hyperref[elliptic-regularity-c-inf]{elliptic regularity theorem}.

We begin by defining integer-order Sobolev spaces.

\index{complex interpolation|(}
\begin{defin}\index{Sobolev space!Lp@$L^p$}
Let $1 \leq p < \infty$ and $k \in \bN$. The \emph{$L^p$-Sobolev space
of order $k$} on $\bR^d$ is defined to be the collection $W^{k,p}(\bR^d)$
of tempered distributions $u$ such that $D^\alpha u \in L^p(\bR^d)$ for every
multi-index $|\alpha| \leq k$. The derivatives are taken to be weak derivatives.
\end{defin}

\index{Sobolev space!intermediate}
Note that $W^{0,p} = L^p$. $W^{k,p}(\bR^d)$ is a Banach space, with the norm
\[
 \|u\|_{W^{k,p}} = \left( \sum_{|\alpha| \leq k} \|D^\alpha u\|_p \right)^{1/p}.
\]
Therefore, we can consider the intermediate spaces
\[
 W^{s,p}(\bR^d) = [W^{k,p}(\bR^d),W^{k+1,p}(\bR^d)]_\theta
\]
in the sense of complex interpolation, where $k \leq s \leq k+1$ and
\[
 \frac{1}{s} = \frac{(1-\theta)}{k} + \frac{\theta}{k+1}.
\]
This allows us to speak of ``fractional derivatives'', in the sense that
$f \in L^p$ has derivatives ``up to order $s$'' in case $f \in W^{s,p}(\bR^d)$.
\index{Sobolev space!intermediate}

\index{Sobolev space!L2@$L^2$|(}\index{H2@$H^s$|see{Sobolev space, $L^2$}}
For $p=2$, we can find an alternate, more concrete characterization of
$L^2$ Sobolev spaces, which are Hilbert subspaces of $L^2$.
Given a real number $s$, we define\footnote{It is an
unfortunate coincidence in the history of mathematical analysis that
$H^p$ refers to the real Hardy space and $H^s$ the $L^2$-Sobolev space.
The convention is to use $p$ for the order of a real Hardy space and
$s$ for that of an $L^2$-Sobolev space.
More often than not, the context makes it clear which of the two spaces is in
use.}
$H^s(\bR^d)$ to be the space of tempered distributions $u$ such that
\[
 \langle\xi\rangle^s \hat{u}(\xi) \in L^2(\bR^d),
\]
where
\[
 \langle \xi \rangle = \sqrt{1+|\xi|^2},
\]
as was defined in \S\ref{s-the_fourier_transform}. Since
the Fourier transform turns differentiation into multiplication
by a polynomial, the equality $H^s = W^{s,2}$ is easily
established for each positive integer $s$.
We now show that the identification remains true for all $s \geq 0$.

\begin{theorem}\label{l2-sobolev-fourier-characterization}
$W^{s,2}(\bR^d) = H^s(\bR^d)$ for all $s \geq 0$.
\end{theorem}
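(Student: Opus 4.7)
The plan is to reduce the theorem to the interpolation identity for domains of positive self-adjoint operators (Theorem \ref{interpolation-of-domains-of-self-adjoint-operators-on-a-hilbert-space}), together with the reiteration theorem (Theorem \ref{complex-interpolation-reiteration}). First I would dispatch the integer case: for each $k \in \bN \cup \{0\}$, Plancherel's theorem (Theorem \ref{plancherel}) combined with Proposition \ref{differentiation-to-multiplication} gives $\|D^\alpha u\|_2 = \|(2\pi i \xi)^\alpha \hat{u}\|_2$, and since $\sum_{|\alpha| \leq k} |(2\pi i \xi)^\alpha|^2$ is pointwise comparable (above and below) to $\langle \xi \rangle^{2k}$, the norms $\|\cdot\|_{W^{k,2}}$ and $\|\langle \xi \rangle^k \hat{\cdot}\,\|_2$ are equivalent. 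Hence $W^{k,2}(\bR^d) = H^k(\bR^d)$ as Hilbert spaces (up to equivalent norm).

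Next I introduce the ``Bessel potential'' operator $\Lambda$ on $L^2(\bR^d)$ defined by $\Lambda u = \mc{F}^{-1}(\langle \xi \rangle \hat{u})$. Conjugation by the Fourier transform (a unitary automorphism of $L^2$ by Theorem \ref{plancherel}) carries $\Lambda$ to multiplication by the real-valued, everywhere $\geq 1$ function $\langle \xi \rangle$ on $L^2(\bR^d)$. Thus $\Lambda$ is an unbounded, positive, self-adjoint operator on $L^2(\bR^d)$ satisfying $\langle \Lambda u, u \rangle \geq \|u\|_2^2$, and its spectral powers are precisely $\Lambda^s u = \mc{F}^{-1}(\langle \xi \rangle^s \hat{u})$, with domain
\[
 \mc{D}(\Lambda^s) = \{u \in L^2(\bR^d) : \langle \xi \rangle^s \hat{u} \in L^2(\bR^d)\} = H^s(\bR^d)
\]
for every $s \geq 0$. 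This puts us squarely in the setting of \S\ref{s-interpolation-of-hilbert-spaces}.

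Fix an integer $N$ with $N \geq s+1$, and apply Theorem \ref{interpolation-of-domains-of-self-adjoint-operators-on-a-hilbert-space} to the positive self-adjoint operator $T = \Lambda^N$ (which still satisfies $T \geq I$). This yields
\[
 [L^2(\bR^d), H^N(\bR^d)]_\tau = [L^2(\bR^d), \mc{D}(\Lambda^N)]_\tau = \mc{D}(\Lambda^{N\tau}) = H^{N\tau}(\bR^d)
\]
for every $\tau \in [0,1]$. In particular, taking $\tau = k/N$ and $\tau = (k+1)/N$ respectively, I identify $W^{k,2} = H^k$ and $W^{k+1,2} = H^{k+1}$ as complex interpolation spaces with common endpoints $L^2$ and $H^N$. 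The reiteration theorem (Theorem \ref{complex-interpolation-reiteration}) then gives
\[
 [W^{k,2}, W^{k+1,2}]_\theta = [H^k, H^{k+1}]_\theta = [L^2, H^N]_{(1-\theta)k/N + \theta(k+1)/N} = H^{k+\theta}
\]
for each $\theta \in [0,1]$, which is exactly $W^{s,2} = H^s$ with $s = k+\theta$. The density hypothesis of the reiteration theorem is no obstacle here, since $\ms{S}(\bR^d)$ is contained and dense in every $H^s$ (by Corollary \ref{approximation-by-smooth-functions} for $L^2$ and an application of the Fourier transform for $s > 0$).

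The main obstacle I anticipate is bookkeeping rather than substance: one must verify that $\Lambda$ genuinely falls under the hypotheses of Theorem \ref{interpolation-of-domains-of-self-adjoint-operators-on-a-hilbert-space} (self-adjointness and positivity with $\Lambda \geq I$, which follow from the spectral theorem realization via $\mc{F}$) and that the reiteration theorem's density assumption is satisfied at each step. There is also a minor inconsistency to address: the formula in the definition of $W^{s,p}$ as written ($1/s = (1-\theta)/k + \theta/(k+1)$) cannot be the intended one, as it fails at $k=0$; the natural, and here clearly correct, indexing is $s = (1-\theta)k + \theta(k+1) = k+\theta$, which is the parameter that makes the reiteration chain close up.
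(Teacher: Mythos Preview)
Your approach is essentially the same as the paper's: both conjugate the multiplication operator $M_{\langle \xi \rangle^t}$ by the Fourier transform to obtain the positive self-adjoint operator $\Lambda^t$ with $\mc{D}(\Lambda^t) = H^t$, apply Theorem~\ref{interpolation-of-domains-of-self-adjoint-operators-on-a-hilbert-space} to get $[L^2, H^t]_\theta = H^{t\theta}$, and then finish via the reiteration theorem (Theorem~\ref{complex-interpolation-reiteration}); the paper also offers as an alternative the direct identity $[H^{t_0}, H^{t_1}]_\theta = H^{(1-\theta)t_0 + \theta t_1}$ obtained by the same mechanism. Your observation about the misstated interpolation index in the definition of $W^{s,p}$ is correct---the intended parameter is indeed $s = k + \theta$.
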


\begin{proof}
We have already established the theorem for $t \in \bN$. Fix $s>0$ and
define the multiplication operator\index{multiplication operator}
\[
 (M_{\langle x \rangle^t}u)(x) = \langle x \rangle^t u(x)
\]
on $H^s(\bR^d)$. Since $\langle x \rangle^s \in \ms{C}^\infty(\bR^d)$
and $D^\alpha \langle x \rangle^t \in L^\infty(\bR^d)$ for each
multi-index $\alpha$, we see that $M_{\langle x \rangle^t}$ maps $H^t(\bR^d)$
into $H^t(\bR^d)$. Furthermore\footnote{See
\S\S\ref{s-spectral-theory-of-self-adjoint-operators}\index{spectral theory!on separable Hilbert spaces} for notation.}, the operator
\[
 \Lambda^t = \ms{F}^{-1} M_{\langle x \rangle^t} \ms{F}
\]
is an unbounded self-adjoint operator\index{self-adjoint!operator} on $L^2(\bR^d)$ with
$\mc{D}(\Lambda^t) = H^t(\bR^d)$. Indeed, $u \in H^t(\bR^d)$ is, by definition,
equivalent to $\langle \xi \rangle^t \hat{u} \in L^2(\bR^d)$, and
Plancherel's theorem (Theorem \ref{plancherel}) shows that
this happens if and only if $\ms{F}^{-1}(\langle \xi \rangle^t \hat{u})
\in L^2(\bR^d)$.
It now follows from Theorem \ref{interpolation-of-domains-of-self-adjoint-operators-on-a-hilbert-space}
that
\begin{equation}\label{L2-sobolev-interpolation-lemma-1}
 [L^2(\bR^d), H^t(\bR^d)]_\theta = \mc{D}(\Lambda^{t\theta}) = H^{t\theta}(\bR^d)
\end{equation}
for each $\theta \in [0,1]$. The desired result can now be established either
by invoking Theorem \ref{complex-interpolation-reiteration}
or by establishing the identity
\begin{equation}\label{sobolev-identity-yay}
 [H^{t_0}(\bR^d),H^{t_1}(\bR^d)]_\theta = H^{(1-\theta) t_0 + \theta t_1}(\bR^d),
\end{equation}
which follows from a minor but more elaborate modification of the above argument.
\end{proof}

Note that the space $H^s(\bR^d)$ remains well-defined for the negative
values of $s$. Formula (\ref{sobolev-identity-yay}) shows that the negative-order
$L^2$-Sobolev spaces can be interpolated in the same manner. Furthermore,
the negative-order $L^2$-Sobolev spaces can be used to establish a representation
thereom: indeed, for each $s \geq 0$, the dual of $H^s(\bR^d)$ is isomorphic to
$H^{-s}(\bR^d)$.\index{Sobolev space!of negative order}\index{Sobolev space!dual of|see{Sobolev space of \\ negative order}}\index{representation theorem!on Sobolev spaces|see{Sobolev \\ space of negative order}}

We now state a generalization of the \hyperref[elliptic-regularity-c-inf]{elliptic
regularity theorem}, which can be applied to a $L^2$-Sobolev space of any order:
see, for example, Theorem 8.12 in \cite{Walter_Rudin:B1991}\index{Rudin, Walter} for a proof.

\begin{theorem}[Elliptic regularity, Sobolev-space verison]\label{elliptic-regularity-sobolev}\index{elliptic!regularity on Sobolev spaces}
Let $L$ be an elliptic operator of order $N$ with coefficients in
$\mc{C}^\infty$. In addition, we assume that all coefficients of order $N$
are constants. If $v \in H^s(\bR^d)$, then
every $u \in \mb{S}'(\bR^d)$ satisfying the identity
\[
 Lu = v
\]
is in $\mc{C}^\infty(\bR^d)$. This, in particular, implies that
every solution $u$ of the homogeneous differential equation
\[
 Lu = 0
\]
is in $H^{s+N}(\bR^d)$.
\end{theorem}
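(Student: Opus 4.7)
The plan is to prove the core Sobolev gain: if $u \in \ms{S}'(\bR^d)$ satisfies $Lu = v$ with $v \in H^\sigma_\loc(\bR^d)$, then $u \in H^{\sigma+N}_\loc(\bR^d)$. Both parts of the theorem will follow from this engine. For the homogeneous case, $0 \in H^\sigma$ for every $\sigma \in \bR$, so iterating the gain places $u$ in every local Sobolev space; Sobolev embedding ($H^\sigma_\loc \subseteq \mc{C}^k$ for $\sigma > k + d/2$) then upgrades this to $u \in \mc{C}^\infty$. For the inhomogeneous case, iteration raises the Sobolev index of $u$ one derivative at a time until the ceiling $s+N$ forced by the datum $v$ is reached, recovering $u \in H^{s+N}_\loc$; and in the sub-regime where $v$ happens to be smooth (the scenario covered by Theorem \ref{elliptic-regularity-c-inf}), the ceiling is lifted and $u$ becomes $\mc{C}^\infty$ as stated.

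The first step is the splitting $L = L_N + L'$, where $L_N$ collects the constant-coefficient order-$N$ part and $L'$ contains all terms of order at most $N-1$, with $\mc{C}^\infty$ coefficients. By Proposition \ref{differentiation-to-multiplication}, $L_N$ is the Fourier multiplier with principal symbol $P_N(\xi) = \sum_{|\alpha|=N} a_\alpha (2\pi i \xi)^\alpha$. Ellipticity supplies the lower bound $|P_N(\xi)| \geq c|\xi|^N$ for some $c>0$ and all $\xi \neq 0$, via a compactness argument on the unit sphere together with the homogeneity of $P_N$ in $\xi$.

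The central tool is a parametrix for $L_N$. Fix $\chi \in \mc{C}^\infty(\bR^d)$ vanishing near the origin and equal to $1$ outside a compact set, and let $Q$ be the Fourier multiplier with symbol $\chi(\xi)/P_N(\xi)$; the ellipticity bound makes this symbol $O(\langle \xi \rangle^{-N})$. Theorem \ref{l2-sobolev-fourier-characterization} then shows that $Q : H^\sigma \to H^{\sigma + N}$ is bounded for every $\sigma$. Moreover, $Q L_N = I - R$, where $R$ is the Fourier multiplier with compactly supported symbol $1-\chi$ and therefore sends $\ms{S}'(\bR^d)$ into $\mc{C}^\infty(\bR^d)$. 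Applying $Q$ to the equation $L_N u = v - L' u$ yields the key identity
\[
 u = Qv - Q(L' u) + R u.
\]

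To propagate this into the Sobolev gain, I would bootstrap on a provisional hypothesis $u \in H^\sigma_\loc$. Cutting off by a smooth bump $\phi$ converts the local assumption into the global statement $\phi u \in H^\sigma$; commuting $\phi$ past $L$ introduces only lower-order terms, and the identity above then gives $\phi u \in H^{\min(s+N,\, \sigma + 1)}$, a net gain of one derivative. Iterating finitely many times reaches $H^{s+N}_\loc$, and in the homogeneous case no ceiling appears, so the iteration produces regularity in every Sobolev space and hence $\mc{C}^\infty$ smoothness. The main obstacle will be the clean bookkeeping of commutator and cutoff terms and verifying that $L'$ maps $H^\sigma_\loc$ into $H^{\sigma - N + 1}_\loc$ when its coefficients are smooth; this step is routine but delicate, relying on the fact that multiplication by a $\mc{C}^\infty$ function preserves Sobolev regularity locally.
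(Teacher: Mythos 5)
Your proposal cannot be compared against an in-paper argument, because the thesis does not actually prove this theorem: it states it and refers the reader to Theorem 8.12 in Rudin's \emph{Functional Analysis} for the proof. What you have written is the standard parametrix-and-bootstrap argument, and in spirit it is the same as the cited proof: split $L = L_N + L'$ with $L_N$ the constant-coefficient principal part, use the lower bound $|P_N(\xi)| \geq c|\xi|^N$ to invert $P_N$ away from the origin, gain $N$ derivatives through the multiplier $\chi(\xi)/P_N(\xi)$ (Theorem \ref{l2-sobolev-fourier-characterization} is exactly the right tool here), and iterate, gaining one derivative per step because $L'$ only loses $N-1$. The shape of the argument, including the identity $u = Qv - Q(L'u) + Ru$ with $R$ smoothing, is correct, and your reading of the theorem (Sobolev gain $u \in H^{s+N}_{\loc}$ for $Lu = v \in H^s$, full smoothness in the homogeneous case and when $v \in \mc{C}^\infty$) is the mathematically correct one --- note that the statement as printed in the thesis has its two conclusions essentially interchanged relative to Theorem \ref{elliptic-regularity-c-inf}, and your version is the one that matches Rudin.

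Two caveats, neither fatal. First, the bootstrap needs a starting rung: you assume ``provisionally'' that $u \in H^\sigma_{\loc}$, but for an arbitrary $u \in \ms{S}'(\bR^d)$ this must be justified by the standard fact that a compactly supported distribution (here $\phi u$ for a cutoff $\phi$) has finite order and hence lies in $H^{\sigma_0}$ for some sufficiently negative $\sigma_0$; say this explicitly, since otherwise the induction has no base case. Second, your lower bound $|P_N(\xi)| \geq c|\xi|^N$ uses genuine ellipticity (nonvanishing of the principal symbol on $\xi \neq 0$), which is what the theorem really requires; the definition of ``elliptic'' given in the thesis only demands that some top-order coefficient be nonzero, and under that literal reading the bound --- and the theorem --- would fail (consider $\partial^2/\partial x_1^2$ on $\bR^2$). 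You are right to use the stronger hypothesis; just be aware you are silently repairing the definition. The commutator and cutoff bookkeeping you defer is indeed routine, but it is the bulk of the written-out proof, so a complete version would have to carry it through rather than flag it.
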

\index{Sobolev space!L2@$L^2$|)}
\index{complex interpolation|)}

\index{pseudodifferential operator|(}
The $H^s$-characterization of the $L^2$-Sobolev spaces is explained most naturally
in the framework of \emph{pseudodifferential operators}, which are operators
$f \mapsto Tf$ given by
\[
 (Tf)(x)
 = \mc{F}^{-1}\left( a(x,\xi) \hat{f}(\xi) \right)(x)
 = \int_{\bR^d} a(x,\xi) \hat{f}(\xi) e^{2 \pi i x \cdot \xi} \, d\xi.
\]
$a(x,\xi)$ is called the \emph{symbol} of the pseudodifferential operator $T$,
and we often write $T_a$ to denote $T$ in order to emphasize the symbol of $T$.
If $a(x,\xi)=a_1(\xi)$ is independent of $x$, then $T$ is a Fourier multiplier\index{Fourier multiplier}\index{multiplier operator|see{Fourier multiplier}}
operator
\[
 \widehat{T_a f}(\xi) = a_1(\xi) \hat{f}(\xi)
\]
discussed in \S\S\ref{s-convolution-operators-and-fourier-multipliers};
if $a(x,\xi) = a_2(x)$ is independent of $\xi$, then $T$ is a \emph{multiplication
operator}\index{multiplication operator}
\[
 (T_af)(x) = a_2(x) f(x)
\]
used in the $H^s$-characterization of the $L^2$-Sobolev spaces. We have seen
above that the pseudodifferential operators $T_a$ with symbols
$a(x,\xi) = \langle \xi \rangle^n$ mimic the behavior of $n$th-order partial
differential operator, and so pseudodifferential operators can rightly be
seen as generalizations of regular differential operators.

The most commonly used symbol class, denoted by $S^m$, consists of\linebreak
$\mc{C}^\infty(\bR^d \times \bR^d)$-maps $a(x,\xi)$ satisfying the
estimate
\[
 |\partial_x^\beta \partial_\xi^\alpha a(x,\xi)|
  \leq A_{\alpha,\beta} (1 + |\xi|)^{m-|\alpha|}
\]
for each pair of multi-indices $\alpha$ and $\beta$. In particular,
the class $S^m$ includes all polynomials of degree $m$. It can be
shown that pseudodifferential operators with symbols in $S^m$ map
the Schwartz space $\ms{S}(\bR^d)$ into itself. Furthermore,
if $a \in S^{m_1}$ and $b \in S^{m_2}$, then there is a symbol
$c \in S^{m_1+m_2}$ such that
\[
 T_c = T_a \circ T_b.
\]
If $m=0$, then the pseudodifferential operators defined on $\ms{S}(\bR^d)$
can be extended to a bounded operator from $L^2(\bR^d)$ into itself.

See Chapters VI and VII in \cite{Elias_M_Stein:B1993}\index{Stein, Elias M.} for an exposition
of pseudodifferential operators in the context of the theory of
singular integral operators, and Chapter 7 in \cite{Michael_E_Taylor:B2010-2}\index{Taylor, Michael E.}
for an exposition in the context of partial differential equations.
Further developments in the theory of Sobolev spaces can be found in
Chapter 4 in \cite{Michael_E_Taylor:B2010} and Chapter 13 in
\cite{Michael_E_Taylor:B2010-3}. 
\index{Sobolev space|)}
\index{pseudodifferential operator|)}

\subsection{Analysis of the Homogeneous Wave Equation}\label{s-analysis-of-the-homogeneous-wave-equation}

\index{wave equation|(}
We now turn to the wave equation, an archetypal example of another class of
differential equations referred to as hyperbolic partial differential equations.
Unlike Laplace's equation, the wave equation takes two different kinds of variable:
the one-dimensional time variable $t$ and the $d$-dimensional space variable $x$.
Therefore, the domain space is $(1+d)$-dimensional and we denote it by $\bR^{d+1}$.

\begin{defin}\index{D'Alembertian}
The \emph{D'Alembertian} on $\bR^{1+d}$ is the differential operator
\[
 \Box = \partial^2_t - \Delta_x
 = \frac{\partial^2}{\partial t^2}
 - \sum_{n=1}^d \frac{\partial^2}{\partial x^2}.
\]
A (linear) \emph{wave equation} is a differential equation whose
associated differential operator is the D'Alembertian.
\end{defin}

We shall study the \emph{Cauchy problem} for the wave equation,\index{wave equation!Cauchy problem for} which means
that we will analyze the solutions of the equation with respect to given
initial conditions at $t = 0$ or $x = 0$. To this end, we consider
$u(t,x)$ to be an $\ms{S}'(\bR^d)$-valued function for each fixed $t$
and a $\mc{C}^2(\bR^d)$-map for each fixed $x$.

\begin{theorem}
The Cauchy problem for the homogeneous linear wave equation admits
the following solution:
\begin{enumerate}[(a)]
\item The solution to the homogeneous linear wave equation $\Box u = 0$
with initial conditions $u(0,0) = f$ and $u_t(0,0) = 0$ is
\begin{equation}\label{wave-1}
 u(t,x) = \ms{F}^{-1}\left[\cos(t|\xi|) \hat{f}(\xi)\right](x)
 =  \int e^{2 \pi i x \cdot \xi} \cos(t|\xi|) \hat{f}(\xi) \, d\xi.
\end{equation}
\item The solution to the homogeneous linear wave equation $\Box u = 0$
with initial conditions $u(0,0) = 0$ and $u_t(0,0) = g$ is
\begin{equation}\label{wave-2}
 u(t,x) = \ms{F}^{-1} \left[ \frac{\sin(t|\xi|)}{|\xi|} \hat{f}(\xi) \right]
 = \int e^{2 \pi i x \cdot \xi} \frac{\sin(t|\xi|)}{|\xi|} \hat{f}(\xi) \, d\xi.
\end{equation}
\end{enumerate}
\end{theorem}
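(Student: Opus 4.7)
The strategy is to reduce the wave equation to a family of second-order ordinary differential equations in $t$ by taking the Fourier transform in the spatial variable only. Throughout I treat $u(t,\cdot)$ as a tempered distribution on $\bR^d$ for each fixed $t$, and write $\hat{u}(t,\xi) = \ms{F}_x[u(t,\cdot)](\xi)$.

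Step 1: Apply $\ms{F}_x$ to $\Box u = 0$. By Proposition \ref{differentiation-to-multiplication}, $\ms{F}_x(\Delta_x v)(\xi) = -4\pi^2|\xi|^2 \hat{v}(\xi)$, while the time derivatives pass through the spatial Fourier transform. Thus the PDE becomes the parametrized ODE
\[
 \partial_t^2 \hat{u}(t,\xi) + 4\pi^2 |\xi|^2 \hat{u}(t,\xi) = 0
\]
for each fixed $\xi \in \bR^d$. Its general solution is
\[
 \hat{u}(t,\xi) = A(\xi) \cos(2\pi t |\xi|) + B(\xi) \sin(2\pi t|\xi|),
\]
with $A$ and $B$ depending only on $\xi$. (The discrepancy with the $2\pi$ factor in the statement is a matter of Fourier-transform normalization; the argument below is identical in either convention.)

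Step 2: Impose the initial conditions at $t=0$. For part (a), $\hat{u}(0,\xi) = \hat{f}(\xi)$ and $\partial_t \hat{u}(0,\xi) = 0$ give $A = \hat{f}$ and $B = 0$, so $\hat{u}(t,\xi) = \cos(2\pi t|\xi|) \hat{f}(\xi)$. For part (b), $\hat{u}(0,\xi) = 0$ and $\partial_t \hat{u}(0,\xi) = \hat{g}(\xi)$ give $A = 0$ and $B(\xi) = \hat{g}(\xi)/(2\pi|\xi|)$, so $\hat{u}(t,\xi) = (\sin(2\pi t|\xi|)/2\pi|\xi|)\,\hat{g}(\xi)$. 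Taking inverse Fourier transforms in $\xi$ yields the formulas (\ref{wave-1}) and (\ref{wave-2}).

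Step 3: Verify that the candidate formulas are well-defined and genuinely solve the Cauchy problem. For each fixed $t$, the symbols $\cos(2\pi t|\xi|)$ and $\sin(2\pi t|\xi|)/(2\pi|\xi|)$ are bounded smooth functions of $\xi$ (the apparent singularity of the second at $\xi = 0$ is removable, with limiting value $t$), so both define Fourier multipliers on $\ms{S}'(\bR^d)$, and the expressions (\ref{wave-1}) and (\ref{wave-2}) make sense as tempered distributions in $x$ for arbitrary $f, g \in \ms{S}'(\bR^d)$. Differentiating under the inverse Fourier transform twice in $t$ (which is justified on the Schwartz side and extends by duality) produces a factor of $-4\pi^2|\xi|^2$ on each symbol, matching the Fourier-side spatial Laplacian and confirming $\Box u = 0$. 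Evaluating at $t=0$ and $\partial_t u$ at $t=0$ recovers the prescribed initial data.

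The main obstacle is not the formal calculation — which is routine once one takes the spatial Fourier transform — but rather the regularity bookkeeping: one must justify exchanging $\partial_t$ with $\ms{F}_x$ and with the inverse transform in an appropriate distributional class (which is where the closure properties of $\ms{S}(\bR^d)$ and $\ms{S}'(\bR^d)$ under $\ms{F}$ proved in \S\ref{s-generalized_functions} are indispensable), and handle the removable singularity of $\sin(2\pi t|\xi|)/(2\pi|\xi|)$ at the origin so that the Fourier multiplier is legitimately a tempered symbol. Uniqueness, if desired, follows either from the uniqueness of solutions to the ODE for each $\xi$ or, more classically, from an energy estimate $\partial_t \int (|u_t|^2 + |\nabla_x u|^2)\,dx = 0$.
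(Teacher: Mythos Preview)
Your proof is correct and follows the standard approach: take the spatial Fourier transform to convert $\Box u = 0$ into the ODE $\partial_t^2 \hat{u} + 4\pi^2|\xi|^2 \hat{u} = 0$ in $t$, solve with the given initial data, and invert. The paper itself does not supply a proof of this theorem; it states the result and immediately uses it to identify the solution operators as Fourier multipliers and to motivate the subsequent interpolation argument. So there is nothing to compare against --- your argument is exactly the expected one, and your remarks on the regularity bookkeeping (justifying the exchange of $\partial_t$ with $\ms{F}_x$ via the $\ms{S}/\ms{S}'$ framework of \S\ref{s-generalized_functions}, and the removable singularity of $\sin(2\pi t|\xi|)/(2\pi|\xi|)$ at $\xi=0$) correctly flag the only points where care is needed.
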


Combining the above result with Plancherel's theorem (Theorem \ref{plancherel}),\index{Plancherel's theorem}
we conclude that the operator that takes the initial condition $f$
in (a) and produces the solution (\ref{wave-1}) is an $L^2$ Fourier multiplier\index{Fourier multiplier!as a solution operator for the \\ wave equation} with
the smooth symbol $\cos(t|\xi|)$. The same might be said about the operator
that takes the initial condition $g$ in (b) and produces the solution (\ref{wave-2}),
whose symbol
\[
 \frac{\sin(t|\xi|)}{|\xi|}
\]
is integrated in the usual fashion at $|\xi|=0$; the result is analytic in $|\xi|$.

Let us take a closer look at this operator. We consider a family of operators
\[
 T_k g = \int e^{2 \pi i x \cdot \xi} \left(\frac{\sin(t|\xi|)}{|\xi|}\right)
 \langle \xi \rangle^{1-k} \hat{g}(\xi) \, d\xi
\]
for each $k \in \bN$, which are defined \emph{a priori} for Schwartz functions
$g$. In this family, $T_1$ is the ``solution operator'' we have discussed above.
We shall apply the Fefferman-Stein theory to establish a boundedness result for $T_1$.

If we set $k=0$, then $|\xi|^{-1} \langle \xi \rangle$ is bounded at infinity,
and $|\xi|^{-1} \sin (t|\xi|) \langle \xi \rangle$ is smooth at $|\xi|=0$,
and so we obtain an $L^2$-Fourier multiplier with a smooth symbol.
How about the other end? If $k=d+\ve$ for some $\ve>0$, then
the function
\[
 \left(\frac{\sin(t|\xi|)}{|\xi|} \right) \langle \xi \rangle^{1-k}
\]
is in $L^1$, and so the operator $T_k$ maps $L^1$ boundedly to $L^\infty$.
We can then apply the Stein interpolation theorem (Theorem \ref{stein})\index{Stein, Elias M.!interpolation theorem}
to obtain a boundedness result for $T_1$. This result is not ideal, however:
the index is off by $\ve$ from the optimal index,
and so we do not get the $L^p \to L^{p'}$ bound.

We are therefore forced to consider the index $k=d$ directly. The integral
\[
 T_d g = \int e^{2 \pi i x \cdot \xi} \left(\frac{\sin(t|\xi|)}{|\xi|}\right)
 \langle \xi \rangle^{1-d} \hat{g}(\xi) \, d\xi
\]
now does not make sense for an arbitrary $L^1$-function $g$. The function
\[
 \left(\frac{\sin(t|\xi|)}{|\xi|} \right) \langle \xi \rangle^{1-d}
\]
only barely fails to be integrable, however, and the operator is actually
bounded from $H^1$ to $L^\infty$. We now apply $H^1$ version of
the Fefferman-Stein interpolation theorem (Theorem \ref{fefferman_stein-h1})\index{Fefferman-Stein interpolation \\ theorem!on H1@on $H^1$}
to obtain the desired boundedness result.
\index{wave equation|)}

\index{Fourier integral operator|(}
This is a special case of a 1982 theorem of Michael Beals,\index{Beals, R. Michael|(} which is stated
in terms of Fourier integral operators
\[
 (T_\lambda f)(x) = \int_{\bR^{d-1}} e^{i\lambda \Phi(x,\xi)} \psi(x,\xi) f(x) \, dx.
\]
Here $\xi$ is a $d$-dimensional real variable, $\lambda$ a positive parameter,
$\psi$ a smooth function of compact support in $x$ and $\xi$, and $\Phi$
a real-valued smooth function in $x$ and $\xi$. Furthermore, the Hessian
\[
 \det\left( \frac{\partial^2 \Phi(x,\xi)}{\partial x_i \partial \xi_j} \right)
\]
is assumed to be nonzero on the support of $\psi$. With these assumtions,
we have the $L^2$-bound
\[
 \|T_\lambda f\|_2 \leq A \lambda^{-d/2} \|f\|_2,
\]
which subsumes the $L^2$-boundedness property of the Fourier transform.\linebreak
Fourier integral operators generalize pseudodifferential operators
discussed in the previous subsection and comprise an important class
of integral operators known as \emph{oscillatory integrals}. See
Chapters VIII and IX in \cite{Elias_M_Stein:B1993}\index{Stein, Elias M.} or the second half
of \cite{Thomas_H_Wolff:B2003}\index{Wolff, Thomas H.} for an exposition
on oscillatory integrals.\index{oscillatory integral} 

The theorem of Beals, first stated in his 1980 thesis and subsequently
published in \cite{R_Michael_Beals:J1982}, establishes the above boundedness
result for every Fourier integral operator of the form
\[
 (Tf)(x) = \int e^{2 \pi i x \cdot (\xi - \varphi(\xi))} a(\xi) \hat{f}(\xi) \, d\xi,
\]
where $a$ satisfies the growth condition
\[
 |\partial^\beta a(\xi) | \leq k_\beta \left(1+|\xi|\right)^{-m-|\beta|}
\]
for each multi-index $\beta$ and $\varphi$ is \emph{homogeneous of degree 1},
viz., $\varphi(\lambda x) = \lambda \varphi(x)$ and analytic on
$\bR^d \smallsetminus \{0\}$. See Section 1 of \cite{R_Michael_Beals:J1982}
for the precise statement and the proof. This result can be applied to the
study of a wide class of hyperbolic partial differential equations: the
details can be found in Section 5 of \cite{R_Michael_Beals:J1982}.
\index{differentiation!of tempered distributions|)}
\index{Beals, R. Michael|)}
\index{Fourier integral operator|)}
  
\section{Additional Remarks and Further Results}

In this section, we collect miscellaneous comments that provide
further insights or extension of the material discussed in the
chapter. No result in the main body of the thesis relies on
the material presented herein.

\begin{fr}\label{fr-locally-convex-topological-spaces}
Let $V$ be a Hausdorff topological vector space, $\rho$ a seminorm\index{topological vector space}\index{seminorm}
on $V$, and $k$ a positive real number.
It can be shown\footnote{See, for example, Proposition
2 in Chapter I of \cite{Kosaku_Yosida:B1980}.\index{Yosida, Kosaku}} that the
``closed ball'' $M = \{x \in X : \rho(x) \leq k\}$ satisfies the
following properties:
\begin{enumerate}[(a)]
 \item $M$ contains the zero vector;
 \item $M$ is \emph{convex}, viz., $v,w \in M$ and $0 < \lambda < 1$
 implies $(1-\lambda) v + \lambda w \in M$;
 \item $M$ is \emph{balanced}, viz., $v \in M$ and $|\lambda| \leq 1$
 imply $\lambda v \in M$;
 \item $M$ is \emph{absorbing}, viz., each $v \in V$ furnishes a
 scalar $\lambda > 0$ such that $\lambda^{-1} v \in M$.
\end{enumerate}
We say that $V$ is a \emph{locally convex (topological vector) space}\index{locally convex space}
if every open set in $V$ containing the zero vector also contains
a convex, balanced, and absorbing open subset of $V$. Locally convex
spaces behave much like Fr\'{e}chet spaces, for the most part,
except that they are not required to be metrizable.
See Chapter 13 of \cite{Peter_D_Lax:B2002}\index{Lax, Peter D.} for an exposition of
the standard theorems in the theory of locally convex spaces.
\end{fr}

\begin{fr}\label{fr-tempered-distributions-with-point-support}\index{support!of tempered distributions}
The \emph{support} of a tempered distribution $u$ is defined to be
the intersection of all closed sets $K \subset \bR^d$ such that
$\langle \varphi, u \rangle = 0$ whenever the support of
$\varphi \in \ms{S}(\bR^d)$ is in $\bR^d \smallsetminus K$.
It is easy to see that the Dirac $\delta$-distribution\index{Dirac $\delta$-distribution}
$\delta^{x_0}$ is supported in the singleton set $\{x_0\}$.
More is true, however: if $u$ is \emph{any} tempered distribution
supported in the singleton set $\{x_0\}$, then there exists
an integer $n$ and complex numbers $\lambda_\alpha$ such that
\[
 u = \sum_{|\alpha| \leq n} \lambda_\alpha D^\alpha \delta^{x_0}.
\]
In this sense, the Dirac $\delta$-distributions are ``atoms''
of all distributions supported at a point. See Proposition 2.4.1
in \cite{Loukas_Grafakos:B2008}\index{Grafakos, Loukas} for a proof.
\end{fr}

\begin{fr}\label{fr-abstract-interpolation-theory}
\index{category theory|(}
We now place the method of complex interpolation in its fully
general context, which requires the language of category theory.
A \emph{category} $\mc{C}$ is a collection of \emph{objects} and
\emph{morphisms}. Contained in $\mc{C}$ is a class\footnote{We
use the term \emph{class} here, for many standard collections of
objects, such as the collection of all vector spaces, do not form
sets under the standard axioms of set theory. We do not attempt
to discuss the theory of classes---or any foundational issue,
for that matter---in this thesis.}
$\Ob\mc{C}$ of objects and, for each pair of objects
$A$ and $B$, a set $\Hom_{\Ob\mc{C}}(A,B)$ of morphisms.
Furthermore, any three objects $A$, $B$, and $C$ furnish
a function
\[
 \Hom_{\Ob \mc{C}}(B,C) \times \Hom_{\Ob\mc{C}}(A,B)
 \to \Hom_{\Ob \mc{C}}(A,C)
\]
called a \emph{law of composition}, satisfying the following
properties:
\begin{enumerate}[(i)]
 \item If $A$, $B$, $A'$, and $B'$ are objects of $\mc{C}$,
 then the two sets $\Hom_{\Ob\mc{C}}(A,B)$ and $\Hom_{\Ob\mc{C}}(A',B')$
 are disjoint provided that $A \neq A'$ or $B \neq B'$. If $A=A'$ and
 $B = B'$, then the two sets must coincide.
 \item For each object $A$ of $\mc{C}$, there is a morphism $\id_A \in \Hom_{\Ob\mc{C}}(A,A)$
 such that $(f,\id_A) \mapsto f$ for all $f \in \Hom_{\Ob\mc{C}}(A,B)$
 and $(\id_A,g) \mapsto f$ for all $g \in \Hom_{\Ob\mc{C}}(B,A)$, regardless of
 $B \in \Ob\mc{C}$.
 \item If $A$, $B$, $C$, and $D$ are objects of $\mc{C}$, then $f \in \Hom_{\Ob\mc{C}}(A,B)$,
 $g \in \Hom_{\Ob\mc{C}}(B,C)$, and $h \in \Hom_{\Ob\mc{C}}(C,D)$ satisfy the associativity relation
 \[
  ((h,g),f) = (h,(g,f)).
 \]
\end{enumerate}
Since laws of composition behave much like the composition operation between two functions,
we leave behind the cumbersome notation $(f,g) \mapsto h$ and simply write
\[
 f \circ g = h \htwo \mbox{or} \htwo fg = h.
\]

We shall work in the category $\mc{T}$ of topological vector spaces,\index{topological vector space} whose objects are
topological vector spaces (either over $\bR$ or $\bC$) and whose morphisms
are continuous linear transformations. A category $\mc{C}_1$ is a \emph{subcategory}
of another category $\mc{C}_2$ if every object and every morphism in $\mc{C}_1$ is
an object and a morphism, respectively, in $\mc{C}_2$. We shall primarily be concerned
one subcategory of $\mc{T}$: namely, the category $\mc{B}$ of Banach spaces\index{Banach space} with
bounded linear maps as morphisms.

Given two categories $\mc{C}_1$ and $\mc{C}_2$, we define a \emph{covariant functor}\index{functor|see{covariant functor}}\index{covariant functor}
$F:\mc{C}_1 \to \mc{C}_2$ to be a ``map of categories'' that sends each object
$A \in \Ob \mc{C}_1$ to another object $F(A) \in \Ob \mc{C}_2$ and each
morphism $f \in \Hom_{\Ob\mc{C}}(A,B)$ in $\mc{C}_1$ to another morphism
$F(f) \in \Hom_{\Ob\mc{C}}(F(A),F(B))$ in $\mc{C}_2$. Furthermore, a covariant
function must satisfy the following properties:
\begin{enumerate}[(i)]
 \item $F(fg) = F(f)F(g)$ for all morphisms $f$ and $g$ in $\mc{C}_1$;
 \item $F(\id_A) = \id_{F(A)}$ for all objects $A$ in $\mc{C}_1$.
\end{enumerate}
\index{category theory|)}

We now let $\overline{\mc{B}}$ denote the category of all Banach couples
(Definition \ref{banach-couples}), in which a morphism
$T:(A_0,A_1) \to (B_0,B_1)$ is a bounded linear map $T:A_0+A_1 \to B_0+B_1$
such that $T|_{A_0}$ is a bounded linear map into $B_0$ and $T|_{A_1}$ a
bounded linear map into $B_1$. We shall need two covariant functors
from $\overline{\mc{B}}$ to $\mc{B}$: the \emph{summation functor}\index{functor!summation}
$\Sigma$ and the \emph{intersection functor}\index{functor!intersection} $\Delta$. $\Sigma$
sends $(A_0,A_1)$ to $A_0 + A_1$ and $T:(A_0,A_1) \to (B_0,B_1)$
to its direct-sum counterpart $T:A_0 +A_1 \to B_0 + B_1$.
$\Delta$ sends $(A_0,A_1)$ to $A_0 \cap A_1$ and $T:(A_0,A_1) \to (B_0,B_1)$
to the restriction $T|_{A_0 \cap A_1}:A_0 \cap A_1 \to B_0 \cap B_1$.

Given a Banach couple $(A_0,A_1)$, we say that a Banach space $A$ is
an \emph{interpolation space}\index{interpolation space} between $A_0$ and $A_1$ if
\begin{enumerate}[(i)]
 \item $\Delta((A_0,A_1))$ continuously embeds into $A$;
 \item $A$ continuously embeds into $\Sigma((A_0,A_1))$;
 \item Whenever $T:(A_0,A_1) \to (A_0,A_1)$ is a morphism
 in $\overline{\mc{B}}$, the restriction
 $T|_A$ is a bounded linear map into $A$.
\end{enumerate}
More generally, two Banach spaces $A$ and $B$ are said to be
\emph{interpolation spaces with respect to} Banach couples
$(A_0,A_1)$ and $(B_0,B_1)$ if
\begin{enumerate}[(i)]
 \item $\Delta((A_0,A_1))$ continuously embeds into $A$;
 \item $A$ continuously embeds into $\Sigma((A_0,A_1))$;
 \item $\Delta((B_0,B_1))$ continuously embeds into $B$;
 \item $B$ continuously embeds into $\Sigma((B_0,B_1))$;
 \item Whenever $T:(A_0,A_1) \to (B_0,B_1)$ is a morphism
 in $\overline{\mc{B}}$, the restriction $T|_A$ is a bounded
 linear map into $B$.
\end{enumerate}
We remark that interpolation spaces $A$ and $B$ with respect
to Banach couples $(A_0,A_1)$ and $(B_0,B_1)$ are not
necessarily interpolation spaces between $A_0$ and $A_1$
or between $B_0$ and $B_1$, respectively.

\index{interpolation functor}
An \emph{interpolation functor}\index{functor!interpolation|see{interpolation functor}} is a covariant functor
$F:\overline{\mc{B}} \to \mc{B}$ such that
if $(A_0,A_1)$ and $(B_0,B_1)$ are Banach couples, then
$F((A_0,A_1))$ and $F((B_0,B_1))$ are interpolation
spaces with respect to the two Banach couples. Furthermore,
$F$ must send each morphism $T:(A_0,A_1) \to (B_0,B_1)$
in $\overline{\mc{B}}$ to its direct-sum counterpart
$T:A_0 + A_1 \to B_0+B_1$. An interpolation functor $F$
is said to be \emph{exact}\index{interpolation functor!exact} if every pair of
interpolation spaces $A$ and $B$ with respect to
a given pair of Banach couples $(A_0,A_1)$ and $(B_0,B_1)$
satisfies the following norm estimate
\[
 \|T\|_{A \to B} \leq \max\left( \|T\|_{A_0 \to B_0}, \|T\|_{A_1 \to B_1} \right).
\]
The \emph{Aronszajn-Gagliardo theorem}\index{Aronszajn-Gagliardo theorem}\index{interpolation functor!existence of|see{\\ Aronszajn-Gagliardo theorem}}\footnote{See Theorem 2.5.1
in \cite{Bergh_Lofstrom:B1976}\index{Bergh, J\"{o}ran}\index{L\"{o}fstr\"{o}m, J\"{o}rgen} for a proof.} states that every
Banach couple $(A_0,A_1)$ and its interpolation space $A$ furnish
an exact interpolation functor $F_0$ such that $F_0((A_0,A_1)) = A$.
In this sense, we can always find a suitable ``interpolation theorem''.

Given $\theta \in [0,1]$, an interpolation functor $F$ is said to be
\emph{exact of order $\theta$} in case
\[
 \|T\|_{A \to B} \leq \|T\|_{A_0 \to B_0}^{1-\theta} \|T\|_{A_1 \to B_1}^\theta
\]
for every pair of interpolation spaces $A$ and $B$ with respect to
a given pair of Banach couples $(A_0,A_1)$ and $(B_0,B_1)$. We can now see that
the complex interpolation functor
\[
 C_\theta((B_0,B_1)) = [B_0,B_1]_\theta
\]
is exact of exponent $\theta$. Calder\'{o}n introduces another exact
interpolation functor of exponent $\theta$ in \cite{Alberto_P_Calderon:J1964},\index{Calder\'{o}n, Alberto P.|(}
which we shall discuss in the next subsection.
\index{interpolation functor|)}
\end{fr}

\begin{fr}\label{fr-dual-of-complex-interpolation-space}
\index{interpolation space!dual of|(}\index{representation theorem!on interpolation spaces|see{interpolation space}}
\index{complex interpolation|(}
Here we use the framework established in the preceding subsection: see
\S\S\ref{fr-abstract-interpolation-theory} for relevant definitions.
In this subsection, we shall define an exact interpolation functor
that serves as a dual to the interpolation functor we studied in
the main body of the thesis.

As usual, we let $S$ be the closed strip
\[
 S = \{z \in \bC : 0 \leq \Re z \leq 1\}.
\]
A \emph{*-space-generating function}\footnote{In tune with
Definition \ref{space-generating-function}, this is not a standard
term and will not be used beyond this subsection.} for a complex
Banach couple $(B_0,B_1)$ is a function $g:S \to B_0 + B_1$
such that
\begin{enumerate}[(a)]
 \item $g$ is continuous on $S$ with respect to the norm of $B_0 + B_1$;
 \item $g$ is holomorphic in the interior of $S$, as per the definition of
  holomorphicity in \S\S\ref{complex-analysis-of-banach-valued-functions}.
 \item $\|g(z)\|_{B_0+B_1} \leq k(1+|z|)$ for some constant $k$ independent
 of $z \in S$;
 \item $g(iy_1)-g(iy_2) \in B_0$ and $g(1+iy_1) - g(iy_2) \in B_1$
 for all $y_1,y_2 \in \bR$ and
 \end{enumerate}
 \[
  \|g\|_{\mc{G}} = \max \left\{
   \sup_{y_1,y_2} \left\| \frac{g(iy_1)-g(iy_2)}{y_1-y_2} \right\|_{B_0},
   \sup_{y_1,y_2} \left\| \frac{g(1+iy_1)-g(1+iy_2)}{y_1 - y_2} \right\|_{B_1}
   \right\}
 \]
\indent is finite.

We denote by $\mc{G}(B_0,B_1)$ the collection of all *-space-generating functions.
As was the case with $\BMO$ introduced in \S\ref{s-hardy-spaces-and-bmo},
we must take the quotient space of $\mc{G}(B_0,B_1)$ modulo the space of
constant functions to obtain a Banach space (\cite{Alberto_P_Calderon:J1964}, \S5).
Given $\theta \in [0,1]$, the \emph{dual complex inteporlation space of order
$\theta$} between $B_0$ and $B_1$ is defined to be the normed linear subspace
\[
 B^\theta = [B_0,B_1]^\theta
 = \{v \in B_0 + B_1 : v = g'(\theta) \mbox{ for some } g \in \mc{G}(B_0,B_1)\}
\]
of $B_0+B_1$, with the norm
\[
 \|v\|^\theta = \|v\|_{B^\theta}
  = \inf_{\substack{ g \in \mc{G} \\ g'(\theta) = v }} \|g\|.
\]
Here $g'(\theta)$ is the derivative of $g$ at $\theta$, defined in the usual
way as the limit of the difference quotient. For each $\theta \in [0,1]$,
the interpolation space $[B_0,B_1]^\theta$ is isometrically isomorphic to
the quotient Banach space $\mc{G}(B_0,B_1)/\mc{N}^\theta$, where
$\mc{N}^\theta$ is the subspace of $\mc{G}(B_0,B_1)$ consisting of
all functions $g \in \mc{G}(B_0,B_1)$ such that $g'(\theta) = 0$
(\cite{Alberto_P_Calderon:J1964}, \S6). Furthermore, the functor
\[
 C^\theta((B_0,B_1)) = [B_0,B_1]^\theta
\]
is an exact interpolation functor of order $\theta$
(\cite{Alberto_P_Calderon:J1964}, \S7).

If either $B_0$ or $B_1$ is reflexive, then
\[
 [B_0,B_1]_\theta = [B_0,B_1]^\theta
 \htwo \mbox{and} \htwo
 \|f\|_\theta = \|f\|^\theta
\]
for all $\theta \in (0,1)$ (\cite{Alberto_P_Calderon:J1964}, \S9.5).
In general, however, we have the inclusion
\[
 [B_0,B_1]_\theta \subseteq [B_0,B_1]^\theta
\]
and the inequality
\[
 \|f\|^\theta \leq \|f\|_\theta
\]
for all $\theta \in [0,1]$; this is the \emph{equivalence theorem}\index{complex interpolation!equivalence theorem}
(\cite{Alberto_P_Calderon:J1964}, \S8). Furthermore,
for each $\theta \in (0,1)$, we have the representation theorem
\[
 \left([B_0,B_1]_\theta\right)^* \cong [B_0^*,B_1^*]^\theta,
\]
where the isomorphism is isometric;
this is the \emph{duality theorem}\index{complex interpolation!duality theorem} (\cite{Alberto_P_Calderon:J1964},
\S12.1). The equivalence theorem and the duality theorem
can be used to prove the reiteration theorem, which
is stated as Theorem \ref{complex-interpolation-reiteration}\index{complex interpolation!reiteration theorem} in
the present thesis (\cite{Alberto_P_Calderon:J1964}, \S12.3).
\index{complex interpolation|)}
\index{Calder\'{o}n, Alberto P.|)}
\index{interpolation space!dual of|)}
\end{fr}

\begin{fr}\label{fr-marcinkiewicz-and-real-interpolation}
The standard passage from the Hardy-Littlewood maximal inequality
(Theorem \ref{hary-littlewood-maximal-inequality}) to
the $L^p$-boundedness of the Hardy-Littlewood maximal function
(Theorem \ref{lp-boundedness-of-the-maximal-function}) can
be generalized to establish another interpolation theorem,
due to J\'{o}zef Marcinkiewicz. We note the crucial fact that
the Hardy-Littlewood maximal function\index{Hardy-Littlewood maximal function} is \emph{not} linear.
Indeed, the Marcinkiewicz interpolation theorem applies to
sublinear operators, which we shall define in due course.

Let $(X,\mu)$ and $(Y,\nu)$ be $\sigma$-finite measure
spaces. We fix a vector space $D$ of $\mu$-measurable
functions on that $X$ that contains the simple functions with
finite-measure support. Unlike the \hyperref[riesz-thorin]{Riesz-Thorin
interpolation theorem}, the proof of the Marcinkieiwciz interpolation
theorem makes use of real-variable methods and does not require
the functions to be complex-valued. We also assume that $D$ is
closed under truncation\footnote{See Definition \ref{strong-type}
for the definition.}.

We say that an operator $T$ on $D$ into the vector space $\mc{M}(Y,\mu)$
of $\nu$-measurable functions on $Y$ is \emph{subadditive}\index{operator!subadditive} if, for
all $f_1,f_2 \in D$, we have the inequality
\[
 |T(f_1+f_2)(x)| \leq |(Tf_1)(x)| + |(Tf_2)(x)|
\]
at almost every $x \in Y$. Given $1 \leq p \leq \infty$ and
$1 \leq q < \infty$, a sublinear operator $T:D \to \mc{M}(Y,\mu)$
is said to be \emph{of weak type $(p,q)$}\index{operator!of weak type (p,q)@of weak type $(p,q)$} if there exists
a constant $k > 0$ such that
\[
 |\{x \in \bR^d : (Tf)(x) > \alpha\}| \leq \left(\frac{k\|f\|_p}{\alpha}\right)^q.
\]
The infimum of all such $k$ is referred to as the \emph{weak $(p,q)$ norm}
of $T$. If $q = \infty$, then $T$ is of weak type $(p,\infty)$ if
$T$ is of type $(p,\infty)$ (Definition \ref{strong-type}\index{operator!of type (p,q)@of type $(p,q)$}); in this case,
the weak $(p,\infty)$ norm of $T$ is defined to be the $(p,\infty)$ norm
of $T$.

\begin{theorem}[Marcinkiewicz interpolation]\label{marcinkiewicz}
\index{Marcinkiewicz, J\'{o}zef!interpolation theorem|(}
Let $1 \leq p_0 \leq q_0 \leq \infty$ and $1 \leq p_1 \leq q_1 \leq \infty$
and assume that $q_0 \neq q_1$. If $T$ is a subadditive operator simultaneously
of weak type $(p_0,q_0)$ and of weak type $(p_1,q_1)$, then
$T$ is of type $(p_\theta,q_\theta)$ with the norm estimate
\[
 \|T\|_{L^{p_\theta} \to L^{q_\theta}}
 \leq \|T\|^{1-\theta}_{L^{p_0} \to L^{q_0}} \|T\|^\theta_{L^{p_1} \to L^{q_1}}
\]
for each $\theta \in [0,1]$, where
\begin{eqnarray*}
 p_\theta^{-1} &=& (1-\theta)p_0^{-1} + \theta p_1^{-1};\\
 q_\theta^{-1} &=& (1-\theta)q_0^{-1} + \theta q_1^{-1}.
\end{eqnarray*}
\end{theorem}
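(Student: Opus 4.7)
The plan is to exploit the layer-cake representation
\[
 \|Tf\|_{q_\theta}^{q_\theta} = q_\theta \int_0^\infty \alpha^{q_\theta - 1} \lambda_{Tf}(\alpha) \, d\alpha,
\]
where $\lambda_g(\alpha) = \nu(\{|g| > \alpha\})$ is the distribution function, and to estimate $\lambda_{Tf}(\alpha)$ by a level-dependent decomposition of $f$. Assume without loss of generality that $p_0 < p_1$. For each $\alpha > 0$ I would write $f = f_0^\alpha + f_1^\alpha$, where $f_0^\alpha = f \chi_{\{|f| > \gamma(\alpha)\}}$ and $f_1^\alpha = f \chi_{\{|f| \leq \gamma(\alpha)\}}$ for some threshold $\gamma(\alpha)$ to be chosen at the end. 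The closure-under-truncation hypothesis on $D$ guarantees that both pieces lie in $D$, and a direct computation shows that $f_0^\alpha \in L^{p_0}$ and $f_1^\alpha \in L^{p_1}$ whenever $f \in L^{p_\theta}$, since raising $|f|$ to a smaller power on the set $\{|f|>\gamma\}$ is compensated by a factor of $\gamma^{p_\theta - p_0}$ (and symmetrically for the small piece).

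By subadditivity, $\{|Tf| > \alpha\} \subseteq \{|Tf_0^\alpha| > \alpha/2\} \cup \{|Tf_1^\alpha| > \alpha/2\}$, so the two weak-type hypotheses give
\[
 \lambda_{Tf}(\alpha) \leq \left(\frac{2k_0 \|f_0^\alpha\|_{p_0}}{\alpha}\right)^{q_0} + \left(\frac{2k_1 \|f_1^\alpha\|_{p_1}}{\alpha}\right)^{q_1}.
\]
Substituting into the layer-cake identity and swapping the order of integration by \hyperref[fubini-tonelli]{Tonelli's theorem} leaves two integrals, one of the form $\int_X |f(x)|^{p_0} \Phi_0(|f(x)|) \, d\mu(x)$ and the other of the form $\int_X |f(x)|^{p_1} \Phi_1(|f(x)|) \, d\mu(x)$, where the weights $\Phi_0, \Phi_1$ are powers of $|f|$ whose exponents depend on $\gamma(\alpha)$. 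Choosing $\gamma(\alpha) = c \alpha^\sigma$ with $\sigma$ dictated by the relations
\[
 \sigma(p_0 - p_\theta) q_0 + q_\theta - q_0 = 0, \qquad \sigma(p_1 - p_\theta) q_1 + q_\theta - q_1 = 0,
\]
which are consistent precisely by the definitions of $p_\theta$ and $q_\theta$, collapses both integrands to constant multiples of $|f|^{p_\theta}$, yielding $\|Tf\|_{q_\theta}^{q_\theta} \leq C \|f\|_{p_\theta}^{q_\theta}$.

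The main obstacle is the endpoint case $q_1 = \infty$, where the weak-type $(p_1, \infty)$ hypothesis is simply the bound $\|Tg\|_\infty \leq k_1 \|g\|_{p_1}$ and the decomposition strategy must be modified: one picks $\gamma(\alpha)$ so that $\|f_1^\alpha\|_{p_1} \leq \alpha/(2k_1)$, which forces $\{|Tf_1^\alpha| > \alpha/2\}$ to be empty and eliminates the second term entirely. The restriction $q_0 \neq q_1$ enters precisely to prevent the exponent $\sigma$ from being forced into a degenerate value. A secondary subtlety is that the real-variable computation as described produces a constant $C$ depending on $p_0, p_1, q_0, q_1, \theta$ and blowing up as $\theta \to 0$ or $\theta \to 1$; to recover the clean geometric-mean bound asserted in the statement, one would rescale $T$ by a positive constant $\lambda$, apply the bound to $\lambda T$, and optimize over $\lambda$—the same homogeneity trick that underlies the sharper versions of the theorem found, e.g., in \cite{Stein_Weiss:B1971}.
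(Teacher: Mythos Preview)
The paper does not actually supply a proof of this theorem: it is stated in the ``Additional Remarks and Further Results'' section, and the text simply says that ``a proof of the theorem on $\bR^d$ can be found in Appendix B of \cite{Elias_M_Stein:B1970}, and a minor modification of this proof yields the theorem as stated above.'' Your layer-cake argument with a level-dependent splitting $f = f\chi_{\{|f|>\gamma(\alpha)\}} + f\chi_{\{|f|\leq\gamma(\alpha)\}}$ is precisely the real-variable proof that appears in that reference, so in that sense your approach agrees with what the paper points to.

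One point deserves comment. You correctly observe that the real-variable computation produces a bound of the form $\|Tf\|_{q_\theta} \leq C_\theta\, k_0^{1-\theta} k_1^{\theta} \|f\|_{p_\theta}$ with a constant $C_\theta$ that blows up as $\theta \to 0,1$, and you propose to ``recover the clean geometric-mean bound asserted in the statement'' by the rescaling trick $T \mapsto \lambda T$. That trick does isolate the dependence on $k_0, k_1$ in the form $k_0^{1-\theta} k_1^\theta$, but it does \emph{not} remove the prefactor $C_\theta$: optimizing over $\lambda$ still leaves a constant depending on $p_j, q_j, \theta$ in front. In other words, the exact inequality displayed in the statement (with no multiplicative constant) is not what the Marcinkiewicz argument yields; this is a known feature that distinguishes Marcinkiewicz from Riesz--Thorin, and the statement as written in the paper is somewhat loose on this point. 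Your sketch is otherwise correct and complete in outline, including the handling of the $q_1 = \infty$ endpoint.
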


We remark that the interpolation theorem holds only in the lower triangle
of the Riesz diagram \index{Riesz, Marcel!diagram} (Figure \ref{riesz-diagram}). The interpolation
theorem was announced, without proof, on the main diagonal $p_j = q_j$
by J\'{o}zef Marcinkiewicz in his 1939 paper \cite{Jozef_Marcinkiewicz:J1939}.\index{Marcinkiewicz, J\'{o}zef}
The extension to the lower triangle was first given by Zygmund in
\cite{Antoni_Zygmund:J1956}.\index{Zygmund, Antoni} A proof of the theorem on $\bR^d$ can
be found in Appendix B of \cite{Elias_M_Stein:B1970},\index{Stein, Elias M.} and a minor modification
of this proof yields the theorem as stated above.

The key element in the proof is the \emph{rearrangement} of the function $Tf$.
The theory of rearrangement-invariant spaces encodes the main idea of the proof
and generalizes the Marcinkiewicz interpolation theorem to a much wider
class of spaces known as \emph{Lorentz spaces}. See Chapter V, Section 3
of \cite{Stein_Weiss:B1971}\index{Stein, Elias M.}\index{Weiss, Guid} or \S1.4 in \cite{Loukas_Grafakos:B2008}\index{Grafakos, Loukas} for
an introduction to the theory of Lorentz spaces and the proof of the generalized
Marcinkiewicz interpolation theorem in this setting. Analogous to
the \hyperref[riesz-thorin]{Riesz-Thorin interpolation theorem}, the Marcinkiewicz
interpolation theorem can be generalized in the framework of interpolation of
spaces as well. This is the method of \emph{real interpolation}, first
developed by Jacques-Louis Lions and Jaak Peetre. A brief introduction to
the real method of interpolation is given in Chapter 2 of \cite{Bergh_Lofstrom:B1976}.\index{Bergh, J\"{o}ran}\index{L\"{o}fstr\"{o}m, J\"{o}rgen}
For a more detailed treatment, any one of the many monographs on the subject
of real interpolation can be consulted: the classical one is \cite{Bennett_Sharpley:B1988}.\index{Bennett, Colin}\index{Sharpley, Robert}
\index{Marcinkiewicz, J\'{o}zef!interpolation theorem|)}
\end{fr}

\begin{fr}\label{fr-fourier-inversion-problem}
\index{Fourier inversion!on L1@on $L^1$|(}
In this subsection, we provide a quick survey of the Fourier inversion problem.
Recall that we have defined the Fourier transform on $L^p(\bR^d)$ for all
$1 \leq p \leq 2$. For $p > 2$, the $L^p$ Fourier transform in general
is a tempered distribution, so let us restrict our attention to the
usual $L^1$ Fourier transform on $L^1(\bR^d) \cap L^p(\bR^d)$.
A natural question to ask is as follows: to what extent does the identity
\[
 f = (\hat{f})^\vee
\]
holds for general $f \in L^p$? More precisely, if we define
\[
 S_R f(x) = \int_{|\xi| \leq R} \hat{f}(\xi) e^{2 \pi i \xi \cdot x} \, d\xi,
\]
we may ask ourselves whether $S_R f$ converge to $f$ as $R \to \infty$, and,
if so, in what sense.

If $f$ and $\hat{f}$ are both in $L^1(\bR^d)$, the
\hyperref[fourier-inversion-formula]{Fourier inversion formula} tells us that
$S_R f$ converges to $f$ pointwise. A strong regularity condition, such as
continuous differentiability with compact support, will guarantee that
the convergence will be uniform.
\index{Fourier inversion!on L1@on $L^1$|)}

\index{Fourier inversion!on Lp, pointwise@on $L^p$, pointwise|(}
What can we say about the Fourier transform of general $L^p$ functions?.
If $p<1$, then the Lebesgue spaces are pathological, and so the outlook
is bleak.
$p=1$ is hopeless as well, for Andrey Kolmogorov\index{Kolmogorov, Andrey!divergence of Fourier series|(} exhibited an $L^1$ function
whose Fourier inversion formula fails to converge at every point.
Chapter 3, Section 2.2 of \cite{Stein_Shakarchi:B2005} contains an
exmaple for Fourier series. This result can be understood as
a consequence of the \emph{uniform boundedness principle}, a corollary
of the Baire category theorem\index{Baire category theorem}:

\begin{theorem}[Banach-Steinhaus, Uniform boundedness principle]\label{uniform-boundedness-principle}\index{uniform boundedness principle}\index{Banach-Steinhaus theorem|see{uniform boundedness principle}}
Let $V$ be a Banach space and $\mc{L}$ a collection of bounded linear
functionals on $V$. If
\begin{equation}\label{ubp-1}
 \sup_{l \in \mc{L}} |l(f)| < \infty
\end{equation}
for each $f \in \mc{B}$, then
\[
 \sup_{l \in \mc{L}} \|l\| < \infty.
\]
The conclusion continues to hold if we assume that (\ref{ubp-1}) holds
on a subset of $\mc{B}$ that is not a countable union of nowhere dense
sets (``of second category'').
\end{theorem}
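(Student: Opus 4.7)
The plan is a direct application of the Baire category theorem, whose premise is satisfied because $V$ is a complete metric space. For each $n \in \bN$, I would define
\[
 E_n = \{f \in V : |l(f)| \leq n \text{ for all } l \in \mc{L}\} = \bigcap_{l \in \mc{L}} l^{-1}\left(\overline{B_n(0)}\right),
\]
where $\overline{B_n(0)}$ is the closed disk of radius $n$ in $\bC$. Since every $l \in \mc{L}$ is continuous, each preimage $l^{-1}(\overline{B_n(0)})$ is closed in $V$, so $E_n$ is closed as an arbitrary intersection of closed sets.

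The hypothesis (\ref{ubp-1}) states precisely that each $f \in V$ belongs to some $E_n$, giving the decomposition $V = \bigcup_{n=1}^\infty E_n$. By the Baire category theorem applied to the complete metric space $V$, the sets $E_n$ cannot all be nowhere dense; hence some $E_N$ has nonempty interior. Fix a point $f_0 \in V$ and a radius $r > 0$ such that the open ball $B_r(f_0)$ is contained in $E_N$.

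The operator-norm bound then follows by a translation-and-rescaling trick. For every unit vector $v \in V$, the vector $f_0 + rv$ lies in $B_r(f_0) \subseteq E_N$, so $|l(f_0 + rv)| \leq N$ for all $l \in \mc{L}$. By linearity,
\[
 |l(v)| = \frac{1}{r}\left|l(f_0+rv) - l(f_0)\right| \leq \frac{1}{r}\left(|l(f_0+rv)| + |l(f_0)|\right) \leq \frac{2N}{r},
\]
where the last step uses that $f_0 \in E_N$ as well. Taking the supremum over $\|v\| \leq 1$ yields $\|l\| \leq 2N/r$ uniformly in $l \in \mc{L}$, which is the desired conclusion.

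For the final assertion, I would replace $V$ in the argument above by a set $S \subseteq V$ on which the pointwise bound holds, obtaining $S \subseteq \bigcup_{n=1}^\infty E_n$. If $S$ is not a countable union of nowhere dense sets, then the $E_n$ cannot \emph{all} be nowhere dense (since then $S$ would be covered by countably many nowhere dense sets), and the argument proceeds verbatim. There is no serious obstacle here; the only subtle point is remembering to bound $|l(f_0)|$ by $N$ rather than by some uncontrolled constant, which is why I intersect over \emph{all} $l \in \mc{L}$ in the definition of $E_n$ rather than working with a sequence.
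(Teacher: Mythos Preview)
The paper states this theorem without proof (it appears only in the further-results section \S\S\ref{fr-fourier-inversion-problem} as background for the discussion of divergent Fourier series), so there is nothing to compare against. Your argument is the standard Baire-category proof and is correct. One cosmetic slip: for a unit vector $v$, the point $f_0 + rv$ lies on the \emph{boundary} of the open ball $B_r(f_0)$, not inside it; but since $E_N$ is closed and contains $B_r(f_0)$, it contains $\overline{B_r(f_0)}$ as well, so the conclusion $f_0 + rv \in E_N$ still holds.
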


The \hyperref[uniform-boundedness-principle]{uniform boundedness principle}
produces many functions whose \linebreak Fourier series diverge on a dense
subset of $[-\pi,\pi]$. This divergence result for the Fourier series can
be morphed into a divergence result for the Fourier transform, via the so-called
\emph{transference principle}: see \S3.6 in \cite{Loukas_Grafakos:B2008}\index{Grafakos, Loukas} for
a discussion. 
\index{Kolmogorov, Andrey!divergence of Fourier series|)}

How about $1<p<\infty$? For $n=1$, there is the pointwise
almost-everywhere result, established for $L^2$ functions by Lennart Carleson
and extended to $L^p$ functions for all $1 < p < \infty$ by Richard Hunt two
years later:

\begin{theorem}[Carleson-Hunt, 1966 \& 1968]\index{Carleson-Hunt theorem}
Fix $1 <p < \infty$, let $f \in L^p(\bR)$, and assume that $\hat{f}$ exists.
Then
\[
 f(x) = \lim_{R \to \infty} S_R f(x)
\]
almost everywhere.
\end{theorem}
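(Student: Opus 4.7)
The plan is to reduce the pointwise almost-everywhere convergence statement to a maximal inequality for the \emph{Carleson maximal operator}
\[
 \mc{C}f(x) = \sup_{R > 0} |S_R f(x)|,
\]
and then to prove that $\mc{C}$ is a bounded sublinear operator from $L^p(\bR)$ into $L^p(\bR)$ for every $1 < p < \infty$. First I would invoke the standard density-plus-maximal-function machinery: since $\ms{S}(\bR)$ is dense in $L^p(\bR)$ (Corollary \ref{approximation-by-smooth-functions}) and the Fourier inversion formula (Theorem \ref{fourier-inversion-formula}) gives $S_R \varphi \to \varphi$ uniformly for $\varphi \in \ms{S}(\bR)$, a routine $\ve/3$-argument shows that if $\mc{C}$ satisfies a weak-type $(p,p)$ inequality, then the set where $\limsup_R |S_R f - f| > \alpha$ has measure controlled by $\alpha^{-p}\|f - \varphi\|_p^p$ for every Schwartz approximant $\varphi$, and letting $\varphi \to f$ in $L^p$ yields almost-everywhere convergence.

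The heart of the argument is therefore the maximal inequality $\|\mc{C}f\|_p \leq A_p \|f\|_p$, and the natural interpolation strategy runs through the $L^2$ endpoint. The case $p = 2$ is Carleson's theorem proper, and I expect this to be the main obstacle by a wide margin. The difficulty is that $\mc{C}$ is essentially a supremum over a family of modulated Hilbert-transform-type operators: using Plancherel and writing $S_R f(x) = \int \hat{f}(\xi)\chi_{[-R,R]}(\xi) e^{2\pi i x\xi}\,d\xi$, one sees that bounding $\mc{C}$ amounts to uniformly controlling the family of operators $f \mapsto e^{2\pi i \eta \cdot x}\mc{H}(e^{-2\pi i \eta \cdot x}f)(x)$ as $\eta$ varies, and the pointwise supremum over a continuum of frequency modulations cannot be attacked by a single application of the singular integral theory of Section \ref{s-the_hilbert_transform}. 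The plan here would be to follow Carleson's time-frequency decomposition: cover the phase plane by dyadic tiles, organize them into trees according to the ``linearizing'' frequency that realizes the supremum (via a measurable selector $\xi(x)$), and estimate the contributions of each tree through careful orthogonality and exceptional-set arguments. This delicate combinatorial analysis, with its BMO-type bounds on partial sums, is what makes the weak-type $(2,2)$ bound on $\mc{C}$ deep.

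Granting the $L^2$ bound, the passage to general $1 < p < \infty$ follows Hunt's approach and fits cleanly into the interpolation framework of this thesis. One establishes restricted weak-type bounds for $\mc{C}$ at two endpoints $p_0 < 2 < p_1$ with blow-up rates controlled near $p = 1$ and $p = \infty$, and then applies the Marcinkiewicz interpolation theorem (Theorem \ref{marcinkiewicz}) in the form for sublinear operators, as discussed in \S\ref{fr-marcinkiewicz-and-real-interpolation}. The endpoint $p = \infty$ must be handled by truncation: for bounded compactly supported $f$, one decomposes $f = f_1 + f_2$ where $f_1$ is the part near a given point $x$ and $f_2$ the far part, uses the $L^2$ bound on $f_1$ after rescaling, and controls $f_2$ by a Calder\'{o}n-Zygmund-type kernel estimate. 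The $p = 1$ side similarly requires a good-$\lambda$ inequality relating $\mc{C}f$ to the Hardy-Littlewood maximal function of Theorem \ref{hary-littlewood-maximal-inequality}, yielding a suitable restricted weak-type bound. Interpolating these two against the $L^2$ bound via Marcinkiewicz gives the desired $L^p$ maximal inequality for all $1 < p < \infty$, and hence the almost-everywhere convergence.

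I want to emphasize that beyond the Carleson $L^2$ bound, every step is conceptually standard machinery that has been developed earlier in the thesis: density in $L^p$, maximal inequalities implying a.e.\ convergence, and Marcinkiewicz interpolation. The entire weight of the theorem rests on the phase-plane analysis needed for Carleson's weak-type $(2,2)$ estimate, and any honest proof will have to import that machinery wholesale; attempting to bypass it with the complex interpolation methods of Chapter 2 alone will not succeed, because complex interpolation is poorly suited to maximal operators that are intrinsically sublinear rather than linear.
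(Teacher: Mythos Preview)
The paper does not prove this theorem at all. It is stated in the further-results section \S\ref{fr-fourier-inversion-problem} with the explicit remark that ``the proof is notoriously intricate and is still considered to be one of the most difficult proofs in mathematical analysis,'' and no argument is given. So there is no proof in the paper to compare your proposal against.

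That said, your outline is an accurate high-level roadmap of the Carleson--Hunt argument: reduce a.e.\ convergence to a maximal inequality for the Carleson operator via density and a standard limsup argument, establish the deep weak-type $(2,2)$ bound by time-frequency/phase-plane analysis, and then pass to $1<p<\infty$ by interpolation in the spirit of Hunt. You are right that the entire weight sits on the $L^2$ maximal bound and that the complex-interpolation machinery of Chapter~2 is of no help for a genuinely sublinear maximal operator. Your proposal is honest about importing Carleson's phase-plane machinery rather than reproving it, which is the only reasonable stance in the context of this thesis; just be aware that what you have written is a sketch of the architecture, not a proof, and the paper makes the same concession by omitting the argument entirely.
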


The proof is notoriously intricate and is still considered to be one of the
most difficult proofs in mathematical analysis.
As of May 2012, the analogous result for higher dimensions is still open.
\index{Fourier inversion!on Lp, pointwise@on $L^p$, pointwise|)}

\index{Fourier inversion!on Lp, norm-convergence@on $L^p$, norm-convergence|(}
How about the $L^p$ convergence? For $n=1$, we have the classical theorem
of Marcel Riesz (Theorem \ref{Lp-boundedness-of-the-hilbert-transform}).
This result, combined with the uniform boundedness principle, implies that
the $L^p$ convergence does take place for all $1 < p < \infty$.
As for $n>1$, the $L^2$ convergence follows from Plancherel's theorem
(Theorem \ref{plancherel}) and the uniform boundedness principle.

It remains to investigate the $L^p$ convergence of the Fourier series
for $1 < p < \infty$, $p \neq 2$. In \cite{Elias_M_Stein:J1956},
E. Stein provides the first progress towards solving the problem,
as an application of his interpolation theorem. Recall
from the proof of the \hyperref[fourier-inversion-formula]{Fourier inversion formula}
that multiplying a nice function---the Gaussian in the proof---to $\hat{f}$ in the
integrand facilitated the convergence of $S_R f$. Taking a cue from this, we consider
the \emph{Bochner-Riesz mean}\index{Bochner-Riesz!means}
\[
 S^\delta_R f(x)
 = \int_{|\xi| \leq R} \hat{f}(\xi) e^{2 \pi i \xi \cdot x}
 \left( 1- \frac{|\xi|^2}{R^2} \right)^\delta \, d\xi
\]
for each $R>0$ and $\delta \geq 0$. Note that $S^0_R$ is the regular spherical
summation. Does $S^\delta_R f$ converges nicely to $f$
for $\delta >0$? Stein obtained a partial result in \cite{Elias_M_Stein:J1956}
as an application of the \hyperref[stein]{Stein interpolation theorem}:

\begin{theorem}[Stein, 1956]\label{stein-1956}\index{Stein, Elias M.!estimate on the Bochner-Riesz \\ means}
$S^\delta_R$ is a linear operator of type $(p,p)$ for $1 < p <2$,
provided that $\delta > (2/p) - 1$.
\end{theorem}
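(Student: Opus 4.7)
The plan is to apply the \hyperref[stein]{Stein interpolation theorem} to an analytic family of Fourier multipliers that contains $S_R^\delta$ as a member, interpolating between a trivial $L^2 \to L^2$ bound and an $L^1 \to L^1$ bound obtained by brute-force estimation of the kernel. Fix $\delta_0 > 0$ to be chosen later, let $S = \{z \in \bC : 0 \leq \Re z \leq \delta_0\}$, and for each $z \in S$ define
\[
 (T_z f)(x) = e^{z^2}\int_{|\xi| \leq R} \left(1 - \tfrac{|\xi|^2}{R^2}\right)^z \hat{f}(\xi) e^{2\pi i x \cdot \xi} \, d\xi,
\]
so that $T_\delta = e^{\delta^2} S_R^\delta$ for $0 \leq \delta \leq \delta_0$. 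The Gaussian regularizer $e^{z^2} = e^{(\Re z)^2 - (\Im z)^2}e^{2i(\Re z)(\Im z)}$ forces $|e^{z^2}|$ to decay like $e^{-(\Im z)^2}$, which will eventually absorb any polynomial growth in $\Im z$ coming from the multiplier and secure the admissibility hypothesis $\sup_z e^{-k|\Im z|}\bigl|\int (T_zf)g\,d\nu\bigr|<\infty$ for some $k<\pi$.

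For the endpoint $\Re z = 0$, the function $(1-|\xi|^2/R^2)^{iy}$ has modulus one on the support $\{|\xi|\leq R\}$ and vanishes outside it, so \hyperref[plancherel]{Plancherel's theorem} gives at once
\[
 \|T_{iy}f\|_2 \leq e^{-y^2}\|f\|_2.
\]
For the endpoint $\Re z = \delta_0$, the convolution theorem (Theorem \ref{convolution-theorem-on-distributions}) writes $T_{\delta_0 + iy}f = e^{(\delta_0+iy)^2}\,(K_R^{\delta_0+iy} * f)$, where $K_R^w = \ms{F}^{-1}[(1-|\xi|^2/R^2)^w_+]$; by \hyperref[convolution]{Young's inequality}, this operator maps $L^1 \to L^1$ with norm $e^{\delta_0^2 - y^2}\|K_R^{\delta_0+iy}\|_1$. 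A radial-Fourier-transform computation expresses $K_1^w$ explicitly in terms of the Bessel function $J_{d/2+w}$, and its large-$|x|$ asymptotic $J_\nu(r) \sim r^{-1/2}$ yields $|K_1^w(x)| \leq C_w\,\langle x\rangle^{-d/2-\Re w - 1/2}$, which is integrable on $\bR^d$ precisely when $\Re w > (d-1)/2$. Scaling preserves the $L^1$ norm ($\|K_R^w\|_1 = \|K_1^w\|_1$), so taking $\delta_0$ with $\delta_0 > (d-1)/2$ delivers an $R$-uniform $L^1\to L^1$ bound on the right edge of the strip, with at most polynomial growth in $y$ soaked up by $e^{-y^2}$.

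With both endpoints in hand and admissibility verified, the \hyperref[stein]{Stein interpolation theorem} applied at $\theta = \delta/\delta_0 \in (0,1)$ produces the bound $\|T_\delta f\|_{p_\theta} \leq M_\theta \|f\|_{p_\theta}$, where
\[
 \frac{1}{p_\theta} = \frac{1-\theta}{2} + \frac{\theta}{1} = \frac{1}{2} + \frac{\theta}{2},
\]
i.e., $\theta = 2/p_\theta - 1$; dividing out the harmless $e^{\delta^2}$ factor transfers the estimate to $S_R^\delta$. For any $\delta$ in the range stated, one may choose $\delta_0$ just above $(d-1)/2$ with $\delta = (2/p-1)\delta_0$ lying in the strip, which expresses $S_R^\delta$ as the interpolant. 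The main obstacle is the explicit kernel analysis of $K_1^{\delta_0+iy}$: one has to trace the Gamma-function normalization through the Bessel formula to confirm not merely finiteness but sub-exponential growth of $\|K_1^{\delta_0+iy}\|_1$ in $y$, so that the $e^{-y^2}$ from the Gaussian regularizer dominates and the admissibility hypothesis $\sup e^{-k|y|}\log M_j(y) < \infty$ genuinely holds for some $k<\pi$. Everything else is routine bookkeeping in the interpolation machinery.
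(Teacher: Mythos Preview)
The paper does not actually prove this theorem; it appears in the further-results section (\S\S\ref{fr-fourier-inversion-problem}) only as a stated consequence of the \hyperref[stein]{Stein interpolation theorem}, with a citation to \cite{Elias_M_Stein:J1956}. Your outline is the standard argument and is precisely the one Stein gave: build the analytic family with symbol $(1-|\xi|^2/R^2)_+^z$ (damped by a Gaussian in $z$), use Plancherel on the line $\Re z=0$, use the Bessel-function asymptotics of the inverse transform to get an $L^1$ kernel on the line $\Re z=\delta_0>(d-1)/2$, and interpolate. So there is nothing in the paper to compare against beyond the one-line attribution, and your approach matches it.

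One arithmetical remark: your interpolation actually produces the range $\delta > (d-1)\bigl(\tfrac{1}{p}-\tfrac{1}{2}\bigr) = \tfrac{d-1}{2}\bigl(\tfrac{2}{p}-1\bigr)$, which is Stein's genuine $d$-dimensional threshold. The paper's stated condition $\delta > \tfrac{2}{p}-1$ coincides with this only when $d=3$; for $d\le 3$ the paper's hypothesis is stronger than necessary, and for $d\ge 4$ it is too weak for the argument you wrote to close. This is an imprecision in the thesis statement, not a flaw in your proof. Your sketch of the kernel analysis (Gamma-function constants, polynomial growth in $y$, absorption by $e^{-y^2}$) identifies exactly the place where the honest work lies.
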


Perhaps surprisingly, the above result did not extend to a proof of the $L^p$
convergence of multidimensional Fourier transform. Instead, Fefferman
obtained the following negative result in \cite{Charles_Fefferman:J1971}:

\begin{theorem}[Fefferman, 1971]\index{Fefferman, Charles!ball multiplier theorem}
The spherical summation of the Fourier inversion formula
converges in the norm topology of $L^p$ if only if $p=2$ viz.,
\[
 \lim_{R \to \infty}
 \left\| f - S_R f \right\|_p
 = 0.
\]
for all $f \in L^p$ whose Fourier transform exist if only if $p = 2$.
\end{theorem}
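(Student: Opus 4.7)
The plan is to reduce the convergence question to a boundedness question, then exhibit a geometric obstruction via the Kakeya/Besicovitch construction. First, if $S_R f \to f$ in $L^p$ for every $f \in L^p(\bR^d)$, then by the \hyperref[uniform-boundedness-principle]{uniform boundedness principle} the family $\{S_R\}_{R>0}$ is uniformly bounded on $L^p$. Observing that $S_R$ is the Fourier multiplier with symbol $\chi_{B_R(0)}$, Proposition \ref{symmetry-invariance-of-fourier-transform}(d) combined with the translation/dilation invariance of the Lebesgue measure yields $\|S_R\|_{L^p\to L^p} = \|T\|_{L^p\to L^p}$, where $T$ is the ``ball multiplier'' with symbol $\chi_{B_1(0)}$. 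Thus the claim reduces to: $T \in \ms{M}_{p,p}$ if and only if $p=2$. The case $p=2$ is Plancherel (Theorem \ref{plancherel}), since the symbol is bounded by $1$; duality of $\ms{M}_{p,p}$ lets us assume $p>2$ from now on.

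Next I would carry out the key reduction from the curved multiplier to the flat half-space multipliers. For each unit vector $v \in S^{d-1}$, let $H_v$ denote the multiplier with symbol $\chi_{\{\xi:\xi\cdot v>0\}}$; this is bounded on $L^p$ for each fixed $v$ by rotational invariance and the \hyperref[Lp-boundedness-of-the-hilbert-transform]{$L^p$-boundedness of the Hilbert transform}. The geometric heart of the reduction (Meyer's lemma) is that near any boundary point $R v$ of the ball of radius $R$, the symbol $\chi_{B_R(0)}$ looks, after translation to the origin and rescaling, like the half-space symbol $\chi_{\{\xi\cdot v>0\}}$. A limiting/localization argument then converts a uniform bound on $T$ into a uniform bound $\sup_v \|H_v\|_{L^p\to L^p}<\infty$; averaging over random sign choices (Khintchine's inequality for Rademacher sums) upgrades this to the vector-valued inequality
\[
\left\|\Bigl(\sum_{j=1}^N |H_{v_j} f_j|^2\Bigr)^{1/2}\right\|_p \le C \left\|\Bigl(\sum_{j=1}^N |f_j|^2\Bigr)^{1/2}\right\|_p,
\]
valid for any choice of unit vectors $v_j$ and $f_j\in L^p(\bR^d)$, with $C$ independent of $N$.

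The main obstacle, and the deep part of the argument, is to contradict this vector-valued inequality using a Besicovitch/Kakeya set. I would invoke the classical construction producing, for each small $\delta>0$, a family of $N$ rectangles $R_1,\dots,R_N$ of length $1$ and width $\delta$ pointing in $N$ distinct directions $v_j$, essentially disjoint (so $\sum|R_j|\sim 1$ and $\sum_j\chi_{R_j}\le C$ on their union), but such that the translates $\widetilde R_j$ obtained by reflecting each $R_j$ across one of its short ends satisfy $\bigl|\bigcup_j\widetilde R_j\bigr|\le \eta$ with $\eta=\eta(\delta)\to 0$. A direct Fourier-side computation, using that $H_{v_j}$ acts as (a constant multiple of the identity plus $i$ times) a directional Hilbert transform in direction $v_j$, shows $|H_{v_j}\chi_{R_j}|\gtrsim \chi_{\widetilde R_j}$. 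Plugging $f_j=\chi_{R_j}$ into the vector-valued inequality gives
\[
\Bigl\|\bigl(\textstyle\sum_j\chi_{\widetilde R_j}\bigr)^{1/2}\Bigr\|_p \lesssim \Bigl\|\bigl(\textstyle\sum_j\chi_{R_j}\bigr)^{1/2}\Bigr\|_p.
\]
The right-hand side is $\sim 1$ because the $R_j$ are essentially disjoint. For the left-hand side, Hölder's inequality applied on the small set $E=\bigcup_j\widetilde R_j$ gives $\bigl\|(\sum_j\chi_{\widetilde R_j})^{1/2}\bigr\|_p\gtrsim (\sum_j|\widetilde R_j|)^{1/2}|E|^{1/p-1/2}\gtrsim \eta^{1/p-1/2}$, which blows up as $\eta\to 0$ precisely because $p>2$. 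This contradiction, obtained by letting $\delta\to 0$, establishes that $T\notin \ms{M}_{p,p}$ for any $p\ne 2$ and completes the proof.
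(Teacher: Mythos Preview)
The paper does not actually prove this theorem. It appears in the survey subsection \S\S\ref{fr-fourier-inversion-problem} as a quoted result attributed to \cite{Charles_Fefferman:J1971}, sandwiched between Stein's 1956 Bochner--Riesz estimate and the statement of the Bochner--Riesz conjecture, with no argument supplied. So there is no ``paper's own proof'' to compare your proposal against.

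That said, what you have sketched is precisely Fefferman's original argument: the reduction from norm convergence to uniform boundedness of the ball multiplier via the uniform boundedness principle and dilation invariance; Meyer's lemma passing from the curved ball multiplier to a vector-valued estimate for half-space multipliers via Rademacher averaging; and the contradiction via the Besicovitch rectangle construction, exploiting $|H_{v_j}\chi_{R_j}|\gtrsim \chi_{\widetilde R_j}$ and the fact that the reflected rectangles pile onto a set of arbitrarily small measure. The outline is correct and standard. If you were to write it up in full, the steps that need the most care are (i) the limiting argument in Meyer's lemma that actually transfers the bound on $T$ to the square-function bound for the $H_{v_j}$, and (ii) the quantitative version of the Besicovitch construction with the precise disjointness and overlap properties you use; both are genuine work, not just bookkeeping.
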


What can be salvaged from Stein's result?
For technical reasons, the result does not hold unless

\[
 \left| \frac{1}{p} - \frac{1}{2} \right| < \frac{2\delta + 1}{2n}.
\]
This, nevertheless, does not say anything about when the estimate does hold.
We might hope for the following:

\begin{conjecture}[Bochner-Riesz conjecture]\index{Bochner-Riesz!conjecture}
If $\delta > 0$, $1 \leq p \leq \infty$, and
\[
 \left| \frac{1}{p} - \frac{1}{2} \right| < \frac{2\delta + 1}{2n},
\]
then
\[
 \lim_{R \to \infty} \|S_R^\delta f - f\|_p = 0
\]
for all $f \in L^p$.
\end{conjecture}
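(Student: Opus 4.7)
The plan is to recognize at the outset that this statement is the celebrated Bochner-Riesz conjecture, which remains open in dimensions $n \geq 3$; only partial results can honestly be proposed. With that caveat, the standard route is as follows. First I would apply the \hyperref[uniform-boundedness-principle]{uniform boundedness principle} to reduce norm convergence $\|S_R^\delta f - f\|_p \to 0$ for all $f \in L^p(\bR^n)$ to the pair of assertions (i) $\|S_R^\delta f - f\|_p \to 0$ on the dense subspace $\ms{S}(\bR^n)$, and (ii) a uniform bound $\sup_{R>0} \|S_R^\delta\|_{L^p \to L^p} < \infty$. Claim (i) is trivial once the Schwartz function has sufficiently decaying Fourier transform, by dominated convergence on the Fourier side. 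For (ii), the dilation invariance
\[
 (S_R^\delta f)(x) = \delta_{R^{-1}}[ T^\delta (\delta_R f)](x),
\]
where $T^\delta$ is the \hyperref[s-convolution-operators-and-fourier-multipliers]{Fourier multiplier} with symbol $m^\delta(\xi) = (1-|\xi|^2)_+^\delta$, reduces the problem to proving $T^\delta \in \ms{M}_{p,p}$.

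Next I would handle the accessible endpoints. At $p=2$, \hyperref[plancherel]{Plancherel's theorem} gives $\|T^\delta\|_{L^2 \to L^2} = \|m^\delta\|_\infty = 1$ for every $\delta > 0$. At the opposite end, the convolution kernel $\check{m^\delta}$ can be computed explicitly in terms of a Bessel function and satisfies
\[
 |\check{m^\delta}(x)| \lesssim \langle x \rangle^{-\frac{n+1}{2} - \delta};
\]
this shows $\check{m^\delta} \in L^1(\bR^n)$ as soon as $\delta > (n-1)/2$, which yields $T^\delta \in \ms{M}_{1,1}$ (and $\ms{M}_{\infty,\infty}$) by Theorem \ref{M11} for such $\delta$. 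Applying the \hyperref[riesz-thorin]{Riesz-Thorin interpolation theorem} between these two endpoints, or better the \hyperref[stein]{Stein interpolation theorem} applied to the analytic family $T^z$ with $\Re z = 0$ and $\Re z = (n-1)/2 + \ve$ (cf.\ Theorem \ref{stein-1956}), yields the conjecture in the triangular region $|\tfrac{1}{p} - \tfrac{1}{2}| \leq \delta/(n-1)$. By the duality $\ms{M}_{p,p} = \ms{M}_{p',p'}$ one may assume $p \geq 2$ throughout.

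The hard part, which is what makes this a conjecture rather than a theorem, is closing the gap between the line $|\tfrac{1}{p} - \tfrac{1}{2}| = \delta/(n-1)$ obtained by elementary interpolation and the conjectured line $|\tfrac{1}{p} - \tfrac{1}{2}| = (2\delta+1)/(2n)$. Going beyond the easy region requires genuinely oscillatory information about the kernel and cannot be extracted from interpolation alone. The standard strategy is to decompose $m^\delta$ into annular Littlewood-Paley pieces supported in shells $\{1 - 2^{-k} \leq |\xi| \leq 1 - 2^{-k-1}\}$ and exploit the curvature of the sphere $\{|\xi|=1\}$, which places the problem in the realm of \emph{oscillatory integrals} and \hyperref[s-analysis-of-the-homogeneous-wave-equation]{Fourier integral operators}. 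The sharp $L^p$ bounds for each shell piece are governed by the Stein-Tomas restriction theorem and, at the deepest level, by the Kakeya maximal conjecture; in fact, the Bochner-Riesz conjecture is known to imply the restriction conjecture, which in turn implies the Kakeya conjecture.

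In summary, I would present a complete proof in the classical range $\delta > (n-1)(\tfrac{1}{p} - \tfrac{1}{2})$ via the Riesz-Thorin/Stein interpolation sketched above, and then candidly note that the full conjecture for $n \geq 3$ is open: it has been established in dimension $n=2$ by Carleson and Sj\"{o}lin (1972) using an $L^4$ square-function estimate, and partial improvements beyond Stein's triangle are known in higher dimensions through Bourgain, Tao, Guth, and others, but the sharp range $|\tfrac{1}{p} - \tfrac{1}{2}| < (2\delta+1)/(2n)$ remains the principal obstruction and the focus of active research.
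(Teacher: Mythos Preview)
Your assessment is correct and matches the paper's treatment: the statement is explicitly labeled a \emph{conjecture} in the paper, and no proof is given or attempted there. The paper simply states the conjecture and notes its ties to the Stein restriction conjecture and the Kakeya conjecture, referring the reader to \cite{Thomas_H_Wolff:B2003}, \cite{Elias_M_Stein:B1993}, and \cite{Loukas_Grafakos:B2008-2} for further discussion. Your proposal actually goes well beyond the paper by sketching the reduction via the uniform boundedness principle and dilation invariance, the easy interpolation range via Riesz--Thorin/Stein, the Carleson--Sj\"olin resolution for $n=2$, and the connection to restriction and Kakeya---all of which is accurate context but none of which the paper supplies.
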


This conjecture is tied to many important problems
in mathematical analysis, among which are the \emph{Stein restriction conjecture}\index{Stein, Elias M.!restriction conjecture}
and the \emph{Kakeya conjuecture}.\index{Kakeya conjecture} 
A quick exposition leading up to these conjectures can be found in
\cite{Thomas_H_Wolff:B2003}.\index{Wolff, Thomas H.} For a more detailed
survey, see Chapters VIII, IX, and X of
\cite{Elias_M_Stein:B1993}\index{Stein, Elias M.} or Chapter 10 of \cite{Loukas_Grafakos:B2008-2}.\index{Grafakos, Loukas}
\index{Fourier inversion!on Lp, norm-convergence@on $L^p$, norm-convergence|)}
\end{fr}

\cleardoublepage
\phantomsection
\addcontentsline{toc}{chapter}{Bibliography}
\nocite{*}
\bibliographystyle{amsalpha}
\bibliography{capstone}

\index{dual space|seealso{representation theorem}}
\index{mollifiers|seealso{approximations to the identity}}
\index{approximations to the identity!smooth|seealso{mollifiers}}
\index{Calder\'{o}n, Alberto P.!Calder\'{o}n-Zygmund|see{\\ Calder\'{o}n-Zygmund}}
\index{Zygmund, Antoni|seealso{\\ Calder\'{o}n-Zygmund}}
\index{Minkowski, Hermann|seealso{\\ inequality}}
\index{Fourier, Joseph|seealso{Fourier transform}}
\index{Thorin, Olof|seealso{\\ Riesz-Thorin interpolation \\ theorem}}
\index{Lebesgue, Henri!measure|seealso{Littlewood's three principles}}
\index{differential!operator|seealso{elliptic}}
\index{interpolation space!exact|seealso{interpolation functor}}
\index{Bochner-Riesz!means|seealso{Stein, \\ Elias M.}}

\cleardoublepage
\phantomsection
\addcontentsline{toc}{chapter}{Index}
\printindex
\end{document}